\definecolor{light-blue}{rgb}{0.53,.8,98}
\definecolor{verde}{RGB}{50,180,50}
\DeclareMathAlphabet{\mathpzc}{OT1}{pzc}{m}{it}
\newtheorem{theorem}{Theorem}[section]
\newtheorem{corollary}[theorem]{Corollary}
\newtheorem{definition}[theorem]{Definition}
\newenvironment{proof}[1][Proof]{\noindent \emph{#1.} }
{\hfill \ \rule{0.5em}{0.5em}}
\newtheorem{lemma}[theorem]{Lemma}
\newtheorem{proposition}[theorem]{Proposition}
\newtheorem{assumption}[theorem]{Assumption}
\numberwithin{equation}{section}
\numberwithin{table}{section}
\numberwithin{figure}{section}
\newtheorem{remark}[theorem]{Remark}
\newtheorem{example}[theorem]{Example}
\newcommand{\R}{\mathbb{R}}
\newcommand{\Z}{\mathbb{Z}}
\newcommand{\cA}{{\cal A}}
\newcommand{\cG}{{\cal G}}
\newcommand{\cE}{{\cal E}}
\newcommand{\cH}{{\cal H}}
\newcommand{\cL}{{\cal L}}
\newcommand{\cP}{{\cal P}}
\newcommand{\cS}{{\cal S}}
\newcommand{\cT}{{\cal T}}
\newcommand{\cD}{{\cal D}}
\newcommand{\cW}{{\cal W}}
\newcommand{\cX}{{\cal X}}
\newcommand{\cY}{{\cal Y}}
\newcommand{\cZ}{{\cal Z}}
\newcommand{\C}{\mathbb{C}}
    \newcommand\quotient[2]{
        \mathchoice
            {
                \text{\raise1ex\hbox{$#1$}\Big/\lower1ex\hbox{$#2$}}%
            }
            {
                #1\,/\,#2
            }
            {
                #1\,/\,#2
            }
            {
                #1\,/\,#2
            }
    }
\newcommand{\re}{{\rm e}}
\newcommand{\ri}{{\rm i}}
\newcommand{\beq}{\begin{equation}}
\newcommand{\eeq}{\end{equation}}
\newcommand{\beqs}{\begin{equation*}}
\newcommand{\eeqs}{\end{equation*}}
\newcommand{\bit}{\begin{itemize}}
\newcommand{\eit}{\end{itemize}}
\newcommand{\ben}{\begin{enumerate}}
\newcommand{\een}{\end{enumerate}}
\newcommand{\bal}{\begin{align}}
\newcommand{\eal}{\end{align}}
\newcommand{\bals}{\begin{align*}}
\newcommand{\eals}{\end{align*}}
\newcommand{\bse}{\begin{subequations}}
\newcommand{\ese}{\end{subequations}}
\newcommand{\bpr}{\begin{proposition}}
\newcommand{\epr}{\end{proposition}}
\newcommand{\bre}{\begin{remark}}
\newcommand{\ere}{\end{remark}}
\newcommand{\bpf}{\begin{proof}}
\newcommand{\epf}{\end{proof}}
\newcommand{\ble}{\begin{lemma}}
\newcommand{\ele}{\end{lemma}}
\newcommand{\bco}{\begin{corollary}}
\newcommand{\eco}{\end{corollary}}
\newcommand{\bex}{\begin{example}}
\newcommand{\eex}{\end{example}}
\newcommand{\bth}{\begin{theorem}}
\newcommand{\enth}{\end{theorem}}
\newcommand{\Rea}{\mathbb{R}}
\newcommand{\diam}{\mathop{{\rm diam}}}
\newcommand{\pdiff}[2]{\frac{\partial #1}{\partial #2}}
\def\XXint#1#2#3{{\setbox0=\hbox{$#1{#2#3}{\int}$}
     \vcenter{\hbox{$#2#3$}}\kern-.5\wd0}}
\definecolor{myblue}{rgb}{0,0,0.6}
\newcommand*{\N}[1]{\left\|#1\right\|}
\newcommand{\blue}[1]{#1}
\newcommand{\tfa}{\text{ for all }}
\newcommand{\tfor}{\text{ for }}
\newcommand{\tin}{\text{ in }}
\newcommand{\ton}{\text{ on }}
\newcommand{\tas}{\text{ as }}
\newcommand{\tand}{\text{ and }}
\newcommand{\mapF}{\mathscr{F}}
\newcommand{\X}{\mathcal{X}}
\newcommand{\vertiii}[1]{{\left\vert\kern-0.25ex\left\vert\kern-0.25ex\left\vert #1
    \right\vert\kern-0.25ex\right\vert\kern-0.25ex\right\vert}}
\definecolor{jwcol}{RGB}{27, 137, 18}  
\definecolor{dalcol}{rgb}{0.8,0,0}
\definecolor{escol}{rgb}{0,0,0.8}
\definecolor{estcol}{rgb}{0,0.5,0}
\definecolor{esnewcol}{rgb}{0,0.5,0}
\newcommand{\es}[1]{#1}
\definecolor{macol}{HTML}{136739}
\newcommand{\purple}[1]{{\color{purple}{#1}}}
\newcommand{\dist}{{\rm dist}}
\DeclareMathOperator{\supp}{\rm supp}
\newcommand{\comp}{{\rm comp}}
\newcommand{\abs}[1]{{\left\lvert{#1}\right\rvert}}
\newcommand{\norm}[1]{{\left\lVert{#1}\right\rVert}}
\newcommand{\Op}{{\rm Op}}
\DeclareMathOperator{\Id}{I}
\newcommand{\tr}{{\rm tr}}
\newcommand{\e}{\epsilon}
\newcommand{\RPMLo}{R_{\rm PML, -}}
\newcommand{\WF}{\operatorname{WF}}
\newcommand{\Rtr}{R_{\tr} }
\newcommand{\ad}{\operatorname{ad}}
\newcommand{\settheoremtag}[1]{
  \let\oldthetheorem\thetheorem
  \renewcommand{\thetheorem}{#1}
  \g@addto@macro\endtheorem{
    \addtocounter{theorem}{-1}
    \global\let\thetheorem\oldthetheorem}
  }
\definecolor{jeffColor}{RGB}{102, 0, 204}
\newcommand{\mc}[1]{\mathcal{#1}}
\newcommand{\Zspace}[1]{\cZ_k^{#1}}
\newcommand{\Zspaced}[1]{\cZ_{k, d}^{#1}}
\newcommand{\Zspacen}[1]{\cZ_{k, n}^{#1}}
\newcommand{\ZcupHspace}[1]{\mc{Y}_k^{#1}}
\newcommand{\ZcapHspace}[1]{\mc{W}_k^{#1}}
\newcommand{\Dspace}[1]{\mc{D}_k^{#1}}
\newcommand{\Dspaced}[1]{\mc{D}_{k,d}^{#1}}
\newcommand{\Dspacen}[1]{\mc{D}_{k,n}^{#1}}
\newcommand{\Hspace}[1]{\mathcal{H}_k^{#1}}
\newcommand{\specialC}{C_\dagger}
\newcommand{\matrixB}{B}
\newcommand{\matrixZ}{Z}
\newcommand{\psis}{\psi^{\sharp}}
\newcommand{\domainnumber}{M}
\newcommand{\length}{L}
\newcommand{\Lf}{\mathcal{L}_{\rm b}}
\newcommand{\RPs}{R_{k}^*}
\newcommand{\RPd}{R_{k}^\sharp}
\newcommand{\Rrr}{r}
\let\div\relax
\DeclareMathOperator{\div}{div}
\newcommand{\cGa}{c_{\rm Ga}}
\newcommand{\CGa}{C_{\rm Ga}}
\newcommand{\x}{\times}
\newcolumntype{P}[1]{>{\centering\arraybackslash}p{#1}}
\newcolumntype{M}[1]{>{\arraybackslash}m{#1}}
\newcommand{\newell}{m}
\newcommand{\Hdiag}{\cH}
\newcommand{\Hmin}{\cH^{\rm min}}
\newcommand{\Int}{{\rm I}}
\newcommand{\Pml}{{\rm P}}
\newcommand{\piminus}{\pi_{\Int,-}}
\newcommand{\piplus}{\pi_{\Int,+}}
\newcommand{\pipml}{\pi_{\Pml}}
\newcommand{\Oursubset}[1]{\Omega_{#1}'}
\newcommand{\oldT}{W}
\newcommand{\transferIntro}{\mathscr{T}}
\newcommand{\transfer}{T^\star}
\newcommand{\cavity}{\mathcal{K}}
\newcommand{\visible}{\mathcal{V}}
\newcommand{\invisible}{\mathcal{I}}
\newcommand{\pml}{\mathcal{P}}
\newcommand{\chibigger}{\chi}
\newcommand{\Ell}{\operatorname{Ell}}
\newcommand{\Cfem}{C}
\title{
Non-uniform finite-element meshes defined by ray dynamics for Helmholtz problems
}
\author{
Martin~Averseng\thanks{Laboratoire Angevin de Recherche Mathématique, Universit\'e d'Angers, 49045 Angers, France, \tt martin.averseng@univ-angers.fr },
\,\,
Jeffrey~Galkowski\thanks{Department of Mathematics, University College London, London, WC1H 0AY, UK,   \tt J.Galkowski@ucl.ac.uk}
\,, Euan~A.~Spence\thanks{Department of Mathematical Sciences, University of Bath, Bath, BA2 7AY, UK, \tt E.A.Spence@bath.ac.uk }
}
\date{\today}
\begin{document}
\pagenumbering{arabic}

\maketitle

\begin{abstract}
The $h$-version of the finite-element method ($h$-FEM) applied to the high-frequency Helmholtz equation has been a classic topic in numerical analysis since the 1990s. It is now rigorously understood that (using piecewise polynomials of degree $p$ on a mesh of a maximal width $h$) the conditions ``$(hk)^p \rho$ sufficiently small'' and ``$(hk)^{2p} \rho$ sufficiently small'' guarantee, respectively, $k$-uniform quasioptimality (QO) and bounded relative error (BRE), where $\rho$ is the norm of the solution operator with $\rho\sim k$ for non-trapping problems. Empirically, these conditions are observed to be optimal in the context of $h$-FEM with a uniform mesh. This paper demonstrates that QO and BRE can be achieved using certain non-uniform meshes that violate the conditions above on $h$ and involve coarser meshes away from trapping and in the perfectly matched layer (PML). The main theorem details how varying the meshwidth in one region affects errors both in that region and elsewhere. One notable consequence is that, for any scattering problem (trapping or nontrapping), in the PML one only needs $hk$ to be sufficiently small; i.e. there is no pollution in the PML.

The motivating idea for the analysis is that the Helmholtz data-to-solution map behaves differently depending on the locations of both the measurement and data, in particular, on the properties of billiards trajectories (i.e.~rays) through these sets. Because of this, it is natural that the approximation requirements for finite-element spaces in a subset should depend on the properties of billiard rays through that set. Inserting this behaviour into the latest duality arguments for the FEM applied to the high-frequency Helmholtz equation allows us to retain detailed information about the influence of \emph{both} the mesh structure \emph{and} the behaviour of the true solution on local errors in FEM.
\end{abstract}

%

\setcounter{tocdepth}{1}
\tableofcontents

\section{Introduction}\label{sec:intro}

\subsection{The main result in its simplest form}\label{sec:1.1}

\paragraph{The scattering problem and its finite-element approximation using a PML.}
We study computing approximations to the solution of sound-soft or sound-hard scattering problems using the finite-element method with non-uniform meshes. We consider scattering by a \blue{bounded} open obstacle $\Omega_-\blue{\subset} \mathbb{R}^d$ with smooth boundary and connected complement, $\Omega_+:=\mathbb{R}^d\setminus \overline{\Omega}_-$: given $f\in L^2_{\rm comp}(\overline{\Omega_+})$, find $u\in H^1_{\rm loc}(\overline{\Omega_+})$ such that
\begin{equation}
\label{e:edp}
-k^{-2}\operatorname{div}( A\nabla u)-n u=f\text{ in }\Omega_+,\qquad (Bu)|_{\partial\Omega_+}=0,\qquad (k^{-1}\partial_r-i)u=o_{r\to \infty}(r^{\frac{1-d}{2}}),
\end{equation}
where $A$ is a smooth, symmetric, positive-definite matrix with real coefficients \blue{that equals the identity outside a compact set, 
$n$ is a smooth, strictly positive, real-valued function that equals one outside a compact set}, 
and $Bu=u$ in the sound-soft case and $Bu=\partial_{\nu}u$, with $\nu$ the normal to $\partial\Omega_+$ in the sound-hard case. \blue{Under these assumptions, the solution to \eqref{e:edp} exists and is unique; see \cite[\S4.2]{GaSp:25b} and the references therein.}

We approximate the Sommerfeld radiation condition using a radial perfectly matched layer (PML):~let $\Omega_{\tr}\blue{\subset} \mathbb{R}^d$ be open, \blue{bounded, connected}, and contain  
\blue{a closed ball containing}
$\Omega_-\cup \supp (A-I)\cup \supp (n-1)$. We truncate the problem~\eqref{e:edp} to the computational domain $\Omega:=\Omega_+\cap \Omega_{\tr}$ and apply the finite-element method to the problem:~given $f\in L^2(\Omega)$, find $u\in H^1(\Omega)$ such that
\begin{equation}
\label{e:PML1}
P_ku:=-k^{-2}\operatorname{div}(A_\theta\nabla u)+k^{-2} b_\theta\cdot\nabla u-n_\theta u=f\text{ in }\Omega,\qquad (Bu)|_{\partial\Omega_+}=0,\qquad u|_{\partial\Omega_{\tr}}=0,
\end{equation} 
where $A_\theta$, $b_\theta$, and $n_\theta$ are defined in \S\ref{sec:PML} (and $A_\theta$, $b_\theta$, and $n_\theta$ are respectively $A$, $0$, and $n$ in the non-PML region). 
\blue{
For all but finitely-many values of $k$
the solution to \eqref{e:PML1} exists and is unique.\footnote{\blue{To see this, observe that $P_k$ is an analytic family of Fredholm operators  on $\mathbb{C}\setminus 0$; since $P_k$ satisfies a G\aa rding inequality, it is invertible for some $k_0$. Therefore, the Analytic Fredholm Theorem (see e.g.~\cite[Theorem C.8]{DyZw:19}) implies that $P_k$ is invertible at all but a discrete set of $k$'s. Under appropriate assumptions on the scaling function,~\cite[Theorem 1.2]{GLS2} then shows that $P_k$ is invertible for $k\gg 1$ and hence the set of real $k$'s such that $P_k$ is not invertible is finite.}}}
\blue{When the data $f$ is away from the PML region, the difference between the solutions of \eqref{e:edp} and \eqref{e:PML1} (measured away from the PML region) is exponentially small in $k$ \cite{GLS2}.}
 Let $a_k(\cdot,\cdot)$ be the sesquilinear form associated with~\eqref{e:PML1}.
\begin{definition}
Given  a subspace $\blue{V\subset} H_0^1(\Omega)$ in the sound-soft case or $V\subset H^1(\Omega)\cap H_0^1(\Omega_{\blue{\tr}})$ \blue{in the sound-hard case}, a \emph{finite-element/Galerkin solution of~\eqref{e:PML1}} is an element $u_h\in V$ such that 
\begin{equation}
\label{e:galerkinDef}
a_k( u_h,w_h) =\langle f,w_h\rangle\quad \tfa w_h\in V.
\end{equation}
\end{definition}

Let 
$$
\rho=\rho(k):=\sup\Big\{ \|u\|_{L^2(\Omega)}\,:\, u\text{ solves~\eqref{e:PML1} with }\, \|f\|_{L^2(\Omega)}=1\Big\}.
$$
Recall that, with the normalisation used in~\eqref{e:edp}, for all $k_0>0$ there exists $c>0$ such that  $\rho(k)\geq ck$ for $k>k_0$. By~\cite[Theorem 1.6]{GLS2}, for a radial PML (defined in \S\ref{sec:PML}), there exist $C, k_1>0$ such that for $k>k_1$ and $\chi\equiv 1$ on the convex hull of $\Omega$, 
$$
\rho\leq C\sup\Big\{ \|\chi u\|_{L^2(\Omega^+)}\,:\, u\text{ solves~\eqref{e:edp} with }\, \|\chi f\|_{L^2(\Omega^+)}=1\Big\};
$$
i.e. the PML solution operator is controlled by the scattering solution operator. 

\paragraph{State-of-the-art analysis of the $h$-FEM.}
The $h$-version of the finite-element method (FEM) considers the Galerkin solution to~\eqref{e:PML1} with $V$ given by the space of piecewise polynomials of a fixed degree, $p$, on a mesh with maximum width $h$.  The accuracy of the solution is then increased by decreasing $h$. 

Many authors have studied  $k$-explicit conditions on the meshwidth guaranteeing that the finite-element solution exists and has controlled error. The best existing result is the following: if $(hk)^{2p}\rho$ is sufficiently small, then for $m\in\{0,\dots, p\}$, 
\begin{equation}
\label{e:preasymptotic1}
\|u-u_h\|_{H_k^{1-m}(\Omega)}\leq C\Big((hk)^m+\rho (hk)^p\Big)
\inf_{w_h\in V_{\mc{T}_k}^p}\|u-w_h\|_{H_k^1(\Omega)}.
\end{equation}
This estimate was proved for general Helmholtz problems  and general $p\in \mathbb{Z}^+$ in \cite{GS3} (with earlier work in \cite{FeWu:09, MeSa:10, FeWu:11, MeSa:11,Wu:14,DuWu:15,BaChGo:17,LiWu:19, ChNi:20, Pe:20,ChGaNiTo:22, LSW2}) and is empirically sharp when the mesh considered has uniform width $h$.
The bound~\eqref{e:preasymptotic1} implies that if $\rho (hk)^p$ is bounded then the FE solution is quasi-optimal (QO) in the sense that
$\|u-u_h\|_{H^1_k(\Omega)}$ is, up to a constant, the best-approximation error. Since $\rho\gtrsim k$, the requirement $\rho (hk)^p\lesssim 1$ implies that $hk\lesssim \rho^{-1/p}\ll k^{-1/p}$-- this fact that $hk$ must decrease with $k$ is the \emph{pollution effect}~\cite{BaSa:00}.


Using standard piecewise-polynomial approximation results in the right-hand side of \eqref{e:preasymptotic1}, one obtains 
that 
\begin{equation}
\label{e:preasymptotic2}
\|u-u_h\|_{H_k^{1-m}(\Omega)}\leq C\Big((hk)^m+\rho (hk)^p\Big)(hk)^p\|u\|_{H_k^{p+1}(\Omega)}.
\end{equation}
If the data is $k$-oscillatory, then so is the solution (by elliptic regularity; see \cite[Page 9]{GS3}), with $\|u\|_{H_k^{p+1}(\Omega)}\leq C\|u\|_{H^1_k(\Omega)}$. In this case,~\eqref{e:preasymptotic2} implies that the Galerkin solution has bounded relative error (BRE) if $(hk)^{2p} \rho$ is sufficiently small. We highlight that 
this threshold for BRE was famously identified for 1-d problems in the work of Ihlenburg and Babu\v{s}ka \cite{IhBa:95a, IhBa:97} (see \cite[Page 350, penultimate displayed equation]{IhBa:97}, \cite[Equation 4.7.41]{Ih:98}).  

To date, all $k$-explicit a priori analyses of the $h$-FEM consider uniform meshes. 
The goal of this paper is to study non-uniform meshes, designed by considering the ray dynamics in $\Omega_+$, and give local -- as opposed to global -- criteria on the meshwidths. In particular, we show that there exist meshes that obtain QO/BRE while severely violating the mesh thresholds above, and thus involve many fewer degrees of freedom (see Table~\ref{tab:regimes} below).

\paragraph{Subsets of $\Omega$ defined by ray dynamics.}
We define \blue{(generalised)} billiard trajectories to be geodesics for the metric $g^{-1}=A/n$ in $\Omega_+$ continued by reflection with respect to $g$ at \blue{transversal} intersections with the boundary 
of $\Omega_+$ -- when $A=I$ and $n=1$, these are straight line paths continued using the Snell--Descartes law at the boundary. 
\blue{When these geodesics hit $\partial\Omega$ tangentially they are continued in a more subtle manner (see Definition~\ref{d:PGBB} below). Roughly speaking, when the boundary is geodesically strictly concave, the trajectories continue as geodesics in $\Omega_+$ (as if the boundary was not encountered at all), and when it is geodesically convex they are continued as geodesics in the induced metric on the boundary until they encounter a point of geodesic concavity at which point they detach from the boundary. 
}

Next, we define the \emph{cavity} $\cavity\subset \overline{\Omega}_+$ as the set of points $x\in \overline{\Omega}_+$ such that there is a  billiard trajectory passing over $x$ that remains in a compact set for all positive and negative times. \blue{Observe that $\cavity\subset \text{convex hull}(\Omega_-\cup \supp (A-I)\cup \supp (n-1))$. In fact, $\mathcal{K}$ is usually much smaller than this; for more discussion on the size of $\cavity$, see \S\ref{s:special}.}

We define the \emph{visible set} $\visible\subset \overline{\Omega}_+$ as those points $x\in \overline{\Omega}_+$ such that there is a  billiard trajectory passing over $x$ that remains in a compact set for all positive times or all negative times \blue{(so that $\cavity \subset \visible$)}. Finally, we define the \emph{invisible set} $\invisible:=\overline{\Omega}_+\setminus \blue{\visible}$ (the adjectives visible and invisible are relative to the cavity). Let $\Omega_{\pml}\subset \Omega$ be an open neighbourhood of $\partial\Omega_{\tr}$ that is strictly contained in the PML. Next, let $\Omega_\cavity$, $\Omega_\visible$ and $\Omega_\invisible$ be open neighbourhoods of the intersections with $\overline{\Omega}$ of, respectively, $\cavity$, $\visible\setminus (\cavity\cup\Omega_\pml)$, and $\invisible\setminus \Omega_\pml$ in the subspace topology of $\overline{\Omega}$ such that $\Omega_{\cavity}\cap \partial\Omega_{\tr}=\Omega_{\visible}\cap\partial \Omega_{\tr}=\Omega_{\invisible}\cap\partial\Omega_{\tr}=\emptyset.$
\blue{An example of the domains $\Omega_\cavity,\Omega_\visible,\Omega_\invisible,$ and $\Omega_\pml$ is given in Figure \ref{f:twoMirrors}.

It is typically difficult to determine $\cavity$ and $\visible$ exactly because they involve large-time dynamics of the (generalised) billiard trajectories; e.g., even in the example of Figure \ref{f:twoMirrors}, while $\cavity$ can be easily determined exactly, it is more difficult to do the same for $\visible$. 
To determine $\cavity$ and $\visible$ in practical situations requires ray tracing; this is discussed more in Remark \ref{r:findKV}. Note, however, that the question of choosing the regions $\Omega_\cavity$ and $\Omega_\visible$ (which only need to include $\cavity$ and $\visible$, respectively) is slightly simpler, and is discussed more at the end of this subsection.
}

\begin{figure}
\begin{center}
\begin{tikzpicture}
\begin{scope}[scale=.9]
\fill[fill=light-blue,opacity=.6] (-2.5,-1.1)rectangle(2.5,1.1);
\draw[fill=orange ](0,0)circle(3);
\begin{scope}
\clip (-2.5,-1.1)rectangle(2.5,1.1);
\fill[fill=light-blue ](0,0)circle(2.25);
\end{scope}
\node at(0,2.55){$\Omega_{\pml}$};
\fill[fill=light-blue] (0,0)circle(2.25);
\fill[fill=verde,domain=-157:-23,opacity=.6] plot({2.35*cos(\x)},{2.35*sin(\x)})--cycle;
\fill[fill=verde,domain=157:23,opacity=.6] plot({2.35*cos(\x)},{2.35*sin(\x)})--cycle;
\begin{scope}
\clip (0,0) circle (2.35);
\fill[fill=light-blue,opacity=.6] (-2.5,-1.1)rectangle(2.5,1.1);
\end{scope}
\fill[fill=pink,opacity=.6] (-1,-1.1)rectangle (1,1.1);
    \draw[rounded corners=0.2cm, fill=gray] (-1.6,-1) rectangle (-.5,1);

    \draw[rounded corners=0.2cm, fill=gray](1.6,-1) rectangle (.5,1);
    \node at(0,0){$\Omega_\cavity$};
\node at(0,-1.5){$\Omega_\visible$};
\node at(0,1.5){$\Omega_\visible$};
\node at(2,0){$\Omega_\invisible$};
\node at(-2,0){$\Omega_\invisible$};
    \end{scope}
\end{tikzpicture}
\end{center}
\caption{\blue{An example of overlapping} domains $\Omega_\cavity,\Omega_\visible,\Omega_\invisible,$ and $\Omega_\pml$, when $\Omega_-$ consists of two (rounded) aligned rectangles, \blue{$A=I$, and $n=1$.} \blue{In this case, one can see that any trapped billiard trajectory is a horizontal trajectory bouncing between the two rectangles and hence $\cavity$ is the union of such trajectories. Furthermore, any point in $\visible$ must have a trajectory passing through it that reaches the convex hull of $\Omega_-$ intersected with $\Omega_+$ and hence is contained in $\Omega_{\visible}$ as shown.}}
\label{f:twoMirrors}
\end{figure}

\paragraph{The finite-element space.}
Given \blue{$p\geq 1$, and  a $C^{p+1}$ simplicial, affine-conforming mesh,} $\mathcal{T}$ of $\Omega$, \blue{(see Definitions~\ref{d:conformingMesh} to \ref{d:AffineConforming})} we define $h_\cavity , h_\visible,h_\invisible,h_\pml>0$, 
 to be upper bounds for the diameter of any mesh element that intersects $\Omega_\cavity$, $\Omega_\visible$, $\Omega_\invisible$, and $\Omega_\pml$ respectively, and let $h:= \max\{ h_\cavity , h_\visible,h_\invisible,h_\pml\}$. 
 Since $\Omega$ is $C^\infty$, some elements of the mesh need to be curved; however, our results can, in principle, be combined with those of \cite{ChSp:24} to prove results about simplicial meshes. 
We define the following measure of local uniformity of the mesh at scale $\e>0$:
$$
U(\mathcal{T},\e):= \sup_{x\in \Omega}\underset{\substack{T_1,T_2\in\mathcal{T}\\T_1\cap B(x,\e)\neq \emptyset\\T_2\cap B(x,\e)=\emptyset}}{\sup}\frac{\diam(T_1)}{\diam(T_2)}.
$$

\blue{For a $C^{p+1}$ simplicial, affine-conforming mesh $\mc{T}$ and $p\in\{ 1,2,\dots\}$, we denote by $V_{\mathcal{T}}^p\subset H^1_0(\Omega)$ (or $H^1_0(\Omega_{\tr})\cap H^1(\Omega)$ in the sound-hard case) the space of mapped polynomials of degree $p$ on the mesh $\mathcal{T}$ (defined in~\eqref{e:polySpace} below). }

\paragraph{\blue{Growth of the solution operator.}}

\begin{assumption}
\label{a:polyBoundIntro}
The set $\blue{\mathrm{J}}\subset\blue{(0,\infty)}$, $\Omega_-$, $A_\theta, b_\theta$, and $n_\theta$ are such that \blue{for all $k_0>0$
$$
\sup_{k\in [k_0,\infty)\setminus \blue{\mathrm{J}}}\frac{\log \rho(k)}{\log k}<\infty.
$$
i.e., $\rho(k)$ is polynomially bounded outside $\blue{\mathrm{J}}$.
} 
\end{assumption}

\blue{
For all but the strongest forms of trapping, $\rho$ is polynomially bounded \cite{Ik:83, NoZw:09, WuZw:11, ChWu:13, ChSpGiSm:20}, and so $\blue{\mathrm{J}}$ in Assumption \ref{a:polyBoundIntro} can be taken to be $\emptyset$. 
When the strongest form of trapping occurs, $\rho$ grows exponentially in $k$ \cite{Ra:71, BeChGrLaLi:11}, and this is the fastest-possible growth for smooth obstacles by \cite{Bu:98}. 
However, if a set of frequencies of arbitrarily-small measure is excluded, then the growth is polynomial; more specifically,}
by~\cite{LSW1} and~\cite[Theorem 1.6]{GLS2}, for any $\delta>0$, Assumption~\ref{a:polyBoundIntro} holds for a radial PML, any $(\Omega_-,A,n)$, and some $\blue{\mathrm{J}}_\delta$ with $|\blue{\mathrm{J}}_\delta|<\delta$, \blue{where $|\cdot|$ denotes the Lebesgue measure.}
\blue{We require such a polynomial-growth estimate to control super-algebraically small errors in the proof of the main result.} 

\paragraph{The main result in its simplest form.}

Define
\begin{equation}
\begin{gathered}\label{e:matrices}
\mathcal{C}:=\begin{pmatrix} \rho&\sqrt{k \rho}&0&0\\
\sqrt{k\rho}&k&k&0\\
0&k&k&0\\
0&0&0&1
\end{pmatrix},
\quad 
\cH:=\begin{pmatrix} h_\cavity  &0&0&0\\
0&h_\visible &0&0\\
0&0&h_\invisible&0\\
0&0&0&h_\pml\end{pmatrix},
\quad\mathscr{F}:=\begin{pmatrix}
1 &1&1& 1 \\
1 &1&1& 1\\
1 &1&1& 1\\
1 &1&1& 1
\end{pmatrix},
\\
\transferIntro:=\begin{pmatrix} 1&(h_\visible k)^{2p}\sqrt{k\rho}&(h_\visible k)^{2p}\sqrt{k \rho}( h_\invisible k)^{2p}k&
0\\
(h_\cavity  k)^{2p}\sqrt{k\rho}&1&(h_\invisible k)^{2p}k&0\\
(h_\cavity  k)^{2p}\sqrt{k\rho}(h_\visible k)^{2p}k&(h_\visible k)^{2p}k&1&0\\
0
&0&0&1\end{pmatrix}.
\end{gathered}
\end{equation}
Conceptually, $\mathcal{C}$ is the \blue{matrix consisting of norms} of the localised data-to-solution map (with $\mathcal{C}$ standing for ``communication"); \blue{i.e., the $ij$th entry of $\mathcal{C}$ is a bound on the norm of the solution measured in $\Omega_i$ with data in $\Omega_j$}.
\blue{Furthermore},
$\mathscr{T}$ \blue{captures} the propagation of Galerkin errors between subdomains according to the graph in 
Figure \ref{f:graph2} (with a simplified version -- Figure \ref{f:graph1} -- given in the sketch of the proof in \S\ref{s:sketch}). 

We work in $k$-weighted Sobolev spaces defined for $U \subset \R^d$ by
\beq\label{e:weightedNorm}
\|u\|^2_{H^n_k(U)} := \sum_{|\alpha|\leq n} k^{-2\abs{\alpha}} \|\partial^\alpha u\|^2_{L^2(U)}, \quad n\in \mathbb{N} \blue{:=\{0,1,2,\ldots\}},
\eeq
and let $H_k^{-n}(U)$ be the normed dual of $H_k^n(U)$. 


The following is a particular case of our main result (Theorem \ref{t:theRealDeal} below).
\begin{theorem}
\label{t:simple}
Let $k_0,N, \blue{\Upsilon}>0$, $p\in \mathbb{N}\setminus\{0\}$, 
$\blue{\mathrm{J}}\subset \blue{(0,\infty)}$ be such that Assumption~\ref{a:polyBoundIntro} holds,
and let $\Oursubset{\star}$ be compactly contained in  $\Omega_{\star}$ with respect to the subspace topology of $\overline{\Omega}$, $\star\in\{ \cavity,\visible,\invisible,\pml\}$. 

There exist $c,C>0$ such that for all $k\in (k_0,\infty)\setminus \blue{\mathrm{J}}$, \blue{all $C^{p+1}$ simplicial, affine conforming meshes $\mathcal{T}$ with constant $\Upsilon$ satisfying $U(\mathcal{T},k^{-1})\leq \Upsilon$, and}
\begin{equation}
\label{e:meshConditions}
(h_\cavity k)^{2p}\rho(k)+(h_\visible k)^{2p}k+(h_\invisible k)^{2p}k+(h_\pml k)^{2p}\leq c,
\end{equation}
and all $w_{h,\star} \in V_{\mathcal{T}}^p$, with $\star\in\{ \cavity,\visible,\invisible,\pml\}$, 
the Galerkin solution, $u_h\in V_{\mathcal{T}}^p$, to~\eqref{e:PML1} exists, is unique, and satisfies, for $m\in\{0,1,\ldots,p\}$,
 \renewcommand{\jot}{1pt}
\begin{align}\label{e:simple}
\left(
\begin{aligned}
&\|u-u_h\|_{H_k^{1-\newell}(\Omega_\cavity')}\\
&\|u-u_h\|_{H_k^{1-\newell}(\Omega_\visible')}\\
&\|u-u_h\|_{H_k^{1-\newell}(\Omega_\invisible')}\\
&\|u-u_h\|_{H_k^{1-\newell}(\Omega_\pml')}
\end{aligned}
\right)
&
\leq C
\Big[(\cH k)^{\newell}+
\transferIntro\mathcal{C} (\cH k)^p
+ k^{-N}(hk)^m \mathscr{F}
\Big]
\left(\begin{aligned}
&\|u-w_{h,\cavity}\|_{H_k^{1}(\Omega_\cavity)}\\
&\|u-w_{h,\visible}\|_{H_k^{1}(\Omega_\visible)}\\
&\|u-w_{h,\invisible}\|_{H_k^{1}(\Omega_\invisible)}\\
&\|u-w_{h,\pml}\|_{H_k^{1}(\Omega_\pml)}
\end{aligned}
\right),
\end{align}
\renewcommand{\jot}{3pt}
where the inequality in \eqref{e:simple} is understood component-wise.
\end{theorem}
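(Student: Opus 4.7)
The plan is to run a localized Schatz duality argument, in which the global Aubin--Nitsche step of \cite{GS3} is replaced by four region-adapted dual problems whose solvability costs are precisely encoded in the matrix $\mathcal{C}$. The first step is to establish, from propagation of singularities for $P_k$ and its adjoint $P_k^*$, a \emph{local solvability estimate}: for $g\in L^2(\Omega)$ supported in $\Omega_\star$,
\begin{equation*}
\|R_k g\|_{L^2(\Omega_{\star'})} \leq C\,\mathcal{C}_{\star',\star}\,\|g\|_{L^2(\Omega_\star)} + k^{-N}\|g\|_{H_k^{-N}(\Omega)},\qquad \star,\star'\in\{\cavity,\visible,\invisible,\pml\},
\end{equation*}
and similarly for $R_k^* = P_k^{-*}$. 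The diagonal entries $\rho$ and $k$ are the global trapped and non-trapping resolvent bounds; the $\sqrt{k\rho}$ cavity--visible coupling is the geometric mean capturing the fact that trapped quasimodes radiate into $\visible$ with amplitude only $\sqrt{\rho/k}$ times their interior amplitude; the off-diagonal entries involving $\pml$ vanish because $P_k$ is semiclassically elliptic in the PML modulo $O(k^{-\infty})$.

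With these in hand, fix $\star$ and $0\leq \newell\leq p$ and dualize: pick $\phi\in H_k^{\newell-1}(\Omega_\star')$ of unit norm, set $z:=R_k^*\phi$, and use Galerkin orthogonality
\begin{equation*}
\langle u-u_h,\phi\rangle = a_k(u-u_h,z-z_h) \qquad \text{for every } z_h\in V_{\mathcal{T}_k}^p.
\end{equation*}
Splitting $a_k$ additively over the four subdomains $\Omega_{\star'}$ and taking $z_h$ to be a locally-best piecewise-polynomial interpolant of $z$ in each region gives
\begin{equation*}
|\langle u-u_h,\phi\rangle|\leq C\sum_{\star'}\|u-u_h\|_{H_k^1(\Omega_{\star'})}(h_{\star'} k)^p\|z\|_{H_k^{p+1}(\Omega_{\star'})} + k^{-N}\text{-tail}.
\end{equation*}
Semiclassical elliptic regularity (which promotes the $H^1_k$ control of $z$ on a slightly-inflated $\Omega_{\star'}$ to an $H^{p+1}_k$ control with only $k$-uniform losses and an $O(k^{-\infty})$ remainder) combined with the adjoint local solvability bound gives $(h_{\star'} k)^p\|z\|_{H_k^{p+1}(\Omega_{\star'})}\leq C\,\mathcal{C}_{\star,\star'}(h_{\star'}k)^p$. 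Taking the supremum over $\phi$ and inserting a local best-approximation step (via a Caccioppoli-type enlargement $\Omega_\star'\Subset\Omega_\star$) then produces the coupled matrix inequality
\begin{equation*}
\mathbf{e}_{1-\newell}\leq C(\cH k)^\newell\mathbf{e}^{\mathrm{best}} + C\,\mathcal{C}(\cH k)^p\,\mathbf{e}_1 + k^{-N}(hk)^\newell\mathscr{F}\,\mathbf{e}^{\mathrm{best}},
\end{equation*}
where the column vectors $\mathbf{e}_s$ and $\mathbf{e}^{\mathrm{best}}$ collect $\|u-u_h\|_{H_k^s(\Omega_{\star'})}$ and $\|u-w_{h,\star}\|_{H_k^1(\Omega_\star)}$ respectively, with entries arranged component-wise as in \eqref{e:simple}.

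The last step is to close a bootstrap. Specializing the estimate above to $\newell=0$, with $\Omega_\star'$ replaced by a slightly larger auxiliary region (nested cutoffs absorbing the Caccioppoli enlargement), one obtains an inequality for $\mathbf{e}_1$ alone. The smallness condition \eqref{e:meshConditions} makes the spectral radius of $C\,\mathcal{C}(\cH k)^p$ strictly less than $1/2$, so a Neumann series yields $\mathbf{e}_1\leq C\,\transferIntro\,\mathbf{e}^{\mathrm{best}}$ with $\transferIntro$ as in \eqref{e:matrices} -- the entries of $\transferIntro$ are precisely the products of one-step propagator factors $\mathcal{C}_{\star,\star'}(h_\star k)^p(h_{\star'}k)^p$ iterated along the graph of Figure \ref{f:graph2}. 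Substituting this back into the $\newell$-indexed inequality produces \eqref{e:simple}. Existence and uniqueness of $u_h$ then follow in the standard way: applying the bound with $u\equiv 0$ shows that the Galerkin system is injective, hence bijective on the finite-dimensional space $V_{\mathcal{T}_k}^p$. \textbf{The principal obstacle} is the sharp cavity--visible entry $\sqrt{k\rho}$ of $\mathcal{C}$: it does not follow from interpolating the diagonal $\rho$ bound with the non-trapping $k$ bound, but instead requires a defect-measure argument separating trapped and outgoing components of $R_k$ in the spirit of Burq's black-box estimates. Matching this $\sqrt{k\rho}$ with the correct ray-based propagation accounting in $\transferIntro$ is what enables the much coarser visible/invisible/PML meshes permitted by \eqref{e:meshConditions}.
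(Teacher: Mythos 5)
The high-level architecture you propose — a localized elliptic-projection duality argument leading to a matrix fixed-point inequality, with the coupling strengths supplied by region-pair resolvent bounds and the system closed by a Neumann/graph series — is the right one and matches the paper's strategy. (The paper deduces Theorem~\ref{t:simple} from the more general Theorem~\ref{t:theRealDeal}, which is itself a matrix-valued localization of the elliptic projection argument of \cite{GS3}.) However, there are two genuine gaps in the way you close the argument.

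\textbf{The bootstrap variable is wrong, so the fixed point does not contract.} You specialize your matrix inequality to $m=0$ to obtain
$\mathbf{e}_1 \leq C\mathbf{e}^{\rm best} + C\,\mathcal{C}(\cH k)^p\,\mathbf{e}_1 + \ldots$
and then claim that \eqref{e:meshConditions} forces the spectral radius of $C\,\mathcal{C}(\cH k)^p$ below $1/2$. That is false: the $(1,1)$ entry is $\rho(h_\cavity k)^p$, which under the mesh condition $(h_\cavity k)^{2p}\rho\leq c$ is only $\lesssim\sqrt{c\rho}$ — large, not small, in the trapping regime. The paper avoids this by taking the bootstrap variable to be the \emph{low} norm $\|\chi_i(u-u_h)\|_{H_k^{-p+1}}$ rather than $\|\chi_i(u-u_h)\|_{H_k^1}$; the Aubin--Nitsche step on the dual problem contributes an extra factor $(h_jk)^p$, so the coupling matrix is $\mathcal{C}(\cH k)^{2p}$, and it is the loop weights of \emph{that} matrix (not a spectral radius; see Theorem~\ref{t:onlyTheSimpleOnes} and Appendix~\ref{sec:loops}) that are controlled by \eqref{e:meshConditions}. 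After the low-norm system is inverted, the $H^1_k$ bound is recovered via a G\r{a}rding-type argument (Lemmas~\ref{l:highReg}, \ref{l:PMLHighest}).

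\textbf{Choosing a region-adapted interpolant for $z_h$ does not localize the duality argument, and the needed decoupling is not available without the elliptic projection.} In your step
$\langle u-u_h,\phi\rangle = a_k(u-u_h,z-z_h)$ you then pick $z_h$ to be a ``locally-best interpolant in each region'' and split the form into regional contributions. Even granting a single globally-defined $z_h$ with region-local approximation quality, this only produces $\|u-u_h\|_{H^1_k(\Omega_{\star'})}$ on the right, and — as Section~1.3.1 of the paper emphasizes — local quasi-optimality of $u_h$ fails in general, so there is no Caccioppoli-type mechanism that converts these local Galerkin errors into local best-approximation errors. The paper breaks this by using the coercive perturbation $P^\sharp_k=P_k+S_k$, its Galerkin projection $\Pi_k^\sharp$, and the identity $\langle P_k(u-u_h),(I-\Pi_k^\sharp)R_k^*\chi_i v\rangle = \langle P^\sharp_k(u-w_h),(I-\Pi_k^\sharp)R_k^*\chi_iv\rangle - \langle S_k(u-u_h),\ldots\rangle$. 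The replacement $u-u_h\mapsto u-w_h$ in the first term is exactly the decoupling you need and it comes from the $P_k^\sharp$-orthogonality of $\Pi_k^\sharp$, not from any choice of interpolant. The second term carries the low-norm error that drives the bootstrap. Moreover, making the sum over regions genuinely decouple requires the \emph{pseudolocality} of $\Pi_k^\sharp$, $S_k$, and $(P^\sharp_k)^{-1}$, which are nontrivial facts (the discrete one, Theorem~\ref{t:pseudoLocalPi}, occupies all of Section~\ref{sec:pseudoLocPi}). You also omit the frequency splitting $\Psi,\,1-\Psi$ used to get the sharp PML and high-frequency blocks of $\transferIntro$.

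A smaller point: the $\sqrt{k\rho}$ entry of $\mathcal{C}$ is obtained in the paper by a positive-commutator/propagation estimate of Datchev--Vasy type (Lemma~\ref{l:halfAway}, the key point being $\|Au\|^2\lesssim k\|P_ku\|\|u\| + k^2\|P_ku\|^2$ and putting $u=R_kf$), not by a defect-measure separation of trapped and outgoing components; the square root does, in that sense, emerge from a product of one factor of $\|P_k u\|$ and one of $\|u\|$, which is close to the ``interpolation'' picture you dismiss.
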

\bre 
\blue{
\S\ref{s:interpret} gives an interpretation of the matrices appearing in the right-hand side of \eqref{e:simple} in terms of the propagation of errors.}
\ere

To compare with the estimate~\eqref{e:preasymptotic2} on relative error, we state the following corollary of Theorem~\ref{t:simple} which follows from standard piecewise-polynomial approximation estimates.
\begin{corollary}
\label{c:relError}
Let $k_0,N, \blue{\Upsilon}>0$, $p\in \mathbb{N}\setminus\{0\}$, $\blue{\mathrm{J}}\subset \blue{(0,\infty)}$ such that Assumption~\ref{a:polyBoundIntro} holds, 
and
let $\Oursubset{\star}$ be compactly contained in  $\Omega_{\star}$ with respect to the subspace topology of $\overline{\Omega}$, $\star\in\{ \cavity,\visible,\invisible,\pml\}$. 

There exist $c,C>0$ such that for all $k\in (k_0,\infty)\setminus \blue{\mathrm{J}}$, \blue{all $C^{p+1}$ simplicial, affine conforming meshes $\mathcal{T}$ with constant $\Upsilon$ satisfying $U(\mathcal{T},k^{-1})\leq \Upsilon$, and}
satisfy~\eqref{e:meshConditions},
the Galerkin solution, $u_h\in V_{\mathcal{T}}^p$, to~\eqref{e:PML1} exists, is unique, and satisfies, for $0\leq m\leq p$,
 \renewcommand{\jot}{1pt}
\begin{align}\label{e:relSimple}
\left(\begin{aligned}
&\|u-u_h\|_{H_k^{1-\newell}(\Omega_\cavity')}\\
&\|u-u_h\|_{H_k^{1-\newell}(\Omega_\visible')}\\
&\|u-u_h\|_{H_k^{1-\newell}(\Omega_\invisible')}\\
&\|u-u_h\|_{H_k^{1-\newell}(\Omega_\pml')}
\end{aligned}
\right)
&
\leq C
\Big[(\cH k)^{\newell}+
\transferIntro \mathcal{C}(\cH k)^p
+ k^{-N}(hk)^m \mathscr{F}
\Big](\cH k)^{p}
\left(\begin{aligned}
&\|u\|_{H_k^{p+1}(\Omega_\cavity)}\\
&\|u\|_{H_k^{p+1}(\Omega_\visible)}\\
&\|u\|_{H_k^{p+1}(\Omega_\invisible)}\\
&\|u\|_{H_k^{p+1}(\Omega_\pml)}
\end{aligned}
\right).
\end{align}
 \renewcommand{\jot}{3pt}
\end{corollary}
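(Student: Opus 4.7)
The plan is to apply Theorem~\ref{t:simple} and then choose the free test functions $w_{h,\star}$ appearing in~\eqref{e:simple} to be suitable interpolants of $u$, so that each best-approximation error on the right-hand side is bounded by the corresponding $H^{p+1}_k$-norm of $u$. Concretely, I would take a single quasi-interpolant $I_h u\in V^p_{\cT_k}$ respecting the boundary conditions imposed on $V^p_{\cT_k}$ (for instance a Scott--Zhang-type operator) and set $w_{h,\star}:=I_h u$ for every $\star\in\{\cavity,\visible,\invisible,\pml\}$.

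The key step is a local, $k$-weighted piecewise-polynomial approximation estimate. On each mesh element $T$ with patch $\omega_T$, the standard Bramble--Hilbert/scaling argument gives $\|u-I_h u\|_{L^2(T)}\leq C h_T^{p+1}|u|_{H^{p+1}(\omega_T)}$ and $\|\nabla(u-I_h u)\|_{L^2(T)}\leq C h_T^p |u|_{H^{p+1}(\omega_T)}$. Inserting the $k$-weights from~\eqref{e:weightedNorm} and using that $h_T k$ is bounded (guaranteed by the wavelength-scale quasiuniformity together with~\eqref{e:meshConditions}) yields $\|u-I_h u\|_{H^1_k(T)}\leq C(h_T k)^p \|u\|_{H^{p+1}_k(\omega_T)}$. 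Summing over the elements $T$ intersecting $\Omega_\star$ and using $h_T\leq h_\star$ for all such $T$ gives the component-wise bound $\|u-I_h u\|_{H^1_k(\Omega_\star)}\leq C(h_\star k)^p \|u\|_{H^{p+1}_k(\Omega_\star^+)}$, where $\Omega_\star^+$ is the union of those patches $\omega_T$.

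Since the matrix multiplying the best-approximation vector in~\eqref{e:simple} has non-negative entries and $\cH$ is diagonal, substituting this component-wise bound -- which is precisely $\vec b \leq C(\cH k)^p \vec u$ -- into~\eqref{e:simple} yields exactly~\eqref{e:relSimple}. The only mildly delicate point is the discrepancy between $\Omega_\star$ and $\Omega_\star^+$: the patches $\omega_T$ may extend slightly beyond $\Omega_\star$. I would handle this by applying Theorem~\ref{t:simple} to slightly enlarged neighbourhoods $\widehat{\Omega}_\star\supset\Omega_\star$ chosen so that $\Omega_\star^+\subset\widehat{\Omega}_\star$; wavelength-scale quasiuniformity with constant $\gamma_0$ ensures that any mesh element within distance $\lesssim h$ of $\Omega_\star$ has diameter $\lesssim h_\star$, so this enlargement leaves every $h_\star$ unchanged up to multiplicative constants, and the assumption $\Oursubset{\star}\Subset\Omega_\star$ leaves room for it. Beyond this bookkeeping (and standard modifications for curved elements near $\partial\Omega_+$ along the lines of~\cite{ChSp:24}), the corollary is a direct substitution; there is no genuine analytic obstacle, as Theorem~\ref{t:simple} already contains all the Helmholtz-specific information.
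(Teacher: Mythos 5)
Your overall approach --- apply Theorem~\ref{t:simple} with each $w_{h,\star}$ taken to be a quasi-interpolant $I_h u$, then invoke the local $k$-weighted approximation estimate $\|u-I_h u\|_{H^1_k(\Omega_\star)}\leq C(h_\star k)^p\|u\|_{H^{p+1}_k}$ to pass from best-approximation errors to Sobolev norms of $u$ --- is exactly what the paper intends; the paper gives no proof beyond the remark that the corollary ``follows from standard piecewise-polynomial approximation estimates.''

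The one genuine slip is the direction of the patch-handling. You propose applying Theorem~\ref{t:simple} with \emph{enlarged} sets $\widehat{\Omega}_\star\supset\Omega_\star$ so that $\Omega_\star^+\subset\widehat{\Omega}_\star$. But this does not close the loop: once you apply the theorem on $\widehat{\Omega}_\star$, the best-approximation error sits on $\widehat{\Omega}_\star$, so the interpolation estimate produces $\|u\|_{H^{p+1}_k(\widehat{\Omega}_\star^+)}$ with $\widehat{\Omega}_\star^+\supsetneq\widehat{\Omega}_\star\supsetneq\Omega_\star$, which cannot be bounded by $\|u\|_{H^{p+1}_k(\Omega_\star)}$. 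The fix is to use the room you correctly identified ($\Omega_\star'\Subset\Omega_\star$), but in the opposite direction: choose intermediate open sets $\Omega_\star^-$ with $\Omega_\star'\Subset\Omega_\star^-\Subset\Omega_\star$, apply Theorem~\ref{t:simple} with $\Omega_\star^-$ in place of $\Omega_\star$, and observe that the local meshwidths $h_{\star^-}\leq h_\star$, so the mesh condition~\eqref{e:meshConditions} and the matrices in~\eqref{e:matrices} only improve. The interpolation estimate on $\Omega_\star^-$ then reads $\|u-I_h u\|_{H^1_k(\Omega_\star^-)}\leq C(h_{\star^-}k)^p\|u\|_{H^{p+1}_k((\Omega_\star^-)^+)}$, and for $h$ below the threshold already forced by~\eqref{e:meshConditions} the patch set $(\Omega_\star^-)^+$ lies inside $\Omega_\star$. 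This yields~\eqref{e:relSimple} exactly. The rest of your argument (using the Scott--Zhang-type operator respecting the boundary conditions, the $k$-weighted scaling of the Bramble--Hilbert estimate, and the non-negativity of the matrix entries) is correct.
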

As with \eqref{e:preasymptotic2}, when the data, $f$, is $k$-oscillatory, so is the solution $u$, and in this case, $\|u\|_{H_k^{p+1}(U')}\leq C\|u\|_{H_k^1(U)}$ for $U'\Subset U$. 

\paragraph{\blue{Discussion of the novelty of Theorem \ref{t:simple}}.}

To the best of the authors' knowledge, Theorem~\ref{t:simple} and its more sophisticated analogue Theorem~\ref{t:theRealDeal} are the first results concerning $k$-dependent, non-uniform finite-element meshes in the context of the Helmholtz equation. 
For a uniform mesh ($\,h_{\cavity}=h_{\visible}=h_{\invisible}=h_{\pml}=h\,$)~\eqref{e:simple} implies the strongest previously-known bound~\eqref{e:preasymptotic1}. Indeed, for a uniform mesh with $(hk)^{2p}\rho$ sufficiently small, all the elements of the matrix $\transferIntro$ are bounded by a constant, and all the elements of $\mathscr{C}(\cH k)^p$ are bounded by $\rho(hk)^p $. 
However, Theorem~\ref{t:simple} provides much more information than~\eqref{e:preasymptotic1}:~it describes how the best approximation errors and local meshwidth in each region affect the Galerkin error in all other regions. 
Section~\ref{s:special} highlights some notable
consequences of this description, with Section~\ref{s:numerical} illustrating these numerically.

Theorem~\ref{t:simple} is most interesting when $\rho(k)\gg k$, which is equivalent to the problem being trapping, i.e., $\cavity\neq \emptyset$ (see \cite{BoBuRa:10}, \cite[Theorem 7.1]{DyZw:19}). In particular, Theorem~\ref{t:simple} shows that in the trapping case there exist meshes with $(hk)^{p}\rho\gg 1$ whose finite-element solutions have guaranteed $k$-uniform quasioptimality (see Corollary~\ref{cor:QO}). 
Similarly, one can use~\eqref{e:relSimple} to find meshes with $(hk)^{2 p}\rho \gg 1$ that nevertheless have guaranteed control on the relative error (see Corollary~\ref{cor:RE}).
Even when $\cavity=\emptyset$, Theorem \ref{t:simple} gives new information including that one needs only a fixed number of points per wavelength in the PML.


\blue{Even in the case of uniform meshes, 
the local information about the error given by 
Theorem \ref{t:simple} (in the paragraph ``Estimates for uniform meshes" in \S\ref{s:special}) is new. 
Indeed, 
under the assumption that the Galerkin solution exists, 
the study of the local FEM error is classic and goes back to Nitsche and Schatz \cite{NiSc:74}. The $k$-explicit analogue of this theory was obtained in \cite{AvGaSp:24}, where error propagation in the Helmholtz FEM solution was demonstrated numerically \cite[Figure 3]{AvGaSp:24}. 
However, the results of \cite{NiSc:74} and \cite{AvGaSp:24} require apriori knowledge of the Galerkin error. 
More precisely, \cite[Theorem 1.2]{AvGaSp:24} proves the local estimate
\begin{equation} 
\label{e:localQOModulo}
\|u-u_h\|_{H_k^1(U)}\leq C\inf _{w_h\in V_{\mathcal{T}}^p}\|u-w_h\|_{H_k^1(V)}+C_N\|u-u_h\|_{H_k^{-N}(V)}
\end{equation}
where $U$ is compactly contained in $V$. 
The estimate~\eqref{e:localQOModulo} provides local information on $u-u_h$ in $H^1_k$ 
given local information in a low-regularity norm. 
This low-regularity norm of the error contains -- but provides no understanding of -- all of the errors propagating from other parts of the domain. Theorem~\ref{t:simple} makes explicit this error propagation and provides local estimates on the Galerkin error given \emph{only} information about the true solution.
}

\bre[Improvements \blue{on Theorem \ref{t:simple}} in Theorem \ref{t:theRealDeal}]
Theorem \ref{t:theRealDeal} below is stronger than Theorem \ref{t:simple} in that it considers arbitrary covers of $\Omega$, and bounds the high ($\gg k$) and low ($\lesssim k$) frequencies of the Galerkin error separately.
Two situations in which a more complicated cover is advantageous are the following. 1) There are two or more cavities that are dynamically separated, i.e., for which there is no billiard trajectory whose closure insects both cavities. 2) One has a priori information about the data and/or solution and hence can obtain good control on the right-hand side of~\eqref{e:simple}. Even when $\cavity=\emptyset$, such information combined with Theorem \ref{t:simple} allows one to define meshes with a priori improved accuracy in some regions, without the need to choose a small meshwidth everywhere.
\ere

\paragraph{\blue{Discussion of degrees of freedom savings following from Theorem \ref{t:simple}.
}}

\blue{
To extract from Theorem \ref{t:simple} the conditions on $h_\cavity, h_\visible, h_\invisible,$ and $h_\pml$ sufficient to obtain, e.g., $k$-uniform quasi-optimality, requires a simple, but tedious, analysis of the matrix $\transferIntro\mathcal{C}$. The results of this analysis are given in the next subsection, \S\ref{s:special},
and these special cases of Theorem \ref{t:simple} are summarised in Table \ref{tab:regimes}.
}

\tabulinesep = 1.2mm
\begin{table}[htbp]
\small
\hspace{-.6cm}
	$\rotatebox[origin=c]{90}{fewer DoFs \qquad\quad more DoFs}%
	\left\updownarrow
	\begin{tabu}{|M{5.65cm}|M{2.85cm}|M{3.9cm}|M{1.4cm}|}
		\hline
		{\bf Mesh threshold} & {\bf Asymptotic DoFs} & {\bf Theoretical guarantee} & {\bf Name}\\
		\hline
		$(h_\cavity k)^p \rho + (h_\visible k)^p \rho + (h_\invisible k)^p \rho =c$ & $\textup{vol}(\Omega) k^d \rho^{\frac{d}{p}}$ & $k$-QO & U1 \\
		\hline
		$(h_\cavity k)^p \rho + (h_\visible k)^p \sqrt{k\rho} + (h_\invisible k)^p k=c$ & $\textup{vol}(\Omega_\cavity) k^d \rho^{\frac{d}{p}}$ & $k$-QO & QO\\
		\hline
		$(h_\cavity k)^p \sqrt{k\rho} + (h_\visible k)^p k+ (h_\invisible k)^p k =c$ & $\textup{vol}(\Omega_\cavity) k^{d+\frac{1}{2p}}\rho^{\frac{d}{2p}}$ & $k$-QO away from trapping & QO away\\
		\hline
		$(h_\cavity k)^{2p} \rho + (h_\visible k)^{2p} \rho + (h_\invisible k)^{2p} \rho =c$ & $\textup{vol}(\Omega) k^d \rho^{\frac{d}{2p}}$ & CRE & U2\\
		\hline
		$(h_\cavity k)^{2p} \rho + (h_\visible k)^{2p} \sqrt{k \rho} + (h_\invisible k)^{2p} k = c$ & $\textup{vol}(\Omega_\cavity) k^d \rho^{\frac{d}{2p}}$ & CRE&RE\\
		\hline
		$(h_\cavity k)^{2p} \rho + (h_\visible k)^p k + (h_\invisible k)^p k = c$ & $\textup{vol}(\Omega_\cavity) k^d \rho^{\frac{d}{2p}}$ & CRE away from trapping & RE away\\
		\hline
	\end{tabu}
	\right.%
	\rotatebox[origin=c]{90}{}$
	\normalsize
	\caption{Summary of the special cases of Theorem \ref{t:simple} discussed in this section, with $\cavity\neq \emptyset$. Note that in all cases we require $h_Pk= c$ which does not contribute to the asymptotic number of degrees of freedom (DoFs).  Here, $k$-QO stands for $k$-uniform quasioptimality, and CRE stands for controllably-small relative error. \blue{$k$-QO away from trapping is defined in~\eqref{eq:defQOaway} and CRE away from trapping is defined in~\eqref{eq:defREaway}. }}
	\label{tab:regimes}
\end{table}

\blue{
Theorem \ref{t:simple}, and hence all its special cases in Table \ref{tab:regimes}, only require $h_\Pml k=c$, i.e., a fixed number of points per wavelength in the PML region. Even in non-trapping situations, where $\cavity=\visible=\emptyset$, this gives a substantial savings in number of degrees of freedom. 
Specifically, since the accuracy of PML truncation increases exponentially with the width of the PML (see e.g. \cite{GLS2} and the references therein),
relatively large PML widths, $w$, are desirable. Since the volume of the PML region grows like $w^d$, 
Theorem \ref{t:simple} implies that  only 
$\sim k^{-d}w^{d}$ degrees of freedom are required in the PML instead of $\sim k^{-d}\rho^{d/2p}w^d$. In particular, for the number of degrees of freedom in the PML region to be comparable to the number of degrees of freedom in the rest of domain $w\sim \rho^{1/2p}$; i.e., 
thanks to Theorem \ref{t:simple} 
taking a large PML width is now inexpensive (even for nontrapping problems).


From the mesh conditions in both  \eqref{e:meshConditions} and Table \ref{tab:regimes}, we see that the degrees of freedom savings in the non-PML regions depend crucially on the volume of $\Omega_\cavity$ and hence of $\cavity$ relative to $\Omega$.
We now give some simple examples showing how this relative size is often small. 
First, as noted above, $\cavity$ is contained in the convex hull of the scatterer. 
However, in many cases $\cavity$ is contained in smaller sets. Indeed, when $A=I$ and $n=1$, then 
\begin{equation} \label{e:inclusion1}
\cavity\subset 
\Omega_+\cap 
\{ tx+(1-t)y\,:\, x,y\in \partial\Omega_-,\, t\in[0,1]\}.
\end{equation}
For example, if $\Omega_i$, $i=1,\dots, m$ are the connected components of $\Omega_-$, then 
\begin{equation}\label{e:pairs}
\cavity\subset \Omega_+\cap\bigcup_{i,j}\text{convex hull}(\Omega_i\cup \Omega_j)
\end{equation}
(See Figure~\ref{f:pairs} for an illustration of the set given in~\eqref{e:pairs} in an example)
The containment~\eqref{e:inclusion1} is often still a dramatic overestimate of the size $\cavity$. It does however show that if $\Omega_-$ is a union of many obstacles that are small relative to $\Omega$, then $\cavity$ will be small.
}

 \begin{figure}
    \centering
    \begin{tikzpicture}[
    obstacle/.style={fill=black!75, draw=black, line width=0.5pt},
    hullfill/.style={fill=pink, fill opacity=0.28, draw=none},
    hullline/.style={draw=pink!80!black, draw opacity=0.45, line width=0.4pt}
]

\def\r{0.15}

\coordinate (O1) at (-3.6,  2.2);
\coordinate (O2) at (-1.7,  2.9);
\coordinate (O3) at ( 3.5,  3.0);
\coordinate (O4) at (-4, -2);
\coordinate (O5) at ( 5, -1.0);

\newcommand{\shadepair}[3]{%
    \path[hullfill, even odd rule]
    let
        \p1 = ($ (#2)-(#1) $),
        \n1 = {atan2(\y1,\x1)}
    in
        ($ (#1)+(\n1+90:#3) $)
        --
        ($ (#2)+(\n1+90:#3) $)
        arc[start angle=\n1+90, end angle=\n1-90, radius=#3]
        --
        ($ (#1)+(\n1-90:#3) $)
        arc[start angle=\n1-90, end angle=\n1+90, radius=#3]
        -- cycle
        (#1) circle[radius=#3]
        (#2) circle[radius=#3];

    \draw[hullline]
    let
        \p1 = ($ (#2)-(#1) $),
        \n1 = {atan2(\y1,\x1)}
    in
        ($ (#1)+(\n1+90:#3) $)
        --
        ($ (#2)+(\n1+90:#3) $)
        arc[start angle=\n1+90, end angle=\n1-90, radius=#3]
        --
        ($ (#1)+(\n1-90:#3) $)
        arc[start angle=\n1-90, end angle=\n1+90, radius=#3]
        -- cycle;
}

\foreach \A in {O1,O2,O3,O4,O5}
\foreach \B in {O1,O2,O3,O4,O5}
{
{
    \shadepair{\A}{\B}{\r}
}}

\foreach \X in {O1,O2,O3,O4,O5}{
    \path[obstacle] (\X) circle[radius=\r];
}

\end{tikzpicture}
\caption{\label{f:pairs}
\blue{For the obstacle $\Omega_-$ given by the five black balls, $\cavity$ is contained in the pink region displayed above. If one imagines these balls in three dimensions, then the pink region occupies a much smaller relative portion of the volume than if the balls occupy two dimensions.}}
 \end{figure}

 \begin{figure}
     \centering
     \begin{tikzpicture}
     \begin{scope}{scale=2}
     \clip (-4,-4) rectangle(4.5,4.5);
     \node at (0,0){\includegraphics[width=\linewidth]{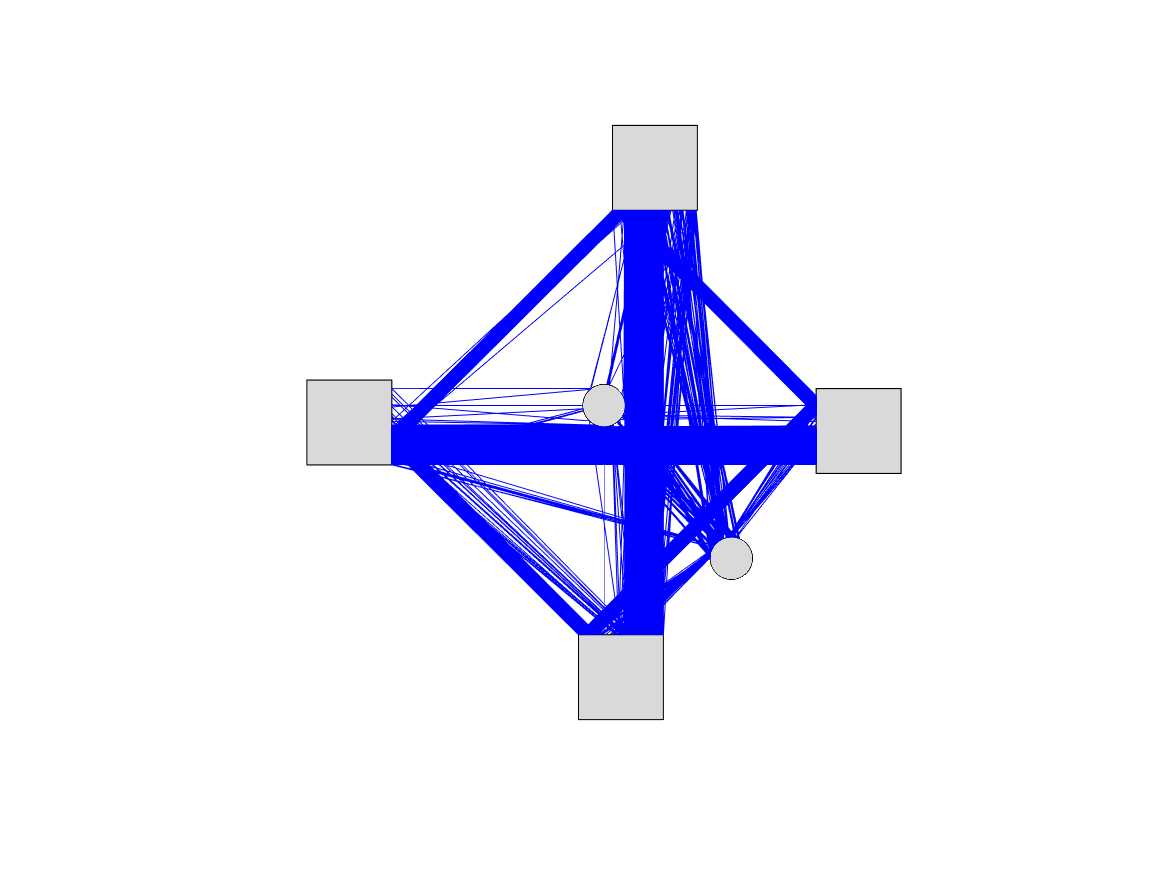}};
\end{scope}
     \end{tikzpicture}
     \caption{\blue{\label{f:trapped}For $\Omega_-$ given by the four rectangles and two circles shaded in grey, the figure shows a subset $\widetilde{\cavity}$ computed by ray tracing that contains $\cavity$. Observe that $\tilde{\cavity}$ has a volume that is significantly smaller than that of the region given in~\eqref{e:inclusion1}.}}
 \end{figure}

\blue{ 
Figure \ref{f:trapped} plots a set $\widetilde{\cavity}$ containing $\cavity$, obtained by ray tracing and demonstrates how $\cavity$ may be much smaller than the set given in~\eqref{e:pairs} as well as its complicated nature.

Figures \ref{f:meshesQO} 
and \ref{f:meshesRE} give examples of non-uniform meshes for the scatterer used in the numerical experiments in \S\ref{s:numerical} (the precise geometry is described in \S\ref{s:geometries}; we note here that the computational domain is a ball of radius $2.7$).
These meshes are in the QO and RE regimes in Table \ref{tab:regimes} with $p=2$; i.e., they give $k$-uniform quasi-optimality and $k$-uniform controlled relative error, respectively. 
The latter figure gives three different meshes, for increasing values of $k$, showing how the mesh becomes more non-uniform as $k$ increases. 

Finally, for the set up in 
Figures \ref{f:meshesQO} 
and \ref{f:meshesRE}, 
Table \ref{tab:DoFs} gives the number of degrees of freedom for the regimes U1, QO, U2, RE of Table \ref{tab:regimes} for different values of $k$. At the largest value of $k$ in the table ($k=138.84$), the number of degrees of freedom to achieve $k$-uniform quasioptimality is 
3.5 times smaller than that for a uniform mesh, and for controlled relative error the ratio is approximately 2.
For this geometric set up (with a relatively large cavity), the cavity occupies approximately one tenth of the computational domain, i.e., ${\rm vol}(\Omega_\cavity)/{\rm vol}(\Omega)\approx 1/10$, and so, in the limit $k\to\infty$, one expects the non-uniform meshes to have 10 times fewer degrees of freedom than the corresponding uniform meshes.
}

\begin{figure}
    \centering
        \begin{tikzpicture}
\clip (-5.5,-3.3) rectangle (2.5,3.5);
\node at (0,.125){\includegraphics[width=1.15\textwidth]{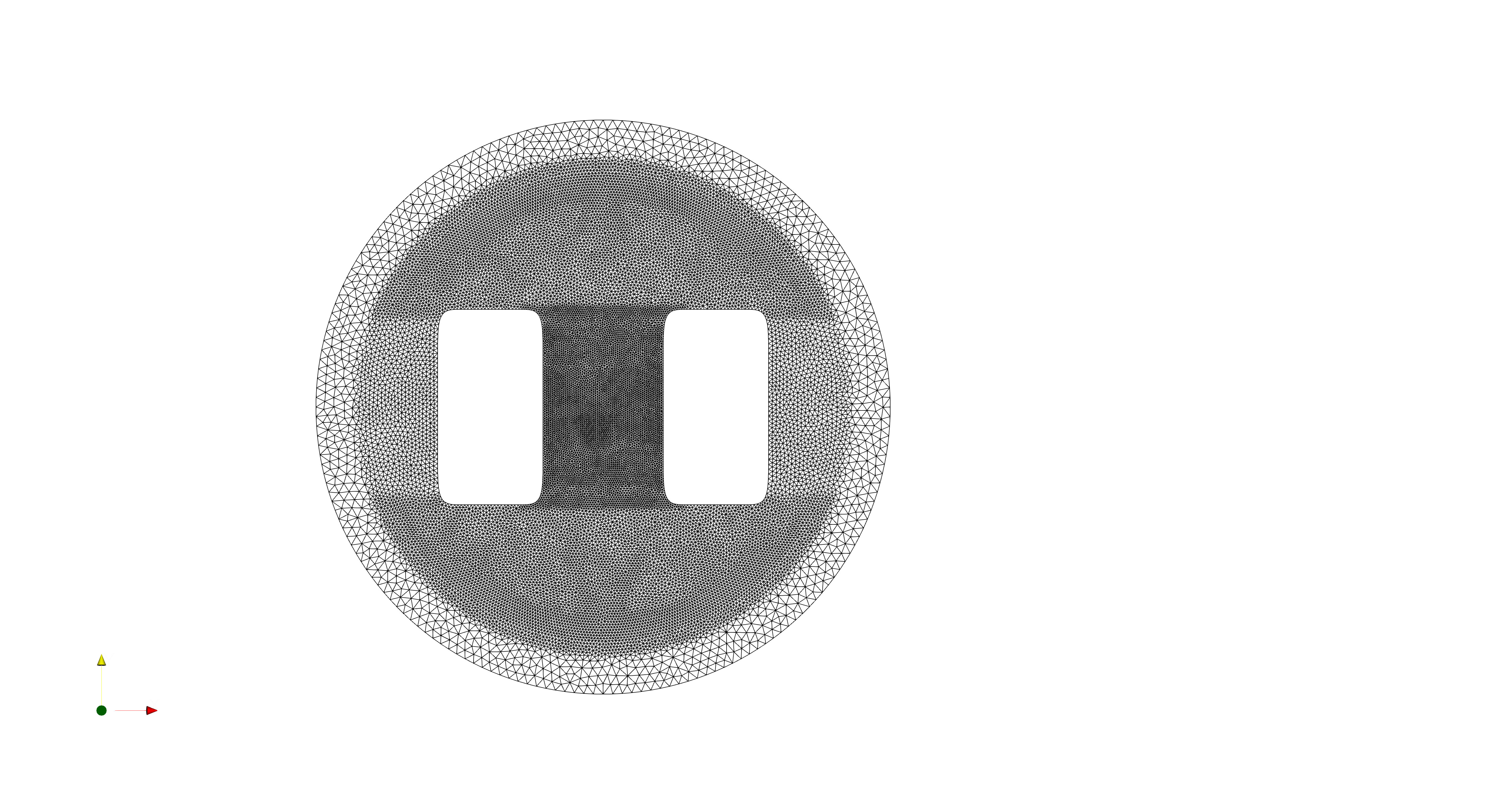}};
\end{tikzpicture}
       \caption{\blue{A mesh in the QO regime in Table \ref{tab:regimes} at  $k=55.5365$, with $p = 2$; i.e., satisfying the mesh thresholds guaranteeing $k$-uniform quasioptimality.}}
    \label{f:meshesQO}
\end{figure}

\begin{figure}
    \centering
    \begin{tikzpicture}
    \fill[white] (-9.5,-10.3) rectangle(7,4.5);  
    \node at(0,0){\begin{tikzpicture}[remember picture, overlay]
    \begin{scope}[xshift=-1.125cm]
    \begin{scope}[xshift=-5cm]
\clip (-4.5,-3.3) rectangle (3.5,3.5);
\node at (0,0){\includegraphics[width=.5\textwidth]{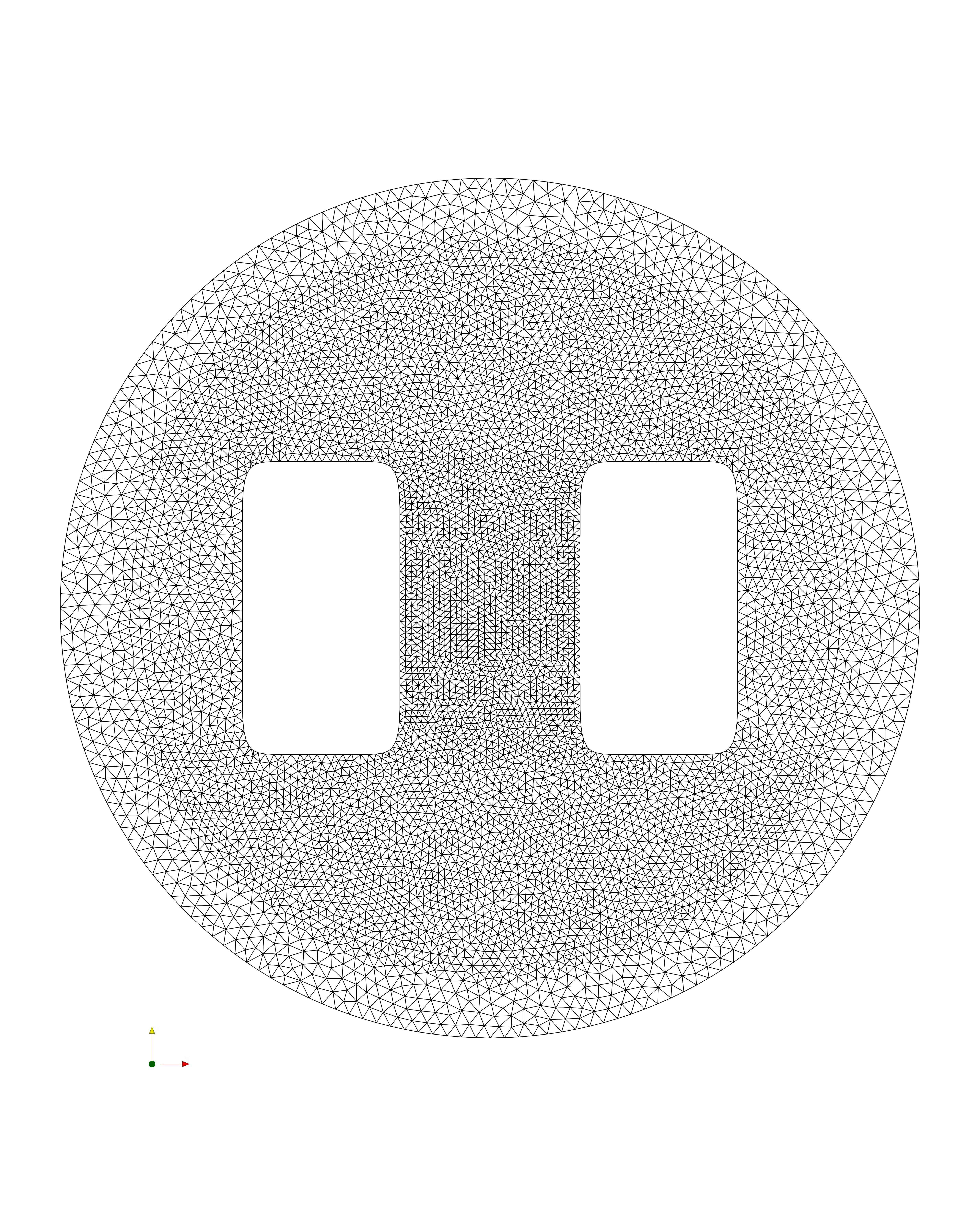}};
\end{scope}
    \node at (-5,3.75){$k=55.536$};
    \begin{scope}[xshift=5cm,yshift=-.16cm]
\clip  (-5.5,-3.3) rectangle (2,3.5);
\node at (0,0){\includegraphics[width=.95\textwidth]{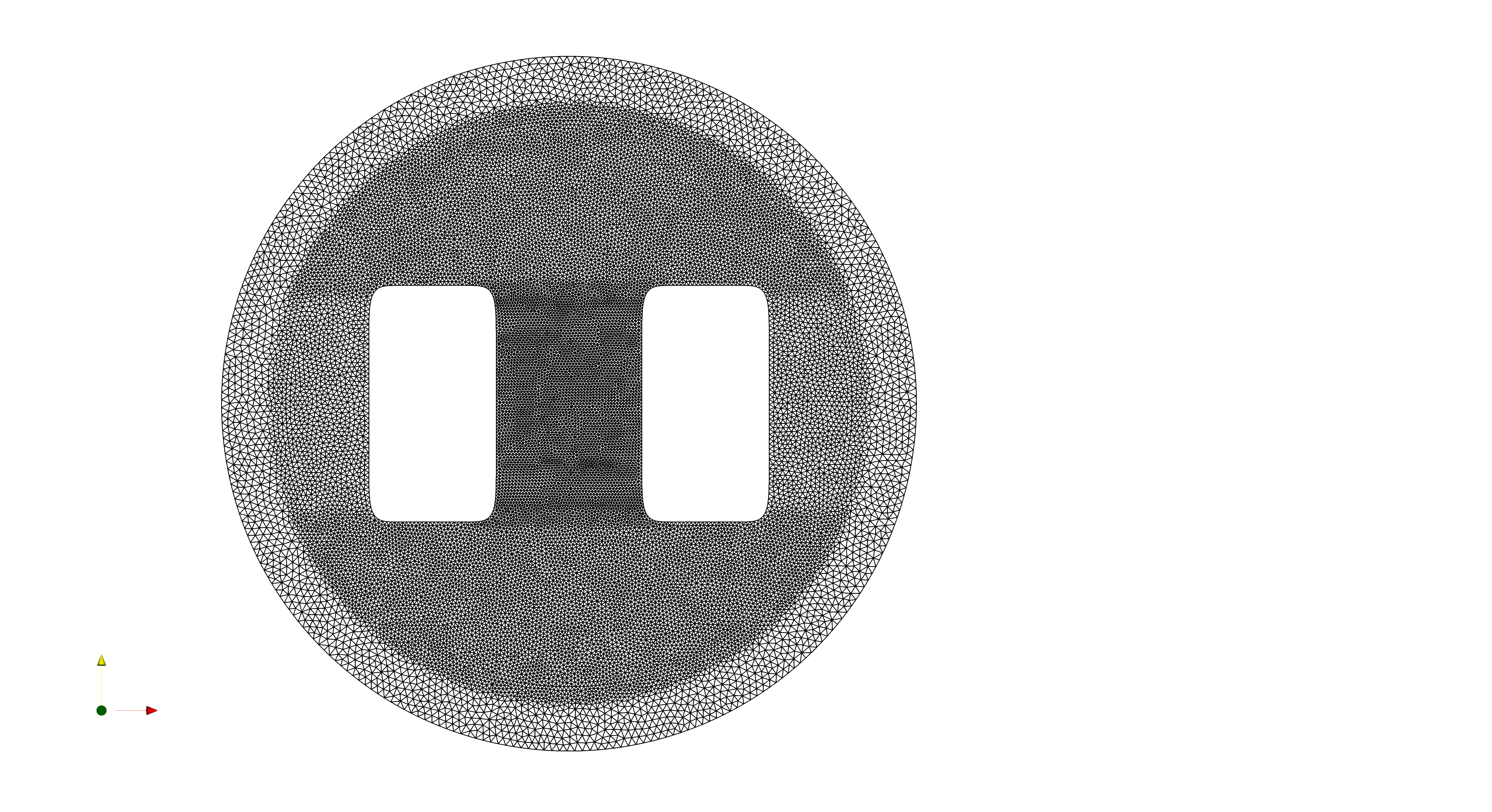}};
\end{scope}
\node at (3.3,3.75){$k=83.3041$};
    \begin{scope}[xshift=-.925cm,yshift=-7cm]
\clip  (-3.5,-3.4) rectangle (3.5,3.6);
\node at (0,0){\includegraphics[width=1.15\textwidth]{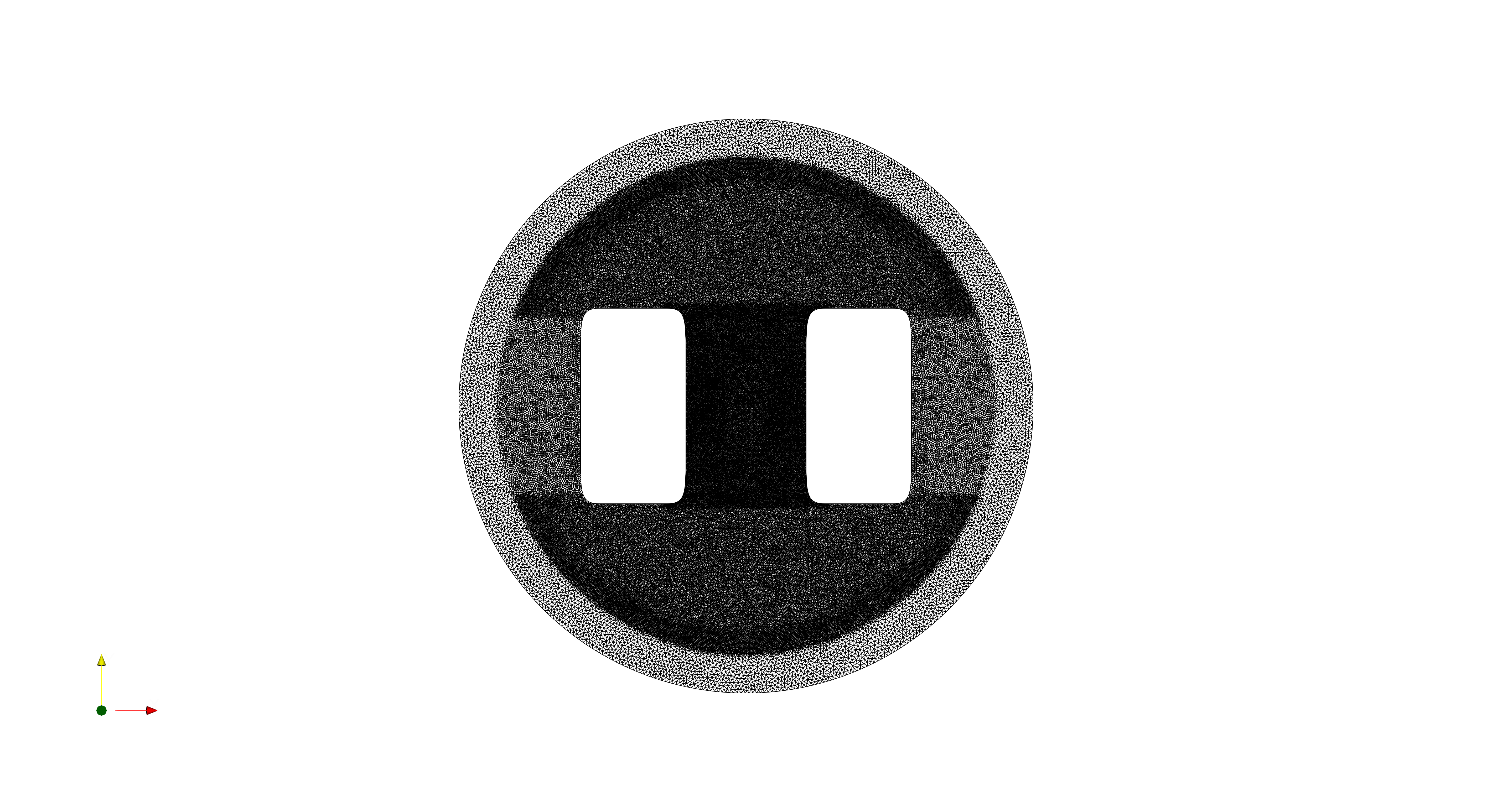}};
\end{scope}
\node at (-.9,-3.1){$k=138.84$};
\end{scope}
\end{tikzpicture}};
    \end{tikzpicture}

    \caption{\blue{Meshes in the RE regime of Table~\ref{tab:regimes} with polynomial order $p = 2$, with, ordered clockwise starting at the top left, wavenumbers $k = 55.536$, $k=83.3041$ and $k=138.84$; i.e., satisfying the mesh thresholds guaranteeing $k$-uniformly controlled relative error. The corresponding number of degrees of freedom are given in Table~\ref{tab:DoFs}.}}
    \label{f:meshesRE}
\end{figure}
\begin{table}
    \centering
    \begin{tabular}{|c|r|r|r|}
\hline
      \# DoFs   &$k=55.5365$&$k=83.3041$&$k=138.84$  \\
         \hline
       U1  & 695603&3691907&28091665\\
       \hline
       QO& 310121&1177749&7744981\\
       \hline
       U2&178541&662163&2658691\\
       \hline
       RE&106511&307573&1270585\\
       \hline
    \end{tabular}
    \caption{\label{tab:DoFs}\blue{The table shows the number of degrees of freedom used to in the regimes, U1, QO, U2, and RE of Table~\ref{tab:regimes} at three different values of $k$. The corresponding meshes for the QO and RE regimes are pictured in Figures~\ref{f:meshesQO} and~\ref{f:meshesRE} respectively. Recall that U1 is the uniform mesh guaranteeing $k$-uniform quasioptimality and the QO regime is the coarsest mesh guaranteed by Theorem~\ref{t:simple} to give $k$-uniform quasioptimality. Similarly, U2 is the uniform mesh guaranteeing $k$-uniform control on the relative error and RE is the regime guaranteed by Theorem~\ref{t:simple} to give $k$-uniformly controlled relative error.}}
\end{table}

\subsection{Special cases of Theorem \ref{t:simple}}\label{s:special}

We now apply Theorem \ref{t:simple} in several special cases, and derive consequences regarding quasi-optimality and bounded relative errors. 
For simplicity, we state these results for $\newell=0$, i.e., we bound the $H^1_k$ norm of the Galerkin error.

Define
$$
\mathscr{M}:= I+\transferIntro
\mathcal{C}(\cH k)^p ,\qquad\qquad \mathscr{M}_{\rm RE}:=\mathscr{M}(\cH k)^p,
$$
and set $\mathscr{M}_\Omega:= \mathscr{M}\begin{pmatrix}1&\dots&1\end{pmatrix}^T$, $\mathscr{M}_{\rm RE,\Omega}:=\mathscr{M}_{\rm RE}\begin{pmatrix}1&\dots&1\end{pmatrix}^T$.
With these definitions, the terms in square brackets on the right-hand sides of \eqref{e:simple} and \eqref{e:relSimple} become, respectively, 
$[
\mathscr{M}
+ k^{-N} \mathscr{F}
]
$ and 
$[
\mathscr{M}_{\rm RE}
+ k^{-N} \mathscr{F}(\cH k)^{p}]$
and, in particular, imply that 
\beqs
\left(\begin{aligned}
&\|u-u_h\|_{H_k^{1}(\Omega_\cavity')}\\
&\|u-u_h\|_{H_k^{1}(\Omega_\visible')}\\
&\|u-u_h\|_{H_k^{1}(\Omega_\invisible')}\\
&\|u-u_h\|_{H_k^{1}(\Omega_\pml')}
\end{aligned}
\right)
C\leq 
\begin{cases}
\Big[ \mathscr{M}_\Omega + k^{-N}
\begin{pmatrix}1 & \ldots &1
\end{pmatrix}^T\Big]\displaystyle\inf_{w_h\in V_{\mc{T}_k}^p}\|u-w_h\|_{H_k^1(\Omega)}
\\
\\
\Big[
\mathscr{M}_{\rm RE, \Omega}
+ k^{-N}(hk)^p 
\begin{pmatrix}1& \ldots &1
\end{pmatrix}^T\Big]
\N{u}_{H^{p+1}_k(\Omega)}.
\end{cases}
\eeqs

\subsubsection*{Bounds for the coarsest meshes allowed by Theorem \ref{t:simple}.}

\begin{corollary}[Bound on the quasi-optimality constant]
	\label{cor:QOCoarse}
	Under the same assumptions as Thoerem~\ref{t:simple},
	\begin{equation*}
		 \mathscr{M}\leq C\begin{pmatrix}
			\sqrt{\rho} & \sqrt{\rho} & \sqrt{\rho}&0 \\
			\sqrt{k} & \sqrt{k} & \sqrt{k}&0\\
			\sqrt{k} & \sqrt{k} & \sqrt{k}&0\\
0&0&0&1
		\end{pmatrix},\qquad \mathscr{M}_{\Omega}\leq 
C\begin{pmatrix}
			\sqrt{\rho} \\ \sqrt{k} \\ \sqrt{k}\\1
		\end{pmatrix} .
	\end{equation*}
	\end{corollary}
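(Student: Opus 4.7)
The plan is to prove this as a direct matrix computation: convert the scalar bound in \eqref{e:meshConditions} into entry-wise bounds on the diagonal matrix $(\cH k)^p$, propagate these through $\transferIntro$ and $\mathcal{C}$, and multiply out.

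First I would extract the elementary consequences of \eqref{e:meshConditions}: since each non-negative summand is bounded by the constant $c$, we get $(h_\cavity k)^p \leq C\rho^{-1/2}$, $(h_\visible k)^p \leq Ck^{-1/2}$, $(h_\invisible k)^p \leq Ck^{-1/2}$, and $(h_\pml k)^p \leq C$. I will also record the standing lower bound $\rho(k) \geq ck$ recalled just before Assumption~\ref{a:polyBoundIntro}, which ensures $\sqrt{k/\rho} \leq C$, so several cross terms below collapse to constants.

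Next I would compute $\mathcal{C}(\cH k)^p$ column by column, using that $(\cH k)^p$ is diagonal. Scaling the $j$-th column of $\mathcal{C}$ by the $j$-th diagonal entry produces the bound
\begin{equation*}
\mathcal{C}(\cH k)^p \leq C \begin{pmatrix} \sqrt{\rho} & \sqrt{\rho} & 0 & 0 \\ \sqrt{k} & \sqrt{k} & \sqrt{k} & 0 \\ 0 & \sqrt{k} & \sqrt{k} & 0 \\ 0 & 0 & 0 & 1 \end{pmatrix}.
\end{equation*}
In parallel, I would bound $\transferIntro$ entry-wise using the same mesh bounds: e.g.\ $(h_\visible k)^{2p}\sqrt{k\rho} \leq C\sqrt{\rho/k}$, $(h_\cavity k)^{2p}\sqrt{k\rho} \leq C\sqrt{k/\rho} \leq C$, $(h_\visible k)^{2p}k \leq C$, $(h_\invisible k)^{2p}k \leq C$, and the product entries such as $(h_\visible k)^{2p}\sqrt{k\rho}(h_\invisible k)^{2p}k \leq C\sqrt{\rho/k}$. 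This produces $\transferIntro \leq C\,T$ where $T$ has entries in $\{1,\sqrt{\rho/k},\sqrt{k/\rho}\}$ in the obvious pattern, with the last row/column the standard basis vector in the PML slot.

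The main (and only) work is the matrix product $T \cdot \mathcal{C}(\cH k)^p$, which I would compute entry-wise. For instance, the $(1,1)$ entry gives $1\cdot\sqrt{\rho} + \sqrt{\rho/k}\cdot\sqrt{k} + \sqrt{\rho/k}\cdot 0 = 2\sqrt{\rho}$; the $(2,1)$ entry gives $\sqrt{k/\rho}\cdot\sqrt{\rho} + 1\cdot\sqrt{k} + 1\cdot 0 = 2\sqrt{k}$, and similarly for the remaining entries. All cross terms are controlled thanks to $\sqrt{k/\rho}\leq C$. Adding the identity $I$ only affects the diagonal, where the new contribution $1$ is absorbed into $\sqrt{\rho}$, $\sqrt{k}$, $\sqrt{k}$, and $1$ respectively (using $\rho\geq ck \geq ck_0$). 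This yields the claimed bound on $\mathscr{M}$, and the bound on $\mathscr{M}_\Omega$ follows by summing the columns.

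There is no real obstacle beyond careful bookkeeping of the mesh inequalities; the computation is entirely mechanical once one observes that \eqref{e:meshConditions} is tuned precisely so that each entry of $\transferIntro\,\mathcal{C}(\cH k)^p$ reduces to a constant multiple of the corresponding entry of $\mathcal{C}$ with the $\rho/k$-growth canceled.
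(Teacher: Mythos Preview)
Your proposal is correct and is exactly the intended approach: the paper states Corollary~\ref{cor:QOCoarse} without proof as a direct consequence of Theorem~\ref{t:simple}, so the entry-wise matrix computation you outline (bounding $(\cH k)^p$ via \eqref{e:meshConditions}, then multiplying out $\transferIntro\,\mathcal{C}(\cH k)^p$ and absorbing the identity using $\rho\geq ck$) is precisely what is expected. Your bookkeeping of the individual entries checks out.
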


\begin{corollary}[Bound on the relative error]
	\label{cor:REcoarse}
	Under the same assumptions as in Theorem \ref{t:simple}, for all $\varepsilon \leq c$, if $(\mathcal{T}_k)_{k>0}$ satisfies
	$$
	(h_\cavity k)^{2p}\rho(k)+(h_\visible k)^{2p}k+(h_\invisible k)^{2p}k+(h_\pml k)^{2p}\leq \varepsilon,
	$$ 
	then
	\begin{equation*}
			 \mathscr{M}_{\rm RE}\leq C\sqrt{\varepsilon}
			\begin{pmatrix}
				1 & \sqrt{\frac{\rho}{k}} & \sqrt{\frac{\rho}{k}}&0\\
				\sqrt{\frac{k}{\rho}} & 1 & 1&0\\
				\sqrt{\frac{k}{\rho}} & 1 & 1&0\\
				0&0&0&1
			\end{pmatrix},\qquad
		\mathscr{M}_{\rm RE,\Omega}\leq C\sqrt{\varepsilon} \begin{pmatrix}
				\sqrt{\rho/k} \\ 1 \\ 1\\1
			\end{pmatrix} .
	\end{equation*}
\end{corollary}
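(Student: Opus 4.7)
The plan is to derive the corollary directly from Corollary~\ref{cor:QOCoarse} and the identity $\mathscr{M}_{\rm RE}=\mathscr{M}(\cH k)^p$. Since $\varepsilon\leq c$ the hypothesis of Theorem~\ref{t:simple} (and hence of Corollary~\ref{cor:QOCoarse}) is satisfied, so the entry-wise bound on $\mathscr{M}$ from Corollary~\ref{cor:QOCoarse} is available. All entries of that bound are nonnegative, so right-multiplication by the nonnegative diagonal matrix $(\cH k)^p=\operatorname{diag}((h_\cavity k)^p,(h_\visible k)^p,(h_\invisible k)^p,(h_\pml k)^p)$ preserves the entry-wise inequality, and the only task is to read off the column scalings.

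From the strengthened hypothesis I would extract the four individual bounds $(h_\cavity k)^p\leq \sqrt{\varepsilon/\rho}$, $(h_\visible k)^p\leq \sqrt{\varepsilon/k}$, $(h_\invisible k)^p\leq \sqrt{\varepsilon/k}$, and $(h_\pml k)^p\leq \sqrt{\varepsilon}$. Scaling column~1 of the $\mathscr{M}$-bound by $\sqrt{\varepsilon/\rho}$ produces $\sqrt{\rho}\cdot\sqrt{\varepsilon/\rho}=\sqrt{\varepsilon}$ in row~1 and $\sqrt{k}\cdot\sqrt{\varepsilon/\rho}=\sqrt{\varepsilon}\sqrt{k/\rho}$ in rows~2 and~3; scaling columns~2 and~3 by $\sqrt{\varepsilon/k}$ gives $\sqrt{\varepsilon}\sqrt{\rho/k}$ in row~1 and $\sqrt{\varepsilon}$ in rows~2 and~3; column~4 contributes $\sqrt{\varepsilon}$ in the bottom-right entry and nothing elsewhere. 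This is precisely the claimed $\mathscr{M}_{\rm RE}$ bound.

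For the vector estimate $\mathscr{M}_{\rm RE,\Omega}=\mathscr{M}_{\rm RE}(1,1,1,1)^T$ I would simply sum the rows of the matrix bound just obtained, using $\rho\geq ck$ (so $\sqrt{k/\rho}$ and the constant~$1$ in rows~2 and~3 are both absorbed into a constant, while in row~1 the constant entry is dominated by $\sqrt{\rho/k}$). This yields row~1 bounded by $C\sqrt{\varepsilon}\sqrt{\rho/k}$ and rows~2,~3,~4 bounded by $C\sqrt{\varepsilon}$, as claimed. The whole argument is routine matrix bookkeeping given Corollary~\ref{cor:QOCoarse}; the only point requiring attention is keeping track of the exponents of $k$ and $\rho$ across the four columns, and there is no analytic obstacle beyond what has already been handled in proving Theorem~\ref{t:simple}.
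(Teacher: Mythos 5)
Your proof is correct and takes the approach the authors clearly intend: since $\mathscr{M}_{\rm RE}=\mathscr{M}(\cH k)^p$ by definition, you bound $\mathscr{M}$ via Corollary~\ref{cor:QOCoarse}, extract the column-wise bounds $(h_\cavity k)^p\leq\sqrt{\varepsilon/\rho}$, $(h_\visible k)^p,(h_\invisible k)^p\leq\sqrt{\varepsilon/k}$, $(h_\pml k)^p\leq\sqrt{\varepsilon}$ from the strengthened hypothesis, and post-multiply; right-multiplication by a nonnegative diagonal matrix preserves entrywise inequalities between nonnegative matrices, so this is legitimate. Your column-scaling computations reproduce the stated matrix exactly, and your row-summing (absorbing $\sqrt{k/\rho}\lesssim 1$ and $1\lesssim\sqrt{\rho/k}$ using $\rho\geq ck$) gives the claimed vector bound for $\mathscr{M}_{\rm RE,\Omega}$. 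The paper states this corollary without a written proof, but your derivation is exactly the routine bookkeeping the authors rely on.
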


\paragraph{Estimates for uniform meshes.}

For quasi-uniform meshes $h_\cavity  = h_\visible = h_\invisible =h_\pml=: h$, recall from \S\ref{sec:1.1} that the known mesh conditions for ensuring 
$k$-uniform quasioptimality or a controllably-small relative error 
of the Galerkin solution are, respectively 
\[
 (hk)^{p} \rho(k) < c,\, \qquad\,(hk)^{2p} \rho(k) < c,
\]
for $c > 0$ sufficiently small, with the former regime known as the \emph{asymptotic regime}. 
Here we show that these thresholds also ensure better error estimates for the Galerkin error away from trapping. 

\begin{corollary}[Asymptotic estimates]
	\label{cor:U1}
	Under the same assumptions as Theorem~\ref{t:simple}, if $\cT_k$ satisfies $h_\cavity  = h_\visible = h_\invisible = h_\pml= h$, with
		$(h k)^{p} \rho < c,$
	then
	\begin{equation*}
		\mathscr{M}\leq C
		\begin{pmatrix}
			1 & \sqrt{\frac{k}{\rho}} & 
			(\frac{k}{\rho})^{\frac{3}{2}}\frac{1}{\rho}& 0
			\\
			\sqrt{\frac{k}{\rho}} & 1 & \frac{k}{\rho} &0\\
			 (\frac{k}{\rho})^{\frac{3}{2}}\frac{1}{\rho}
			   &  \frac{k}{\rho}  & 1 &0\\
			   0&0&0&1
		\end{pmatrix}.
	\end{equation*}
\end{corollary}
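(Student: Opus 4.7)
The corollary follows from Theorem~\ref{t:simple} by a direct matrix computation. With the uniform choice $h_\cavity=h_\visible=h_\invisible=h_\pml=h$, the diagonal matrix $\cH k$ becomes the scalar $hk$, so
\[
\mathscr{M} \;=\; I + (hk)^p\,\transferIntro\,\mathcal{C},
\]
and the task reduces to evaluating the product $\transferIntro\mathcal{C}$ entry by entry and then applying the hypothesis $(hk)^p\rho < c$ together with the a priori lower bound $\rho \geq C k$.

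First, I would rewrite the hypothesis in the form $(hk)^p \leq c/\rho$, so that $(hk)^{jp}\leq c^j/\rho^j$ for every integer $j\geq 1$. Combined with $\rho\gtrsim k$, this yields a hierarchy of smallness estimates such as
\[
(hk)^{2p}k\;\leq\; \frac{c^2 k}{\rho^2}\;\lesssim\;\frac{1}{\rho},\qquad (hk)^{2p}\sqrt{k\rho}\;\lesssim\;\frac{1}{\sqrt{k\rho}},
\]
which say that all off-diagonal entries of $\transferIntro$ are $o(1)$ in a controlled manner. The point is that $\transferIntro$ is very close to the identity under the asymptotic threshold, so $\transferIntro\mathcal{C}$ is essentially a small correction of $\mathcal{C}$ itself.

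Next, I would compute the twelve nontrivial entries of $(hk)^p \transferIntro\mathcal{C}$ one at a time. For the diagonal $(1,1)$ entry one finds $(hk)^p \rho \bigl(1+(hk)^{2p}k\bigr)\leq c(1+c)$, which is bounded, matching the constant $1$ in the claimed majorant. For a typical off-diagonal block one finds, e.g.,
\[
\mathscr{M}_{12}\;\leq\; (hk)^p\sqrt{k\rho}\bigl(1+(hk)^{2p}k+(hk)^{4p}k^2\bigr)\;\lesssim\;\sqrt{k/\rho},
\]
and similarly $\mathscr{M}_{13}\lesssim (hk)^{3p}\sqrt{k\rho}\,k \lesssim k^{3/2}/\rho^{5/2}=(k/\rho)^{3/2}/\rho$, which is precisely the $(1,3)$ entry in the target matrix. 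The entries touching the PML row/column are trivial because the fourth row and column of $\transferIntro$ decouple: $\mathscr{M}_{i4}=\mathscr{M}_{4i}=0$ for $i\in\{1,2,3\}$ and $\mathscr{M}_{44}=1+(hk)^p\leq 1+c$. All remaining entries are estimated by the same mechanism: each $(hk)^{jp}$ factor absorbs a corresponding power of $\rho^{-1}$, and any leftover factors of $k$ are dominated using $k\leq \rho/C$.

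The only real bookkeeping challenge is tracking which of the two summands (the $(hk)^{2p}\sqrt{k\rho}$ coupling between cavity and visible regions, or the $(hk)^{2p}k$ coupling between visible and invisible regions) dominates in each entry, and verifying that the dominant summand yields the exponent of $k/\rho$ claimed in the target matrix. Once this elementary calculation is organised row by row, the bound in the corollary follows immediately. No deep tool is required beyond the definitions of $\transferIntro$, $\mathcal{C}$, and $\cH$ from~\eqref{e:matrices} and the inequality $\rho\geq C k$.
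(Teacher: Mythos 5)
Your proof is correct. The paper does not give a proof of Corollary~\ref{cor:U1} (it is stated, like the other special cases in \S1.2, as an immediate consequence of Theorem~\ref{t:simple}), so there is no alternative argument to compare against; what you have written is precisely the direct entrywise computation that the paper implicitly relies on. The key reductions you use — that $(\cH k)^p$ becomes the scalar $(hk)^p$ under the uniform-mesh assumption, that $(hk)^p\leq c/\rho$ bootstraps to $(hk)^{jp}\leq c^j/\rho^j$, and that $k/\rho\lesssim 1$ lets you absorb leftover powers of $k$ — are exactly the right tools, and your spot-checks of the $(1,1)$, $(1,2)$, $(1,3)$ entries are accurate (e.g.\ $\mathscr{M}_{13}\lesssim (hk)^{3p}\sqrt{k\rho}\,k\lesssim k^{3/2}/\rho^{5/2}=(k/\rho)^{3/2}/\rho$, and the sub-leading $(hk)^{5p}\sqrt{k\rho}k^2$ term is dominated because $k/\rho^2\lesssim 1/\rho$). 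The decoupling of the fourth row and column of $\transferIntro$ gives the PML block immediately. The only thing worth making explicit in a written-up version is that the paper's ordering $\mathscr{M}=I+\transferIntro\mathcal{C}(\cH k)^p$ matches your $I+(hk)^p\transferIntro\mathcal{C}$ precisely because $(\cH k)^p$ is a scalar multiple of the identity here, so it commutes past $\mathcal{C}$ and $\transferIntro$ — a point you use but should state.
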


\begin{corollary}[Preasymptotic estimates]
	\label{cor:U2}
	Under the same assumptions as Theorem~\ref{t:simple}, there exists $C>0$ such that for all $0<\varepsilon<c$ if $(\cT_k)_{k>0}$ satisfies $h_\cavity  = h_\visible = h_\invisible = h_\pml= h$, with
		$(h k)^{2p} \rho < \varepsilon$, 
	 then
	\begin{equation}
		\label{eq:relErrPreAsymptotic}
	\mathscr{M}_{\rm RE} \leq C
\sqrt{\varepsilon} 
	\begin{pmatrix}
		1 & \sqrt{\frac{k}{\rho}} & \left(\frac{k}{\rho}\right)^{3/2}& 0\\
		\sqrt{\frac{k}{\rho}} & \frac{k}{\rho} + \frac{1}{\sqrt{\rho}} & \frac{k}{\rho} & 0\\
		\big(\frac{k}{\rho}\big)^{3/2}  &  \big(\frac{k}{\rho}\big)^2  & \frac{k}{\rho} + \frac{1}{\sqrt{\rho}} & 0\\
0		& 0& 0& \frac{1}{\sqrt{\rho}}
	\end{pmatrix} .
	\end{equation}
\end{corollary}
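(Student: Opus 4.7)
The plan is to apply Corollary~\ref{c:relError} and then bound the resulting matrix entrywise. Setting $\newell=0$ in that corollary and specialising to a uniform mesh where $(\cH k)^p = (hk)^p I$ gives
\begin{equation*}
\mathscr{M}_{\rm RE} \;=\; (\cH k)^p + \transferIntro\mathcal{C}(\cH k)^{2p} \;=\; (hk)^p I + (hk)^{2p}\,\transferIntro\mathcal{C},
\end{equation*}
so the task reduces to (i) computing the $4\times 4$ product $\transferIntro\mathcal{C}$ explicitly, and (ii) bounding each entry of $(hk)^p I+(hk)^{2p}\transferIntro\mathcal{C}$ under the hypothesis $(hk)^{2p}\rho\leq \varepsilon$.

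Step (i) is routine: row 4 and column 4 of each of $\transferIntro$ and $\mathcal{C}$ are supported only at the $(4,4)$ slot, so the PML block decouples and gives $(hk)^p+(hk)^{2p}$. The upper-left $3\times 3$ entries of $\transferIntro\mathcal{C}$ are each sums of at most three monomials in the variables $(hk)^{2p}$, $k$, and $\rho$. For step (ii), the key algebraic identities are $(hk)^p\leq \sqrt{\varepsilon/\rho}$ and $(hk)^{2p}\leq \varepsilon/\rho$. Applied to each monomial, these convert the mesh factors into a product of $\sqrt{\varepsilon}$-powers and negative powers of $\rho$, and one checks in each entry that the dominant monomial matches the corresponding entry of the target matrix (using $\rho\geq ck$ and $\varepsilon<c$ to discard non-dominant terms). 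The diagonal contribution $(hk)^p I$ gives the additional $\sqrt{\varepsilon}/\sqrt{\rho}$ summand appearing in the $(2,2)$ and $(3,3)$ positions, and $\sqrt{\varepsilon}/\sqrt{\rho}$ alone in the $(4,4)$ position.

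Finally, the residual term $k^{-N}(hk)^m\mathscr{F}$ from Corollary~\ref{c:relError} must be absorbed. Since Theorem~\ref{t:simple} (hence Corollary~\ref{c:relError}) is valid for every $N>0$, I would choose $N$ larger than the polynomial bound on $\rho(k)$ from Assumption~\ref{a:polyBoundIntro}, which makes this residual smaller than $\sqrt{\varepsilon}\rho^{-1/2}$ in every entry and thus absorbable into the constant $C$.

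The main obstacle is the bookkeeping in step (ii): each target entry is a specific power of $k/\rho$ (or $1/\sqrt{\rho}$) times $\sqrt{\varepsilon}$, and the dominant monomial in the explicit polynomial must be matched to this structure. This is mechanical but error-prone — no new analytical ideas beyond those already encoded in Theorem~\ref{t:simple} are required.
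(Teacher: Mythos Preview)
Your approach is correct and matches what the paper implicitly intends: the corollary is a direct entrywise computation from the definition $\mathscr{M}_{\rm RE}=(\cH k)^p+\transferIntro\mathcal{C}(\cH k)^{2p}$ under the constraint $(hk)^{2p}\rho\leq\varepsilon$, and the paper gives no proof beyond this. One minor correction: since $\mathscr{M}_{\rm RE}$ is a purely algebraic quantity defined from $\transferIntro$, $\mathcal{C}$, and $\cH$ in \S\ref{sec:special}, there is no need to invoke Corollary~\ref{c:relError} or to worry about the residual term $k^{-N}(hk)^m\mathscr{F}$ --- that term belongs to the Galerkin-error bound \eqref{e:relSimple}, not to $\mathscr{M}_{\rm RE}$ itself, so your final paragraph can simply be dropped.
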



\subsubsection*{Weakest conditions guaranteeing $k$-uniform quasi-optimality and controllably-small relative error.}

We proceed by identifying the minimal thresholds under which Theorem \ref{t:simple} guarantees that (i) the Galerkin solution is quasi-optimal,  uniformly in $k$, and 
(ii) the relative error is controllably small.

\begin{corollary}[Threshold for $k$-uniform quasi-optimality]
	\label{cor:QO}
	Under the same assumptions as Theorem~\ref{t:simple}, 
if $(\mathcal{T}_k)_{k>0}$ satisfies
	\begin{equation}
		\label{eq:cond_QO}
		(h_\cavity k)^p \rho + (h_\visible k)^p \sqrt{k \rho} + (h_\invisible k)^p k + (h_\pml k)^p < c,
	\end{equation}
	then
	\begin{equation}
		\label{eq:QO}
\mathscr{M}\leq C		\begin{pmatrix}
			1 & 1 & \frac{1}{\sqrt{k \rho}}&0\\
			\sqrt{\frac{k}{\rho}} & 1 & 1 &0\\
			\frac{1}{\rho}\sqrt{\frac{k}{\rho}}
			&  \sqrt{\frac{k}{\rho}}  & 1 &0\\
			0 & 0 & 0&1
		\end{pmatrix} .
	\end{equation}
\end{corollary}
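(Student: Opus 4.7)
The corollary is a purely algebraic consequence of the definitions in \eqref{e:matrices} together with the hypothesis \eqref{eq:cond_QO}, so beyond invoking Theorem~\ref{t:simple} no further analytic input is required. I would first check that, for $c$ small enough, \eqref{eq:cond_QO} implies the weaker mesh condition \eqref{e:meshConditions}: squaring each of the four summands of \eqref{eq:cond_QO} and dividing by $\rho,\rho,k$, or $1$ respectively (each of which is $\gtrsim k_0$) dominates the corresponding terms of \eqref{e:meshConditions}. Thus Theorem~\ref{t:simple} applies and it remains only to bound $\mathscr{M} = I + \mathscr{T}\,\mathcal{C}\,(\cH k)^p$ entry-by-entry under the stronger hypothesis.

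To streamline the computation I would introduce the abbreviations $\alpha := (h_\cavity k)^p$, $\beta := (h_\visible k)^p$, $\delta := (h_\invisible k)^p$, $\gamma := (h_\pml k)^p$, turning \eqref{eq:cond_QO} into $\alpha\rho + \beta\sqrt{k\rho} + \delta k + \gamma < c$ and yielding the elementary upper bounds $\alpha \leq c/\rho$, $\beta \leq c/\sqrt{k\rho}$, $\delta \leq c/k$, $\gamma \leq c$, to be used alongside the standing estimate $\rho(k) \geq ck$ (so that $k/\rho \leq 1/c$). Since $(\cH k)^p$ is diagonal, $\mathcal{C}(\cH k)^p$ is obtained by rescaling the columns of $\mathcal{C}$ by $\alpha,\beta,\delta,\gamma$. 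Both $\mathscr{T}$ and $\mathcal{C}$ split off a trivial $1\times 1$ PML block, so the $(4,4)$ entry of $\mathscr{T}\mathcal{C}(\cH k)^p$ is just $\gamma \leq c$, and all remaining work concerns the upper-left $3\times 3$ block.

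Each of the nine entries of that block factors cleanly. The diagonal entries take the form $\alpha\rho(1+\beta^2 k)$, $\beta k(1+\alpha^2\rho+\delta^2 k^2)$, $\delta k(1+\beta^2 k)$, all obviously bounded by a constant under the elementary bounds above. The off-diagonals are similar: for instance the $(2,1)$ entry is $\alpha\sqrt{k\rho}(1+\alpha^2\rho) \leq C\sqrt{k/\rho}$, the $(1,3)$ entry is $\beta^2\sqrt{k\rho}\,\delta k(1+\delta^2 k) \leq C\beta \leq C/\sqrt{k\rho}$, and so on. The single entry I expect to require the most care is $(3,1)$: it expands to $\alpha\beta^2 k\sqrt{k\rho}(1+\alpha^2\rho)$, which on substituting $\alpha \leq c/\rho$ and $\beta^2 \leq c^2/(k\rho)$ collapses to $\leq C\rho^{-1}\sqrt{k/\rho}$, exactly matching the target entry in \eqref{eq:QO}. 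Adding back the identity does not spoil any of these bounds since every entry of the target matrix is $\geq 0$ and the diagonal entries of the target are $1$.

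The main obstacle is purely bookkeeping—tracking which pairing of elementary bounds is required for each of the nine upper-left entries, and verifying that each resulting monomial in $k$ and $\rho$ matches the tabulated scaling in \eqref{eq:QO}. There is no conceptual difficulty beyond repeatedly applying the four bounds on $\alpha,\beta,\delta,\gamma$ together with $\rho \gtrsim k$.
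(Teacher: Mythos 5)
Your proposal is correct, and since the paper states Corollary~\ref{cor:QO} without proof (it is presented as one of several elementary algebraic consequences of Theorem~\ref{t:simple} and the definitions in~\eqref{e:matrices}), your entry-by-entry verification of $\mathscr{M} = I + \transferIntro\,\mathcal{C}\,(\cH k)^p$ under the hypothesis~\eqref{eq:cond_QO} is precisely the intended argument. The reduction to $\alpha\leq c/\rho$, $\beta\leq c/\sqrt{k\rho}$, $\delta\leq c/k$, $\gamma\leq c$, the observation that $(\cH k)^p$ rescales the columns of $\mathcal{C}$, the $1\times1$ block decoupling, and the verification that~\eqref{eq:cond_QO} implies~\eqref{e:meshConditions} (needed before invoking Theorem~\ref{t:simple}) are all correct and necessary. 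One small slip: the $(2,2)$ entry of $\transferIntro\mathcal{C}(\cH k)^p$ is
\[
\alpha^2\sqrt{k\rho}\cdot\beta\sqrt{k\rho} + 1\cdot\beta k + \delta^2 k\cdot\beta k
\;=\; \beta k\bigl(\alpha^2\rho + 1 + \delta^2 k\bigr),
\]
i.e.\ the last factor should be $\delta^2 k$ rather than your $\delta^2 k^2$; this does not affect the conclusion since both are bounded (indeed $\delta k<c$ gives $\delta^2 k^2<c^2$ and $\delta^2 k<c^2/k$), but the expression as written is not the actual matrix entry.
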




\begin{corollary}[Threshold for bounded relative error]
	\label{cor:RE}
	Under the same assumptions as Theorem~\ref{t:simple}, 
there exists $C>0$ such that for all $0<\varepsilon<c$ 
if $(\mathcal{T}_k)_{k>0}$ satisfies
	\begin{equation}
		\label{eq:cond_RE}
		(h_\cavity k)^{2p} \rho + (h_\visible k)^{2p} \sqrt{\rho k} + (h_\invisible k)^{2p} k + (h_\pml k)^{2p}< \varepsilon,
	\end{equation}
	then 
	\begin{equation}
		\label{eq:RE}
		\mathscr{M}_{\rm RE}
		\leq C\sqrt{\varepsilon}
		\begin{pmatrix}
			1 & 1 & 1 &0\\
			\sqrt{\frac{k}{\rho}} & 	\sqrt{\frac{k}{\rho}} + \frac{1}{(\rho k)^{1/4}}& 1&0\\
			\frac{k}{\rho}&\frac{k}{\rho} & 1 &0\\
			0 &0&0 &1
		\end{pmatrix} .
	\end{equation}
	In particular, $\norm{u - u_h}_{H^1_k(\Omega)} / \norm{u}_{H^{p+1}_k(\Omega)}$ is bounded. 
\end{corollary}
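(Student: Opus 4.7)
The plan is to reduce the statement to an entrywise matrix estimate on $\mathscr{M}_{\rm RE}$ and then verify it by direct computation. Since $\rho(k)\geq ck$, we have $\sqrt{\rho k}\geq \sqrt{c}\,k$, so for $\varepsilon\leq c$ the hypothesis \eqref{eq:cond_RE} is at least as strong as the mesh condition \eqref{e:meshConditions} of Theorem~\ref{t:simple}, and hence Corollary~\ref{c:relError} applies. Specialising that corollary to $\newell=0$ reduces the claim to the matrix bound \eqref{eq:RE} for $\mathscr{M}_{\rm RE}=(\cH k)^p+\transferIntro\,\mathcal{C}\,(\cH k)^{2p}$; the residual $k^{-N}\mathscr{F}(\cH k)^p$ term in \eqref{e:relSimple} is absorbed into the right-hand side of \eqref{eq:RE} using the polynomial growth of $\rho$ from Assumption~\ref{a:polyBoundIntro}.

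Next I would introduce $a:=(h_\cavity k)^p$, $b:=(h_\visible k)^p$, $c:=(h_\invisible k)^p$, $d:=(h_\pml k)^p$. The nonnegativity of the summands in \eqref{eq:cond_RE} yields four independent constraints $a^2\rho\leq\varepsilon$, $b^2\sqrt{\rho k}\leq\varepsilon$, $c^2 k\leq\varepsilon$, $d^2\leq\varepsilon$. Combining these with $\rho\geq ck$ produces the derived inequalities used in the entrywise computation, e.g.\ $a^2\sqrt{k\rho}\leq a^2\rho\leq\varepsilon$, $b^2 k\leq\varepsilon\sqrt{k/\rho}$, and $b\leq\sqrt{\varepsilon}/(\rho k)^{1/4}$; each derived inequality retains information about the relative sizes of $k$ and $\rho$ rather than collapsing everything to $\varepsilon$.

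The bulk of the proof is then a direct matrix computation. Because $(\cH k)^{2p}$ is diagonal, $(\transferIntro\mathcal{C}(\cH k)^{2p})_{ij}=((\cH k)^{2p})_{jj}(\transferIntro\mathcal{C})_{ij}$, and the fourth row and column of both $\transferIntro$ and $\mathcal{C}$ decouple, so the PML block contributes only the $(4,4)$ entry $d+d^2\leq 2\sqrt{\varepsilon}$. For each of the remaining nine entries I would write out the sum of at most three products and choose the factorisation that exhibits the scaling $\sqrt{k/\rho}$, $k/\rho$, or $1/(\rho k)^{1/4}$ claimed in \eqref{eq:RE}; concretely one groups factors into blocks of the form $a^2\rho$, $b^2\sqrt{k\rho}$, $c^2 k$, each at most $\varepsilon$, and tracks the residual dimensional factor in $k$ and $\rho$.

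The main obstacle is choosing the right factorisation at each entry, since a naive bound typically loses powers of $\sqrt{\rho/k}$; for example the $(3,1)$ entry $a^2 b^2 k\sqrt{k\rho}(1+a^2\rho)$ should be written as $(a^2\rho)(b^2\sqrt{k\rho})(k/\rho)(1+a^2\rho)\leq 2\varepsilon^2 k/\rho$, not as $(a^2\sqrt{k\rho})(b^2 k)$. A second subtlety is the diagonal $(2,2)$ entry, which receives two contributions of distinct scaling: the direct term $b$ from $(\cH k)^p$ gives $\sqrt{\varepsilon}/(\rho k)^{1/4}$, whereas the cross term dominated by $b^2 k$ from $\transferIntro\mathcal{C}(\cH k)^{2p}$ gives $\sqrt{\varepsilon}\sqrt{k/\rho}$, explaining the sum $\sqrt{k/\rho}+1/(\rho k)^{1/4}$ in \eqref{eq:RE}. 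Once all entries are verified, summing the rows of $\mathscr{M}_{\rm RE}$ and using $\rho\geq ck$ shows that each row sum is at most $C\sqrt{\varepsilon}$, which combined with \eqref{e:relSimple} yields $\|u-u_h\|_{H^1_k(\Omega)}\leq C\sqrt{\varepsilon}\|u\|_{H^{p+1}_k(\Omega)}$, i.e.\ bounded relative error.
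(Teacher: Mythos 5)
Your approach --- reducing to the entrywise bound on $\mathscr{M}_{\rm RE}=(\cH k)^p+\transferIntro\mathcal{C}(\cH k)^{2p}$ and verifying it by factoring into the constrained blocks $a^2\rho$, $b^2\sqrt{k\rho}$, $c^2 k$, $d^2$ (each at most $\varepsilon$) --- is the correct and indeed the only available strategy (the paper does not record a proof of this corollary), and the entries you exhibit, the $(3,1)$ and $(2,2)$ ones, do check out. However, you never actually carry out the $(3,2)$ entry, and it does not satisfy the bound stated in \eqref{eq:RE}. Direct multiplication gives
\[
(\mathscr{M}_{\rm RE})_{32} = b^2\bigl(a^2 b^2 k^2\rho + b^2 k^2 + k\bigr),
\]
whose dominant term is $b^2 k$. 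Your own grouping strategy yields $b^2 k = (b^2\sqrt{k\rho})\sqrt{k/\rho}\leq\varepsilon\sqrt{k/\rho}$, and this is sharp when $b$ saturates the constraint; so the achievable bound is $(\mathscr{M}_{\rm RE})_{32}\leq C\sqrt{\varepsilon}\,\sqrt{k/\rho}$, not $C\sqrt{\varepsilon}\,(k/\rho)$ as in \eqref{eq:RE}. Since $\rho\geq ck$ forces $\sqrt{k/\rho}\geq k/\rho$, these differ exactly when $\varepsilon>k/\rho$, which is allowed under the hypotheses (e.g.\ $\rho\sim k^2$ with $\varepsilon$ fixed and $k$ large). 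The $(3,2)$ entry in \eqref{eq:RE} should almost certainly read $\sqrt{k/\rho}$: this is consistent with Corollary~\ref{cor:REcoarse}, which gives $1$ in that position under the weaker hypothesis $b^2 k<\varepsilon$, and with the $(3,1)$ entry of \eqref{eq:RE}, where the extra factor $a^2\rho\leq\varepsilon$ supplies the additional $\sqrt{k/\rho}$ needed to reach $k/\rho$. The discrepancy does not affect the ``in particular'' conclusion, since the row sums of the corrected matrix are still $O(1)$; but your proposal should not assert that all nine interior entries verify without flagging that the $(3,2)$ computation does not close to the rate stated in the corollary.
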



\bre
In Corollaries \ref{cor:REcoarse} and \ref{cor:U2} one can track how the matrix entries depend on $\varepsilon$, but we do not do this here for simplicity.
\ere

\paragraph{Condition guaranteeing quasi-optimality away from trapping.}

We finally give the weakest condition under which Theorem \ref{t:simple} ensures that the quantities
\begin{equation}
	\label{eq:defQOaway}
	\frac{\norm{u-u_h}_{H^1_k(\Omega_\visible')}}{\displaystyle\min_{w_h \in V_{\mathcal{T}}^p}\norm{u - w_h}_{H^1_k(\Omega)}}, \quad \frac{\norm{u-u_h}_{H^1_k(\Omega_\invisible')}}{\displaystyle\min_{w_h\in V_{\mathcal{T}}^p}\norm{u - w_h }_{H^1_k(\Omega)}}
\end{equation}
remain $k$-uniformly bounded. We refer to these quantities as the quasi-optimality constants ``away from trapping". These quantities should not be confused with ``local quasi-optimality" constants (which would be defined with $\Omega_\visible$ and $\Omega_\invisible$ instead of $\Omega$ in the denominators of \eqref{eq:defQOaway}).

\begin{corollary}[Threshold for $k$-uniform ``quasi-optimality away from trapping"]
	\label{cor:QOaway}
		Under the same assumptions as Theorem~\ref{t:simple}, 
if $(\mathcal{T}_k)_{k>0}$ satisfies
	\begin{equation}
		\label{eq:cond_QOaway}
		(h_\cavity k)^p \sqrt{k\rho} + (h_\visible k)^p k + (h_\invisible k)^p k + (h_\pml k)^p < c,
	\end{equation}
then 
	\begin{equation}
		\label{eq:QOaway}
\mathscr{M}\leq 		C\begin{pmatrix}
			\sqrt{\frac{\rho}{k}} & \sqrt{\frac{\rho}{k}} & \frac{1}{k^2}\sqrt{\frac{\rho}{k}} &0\\
			1 & 1 & 1 &0\\
			\frac{1}{k} &1 & 1&0\\
			0&0&0&1
		\end{pmatrix} 
\quad\tand\quad
\mathscr{M}_\Omega
 \leq C \begin{pmatrix}
		\sqrt{\frac{\rho}{k}}\\
			1\\
			1\\
			1
	\end{pmatrix} .
	\end{equation}
\end{corollary}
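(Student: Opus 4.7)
The plan is to (i) verify that the hypothesis~\eqref{eq:cond_QOaway} implies the meshwidth condition~\eqref{e:meshConditions}, so that Theorem~\ref{t:simple} applies and supplies the Galerkin solution together with the error estimate~\eqref{e:simple}, and then (ii) obtain~\eqref{eq:QOaway} by a direct entrywise bound on the matrix product $\transferIntro\mathcal{C}(\cH k)^p$.

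For step (i), denote the four terms on the left-hand side of~\eqref{eq:cond_QOaway} by $\eta_\cavity,\eta_\visible,\eta_\invisible,\eta_\pml$; each is $\lesssim 1$ by hypothesis. Squaring and dividing by the appropriate power of $k$ gives $(h_\cavity k)^{2p}\rho=\eta_\cavity^2/k\lesssim 1/k$, $(h_\visible k)^{2p}k=\eta_\visible^2/k\lesssim 1/k$, $(h_\invisible k)^{2p}k\lesssim 1/k$, and $(h_\pml k)^{2p}=\eta_\pml^2\lesssim 1$, so~\eqref{e:meshConditions} holds for all $k$ large enough and Theorem~\ref{t:simple} applies.

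For step (ii), set $a:=(h_\cavity k)^p$, $b:=(h_\visible k)^p$, $c':=(h_\invisible k)^p$, $d:=(h_\pml k)^p$, so that the hypothesis reads $a\sqrt{k\rho},\,bk,\,c'k,\,d\lesssim 1$. A direct calculation gives
\[
\mathcal{C}(\cH k)^p=\begin{pmatrix}\rho a & \sqrt{k\rho}\,b & 0 & 0 \\ \sqrt{k\rho}\,a & kb & kc' & 0 \\ 0 & kb & kc' & 0 \\ 0 & 0 & 0 & d\end{pmatrix}.
\]
Using the identities $\rho a=(a\sqrt{k\rho})\sqrt{\rho/k}$ and $\sqrt{k\rho}\,b=(bk)\sqrt{\rho/k}$, every nonzero entry in the first column is $\lesssim\sqrt{\rho/k}$, the $(1,2)$ entry is $\lesssim\sqrt{\rho/k}$, and all other nonzero entries are $\lesssim 1$. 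The entries of $\transferIntro$ are products of $a^2\sqrt{k\rho}$, $b^2\sqrt{k\rho}$, $b^2k$, $c'^2k$, each of which is controlled by squaring the hypothesis: $a^2\sqrt{k\rho}\lesssim 1/\sqrt{k\rho}$, $b^2\sqrt{k\rho}\lesssim\sqrt{\rho/k}/k$, and $b^2k,\,c'^2k\lesssim 1/k$. Carrying out the matrix product $\transferIntro\cdot\mathcal{C}(\cH k)^p$ then reduces to sixteen elementary estimates; adding the identity yields the entrywise bound~\eqref{eq:QOaway}. The bound on $\mathscr{M}_\Omega$ follows by summing across rows and keeping the dominant term in each.

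I expect the main obstacle to be purely bookkeeping: several off-diagonal entries of $\transferIntro\mathcal{C}(\cH k)^p$ admit multiple algebraically equivalent expressions as products of the small quantities $a\sqrt{k\rho},\,bk,\,c'k,\,d$ and powers of $\sqrt{\rho/k}$, and obtaining the sharp $k$-exponents displayed in~\eqref{eq:QOaway} requires grouping factors so as to consume each piece of the hypothesis once and only once, rather than naively substituting $a\lesssim 1/\sqrt{k\rho}$ and $b,c'\lesssim 1/k$ everywhere. Structurally this calculation is identical to those that prove Corollaries~\ref{cor:QOCoarse}--\ref{cor:RE}, but with the hypothesis~\eqref{eq:cond_QOaway} in place of theirs.
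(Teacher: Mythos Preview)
Your proposal is correct and matches the paper's approach: these corollaries are stated in the paper without proof as direct specialisations of Theorem~\ref{t:simple}, and the only thing to do is exactly what you outline---verify that~\eqref{eq:cond_QOaway} implies~\eqref{e:meshConditions}, then bound $\mathscr{M}=I+\transferIntro\mathcal{C}(\cH k)^p$ entrywise using the hypothesis.

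One small wording point in step~(i): rather than ``for all $k$ large enough'', the correct formulation is that for any fixed $k_0>0$, taking the constant $c$ in~\eqref{eq:cond_QOaway} small enough (depending on $k_0$) forces the left-hand side of~\eqref{e:meshConditions} below the threshold of Theorem~\ref{t:simple} for all $k\ge k_0$; your own inequalities $(h_\cavity k)^{2p}\rho\le\eta_\cavity^2/k\le c^2/k_0$ etc.\ already give this. Also note that your entrywise computation for the $(1,3)$ entry yields $\lesssim\sqrt{\rho/k}/k$ rather than the $\sqrt{\rho/k}/k^2$ printed in~\eqref{eq:QOaway}; this does not affect the row-sum bound on $\mathscr{M}_\Omega$, which is the content that matters for quasi-optimality away from trapping.
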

\begin{remark}
	It is natural to look for the analogous weakest condition guaranteeing a controllably small $k$-uniform bound on the ``relative error away from trapping", defined by
	\begin{equation}
		\label{eq:defREaway}
		\frac{\norm{u-u_h}_{H^1_k(\Omega_\visible')}}{\norm{u}_{H^{p+1}_k(\Omega)}}, \quad \frac{\norm{u-u_h}_{H^1_k(\Omega_\invisible')}}{\norm{u}_{H^{p+1}_k(\Omega)}}
	\end{equation}
 	(again not to be confused with ``local relative errors" which would involve $\norm{u}_{H^1_k(\Omega_\visible \cap \Omega)}$ and $\norm{u}_{H^1_k(\Omega_\invisible \cap \Omega)}$ in the denominators). However, these quantities are already bounded under the weakest possible condition in Theorem \ref{t:simple}, see Corollary \ref{cor:REcoarse}.
\end{remark}

\subsection{Discussion of the ideas behind Theorem~\ref{t:simple} and a sketch of the proof}

\subsubsection{The ideas behind Theorem~\ref{t:simple}}

\blue{Recall that $P_k$ is the Helmholtz PML operator defined in \eqref{e:PML1}.}
The following two important phenomena motivate Theorem \ref{t:simple}. 
\begin{enumerate}
\item \emph{The solution operator $P_k^{-1}$ reflects the billiard dynamics in $\Omega$.} In particular, for $\chi_1,\chi_2\in C^\infty(\Omega)$ the operator $\chi_1P_k^{-1}\chi_2$ behaves differently depending on the locations of $\supp \chi_j$; e.g., $\|P_k^{-1}\|\gg k$ when $\Omega_{\cavity}\neq \emptyset$, but if both $\chi_1$ and $\chi_2$ are away from $\Omega_{\cavity}$ then $\|\chi_1P_k^{-1}\chi_2\|_{L^2\to L^2}\lesssim k$.  
\blue{(Recall that the $ij$th entry of the matrix $\mathcal{C}$ in \eqref{e:matrices} is a bound on $\|\chi_i P_k^{-1}\chi_j\|_{L^2\to L^2}$.)
}

\item \emph{The Galerkin error propagates.} The best possible situation would be local quasioptimality i.e., there exists $C>0$ such that the Galerkin solution $u_h$ satisfies, for every $U\subset\Omega$,
\begin{equation}
\label{e:localQO}
\|u-u_h\|_{H^1_k(U)}\leq C\inf_{w_h\in V_{\mathcal{T}}^p}\|u-w_h\|_{H^1_k(U)}.
\end{equation}
In this case, since approximation of oscillatory functions by piecewise polynomials is well understood (see \cite{G1} and the references therein), 
the properties of the data and behaviour of $P_k^{-1}$ would dictate the meshwidth in each region. 
Unfortunately~\eqref{e:localQO} cannot hold for general meshes. Indeed, suppose that \eqref{e:localQO} holds and let $\phi,\phi_1,\phi_2\in C^\infty(\Omega)$ be such that $\supp \phi\subset \{\phi_1\equiv 1\}$, $\phi\neq 0$, and $\phi_1+\phi_2\equiv 1$ on $\Omega$. Then,
\begin{equation}
\label{e:errorMoving}
\begin{aligned}
\phi(u-u_h)&=\sum_{j=1}^2\phi P_k^{-1}\phi_j P_k(u-u_h).
\end{aligned}
\end{equation}
We now consider a situation where $\mc{T}$ has arbitrarily small elements on $\supp \phi_1=:\Omega_1$ so that, by~\eqref{e:localQO}, 
$$
\|u-u_h\|_{H^1_k(\Omega_1)}\leq C\inf_{w_h\in V_{\mathcal{T}}^p}\|u-w_h\|_{H^1_k(\Omega_1)}\ll 1 .
$$
In particular,
$$
\|\phi(u-u_h)\|_{H^1_k}+\|P_k^{-1}\phi_1P_k(u-u_h)\|_{H^1_k}\ll 1
$$
(by continuity of $P_k$ and $P_k^{-1}$ and locality of $P_k$).
Then,~\eqref{e:errorMoving} implies that
$$
\|  \phi P_k^{-1}\phi_2P_k(u-u_h)\|_{H^1_k}\ll 1,
$$
which cannot be true unless the meshwidth is also sufficiently small on $\Omega_2$ or $\phi P_k^{-1}\phi_2\approx 0$. By Item 1, the latter is not the case whenever $\supp \phi$ and $\supp \phi_2$ are connected by a billiard trajectory. (For a striking illustration of this propagation of error, see~\cite[Figure 3]{AvGaSp:24}.)

This argument indicates, not only that the Galerkin error propagates, but that the norm of the operator $\phi P_k^{-1}\phi_2$ determines the strength of propagation from $\supp \phi_2$ to $\supp \phi$. 
\end{enumerate}
Item 1 motivates varying the meshwidth from one location to another, but Item 2 shows that, to be effective, this strategy must 
take into account the global behaviour of billiard trajectories. 
In particular, by Item 2, the error in the cavity is \emph{not} just dictated by the meshwidth in the cavity
-- the meshwidth also needs to be sufficiently small away from the cavity to control the propagating error. 

\blue{
In the proofs of Theorems \ref{t:simple} and \ref{t:theRealDeal}, the propagation of Galerkin error is captured via 
the graph in Figure~\ref{f:graph1} (and more complicated generalisations; see Figures \ref{f:twitch} and \ref{f:graph2}). 
One way to understand this graph is that it gives 
(up to swapping the direction of the arrows caused by using a duality argument) the magnitude of the error in each domain caused by making a local error in $P_k(u-u_h)$ and propagating via the solution operator (with norms bounded via $\mathcal{C}$ \eqref{e:matrices}) this error into all other domains.

One should then understand the mesh-thresholds in Theorem~\ref{t:simple} as guaranteeing that these
local errors repeatedly propagating through the graph in Figure \ref{f:graph1} do not become unbounded.
This repeated propagation appears because the (discrete) operator, $\widehat{P}_k$, arising from the FEM approximation to the Helmholtz operator $P_k$ is given by $\widehat{P}_k=P_k-E_k$, where $E_k$ encodes the local errors incurred by approximating $P_k$. Since our goal is to show that $\widehat{P}_k^{-1}$ is close to $P_k^{-1}$, it is natural to write 
$$
\widehat{P}_k^{-1}= (P_k-E_k)^{-1}= (I-P_k^{-1}E_k)^{-1}P_k^{-1}=\sum_{j=0}^{\infty}(P_k^{-1}E_k)^jP_k^{-1}.
$$
Each application of $P_k^{-1}E_k$ then represents one step of making an error and propagating and the absence of accumulation means that the sum converges.

\S\ref{s:sketch} sketches the proof of Theorem \ref{t:simple} -- showing explicitly how the graph in Figure \ref{f:graph1} enters the analysis -- and \S\ref{s:interpret} 
concretely links the key identity \eqref{e:startDuality} in the proof to the propagation of Galerkin error.
}

\begin{figure}[htbp]
\begin{center}
\begin{tikzpicture}[->,>=stealth,shorten >=1pt,auto,node distance=7cm,semithick]

\begin{scope}[xscale=2,yscale=1.5]
  \node[draw, circle] (A) at (0, 0) {${\Omega_\cavity}$};
  \node[draw, circle] (B) at (2, 0) {${\Omega_\visible}$};
  \node[draw, circle] (C) at (4, 0) {${\Omega_\invisible}$};
  \node[draw, circle] (D) at (6, 0) {${\Omega_{\pml}}$};

  \draw[<-] (A) to[bend left] node[midway, above] {$({h_\cavity }k )^{2p}\sqrt{k {\rho}}$} (B);
  \draw[<-] (B) to[bend left] node[midway, below] {$({h_\visible} k )^{2p}\sqrt{k {\rho}}$} (A);
  \draw[<-] (B) to[bend left] node[midway, above] {$({h_\visible} k )^{2p}k $} (C);
  \draw[<-] (C) to[bend left] node[midway, below] {$({h_\invisible}k )^{2p}k $} (B);
  \draw[<-] (D) to[in=0,out=-135] (4,-1.3)node[ below] {$({h_\pml}k )^{2p}$}to[in = -45,out=180] (B);
  \draw[<-] (B) to[in=180, out=45] (4,1.3)node[ above] {$({h_\visible}k )^{2p}$}to[in=135,out=0] (D);
    \draw[<-] (C) to[bend left] node[midway, below] {$({h_\invisible}k )^{2p}$} (D);
  \draw[<-] (D) to[bend left] node[midway, below] {$(h_{\pml}k )^{2p}$} (C);
%
  \draw[<-] (A) to[loop above] node[midway, above] {$({h_\cavity }k )^{2p}{\rho}$} (A);
  \draw[<-] (B) to[loop above] node[midway, above] {$({h_\visible} k )^{2p}k $} (B);
  \draw[<-] (C) to[loop above] node[midway, above] {$({h_{\invisible}}k )^{2p}k $} (C);
  \draw[<-] (D) to[loop above] node[midway, above] {$(h_{\pml}k )^{2p}$} (D);
  \end{scope}
\end{tikzpicture}
\end{center}
\caption{\label{f:graph1} The graph showing propagation of errors for the decomposition into $\Omega_\cavity$, $\Omega_\visible$, $\Omega_\invisible$, and $\Omega_{\pml}$ in the simplified setup of Section~\ref{s:sketch}. Note that this can be improved using the analysis in Section~\ref{sec:proof_realDeal}. The graph corresponding to Theorem~\ref{t:simple} is shown in Figure~\ref{f:graph2} and that for Theorem~\ref{t:theRealDeal} is shown in Figure~\ref{f:twitch}.}
\end{figure}

\subsubsection{Sketch of the proof of Theorem~\ref{t:simple}}
\label{s:sketch}

For simplicity, we consider here the bound \eqref{e:simple} with $m=p$ and ignore improvements that are possible in the overlaps between subdomains, in the PML region, and by splitting the frequencies of the Galerkin error into those $\gg k$ and $\lesssim k$. 

The proofs of Theorem~\ref{t:simple} and Theorem~\ref{t:theRealDeal} are, at heart, localised versions of the elliptic projection-type argument introduced in~\cite{GS3}.
We first recap this argument 
and prove~\eqref{e:preasymptotic1} for $m=p$. The key insight in~\cite{GS3} is the existence of a self-adjoint smoothing operator $S_k$ so that $P_k^\sharp:= P_k+S_k$ is coercive (uniformly in $k$) and for all $N$ there exists $C>0$ such that for $k\geq k_0$ 
$$
\|S_k\|_{H_k^{-N}\to H_k^N}\leq C
$$
(see~\eqref{e:def_Sk} for the definition of the operator $S_k$).
Since $P_k^\sharp$ is coercive there is an \emph{elliptic projection} $\Pi_k^\sharp:H_k^1\to V_{\mc{T}_k}^p$ such that 
\beq\label{e:introPiSharp}
\big\langle P^\sharp_k w_h, (I- \Pi^\sharp_k)u\big\rangle =0 \quad\tfa w_h \in V_{\mc{T}_k}^p
\eeq
and there exists $C>0$ such that for all $k>k_0$
\beq\label{e:introPiSharpCea}
\|(\Id - \Pi_k^\sharp)v\|_{H^1_k} \leq C \inf_{w_h \in V_{\mc{T}_k}^p} \|v - w_h\|_{H^1_k}
\eeq
(i.e. $\Pi^\sharp_k$ is the adjoint Galerkin projection associated to $P^\sharp_k$). Moreover, by an Aubin--Nitsche-type duality argument
\beq
\label{e:lowNormPiSharp}
\|(\Id - \Pi_k^\sharp)v\|_{H^{-p+1}_k} \leq C(hk)^p\inf_{w_h \in V_{\mc{T}_k}^p} \|v - w_h\|_{H^1_k}.
\eeq

\blue{Denote $R_k:=P_k^{-1}$ and let $R_k^*$ be its $L^2$ adjoint. Then,}
 it follows from~\eqref{e:introPiSharp} and Galerkin orthogonality~\eqref{e:galerkinDef} that for all $w_h\in V_{\mc{T}_k}^p$, $v\in H_k^{p-1}$,
\begin{equation}
\label{e:basicEllipticProjection}
\begin{aligned}
\langle u-u_h,v\rangle 
&= \big\langle P_k(u-u_h) , R_k^*v \big\rangle\\
&= \big\langle P_k(u-u_h), (I-\Pi^\sharp_k)R_k^* v \big\rangle \\
&= \big\langle P^\sharp_k(u-u_h), (I-\Pi^\sharp_k) R_k^* v \big\rangle - 
\big\langle S(u-u_h), (I-\Pi^\sharp_k) R_k^* v \big\rangle \\
&= \big\langle P^\sharp_k(u-w_h), (I-\Pi^\sharp_k) R_k^*v\big\rangle - 
\big\langle S(u-u_h), (I-\Pi^\sharp_k) R_k^* v\big\rangle.
\end{aligned}
\end{equation}
By~\eqref{e:introPiSharpCea},~\eqref{e:lowNormPiSharp}, and the mapping properties $S_k:H_k^{-p+1}\to H_k^{p-1}$ and $P_k^\sharp:H_k^1\to H_k^{-1}$,
\begin{equation}
\label{e:preasymptoticSketch1}
\begin{gathered}
|\langle u-u_h,v\rangle|\leq C\eta_p\Big( \inf_{w_h\in V_{\mc{T}_k}^p}\|u-w_h\|_{H_k^1}+(hk)^p\|u-u_h\|_{H_k^{-p+1}}\Big)\|v\|_{H_k^{p-1}},\\
\text{ where }\quad \eta_p:=\sup_{0\neq v\in H_{k}^{p-1}}\inf_{w_h\in V_{\mc{T}_k}^p}\frac{ \|R_k^*v-w_h\|_{H_k^1}}{\|v\|_{H_k^{p-1}}}.
\end{gathered}
\end{equation}
 By duality,~\eqref{e:preasymptoticSketch1} implies
\beq \label{e:theFirstSystemSimple}
\begin{gathered}
 \|u-u_h\|_{H_k^{-p+1}}\leq C \Big( b\inf_{w_h\in V_{\mc{T}_k}^p}\|u-w_h\|_{H_k^1}+\omega\|u-u_h\|_{H_k^{-p+1}}\Big),\\
 b:=\eta_p,\qquad \omega:=(hk)^{p}\eta_p.
 \end{gathered}
\eeq 
By a frequency splitting argument similar to that in Lemma~\ref{l:bound_eta_microlocal} below and the fact that $\rho\geq ck$,
 $$
 \eta_p \leq C(hk)^p\big(1+\|R_k\|_{L^2\to L^2}\big)= C(hk)^p(1+\rho)\leq C(hk)^p\rho.
 $$
Thus, from \eqref{e:theFirstSystemSimple}, when $(hk)^{2p}\rho$ is sufficiently small, $(1-C\omega)^{-1}$ exists and is positive, and then
 \begin{equation}
 \label{e:martinIsHappy}
 \|u-u_h\|_{H_k^{-p+1}}\leq C(1-C\omega)^{-1}b\inf_{w_h\in V_{\mc{T}_k}^p}\|u-w_h\|_{H_k^1}\leq C(hk)^p\rho\inf_{w_h\in V_{\mc{T}_k}^p}\|u-w_h\|_{H_k^1},
 \end{equation}
which is the preasymptotic estimate~\eqref{e:preasymptotic1} for $m=p$.

We now sketch the localised version of the above argument, which is used to prove Theorems~\ref{t:simple} and \ref{t:theRealDeal}. In this sketch, we treat $\Pi_k^\sharp$ and $S_k$ as though they are local; i.e., for $\chi,\psi\in C^\infty(\overline{\Omega})$ with $\supp \chi \cap \supp \psi=\emptyset$, we neglect the terms
$$
\chi\Pi_k^\sharp\psi \,\text{ and }\,\chi S_k\psi.
$$
In general these terms are nonzero, but Sections~\ref{sec:pseudolocS} to~\ref{sec:pseudoLocPi}, which contain the bulk of the technical work of this paper, show that they are $O(k^{-\infty})$   smoothing. 
Using these properties, Section~\ref{sec:proof_realDeal} shows that these terms only contribute to the remainder term in Theorem~\ref{t:simple}.

To localise the elliptic-projection argument, we introduce an open cover of $\Omega$,  $\{\Omega_j\}_{j=1}^\domainnumber$ and $\{\phi_j\}_{j=1}^\domainnumber\subset C^\infty(\overline{\Omega})$ a partition of unity subordinate to this cover.
(In Theorem~\ref{t:simple}, $\domainnumber=4$ and $(\Omega_1,\Omega_2,\Omega_3,\Omega_4):=(\Omega_\cavity,\Omega_\visible,\Omega_\invisible,\Omega_\pml)$.)
Next, let $\chi_j\in C^\infty(\overline{\Omega})$, $j=1,\dots,\domainnumber$, such that 
$$
\supp \chi_j \subset \Omega_j\cup\partial\Omega,\qquad \chi_j\equiv 1\text{ in a neighbourhood of }\supp \phi_j.
$$
Arguing as in~\eqref{e:basicEllipticProjection}, for all $w_{h,j}\in \blue{V_{\mathcal{T}}}$, $j=1,\ldots,\domainnumber$, and $v\in H_k^{p-1}$, we obtain
\begin{align}\nonumber
&\langle \chi_i(u-u_h) ,v\rangle \\ 
\nonumber 
&= \big\langle P_k(u-u_h) , R_k^* \chi_i v \big\rangle\\ \nonumber
&= \big\langle P^\sharp_k(u-u_h), (I-\Pi^\sharp_k) R_k^* \chi_iv \big\rangle - 
\big\langle S_k(u-u_h), (I-\Pi^\sharp_k) R_k^* \chi_iv \big\rangle \\ 
&=\sum_{j=1}^{\domainnumber} \bigg(\big\langle P^\sharp_k(u-w_{h,j}), (I-\Pi^\sharp_k) \phi_j R_k^* \chi_iv\big\rangle - 
\big\langle S_k(u-u_h), (I-\Pi^\sharp_k) \phi_j R_k^* \chi_i v\big\rangle \bigg) \label{e:startDuality}
\\ 
&=\sum_{j=1}^{\domainnumber} \bigg(\big\langle P^\sharp_k\chi_j(u-w_{h,j}), \chi_j(I-\Pi^\sharp_k) \phi_j R_k^* \chi_iv\big\rangle - 
\big\langle S_k\chi_j(u-u_h), \chi_j(I-\Pi^\sharp_k) \phi_j R_k^* \chi_i v\big\rangle \bigg),\nonumber
\end{align}
where we have neglected the nonlocal parts of $\Pi_k^\sharp$, $P^\sharp_k$, and $S_k$ in the last line.
The local Aubin--Nitsche--type argument in Lemma~\ref{l:localSharp} shows that (modulo remainder terms)
\begin{equation}
\label{e:localLowNormPiSharp}
\|\chi_j (\Id - \Pi_k^\sharp)v\|_{H^{-p+1}_k} \leq C(h_jk)^p\inf_{w_h \in V_{\mc{T}_k}^p} \|v - w_h\|_{H^1_k},\quad \text{ where } h_j:=\max_{\substack{\blue{T}\in\mc{T}\\\blue{T}\cap \Omega_j\neq\emptyset}}h_\blue{T}
\end{equation}
(compare to~\eqref{e:lowNormPiSharp}).
By 
\eqref{e:startDuality},  \eqref{e:localLowNormPiSharp} and the mapping properties $S_k:H_k^{-p+1}\to H_k^{p-1}$ and $P_k^\sharp: H_k^1\to H_k^{-1}$,
\begin{equation}
\label{e:preasymptoticSketch2}
\begin{gathered}
\begin{aligned}
|\langle \chi_i(&u-u_h),v\rangle|\\
&\leq C\sum_j\eta_p(j\to i) \Big(\inf_{w_{h,j}\in V_{\mc{T}_k}^p}\|\chi_j(u-w_{h,j})\|_{H_k^1}+(h_jk)^p\|\chi_j(u-u_h)\|_{H_k^{-p+1}}\Big)\|v\|_{H_k^{p-1}},
\end{aligned}\\
\text{ where } \quad\eta_p(j\to i):=\sup_{0\neq v\in H_{k}^{p-1}}\inf_{w_h\in V_{\mc{T}_k}^p}\frac{ \|\chi_jR_k^*\chi_iv-w_h\|_{H_k^1}}{\|v\|_{H_k^{p-1}}}
\end{gathered}
\end{equation}
(compare to \eqref{e:preasymptoticSketch1}). 
 By duality,~\eqref{e:preasymptoticSketch2} implies
 $$
 \|\chi_i(u-u_h)\|_{H_k^{-p+1}}\leq \sum_j C\eta_p(j\to i)\Big(\inf_{w_{h,j}\in V_{\mc{T}_k}^p}\|\chi_j(u-w_{h,j})\|_{H_k^1}+(h_jk)^p\|\chi_j(u-u_h)\|_{H_k^{-p+1}}\Big).
 $$
 We then use a frequency splitting argument (see Lemma~\ref{l:bound_eta_microlocal}) to obtain (neglecting remainder terms)
 $$
 \eta_p(j\to i) \leq C(h_jk)^p\|\chi_jR_k^*\chi_i\|_{L^2\to L^2}+C1_{\{\Omega_i\cap \Omega_j\neq \emptyset\}}(h_{ij}k)^p,\quad \text{ where } h_{ij}:=\min (h_i,h_j).
 $$
 In particular, this yields the system of inequalities
 \begin{equation}
 \label{e:theFirstSystem}
\big(\| \chi_i(u-u_h)\|_{H_k^{-p+1}}\big)_{i=1}^\domainnumber\leq CB \Big(\inf_{w_{h,j}\in V_{\mc{T}_k}^p}\|\chi_j(u-w_{h,j})\|_{H_k^1}\Big)_{j=1}^\domainnumber+C\oldT\big(\|\chi_j(u-u_h)\|_{H_k^{-p+1}}\big)_{j=1}^\domainnumber,
 \end{equation}
 where 
 \begin{align*}
 B_{ij}&:=\eta_p(j\to i)\leq C(h_jk)^p\|\chi_jR_k^*\chi_i\|_{L^2\to L^2}+C1_{\{\Omega_i\cap \Omega_j\neq \emptyset\}}(h_{ij}k)^p\\
 \oldT_{ij}&:=(h_jk)^p\eta_p(j\to i)\leq C(h_jk)^{2p}\|\chi_jR_k^*\chi_i\|_{L^2\to L^2}+C(h_jk)^p1_{\{\Omega_i\cap \Omega_j\neq \emptyset\}}(h_{ij}k)^p
 \end{align*}
 (compare to \eqref{e:theFirstSystemSimple}). 
 Under the condition that
\begin{equation}
\label{e:conditionW}
\sum_{n=0}^\infty (C W)^n<\infty,
 \end{equation}
 $(I-C\oldT)^{-1}$ exists and has non-negative entries. Hence~\eqref{e:theFirstSystem} implies that
 $$
\|  \underline{u-u_h}\|_{H_k^{-p+1}}\leq C(I-C\oldT)^{-1}B \|  \underline{u-w_h}\|_{H_k^1}.
 $$
 (compare to~\eqref{e:martinIsHappy}).

 To understand when $\sum_n (CW)^n$ converges, consider $W$ as the weighted adjacency matrix of a directed graph with $\domainnumber$ nodes representing $\{\Omega\}_{j=1}^{\domainnumber}$. Observe that the entry in the $i^{\text{th}}$ row and $j^{\text{th}}$ column of $(CW)^{\ell}$ is given by $C^\ell$ times the sum of the weights over all paths of length $\ell$ from $j$ to $i$ in this graph. Hence, the sum converges if for any $i$ and $j$ the sum of the weights of all paths from $j$ to $i$ multiplied by $C^{\text{path length}}$ is finite. Using elementary graph analysis this condition can be reduced to the requirement that all the sum of such weights for all non-self intersecting loops is less than 1 (see Appendix~\ref{sec:loops}).

In the setting of Theorem~\ref{t:simple}, $\domainnumber=4$ and $(\Omega_1,\Omega_2,\Omega_3,\Omega_4):=(\Omega_\cavity,\Omega_\visible,\Omega_\invisible,\Omega_\pml)$. For $k\notin \mc{J}$, Section~\ref{sec:boundsCsol} obtains the bounds of Table~\ref{ta:resolve} on $\psi R_k^*\chi$ according to the support of $\psi$ and $\chi$.
\begin{table}[H]
\begin{center}
\renewcommand{\arraystretch}{1.5}
\begin{tabular}{|c|c|c|c|c|}
\hline
$\supp \psi\Big\backslash \supp \chi$&$\Omega_{\cavity}$&$\Omega_{\visible}$&$\Omega_{\invisible}$&$\Omega_\pml$\\
\hline
$\Omega_{\cavity}$&$\rho$&$\sqrt{k\rho}$&$O(k^{-\infty})$&$O(k^{-\infty})$\\
\hline
$\Omega_{\visible}$&$\sqrt{k\rho}$&$k$&$k$&$1$\\
\hline
$\Omega_{\invisible}$&$O(k^{-\infty})$&$k$&$k$&$1$\\
\hline
$\Omega_{\pml}$&$O(k^{-\infty})$&$1$&$1$&$1$\\
\hline
\end{tabular}
\renewcommand{\arraystretch}{1}
\caption{\label{ta:resolve}Bounds on $\|\psi R_k^*\chi\|_{L^2\to L^2}$ (up to $k$-independent constants) proved in Section~\ref{sec:boundsCsol} for $k\notin \mc{J}$.}
\end{center}
\end{table}
As a result, the graph corresponding to $W$ is the one in Figure~\ref{f:graph1}, and the requirement that the sum of weights on all non-self intersecting loops be less than 1 reduces to~\eqref{e:meshConditions}.

\subsubsection{Interpretation as error propagation}
\label{s:interpret}

To properly interpret the matrices appearing in~\eqref{e:simple}, we return to~\eqref{e:startDuality}, which is equivalent to
\begin{equation}
\label{e:euanSoImportant}
\chi_i(u-u_h)=\sum_{j=1}^{\domainnumber}  \chi_i R_k\phi_j \big((I-\Pi_k^\sharp)^*\chi_jP^\sharp_k\chi_j(u-w_{h,j}) - 
 (I-\Pi_k^\sharp)^*\chi_j S_k\chi_j(u-u_h)\big).
\end{equation}
We are interested in $\|\chi_i(u-u_h)\|_{H_k^{-p+1}}$, which we think of as the low frequencies of $\chi_i(u-u_h)$; these low frequencies are captured by $S_k\chi_i(u-u_h)$. For purposes of this discussion, we assume $S_k$ commutes with $\chi_i R_k\phi_j$. This is not quite true, but (away from the PML), since
$$
\|S_k\chi_i R_k\phi_j\|_{H_{k}^{-p+1}\to L^2}\leq C\|\chi_i R_k\phi_j\|_{L^2\to L^2},
$$ 
 $S_k\chi_iR_k\phi_j$ acts like $\chi_iR_k\phi_j \mc{L}$ where $\mc{L}$ is a lowpass filter. We show in Theorem~\ref{thm:PML} that near the PML there is no propagation and so we ignore the PML here. 

With these caveats,~\eqref{e:euanSoImportant} implies
$$
S_k\chi_i(u-u_h)=\sum_{j=1}^{\domainnumber}  \chi_i R_k\phi_j\Big(S_k\chi_j (I-\Pi_k^\sharp)^*\chi_jP^\sharp_k\chi_j(u-w_{h,j}) - 
 S_k\chi_j (I-\Pi_k^\sharp)^*\chi_j \tilde{S}_kS_k\chi_j(u-u_h)\Big).
$$
The operator $\chi_i R_k\phi_j$ has the effect of propagating between domains. The operator $(I-\Pi_k^\sharp)^*$ essentially takes the best approximation in $H_k^1$ norm and $S_k$ then returns only the frequency $\lesssim k$ components. This process is represented in the graph in Figure~\ref{f:error}. 

To find $S_k\chi_i(u-u_h)$ in terms of the local best approximations to $u$, one inserts $\chi_jP^\sharp \chi_j(u-w_{h,j})$ at node 1 in Figure~\ref{f:error} and follows the cycle to node 4, producing the first approximation to $S_k\chi_i(u-u_h)$. One then continues around the cycle arbitrarily many times, adding $\chi_jP^\sharp \chi_j(u-w_{h,j})$ in each cycle. This process converges under the condition~\eqref{e:conditionW} and the final result at node $4$ is $(S_k\chi_i(u-u_h))_i$. The $W$ and $B$ matrices in~\eqref{e:theFirstSystem} are respectively one full cycle from node 4 to node 4 and a path from node 1 to node 4 in Figure~\ref{f:error}. 

\blue{Connecting this to \eqref{e:simple}, we see that 
 $B$ plays the role of $\mathcal{C}(\mathcal{H}k)^p$ 
and 
$\transferIntro$ plays the same role as $(I- C W)^{-1}$, with the differences a result of the more-sophisticated analysis used to prove Theorem \ref{t:theRealDeal} (including splitting the errors into high- and low-frequencies, special treatments of subdomain intersections, and special treatment of the PML region).
}

\bigskip
\noindent \textsc{Acknowledgements:} MA was supported by EPSRC grant EP/R005591/1, JG was supported by EPSRC grants EP/V001760/1 and EP/V051636/1, Leverhulme Research Project Grant RPG-2023-325, and ERC Synergy Grant PSINumScat - 101167139, and EAS was supported by EPSRC grant EP/R005591/1 and ERC Synergy Grant PSINumScat - 101167139.



\begin{figure}
\centering
\begin{tikzpicture}[>=stealth, node distance=3cm, every path/.style={->, thick}]
\def\diameter{1cm}
  
  \draw[white] (-6,0)rectangle(6,1);
\node (AA) at (0,4.5){$\chi_j P^\sharp\chi_j(u-w_{h,j})$};
\node (BB) at (-4.5,0)[left]{$S_k\chi_i(u-u_h)$};
\node[circle,draw,minimum size=\diameter]  (A) at (0, 2.5) {1};
\node[circle,draw,minimum size=\diameter]  (B) at (2.5, 0) {2};
\node[circle,draw,minimum size=\diameter]  (C) at (0, -2.5) {3};
\node[circle,draw,minimum size=\diameter]  (D) at (-2.5, 0) {4};

   \path 
   (AA) edge [dashed]node[above, rotate=90]{add}(A)
  (A)edge [bend left] node[above,rotate=-45]{$\overset{(I-\Pi_k^\sharp)^*}{\text{\small Take BAE}}$}(B)
    (B)edge [bend left]node[below,rotate=45]{$\underset{S_k}{\text{low pass filter}}$}(C)
      (C)edge [bend left]node[below, rotate =-45] {$\underset{\chi_i R_k\chi_j}{\text{propagate}}$}(D)
        (D)edge  [bend left]node[above,rotate=45]{$\overset{-S_k}{\text{low pass filter}}$}(A)
        (D)edge[dashed](BB);
\end{tikzpicture}
\caption{The graph showing the process of error propagation when determining the low frequencies of $u-u_h$ from the local best approximation errors. The arrows are labelled first with the type of operation (low pass filter etc.) and then with the operator whose action gives this effect. These operators are applied multiplicatively.}
\label{f:error}
\end{figure}

\section{Numerical experiments illustrating the main result}
\label{s:numerical}

We illustrate Theorem \ref{t:simple} with numerical results in a selection of asymptotic regimes and in two different geometric settings, in which we solve the PDE \eqref{e:edp} with constant coefficients $A,n \equiv 1$.
\subsection{Experimental setup}

\subsubsection{Geometric setup}\label{s:geometries}
  The first geometric setting involves a scatterer with two parallel ``walls" obtained by placing two rectangular obstacles next to each other \blue{(as in Figure \ref{f:twoMirrors})}. The second geometric setting is similar, but has one of the two rectangles shifted slightly upwards to ``make way" for the wave to come inside the cavity; \blue{both of these settings are shown in Figure \ref{fig:showNumericalSetting}.} 

In the first setting (without shifting one of the two rectangles) the obstacle $\Omega_-$ is the union of two congruent rectangles of sides $L_1 = 0.7\sqrt{2}$ and $L_2=1.3\sqrt{2}$ with rounded corners so that they have a $C^\infty$ boundary (this is done using the technique from \cite{epstein2016smoothed}). The four vertices of the first (respectively second) rectangle are located at the coordinates $[-X_1/2 \pm \frac{L_1}{2},\pm \frac{L_2}{2}]$ (respectively, $[X_1/2 \pm \frac{L_1}{2},\pm \frac{L_2}{2}]$) where $X_1 = 3\frac{\sqrt{2}}{2}$. Therefore, the two rectangles have parallel sides and are separated by a gap in the $x$-axis equal to $L_{\rm gap} = X_1 - L_1 = 0.8\sqrt{2}$. \blue{The scatterer is surrounded by a PML layer with coefficients defined in \eqref{e:firstPML}, with the PML scaling function given by $f_\theta(r) = (r-\RPMLo)^3/(3(\Rtr - \RPMLo)^2)$ for $r > \RPMLo$, with $\RPMLo = 2.2$ and $R_\tr = 2.7$.} 

\blue{Set $\delta := \frac{\sqrt{2}}{8}$ Then, we take 
$$
\begin{aligned}
\Omega_\cavity &:= \overline{\Omega}_+ \cap \Big((-\tfrac{L_{\rm gap}}{2}-\delta,\tfrac{L_{\rm gap}}{2}+\delta) \times (-\tfrac{L_2}{2},\tfrac{L_2}{2})\Big)\\[1ex]
\blue{\Omega_\visible}& := \Big\{(x,y) \in \overline{\Omega}_+\,:\, \abs{y} > \tfrac{L_2}{2}-\delta\Big\} \blue{\cap \Big\{(x,y) \in \overline{\Omega_+} \,:\, x^2 + y^2 < 1.05\RPMLo+0.1\Big\}},\\[1ex]
\Omega_\invisible& := \Big\{(x,y) \in \overline{\Omega}_+ \,:\, \abs{x}> \tfrac{L_{\rm gap}}{2} +\delta  \Big\} \blue{\cap \Big\{(x,y) \in \overline{\Omega}_+ \,:\, x^2 + y^2 < 1.05\RPMLo+0.1\Big\}},\\[1ex]
\Omega_\pml &:= \{(x,y) \in \overline{\Omega}_+\,:\,x^2 + y^2 > 1.05 \RPMLo\}.
\end{aligned}
$$

For these geometries, and since the wave speed is constant, we can identify $\cavity$ and find a set containing $\visible$ ``by eye''. For more complicated geometries and wave speeds, one would need to identify these regions using ray tracing (see Figure~\ref{f:trapped}), \blue{as discussed in \S\ref{sec:1.1} after Table \ref{tab:regimes}.}}


\subsubsection{Discussion of $\rho(k)$ in the experimental setup}

For the wavenumbers 
\begin{equation*}
	k_n := \frac{n\pi}{L_{\rm gap}},
\end{equation*}
one can show that there exists $c > 0$ such that  $\rho(k_n) \geq ck_n^2$ (e.g. by considering \eqref{e:edp} with the right hand side $f$ obtained by applying the operator $-k^{-2} \Delta - 1$ to $u(x,y) := \chi(x,y) \sin \Big[ k_n \left(x-\frac{L_{\rm gap}}{2}\right)\Big]$ where $\chi$ is a smooth compactly supported function which is identically $1$ in the set $[-\frac{L_{\rm gap}}{2},\frac{L_{\rm gap}}{2}] \times [-\varepsilon,\varepsilon]$ for some sufficiently small $\varepsilon>0$. The best known upper bound for all $k \in \R_+$ is $\rho(k) \leq C k^3$ for all $k \geq k_0$ \cite{ChSpGiSm:20}, but it is conjectured that $\rho(k) \leq Ck^2$, and we assume this from now on.

\subsubsection{Description of the sources}
For these two geometries, we consider $k$-dependent right-hand sides (source terms) $f_{\rm in}$ and $f_{\rm out}$ that are ``Gaussian beams" (or ``wave-packets") of width $k^{-1/2}$ both in the physical and frequency space. \blue{Namely, given $x_0,\xi_0 \in \R^2$ with $|\xi_0|=1$, we put 
$$f_{x_0,\xi_0}(x) := Ck^{1/4}\chi(x-x_0) e^{-k[(x-x_0) \cdot \xi_0^\perp]^2} e^{i k x\cdot \xi_0}$$
where $\xi_0^\perp$ is a unit vector orthogonal to $\xi_0$ and $\chi$ is the following bump function
$$\chi(x) := \begin{cases}
    e^{-\frac12|x|^2/(r^2 -|x|^2)} & \textup{for } |x| < r\\
    0 & \textup{otherwise.}
\end{cases}$$
with $r := 0.4$, and $C$ is such that $\|f_{x_0,\xi_0}\|_{L^2(\R^2)} = 1$.
} 
\blue{For the first geometry described in \S\ref{s:geometries}, we use a beam centered inside the cavity, and propagating in the $x$-direction, namely
$f_{\rm in} := f_{(0,0),(1,0)}$.
For the second geometry, we use a beam centered outside, and aimed at the cavity. More precisely, we choose
$f_{\rm out} := f_{x_0(k),\xi_0(k)}$,
where $\xi_0(k) = (\cos(\frac{3}{\sqrt{k}}),\sin(\frac{3}{\sqrt{k}}))$ and $x_0(k)$ is chosen such that the line $x_0(k) + t \xi_0(k)$ hits a point near the bottom left of the right wall 
}
(this ensures that the beam can coherently stay in the cavity as long as possible, with $O(\sqrt{k})$ reflections on the cavity's boundaries), see Figure \ref{fig:showNumericalSetting} (c). 
The obstacle, the right-hand sides $f_{\rm in}$ and $f_{\rm out}$, and the corresponding solutions $u_{\rm in}$ and $u_{\rm out}$ are represented in Figure \ref{fig:showNumericalSetting}. 

\begin{figure}[h]
	\centering
	\begin{subfigure}{0.48\textwidth}
		\includegraphics[width=0.95\textwidth]{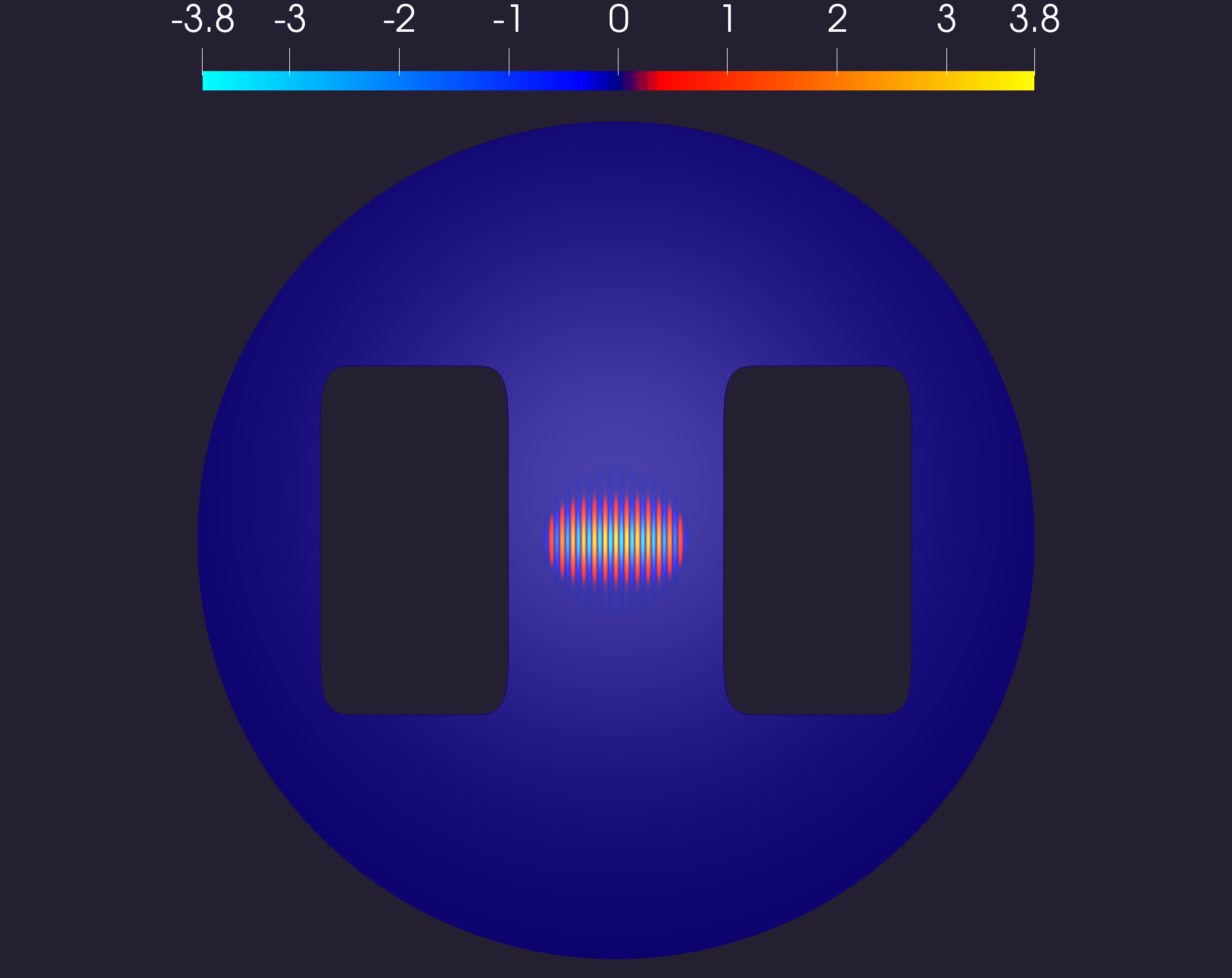}
		\caption{Data $f_{\rm in}$}
	\end{subfigure}
	\hfill
	\begin{subfigure}{0.48\textwidth}
		\includegraphics[width=0.95\textwidth]{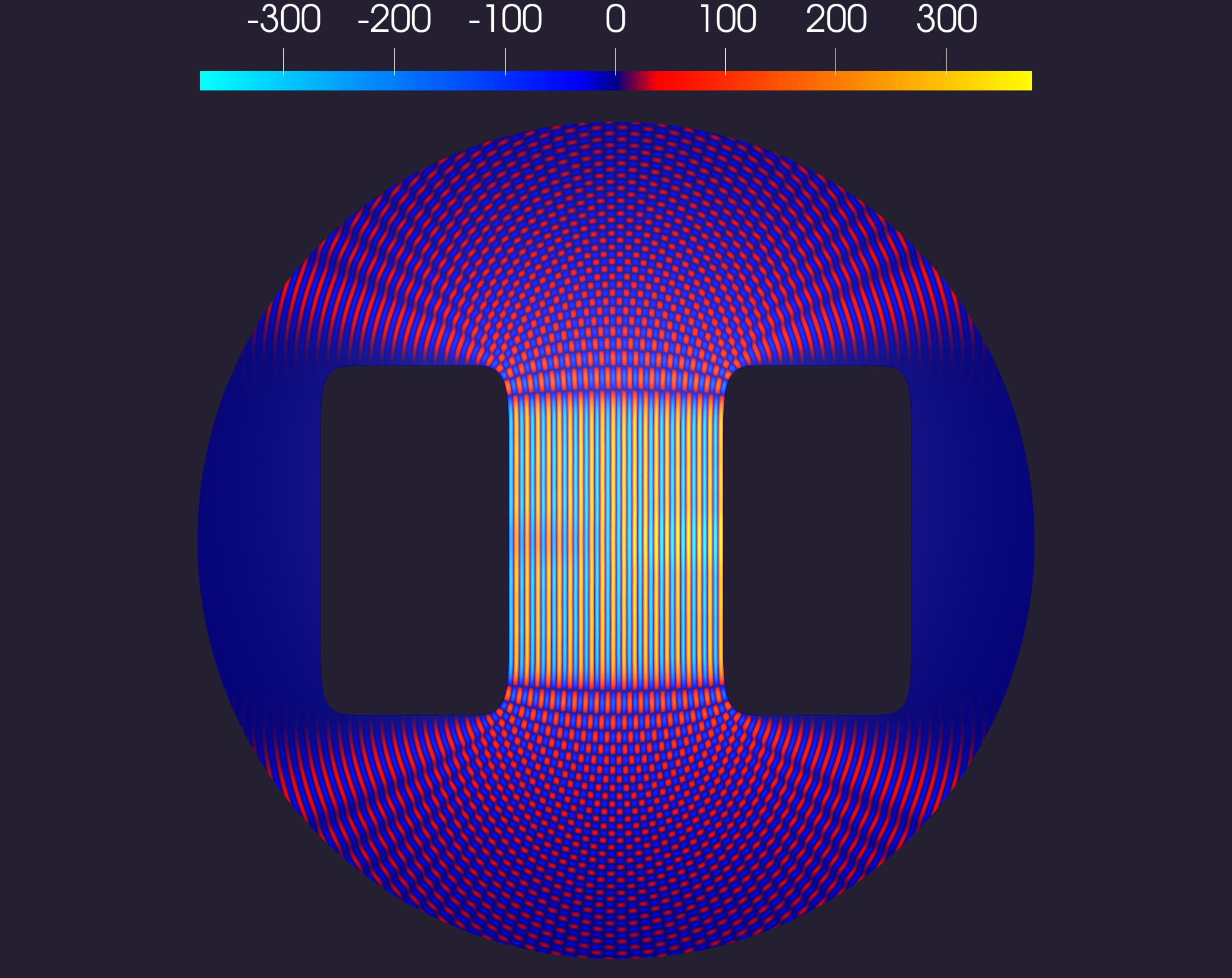}
		\caption{Solution $\textup{Re}(u_{\rm in})$}
	\end{subfigure}\\[1em]
	\begin{subfigure}{0.48\textwidth}
		\includegraphics[width=0.95\textwidth]{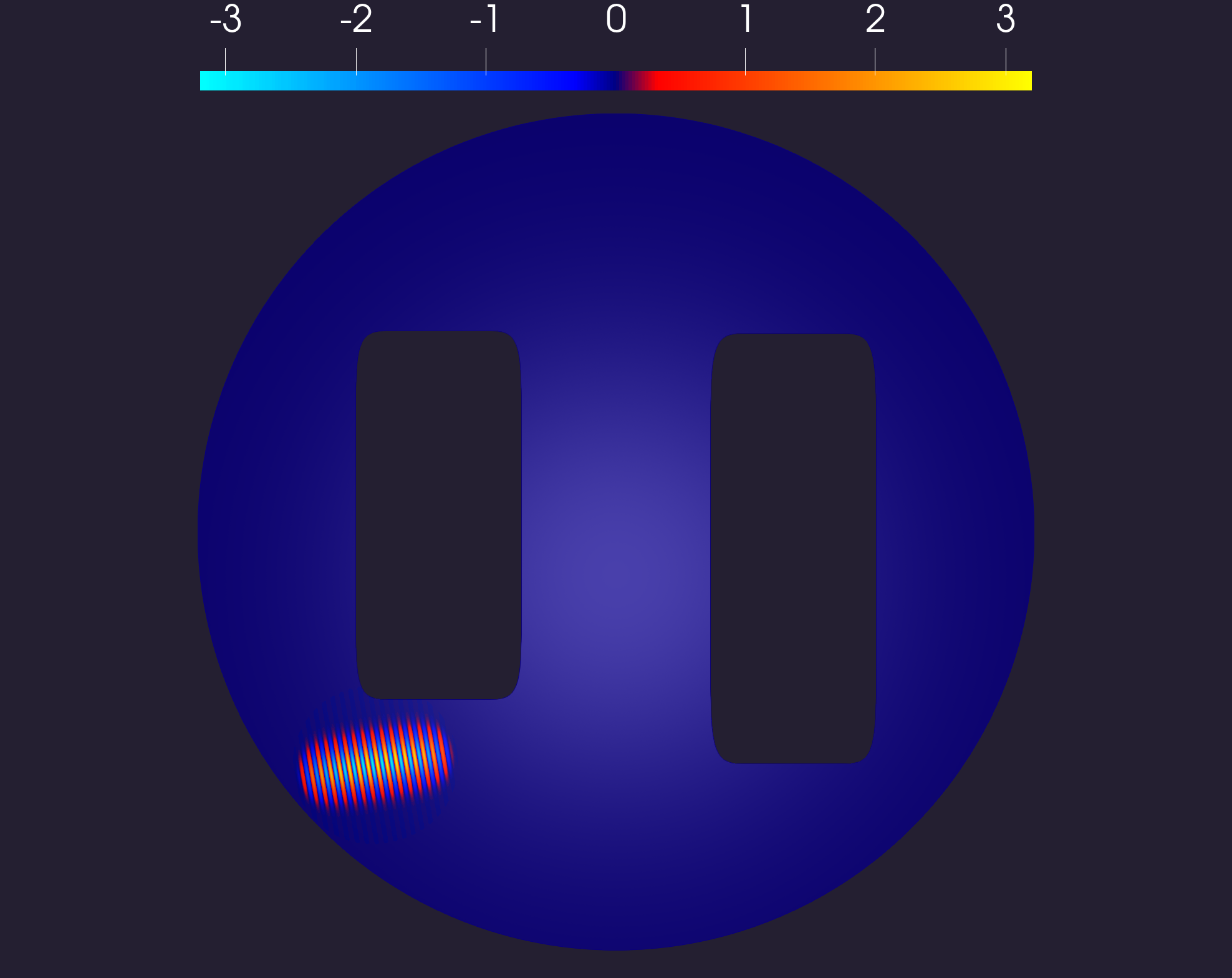}
		\caption{Data $f_{\rm out}$}
	\end{subfigure}
	\hfill
	\begin{subfigure}{0.48\textwidth}
		\includegraphics[width=0.95\textwidth]{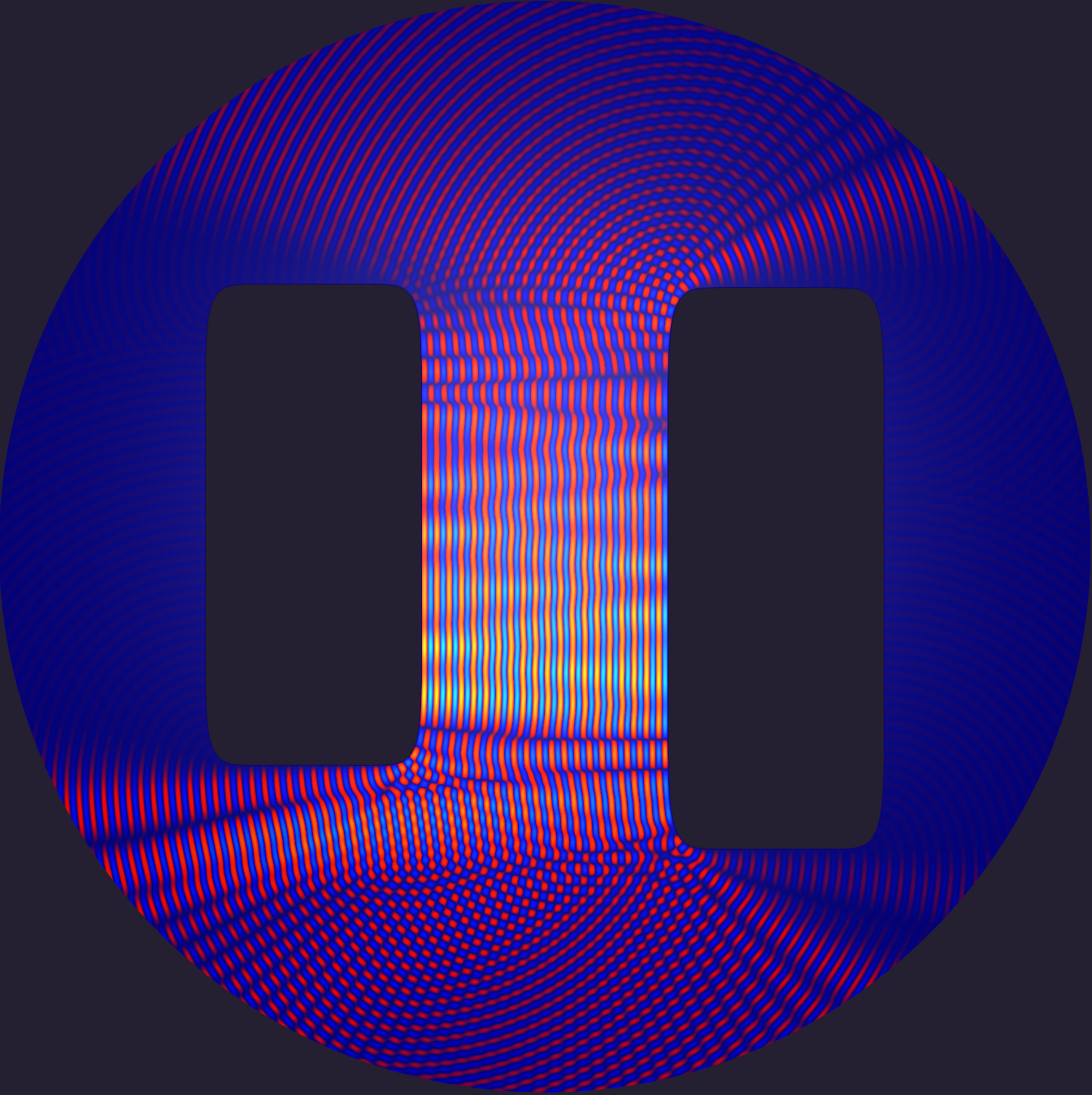}
		\caption{Solution $\textup{Re}(u_{\rm out})$}
	\end{subfigure}
	\caption{Top left: right-hand side $f_{\rm in}$. Top right: numerical approximation of the solution $u_{\rm in}$ with data $f_{\rm in}$. Bottom left: right-hand side $f_{\rm out}$. Bottom right: numerical approximation of the solution $u_{\rm out}$ with data $f_{\rm out}$. In these figures, $k = 40 \frac{\pi}{L_{\rm gap}} \approx 111$ and the functions $f$ and $u$ are truncated to a domain $\{x:|x|< R\} \cap \Omega_+$ where $R = 2.2$.}
		\label{fig:showNumericalSetting}
\end{figure}

\subsubsection{Reference solutions and their $k$-dependence}
Numerical approximations of the exact solutions $u_{\rm in}$ and $u_{\rm out}$ are computed using the FEM with piecewise polynomials of degree $p_{\rm ref}=4$. These numerical solutions are used as reference solutions to analyze the error in the FEM with $p=2$ throughout numerical experiments.  

By Theorem \ref{thm:DV}, there exists a constant $C > 0$ and, for each $N > 0$, a constant $C_N >0$ such that $\norm{u_{\rm in}}_{H^1_k(\Omega_\visible)} \leq C \sqrt{k \rho}$, $\norm{u_{\rm in}}_{H^1_k(\Omega_\invisible)} \leq C_N k^{-N}$, $\norm{u_{\rm out}}_{H^1_k(\Omega_\cavity)} \leq C \sqrt{k \rho}$ and $\norm{u_{\rm out}}_{H^1_k(\Omega_\visible \cup \Omega_\invisible)} \leq Ck$  for all $k \geq k_0$. This is illustrated in Figure \ref{fig:numericalResolvent}, where we observe the empirical rates $\norm{u_{\rm in}}_{H^1_k(\Omega_\cavity)} \approx C k^{1.7} \leq C k^2$, $\norm{u_{\rm in}}_{H^1_k(\Omega_\visible)} \approx C k^{1.2} \leq C k^{3/2}$, and  $\norm{u_{\rm out}}_{H^1_k(\Omega_\cavity)} \approx C k^{1.4} \leq C k^{3/2}$, $\norm{u_{\rm out}}_{H^1_k(\Omega_\visible)} \approx C k^{0.75} \leq C k$. 
The regimes that we consider are those of Table \ref{tab:regimes}.

\begin{figure}[htbp]
	\centering
	\begin{subfigure}{0.48\textwidth}
	\begin{tikzpicture}
		\draw node at (0,0){\includegraphics[width=0.95\textwidth]{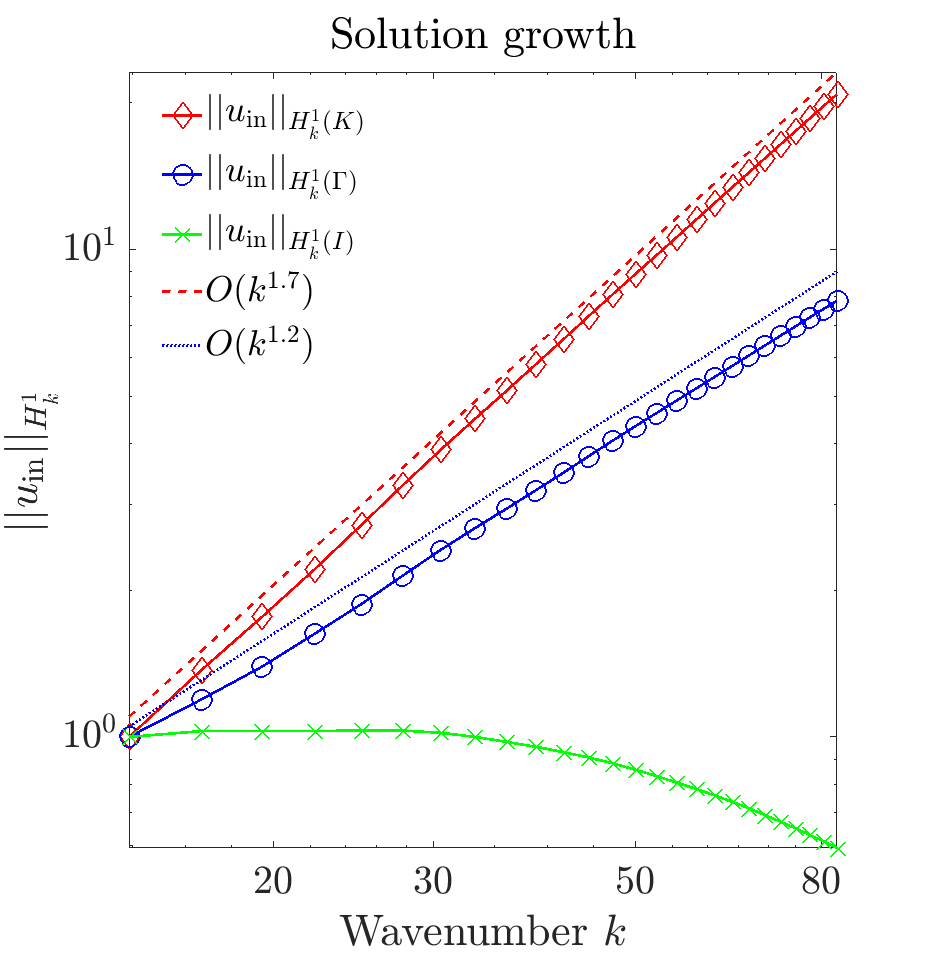}};
		\fill[color=white] (-1.9,.8) rectangle (-.5,3);
		\draw (-2.0,2.65) node[right]{\footnotesize{$\|u_{\rm in}\|_{_{H_k^1(\cavity)}}$}};
		\draw (-2.0,2.2) node[right]{\footnotesize{$\|u_{\rm in}\|_{_{H_k^1(\visible)}}$}};
		\draw (-2.0,1.75) node[right]{\footnotesize{$\|u_{\rm in}\|_{_{H_k^1(\invisible)}}$}};
		\draw (-2.0,1.35) node[right]{\footnotesize{$O(k^{1.7})$}};
		\draw (-2.0,.95) node[right]{\footnotesize{$O(k^{1.2})$}};
		\fill[color=white] (-2,-4)rectangle (2.1,-3.2);
		\draw node at (.125,-3.3){Wavenumber $k$};
		\fill[color=white] (-3.4,-1) rectangle (-2.9,4);
		\draw node[rotate=90] at (-3.2,.3){$\|u_{\rm in }\|_{H_k^1}$};
		\fill[color=white] (-2,3.1) rectangle (2,3.5);
		\draw node at (.125,3.3){Solution Growth};
		\end{tikzpicture}
	\end{subfigure}
	\hfill
	\begin{subfigure}{0.48\textwidth}
		\begin{tikzpicture}
		\draw node at (0,0){\includegraphics[width=0.95\textwidth]{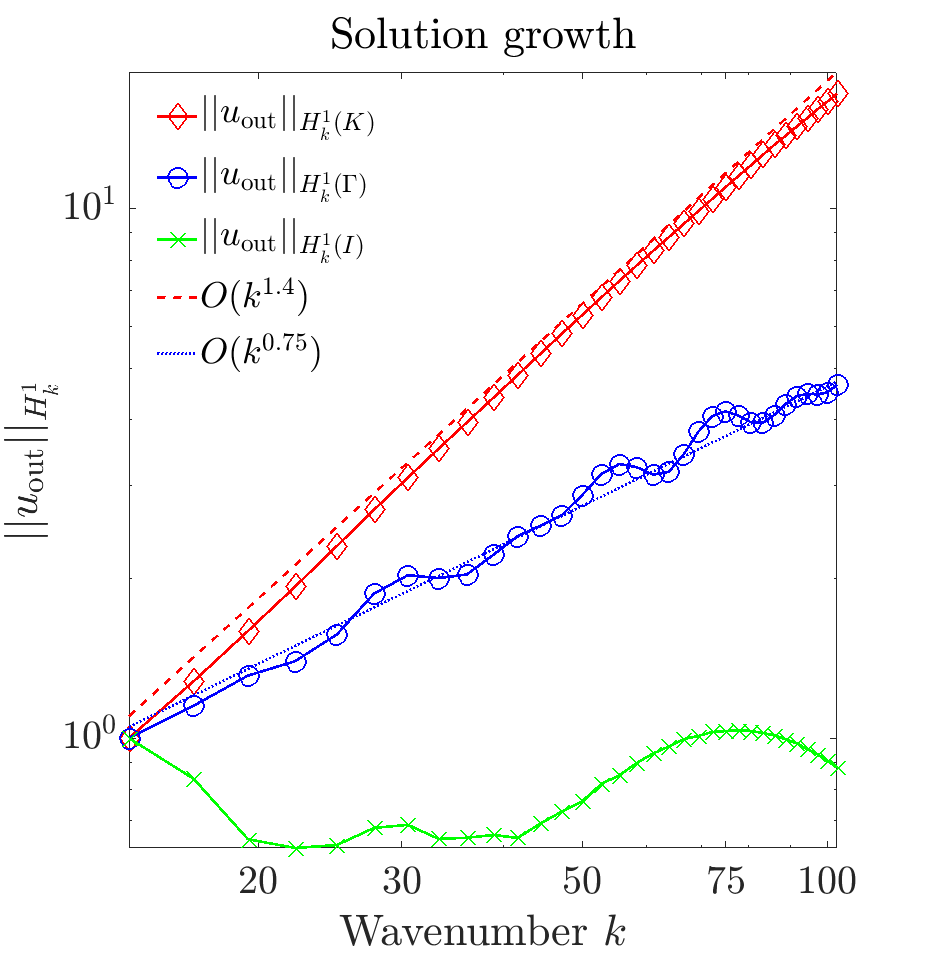}};
		\fill[color=white] (-2,.8) rectangle (-.5,3);
		\draw (-2.0,2.65) node[right]{\footnotesize{$\|u_{\rm out}\|_{_{H_k^1(\cavity)}}$}};
		\draw (-2.0,2.2) node[right]{\footnotesize{$\|u_{\rm out}\|_{_{H_k^1(\visible)}}$}};
		\draw (-2.0,1.75) node[right]{\footnotesize{$\|u_{\rm out}\|_{_{H_k^1(\invisible)}}$}};
		\draw (-2.0,1.35) node[right]{\footnotesize{$O(k^{1.4})$}};
		\draw (-2.0,.95) node[right]{\footnotesize{$O(k^{0.75})$}};
		\fill[color=white] (-2,-4)rectangle (2.1,-3.2);
		\draw node at (.125,-3.3){Wavenumber $k$};
		\fill[color=white] (-3.4,-1) rectangle (-2.9,4);
		\draw node[rotate=90] at (-3.2,.3){$\|u_{\rm out }\|_{H_k^1}$};
		\fill[color=white] (-2,3.1) rectangle (2,3.5);
		\draw node at (.125,3.3){Solution Growth};
		\end{tikzpicture}

	\end{subfigure}
	\caption{Left: growth of the solution $u_{\rm in}$. Right: growth of the solution $u_{\rm out}$. Solid red line (resp. blue, yellow): growth in the cavity (resp. the visible set, the invisible set).}
	\label{fig:numericalResolvent}
\end{figure}
\begin{table}[htbp]
	\centering
	\begin{tabular}{|c|c|c|}
		\hline
		Region & $\|u_{\rm in}\|_{H^1_k}$ & $\|u_{\rm out}\|_{H^1_k}$ 
		\\\hline&&\\[-0.8em]
		$\Omega_\cavity$ & $\approx k^{1.7}$ & $\approx k^{1.4}$ 
		\\\hline&&\\[-0.8em]
		$\Omega_\visible$ & $\approx k^{1.2}$ & $\approx k^{0.75}$\\\hline
	\end{tabular}
	\caption{Bounds on the $H^1_k$ norms of $u_{\rm in}$ and $u_{\rm out}$ inferred from Figure \ref{fig:numericalResolvent}.}
	\label{table:inferred}
\end{table}

\subsubsection{\blue{Meshing of the domain}}\label{s:meshing}

\blue{In the numerical experiments, the geometries described in \S\ref{s:geometries} are approximated by (non-uniform) simplicial meshes. Strictly speaking this setup is not covered by Theorem \ref{t:simple}, which requires (curved) meshes that fit the geometry exactly. However, the numerical experiments give quantitative agreement with the results of Theorem \ref{t:simple}, and we expect that the geometric error is small compared to the overall Galerkin error; see Remark \ref{r:geometricError} below.}

The non-uniform meshes used in the experiments are created using a feature of FreeFem++ allowing one to ``adapt" a mesh according to a custom  metric. For our purposes, we only require an isotropic metric, which is described by a scalar function $h: \Omega \to \R_+$ describing the local required mesh size. This function $h$ can be passed -- along with an initial, uniform mesh -- as an optional argument to the FreeFem++ ``adaptMesh" routine, which uses the BAMG algorithm \cite{hecht1998bamg}. We define $h \in C^\infty(\overline{\Omega})$ so that
\[\max_{x \in \Omega_\star} h(x)\leq h_\star,\]
where $\star\in \{\cavity, \visible, \invisible, \pml\}$, with $h_\star$ the corresponding mesh threshold. In some parts of $\Omega_{\star}$, the function $h$ can be significantly smaller than $h_\star$, for instance in intersections between two subdomains. 
 However, we enforce that $h(x) \equiv h_\star$ for all $x$ in a $k$-independent subset $\Omega_{\star}' \subset \Omega_{\star}$. 
Therefore, up to smooth transitions across regions, the metric is sharply described by $h_\star$.
In all the experiments, we take $h_\pml k$ to be constant. Since the solution in the PML region is not physically relevant, we do not display the errors in this region. 
 

\subsection{Numerical results}
In the numerical results, we compute a few important quantities under a variety of mesh conditions. The \emph{local quasioptimality (QO) constants} for $u_{\rm in/\rm out}$ are given by
\[\norm{u_{\rm in/out}-u_h}_{H^1_k(\Omega_\star)}\Big/\norm{u_{\rm in/out} - w_h}_{H^1_k(\Omega_\star)},\qquad \star\in\{ \cavity,\visible,\invisible\},\]
where  $u_h$ is the Galerkin solution and $w_h$ is the best approximation of $u_{\rm in / out}$ in the finite-element space.
The \emph{local-global relative error} is the Galerkin error in the $H^1_k$ norm in these regions, normalized by the global $H^1_k$ norm of the solution is given by
\[\norm{u_{\rm in/out}-u_h}_{H^1_k(\Omega_\star)}\Big/\norm{u_{\rm in/out}}_{H^1_k(\Omega)},\qquad \star\in\{\cavity,\visible\},\]

\subsubsection{Regime Uniform 1 (U1)}\label{sec:expU1}

The first numerical experiment uses the uniform mesh guaranteeing $k$-uniform quasioptimality. We choose
\[(h_\cavity k)^p k^2 = (h_\visible k)^p k^2 = (h_\invisible k)^p k^2 =: (hk)^p k^2 = C\]
where $C$ is independent of $k$. Figure \ref{fig:U1QO} plots the local QO constants and Figure \ref{fig:U1} plots the local-global relative errors.

\begin{figure}[H]
	\centering
	\begin{tikzpicture}
	\draw node at (0,0){\includegraphics[width=0.45\textwidth]{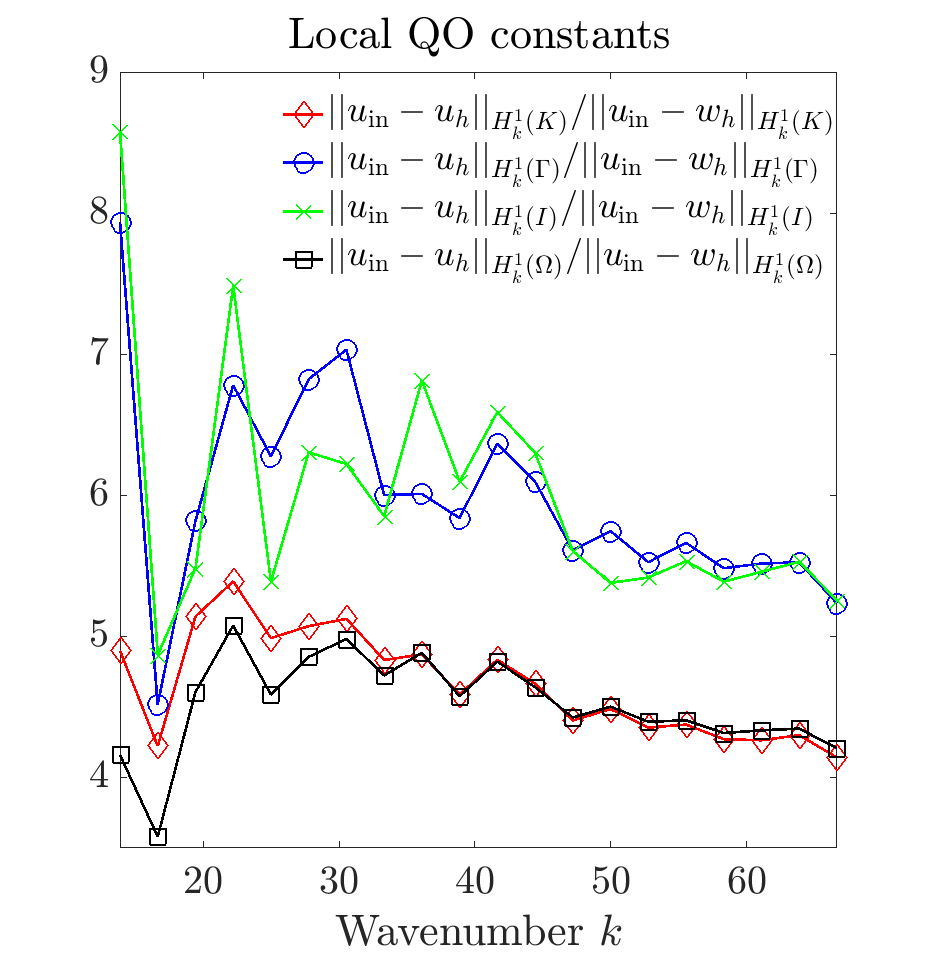}};
			\fill[color=white] (-1.025,1.25) rectangle (2.7,2.875);
		\draw (-1.125,2.6) node[right]{\tiny{$\|\!u_{\rm in}\!-\!u_h\!\|_{H_k^1(\cavity)}\!/\!\|\!u_{\rm in}\!-\!w_h\!\|_{H^1_{k}(\cavity)}$}};
		\draw (-1.125,2.25) node[right]{\tiny{$\|\!u_{\rm in}\!-\!u_h\!\|_{H_k^1(\visible)}\!/\!\|\!u_{\rm in}\!-\!w_h\!\|_{H^1_{k}(\visible)}$}};
		\draw (-1.125,1.9) node[right]{\tiny{$\|\!u_{\rm in}\!-\!u_h\!\|_{H_k^1(\invisible)}\!/\!\|\!u_{\rm in}\!-\!w_h\!\|_{H^1_{k}(\invisible)}$}};
		\draw (-1.125,1.5) node[right]{\tiny{$\|\!u_{\rm in}\!-\!u_h\!\|_{H_k^1(\Omega)}\!/\!\|\!u_{\rm in}\!-\!w_h\!\|_{H^1_{k}(\Omega)}$}};
		\fill[color=white] (-2,-4)rectangle (2.1,-3.1);
		\draw node at (.125,-3.3){Wavenumber $k$};
		\fill[color=white] (-2,3.1) rectangle (2,3.5);
		\draw node at (.125,3.3){Local QO Constants};
	\end{tikzpicture}
	\begin{tikzpicture}
	\draw node at (0,0){\includegraphics[width=0.45\textwidth]{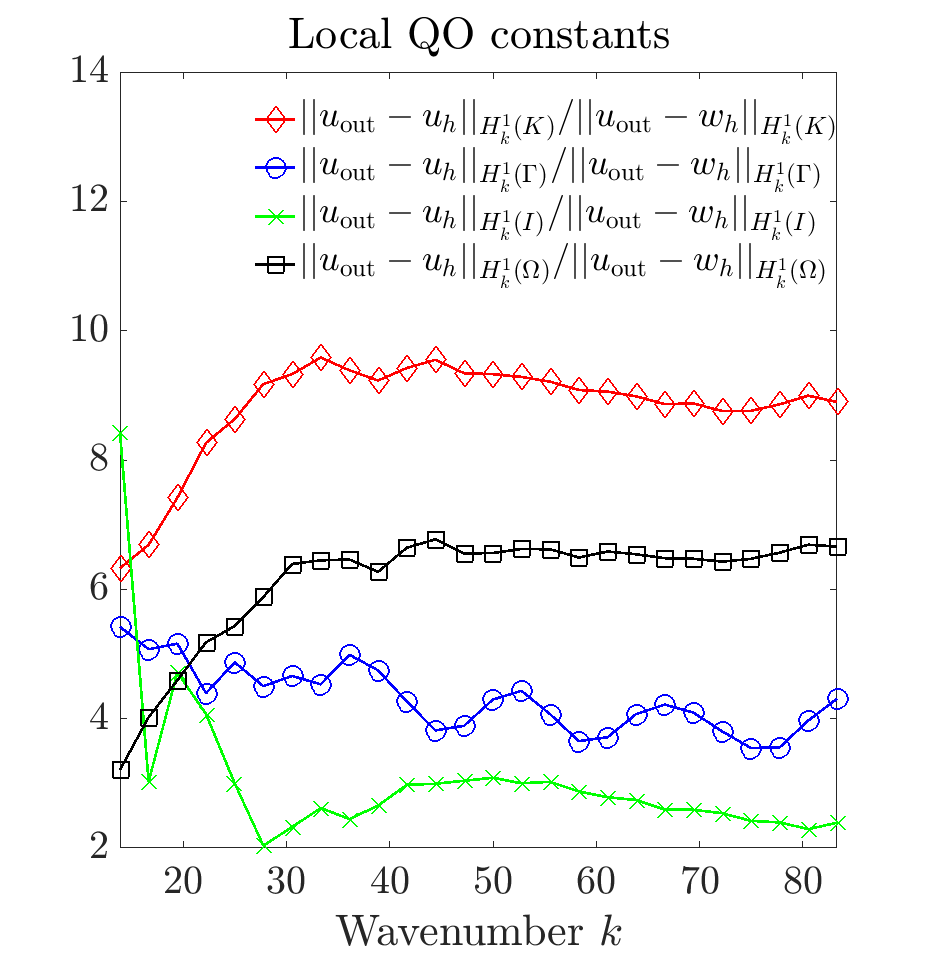}};
			\fill[color=white] (-1.2,1.25) rectangle (2.7,2.875);
		\draw (-1.35,2.6) node[right]{\tiny{$\|\!u_{\rm out}\!-\!u_h\!\|_{H_k^1(\cavity)}\!/\!\|\!u_{\rm out}\!-\!w_h\!\|_{H^1_{k}(\cavity)}$}};
		\draw (-1.35,2.25) node[right]{\tiny{$\|\!u_{\rm out}\!-\!u_h\!\|_{H_k^1(\visible)}\!/\!\|\!u_{\rm out}\!-\!w_h\!\|_{H^1_{k}(\visible)}$}};
		\draw (-1.35,1.9) node[right]{\tiny{$\|\!u_{\rm out}\!-\!u_h\!\|_{H_k^1(\invisible)}\!/\!\|\!u_{\rm out}\!-\!w_h\!\|_{H^1_{k}(\invisible)}$}};
		\draw (-1.35,1.5) node[right]{\tiny{$\|\!u_{\rm out}\!-\!u_h\!\|_{H_k^1(\Omega)}\!/\!\|\!u_{\rm out}\!-\!w_h\!\|_{H^1_{k}(\Omega)}$}};
		\fill[color=white] (-2,-4)rectangle (2.1,-3.1);
		\draw node at (.125,-3.3){Wavenumber $k$};
		\fill[color=white] (-2,3.1) rectangle (2,3.5);
		\draw node at (.125,3.3){Local QO Constants};
	\end{tikzpicture}
	\caption{QO constants for $u_{\rm in}$ (left) and $u_{\rm out}$ (right) in regime U1. Black squares: global QO constant. Red diamonds: local QO constant in the cavity. Blue circles: local QO constant in the visible set. Green crosses: local QO constant in the invisible set.}
	\label{fig:U1QO}
\end{figure}

\begin{figure}[H]
	\centering
	\begin{tikzpicture}
	\draw node at (0,0){\includegraphics[width=0.45\textwidth]{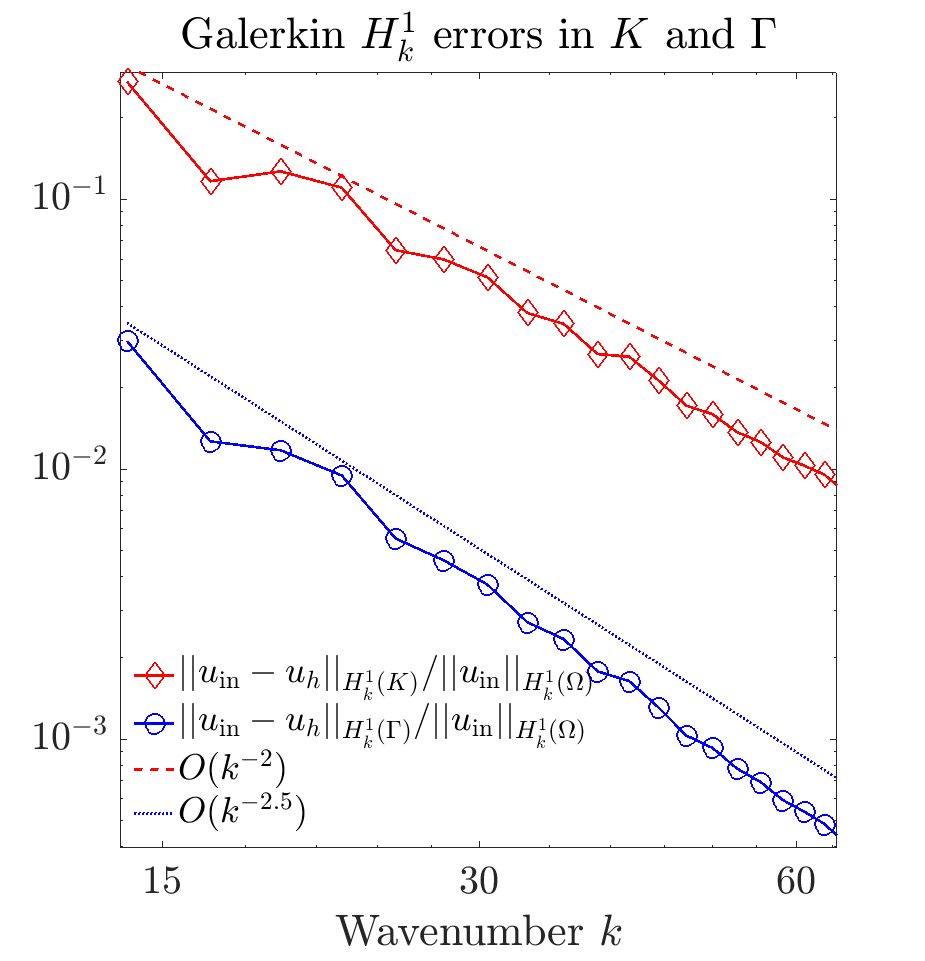}};
				\fill[color=white] (-2.1,-1.25) rectangle (.82,-2.6);
				\fill[color=white] (.82,-1.39) rectangle (.9,-2.6);
				\fill[color=white] (.9,-1.475) rectangle (1,-2.6);
		\draw (-2.25,-1.5) node[right]{\tiny{$\|\!u_{\rm in}\!-\!u_h\!\|_{H_k^1(\cavity)}\!/\!\|\!u_{\rm in}\!\|_{H^1_{k}(\Omega)}$}};
		\draw (-2.25,-1.85) node[right]{\tiny{$\|\!u_{\rm in}\!-\!u_h\!\|_{H_k^1(\visible)}\!/\!\|\!u_{\rm in}\!\|_{H^1_{k}(\Omega)}$}};
		\draw (-2.2,-2.1) node[right]{\tiny{$O(k^{-2})$}};
		\draw (-2.2,-2.4) node[right]{\tiny{$O(k^{-2.5})$}};
		\fill[color=white] (-2,-4)rectangle (2.1,-3.1);
		\draw node at (.125,-3.3){Wavenumber $k$};
		\fill[color=white] (-2.6,3.05) rectangle (2.6,3.5);
		\draw node at (.125,3.3){Galerkin $H_k^1$ errors in $\cavity$ and $\visible$};
	\end{tikzpicture}
		\begin{tikzpicture}
	\draw node at (0,0){	\includegraphics[width=0.45\textwidth]{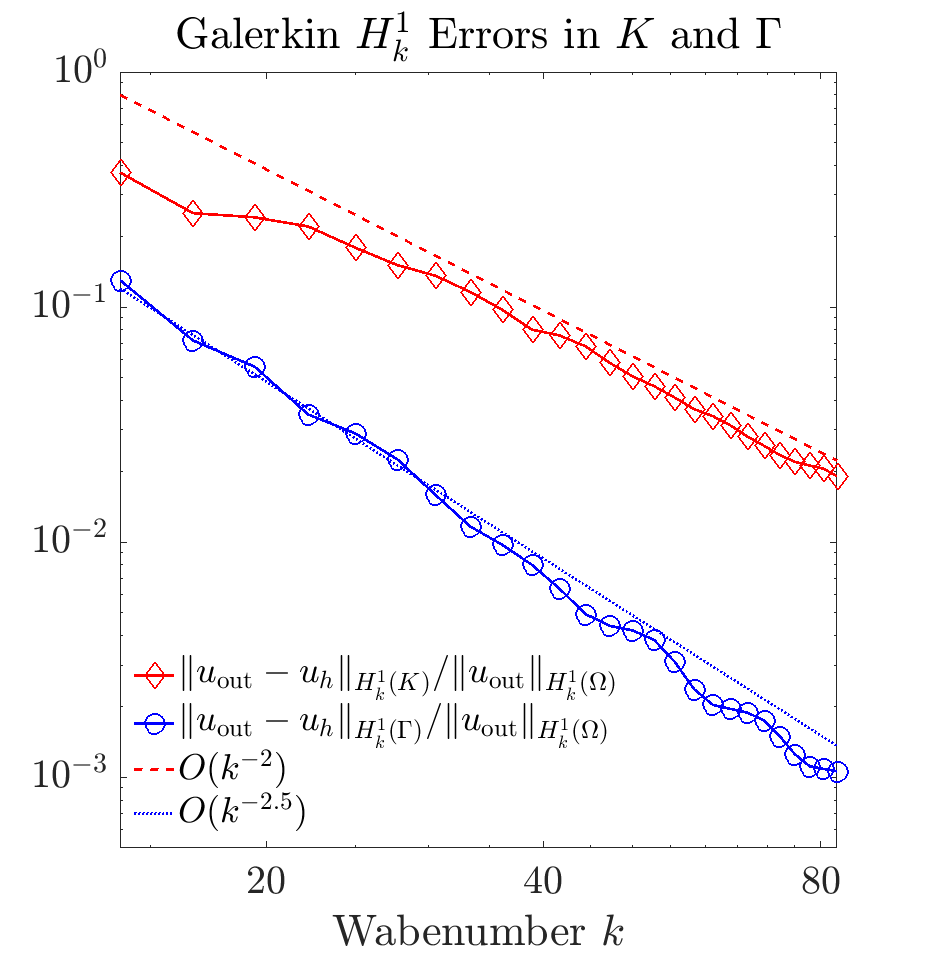}};
				\fill[color=white] (-2.1,-1.25) rectangle (1.3,-2.6);
		\draw (-2.25,-1.5) node[right]{\tiny{$\|\!u_{\rm out}\!-\!u_h\!\|_{H_k^1(\cavity)}\!/\!\|\!u_{\rm out}\!\|_{H^1_{k}(\Omega)}$}};
		\draw (-2.25,-1.85) node[right]{\tiny{$\|\!u_{\rm out}\!-\!u_h\!\|_{H_k^1(\visible)}\!/\!\|\!u_{\rm out}\!\|_{H^1_{k}(\Omega)}$}};
		\draw (-2.2,-2.1) node[right]{\tiny{$O(k^{-2})$}};
		\draw (-2.2,-2.4) node[right]{\tiny{$O(k^{-2.5})$}};
		\fill[color=white] (-2,-4)rectangle (2.1,-3.1);
		\draw node at (.125,-3.3){Wavenumber $k$};
		\fill[color=white] (-2.6,3.05) rectangle (2.6,3.5);
		\draw node at (.125,3.3){Galerkin $H_k^1$ errors in $\cavity$ and $\visible$};
	\end{tikzpicture}

	\caption{
		Local Galerkin errors in the $H^1_k$ norm in $\cavity$ and $\visible$ for the approximation of $u_{\rm in}$ (left) and $u_{\rm out}$ (right) in regime U1. Red diamonds: Galerkin error in $\cavity$. Blue circles: Galerkin error in $\visible$. A priori bounds represented as red dashed lines (for the cavity) and blue dotted lines (away from cavity). 
	}
	\label{fig:U1}
\end{figure} 

It is well-known that in U1, the Galerkin solution is globally $k$-uniformly quasi-optimal (see Table \ref{tab:regimes}), and this also follows from Theorem~\ref{t:simple} (see Corollary \ref{cor:U1}, using that all matrix entries are $\lesssim 1$). This fact is illustrated by the solid black curves in Figure \ref{fig:QOQO}. By Corollary \ref{cor:U1}, the inferred rates in Table \ref{table:inferred}, and the fact that $\rho(k) \geq C k^{2}$ at the wavenumbers chosen  in the experiments, the following a priori bounds for $u_{\rm in}$ and $u_{\rm out}$ can be obtained:

\[\begin{split}
	\frac{\|u_{\rm in} - u_h\|_{H^1_k(\Omega_\cavity)}}{\|u_{\rm in}\|_{H^1_k(\Omega)}} &\lesssim (hk)^p \frac{\|u_{\rm in}\|_{H^1_k(\Omega_\cavity)}}{\|u_{\rm in}\|_{H^1_k(\Omega)}} + \sqrt{\frac{k}{\rho}} (hk)^p \frac{\|u_{\rm in}\|_{H^1_k(\Omega_\visible)}}{\|u_{\rm in}\|_{H^1_k(\Omega)}} + \left(\frac{k}{\rho}\right)^{\frac32} \frac{1}{\rho} (hk)^p\frac{\|u_{\rm in}\|_{H^1_k(\Omega_\invisible)}}{\|u_{\rm in}\|_{H^1_k(\Omega)}} \\
	& \lesssim k^{-2} ,	\\
	\frac{\|u_{\rm in} - u_h\|_{H^1_k(\Omega_\visible)}}{\|u_{\rm in}\|_{H^1_k(\Omega)}} &\lesssim (hk)^p \underbrace{\sqrt{\frac{k}{\rho}}}_{k^{-0.5}}\frac{\|u_{\rm in}\|_{H^1_k(\Omega_\cavity)}}{\|u_{\rm in}\|_{H^1_k(\Omega)}} + (hk)^p \underbrace{\frac{\|u_{\rm in}\|_{H^1_k(\Omega_\visible)}}{\|u_{\rm in}\|_{H^1_k(\Omega)}}}_{k^{-0.5}} + \left(\frac{k}{\rho}\right)^{\frac32} \frac{1}{\rho} (hk)^p\frac{\|u_{\rm in}\|_{H^1_k(\Omega_\invisible)}}{\|u_{\rm in}\|_{H^1_k(\Omega)}} \\
	& \lesssim k^{-\frac52} 	,\\
	\frac{\|u_{\rm out} - u_h\|_{H^1_k(\Omega_\cavity)}}{\|u_{\rm out}\|_{H^1_k(\Omega)}} &\lesssim (hk)^p \frac{\|u_{\rm out}\|_{H^1_k(\Omega_\cavity)}}{\|u_{\rm out}\|_{H^1_k(\Omega)}} + \sqrt{\frac{k}{\rho}} (hk)^p \frac{\|u_{\rm out}\|_{H^1_k(\Omega_\visible)}}{\|u_{\rm out}\|_{H^1_k(\Omega)}} + \left(\frac{k}{\rho}\right)^{\frac32} \frac{1}{\rho} (hk)^p\frac{\|u_{\rm out}\|_{H^1_k(\Omega_\invisible)}}{\|u_{\rm out}\|_{H^1_k(\Omega)}} \\
	& \lesssim k^{-2} ,	\\
	\frac{\|u_{\rm out} - u_h\|_{H^1_k(\Omega_\visible)}}{\|u_{\rm out}\|_{H^1_k(\Omega)}} &\lesssim (hk)^p \underbrace{\sqrt{\frac{k}{\rho}}}_{k^{-0.5}}\frac{\|u_{\rm out}\|_{H^1_k(\Omega_\cavity)}}{\|u_{\rm out}\|_{H^1_k(\Omega)}} + (hk)^p \underbrace{\frac{\|u_{\rm out}\|_{H^1_k(\Omega_\visible)}}{\|u_{\rm out}\|_{H^1_k(\Omega)}}}_{k^{-0.65}} + \left(\frac{k}{\rho}\right)^{\frac32} \frac{1}{\rho} (hk)^p\frac{\|u_{\rm out}\|_{H^1_k(\Omega_\invisible)}}{\|u_{\rm out}\|_{H^1_k(\Omega)}} \\
	& \lesssim k^{-\frac52} 	.
\end{split}\]
Figure~\ref{fig:U1} shows that, at least experimentally, these rates are sharp. Furthermore, since $u$ is $k$-oscillatory, the results of~\cite{G1} imply that the standard polynomial approximation bounds are locally sharp, i.e. the local best approximation errors satisfy
\begin{equation*}\|u - w_{h,\cavity}\|_{H^1_k(\Omega_\cavity)} \geq C(hk)^p \|u\|_{H^1_k(\Omega_\cavity)},\qquad \|u - w_{h,\visible}\|_{H^1_k(\Omega_\visible)}  \geq C (hk)^p \|u\|_{H^1_k(\Omega_\visible)}.
\end{equation*}
Corollary \ref{cor:U1} then implies that the local quasi-optimality constants in each region are  $k$-uniformly bounded as well, i.e.,
\[\frac{\|u - u_h\|_{H^1_k(\Omega_\cavity)}}{\|u - w_{h,\cavity}\|_{H^1_k(\Omega_\cavity)}} \lesssim 1, \quad \frac{\|u - u_h\|_{H^1_k(\Omega_\visible)}}{\|u - w_{h,\visible}\|_{H^1_k(\Omega_\visible)}} \lesssim 1.\]
This is consistent with the behavior observed in Figure \ref{fig:U1QO}.

\subsubsection{Regime Quasioptimality (QO)}

In QO, we choose
\[(h_\cavity k)^p k^2 + (h_\visible k)^p k^{\frac32} + (h_\invisible k)^p k = C,\]
where $C$ is independent of $k$. By Corollary \ref{cor:QO}, the Galerkin solution is again $k$-uniformly globally quasi-optimal, see Table \ref{tab:regimes}. Figure \ref{fig:QOQO} shows the local quasi-optimality constants in each regions for the problems involving $u_{\rm in}$ and $u_{\rm out}$. Figure \ref{fig:QOerrors} plots local--global relative errors.

\begin{figure}[H]
	\centering
	
		\begin{tikzpicture}
	\draw node at (0,0){\includegraphics[width=0.45\textwidth]{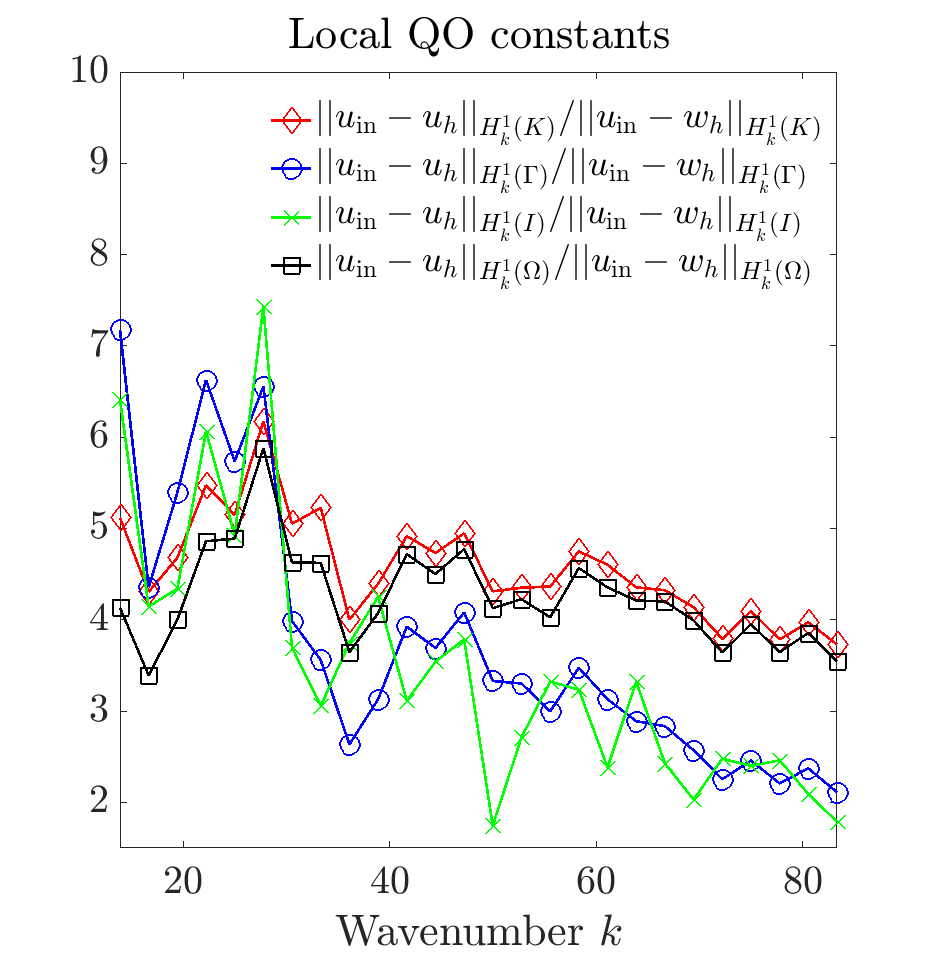}};
			\fill[color=white] (-1.1,1.25) rectangle (2.7,2.875);
		\draw (-1.175,2.6) node[right]{\tiny{$\|\!u_{\rm in}\!-\!u_h\!\|_{H_k^1(\cavity)}\!/\!\|\!u_{\rm in}\!-\!w_h\!\|_{H^1_{k}(\cavity)}$}};
		\draw (-1.175,2.25) node[right]{\tiny{$\|\!u_{\rm in}\!-\!u_h\!\|_{H_k^1(\visible)}\!/\!\|\!u_{\rm in}\!-\!w_h\!\|_{H^1_{k}(\visible)}$}};
		\draw (-1.175,1.9) node[right]{\tiny{$\|\!u_{\rm in}\!-\!u_h\!\|_{H_k^1(\invisible)}\!/\!\|\!u_{\rm in}\!-\!w_h\!\|_{H^1_{k}(\invisible)}$}};
		\draw (-1.175,1.5) node[right]{\tiny{$\|\!u_{\rm in}\!-\!u_h\!\|_{H_k^1(\Omega)}\!/\!\|\!u_{\rm in}\!-\!w_h\!\|_{H^1_{k}(\Omega)}$}};
		\fill[color=white] (-2,-4)rectangle (2.1,-3.1);
		\draw node at (.125,-3.3){Wavenumber $k$};
		\fill[color=white] (-2,3.1) rectangle (2,3.5);
		\draw node at (.125,3.3){Local QO Constants};
	\end{tikzpicture}
		\begin{tikzpicture}
	\draw node at (0,0){\includegraphics[width=0.45\textwidth]{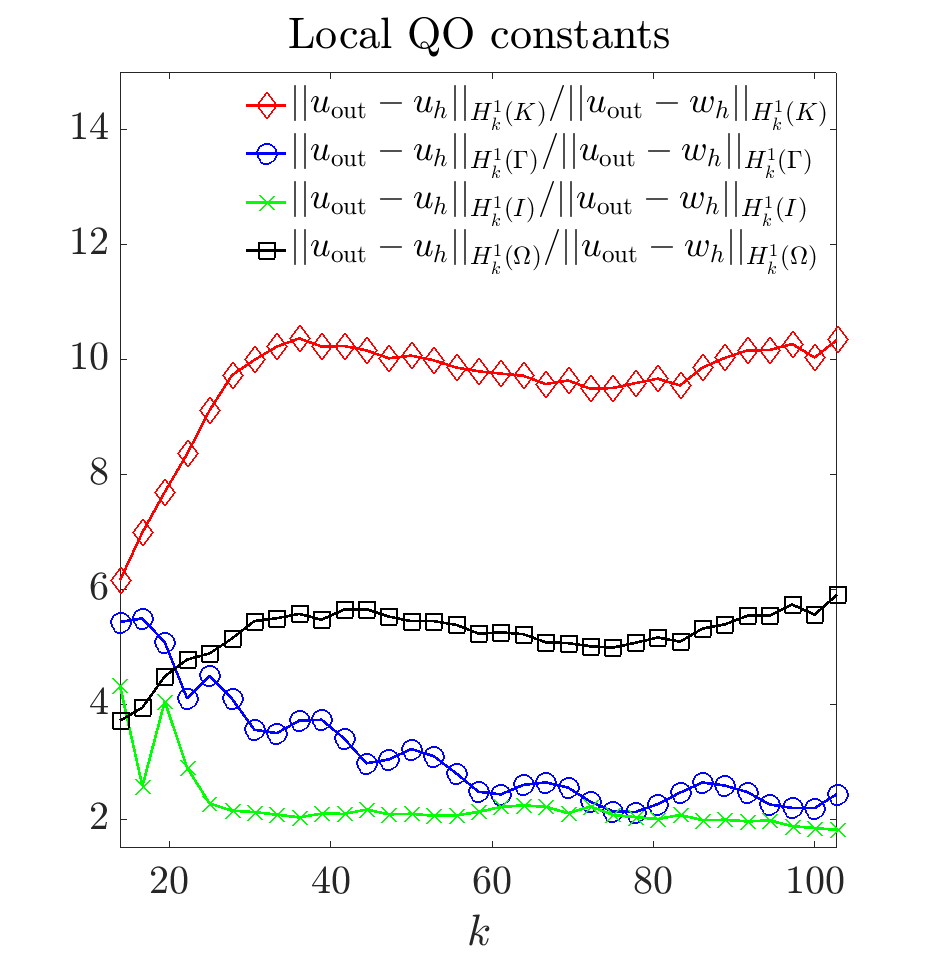}};
			\fill[color=white] (-1.3,1.25) rectangle (2.7,2.875);
		\draw (-1.4,2.675) node[right]{\tiny{$\|\!u_{\rm out}\!-\!u_h\!\|_{H_k^1(\cavity)}\!/\!\|\!u_{\rm out}\!-\!w_h\!\|_{H^1_{k}(\cavity)}$}};
		\draw (-1.4,2.325) node[right]{\tiny{$\|\!u_{\rm out}\!-\!u_h\!\|_{H_k^1(\visible)}\!/\!\|\!u_{\rm out}\!-\!w_h\!\|_{H^1_{k}(\visible)}$}};
		\draw (-1.4,1.975) node[right]{\tiny{$\|\!u_{\rm out}\!-\!u_h\!\|_{H_k^1(\invisible)}\!/\!\|\!u_{\rm out}\!-\!w_h\!\|_{H^1_{k}(\invisible)}$}};
		\draw (-1.4,1.575) node[right]{\tiny{$\|\!u_{\rm out}\!-\!u_h\!\|_{H_k^1(\Omega)}\!/\!\|\!u_{\rm out}\!-\!w_h\!\|_{H^1_{k}(\Omega)}$}};
		\fill[color=white] (-2,-4)rectangle (2.1,-3.1);
		\draw node at (.125,-3.3){Wavenumber $k$};
		\fill[color=white] (-2,3.1) rectangle (2,3.5);
		\draw node at (.125,3.3){Local QO Constants};
	\end{tikzpicture}
	
	\caption{Local QO constants for $u_{\rm in}$ (left) and $u_{\rm out}$ (right) in the regime QO. Black squares: global QO constant. Red diamonds: local QO constant in the cavity. Blue circles: local QO constant in the visible set. Green crosses: local QO constant in the invisible set. }
	\label{fig:QOQO}
\end{figure}

\begin{figure}[h]
	\centering
	
		\begin{tikzpicture}
	\draw node at (0,0){\includegraphics[width=0.45\textwidth]{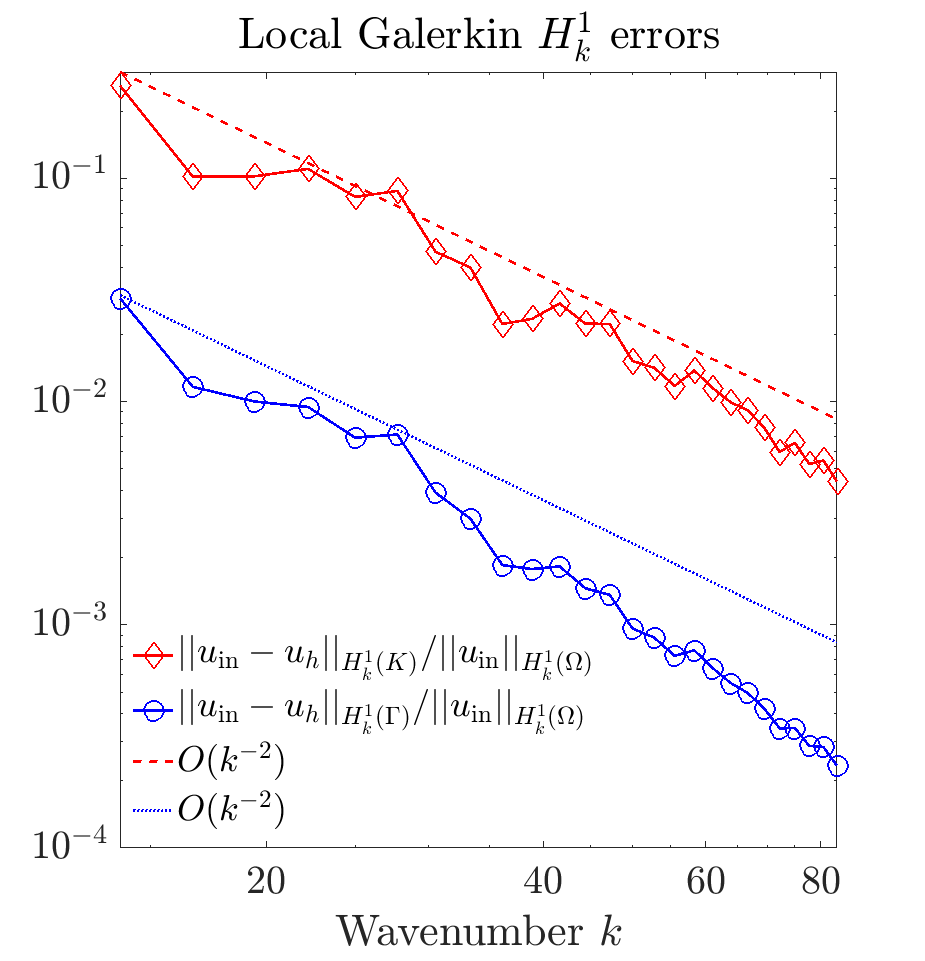}};
				\fill[color=white] (-2.1,-1.05) rectangle (1,-2.6);
		\draw (-2.2,-1.35) node[right]{\tiny{$\|\!u_{\rm in}\!-\!u_h\!\|_{H_k^1(\cavity)}\!/\!\|\!u_{\rm in}\!\|_{H^1_{k}(\Omega)}$}};
		\draw (-2.2,-1.75) node[right]{\tiny{$\|\!u_{\rm in}\!-\!u_h\!\|_{H_k^1(\visible)}\!/\!\|\!u_{\rm in}\!\|_{H^1_{k}(\Omega)}$}};
		\draw (-2.15,-2) node[right]{\tiny{$O(k^{-2})$}};
		\draw (-2.15,-2.4) node[right]{\tiny{$O(k^{-2})$}};
		\fill[color=white] (-2,-4)rectangle (2.1,-3.1);
		\draw node at (.125,-3.3){Wavenumber $k$};
		\fill[color=white] (-2.6,3.05) rectangle (2.6,3.5);
		\draw node at (.125,3.3){Galerkin $H_k^1$ errors in $\cavity$ and $\visible$};
	\end{tikzpicture}
			\begin{tikzpicture}
	\draw node at (0,0){		\includegraphics[width=0.45\textwidth]{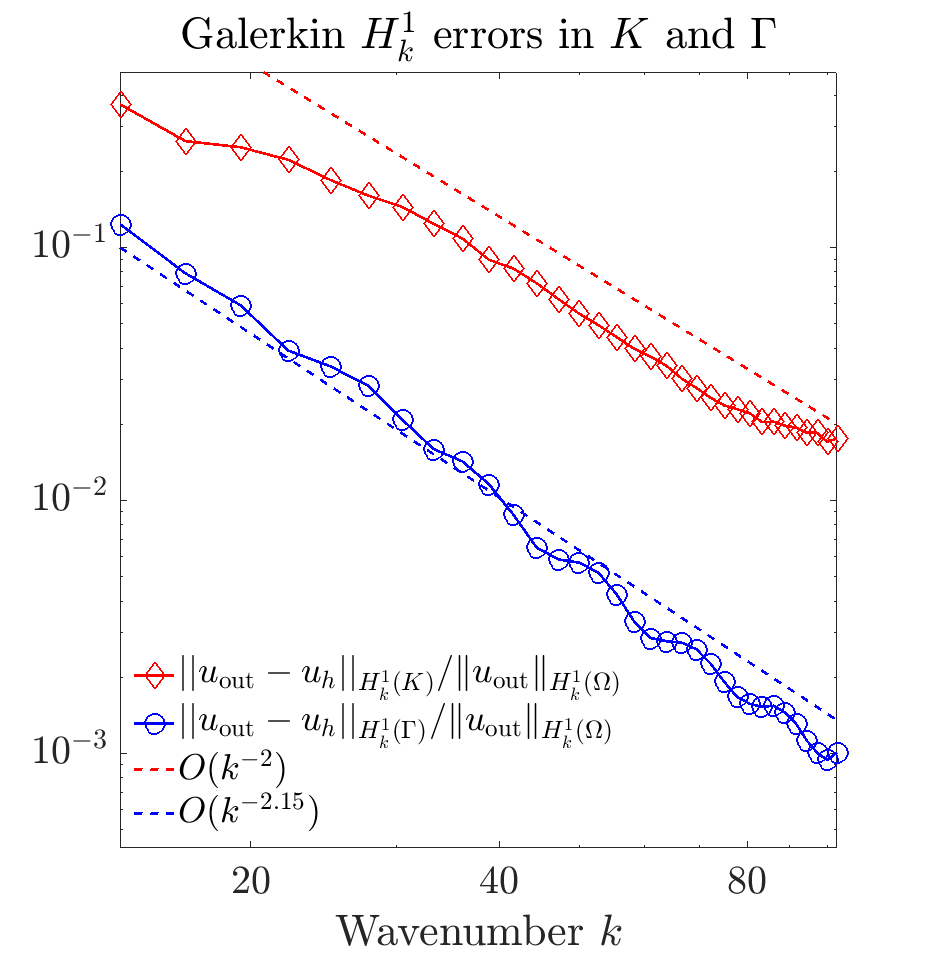}};
				\fill[color=white] (-2.1,-1.25) rectangle (1.3,-2.6);
		\draw (-2.2,-1.5) node[right]{\tiny{$\|\!u_{\rm out}\!-\!u_h\!\|_{H_k^1(\cavity)}\!/\!\|\!u_{\rm out}\!\|_{H^1_{k}(\Omega)}$}};
		\draw (-2.2,-1.85) node[right]{\tiny{$\|\!u_{\rm out}\!-\!u_h\!\|_{H_k^1(\visible)}\!/\!\|\!u_{\rm out}\!\|_{H^1_{k}(\Omega)}$}};
		\draw (-2.2,-2.1) node[right]{\tiny{$O(k^{-2})$}};
		\draw (-2.2,-2.4) node[right]{\tiny{$O(k^{-2.15})$}};
		\fill[color=white] (-2,-4)rectangle (2.1,-3.1);
		\draw node at (.125,-3.3){Wavenumber $k$};
		\fill[color=white] (-2.6,3.05) rectangle (2.6,3.5);
		\draw node at (.125,3.3){Galerkin $H_k^1$ errors in $\cavity$ and $\visible$};
	\end{tikzpicture}

	\caption{Local Galerkin errors in the $H^1_k$ norm in $\cavity$ and $\visible$ for the approximation of $u_{\rm in}$ (left) and $u_{\rm out}$ (right) in the regime QO. Red diamonds: error in the cavity. Blue circles: error away from the cavity. A priori bounds represented as red dashed lines (for the cavity) and blue dotted lines (away from cavity).}
	\label{fig:QOerrors}
\end{figure}

%
By Corollary \ref{cor:QO}, the inferred rates in Table \ref{table:inferred}, and the fact that $\rho(k) \geq C k^{2}$, the a priori bounds for $u_{\rm in}$ and $u_{\rm out}$ in each region are given by 
\[\begin{split}
	\frac{\|u_{\rm in} - u_h\|_{H^1_k(\Omega_\cavity)}}{\|u_{\rm in}\|_{H^1_k(\Omega)}} &\lesssim \underbrace{(h_\cavity k)^p}_{k^{-2}} \frac{\|u_{\rm in}\|_{H^1_k(\Omega_\cavity)}}{\|u_{\rm in}\|_{H^1_k(\Omega)}} + \underbrace{(h_\visible k)^p}_{k^{-3/2}} \underbrace{\frac{\|u_{\rm in}\|_{H^1_k(\Omega_\visible)}}{\|u_{\rm in}\|_{H^1_k(\Omega)}}}_{k^{-\frac12}} + (k\rho)^{-\frac{1}{2}} (h_\invisible k)^p\frac{\|u_{\rm in}\|_{H^1_k(\Omega_\invisible)}}{\|u_{\rm in}\|_{H^1_k(\Omega)}} \\
	& \lesssim k^{-2} ,	\\
	\frac{\|u_{\rm in} - u_h\|_{H^1_k(\Omega_\visible)}}{\|u_{\rm in}\|_{H^1_k(\Omega)}} &\lesssim \underbrace{(h_\cavity k)^p}_{k^{-2}} \frac{\|u_{\rm in}\|_{H^1_k(\Omega_\cavity)}}{\|u_{\rm in}\|_{H^1_k(\Omega)}} + \underbrace{(h_\visible k)^p}_{k^{-1}} \underbrace{\frac{\|u_{\rm in}\|_{H^1_k(\Omega_\visible)}}{\|u_{\rm in}\|_{H^1_k(\Omega)}}}_{k^{-0.5}} +  (h_\invisible k)^p\frac{\|u_{\rm in}\|_{H^1_k(\Omega_\invisible)}}{\|u_{\rm in}\|_{H^1_k(\Omega)}} 
	 \lesssim k^{-1.5} 	,
	 \end{split}\]\[
	 \begin{split}
	\frac{\|u_{\rm out} - u_h\|_{H^1_k(\Omega_\cavity)}}{\|u_{\rm out}\|_{H^1_k(\Omega)}} &\lesssim \underbrace{(h_\cavity k)^p}_{k^{-2}} \frac{\|u_{\rm out}\|_{H^1_k(\Omega_\cavity)}}{\|u_{\rm out}\|_{H^1_k(\Omega)}} + \underbrace{(h_\visible k)^p}_{k^{-\frac32}} \underbrace{\frac{\|u_{\rm out}\|_{H^1_k(\Omega_\visible)}}{\|u_{\rm out}\|_{H^1_k(\Omega)}}}_{k^{-0.65}} +   (k\rho)^{-\frac12}(h_\invisible k)^p\frac{\|u_{\rm out}\|_{H^1_k(\Omega_\invisible)}}{\|u_{\rm out}\|_{H^1_k(\Omega)}} \\
	& \lesssim k^{-2} ,	\\
	\frac{\|u_{\rm out} - u_h\|_{H^1_k(\Omega_\visible)}}{\|u_{\rm out}\|_{H^1_k(\Omega)}} &\lesssim \underbrace{(h_\cavity k)^p}_{k^{-2}} \underbrace{\sqrt{\frac{k}{\rho}}}_{k^{-0.5}}\frac{\|u_{\rm out}\|_{H^1_k(\Omega_\cavity)}}{\|u_{\rm out}\|_{H^1_k(\Omega)}} + \underbrace{(h_\visible k)^p}_{k^{-\frac32}} \underbrace{\frac{\|u_{\rm out}\|_{H^1_k(\Omega_\visible)}}{\|u_{\rm out}\|_{H^1_k(\Omega)}}}_{k^{-0.65}} + (h_\invisible k)^p\frac{\|u_{\rm out}\|_{H^1_k(\Omega_\invisible)}}{\|u_{\rm out}\|_{H^1_k(\Omega)}} \\
	& \lesssim k^{-2.15} 	.
\end{split}\]
Again, these rates are experimentally verified in Figure \ref{fig:QOerrors}.

%

\subsubsection{Regime Quasioptimality away (QO away)}

In QO away, we choose
\begin{equation*}
\label{e:weakMeshQO}(h_\cavity k)^p k^{3/2} = (h_\visible k)^p k = (h_\invisible k)^p k =: (hk)^p k^2 = C,
\end{equation*}
where $C$ is independent of $k$. 
Theorem \ref{t:simple} no longer guarantees $k$-uniform quasi-optimality, but Corollary \ref{cor:QOaway} and the conjecture that $\rho(k) \leq C k^2$ imply the following bounds for the ``QO constants" (not to be confused with ``local QO constants" -- notice the global norm of the best approximation error in the denominator instead of the local norm in the local QO constants) 
\begin{equation}
	\label{eqs:placeDuRalliement}
	\frac{\|u - u_h\|_{H^1_k(\Omega_\cavity)}}{\|u - w_h\|_{H^1_k(\Omega)}} \lesssim \sqrt{k},\quad \frac{\|u - u_h\|_{H^1_k(\Omega_\visible)}}{\|u - w_h\|_{H^1_k(\Omega)}} \lesssim 1, \quad \frac{\|u - u_h\|_{H^1_k(\Omega_\invisible)}}{\|u - w_h\|_{H^1_k(\Omega)}} \lesssim 1,
\end{equation}
hence, the Galerkin solution remains $k$-uniform quasioptimal away from the cavity in the regime QO away (see also Table \ref{tab:regimes}). 

Figure \ref{fig:QOaway} shows the QO constants in each regions for the problems involving $u_{\rm in}$ and $u_{\rm out}$. The QO constant in the invisible set is orders of magnitude smaller than the other quantities, so it is not displayed. Figure \ref{fig:QOawayErrors} plots the local-global relative errors.

\begin{figure}[H]
	\centering
			\begin{tikzpicture}
	\draw node at (0,0){\includegraphics[width=0.45\textwidth]{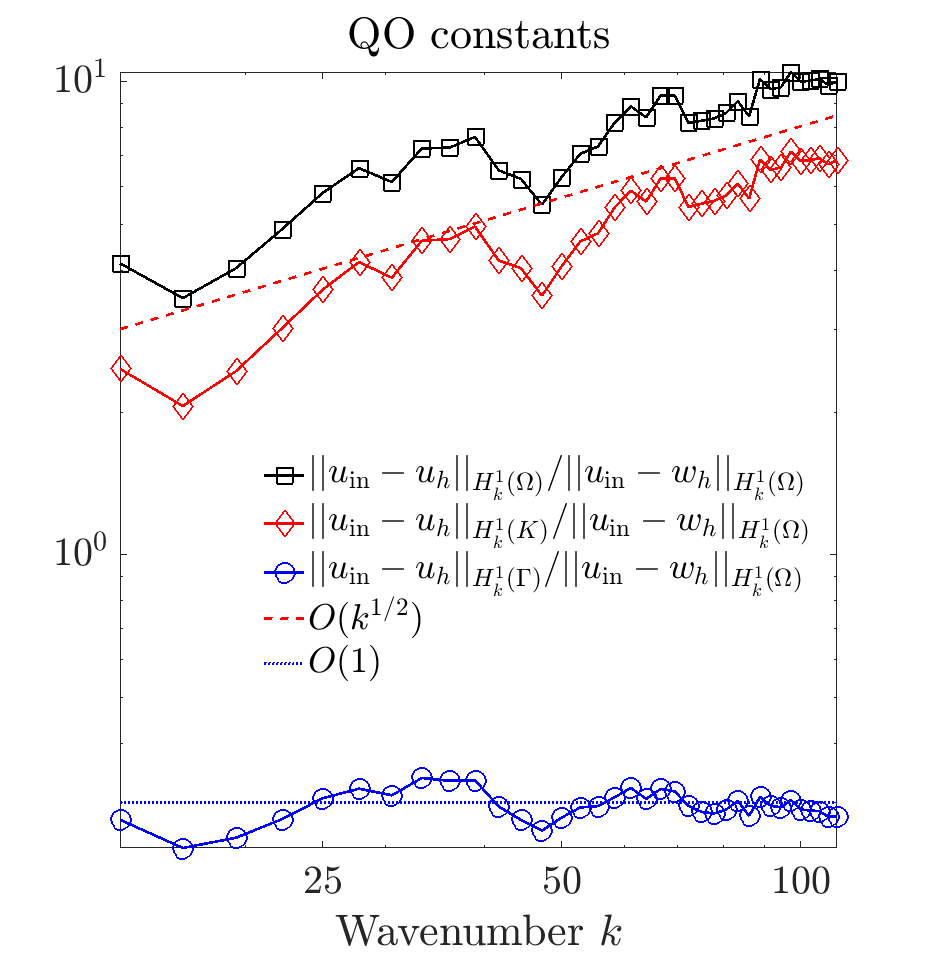}};
				\fill[color=white] (-1.15,.3) rectangle (2.6,-1.6);
				\draw (-1.25,-.025) node[right]{\tiny{$\|u_{\rm in}\!-\!u_h\!\|_{H_k^1(\Omega)}\!/\!\|u_{\rm in}\!-\!w_h\!\|_{H^1_{k}(\Omega)}$}};
		\draw (-1.25,-.375) node[right]{\tiny{$\|u_{\rm in}\!-\!u_h\!\|_{H_k^1(\cavity)}\!/\!\|u_{\rm in}\!-\!w_h\!\|_{H^1_{k}(\Omega)}$}};
		\draw (-1.25,-.725) node[right]{\tiny{$\|u_{\rm in}\!-\!u_h\!\|_{H_k^1(\visible)}\!/\!\|u_{\rm in}-\!w_h\!\|_{H^1_{k}(\Omega)}$}};
		\draw (-1.25,-.975) node[right]{\tiny{$O(k^{0.5})$}};
		\draw (-1.25,-1.325) node[right]{\tiny{$O(1)$}};
		\fill[color=white] (-2,-4)rectangle (2.1,-3.1);
		\draw node at (.125,-3.3){Wavenumber $k$};
		\fill[color=white] (-2.6,3.05) rectangle (2.6,3.5);
		\draw node at (.125,3.3){QO Constants};
	\end{tikzpicture}	
				\begin{tikzpicture}
	\draw node at (0,0){\includegraphics[width=0.45\textwidth]{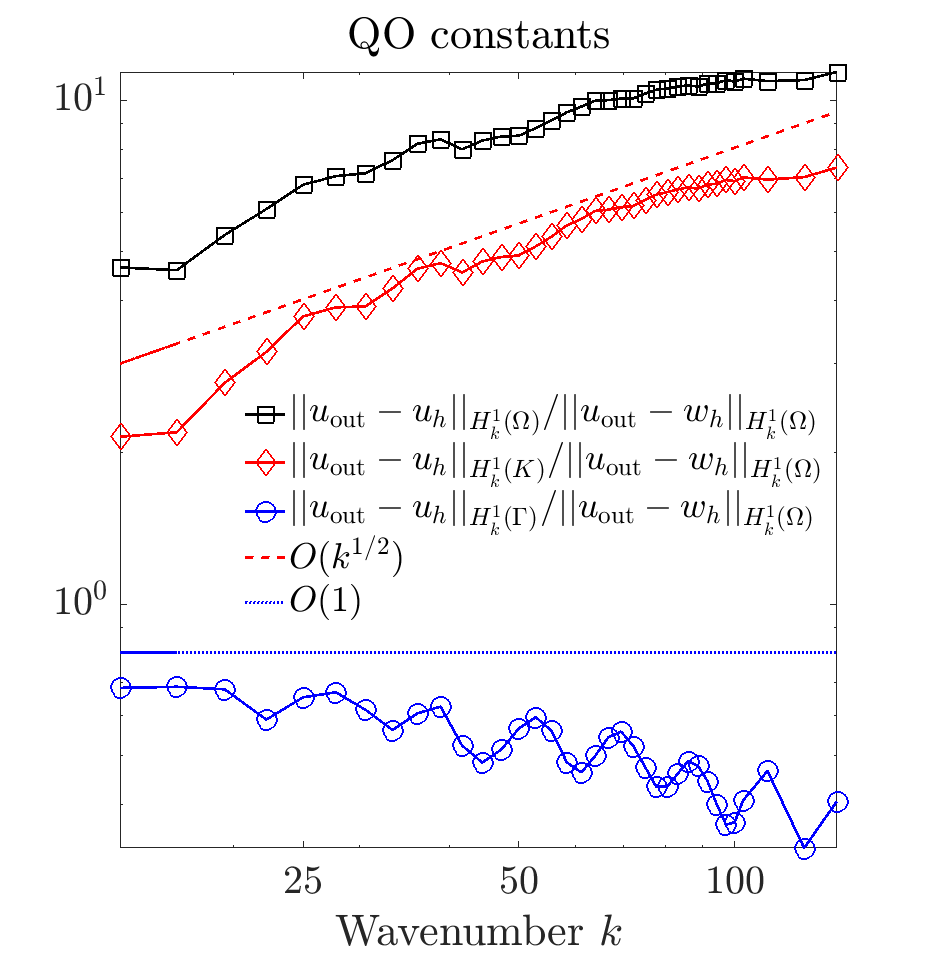}};
				\fill[color=white] (-1.25,.65) rectangle (2.6,-1.2);
				\draw (-1.425,.4) node[right]{\tiny{$\|u_{\rm out}\!-\!u_h\!\|_{H_k^1(\Omega)}\!/\!\|u_{\rm out}\!-\!w_h\!\|_{H^1_{k}(\Omega)}$}};
		\draw (-1.425,.05) node[right]{\tiny{$\|u_{\rm out}\!-\!u_h\!\|_{H_k^1(\cavity)}\!/\!\|u_{\rm out}\!-\!w_h\!\|_{H^1_{k}(\Omega)}$}};
		\draw (-1.425,-.3) node[right]{\tiny{$\|u_{\rm out}\!-\!u_h\!\|_{H_k^1(\visible)}\!/\!\|u_{\rm out}-\!w_h\!\|_{H^1_{k}(\Omega)}$}};
		\draw (-1.4,-.55) node[right]{\tiny{$O(k^{0.5})$}};
		\draw (-1.4,-.9) node[right]{\tiny{$O(1)$}};
		\fill[color=white] (-2,-4)rectangle (2.1,-3.1);
		\draw node at (.125,-3.3){Wavenumber $k$};
		\fill[color=white] (-2.6,3.05) rectangle (2.6,3.5);
		\draw node at (.125,3.3){QO Constants};
	\end{tikzpicture}
	\caption{QO constants for $u_{\rm in}$ (left) and $u_{\rm out}$ (right) in the regime QO away. Black squares: global QO constant. Red diamonds: QO constant in $\cavity$. Blue circles: QO constant in $\Gamma$. A priori bounds represented as red dashed lines (for the cavity) and blue dotted lines (away from cavity).}
	\label{fig:QOaway}
\end{figure}
\begin{figure}[H]
	\centering
			\begin{tikzpicture}
	\draw node at (0,0){	\includegraphics[width=0.45\textwidth]{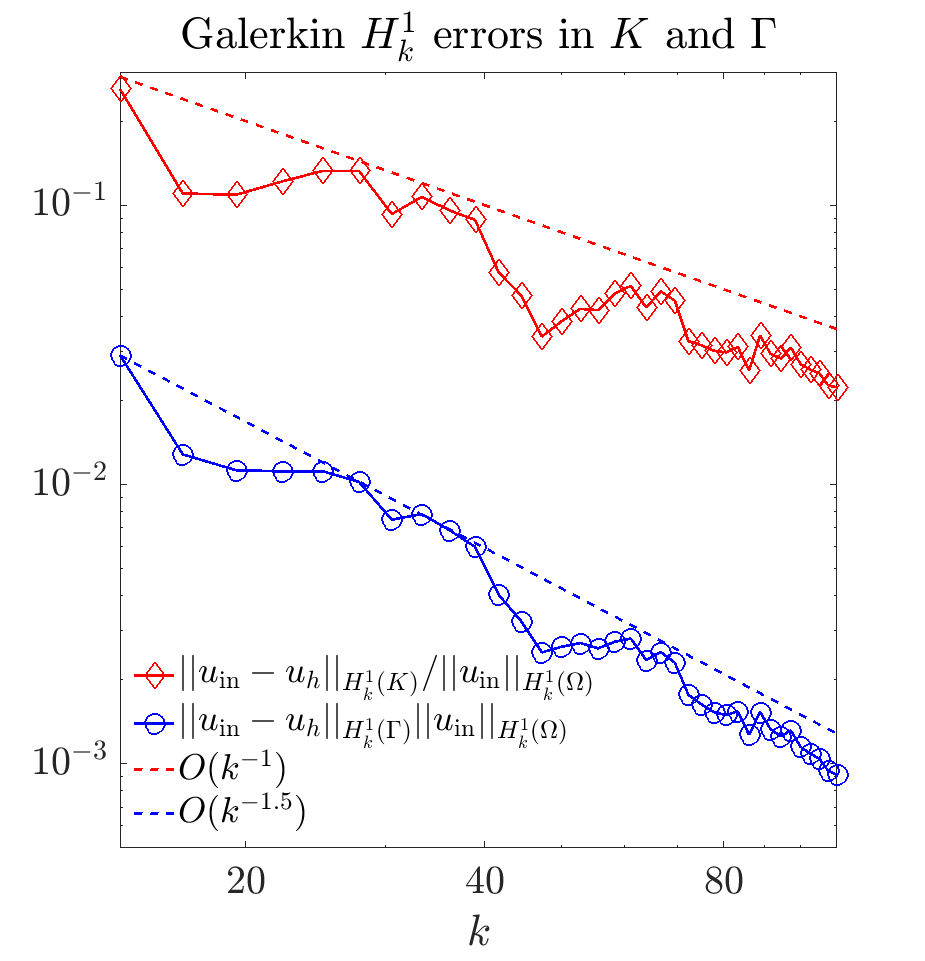}};
				\fill[color=white] (-2.1,-1.35) rectangle (1,-2.6);
				\fill[color=white] (-2.1,-1.25) rectangle (.5,-2.6);
		\draw (-2.2,-1.5) node[right]{\tiny{$\|\!u_{\rm in}\!-\!u_h\!\|_{H_k^1(\cavity)}\!/\!\|\!u_{\rm in}\!\|_{H^1_{k}(\Omega)}$}};
		\draw (-2.2,-1.85) node[right]{\tiny{$\|\!u_{\rm in}\!-\!u_h\!\|_{H_k^1(\visible)}\!/\!\|\!u_{\rm in}\!\|_{H^1_{k}(\Omega)}$}};
		\draw (-2.15,-2.1) node[right]{\tiny{$O(k^{-1})$}};
		\draw (-2.15,-2.4) node[right]{\tiny{$O(k^{-1.5})$}};
		\fill[color=white] (-2,-4)rectangle (2.1,-3.1);
		\draw node at (.125,-3.3){Wavenumber $k$};
		\fill[color=white] (-2.6,3.05) rectangle (2.6,3.5);
		\draw node at (.125,3.3){Galerkin $H_k^1$ errors in $\cavity$ and $\visible$};
	\end{tikzpicture}
				\begin{tikzpicture}
	\draw node at (0,0){\includegraphics[width=0.45\textwidth]{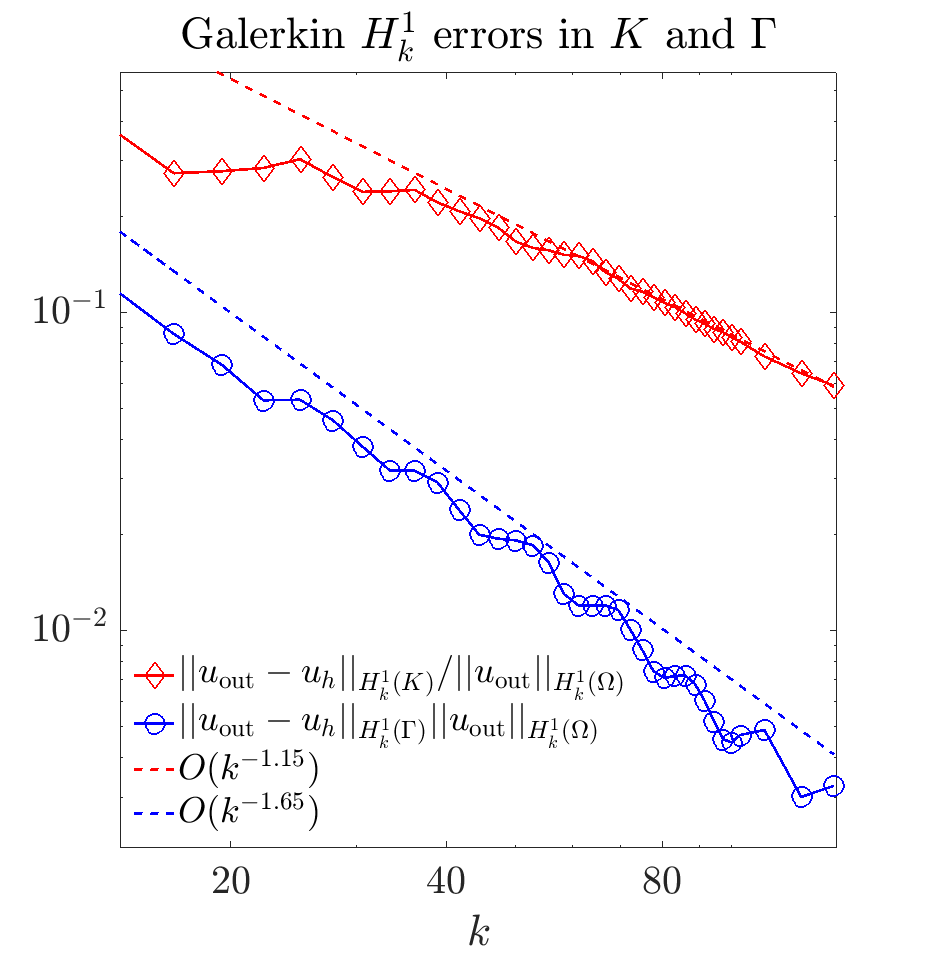}};
				\fill[color=white] (-2.1,-1.35) rectangle (1,-2.6);
				\fill[color=white] (-2.1,-1.25) rectangle (1.2,-2.6);
		\draw (-2.2,-1.5) node[right]{\tiny{$\|\!u_{\rm out}\!-\!u_h\!\|_{H_k^1(\cavity)}\!/\!\|\!u_{\rm out}\!\|_{H^1_{k}(\Omega)}$}};
		\draw (-2.2,-1.85) node[right]{\tiny{$\|\!u_{\rm out}\!-\!u_h\!\|_{H_k^1(\visible)}\!/\!\|\!u_{\rm out}\!\|_{H^1_{k}(\Omega)}$}};
		\draw (-2.15,-2.1) node[right]{\tiny{$O(k^{-1.15})$}};
		\draw (-2.15,-2.4) node[right]{\tiny{$O(k^{-1.65})$}};
		\fill[color=white] (-2,-4)rectangle (2.1,-3.1);
		\draw node at (.125,-3.3){Wavenumber $k$};
		\fill[color=white] (-2.6,3.05) rectangle (2.6,3.5);
		\draw node at (.125,3.3){Galerkin $H_k^1$ errors in $\cavity$ and $\visible$};
	\end{tikzpicture}
	\caption{Local Galerkin errors in the $H^1_k$ norm in $\cavity$ and $\visible$ for the approximation of $u_{\rm in}$ (left) and $u_{\rm out}$ (right) in the regime QO away. Red diamonds: error in the cavity. Blue circles: error away from the cavity. The a priori bounds derived from Corollary \ref{cor:QO}, the lower bound $\rho(k) \geq C k^{2}$, and the inferred rates in Table \ref{table:inferred}, are represented as red dashed lines (for the cavity) and blue dotted lines (away from cavity).}
	\label{fig:QOawayErrors}
\end{figure}

The numerical results in Figure \ref{fig:QOaway} illustrate  that the bounds in \eqref{eqs:placeDuRalliement} are, at least experimentally, sharp. Furthermore, by Corollary \ref{cor:QOaway}, the inferred rates in Table \ref{table:inferred}, and the fact that $\rho(k) \geq C k^{2}$, the a priori bounds for $u_{\rm in}$ and $u_{\rm out}$ in $\cavity$ and $\visible$ are given by
\[\begin{split}
	\frac{\|u_{\rm in} - u_h\|_{H^1_k(\Omega_\cavity)}}{\|u_{\rm in}\|_{H^1_k(\Omega)}} &\lesssim \underbrace{(h_\cavity k)^p}_{k^{-2}} \underbrace{\sqrt{\frac{\rho}{k}}}_{k^{\frac12}} \frac{\|u_{\rm in}\|_{H^1_k(\Omega_\cavity)}}{\|u_{\rm in}\|_{H^1_k(\Omega)}} + \underbrace{\sqrt{\frac{\rho}{k}}}_{k^{\frac12}}\underbrace{(h_\visible k)^p}_{k^{-1}} \underbrace{\frac{\|u_{\rm in}\|_{H^1_k(\Omega_\visible)}}{\|u_{\rm in}\|_{H^1_k(\Omega)}}}_{k^{-\frac12}} + (k\rho)^{-\frac{1}{2}} (h_\invisible k)^p\frac{\|u_{\rm in}\|_{H^1_k(\Omega_\invisible)}}{\|u_{\rm in}\|_{H^1_k(\Omega)}} \\
	& \lesssim k^{-1},\\
	\frac{\|u_{\rm in} - u_h\|_{H^1_k(\Omega_\visible)}}{\|u_{\rm in}\|_{H^1_k(\Omega)}} &\lesssim \underbrace{(h_\cavity k)^p}_{k^{-2}} \underbrace{\sqrt{\frac{k}{\rho}}}_{k^{-0.5}}\frac{\|u_{\rm in}\|_{H^1_k(\Omega_\cavity)}}{\|u_{\rm in}\|_{H^1_k(\Omega)}} + \underbrace{(h_\visible k)^p}_{k^{-\frac32}} \underbrace{\frac{\|u_{\rm in}\|_{H^1_k(\Omega_\visible)}}{\|u_{\rm in}\|_{H^1_k(\Omega)}}}_{k^{-0.5}} +  (h_\invisible k)^p\frac{\|u_{\rm in}\|_{H^1_k(\Omega_\invisible)}}{\|u_{\rm in}\|_{H^1_k(\Omega)}} 
	 \lesssim k^{-2} 	,
\end{split}\]
and similarly, 
\[\frac{\|u_{\rm out} - u_h\|_{H^1_k(\Omega_\cavity)}}{\|u_{\rm out}\|_{H^1_k(\Omega)}} \lesssim k^{-1.15}, \quad \frac{\|u_{\rm out} - u_h\|_{H^1_k(\Omega_\visible)}}{\|u_{\rm out}\|_{H^1_k(\Omega)}} \lesssim k^{-1.65}. \]
These rates are experimentally verified in Figure \ref{fig:QOawayErrors}.

\subsubsection{Regime Uniform 2 (U2)}\label{sec:expU2}

In U2, we choose
\[(h_\cavity k)^{2p} k^2 = (h_\visible k)^{2p} k^2 = (h_\invisible k)^{2p} k^2 =: (hk)^p k^2 = C,\]
where $C$ is independent of $k$. Figure \ref{fig:U2QO} shows the QO constants in each regions for the problems involving $u_{\rm in}$ and $u_{\rm out}$. Figure \ref{fig:U2rel} plots local-global relative errors.

\begin{figure}[htbp]
	\centering
				\begin{tikzpicture}
	\draw node at (0,0){\includegraphics[width=0.45\textwidth]{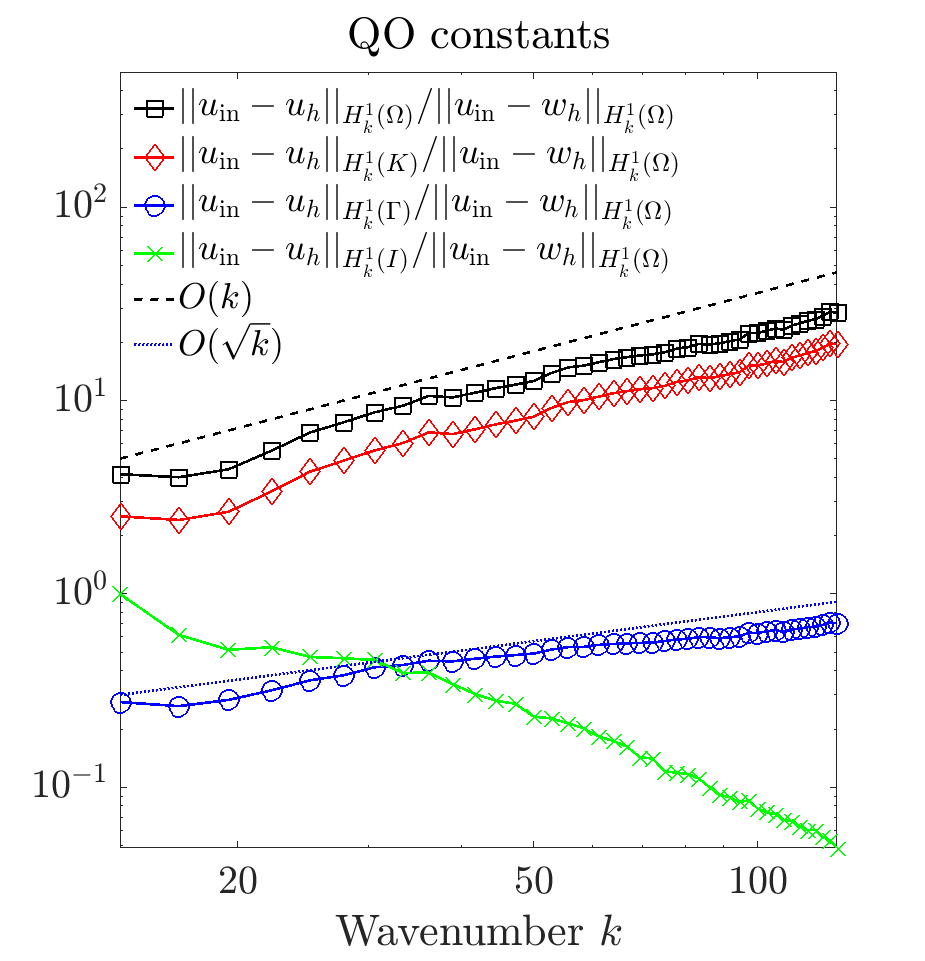}};
				\fill[color=white] (-2.1,.8) rectangle (0,2.9);
				\fill[color=white] (0,1.3) rectangle (1.8,2.9);
				\draw (-2.2,2.6) node[right]{\tiny{$\|u_{\rm in}\!-\!u_h\!\|_{H_k^1(\Omega)}\!/\!\|u_{\rm in}\!-\!w_h\!\|_{H^1_{k}(\Omega)}$}};
		\draw (-2.2,2.25) node[right]{\tiny{$\|u_{\rm in}\!-\!u_h\!\|_{H_k^1(\cavity)}\!/\!\|u_{\rm in}\!-\!w_h\!\|_{H^1_{k}(\Omega)}$}};
		\draw (-2.2,1.9) node[right]{\tiny{$\|u_{\rm in}\!-\!u_h\!\|_{H_k^1(\visible)}\!/\!\|u_{\rm in}-\!w_h\!\|_{H^1_{k}(\Omega)}$}};
		\draw (-2.2,1.55) node[right]{\tiny{$\|u_{\rm in}\!-\!u_h\!\|_{H_k^1(\invisible)}\!/\!\|u_{\rm in}-\!w_h\!\|_{H^1_{k}(\Omega)}$}};
		\draw (-2.2,1.3) node[right]{\tiny{$O(k)$}};
		\draw (-2.2,.95) node[right]{\tiny{$O(k^{0.5})$}};
		\fill[color=white] (-2,-4)rectangle (2.1,-3.1);
		\draw node at (.125,-3.3){Wavenumber $k$};
		\fill[color=white] (-2.6,3.05) rectangle (2.6,3.5);
		\draw node at (.125,3.3){QO Constants};
	\end{tikzpicture}	
					\begin{tikzpicture}
	\draw node at (0,0){\includegraphics[width=0.45\textwidth]{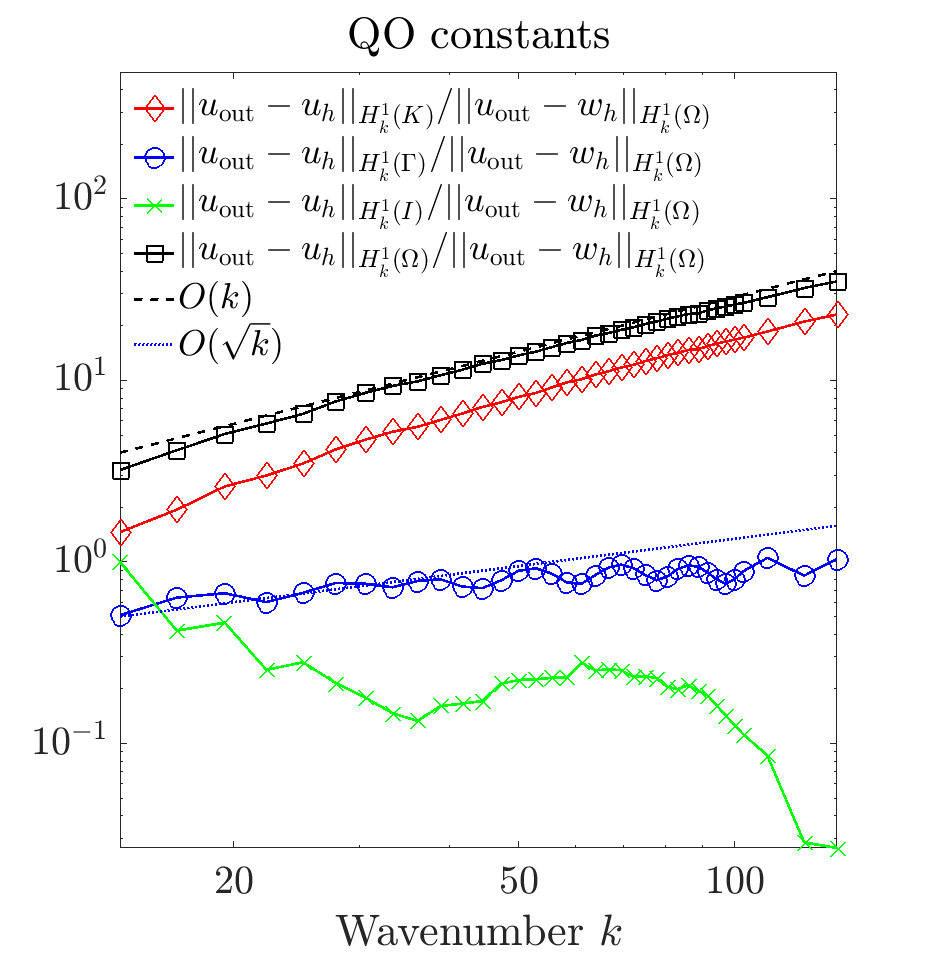}};
				\fill[color=white] (-2.1,.8) rectangle (0,2.9);
				\fill[color=white] (0,1.3) rectangle (1.8,2.9);
				\draw (-2.2,2.6) node[right]{\tiny{$\|u_{\rm out}\!-\!u_h\!\|_{H_k^1(\cavity)}\!/\!\|u_{\rm out}\!-\!w_h\!\|_{H^1_{k}(\Omega)}$}};
		\draw (-2.2,2.25) node[right]{\tiny{$\|u_{\rm out}\!-\!u_h\!\|_{H_k^1(\visible)}\!/\!\|u_{\rm out}-\!w_h\!\|_{H^1_{k}(\Omega)}$}};
		\draw (-2.2,1.9) node[right]{\tiny{$\|u_{\rm out}\!-\!u_h\!\|_{H_k^1(\invisible)}\!/\!\|u_{\rm out}-\!w_h\!\|_{H^1_{k}(\Omega)}$}};
		\draw (-2.2,1.55) node[right]{\tiny{$\|u_{\rm out}\!-\!u_h\!\|_{H_k^1(\Omega)}\!/\!\|u_{\rm out}\!-\!w_h\!\|_{H^1_{k}(\Omega)}$}};
		\draw (-2.2,1.3) node[right]{\tiny{$O(k)$}};
		\draw (-2.2,.95) node[right]{\tiny{$O(k^{0.5})$}};
		\fill[color=white] (-2,-4)rectangle (2.1,-3.1);
		\draw node at (.125,-3.3){Wavenumber $k$};
		\fill[color=white] (-2.6,3.05) rectangle (2.6,3.5);
		\draw node at (.125,3.3){QO Constants};
	\end{tikzpicture}
	
	\caption{QO constants for $u_{\rm in}$ (left) and $u_{\rm out}$ (right) in the regime U2. Black squares: global QO constant. Red diamonds: QO constant in $\cavity$. Blue circles: QO constant in the visible set. Green crosses: QO constant in $\Gamma$. A priori bounds represented as red dashed lines (for the cavity) and blue dotted lines (away from cavity).}
	\label{fig:U2QO}
\end{figure}

The Galerkin solution is no longer $k$-uniformly quasi-optimal, but the relative error is bounded in terms of $C$, see Table \ref{tab:regimes}. The latter fact is illustrated by the black solid lines in Figure \ref{fig:U2rel}. Furthermore, Corollary \ref{cor:U2} and the conjecture that $\rho(k) = O(k^2)$ imply the a priori bound
\[\|u - u_h\|_{H^1_k(\Omega)} \lesssim k \|u - w_h\|_{H^1_k(\Omega)},\]
for $u = u_{\rm in}$ or $u = u_{\rm out}$, as well as the following bounds on the QO constants
\begin{gather*}
\|u - u_h\|_{H^1_k(\Omega_\cavity)} \lesssim k \|u - w_h\|_{H^1_k(\Omega)},\qquad
\|u - u_h\|_{H^1_k(\Omega_\visible)} \lesssim \sqrt{k} \|u - w_h\|_{H^1_k(\Omega)},\\
\|u - u_h\|_{H^1_k(\Omega_\invisible)} \lesssim \sqrt{k} \|u - w_h\|_{H^1_k(\Omega)}.\end{gather*}
These bounds are in line with the results shown in Figure \ref{fig:U2QO}. The inferred rates in Table \ref{table:inferred} additionally give the following a priori bounds
\[\begin{split}
	\frac{\|u_{\rm in} - u_h\|_{H^1_k(\Omega_\cavity)}}{\|u_{\rm in}\|_{H^1_k(\Omega)}} &\lesssim 1 + \sqrt{\frac{k}{\rho}} \frac{\|u_{\rm in}\|_{H^1_k(\Omega_\visible)}}{\|u_{\rm in}\|_{H^1_k(\Omega)}} 
	+ \left(\frac{k}{\rho}\right)^{3/2}\frac{\|u_{\rm in}\|_{H^1_k(\Omega_\invisible)}}{\|u_{\rm in}\|_{H^1_k(\Omega)}}  \lesssim 1,	\\
	\frac{\|u_{\rm in} - u_h\|_{H^1_k(\Omega_\visible)}}{\|u_{\rm in}\|_{H^1_k(\Omega)}} &\lesssim \underbrace{\sqrt{\frac{k}{\rho}}}_{k^{-\frac12}}\frac{\|u_{\rm in}\|_{H^1_k(\Omega_\cavity)}}{\|u_{\rm in}\|_{H^1_k(\Omega)}} +  \underbrace{\frac{\|u_{\rm in}\|_{H^1_k(\Omega_\visible)}}{\|u_{\rm in}\|_{H^1_k(\Omega)}}}_{k^{-0.5}} +  \underbrace{\frac{k}{\rho}}_{k^{-1}}\frac{\|u_{\rm in}\|_{H^1_k(\Omega_\invisible)}}{\|u_{\rm in}\|_{H^1_k(\Omega)}} 
	 \lesssim k^{-1/2} 	.
\end{split}\]
Similarly, 
\[\frac{\|u_{\rm out} - u_h\|_{H^1_k(\Omega_\cavity)}}{\|u_{\rm out}\|_{H^1_k(\Omega)}}
	 \lesssim 1 , \quad
	\frac{\|u_{\rm out} - u_h\|_{H^1_k(\Omega_\visible)}}{\|u_{\rm out}\|_{H^1_k(\Omega)}} \lesssim k^{-0.5}.\]
These rates are experimentally verified in Figure \ref{fig:U2rel}.
\begin{figure}[htbp]
	\centering
				\begin{tikzpicture}
	\draw node at (0,0){	\includegraphics[width=0.45\textwidth]{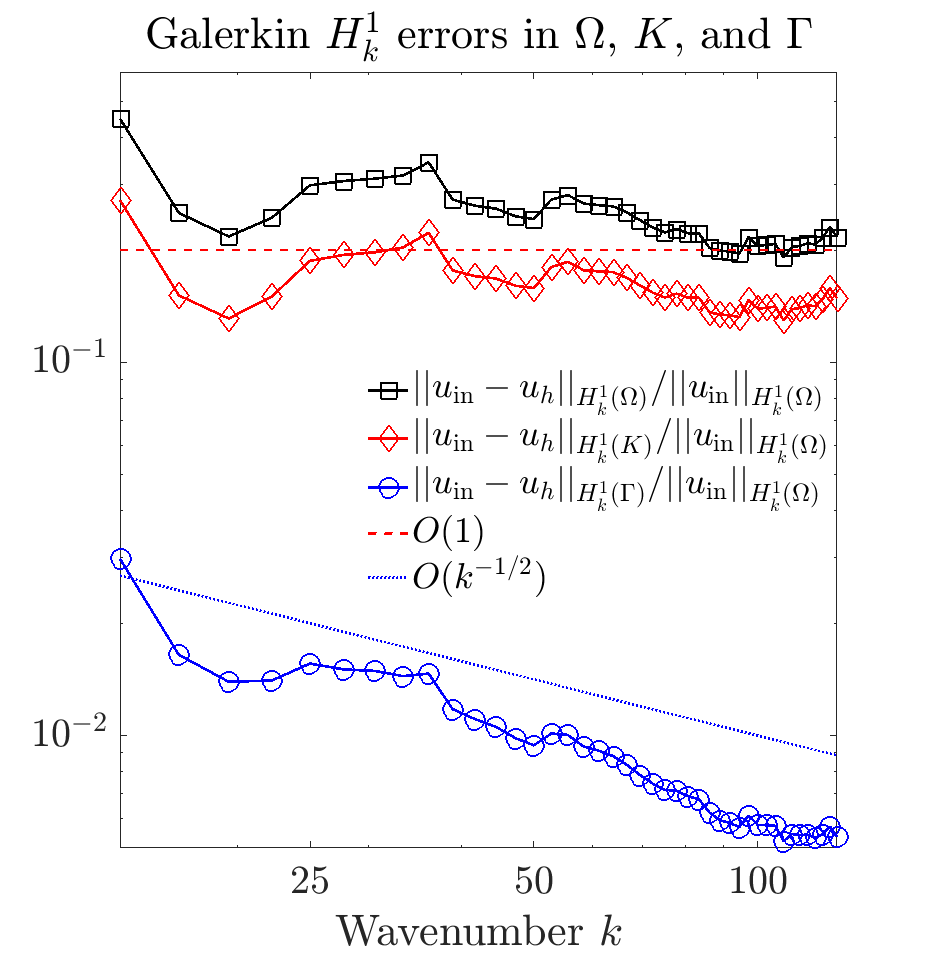}};
				\fill[color=white] (-.4,-1) rectangle (2.65,1);
				\draw (-.5,.55) node[right]{\tiny{$\|\!u_{\rm in}\!-\!u_h\!\|_{H_k^1(\Omega)}\!/\!\|\!u_{\rm in}\!\|_{H^1_{k}(\Omega)}$}};
		\draw (-.5,.2) node[right]{\tiny{$\|\!u_{\rm in}\!-\!u_h\!\|_{H_k^1(\cavity)}\!/\!\|\!u_{\rm in}\!\|_{H^1_{k}(\Omega)}$}};
		\draw (-.5,-.15) node[right]{\tiny{$\|\!u_{\rm in}\!-\!u_h\!\|_{H_k^1(\visible)}\!/\!\|\!u_{\rm in}\!\|_{H^1_{k}(\Omega)}$}};
		\draw (-.5,-.4) node[right]{\tiny{$O(1)$}};
		\draw (-.5,-.7) node[right]{\tiny{$O(k^{-0.5})$}};
		\fill[color=white] (-2,-4)rectangle (2.1,-3.1);
		\draw node at (.125,-3.3){Wavenumber $k$};
		\fill[color=white] (-2.6,3.05) rectangle (2.6,3.5);
		\draw node at (.125,3.3){Galerkin $H_k^1$ errors in $\Omega$, $\cavity$, and $\visible$};
	\end{tikzpicture}
					\begin{tikzpicture}
	\draw node at (0,0){\includegraphics[width=0.45\textwidth]{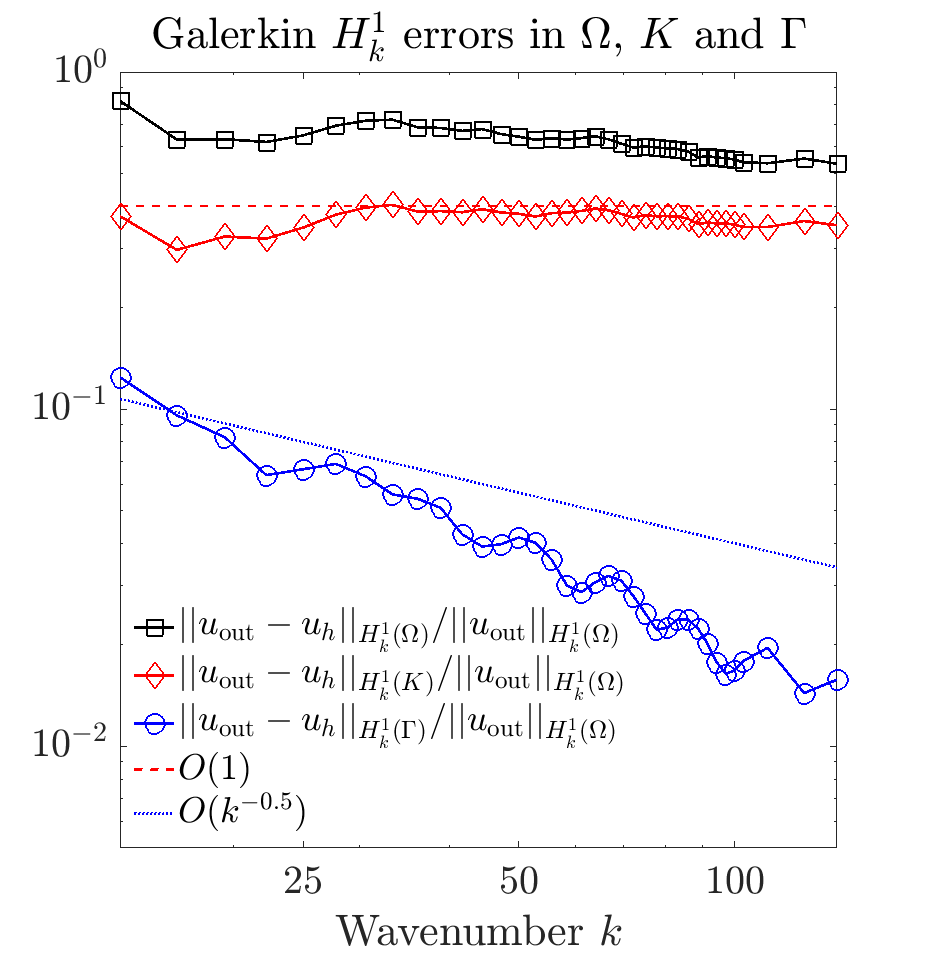}};
				\fill[color=white] (-2.1,-.9) rectangle (1.2,-2.6);
				\draw (-2.2,-1.15) node[right]{\tiny{$\|\!u_{\rm out}\!-\!u_h\!\|_{H_k^1(\Omega)}\!/\!\|\!u_{\rm out}\!\|_{H^1_{k}(\Omega)}$}};
		\draw (-2.2,-1.5) node[right]{\tiny{$\|\!u_{\rm out}\!-\!u_h\!\|_{H_k^1(\cavity)}\!/\!\|\!u_{\rm out}\!\|_{H^1_{k}(\Omega)}$}};
		\draw (-2.2,-1.85) node[right]{\tiny{$\|\!u_{\rm out}\!-\!u_h\!\|_{H_k^1(\visible)}\!/\!\|\!u_{\rm out}\!\|_{H^1_{k}(\Omega)}$}};
		\draw (-2.15,-2.1) node[right]{\tiny{$O(1)$}};
		\draw (-2.15,-2.4) node[right]{\tiny{$O(k^{-0.5})$}};
		\fill[color=white] (-2,-4)rectangle (2.1,-3.1);
		\draw node at (.125,-3.3){Wavenumber $k$};
		\fill[color=white] (-2.6,3.05) rectangle (2.6,3.5);
		\draw node at (.125,3.3){Galerkin $H_k^1$ errors in $\Omega$, $\cavity$, and $\visible$};
	\end{tikzpicture}
	
	\caption{Local Galerkin errors in the $H^1_k$ norm in $\cavity$ and $\visible$ for the approximation of $u_{\rm in}$ (left) and $u_{\rm out}$ (right) in the regime U2. Black squares: global relative error. Red diamonds: error in the cavity. Blue circles: error away from the cavity. A priori bounds represented as red dashed lines (for the cavity) and blue dotted lines (away from cavity).}
	\label{fig:U2rel}
\end{figure}


\subsubsection{Regime Relative error (RE)}

In RE, we choose
\[(h_\cavity k)^{2p} k^2 = (h_\visible k)^{2p} k^{3/2} = (h_\invisible k)^{2p} k = C,\]
where $C$ is independent of $k$. Figure \ref{fig:REQO} shows the QO constants in each regions for the problems involving $u_{\rm in}$ and $u_{\rm out}$. Figure \ref{fig:RErel} plots local-global relative error.

By Corollary \ref{cor:RE}, the relative error is $k$-uniformly bounded, and this is illustrated by the black solid lines in Figure \ref{fig:RErel}. Furthermore, 
\[\begin{split}
	\frac{\|u_{\rm in} - u_h\|_{H^1_k(\Omega_\cavity)}}{\|u_{\rm in}\|_{H^1_k(\Omega)}} &\lesssim 1 + \frac{\|u_{\rm in}\|_{H^1_k(\Omega_\visible)}}{\|u_{\rm in}\|_{H^1_k(\Omega)}} 
	+ \frac{\|u_{\rm in}\|_{H^1_k(\Omega_\cavity)}}{\|u_{\rm in}\|_{H^1_k(\Omega)}}  \lesssim 1	,\\
	\frac{\|u_{\rm in} - u_h\|_{H^1_k(\Omega_\visible)}}{\|u_{\rm in}\|_{H^1_k(\Omega)}} &\lesssim \underbrace{\sqrt{\frac{k}{\rho}}}_{k^{-\frac12}}\frac{\|u_{\rm in}\|_{H^1_k(\Omega_\cavity)}}{\|u_{\rm in}\|_{H^1_k(\Omega)}} +  \underbrace{\frac{\|u_{\rm in}\|_{H^1_k(\Omega_\visible)}}{\|u_{\rm in}\|_{H^1_k(\Omega)}}}_{k^{-0.5}} + \frac{\|u_{\rm in}\|_{H^1_k(\Omega_\invisible)}}{\|u_{\rm in}\|_{H^1_k(\Omega)}}  \lesssim k^{-1/2}. 	
\end{split}\]
Similarly, 
\[\frac{\|u_{\rm out} - u_h\|_{H^1_k(\Omega_\cavity)}}{\|u_{\rm out}\|_{H^1_k(\Omega)}}
\lesssim 1 , \quad
\frac{\|u_{\rm out} - u_h\|_{H^1_k(\Omega_\visible)}}{\|u_{\rm out}\|_{H^1_k(\Omega)}} \lesssim k^{-\frac12}.\]
These rates are verified in Figure \ref{fig:RErel}.
\begin{figure}[H]
	\centering
					\begin{tikzpicture}
	\draw node at (0,0){\includegraphics[width=0.45\textwidth]{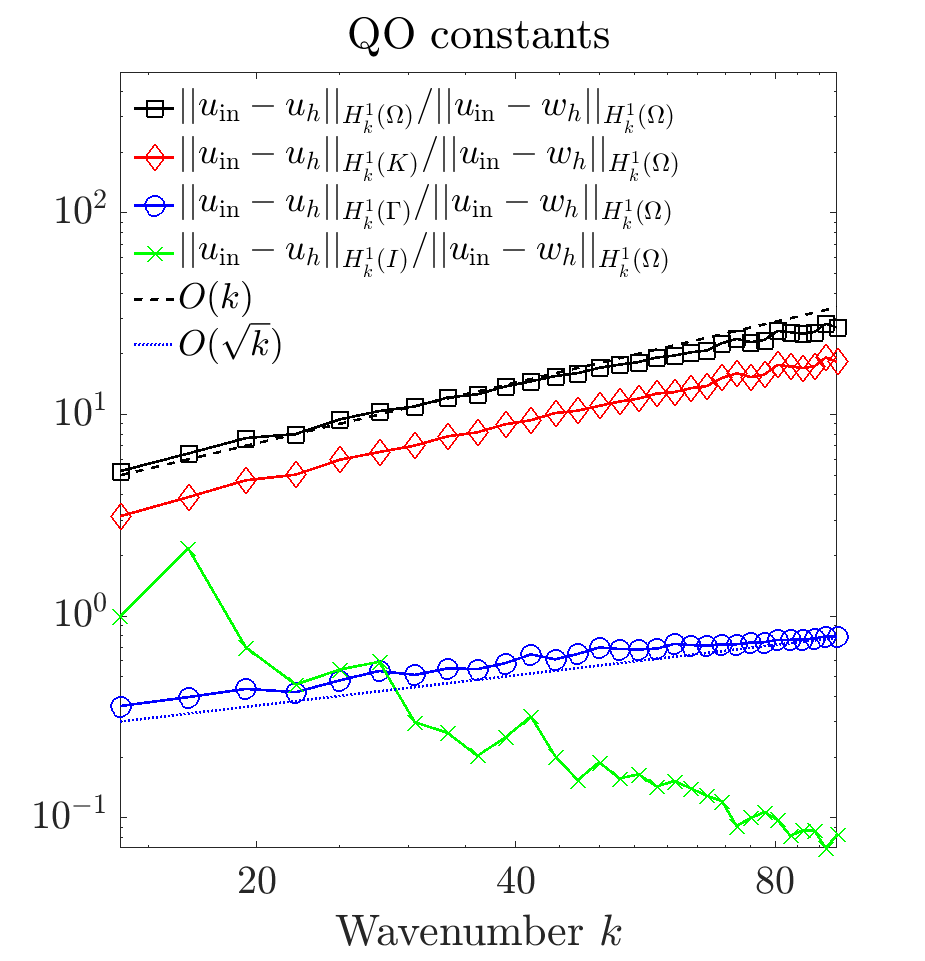}};
				\fill[color=white] (-2.1,.8) rectangle (0,2.9);
				\fill[color=white] (0,1.3) rectangle (1.8,2.9);
				\draw (-2.2,2.6) node[right]{\tiny{$\|u_{\rm in}\!-\!u_h\!\|_{H_k^1(\Omega)}\!/\!\|u_{\rm in}\!-\!w_h\!\|_{H^1_{k}(\Omega)}$}};
		\draw (-2.2,2.25) node[right]{\tiny{$\|u_{\rm in}\!-\!u_h\!\|_{H_k^1(\cavity)}\!/\!\|u_{\rm in}\!-\!w_h\!\|_{H^1_{k}(\Omega)}$}};
		\draw (-2.2,1.9) node[right]{\tiny{$\|u_{\rm in}\!-\!u_h\!\|_{H_k^1(\visible)}\!/\!\|u_{\rm in}-\!w_h\!\|_{H^1_{k}(\Omega)}$}};
		\draw (-2.2,1.55) node[right]{\tiny{$\|u_{\rm in}\!-\!u_h\!\|_{H_k^1(\invisible)}\!/\!\|u_{\rm in}-\!w_h\!\|_{H^1_{k}(\Omega)}$}};
		\draw (-2.2,1.3) node[right]{\tiny{$O(k)$}};
		\draw (-2.2,.95) node[right]{\tiny{$O(k^{0.5})$}};
		\fill[color=white] (-2,-4)rectangle (2.1,-3.1);
		\draw node at (.125,-3.3){Wavenumber $k$};
		\fill[color=white] (-2.6,3.05) rectangle (2.6,3.5);
		\draw node at (.125,3.3){QO Constants};
	\end{tikzpicture}
					\begin{tikzpicture}
	\draw node at (0,0){\includegraphics[width=0.45\textwidth]{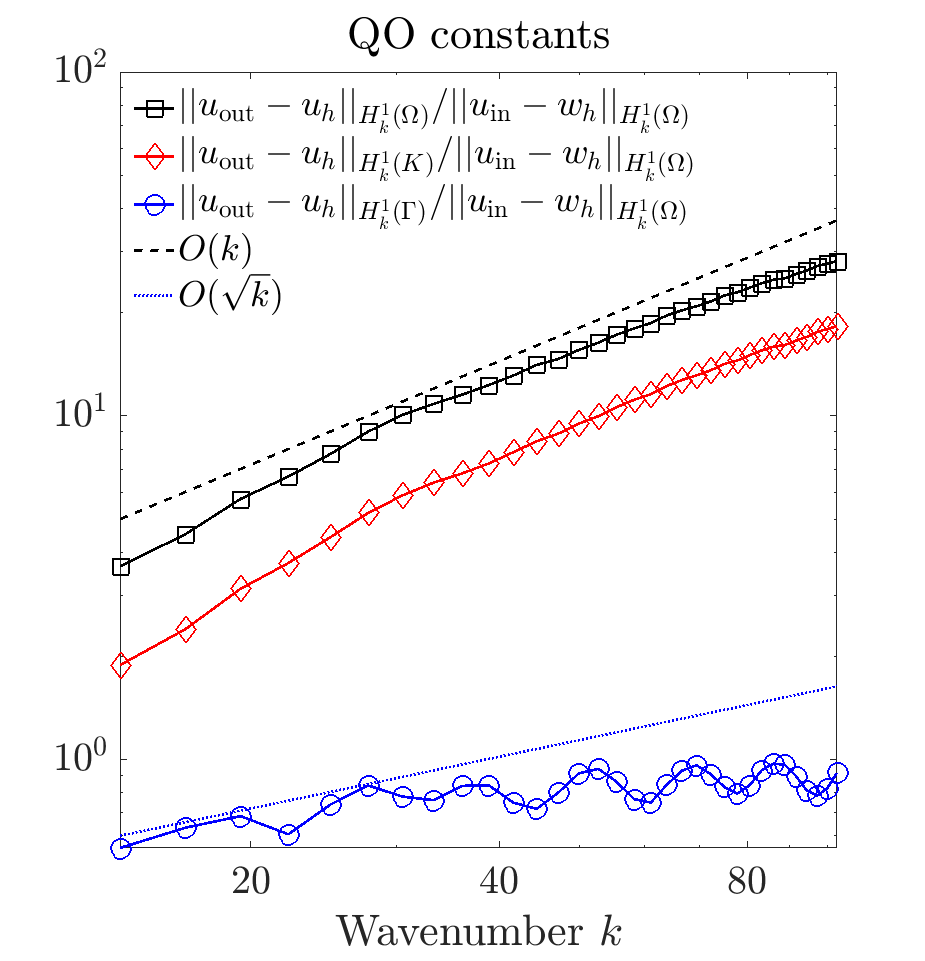}};
				\fill[color=white] (-2.1,.8) rectangle (0,2.9);
				\fill[color=white] (0,1.5) rectangle (1.8,2.9);
				\draw (-2.2,2.6) node[right]{\tiny{$\|u_{\rm out}\!-\!u_h\!\|_{H_k^1(\Omega)}\!/\!\|u_{\rm out}\!-\!w_h\!\|_{H^1_{k}(\Omega)}$}};
		\draw (-2.2,2.25) node[right]{\tiny{$\|u_{\rm out}\!-\!u_h\!\|_{H_k^1(\cavity)}\!/\!\|u_{\rm out}-\!w_h\!\|_{H^1_{k}(\Omega)}$}};
		\draw (-2.2,1.9) node[right]{\tiny{$\|u_{\rm out}\!-\!u_h\!\|_{H_k^1(\invisible)}\!/\!\|u_{\rm out}-\!w_h\!\|_{H^1_{k}(\Omega)}$}};
		\draw (-2.2,1.65) node[right]{\tiny{$O(k)$}};
		\draw (-2.2,1.3) node[right]{\tiny{$O(k^{0.5})$}};
		\fill[color=white] (-2,-4)rectangle (2.1,-3.1);
		\draw node at (.125,-3.3){Wavenumber $k$};
		\fill[color=white] (-2.6,3.05) rectangle (2.6,3.5);
		\draw node at (.125,3.3){QO Constants};
	\end{tikzpicture}
	\caption{QO constants for $u_{\rm in}$ (left) and $u_{\rm out}$ (right) in the regime RE. Black squares: global QO constant. Red diamonds: QO constant in $\cavity$. Blue circles: QO constant in the visible set. Green crosses: QO constant in $\Gamma$. A priori bounds represented as red dashed lines (for the cavity) and blue dotted lines (away from cavity).}
	\label{fig:REQO}
\end{figure}

\begin{figure}[H]
	\centering
					\begin{tikzpicture}
	\draw node at (0,0){\includegraphics[width=0.45\textwidth]{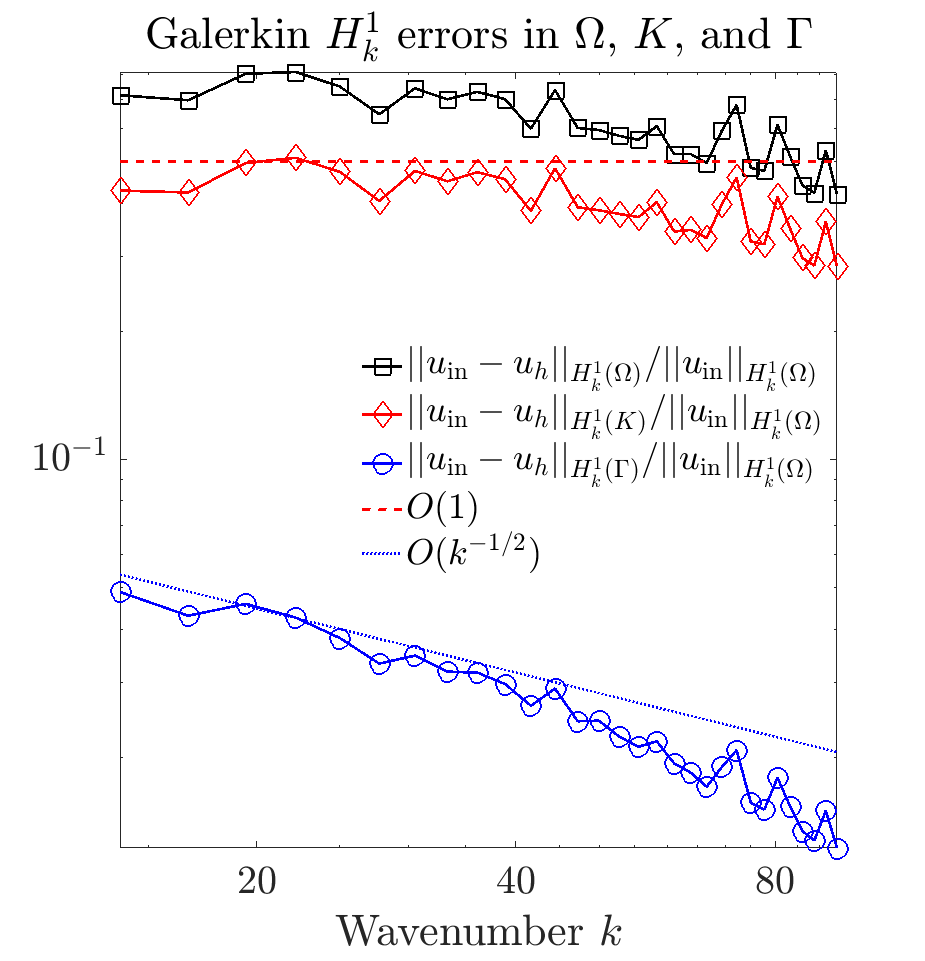}};
				\fill[color=white] (-.4,-1) rectangle (2.65,1);
				\draw (-.5,.75) node[right]{\tiny{$\|\!u_{\rm in}\!-\!u_h\!\|_{H_k^1(\Omega)}\!/\!\|\!u_{\rm in}\!\|_{H^1_{k}(\Omega)}$}};
		\draw (-.5,.4) node[right]{\tiny{$\|\!u_{\rm in}\!-\!u_h\!\|_{H_k^1(\cavity)}\!/\!\|\!u_{\rm in}\!\|_{H^1_{k}(\Omega)}$}};
		\draw (-.5,.05) node[right]{\tiny{$\|\!u_{\rm in}\!-\!u_h\!\|_{H_k^1(\visible)}\!/\!\|\!u_{\rm in}\!\|_{H^1_{k}(\Omega)}$}};
		\draw (-.5,-.2) node[right]{\tiny{$O(1)$}};
		\draw (-.5,-.5) node[right]{\tiny{$O(k^{-0.5})$}};
		\fill[color=white] (-2,-4)rectangle (2.1,-3.1);
		\draw node at (.125,-3.3){Wavenumber $k$};
		\fill[color=white] (-2.6,3.05) rectangle (2.6,3.5);
		\draw node at (.125,3.3){Galerkin $H_k^1$ errors in $\Omega$, $\cavity$, and $\visible$};
	\end{tikzpicture}
					\begin{tikzpicture}
	\draw node at (0,0){\includegraphics[width=0.45\textwidth]{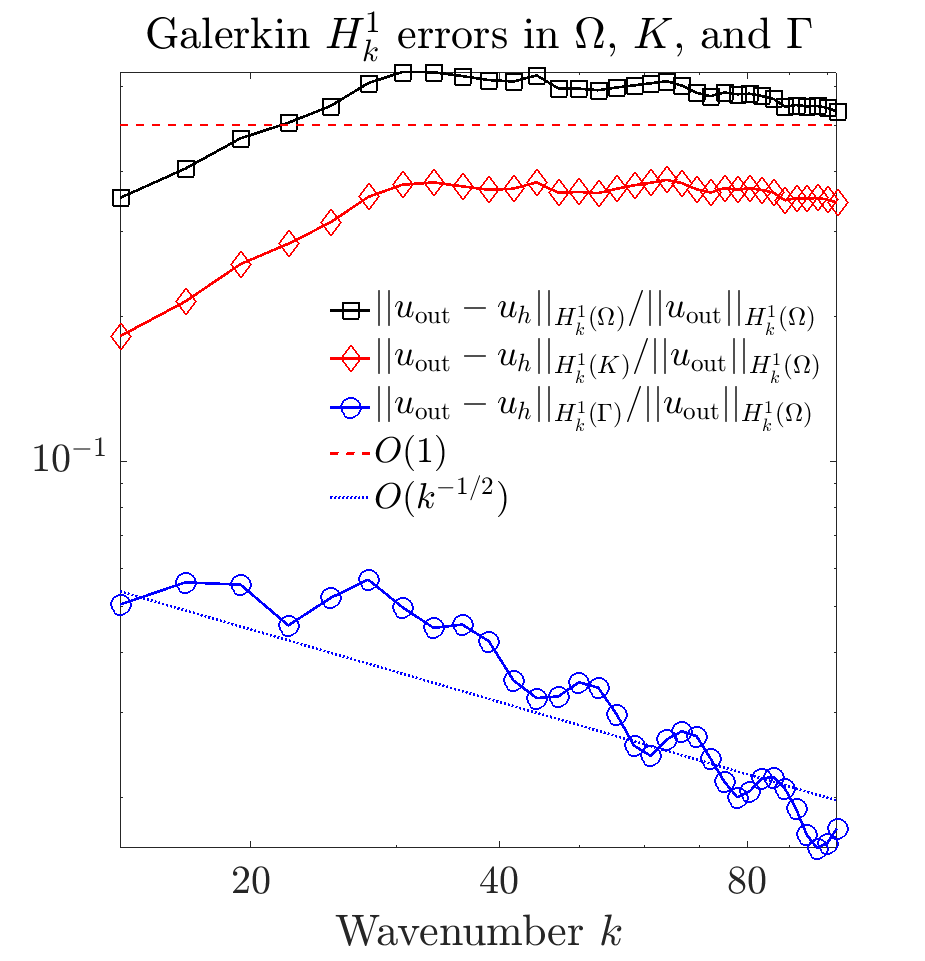}};
				\fill[color=white] (-.65,-.5) rectangle (2.6,1.6);
				\draw (-.8,1.15) node[right]{\tiny{$\|\!u_{\rm out}\!-\!u_h\!\|_{H_k^1(\Omega)}\!/\!\|\!u_{\rm out}\!\|_{H^1_{k}(\Omega)}$}};
		\draw (-.8,.8) node[right]{\tiny{$\|\!u_{\rm out}\!-\!u_h\!\|_{H_k^1(\cavity)}\!/\!\|\!u_{\rm out}\!\|_{H^1_{k}(\Omega)}$}};
		\draw (-.8,.45) node[right]{\tiny{$\|\!u_{\rm out}\!-\!u_h\!\|_{H_k^1(\visible)}\!/\!\|\!u_{\rm out}\!\|_{H^1_{k}(\Omega)}$}};
		\draw (-.8,.2) node[right]{\tiny{$O(1)$}};
		\draw (-.8,-.05) node[right]{\tiny{$O(k^{-0.5})$}};
		\fill[color=white] (-2,-4)rectangle (2.1,-3.1);
		\draw node at (.125,-3.3){Wavenumber $k$};
		\fill[color=white] (-2.6,3.05) rectangle (2.6,3.5);
		\draw node at (.125,3.3){Galerkin $H_k^1$ errors in $\Omega$, $\cavity$, and $\visible$};
	\end{tikzpicture}
	
	\caption{Local Galerkin errors in the $H^1_k$ norm in $\cavity$ and $\visible$ for the approximation of $u_{\rm in}$ (left) and $u_{\rm out}$ (right) in the regime RE. Black squares: global relative error. Red diamonds: error in the cavity. Blue circles: error away from the cavity. A priori bounds represented as red dashed lines (for the cavity) and blue dotted lines (away from cavity).}
	\label{fig:RErel}
\end{figure}
%
%
%

\subsubsection{Regime Relative error away (RE away)}

In RE away, we choose 
\[(h_\cavity k)^{2p} k^2 = (h_\visible k)^{2p} k = (h_\invisible k)^{2p} k = C,\]
where $C$ is independent of $k$. Figure \ref{fig:REawayQO} shows the QO constants in each regions for the problems involving $u_{\rm in}$ and $u_{\rm out}$. Figure \ref{fig:REawayRE} plots the local-global relative error.

This is the coarsest regime for which Theorem \ref{t:simple} applies. By Corollary \ref{cor:QOCoarse} and the conjecture that $\rho(k) \leq C k^2$, one has the following a priori bounds on the local QO factors:
\begin{gather*}
\|u - u_h\|_{H^1_k(\Omega_\cavity)} \lesssim k \|u - w_h\|_{H^1_k(\Omega)},\qquad\|u - u_h\|_{H^1_k(\Omega_\visible)} \lesssim \sqrt{k} \|u - w_h\|_{H^1_k(\Omega)},\\ 
\|u - u_h\|_{H^1_k(\Omega_\invisible)} \lesssim \sqrt{k} \|u - w_h\|_{H^1_k(\Omega)}.
\end{gather*}
These bounds are experimentally verified in Figure \ref{fig:REawayQO}. By Corollary \ref{cor:REcoarse}, we also have the following a priori bounds on the local relative errors:
\[\begin{split}
	\frac{\|u_{\rm in} - u_h\|_{H^1_k(\Omega_\cavity)}}{\|u_{\rm in}\|_{H^1_k(\Omega)}} &\lesssim 1 + \underbrace{\sqrt{\frac{\rho}{k}}}_{k^{1/2}} \underbrace{\frac{\|u_{\rm in}\|_{H^1_k(\Omega_\cavity)}}{\|u_{\rm in}\|_{H^1_k(\Omega)}}}_{k^{-\frac12}} 
	+ \frac{\|u_{\rm in}\|_{H^1_k(\Omega_\invisible)}}{\|u_{\rm in}\|_{H^1_k(\Omega)}}   \lesssim 1	,\\
	\frac{\|u_{\rm in} - u_h\|_{H^1_k(\Omega_\visible)}}{\|u_{\rm in}\|_{H^1_k(\Omega)}} &\lesssim \underbrace{\sqrt{\frac{k}{\rho}}}_{k^{-\frac12}}\frac{\|u_{\rm in}\|_{H^1_k(\Omega_\cavity)}}{\|u_{\rm in}\|_{H^1_k(\Omega)}} +  \underbrace{\frac{\|u_{\rm in}\|_{H^1_k(\Omega_\visible)}}{\|u_{\rm in}\|_{H^1_k(\Omega)}}}_{k^{-\frac12}} +  \frac{\|u_{\rm in}\|_{H^1_k(\Omega_\invisible)}}{\|u_{\rm in}\|_{H^1_k(\Omega)}} , \lesssim k^{-1/2}.	
\end{split}\]
Similarly, 
\[	\frac{\|u_{\rm out} - u_h\|_{H^1_k(\Omega_\cavity)}}{\|u_{\rm out}\|_{H^1_k(\Omega)}} \lesssim 1 \quad \textup{and} \quad	\frac{\|u_{\rm out} - u_h\|_{H^1_k(\Omega_\visible)}}{\|u_{\rm out}\|_{H^1_k(\Omega)}} \lesssim k^{-1/2}.\]
These bounds are also verified in our numerical experiments, see Figure \ref{fig:REawayRE}.

\begin{figure}[htbp]
	\centering
				\begin{tikzpicture}
	\draw node at (0,0){\includegraphics[width=0.45\textwidth]{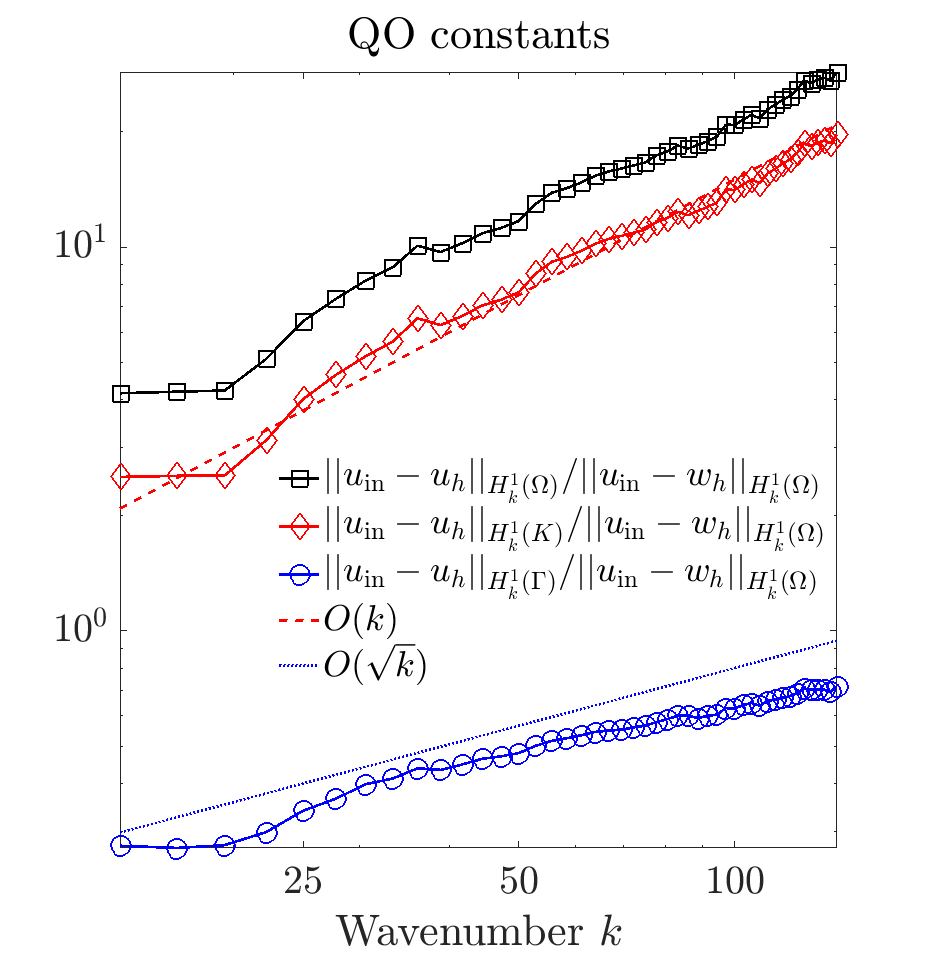}};
				\fill[color=white] (-1.05,.3) rectangle (1,-1.6);
				\fill[color=white] (1,.3) rectangle (2.7,-1);
				\draw (-1.15,-.05) node[right]{\tiny{$\|u_{\rm in}\!-\!u_h\!\|_{H_k^1(\Omega)}\!/\!\|u_{\rm in}\!-\!w_h\!\|_{H^1_{k}(\Omega)}$}};
		\draw (-1.15,-.4) node[right]{\tiny{$\|u_{\rm in}\!-\!u_h\!\|_{H_k^1(\cavity)}\!/\!\|u_{\rm in}\!-\!w_h\!\|_{H^1_{k}(\Omega)}$}};
		\draw (-1.15,-.75) node[right]{\tiny{$\|u_{\rm in}\!-\!u_h\!\|_{H_k^1(\visible)}\!/\!\|u_{\rm in}-\!w_h\!\|_{H^1_{k}(\Omega)}$}};
		\draw (-1.15,-1.0) node[right]{\tiny{$O(k)$}};
		\draw (-1.15,-1.35) node[right]{\tiny{$O(k^{0.5})$}};
		\fill[color=white] (-2,-4)rectangle (2.1,-3.1);
		\draw node at (.125,-3.3){Wavenumber $k$};
		\fill[color=white] (-2.6,3.05) rectangle (2.6,3.5);
		\draw node at (.125,3.3){QO Constants};
	\end{tikzpicture}	
					\begin{tikzpicture}
	\draw node at (0,0){\includegraphics[width=0.45\textwidth]{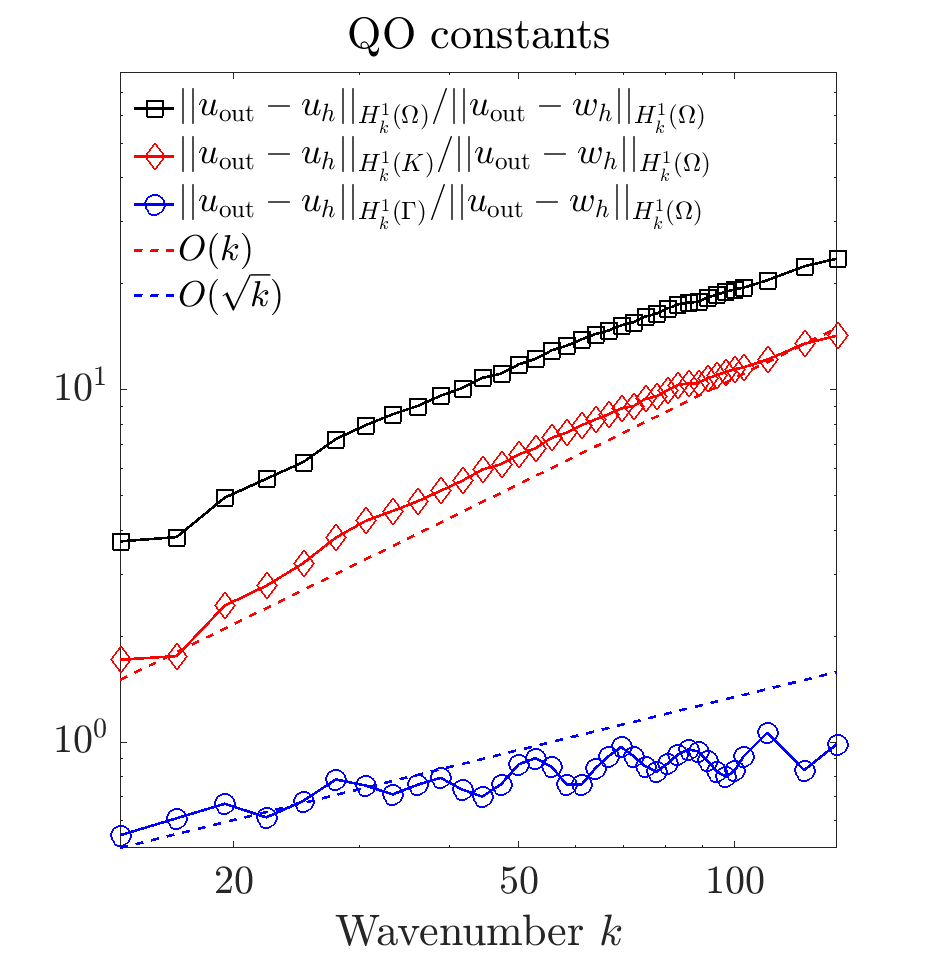}};
				\fill[color=white] (-2.1,.8) rectangle (0,2.9);
				\fill[color=white] (0,1.5) rectangle (1.8,2.9);
				\draw (-2.2,2.6) node[right]{\tiny{$\|u_{\rm out}\!-\!u_h\!\|_{H_k^1(\Omega)}\!/\!\|u_{\rm out}\!-\!w_h\!\|_{H^1_{k}(\Omega)}$}};
		\draw (-2.2,2.25) node[right]{\tiny{$\|u_{\rm out}\!-\!u_h\!\|_{H_k^1(\cavity)}\!/\!\|u_{\rm out}-\!w_h\!\|_{H^1_{k}(\Omega)}$}};
		\draw (-2.2,1.9) node[right]{\tiny{$\|u_{\rm out}\!-\!u_h\!\|_{H_k^1(\invisible)}\!/\!\|u_{\rm out}-\!w_h\!\|_{H^1_{k}(\Omega)}$}};
		\draw (-2.2,1.65) node[right]{\tiny{$O(k)$}};
		\draw (-2.2,1.3) node[right]{\tiny{$O(k^{0.5})$}};
		\fill[color=white] (-2,-4)rectangle (2.1,-3.1);
		\draw node at (.125,-3.3){Wavenumber $k$};
		\fill[color=white] (-2.6,3.05) rectangle (2.6,3.5);
		\draw node at (.125,3.3){QO Constants};
	\end{tikzpicture}
	
	\caption{QO constants for $u_{\rm in}$ (left) and $u_{\rm out}$ (right) in the regime RE away. Black squares: global QO constant. Red diamonds: QO constant in $\cavity$. Blue circles: QO constant in the visible set. A priori bounds represented as red dashed lines (for the cavity) and blue dotted lines (away from cavity).}
	\label{fig:REawayQO}
\end{figure}
\begin{figure}[htbp]
	\centering
						\begin{tikzpicture}
	\draw node at (0,0){\includegraphics[width=0.45\textwidth]{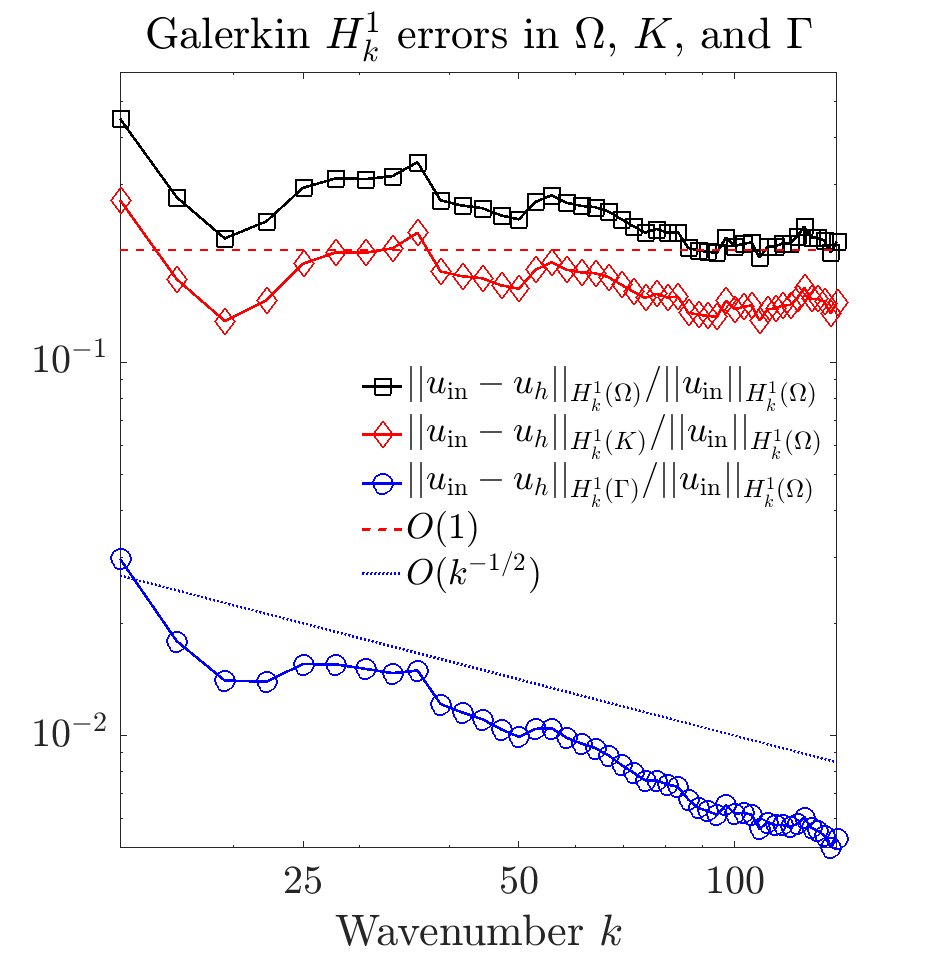}};
				\fill[color=white] (-.4,-1) rectangle (2.65,1);
				\draw (-.5,.6) node[right]{\tiny{$\|\!u_{\rm in}\!-\!u_h\!\|_{H_k^1(\Omega)}\!/\!\|\!u_{\rm in}\!\|_{H^1_{k}(\Omega)}$}};
		\draw (-.5,.25) node[right]{\tiny{$\|\!u_{\rm in}\!-\!u_h\!\|_{H_k^1(\cavity)}\!/\!\|\!u_{\rm in}\!\|_{H^1_{k}(\Omega)}$}};
		\draw (-.5,-.1) node[right]{\tiny{$\|\!u_{\rm in}\!-\!u_h\!\|_{H_k^1(\visible)}\!/\!\|\!u_{\rm in}\!\|_{H^1_{k}(\Omega)}$}};
		\draw (-.5,-.35) node[right]{\tiny{$O(1)$}};
		\draw (-.5,-.65) node[right]{\tiny{$O(k^{-0.5})$}};
		\fill[color=white] (-2,-4)rectangle (2.1,-3.1);
		\draw node at (.125,-3.3){Wavenumber $k$};
		\fill[color=white] (-2.6,3.05) rectangle (2.6,3.5);
		\draw node at (.125,3.3){Galerkin $H_k^1$ errors in $\Omega$, $\cavity$, and $\visible$};
	\end{tikzpicture}
						\begin{tikzpicture}
	\draw node at (0,0){\includegraphics[width=0.45\textwidth]{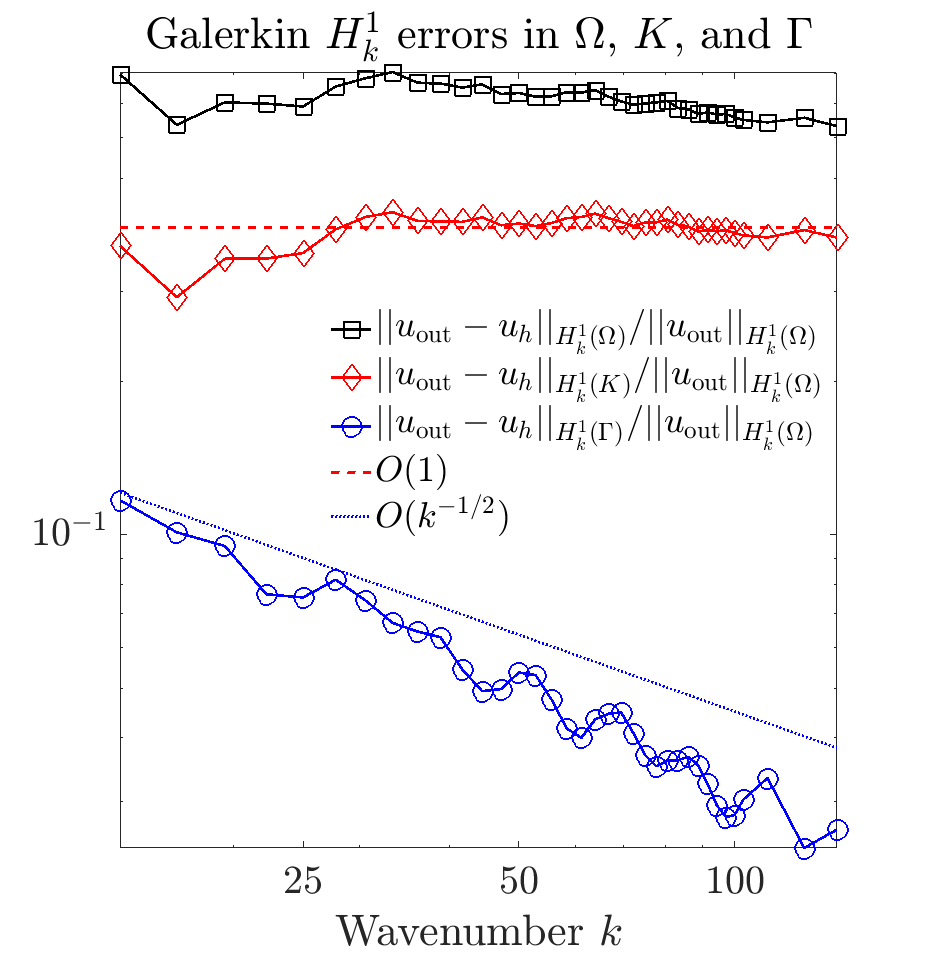}};
				\fill[color=white] (-.65,-.5) rectangle (2.6,1.6);
				\draw (-.8,1.05) node[right]{\tiny{$\|\!u_{\rm out}\!-\!u_h\!\|_{H_k^1(\Omega)}\!/\!\|\!u_{\rm out}\!\|_{H^1_{k}(\Omega)}$}};
		\draw (-.8,.7) node[right]{\tiny{$\|\!u_{\rm out}\!-\!u_h\!\|_{H_k^1(\cavity)}\!/\!\|\!u_{\rm out}\!\|_{H^1_{k}(\Omega)}$}};
		\draw (-.8,.3) node[right]{\tiny{$\|\!u_{\rm out}\!-\!u_h\!\|_{H_k^1(\visible)}\!/\!\|\!u_{\rm out}\!\|_{H^1_{k}(\Omega)}$}};
		\draw (-.775,.05) node[right]{\tiny{$O(1)$}};
		\draw (-.775,-.25) node[right]{\tiny{$O(k^{-0.5})$}};
		\fill[color=white] (-2,-4)rectangle (2.1,-3.1);
		\draw node at (.125,-3.3){Wavenumber $k$};
		\fill[color=white] (-2.6,3.05) rectangle (2.6,3.5);
		\draw node at (.125,3.3){Galerkin $H_k^1$ errors in $\Omega$, $\cavity$, and $\visible$};
	\end{tikzpicture}
	\caption{Local Galerkin errors in the $H^1_k$ norm in $\cavity$ and $\visible$ for the approximation of $u_{\rm in}$ (left) and $u_{\rm out}$ (right) in the regime RE away. Black squares: global relative error. Red diamonds: error in the cavity. Blue circles: error away from the cavity. A priori bounds represented as red dashed lines (for the cavity) and blue dotted lines (away from cavity).}
	\label{fig:REawayRE}
\end{figure}

\subsection{An adaptive mesh-refinement algorithm}

The numerical experiments show that Theorem~\ref{t:simple} accurately captures the effect of local best approximation errors on the local Galerkin errors. 
It is therefore natural to use Theorem~\ref{t:simple} to inform an adaptive refinement algorithm. This will be investigated elsewhere, but we sketch the main steps here.
Given a set $U\subset \Omega$ on which one wants an accurate solution, implement the following.
\begin{enumerate}
\item Use ray tracing to (a) identify the regions $\cavity, \visible,$ and $\invisible$ and (b) estimate $\rho$ \blue{(see Remark \ref{r:findKV} below)}.
\item Compute a Galerkin solution, $u^0_h$ on a rough mesh and let $j=0$.
\item Compute $\|u^j_h\|_{\Omega_{\cavity}}$, $\|u^j_h\|_{\Omega_{\visible}}$,  $\|u^j_h\|_{\Omega_{\invisible}}$, and  $\|u^j_h\|_{\Omega_{\pml}}$.
\item Assuming that $\|u\|_{\Omega_j}\propto \|u_h\|_{\Omega_j}$, use the standard approximation property $\|u-w_h\|_{H_k^1}\leq C(hk)^p\|u\|_{H_k^{p+1}}$ 
and associated lower bounds~\cite{G1} to obtain bounds on the vector of best approximation errors on the hand side of~\eqref{e:simple}.
\item Put the bounds from Step 3 into Theorem~\ref{t:simple} to give an estimator for the map 
\beqs
(h_\cavity, h_\visible, h_\invisible, h_\pml) \mapsto \Big( \| u-u_h\|_{H^1_k(\Omega_\cavity')}, \| u-u_h\|_{H^1_k(\Omega_\visible')},\| u-u_h\|_{H^1_k(\Omega_\invisible')},\| u-u_h\|_{H^1_k(\Omega_\pml')}\Big).
\eeqs
Use this map (e.g., via a penalised optimisation process) to determine what mesh refinement will be effective for reducing the error in $U$. 
\item Solve the problem on the new mesh to obtain $u_h^{j+1}$.
\item Set $j=j+1$ and repeat Steps 3-6 until the desired accuracy is achieved.
\end{enumerate}
The constants in Theorem~\ref{t:simple} are not given explicitly. 
However, we believe that replacing all the constants by one (or possibly adaptively tuning the constants) will produce an effective adaptive refinement for a fixed $k$ (large enough). 

\begin{remark}
\label{r:findKV}
To accomplish Step 1 (a), one can use a set of sample points and directions $x_i\in \Omega$, $i=1,\dots, N$ and $\xi_j\in S^{d-1}$, $j=1,\dots,M$ at a fine scale, $\delta$ and fix a maximal time $T_{\max}>0$ and then run a ray tracing algorithm from each sample point. \blue{Define} $t_{ij}:\min(T_{\max},T_{ij,\mc{E}})$, where $T_{ij,\mc{E}}$ is the time at which the ray from $(x_i,\xi_j)$ enters the PML region.  Define $I_\visible:=\{ i\,:\max_{j}t_{ij}=T_{\max}\}$. We set $\Omega_{\cavity}$ to be a neighbourhood of $\{x_i\,:\,i\in I_{\visible}\}$, and $\Omega_{\visible}$ a neighbourhood of the complement of $\Omega_{\cavity}$ intersected with the rays from $\{x_i\}_{i\in I_{\cavity}}$. 

 For Step 1(b) one can use the heuristic that 
 \begin{equation}
 \label{e:estim_rho_rays}
 \rho \lesssim kV^{-1}(k^{-(\blue{d-1})})\,,
 \end{equation}
 where $V^{-1}$ is the inverse of $t \mapsto V(t)$, and $V(t)$ is the volume of the set points in $\Omega\times S^{d-1}$ that do not enter the PML in time $t$. To estimate $V(t)$, we use the approximation
 $$
 V(t)\sim\widetilde{V}(t):=\delta^{2d-1} \#\{(x_{i},\xi_j)\,:\, t_{ij}\geq t   \}.
 $$
The heuristic \eqref{e:estim_rho_rays} is valid at least for some special cases, including some cases of the weakest form of trapping where trajectories escape exponentially fast,
and certain geometries that are warped products \cite{ChWu:13}. Furthermore, one place where $V(t)$ rigorously appears is in fractal upper bounds on the number of resonances near the real axis \cite{DyGa:17}. However, a precise characterisation of $\rho$ via billiard dynamics is a challenging open problem.

\end{remark}

\section{Assumptions and statement of the main result}
\label{sec:state_main_result}
We now gather some definitions and assumptions and state our main result, Theorem \ref{t:theRealDeal}.

\subsection{The Helmholtz PML operators}\label{sec:3.1}

Throughout this paper, $\Omega_{-} \subset \R^d$ (the obstacle) denotes a bounded open set with $C^\infty$ boundary and connected complement. Let $\Omega_{\tr}\Subset \mathbb{R}^d$ (the truncation domain) be a bounded open set with $\Omega_-\Subset \Omega_{\tr}$ and define $\Omega:=\Omega_{\tr}\setminus\overline{\Omega}_-$ (the computational domain). Let $\Gamma_{\tr}=\partial\Omega_{\tr}$ so that $\partial\Omega$ \blue{is the disjoint union of} $\partial\Omega_-$ \blue{and} $\Gamma_{\tr}$. For all $k > 0$ and $n\geq 0$, and given $U \subset \R^d$, let $H^n_k(U)$ (abbreviated $H^n_k$ when $U = \Omega$) be the completion of $C^\infty(U)$ with respect to the norm \eqref{e:weightedNorm},
and let $H_k^{-n}(U)$ be the normed dual of $H_k^n(U)$, with, as usual, $L^2(\Omega)$ identified through the $L^2$ pairing with a subspace of $H_k^{-n}$ for all $n \geq 0$. \footnote{We highlight that this is not the standard notation, as $H^{-n}$ usually denotes the dual of $H_0^n = \overline{C^\infty_c(\Omega)}^{H^n_k}$. \blue{One can show that the dual of $H^n(U)$ is given by $\overline{C_c^\infty(U)}^{H^{-n}(\mathbb{R}^n)}$~\cite[Theorem 3.30]{Mc:00}; i.e., it is the set of supported $H^{-n}$ distributions in $U$.} } 

Let $a_k: H^1_k\times H^1_k \to \mathbb{C}$ be the sesquilinear form defined by 
\begin{equation}
\label{e:def_ak}
a_k(u,v) := \int_{\Omega} \Big(k^{-2}A_\theta(x) \nabla u(x) \cdot \overline{\nabla v(x)} 
+k^{-2}\langle b_\theta(x),\nabla u\rangle \overline{v(x)}
- n_\theta(x) u(x) \overline{v(x)}\Big)\,dx,
\end{equation}
where $A_\theta$, $b_\theta$, and $n_\theta$ are defined in \S \ref{sec:PML}.

To cover both Dirichlet (so-called ``sound-soft'') and Neumann (``sound-hard'') obstacles, we consider a subspace $\cZ_k \subset H^1_k$, which can be either given by $\cZ_k = \Zspaced{}$ or $\cZ_k = \Zspacen{}$, where
\begin{equation}
\label{e:defZk_concrete}
\Zspaced{}:=H_{0,k}^1(\Omega),\qquad\tand\qquad \Zspacen{}:=\overline{\{u\in C^\infty(\overline{\Omega})\,:\,\supp u\cap \Gamma_{\tr}=\emptyset\}}^{H_k^1},
\end{equation}
and let $\cZ_k^*$ be the normed dual of $\cZ_k$. The Helmholtz operator $P_k:\Zspace{} \to (\Zspace{})^*$ is then defined as the linear operator associated to $a_k$, i.e.
$$P_k : \cZ_k \to \cZ_k^*\,, \quad \langle P_k u,v\rangle := a_k(u,v) \quad \tfa u,v \in \cZ_k.$$
If $P_k$ is invertible, we denote by $R_k := (P_k)^{-1} : \cZ_k^* \to \cZ_k$ its inverse (also known as the resolvent), and let
\begin{equation*}
\rho(k) := \|R_k\|_{L^2 \to L^2} = \sup_{f \in L^2(\Omega) \setminus \{0\}} \frac{\|P_k^{-1} f\|_{L^2(\Omega)}}{\|f\|_{L^2(\Omega)}}
\end{equation*}

Our main result holds for $k$ ranging in a subset $\R_+\setminus \blue{\mathrm{J}}$ of the positive real numbers on which $\rho(k)$ is polynomially bounded; i.e., we make the following assumption on $\blue{\mathrm{J}}$.
\begin{assumption}[Polynomial bound on the resolvent]
\label{a:polyBound}
\blue{The set $\blue{\mathrm{J}}\subset (0,\infty)$ is such that for all $k_0>0$,
$$
\sup_{k\in[k_0,\infty)}\frac{\log \rho(k)}{\log k}<\infty.
$$
i.e. $rho$ is polynomially bounded in $k$.
}
\end{assumption}

In this paper, we are then interested in the error in the finite-element approximation solution (see next paragraph) of the variational problem, for $k \in \R_+ \setminus \blue{\mathrm{J}}$:
\begin{equation}
\label{e:var_pb}
\text{find $u \in \cZ_k$ such that, for all $v \in \cZ_k$, $a_k(u,v) = F(v)$},
\end{equation}
where $F : \cZ_k \to \cZ_k^*$ is a continuous anti-linear form.

\subsection{Finite-element approximation}

We consider a Galerkin approximation of the variational problem \eqref{e:var_pb}. Following the practice in the local FEM error analysis literature (see in particular \cite[Assumptions A.1--A.3]{NiSc:74}, and also \cite{Wa:91,DeGuSc:11,Br:20}), and following closely \cite{AvGaSp:24}, we describe $\blue{V_{\mathcal{T}}}$ through a set of standard assumptions as follows. Throughout, a fixed positive integer $p$,  modelling the polynomial degree of the finite-element subspace, is chosen independently of $k$; hence this setting models \blue{an} ``$h$-version" of the FEM.

If $U \subset \Omega$ is an open set, define
\[C^{\infty}_<(U):= \big\{\chi \in C^\infty(\overline{\Omega}) \textup{ such that } \supp \chi \subset \overline{U} \tand \partial_<(\supp \chi,U) > 0\big\}\]
\[ \Zspace{1,<}(U) := \overline{\big\{ v \in \Zspace{} \, \textup{ s.t. }\, \supp v \subset \overline{U} \,\tand\, \partial_<(\supp v, U) > 0 \big\}},\]
where the closure is taken with respect to the $\Zspace{\blue{1}}$ norm, and for any subsets $\Omega_0 \subset \Omega_1 \subset \Omega$,
\begin{equation}
\label{e:def_partial}
\partial_< (\Omega_0,\Omega_1) := \textup{dist}(\,\partial \Omega_0 \setminus \partial \Omega\,,\,\,\partial \Omega_1 \setminus \partial \Omega\,).
\end{equation}

\begin{definition}
\label{d:mesh}
A {\em \blue{mesh}} $\mathcal{T}$ of $\Omega$ is a set of pairwise disjoint open subsets $\blue{T} \subset \Omega$ such that
\[\bigcup_{\blue{T} \in \mathcal{T}} \overline{\blue{T}} = \overline{\Omega}.\]
\end{definition}
We denote by $h_\blue{T}$ the diameter of $\blue{T} \in \cT$ and define the global meshwidth
$$\blue{h=h(\cT) :=\max_{\blue{T} \in \cT} h_\blue{T}}$$
\blue{A {\em finite-element space} $V_{\mathcal{T}}$ over a mesh $\mathcal{T}$ of $\Omega$ is a finite-dimensional subspace $V \subset \mathcal{H}$ such that for every $u \in V$ and $\blue{T} \in \mathcal{T}$, $u|_{\blue{T}} \in C^\infty(\overline{\blue{T}})$. If $V$ is a finite-element space and $U \subset \Omega$, define
\[V_k^<(U) := \Zspace{1,<}(U) \cap V.\] }

\blue{
\begin{definition}
    We define the uniformity constant of a mesh $\mathcal{T}$ at scale $r$, $\mathcal{U}(\mathcal{T},r)$, by 
    $$
    \mathcal{U}(\mathcal{T},r):=\sup \big\{ \tfrac{h_{\blue{T}_1}}{h_{\blue{T}_2}}\,:\, \blue{T}_1,\blue{T}_2\in\mathcal{T},\, d(\blue{T}_1,\blue{T}_2)<r\Big\}.
    $$
\end{definition}
}

\blue{\begin{definition}[Order-$p$-approximation]
	\label{d:app}
    Let $C_0,k,\kappa>0$. We say that a finite-element space $V_{\mathcal{T}}$ over a mesh $\mathcal{T}$ has the $p$-approximation property at frequency $k$ with constants $(C_0,\kappa)$ if $V_{\mathcal{T}}\subset \mathcal{Z}_k$ and for all $j \in \{1,\ldots,p+1\}$, all  all $u \in \Zspace{1}\cap \Hspace{j}$, there exists $u_h \in V$ such that
	\[\sum_{\blue{T} \in \mathcal{T}}(h_\blue{T} k)^{2(\purple{1}-j)} \|u - u_h\|_{H^{\purple{1}}_k(\blue{T})}^{2} \leq C_0 \|u\|^2_{H_k^j}.\]
	Furthermore, given subsets $U_0 \subset U_1 \subset \Omega$ such that 
	$$\partial_<(U_0,U_1) \geq \kappa \max \big\{h_\blue{T} \,\,|\,\, \blue{T} \in \mathcal{T} \,\,\textup{ s.t. }\,\, \blue{T} \cap U_1 \neq \emptyset\big\},$$  
	if $\textup{supp}\,u \subset U_0 \cup \partial \Omega$, then $u_h$ can be chosen such that $\supp u_h \subset U_1 \cup \partial \Omega$.    
\end{definition}}

\blue{
\begin{definition}[Super-approximation property]
\label{d:sap}
 Let $C_0,k,\kappa>0$. Then a finite-element space $V_{\mathcal{T}}$ over a mesh $\mathcal{T}$ has the \emph{super-approximation property at frequency $k$ with constants $(C_0,\kappa)$} if
	for any subsets $U_0 \subset U_1 \subset \Omega$ such that
	\[2>d:= \partial_<(U_0,U_1) \geq \kappa  \max \big\{h_\blue{T} \,\,|\,\, \blue{T} \in \mathcal{T}_k \,\,\textup{ s.t. }\,\, \blue{T} \cap U_1 \neq \emptyset\big\},\]
	and any $C_{\dagger}>0$ if $\chi \in C^{\infty}_<(U_0)$ is such that,  \beq\label{e:chiBound}\max_{|\alpha| = n} \|\partial^\alpha \chi \|_\infty \leq \frac{C_\dagger}{d^n}, \tfor n = 0,\ldots,p,
    \eeq
	then for any $u_h \in V$, there exists $v_h \in V_k^{<}(U_1)$ such that
	\[\|\chi^2 u_h - v_h\|_{H^1_k(\blue{T})} \leq C_0 C_\dagger \frac{h_\blue{T}}{d} \left[C_{\dagger}\left(1 + \frac{1}{kd}\right)\|u_h\|_{L^2(\blue{T})} + \|\chi u_h\|_{H^1_k(\blue{T})}\right]\,\quad \tfa \blue{T} \in \cT_k.\]
\end{definition}
}

\blue{
\begin{definition}[Inverse inequality]
	\label{d:ii}
    Let $C_0,k>0$. Then a finite element space $V_{\mathcal{T}}$ over a mesh $\mathcal{T}$ has \emph{inverse inequalities at frequency $k$ with constant $C_0$} if for all $\blue{T} \in \mathcal{T}$, all $u_h \in V$ and all $j \in \{0,1,\ldots,p\}$, 
	\begin{align*}
		\|u_h\|_{H^1_k(\blue{T})} \leq \frac{C_0}{h_\blue{T} k}\|u_h\|_{L^2(\blue{T})}\quad\tand\quad
		\|u_h\|_{L^2(\blue{T})} \leq \frac{C_0}{(h_\blue{T} k)^j} \|u_h\|_{H_k^{-j}(\blue{T})},
	\end{align*}
	where $\|u_h\|_{H_k^{-j}(\blue{T})} := \sup_{v \in C^\infty_c(\blue{T})} (\abs{\int_{\blue{T}} u_h v\,dx}/\|v\|_{H^j_k(\blue{T})})$.
\end{definition}
}

\begin{definition}[Well-behaved finite-element space]
\label{d:wellbehaved}
\blue{
Let $C_0,k,\kappa>0$ and $p\geq 1$. We say that a finite-element space $V_{\mathcal{T}}$ over a mesh $\mathcal{T}$ is \emph{well-behaved of order $p$ at frequency $k$ with constants $(C_0,\kappa)$ if}}\\

\blue{\noindent
\begin{minipage}{0.45\textwidth}
\begin{equation}
\label{e:swlg}
h(\mathcal{T})\leq C_0k^{-1},
\end{equation}
\end{minipage}
\hfill
\begin{minipage}{0.45\textwidth}
\begin{equation}
\label{e:qu}\mathcal{U}(\mathcal{T},k^{-1})\leq C_0,
\end{equation}
\end{minipage}}\\

\noindent
\blue{
$V_{\mathcal{T}}$ has the $p$-approximation property at frequency $k$ (of Definition \ref{d:app}), the super-approximation property at frequency $k$ (of Definition \ref{d:sap}) with constants $(C_0,\kappa)$, and has inverse inequalities at frequency $k$ (of Definition \ref{d:ii}) with constant $C_0$.}
\end{definition}


\begin{remark}\label{r:geometricError}
	Since $\Omega$ has a $C^\infty$ boundary \blue{well-behaved finite element spaces must have curved meshes}. However, 
	\begin{itemize}
		\item this type of assumptions is common in the high-frequency error analysis for the finite-element method for the Helmholtz equation, see e.g. \cite[\blue{Example 5.1 and Assumption 5.2}]{MeSa:10}, and
		\item the ``geometric error" incurred by using \blue{straight} elements instead of curved elements is studied in \cite{ChSp:24}, and shown to be smaller than the pollution error. 
	\end{itemize}
\end{remark} 

For each $k \in \R_+ \setminus \blue{\mathrm{J}}$, the Galerkin solution $u_h = u_h(k) \in \blue{V_{\mathcal{T}}}$ (where the subscript $h$ emphasizes the dependence of $u_h$ with respect to the meshwidth of the \blue{mesh}) is defined by 
\begin{equation*}
a_k(u-u_h,v_h) = 0 \quad \tfa v_h \in \blue{V_{\mathcal{T}}},
\end{equation*}
and our main result, Theorem \ref{t:theRealDeal}, describes the (micro-)local behaviour of the error $u-u_h$.

\subsection{Frequency splitting of the error}

We consider a splitting of the Galerkin error $u-u_h$ into ``low-frequencies'' and ``high-frequencies''. To define these notions, we introduce frequency cutoffs as follows.

The following G\aa rding inequality holds (see 
\S\ref{sec:verify} and \S\ref{sec:PML}): 
there exists $\omega\in \Rea$ (with $\omega=0$ for the most commonly-used PML) such that 
for all $k_0 > 0$ there are $\cGa, \CGa > 0$ such that for all $k \geq k_0$, 
\beq\label{e:Garding1}
\Re( \re^{\ri \omega} a_k(u,u)) \geq \cGa\|u\|^2_{\cZ_k} - \CGa \|u\|^2_{L^2}.
\eeq
We deduce from this (see \S \ref{sec:pseudolocS}) that $\sigma(\cP_k) \subset [-\CGa,+\infty)$, and thus, for each $k_0$, there exists a function $\psis$ such that 
\begin{equation}
\label{e:defpsis}
\psis(x) \geq \frac{-x + \CGa}{2} \quad \tfa x \in \sigma(\cP_k).
\end{equation}

Let $\cP_k : \Zspace{} \to (\Zspace{})^*$ be defined by 
$$\cP_k = \frac12 \Big(\re^{\ri \omega} P_k + \re^{-\ri \omega}P_k^{*}\Big).$$
We show in Section \ref{sec:assumptions} that $\cP_k$ is self-adjoint on $L^2(\Omega)$ with domain $\Zspace{2} = \Zspaced{2}$ in the Dirichlet case, and $\Zspace{2} = \Zspacen{2}$ in the Neumann case, where
\begin{equation}
\label{e:defZ2concrete}
\Zspaced{2}=H_0^1(\Omega)\cap H^2(\Omega),\qquad \Zspacen{2}=\{ u\in H^2(\Omega)\,:\,\partial_{\nu, A_\theta} u|_{\partial\Omega_-}=0,\,\, u|_{\Gamma_{\tr}}=0\}.
\end{equation}
Thus, if $f :\R \to \R$ is a bounded, continuous function, we may consider $f(\cP_k): L^2(\Omega) \to L^2(\Omega)$ defined by the functional calculus.

Low-frequency cutoffs will then be defined as $\Psi = \psi(\cP_k)$ where $\psi \in C_c^\infty(\R)$ is such that $\psi \equiv 1$ on the support of $\psis$, and $1 - \Psi$ will correspond to high-frequency cutoffs.

\subsection{Spatial splitting of the error}

In addition to considering the Galerkin error locally in frequency space, we also localize it spatially. We fix a neighbourhood $U_{\Pml}$ of $\Gamma_{\tr}$ in which Theorem~\ref{thm:PML} holds (that is, sufficiently ``deep'' in the PML region so that the resolvent $R_k^*$ in this region behaves like a pseudolocal, uniformly bounded operator with respect to $k$).  Let 
\begin{equation*}
\Omega = \bigcup_{j = 1}^M \Omega_j
\end{equation*}
be an open cover of $\Omega$ by $M = M_\Int + M_\Pml$ subdomains. We assume that the ``interior'' domains $\Omega_1,\ldots,\Omega_{M_\Int}$ do not intersect the truncation boundary, while the ``PML'' domains $\Omega_{M_{\Int}+1},\ldots,\Omega_{M_{\Int}+M_\Pml}$ all lie inside the deep PML region, i.e.
\begin{equation}
\label{e:domainConditions}
\bigcup_{j = 1}^{M_\Int} \Omega_j \cap \Gamma_{\rm tr} = \emptyset\,, \qquad \bigcup_{j = M_\Int + 1}^{M_\Int + M_\Pml} \Omega_j \subset U_{\Pml}.
\end{equation}
For $i,j \in \{1,\ldots,M\}$, define 
\beq\label{e:hij}
h_i := \max_{\blue{T} \in \cT_k} \{\textup{diam}(\blue{T}) \,\mid\, \blue{T} \cap \Omega_i \neq \emptyset\} \quad \tand \quad h_{ij} := \min(h_i,h_j),
\eeq
the local mesh sizes on $\Omega_j$ and $\Omega_i \cap \Omega_j$.

\subsection{Matrix quantities}

In Theorem \ref{t:theRealDeal}, the description of the local error in subdomains is given in terms of matrices $\Hdiag$, $\Hmin$,  $\mathcal{C}$, $T$ and $B$ that we define now. 

For every natural number $\ell$, define the following $M \times M$ matrices 
\begin{equation}
\label{e:defHdiagMin}
\Hdiag := \textup{diag}(h_1,\ldots,h_{\domainnumber}),\qquad
  \mathcal{H}^{\min}(\ell)
:=  (1_{\{(\ell,m)\,:\Omega_i \cap \Omega_j \neq \emptyset\}}(i,j)h_{ij}^\ell)_{1 \leq i,j \leq \domainnumber},
 \end{equation}
\blue{where $1_D$ denotes the indicator function of the set $D$.}
 
 Furthermore, let $\mathcal{C}$ be the $M \times M$ matrix defined by
\beq\label{e:matrixC}
\mathcal{C}_{ij} := \|1_{\Omega_j} R_k^* 1_{\Omega_i}\|_{L^2 \to L^2}= \|1_{\Omega_i} R_k 1_{\Omega_j}\|_{L^2 \to L^2}, \quad i,j=1,\ldots,\domainnumber.
\eeq
For an $\domainnumber \times \domainnumber$ matrix $A$ (either $\Hdiag$ or $\Hmin$), we write
 $$
 A=:\begin{pmatrix} A_{\Int,\Int}&A_{\Int, \Pml}\\A_{\Pml, \Int}&A_{\Pml,\Pml}\end{pmatrix},\quad A_{i,j}\in \mathbb{M}(\domainnumber_i\times \domainnumber_j).
 $$
where $M_1 := M_\Int$ and $M_2 := M_{\Pml}$ \blue{and $\mathbb{M}(n_1\times n_2)$ denotes the set of complex-valued matrices with $n_1$ rows and $n_2$ columns}. Let $B \in\mathbb{M}((2M_\Int + M_\Pml) \times M)$ be defined by 
\beq\label{e:matrixB}
B := \begin{pmatrix}
	\mathcal{C}_{\Int,\Int} (\Hdiag_{\Int, \Int}k)^p&0
	\\
	(\Hdiag_{\Int,\Int} k)^p&0
	\\
	0
	&(\Hdiag_{\Pml,\Pml} k)^p
\end{pmatrix}, \quad 
\eeq
and let $\oldT \in \mathbb{M}((2M_\Int + M_\Pml)\times(2M_\Int + M_\Pml))$ be defined by
\beq\label{e:matrixT}
\oldT := 
\begin{pmatrix}
\mathcal{C}_{\Int,\Int}(\Hdiag_{\Int, \Int} k)^{2p} & \mathcal{C}_{\Int,\Int}(\Hdiag_{\Int, \Int} k)^{2p}&\Hmin_{\Int,\Pml}(N)k^{N}  \\
\Hmin_{\Int,\Int}(2p)k^{2p} & \Hmin_{\Int,\Int}(N)k^{N}&\Hmin_{\Int,\Pml}(N)k^{N}\\
\Hmin_{\Pml,\Int}(N) k^{N}&\Hmin_{\Pml,\Int}(N)k^{N}&\Hmin_{\Pml,\Pml}(N)k^{N}
\end{pmatrix},
\eeq

\subsection{Simple-path matrix}
\label{sec:simple_path_mat}

To any square matrix $\oldT \in \mathbb{M}(N \times N)$, one can associate a matrix $V = V(\oldT)$ defined from the coefficients $\oldT$ in terms of simple paths on a graph. To define this, let $\mathcal{G} = \mathcal{G}(\oldT)$ be the (complete) directed graph, with node set $\mathcal{N} := \{1,\ldots,N\}$, and with edge set $\cE$ the set of ordered pairs $(i,j) \in \{1,\ldots,N\}^2$. A {\em path} $p$ in $\cG$ is a finite (and possibly empty) sequence of edges
$$p = (i_1,j_1) (i_2,j_2) \ldots (i_{\length - 1},j_{\length-1})(i_\length,j_\length)$$ 
satisfying the conditions $j_\ell = i_{\ell+1}$ for $1 \leq \ell  \leq \length-1$. Let $\mathbf{0}$ stand for the empty path. We 
write $\abs{p} := \length$ and denote by $p(\ell)$ the $\ell$-th node visited by $p$, i.e., $p(\ell):=i_\ell$ if $1\leq \ell \leq |p|$ and $p(|p|+1) :=j_\length$. Let $\mathbb{P}_{ij}$ be the set of paths from $i$ to $j$, i.e., such that $p(1) = i$ and $p(|p|+1) = j$.

A path $p$ is {\em non-intersecting} if the map $\ell \mapsto p(\ell)$ is injective. For $i,j \in \{1,\ldots,M\}$,
let $\mathbb{V}_{ij}$ be the set of non-intersecting paths from $i$ to $j$. 
Observe that $\mathbb{V}_{ii}:= \{\mathbf{0}\}$.

A non-empty path $p$ is a {\em loop} if it starts and ends at the same node, i.e., if $p(1) = p(|p|+1)$. It is a {\em simple loop} if it is a loop but otherwise does not intersect itself, i.e.,
\beqs
p(\ell) = p(m) \implies \big( \ell=m\,\text{ or }\,\{\ell,m\}= \{1,|p|+1\}\big).
\eeqs
We denote by $\mathbb{SL}$ the set of simple loops.

To each edge $e = (i,j)$ of $\cG$, we associated the weight $\oldT_e := \oldT_{ij}$ (the $(i,j)$-th coefficient of the matrix $\oldT$). We also define the weight of the path $p$ as the product of the weights of its edges, i.e., $\oldT_{\mathbf{0}} := 1$ and
\begin{equation*}
\oldT_{e_1e_2\ldots e_L} := \oldT_{e_1} \oldT_{e_2} \ldots  \oldT_{e_L}.
\end{equation*}

\begin{definition}[Simple-path matrix]
\label{def:simple_path_mat}
The {\em simple-path matrix} $\transfer = \transfer(\oldT) \in \mathbb{M}(N \times N)$ of a matrix $\oldT \in \mathbb{M}(N\times N)$ is defined by
$$\transfer_{ij} := \sum_{p \in \mathbb{V}_{ij}} \oldT_{p} \,, \quad 1 \leq i,j\leq N.$$
Observe that the diagonal entries of $\transfer$ are $1$ since $\mathbb{V}_{ii} = \{\mathbf{0}\}$.
\end{definition}

\begin{remark}
We show in Theorem \ref{t:onlyTheSimpleOnes} that, provided the simple loops of $\cG$ carry weights bounded by $c < 1$, then the $I - W$ is invertible and $(I- \oldT)^{-1} \leq \transfer$ coefficientwise. 
\end{remark}

\subsection{Statement of the main result}

\begin{theorem}[The main result]\label{t:theRealDeal}
	Let $a_k$ be defined by \eqref{e:def_ak}, $\blue{\mathrm{J}} \subset \R_+$ be such that Assumption \ref{a:polyBound} holds, $p$ be a positive integer, and \blue{$C_0,\kappa>0$}.   Let $\{\Omega_i\}_{i = 1}^M$ be an open cover of $\Omega$ such that the conditions \eqref{e:domainConditions} hold. For every $i \in \{1,\ldots,M\}$, let $\chibigger_i \in C^\infty(\overline{\Omega})$ be such that
		\begin{equation}
		\label{e:crazyCover0}
\supp(\chibigger_i) \subset \Omega_i \cup \partial \Omega\quad \tand \quad
			\Omega=\bigcup_{i=1}^{\domainnumber} {\rm int}\, \big(\big\{ \chi_i \equiv 1\big\}\big),
		\end{equation}
	where the interior is taken in the subspace topology of $\Omega$. Let $k_0, N > 0$, let $\psis$ satisfy \eqref{e:defpsis} and let $\psi \in C^\infty_c(\R)$ be such that 
$\supp \psis \cap \supp(1-\psi)= \emptyset$.

Then, there exist constants $h_0,\specialC>0$ and, for any $0<c<1$, a constant $C>0$ such that the following holds. For any $k\in  (k_0,\infty)\setminus \blue{\mathrm{J}}$, \blue{and all finite element spaces $V_{\mathcal{T}}$ over a mesh $\mathcal{T}$ that are well-behaved of order $p$ at frequency $k$ with constants $(C_0,\kappa)$ in the sense of Definition \ref{d:wellbehaved} satisfying $h(\mathcal{T}) \leq h_0$} and
	\begin{equation}
		\label{e:meshConditionsGeneral}
		\sum_{{L \in \mathbb{SL}}}
		C_\dagger^{|L|} \oldT_{L}\leq c,
	\end{equation}
	where $\oldT$ is defined by \eqref{e:matrixT}, then for all $u \in H^1_k$, there exists a unique solution $u_h \in V$ to the Galerkin problem
	\begin{equation}
	\label{e:Galerkin_ortho}
		a_k(u-u_h,v_h) = 0 \quad \tfa v_h \in V.
	\end{equation}
	Moreover, for any $w_{h,1},\ldots, w_{h,\domainnumber} \in V$, $m \in \{0,\ldots,p\}$, and $i \in \{1\,,\ldots\,,M_\Int\}$,
	\setlength{\arraycolsep}{1pt}
	\begin{equation}
	\begin{aligned}
	&\left(
	\begin{aligned} &\big(\big\|\chi_i\Psi (u-u_h)
	\big\|_{H_k^{1-m}}\,\big)_{i=1}^{\domainnumber_{\Int}}\\[.25em]
	 &\big(\big\|\chi_i(1-\Psi) (u-u_h)
	\big\|_{H_k^{1-m}}\,\big)_{i=1}^{\domainnumber_{\Int}}\\[.25em]
	 &\big(\big\|\chi_i (u-u_h)
	\big\|_{H_k^{1-m}}\,\big)_{i=\domainnumber_{\Int}+1}^{\domainnumber}
	\end{aligned}\right)\\
	&\leq \left[\begin{pmatrix}0&0\\
	(\Hdiag_{\Int, \Int}k)^{\newell}&0\\
	0&(\Hdiag_{\Pml, \Pml}k)^{\newell}\end{pmatrix}+\begin{pmatrix}\Id&0&0\\(\Hdiag_{\Int, \Int}k)^{p+m}&(\Hdiag_{\Int, \Int} k)^{N}&0\\
	0&0&(\Hdiag_{\Pml, \Pml}k)^{N}\end{pmatrix}\transfer B\right]\Big(\|u-w_{h,j}\|_{H_k^1(\Omega_j)}\Big)_{j=1}^{\domainnumber}\\
	&\hspace{12cm}+CR.
	\end{aligned}
	\label{e:mainResult}
	\end{equation}
	\setlength{\arraycolsep}{5pt}
	where $\Hdiag$ is defined by \eqref{e:defHdiagMin}, $B$ is defined by \eqref{e:matrixB}, $\transfer$ is the simple-path matrix of $\specialC \oldT$ in the sense of Definition \ref{def:simple_path_mat}, 
	and 
\beqs
R:= k^{-N}(hk)^{\newell}\sum_{j=1}^\domainnumber \|u-w_{h,j}\|_{H^1_k}.
	\eeqs
	
	In particular, the local Galerkin errors satisfy
		\setlength{\arraycolsep}{1pt}
	\begin{equation}\label{e:onlyH1}
	\begin{aligned}
	&
	\big(\big\|\chi_i (u-u_h)
	\big\|_{H_k^{1-m}}\,\big)_{i=1}^{\domainnumber}
	 \\
	&\leq \left[\begin{pmatrix}
	(\Hdiag_{\Int, \Int}k)^{\newell}&0\\
	0&(\Hdiag_{\Pml, \Pml}k)^{\newell}\end{pmatrix}+\begin{pmatrix}\Id&(\Hdiag_{\Int, \Int} k)^{N}&0\\	0&0&(\Hdiag_{\Pml, \Pml}k)^{N}\end{pmatrix}\transfer B\right]\Big(\|u-w_{h,j}\|_{H_k^1(\Omega_j)}\Big)_{j=1}^{\domainnumber}\\
	&\hspace{12cm}+CR.
	\end{aligned}
	\end{equation}
	\setlength{\arraycolsep}{5pt}
%
\end{theorem}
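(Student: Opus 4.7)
The plan is to execute the localised elliptic projection argument sketched in Section 1.3.2 as a rigorous proof. The first step is to set up the self-adjoint shifted operator $\cP_k$ and a carefully chosen smoothing correction $S_k$ (built from functional calculus applied to $\psi^\sharp$) so that $P_k^\sharp := \cP_k + S_k$ is coercive uniformly in $k$ and satisfies $\|S_k\|_{H_k^{-N}\to H_k^N} = O(1)$ for every $N$. This yields an elliptic projection $\Pi_k^\sharp : H_k^1 \to V_k$ with the Cea-type bound \eqref{e:introPiSharpCea}, and by an Aubin--Nitsche argument against $P_k^\sharp$ the negative-norm gain \eqref{e:lowNormPiSharp}.

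The second step, and the technical heart of the paper, is to establish pseudolocality of $\Pi_k^\sharp$ and $S_k$: for cutoffs $\chi,\psi \in C^\infty(\overline{\Omega})$ with disjoint supports,
\begin{equation*}
\|\chi \Pi_k^\sharp \psi\|_{H_k^{-N}\to H_k^N} + \|\chi S_k \psi\|_{H_k^{-N}\to H_k^N} = O(k^{-\infty}),
\end{equation*}
together with a \emph{local} Aubin--Nitsche bound of the form
\begin{equation*}
\|\chi_j (I-\Pi_k^\sharp) v\|_{H_k^{-p+1}} \leq C(h_j k)^p \inf_{w_h \in V_k}\|v - w_h\|_{H_k^1} + O(k^{-\infty})\|v\|_{H_k^{p-1}}.
\end{equation*}
Granting these, I would run the duality identity \eqref{e:startDuality} with test functions $v \in H_k^{p-1}$ and $u_h$ replaced by the Galerkin solution. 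Inserting the partition $\phi_j$, the elliptic projection, and then multiplying in the ``fat cutoffs'' $\chi_j \equiv 1$ on $\supp\phi_j$, one peels off the local part; the nonlocal parts of $\Pi_k^\sharp$, $P_k^\sharp$, and $S_k$ are controlled using the pseudolocality estimates and collapse into the remainder $R$. The frequency-splitting lemma (Lemma~\ref{l:bound_eta_microlocal} in the paper) then bounds $\eta_p(j\to i)$ by $C(h_j k)^p \|\chi_j R_k^* \chi_i\|_{L^2\to L^2} + C 1_{\{\Omega_i\cap\Omega_j\neq \emptyset\}}(h_{ij}k)^p$, which is precisely the shape of $B$ and $W$ in \eqref{e:matrixB}--\eqref{e:matrixT} once one also invokes Theorem~\ref{thm:PML} to kill propagation into/out of the PML subdomains (replacing those entries by $k^{-N}$ remainders).

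The third step is algebraic. Duality in the above yields the matrix inequality
\begin{equation*}
\big(\|\chi_i(u-u_h)\|_{H_k^{-p+1}}\big)_i \leq C_\dagger B \big(\|u-w_{h,j}\|_{H_k^1(\Omega_j)}\big)_j + C_\dagger W \big(\|\chi_j(u-u_h)\|_{H_k^{-p+1}}\big)_j + CR.
\end{equation*}
Under the simple-loop smallness assumption \eqref{e:meshConditionsGeneral}, Theorem~\ref{t:onlyTheSimpleOnes} (Appendix~\ref{sec:loops}) produces $(I - C_\dagger W)^{-1}$ with nonnegative entries bounded componentwise by the simple-path matrix $\transfer$; this yields existence, uniqueness and the bound on the $H_k^{-p+1}$ components. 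The final step is to upgrade to the $H_k^{1-m}$ norms appearing in \eqref{e:mainResult}: on the high-frequency piece $\chi_i(1-\Psi)(u-u_h)$, an inverse inequality (Assumption~\ref{ass:ii}) together with the frequency-cutoff mapping properties of $1-\Psi$ costs the factor $(h_i k)^{p+m-1}$, whereas on $\chi_i\Psi(u-u_h)$ the smoothing nature of $\Psi$ retains the $H_k^{-p+1}\to H_k^{1-m}$ bound without loss, exactly reproducing the block structure of the matrix in front of $\transfer B$; the bound \eqref{e:onlyH1} then follows by summing the two pieces and absorbing the $(\Hdiag_{\Int,\Int}k)^{p+m}$ term into the leading $(\Hdiag_{\Int,\Int}k)^m$ factor.

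The main obstacle is step two: making the pseudolocality of $\Pi_k^\sharp$ quantitatively precise. The operator $\Pi_k^\sharp$ is defined implicitly through a Galerkin problem for $P_k^\sharp$, so propagating cutoffs through it requires iterative use of the super-approximation property (Assumption~\ref{ass:sap}), careful balancing of the scale on which one smuggles in intermediate cutoffs (driven by $\kappa h/k^{-1}$), and a semiclassical-microlocal analysis of $S_k$ to show the off-diagonal decay really is $k^{-\infty}$ rather than merely polynomially small. This is presumably what Sections~\ref{sec:pseudolocS}--\ref{sec:pseudoLocPi} carry out, and it is the step where the ``neglected'' commutators in the sketch actually need to be paid for.
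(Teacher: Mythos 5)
Your high-level strategy --- coercify $P_k$ by a smoothing correction $S_k$, establish pseudolocality of $\Pi_k^\sharp$ and $S_k$, run a localised Aubin--Nitsche duality argument, obtain a matrix system of inequalities, invert via the simple-path bound --- does match the paper's. Two parts of the plan as written do not hold up.

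First, in the norm-upgrade step you propose applying the inverse inequality (Assumption~\ref{ass:ii}) to $\chi_i(1-\Psi)(u-u_h)$. But that assumption applies only to elements of $V_k$, and $(1-\Psi)(u-u_h)$ is not a finite element function: $u$ is the continuous solution and $\Psi = \psi(\cP_k)$ is a spectral cutoff of the self-adjoint operator $\cP_k$, so $(1-\Psi)(u-u_h)$ has no reason to be piecewise-polynomial on $\cT_k$. The paper instead re-runs the duality argument with $A = \chi_i(1-\Psi)$: the key mechanism is that $R_k^*(1-\Psi)\varphi = (R_k^\sharp)^*(1-\Psi)\varphi + O(k^{-\infty})$ (Lemma~\ref{l:bigScreen}), i.e., the resolvent acts as the uniformly bounded $R_k^\sharp$ on high frequencies, which yields the arbitrary gain $(h_{\tilde\phi}k)^N$ in Lemma~\ref{l:highFreqUpgrade}. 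The $m=0$ (that is $H^1_k$) case is then obtained from the G{\aa}rding inequality together with a commutator estimate on $[P_k,\chi_i(1-\Psi)]$ (Lemmas~\ref{l:highReg} and \ref{l:commutator}), not from a discrete inverse estimate.

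Second, you treat the frequency split as a post-processing step: form a single matrix inequality for $\|\chi_i(u-u_h)\|_{H_k^{-p+1}}$, then split into $\Psi$ and $1-\Psi$ pieces. The paper's vector $X$ in Lemma~\ref{l:lastDay} already has three blocks $(X^-,X^+,X^{\Pml})$ (low-frequency interior, high-frequency interior, PML), and the matrix $W$ in \eqref{e:matrixT} is a $3\times3$ block matrix reflecting this. This is essential: propagation \emph{into} the high-frequency block costs only $\Hmin_{\Int,\Int}(2p)k^{2p}$ or $\Hmin(N)k^N$, not $\mathcal{C}(\Hdiag k)^{2p}$, precisely because of the $O(1)$ high-frequency resolvent; an unsplit matrix system would have the worse $\rho$-weighted entries everywhere, the loop condition \eqref{e:meshConditionsGeneral} would be stronger than needed, and --- more to the point --- the theorem's conclusion separately bounds $\|\chi_i\Psi(u-u_h)\|$ and $\|\chi_i(1-\Psi)(u-u_h)\|$ with different prefactor matrices, which your unsplit system cannot produce directly. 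Relatedly, you omit the refinement in Lemma~\ref{l:lastDay} where each interior $\Omega_i$ is further split into $\Omega_i^\circ$ away from $U_\Pml$ and $\Omega_i^\x$ crossing into it, and the duality argument is run on the resulting expanded cover; this is what makes the PML cross-terms in $W$ be $\Hmin_{\Int,\Pml}(N)k^N$ rather than $\Hdiag$-weighted.
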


The proof of Theorem \ref{t:theRealDeal} is given in \S \ref{sec:proof_realDeal}.
Figure \ref{f:twitch} shows the weighted graph associated to the matrix $\oldT$ in the setting of \S\ref{sec:intro}; i.e., $\domainnumber_\Int=3$ (with domains $\Omega_\cavity, \Omega_\visible$, and $\Omega_\invisible$) and $\domainnumber_\Pml=1$. 
\begin{figure}[h]
\hspace{-.1\textwidth}
\includegraphics[width=1.2\textwidth]{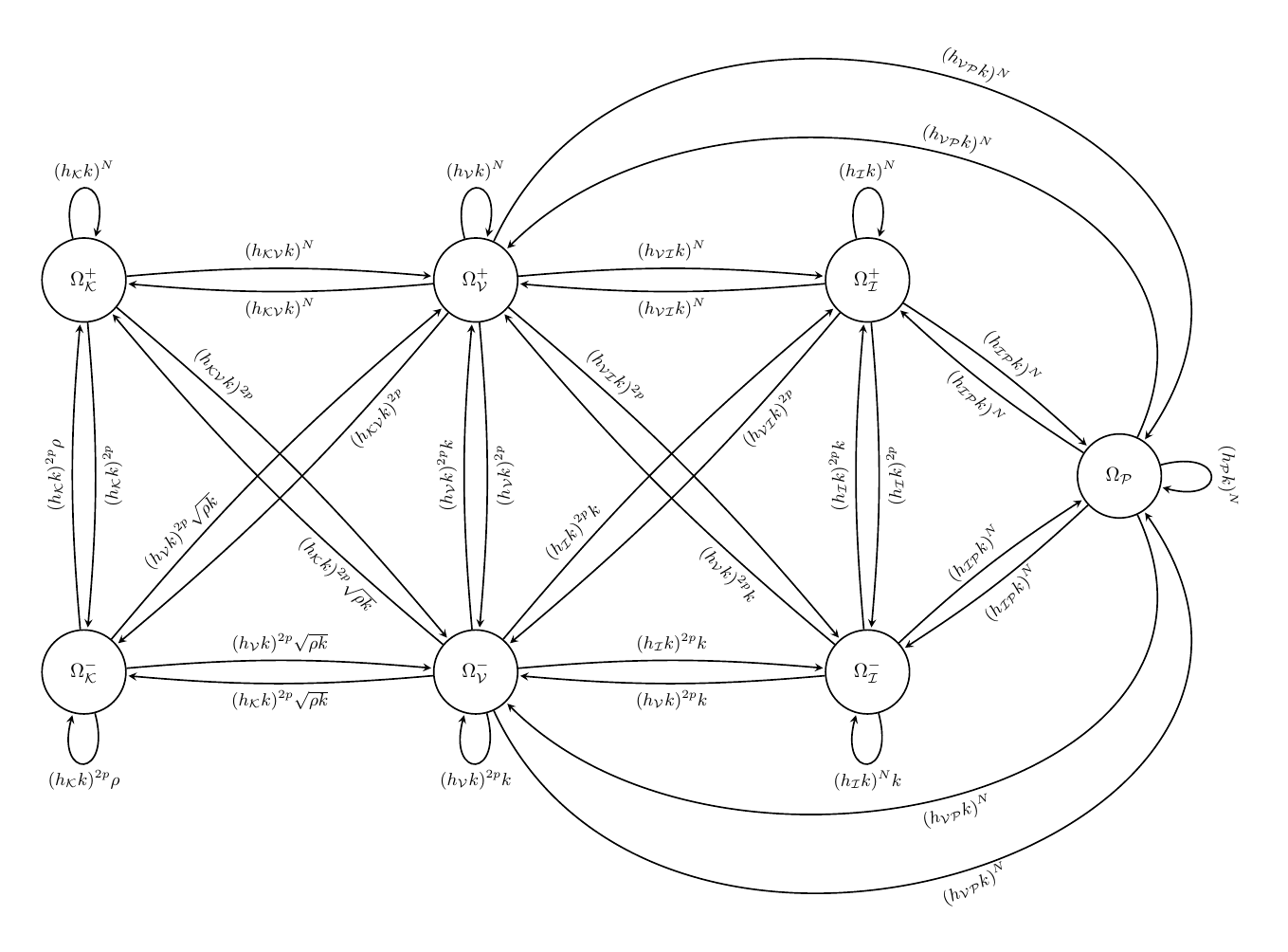}
\caption{The weighted graph associated to the matrix $\oldT$ in the case when $\domainnumber_\Int=3$ (with domains $\Omega_\cavity, \Omega_\visible$, and $\Omega_\invisible$), $\domainnumber_\Pml=1$, and with the $k$-dependence of $\mathcal{C}$ \eqref{e:matrixC} given by the results of \S\ref{sec:boundsCsol}.  Edges with zero weight (between $\Omega^\pm_\cavity$ and $\Omega_\pml$) or $O(k^{-\infty})$ weight (between $\Omega_\cavity^\pm$ and $\Omega_\invisible^\pm$) are not displayed.
Finally $h_{\visible \pml} := \min\{ h_\visible, h_\pml\}$ etc.
 \label{f:twitch}}
\end{figure}


\section{Local bounds on the Helmholtz solution operator}
\label{sec:boundsCsol}

This section describes two results showing how the Helmholtz solution operator has improved $k$-dependence based on the data and measurement locations. The first result (Theorem \ref{thm:DV}) considers locations relative to the cavity or the ray dynamics, and the second (Theorem \ref{thm:PML}) considers locations relative to the PML.
Both results are proved in Appendix \ref{app:help} with the first result a special case of a more general result phrased in terms of semiclassical pseudodifferential operators.


\begin{theorem}[Improved behaviour away from trapping]\label{thm:DV}
Let $k_0>0$ and let $\blue{\mathrm{J}}$ be such that 
Assumption \ref{a:polyBoundIntro} holds. 

(i) For all $\chi \in C^\infty(\overline{\Omega})$ with $\supp \chi \cap \cavity=\emptyset$, there exists $C>0$ such that for all $k\in (k_0,\infty)\setminus\blue{\mathrm{J}}$
\begin{gather}\label{eq:DV}
\|\chi R_k\|_{L^2\to L^2}+\|R_k\chi \|_{L^2\to L^2}\leq C\sqrt{k\rho },\qquad \|\chi R_k\chi\|_{L^2\to L^2}\leq Ck.
\end{gather}

(ii) For all $\chi,\psi \in C^\infty(\overline{\Omega})$ with $\supp \chi \subset \invisible$ and $\supp\psi\subset \cavity$, and all $N>0$, there exists $C>0$ such that for all $k\in (k_0,\infty)\setminus\blue{\mathrm{J}}$
\begin{gather}\label{eq:negligible}
\|\chi R_k\psi\|_{L^2\to L^2}+\|\psi R_k\chi\|_{L^2\to L^2}\leq Ck^{-N}.
\end{gather}
\end{theorem}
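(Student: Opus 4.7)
The plan is to work in the semiclassical framework with $h = k^{-1}$, reducing both estimates to statements about the (generalised broken) bicharacteristic flow of the principal symbol of $P_k$, i.e.\ the billiard flow for the metric $g^{-1} = A/n$ on $\overline{\Omega}_+$. The two ingredients are (a) a non-trapping resolvent estimate localised on sets every trajectory of which escapes to the PML, and (b) the Datchev--Vasy resolvent gluing that upgrades a doubly-cut-off bound to a singly-cut-off bound. The PML is handled as a complex-absorbing perturbation in the sense of \cite{GLS2}, so that energy entering the PML region is irretrievably lost; the relevant propagation facts for bicharacteristics are exactly the same as for a compactly-supported complex absorbing potential.

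\emph{Proof of (i).} The key step is
$$
\|\chi R_k \chi\|_{L^2\to L^2} \leq C k\quad\text{when }\supp\chi\cap \cavity=\emptyset.
$$
I would argue by contradiction using semiclassical defect measures. Assume sequences $k_j\to\infty$ outside $\mathcal{J}$ and $f_j$ with $\|f_j\|_{L^2}=1$ and $\|\chi R_{k_j}\chi f_j\|_{L^2}/k_j\to\infty$. Set $u_j:=R_{k_j}\chi f_j/\|\chi R_{k_j}\chi f_j\|_{L^2}$ so that $\|u_j\|_{L^2}\leq 1$ and $P_{k_j}u_j=o(k_j^{-1})$ in $L^2$ on $\{\chi=1\}$. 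Extract a semiclassical defect measure $\mu$ for $\{\chi u_j\}$. Propagation of singularities (for the damped/PML-scaled operator) gives that $\mu$ is invariant under the generalised broken bicharacteristic flow, and absorbing estimates in the PML give $\mu = 0$ over the deep PML. By the hypothesis $\supp\chi\cap\cavity=\emptyset$, every bicharacteristic over a point of $\supp\chi$ reaches the PML in forward or backward time, so flow-invariance forces $\mu=0$ over $\supp\chi$, contradicting $\|\chi u_j\|_{L^2}\not\to 0$ (which must hold since otherwise normalisation fails by polynomial $\rho$ and Assumption~\ref{a:polyBoundIntro}).

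To pass from $\|\chi R_k\chi\|\leq Ck$ to $\|\chi R_k\|+\|R_k\chi\|\leq C\sqrt{k\rho}$, I would use the $TT^*$/gluing argument of Datchev--Vasy. Writing $\tilde\chi\in C^\infty(\overline{\Omega})$ with $\tilde\chi\equiv 1$ near $\supp\chi$ and $\supp\tilde\chi\cap\cavity=\emptyset$, one has
$$
\|\chi R_k\|^2 = \|\chi R_k R_k^*\chi\| \leq \|\chi R_k\tilde\chi\|\cdot\|\tilde\chi R_k^*\chi\| + \|\chi R_k(1-\tilde\chi)R_k^*\chi\|.
$$
The first term is bounded by $(Ck)^2$ by the step above (applied to both $R_k$ and $R_k^*$, since $P_k^*$ has the same underlying real dynamics). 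For the second, a commutator/resolvent-identity manipulation lets one replace $R_k(1-\tilde\chi)$ by $(1-\tilde\chi)R_k$ up to a term of the form $R_k[P_k,\tilde\chi]R_k$; the commutator is microlocally supported away from $\cavity$, so one factor of $R_k$ picks up only $Ck$ while the other picks up $\rho$. Taking square roots gives the claimed $\sqrt{k\rho}$.

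\emph{Proof of (ii).} Here $\supp\chi\subset\invisible$ and $\supp\psi\subset\cavity$ are dynamically disconnected: by the definitions, no generalised broken bicharacteristic has its closure intersect both sets (a point in $\invisible$ has both forward and backward orbits escaping, hence cannot lie on the closure of a trapped trajectory). Consequently, for any microlocal partition of unity $I=\sum_j Q_j$ with $Q_j$ having small symbol supports, each composition $\chi R_k Q_j \psi$ is $O(h^\infty)$ by an iterated application of propagation of singularities: the flow-out of $\mathrm{WF}_h(\psi u)$ under the bicharacteristic flow does not meet a conic neighbourhood of $T^*\supp\chi$. Combining with the polynomial a~priori bound $\|R_k\|\leq Ck^N$ from Assumption~\ref{a:polyBoundIntro} (used to absorb the remainder $O(h^\infty)R_k$ terms), one concludes $\|\chi R_k\psi\|_{L^2\to L^2}\leq Ck^{-N}$ for every $N>0$. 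The bound for $\|\psi R_k\chi\|$ follows by taking adjoints, since the adjoint operator has the same real bicharacteristic dynamics.

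The main obstacle is microlocal analysis near and inside the PML: the operator is non-self-adjoint there, propagation of singularities requires the modified (complex-scaled) Hamiltonian flow, and one must verify that trajectories cannot return from the deep PML to affect the defect measure. This has been set up in \cite{GLS2}, and the proof would invoke that framework; the generalisation stated in the text (``more general result phrased in terms of semiclassical pseudodifferential operators'') packages these ingredients into a single microlocal resolvent estimate from which (i) follows as a special case.
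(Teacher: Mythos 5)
Your route for part (i) is genuinely different from the paper's, and the gluing step has a gap. The paper proves the $\sqrt{k\rho}$ bound \emph{first} and \emph{directly}: Lemmas \ref{l:outgoing}--\ref{l:halfAway} establish, by an iterated positive-commutator argument with a radial escape function $g(|x|)$ (iterated to push the remainder to $O(k^{-N})$), that for $A$ with ${}^b\WF(A)\cap K=\emptyset$,
\begin{equation*}
\|Au\|_{L^2}^2\leq Ck\|P_ku\|_{L^2}\|u\|_{L^2}+Ck^2\|P_ku\|_{L^2}^2+Ck^{-N}\|u\|_{L^2}^2.
\end{equation*}
Plugging $u=R_kf$ into this gives $\|AR_kf\|^2\leq Ck\|f\|\cdot\rho\|f\|+Ck^2\|f\|^2\leq Ck\rho\|f\|^2$ in one step; the $\|AR_kA\|\leq Ck$ bound is then a separate application of Lemma~\ref{l:fullAway} (data and measurement both away from $K$). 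You instead prove $\|\chi R_k\chi\|\leq Ck$ first and try to upgrade by a $TT^*$/gluing step. That order is workable in principle (it is the scheme of Datchev--Vasy, which the paper cites), but the specific $TT^*$ decomposition you write does not close. After the resolvent identity, you are left with $\chi R_k[P_k,\tilde\chi]R_kR_k^*\chi$. The commutator lives away from $\cavity$, but the middle factor $R_kR_k^*\chi$ then has to be estimated by something like $\|\eta R_k\|\cdot\|R_k^*\chi\|$ with $\eta$ again supported away from $\cavity$ — which is precisely the quantity you are trying to bound. If instead you estimate $R_kR_k^*\chi$ by $\rho\cdot\|\chi R_k\|$, the resulting quadratic inequality only yields $\|\chi R_k\|\lesssim\rho$, not $\sqrt{k\rho}$. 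The actual Datchev--Vasy gluing uses a more delicate comparison with a model (non-trapping or elliptic) resolvent and cannot be reduced to the two-line $TT^*$ splitting you give. So this step as written is a gap; either supply the genuine gluing argument or adopt the paper's direct route.

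A smaller issue in your defect-measure sketch: with $u_j:=R_{k_j}\chi f_j/\|\chi R_{k_j}\chi f_j\|_{L^2}$ you have $\|\chi u_j\|=1$, hence (taking $|\chi|\leq 1$) $\|u_j\|\geq 1$, not $\|u_j\|\leq 1$. A priori $\|u_j\|$ could grow like $k_j^{N-1}$ under Assumption~\ref{a:polyBoundIntro}, so some care is needed to extract a defect measure without the normalisation $\|\chi u_j\|=1$ becoming vacuous after rescaling. This is fixable (it is a standard refinement) but as stated it is not resolved by the parenthetical remark you give. One advantage of the paper's direct propagation estimate is precisely that the $Ck^{-N}\|u\|^2$ remainder takes care of this issue quantitatively, without a compactness argument. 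For part (ii), your dynamical-disconnection argument matches the paper's (the paper invokes Theorems~\ref{thm:negligible2}--\ref{thm:negligible3} and verifies the flow-out of $T^*\invisible$ misses $T^*\cavity$, which is what you say).
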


In the case of scattering without boundaries, the result analogous to \eqref{eq:DV} was proved in \cite{DaVa:12, DaVa:12a}.

\begin{theorem}[Improved behaviour in the PML]\label{thm:PML}
Let $k_0>0$ and let $\blue{\mathrm{J}}$ be such that 
Assumption \ref{a:polyBoundIntro} holds. Then there is $U\subset \Omega$ a neighbourhood of $\Gamma_{\tr}$ such that for all $\chi \in C^\infty(\overline{\Omega})$ with $\supp \chi\subset U$, there exists $C>0$ such that,  for all $k\in (k_0,\infty)\setminus\blue{\mathrm{J}}$,
\begin{equation}
\label{e:bounded_in_pml}
\|\chi R_k\|_{L^2\to L^2}+\|R_k\chi \|_{L^2\to L^2}\leq C.
\end{equation}
Moreover, if $\supp \chi \subset U$, and $\psi\in C^\infty(\overline{\Omega})$ with $\supp \chi \cap \supp \psi=\emptyset$, then for any $N$ there exists $C>0$ such that for all $k>k_0$, 
\begin{equation}
\label{e:pseudoloc_in_pml}
\|\chi R_k\psi\|_{L^2\to H_k^N}+\|\psi R_k\chi\|_{L^2\to H_k^N}\leq Ck^{-N}.
\end{equation}
\end{theorem}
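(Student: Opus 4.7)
My plan is to exploit the fact that, in a sufficiently deep PML region, the complex-scaled operator $P_{k,\theta}$ becomes semiclassically elliptic (with $\hsc := 1/k$), and hence admits a local pseudodifferential parametrix that is $L^2$-bounded and pseudolocal uniformly in $k$. This is precisely the feature distinguishing the PML from the physical region: no generalized bicharacteristics of $P_{k,\theta}$ remain in the deep PML, so there is no wave propagation there.

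First I would choose $U$ to be a neighborhood of $\Gamma_{\tr}$ contained in the region where the radial PML scaling $f_\theta$ is fully active. On such a $U$, the semiclassical principal symbol of $\hsc^2 P_{k,\theta}$ has the form $e^{-2i\vartheta(x)}|\xi|^2_{g(x)} - n(x)$ plus lower-order terms, with $\vartheta(x) > 0$ bounded below. One checks that $|\sigma(\hsc^2 P_{k,\theta})(x,\xi)| \geq c(1+|\xi|^2)$ uniformly on $U \times \R^d$, so the operator is elliptic in the semiclassical sense. A standard construction (see, e.g., \cite[Appendix E]{DyZw:19}) then yields $Q \in \Psih^{-2}$, properly supported in $U$, such that
\begin{equation*}
Q P_{k,\theta} = \chi_U + E \quad \text{on functions microsupported in } U,
\end{equation*}
where $\chi_U \in C^\infty(\overline{\Omega})$ equals $1$ on a slightly smaller neighborhood of $\Gamma_{\tr}$, and $E$ has operator norm $O(k^{-\infty})$ between any pair of weighted Sobolev spaces. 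Crucially, $Q : H_k^s \to H_k^{s+2}$ is bounded uniformly in $k$, and $Q$ is pseudolocal: $\chi Q \psi$ has $O(k^{-\infty})$ norm when $\supp \chi \cap \supp \psi = \emptyset$.

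For the uniform bound \eqref{e:bounded_in_pml}, I would fix nested cutoffs $\chi \prec \chi_0 \prec \chi_1$ all supported in $U$, with $\chi_1 \equiv 1$ near $\Gamma_{\tr}$ and $\chi_U \equiv 1$ on $\supp \chi_1$. For $u := R_k f$, applying $Q P_{k,\theta} = \chi_U + E$ to $\chi_0 u$ and using $P_{k,\theta}(\chi_0 u) = \chi_0 f + [P_{k,\theta},\chi_0] u$ yields
\begin{equation*}
\chi u = \chi Q \chi_0 f + \chi Q [P_{k,\theta},\chi_0] u - \chi E(\chi_0 u).
\end{equation*}
The first term is bounded in $L^2$ by $C\|f\|_{L^2}$ since $Q$ is $L^2$-bounded. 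The third term is $O(k^{-\infty})\|u\|_{L^2} = O(k^{-\infty})\|f\|_{L^2}$ by Assumption \ref{a:polyBoundIntro}. For the middle term, $[P_{k,\theta},\chi_0]$ is supported on $\supp \nabla \chi_0$, which is disjoint from $\supp \chi$; pseudolocality of $Q$ gives $\|\chi Q [P_{k,\theta},\chi_0]\|_{L^2 \to L^2} = O(k^{-\infty})$, which combined with the polynomial resolvent bound yields an $O(k^{-\infty})\|f\|_{L^2}$ contribution. The bound on $R_k \chi$ follows by applying the same argument to the adjoint $P_{k,\theta}^*$, which has the same ellipticity structure on $U$.

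For the pseudolocality estimate \eqref{e:pseudoloc_in_pml}, I would run the identical argument with $f = \psi g$ but choose $\chi_0$ to vanish identically on $\supp \psi$ (possible since $\supp \chi \cap \supp \psi = \emptyset$, so $\chi_0$ can be taken $\equiv 1$ near $\supp \chi$ and $\equiv 0$ near $\supp \psi$). Then $\chi_0 \psi \equiv 0$ so $\chi Q \chi_0 \psi g = 0$, and the other two terms are $O(k^{-\infty})\|g\|_{L^2}$ as before. The gain from $L^2$ to $H_k^N$ comes from the $H_k^s \to H_k^{s+2}$ mapping of $Q$, iterated, combined with the fact that the residual $E$ and the commutator-error operators map $L^2$ into $H_k^N$ with $O(k^{-\infty})$ norm --- this is automatic because both arise from compositions involving the order $-\infty$ remainder in the parametrix expansion. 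The main technical obstacle is making the parametrix construction consistent with the Dirichlet condition on $\Gamma_{\tr}$; this is handled by the standard boundary pseudodifferential calculus, or equivalently by extending $P_{k,\theta}$ across $\Gamma_{\tr}$ by reflection and observing that the extended operator remains semiclassically elliptic on a neighborhood of $\Gamma_{\tr}$ in the doubled domain.
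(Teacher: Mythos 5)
Your approach is based on the right underlying observation --- the complex-scaled operator is semiclassically elliptic in the deep PML region, which is what kills propagation there --- and the structure of your cutoff/commutator/parametrix argument would deliver both \eqref{e:bounded_in_pml} and \eqref{e:pseudoloc_in_pml} if you had the parametrix you invoke. But there is a genuine gap at exactly the step you flag as ``the main technical obstacle'': you need a left parametrix $Q$ for $P_{k,\theta}$ that is accurate up to $O(k^{-\infty})$ errors \emph{up to the boundary} $\Gamma_{\tr}$ while respecting the Dirichlet condition there. The interior construction from \cite[Appendix E]{DyZw:19} does not give this; it produces an error that is not small near $\Gamma_{\tr}$. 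Your two suggested fixes are not straightforward here. The odd-reflection trick requires the extended operator in the doubled domain to agree with $P_{k,\theta}$ near $\Gamma_{\tr}$, which in turn requires the coefficients to be (at least approximately) reflection-symmetric across $\Gamma_{\tr}$; for the complex-scaled operator with variable coefficients this holds only to finite order, and making the reflection argument produce an $O(k^{-\infty})$ remainder requires a careful Borel-summation-type construction that is not in the literature you cite. The alternative ``standard boundary pseudodifferential calculus'' (Boutet de Monvel or its semiclassical version) would do it, but that is substantially more machinery than a one-line appeal.

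The paper's route sidesteps this entirely. Instead of building a boundary-adapted parametrix, it quotes a coercivity estimate \cite[Lemma~4.4]{GLS2}: for $v$ supported in a deep-PML neighbourhood $U$ of $\Gamma_{\tr}$ with $v|_{\Gamma_{\tr}}=0$, one has $\|v\|_{H^1_k}\leq C\|P_k v\|_{L^2}$. This is established by a multiplier/G\aa rding-type argument, not by a parametrix, and it already encodes the boundary condition. The paper applies this to $\psi u$, commutes $P_k$ past the cutoff, and only then uses the \emph{interior} elliptic parametrix to absorb the commutator terms (which are supported away from $\Gamma_{\tr}$ by the choice of cutoffs, so the interior calculus suffices). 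The upgrade to $H_k^N$ in \eqref{e:pseudoloc_in_pml} is then done by elliptic regularity up to the boundary, not by iterating a parametrix. So: your mechanism is different and heavier, and as written it rests on a parametrix you have not constructed; the paper's mechanism is lighter because the boundary estimate is imported as a coercivity result rather than derived microlocally. If you want to complete your route, the cleanest way is probably to replace the boundary parametrix step with precisely the kind of coercivity estimate the paper cites, at which point the two proofs essentially coincide.
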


Theorem \ref{thm:PML} is based on ellipticity in the PML region.

\section{Abstract pseudolocality results}
\label{sec:pseudolocS}

As described in \S\ref{s:sketch},
the proof of the main result requires pseudolocaity of the operators $S_k$ and $\Pi^\sharp_k$ (see \eqref{e:startDuality}); furthermore, although not stated in \S\ref{s:sketch}, the proof also requires pseudolocality of $(P^\sharp_k)^{-1}$, where $P^\sharp_k:= P_k +S_k$. 
This section proves pseudolocality of $S_k$ and $(P^\sharp_k)^{-1}$, and \S\ref{sec:pseudoLocPi} proves pseudolocality of $\Pi^\sharp_k$.


The operator $S_k$ is defined as a function of the self-adjoint operator $\cP_k := \Re  P_k$ via the functional calculus. This section therefore studies general Helmholtz operators (satisfying continuity, a G\aa rding inequality, and elliptic regularity), proves that $\cP_k$ is self-adjoint, and then proves that both functions of $\cP_k$ and $(P^\sharp_k)^{-1}$ are pseudolocal; i.e., 
 when sandwiched by disjoint ``spatial" or ``frequency" cutoffs, the result is $O(k^{-\infty})$ and infinitely smoothing.
 
For the ``spatial cutoffs'', we require some control over their repeated commutators with $\cP_k$ in a scale of Hilbert spaces $(\cH_k^n)_{n \geq 0}$ (which will be taken as $H^n_k(\Omega)$). Checking these assumptions in the concrete setting will require the construction of suitable cutoff functions with a special behavior near the boundary

For the ``frequency cutoffs'', we require that, in addition, the repeated commutators act in domains $\cD^n$ of powers of the self-adjoint operators $\cP_k$. This essentially asks that repeated commutators preserve an arbitrary number of boundary conditions, which in practice, will be achieved by requiring the frequency cutoffs to be constant near the boundary and $0$ near the PML truncation boundary. 

\subsection{Abstract formulation of Helmholtz operators with smooth coefficients on smooth domains}

\begin{table}
\centering
\begin{tabular}{c|c}
Abstract setting & Model Dirichlet setting \\[0.5em]
\hline
&\\[-0.5em]
$\mathcal{H}_k^n$ & $H^n(\Omega)$ with $k$-weighted norm \\[0.5em]
$a_k(u,v)$ & $\int_{\Omega} k^{-2}\nabla u \cdot \overline{\nabla v} - u\overline{v}$ \\[0.5em]
$\cZ_k$ & $H^1_0(\Omega)$  \\[0.5em]
$\Zspace{2}$ & $H^2(\Omega) \cap H^1_0(\Omega)$ \\[0.5em]
$\Zspace{n}$ & $H^n(\Omega) \cap H^1_0(\Omega)$  \\[0.5em]
$\Dspace{2n}$ & $\{u \in H^{2n}(\Omega) : \gamma u, ...,\gamma \Delta^{n-1} u = 0 \textup{ on } \partial \Omega\}$ \\[0.5em]
\hline
&\\[-0.5em]
Abstract setting &Model Neumann setting\\[0.5em]
\hline
&\\[-0.5em]
$\mathcal{H}_k^n$ & $H^n(\Omega)$  with $k$-weighted norm\\[0.5em]
$a_k(u,v)$ & $\int_{\Omega} k^{-2}\nabla u \cdot \overline{\nabla v} - u\overline{v}$\\[0.5em]
$\cZ_k$ & $H^1(\Omega)$ \\[0.5em]
$\Zspace{2}$  & $\{u \in H^2(\Omega) \,:\, \partial_\nu u = 0 \textup{ on } \partial \Omega\}$\\[0.5em]
$\Zspace{n}$ & $\{u \in H^n(\Omega) \,:\, \partial_\nu u = 0 \textup{ on } \partial \Omega\}$ \\[0.5em]
$\Dspace{2n}$ & $\{u \in H^{2n}(\Omega) : \partial_\nu u, ...,\partial_\nu \Delta^{n-1} u = 0 \textup{ on } \partial \Omega\}$
\end{tabular}
\caption{Model examples for the spaces in Section \ref{sec:pseudolocS}}
\label{tab:model_settings}
\end{table}

In what follows, $(\cH,\|\cdot\|_{\cH})$ is a Hilbert space and for every $k \in \R_+$, $(\Hspace{n},\|\cdot\|_{\Hspace{n}})_{n \in \mathbb{N}}$ is a decreasing sequence of Hilbert spaces with continuous and dense inclusions $\Hspace{n} \subset \Hspace{m}$ for all $m \leq n$, with
$$\|u\|_{\cH_k^m} \leq \|u\|_{\cH_k^{n}} \quad \tfa u \in \cH_k^n$$ 
and such that $\Hspace{0} = \cH$ with equal norms.\footnote{Notice that the abstract setup is in many parts similar to \cite{GS3}, but here it is not assumed that the inclusions $\Hspace{n} \subset \Hspace{m}$ are compact for $n>m$.} For all $n \in \mathbb{N}$, $\Hspace{-n}$ denotes the {\em anti}-dual of $\Hspace{n}$, i.e., the set of continuous complex-valued {\em anti}-linear forms on $\Hspace{n}$. For any $u \in \cH$, one may define an element $L^n_u \in \Hspace{-n}$ by
$$L^n_u(v) := \langle u,v\rangle := (u,v)_{\cH} \qquad \tfa v \in \Hspace{n}.$$  
By density of the embeddings $\Hspace{n} \subset \Hspace{m} \subset \cH$ for $n \geq m$, the mapping $u \mapsto L_u^n$ is injective and $L^n_u$ coincides with $L^m_u$ on $\Hspace{m}$, so we may identify $u$ with $L_u$. Under this identification, the continuous embeddings $\Hspace{n} \subset \Hspace{m}$ for $n\geq m$ extend to all $m,n \in \Z$ and $\langle \cdot,\cdot\rangle$ extends to a continuous sesquilinear (linear on the left, anti-linear on the right) pairing $\cH_k^{-n} \times \cH_k^{n}$ for all $n$.

Let $(a_k)_{k \in \R_+}$ be a family of sesquilinear forms 
$$a_k : \Hspace{1} \times \Hspace{1}\to \mathbb{C}\quad\tfa k\geq 0,$$

\begin{assumption}[$k$-uniform Continuity] 
	\label{ass:cont}
	For every $k_0 \in \R$, there exists a positive constant $C_0(k_0)>0$  such that, for all $k \geq k_0$ and all $u,v \in \cH^1_k$, 
	\[ |a_k(u,v)| \leq C_0(k_0) \|u\|_{\cH^1_k} \|v\|_{\cH^1_k}.\]
\end{assumption}
\begin{assumption}[G\aa rding inequality] 
	\label{ass:Gar}
	For every $k_0 \in \R$, there exist positive constants $\cGa(k_0)$ and $\CGa(k_0) > 0$ such that, for all $k \geq k_0$, 
	\[\Re( a_k(u,u)) \geq \cGa(k_0)\|u\|_{\Hspace{1}}^2 - \CGa(k_0) \|u\|_{\cH}^2\qquad \tfa u \in \Hspace{1}. \]
\end{assumption}
Let $\Re a_k$ denote the Hermitian part of $a_k$, i.e.
$$(\Re  a_k)(u,v) := \frac{1}{2} \left(a_k(u,v) + \overline{a_k(v,u)}\right).$$
We fix a closed subspace $\cZ_k \subset \cH^1_k$ (possibly $\cH^1_k$ itself) which is dense in $\cH$ with respect to the $\cH$ norm, and make the following assumption:
\begin{assumption}[Domain symmetry]
	\label{ass:domComp}
	The spaces 
	$$\big\{u\in \cZ_k\,:\, \sup_{v \in \cZ_k, \|v\|_{\cH}=1} |a_k(u,v)| < +\infty \big\}\,, \qquad \big\{u\in \cZ_k\,:\, \sup_{v \in \cZ_k, \|v\|_{\cH}=1} |a_k(v,u)| < +\infty \big\}\,,$$
	$$ \tand \quad  \big\{u\in \cZ_k\,:\, \sup_{v \in \cZ_k, \|v\|_{\cH}=1} \big|\big(\Re a_k\big)(u,v)\big| < +\infty \big\}$$ 
	are equal and contained in $\Hspace{2}$. We denote  their common value by $\Zspace{2}$.
\end{assumption}
\begin{remark}[Boundary conditions]
In practice, the space $\Zspace{2}$ will be a subset of $H^2(\Omega)$ with Dirichlet/Neumann conditions on (parts of) $\partial \Omega$. Dirichlet conditions will be enforced ``essentially'' by the choice of $\cZ_k$, and Neumann conditions will appear ``naturally'' in $\Zspace{2}$ as a result of a lack of Dirichlet condition. 
\end{remark}
Due to the density of $\cZ_k$ in $\cH$ and the Riesz representation theorem, this allows to state the following definition
\begin{definition}[The operators $P_k$, $P_k^*$ and $\cP_k$]
For all $u \in \Zspace{2}$, define $P_k u$ and $P_k^* u$ as the unique elements of $\cH$ such that for all $v \in \cZ_k$ 
\beq\label{eq:crapatrunning1}
\langle P_k u,v\rangle = a_k(u,v) \tand  \langle P_k^* u,v\rangle = \overline{a_k(v,u)}.
\eeq
Furthermore, let
$$\cP_k u := \frac{1}{2}\left( P_k u +  P_k^*u\right).$$
\end{definition} 
\begin{proposition}
\label{prop:Pksa}
The space $\cZ_k^2$ is dense in $\cH$ and $\cZ_k$ for their respective norms, i.e.
$$\overline{\Zspace{2}}^{\|\cdot\|_\cH} = \cH \quad \tand \quad \overline{\Zspace{2}}^{\|\cdot\|_{\cZ_k}} = \cZ_k$$
Moreover, $\cP_k : \cZ_k^2 \to \cH$ is an unbounded self-adjoint operator. Its spectrum satisfies
$$\sigma(\cP_k) \subset [-\CGa(k_0),+\infty).$$
\end{proposition}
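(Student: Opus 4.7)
The plan is to apply Kato's first representation theorem for closed, coercive, Hermitian sesquilinear forms to the shifted Hermitian form
\[
b_k(u,v) := (\Re a_k)(u,v) + \CGa(k_0)\,(u,v)_{\cH}, \qquad u,v\in\cZ_k.
\]
By Assumption~\ref{ass:cont} this form is bounded on the Hilbert space $\cZ_k\subset \cH^1_k$, by Assumption~\ref{ass:Gar} it satisfies $b_k(u,u)\geq \cGa(k_0)\|u\|_{\cH^1_k}^2$, and it is manifestly Hermitian. Thus $b_k$ is closed, coercive, and positive, and its form norm is equivalent to the $\cH^1_k$ norm on $\cZ_k$. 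Kato's theorem then produces a unique non-negative self-adjoint operator $B$ on $\cH$ with domain
\[
\mathcal{D}(B)=\bigl\{u\in \cZ_k\,:\, v\mapsto b_k(u,v)\text{ extends continuously from }(\cZ_k,\|\cdot\|_{\cH})\text{ to }\mathbb{C}\bigr\},
\]
such that $b_k(u,v)=(Bu,v)_{\cH}$ for all $u\in\mathcal{D}(B)$ and $v\in\cZ_k$; moreover $\mathcal{D}(B)$ is dense in $\cZ_k$ for the form norm, and, as the domain of a self-adjoint operator, dense in $\cH$.

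Next, I would identify $\mathcal{D}(B)$ with $\cZ_k^2$. Since $\cZ_k$ is dense in $\cH$ and the term $\CGa(k_0)(u,v)_{\cH}$ is automatically $\cH$-continuous in $v$, the defining condition for $\mathcal{D}(B)$ reduces to $\sup_{v\in\cZ_k,\,\|v\|_{\cH}=1}|(\Re a_k)(u,v)|<\infty$, and by Assumption~\ref{ass:domComp} this distinguishes exactly the set $\cZ_k^2$. Consequently $B-\CGa(k_0)I$ is a self-adjoint operator with domain $\cZ_k^2$, and using
\[
(\Re a_k)(u,v)=\tfrac{1}{2}\bigl(a_k(u,v)+\overline{a_k(v,u)}\bigr)
\]
together with the defining relations~\eqref{eq:crapatrunning1}, it coincides with $\cP_k$ on $\cZ_k^2$ in the weak sense. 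Density of $\cZ_k$ in $\cH$ then upgrades this to $\cP_k = B-\CGa(k_0)I$ as operators $\cZ_k^2\to\cH$, so $\cP_k$ is self-adjoint. The spectral bound $\sigma(\cP_k)\subset[-\CGa(k_0),+\infty)$ follows at once from $B\geq 0$.

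The density statements in the proposition then come for free: $\cZ_k^2=\mathcal{D}(B)$ is dense in $\cH$ by self-adjointness of $B$, and it is dense in $\cZ_k$ with respect to the $\cH^1_k$ norm because the form norm of $b_k$ is equivalent to the $\cH^1_k$ norm on $\cZ_k$ thanks to Assumptions~\ref{ass:cont} and~\ref{ass:Gar}. The only substantive step in this plan is the domain identification $\mathcal{D}(B)=\cZ_k^2$, which is exactly the translation of the abstract Friedrichs domain into the concrete space $\cZ_k^2$. This step is handled essentially for free here because Assumption~\ref{ass:domComp} is tailor-made for it; the genuine work is rather in verifying Assumption~\ref{ass:domComp} in the concrete Helmholtz/PML setting, which is deferred to \S\ref{sec:verify} and \S\ref{sec:PML}.
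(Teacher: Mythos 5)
Your proposal is correct and follows essentially the same route as the paper: both invoke the abstract theory of semibounded closed Hermitian forms (Kato's first representation theorem, cited by the paper via Schm\"udgen, Chapter 10) and then use Assumption~\ref{ass:domComp} to identify the Friedrichs domain of the associated self-adjoint operator with $\cZ_k^2$. The only cosmetic difference is that you shift by $\CGa(k_0)$ to make the form positive before applying the theorem, whereas the paper cites the lower-semibounded version directly.
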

\begin{proof}
The continuity and G\aa rding inequality, and the fact that $\cZ_k$ is a closed subspace of $\cH^1_k$ that is dense in $\cH$ (for the $\|\cdot\|_{\cH}$ norm) imply that the restriction of $\Re a_k$ to $\cZ_k$ is a lower semi-bounded closed Hermitian form in the sense of \cite[Chap. 10]{schmudgen2012unbounded}, and $\cP_k$ is the operator associated to $\Re a_k$ in the sense of \cite[Definition 10.4]{schmudgen2012unbounded}. The density of $\cZ_k^2$ in $\cH$ and the self-adjointness of $\cP_k$ then follows from \cite[Theorem 10.7]{schmudgen2012unbounded}. The density of $\cZ_k^2$ in $\cZ_k$ is Proposition 10.5(iv) in the same reference. The lower bound on the spectrum is by Proposition 10.4 in the same reference. 
\end{proof}
\begin{proposition}
$\Zspace{2}$ is a Hilbert space under the norm 
\beq\label{e:zoomWorking1}
\|u\|_{\Zspace{2}} := \|\big(\cP_k + (\CGa(k_0) + 1) \Id\big)u\|_{\cH}.
\eeq
\end{proposition}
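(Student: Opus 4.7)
The plan is to realize $\Zspace{2}$ with the given norm as an isometric copy of $\cH$ under a suitable self-adjoint operator, from which the Hilbert-space structure is transported for free.

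First, set $T := \cP_k + (\CGa(k_0)+1)\Id$ acting with domain $\Zspace{2}$. By Proposition~\ref{prop:Pksa}, $\cP_k$ is self-adjoint on $\cH$ with domain $\Zspace{2}$ and $\sigma(\cP_k)\subset [-\CGa(k_0),+\infty)$. Since a real shift of a self-adjoint operator is self-adjoint with shifted spectrum, $T$ is self-adjoint with $\sigma(T)\subset [1,+\infty)$. In particular $0\notin \sigma(T)$, so $T:\Zspace{2}\to\cH$ is a bijection and $T^{-1}:\cH\to\cH$ is a bounded operator (with $\|T^{-1}\|_{\cH\to\cH}\leq 1$ by the spectral bound).

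Next, I would introduce the sesquilinear form
\begin{equation*}
(u,v)_{\Zspace{2}} := (Tu,Tv)_{\cH}, \qquad u,v\in \Zspace{2}.
\end{equation*}
This is clearly sesquilinear and Hermitian, and it is positive definite because $T$ is injective on $\Zspace{2}$: if $(u,u)_{\Zspace{2}}=\|Tu\|_{\cH}^2=0$ then $Tu=0$, hence $u=0$. The induced norm is by construction $\|u\|_{\Zspace{2}}=\|Tu\|_{\cH}$, matching \eqref{e:zoomWorking1}. Thus $(\Zspace{2},(\cdot,\cdot)_{\Zspace{2}})$ is an inner product space and $T$ is an isometry from this space onto $\cH$.

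Completeness then follows immediately: if $(u_n)\subset \Zspace{2}$ is Cauchy in $\|\cdot\|_{\Zspace{2}}$, then $(Tu_n)$ is Cauchy in $\cH$, so converges to some $f\in\cH$. Setting $u:=T^{-1}f\in \Zspace{2}$ gives $\|u_n-u\|_{\Zspace{2}}=\|Tu_n-f\|_{\cH}\to 0$. Hence $\Zspace{2}$ is a Hilbert space.

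There is no real obstacle here beyond invoking the spectral lower bound on $\cP_k$ supplied by Proposition~\ref{prop:Pksa}; the only point to verify carefully is that shifting a self-adjoint operator by a real multiple of the identity preserves self-adjointness and shifts the spectrum, which is standard. Everything else is the transport of the Hilbert structure along the bijective isometry $T$.
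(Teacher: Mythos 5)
Your proof is correct and rests on the same two facts as the paper's proof — self-adjointness of $\cP_k$ on $\Zspace{2}$ and the spectral lower bound $\sigma(\cP_k)\subset[-\CGa(k_0),+\infty)$ — but the organisation is slightly different. The paper argues: a self-adjoint operator is closed, so its graph norm $\|Au\|_{\cH}+\|u\|_{\cH}$ turns its domain into a Hilbert space; the spectral bound then gives $\|Au\|_{\cH}\geq\|u\|_{\cH}$, so the stated norm $\|Au\|_{\cH}$ is equivalent to the graph norm, and one transports completeness across the equivalence. You instead observe directly that $0\notin\sigma(T)$ makes $T$ a bijection $\Zspace{2}\to\cH$ with bounded inverse, declare $T$ to be an isometry onto $\cH$, and prove completeness by pulling back along $T^{-1}$ — bypassing the graph-norm-equivalence step entirely. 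Both are standard operator-theory moves; yours is marginally more economical, while the paper's makes explicit the comparison with the graph norm, which is the canonical Hilbert structure on the domain of a closed operator and is a useful thing to have spelled out when working in the scales $(\cD_k^s)$ later in the paper.
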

\begin{proof}
The operator $A := \big(\cP_k + (\CGa(k_0) + 1) \Id\big)u$ is self-adjoint, thus closed (since the adjoint operator is closed by, e.g., \cite[Prop.~1.6]{schmudgen2012unbounded}), so its graph norm makes $\Zspace{2}$ a Hilbert space. Furthermore, its spectrum is contained in $[1,+\infty)$, so that
$$(Au,u)_{\cH} \geq \|u\|^2_{\cH} \implies \|Au\|_{\cH} \geq \|u\|_{\cH}.$$
Thus, 
$$\|Au\|_{\cH} \leq \|Au\|_{\cH} + \|u\|_{\cH} \leq 2\|Au\|_{\cH},$$
concluding the proof. 
\end{proof}

We denote  the dual of $\Zspace{2}$ by $\Zspace{-2}$, and identify $\cH$ and $\cZ_k$ as subspaces of $\Zspace{-2}$ -- this identification is possible by the density of $\Zspace{2}$ in $\cH$. There are then unique linear continuous extensions of the operators $P_k$, $P_k^*$ and $\cP_k$ from $\cH$ to $\Zspace{-2}$ by 
$$\langle P_k u, v\rangle := \langle u,P_k^* v\rangle, \quad \langle P_k^* u, v\rangle := \langle u,P_k v\rangle\,, \quad \langle \cP_k u,v \rangle := \langle u,\cP_k v \rangle,$$
for all $u \in \cH$ and $v \in \Zspace{2}$. With these definitions, the operator $P_k^*$ is indeed the conjugate adjoint of $P_k$, as the notation 
\eqref{eq:crapatrunning1}
suggests. 

\begin{remark}[$P_k$ is not a differential operator]
\label{rem:Pknotdiff}
In the model settings of Table \ref{tab:model_settings}, $P_k$ is not a differential operator. For instance, in the case of the Neumann Laplacian, although
$P_k$ agrees with the differential operator $-k^{-2}\Delta - \Id$
on $\Zspace{2}$,
its extension 
to $L^2(\Omega)$ differs from it
(even when $\Delta$ is interpreted in the sense of distributions). Indeed, for $u \in C^\infty(\overline{\Omega}) \subset L^2(\Omega)$, integration by parts reveals that
$$P_k u = -k^{-2}\Delta u - u + k^{-2} \gamma' \cdot \partial_\nu u$$
where $\gamma : H^\ell(\Omega) \to H^{\ell - 1/2}(\partial \Omega)$, $\ell > \frac12$, is the trace operator and $\gamma'$ is its adjoint. In particular, even if $u \in H^n(\Omega)$ for a large $n$, $P_k u$ is only in $(H^{1/2 + \varepsilon}(\Omega))^*$ for all $\varepsilon > 0$, instead of $H^{n-1}(\Omega)$, unless $\partial_\nu u = 0$. 
\end{remark}

\begin{proposition}
\label{prop:PkZk}
The operators $P_k$ and $P_k^*$ map  $\cZ_k$ to $\cZ_k^*$ continuously, and they satisfy
$$\langle P_k u,v\rangle = a_k(u,v), \quad \langle P_k^* u,v\rangle = \overline{a_k(v,u)} \quad \tfa u,v\in \cZ_k,$$
$$\max(\|P_k u\|_{\cZ_k^*}, \|P_k^* u\|_{\cZ_k^*})\leq C_0(k_0)\|u\|_{\cZ_k} \quad \tfa u \in \cZ_k.$$
\end{proposition}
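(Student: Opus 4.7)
The plan is to first define a candidate map $\widetilde{P}_k : \cZ_k \to \cZ_k^*$ directly from the sesquilinear form, and then to verify that it agrees with the extension of $P_k$ to $\Zspace{-2}$ already constructed before the proposition. For any $u \in \cZ_k$, the map $v \mapsto a_k(u,v)$ is an antilinear form on $\cZ_k$, and by the continuity Assumption \ref{ass:cont} it is bounded by $C_0(k_0)\|u\|_{\cZ_k}\|v\|_{\cZ_k}$. Hence the Riesz-type assignment $\widetilde{P}_k u\in \cZ_k^*$, $\langle \widetilde{P}_k u, v\rangle := a_k(u,v)$, defines a bounded linear map $\widetilde{P}_k:\cZ_k \to \cZ_k^*$ with $\|\widetilde{P}_k\| \leq C_0(k_0)$; the analogous definition with $a_k(u,v)$ replaced by $\overline{a_k(v,u)}$ yields $\widetilde{P}_k^*$.

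Next, I would show that $\widetilde{P}_k u$ agrees, under the identification $\cZ_k^* \hookrightarrow \Zspace{-2}$, with the extension $P_k u$ already defined on all of $\Zspace{-2}\supset\cH \supset \cZ_k$. The inclusion $\Zspace{2}\hookrightarrow \cZ_k$ is continuous (this follows from combining the definition \eqref{e:zoomWorking1} of the $\Zspace{2}$-norm with Assumption \ref{ass:Gar}: applying G\aa rding to $u \in \Zspace{2}$ and using $((\cP_k+\CGa+1)u,u)_\cH \geq \cGa\|u\|_{\cH^1_k}^2$ gives $\|u\|_{\cZ_k}\leq C\|u\|_{\Zspace{2}}$), so dually $\cZ_k^* \hookrightarrow \Zspace{-2}$. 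For $u\in\cZ_k$ and $v\in\Zspace{2}$, unwinding the definition of the extended $P_k$ gives
$$\langle P_k u,v\rangle = \langle u, P_k^* v\rangle = (u, P_k^* v)_\cH = \overline{(P_k^* v, u)_\cH} = \overline{\,\overline{a_k(u,v)}\,}= a_k(u,v) = \langle \widetilde{P}_k u,v\rangle,$$
where the third equality uses the definition \eqref{eq:crapatrunning1} of $P_k^*$ on $\Zspace{2}$ tested against $u \in \cZ_k$.

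Finally, since $\Zspace{2}$ is dense in $\cZ_k$ by Proposition \ref{prop:Pksa}, the identity $\langle P_k u, v\rangle = \langle \widetilde{P}_k u,v\rangle$ on the dense subspace $\Zspace{2}$ together with continuity of $\widetilde{P}_k u$ on $\cZ_k$ forces the extended $P_k u$ to coincide with $\widetilde{P}_k u$ as elements of $\cZ_k^*$. This simultaneously verifies the identity $\langle P_k u, v\rangle = a_k(u,v)$ for all $u,v\in\cZ_k$ and the norm bound $\|P_k u\|_{\cZ_k^*}\leq C_0(k_0)\|u\|_{\cZ_k}$. The assertions for $P_k^*$ follow by an identical argument with the roles of the two arguments of $a_k$ swapped, or equivalently by replacing $a_k$ by its adjoint form $(u,v)\mapsto \overline{a_k(v,u)}$, which inherits continuity with the same constant $C_0(k_0)$.

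The proof is essentially bookkeeping — there is no substantive difficulty. The one subtlety worth spelling out carefully (as in the second paragraph above) is the compatibility of the two distinct constructions of ``$P_k u$'': one as a continuous antilinear form on $\cZ_k$ obtained directly from $a_k$, the other as the abstract dual-extension $P_k:\Zspace{-2}\to\Zspace{-2}$ defined via $P_k^*$ acting on $\Zspace{2}$. The compatibility relies on (i) the continuous inclusion $\Zspace{2}\hookrightarrow\cZ_k$, which needs G\aa rding, and (ii) the density of $\Zspace{2}$ in $\cZ_k$ from Proposition \ref{prop:Pksa}. As Remark \ref{rem:Pknotdiff} warns, one cannot short-circuit this by treating $P_k$ as a differential operator outside $\Zspace{2}$, so the two-step verification above is the natural route.
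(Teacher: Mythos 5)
Your proof is correct and takes essentially the same route as the paper's: the same chain of identities $\langle P_k u,v\rangle = \langle u, P_k^* v\rangle = \overline{\langle P_k^* v, u\rangle} = a_k(u,v)$ for $u\in\cZ_k$, $v\in\Zspace{2}$, followed by the continuity bound from Assumption \ref{ass:cont} and density of $\Zspace{2}$ in $\cZ_k$ from Proposition \ref{prop:Pksa}. Your extra step verifying the continuous embedding $\Zspace{2}\hookrightarrow\cZ_k$ via the G\aa rding inequality is a useful piece of bookkeeping that the paper leaves implicit, but it is the same argument repackaged around the auxiliary operator $\widetilde{P}_k$.
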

\begin{proof}
We observe that for $u \in \cZ_k$ and $v \in \Zspace{2}$,
$$\langle P_k u,v\rangle=\langle  u,P_k^*v\rangle = (u,P_k^* v)_\cH = \overline{(P_k^* v,u )_{\cH}}= \overline{\langle P_k^* v,u \rangle} = \overline{\overline{a_k(u,v)}} = a_k(u,v)$$
and thus
$$|\langle P_k u,v\rangle| \leq C_0(k_0) \|u\|_{\cZ_k} \|v\|_{\cZ_k}.$$
The conclusion follows by density of $\Zspace{2}$ in $\cZ_k$ for the $\cZ_k$ norm. The reasoning for $P_k^*$ is similar.
\end{proof}
\begin{definition}[The resolvent norm $\rho(k)$]
Given $k \geq 0$, if $P_k : \cZ_k \to \cZ_k^*$ is invertible, we define 
\beq\label{e:rhoHtoH}
\rho(k) := \sup_{f \in \cH \setminus \{0\}} \frac{\|P_k^{-1}f\|_{\cH}}{\|f\|_{\cH}}.
\eeq
\end{definition}

\begin{proposition}
\label{prop:ResboundZkZk}
Suppose that $P_k : \cZ_k \to \cZ_k^*$ is invertible. Then $P_k^*:\cZ_k \to \cZ_k^*$ is also invertible and for all $k_0 > 0$, there exists $C > 0$ such that for all $k \geq k_0$, 
$$\|P_k^{-1}u\|_{\cZ_k} + \|(P_k^*)^{-1}u\|_{\cZ_k} \leq C\big(1+ \rho(k)\big) \|u\|_{\cZ_k^*}.$$
Moreover, for all $z \in \mathbb{C}\setminus \R$, $(\cP_k - z) : \cZ_k \to \cZ_k^{*}$ is invertible and
$$\|(\cP_k - z)^{-1} u\|_{\cZ_k} \leq C \frac{\langle z \rangle}{|\Im(z)|} \|u\|_{\cZ_k^*}$$
where $\langle z \rangle = (1 + |z|^2)^{1/2}$.
\end{proposition}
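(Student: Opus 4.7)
The plan is to handle the two claims in turn, reducing the resolvent bounds to Lax--Milgram applied to suitably modified sesquilinear forms.

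For $P_k^{-1}$ and $(P_k^*)^{-1}$, I first observe that $P_k^*$ is the anti-dual of $P_k$ viewed as a map $\cZ_k\to \cZ_k^*$, so invertibility and norm bounds transfer between them; moreover the Garding inequality from Assumption~\ref{ass:Gar} also holds for $\overline{a_k}$, so it is enough to prove the bound for $P_k^{-1}$. The key device is the shifted operator $P^\sharp:=P_k+(\CGa+1)\Id_{\cH}$ from $\cZ_k$ to $\cZ_k^*$, whose associated sesquilinear form $a^\sharp(u,v)=a_k(u,v)+(\CGa+1)(u,v)_{\cH}$ satisfies $\Re a^\sharp(u,u)\geq \cGa\|u\|_{\cZ_k}^2$ by Assumption~\ref{ass:Gar}; by Lax--Milgram $P^\sharp$ is a bijection $\cZ_k\to\cZ_k^*$ with $\|(P^\sharp)^{-1}\|_{\cZ_k^*\to\cZ_k}\leq 1/\cGa$. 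Then, the resolvent identity
\beqs
P_k^{-1}=(P^\sharp)^{-1}+(\CGa+1)\,P_k^{-1}(P^\sharp)^{-1}
\eeqs
is the right factorisation because $(P^\sharp)^{-1}$ is applied first, landing the data inside $\cZ_k\subset\cH$ before $P_k^{-1}$ acts, which is where $\rho(k)$ is usable.

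Concretely, given $u\in\cZ_k^*$, set $v:=(P^\sharp)^{-1}u\in\cZ_k\subset\cH$, so $\|v\|_\cH\leq \|v\|_{\cZ_k}\leq \|u\|_{\cZ_k^*}/\cGa$. Write $w:=P_k^{-1}v$; since $v\in\cH$ the definition of $\rho(k)$ gives $\|w\|_{\cH}\leq \rho(k)\|v\|_\cH$. Applying Garding to $w$ and using $\langle P_kw,w\rangle=(v,w)_\cH$,
\beqs
\cGa\|w\|_{\cZ_k}^2\leq \Re a_k(w,w)+\CGa\|w\|_{\cH}^2\leq \|v\|_{\cH}\|w\|_{\cH}+\CGa\|w\|_{\cH}^2\leq \rho(k)(1+\CGa\rho(k))\|v\|_{\cH}^2,
\eeqs
so $\|w\|_{\cZ_k}\leq C(1+\rho(k))\|u\|_{\cZ_k^*}$, and combining with the first term in the resolvent identity yields the claim $\|P_k^{-1}u\|_{\cZ_k}\leq C(1+\rho(k))\|u\|_{\cZ_k^*}$; the bound for $(P_k^*)^{-1}$ follows by anti-duality.

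For the second statement, fix $z\in\C\setminus\R$ and consider the sesquilinear form $b_z(u,v):=(\Re a_k)(u,v)-z(u,v)_{\cH}$ on $\cZ_k\times\cZ_k$. Uniqueness in $\cZ_k$ is immediate: $(\cP_k-z)u=0$ forces $b_z(u,u)=0$, whose imaginary part $-\Im z\,\|u\|_{\cH}^2=0$ gives $u=0$. For the coercivity bound I compute
\beqs
\Re b_z(u,u)\geq \cGa\|u\|_{\cZ_k}^2-(\CGa+|\Re z|)\|u\|_{\cH}^2,\qquad |\Im b_z(u,u)|=|\Im z|\,\|u\|_{\cH}^2,
\eeqs
and combining these (multiplying the second by $(\CGa+|\Re z|)/|\Im z|$ and adding) gives the modulus-coercivity
\beqs
\Bigl(1+\tfrac{\CGa+|\Re z|}{|\Im z|}\Bigr)|b_z(u,u)|\geq \cGa\|u\|_{\cZ_k}^2,
\eeqs
equivalently $|b_z(u,u)|\geq c\,|\Im z|\langle z\rangle^{-1}\|u\|_{\cZ_k}^2$. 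Rotating by a $u$-independent phase $e^{i\theta(z)}$ (chosen to align the real and imaginary contributions in the half-plane above) converts this to $\Re(e^{i\theta}b_z(u,u))\geq c\,|\Im z|\langle z\rangle^{-1}\|u\|_{\cZ_k}^2$, and Lax--Milgram applied to $e^{i\theta}b_z$ yields invertibility of $\cP_k-z:\cZ_k\to\cZ_k^*$ together with the stated bound $\|(\cP_k-z)^{-1}\|_{\cZ_k^*\to\cZ_k}\leq C\langle z\rangle/|\Im z|$.

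The main obstacle throughout is the norm mismatch in Part~1: $\rho(k)$ only controls $P_k^{-1}$ from $\cH$ to $\cH$, whereas the statement asks to map from $\cZ_k^*\supsetneq \cH$ into $\cZ_k$, so a direct application of Garding to $w=P_k^{-1}u$ leaves an uncontrolled $\CGa\|w\|_{\cH}^2$ term. The point of the factorisation $P_k^{-1}=(P^\sharp)^{-1}+(\CGa+1)P_k^{-1}(P^\sharp)^{-1}$ is precisely to launder the $\cZ_k^*$ datum through the coercive operator $P^\sharp$ into $\cH$, where $\rho(k)$ can then be invoked legitimately; any other ordering of the factorisation is circular. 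In Part~2 the subtlety is that $b_z$ is only indefinite coercive (coercive in modulus), so standard Lax--Milgram does not apply verbatim, and the phase rotation -- together with careful bookkeeping of the $\Re z,\Im z$ dependence -- is what extracts the sharp $\langle z\rangle/|\Im z|$ constant.
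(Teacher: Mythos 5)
Your proof is correct, but it follows a genuinely different route from the paper's, so a comparison is worth giving.

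For the first bound, the paper estimates $\|P_k^{-1}\|_{\cH\to\cZ_k}$ and $\|(P_k^*)^{-1}\|_{\cH\to\cZ_k}$ directly from G\aa rding, passes to $\|P_k^{-1}\|_{\cZ_k^*\to\cH}$ by duality (anti-transposing), and then closes the argument with a Peter--Paul bootstrap on the original G\aa rding inequality to reach $\|P_k^{-1}\|_{\cZ_k^*\to\cZ_k}$. Your argument replaces the duality/bootstrap pair with a single algebraic step: the resolvent identity
\[
P_k^{-1}=(P^\sharp)^{-1}+(\CGa+1)\,P_k^{-1}(P^\sharp)^{-1},\qquad P^\sharp:=P_k+(\CGa+1)\Id,
\]
pushes the $\cZ_k^*$ datum through the Lax--Milgram-invertible $P^\sharp$ into $\cZ_k\subset\cH$ before $\rho(k)$ is invoked, which neatly decouples the ``lose a factor $\rho$'' step from the ``recover the $\cZ_k$ norm'' step. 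The ordering of the factorisation you chose is indeed the only one that is not circular, as you observe. (Note that your $P^\sharp$ is a deliberate shift by the identity and is \emph{not} the paper's $P_k^\sharp = P_k + S_k$; the notational clash is harmless here since you define your object, but worth flagging.)

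For the resolvent bound on $\cP_k-z$, the paper again relies on the bootstrap, seeding it with the self-adjointness estimate $\|(\cP_k-z)^{-1}\|_{\cH\to\cH}\leq |\Im z|^{-1}$ (invertibility of $\cP_k-z:\cZ_k\to\cZ_k^*$ is not argued separately there; it is implicit from self-adjointness on $\Zspace{2}$ plus elliptic regularity). Your Lax--Milgram argument via phase rotation has the merit of establishing invertibility on $\cZ_k\to\cZ_k^*$ as part of the same estimate. One small imprecision: the displayed ``modulus-coercivity'' inequality $|b_z(u,u)|\geq c|\Im z|\langle z\rangle^{-1}\|u\|_{\cZ_k}^2$ does not by itself ``convert'' to a coercivity bound under a $u$-independent rotation — that step relies on the additional fact that $\Im b_z(u,u)=-\Im z\,\|u\|_\cH^2$ has a \emph{fixed sign} for all $u$, so that a single $\theta(z)$ with $\tan\theta=(\CGa+|\Re z|)/|\Im z|$ cancels the negative $\|u\|_\cH^2$ contribution in $\Re(e^{i\theta}b_z(u,u))$. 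You clearly understand this mechanism (it is implicit in ``chosen to align the real and imaginary contributions''), but stating the sign condition on $\Im b_z$ explicitly would make the deduction airtight. Both your route and the paper's produce the same $\langle z\rangle/|\Im z|$ constant.
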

\begin{proof}
Let $k_0 > 0$ be fixed and denote by $C$ a generic constant depending only on $k_0$. For $u \in \Zspace{*}$, the G\aa rding inequality gives
\begin{align}\nonumber
\|P_k^{-1}u\|_{\cZ_k}^2 &\leq C\left(\textup{Re}(a_k(P_k^{-1}u,P_k^{-1}u)) + \|P_k^{-1}u\|^2_{\cH}\right) \\\label{e:moreclever}
& \leq C\left(\textup{Re}(\langle u,P_k^{-1}u\rangle) + \|P_k^{-1}u\|^2_{\cH}\right)
\end{align}
Now if, moreover, $u \in \cH$, then
$\|P_k^{-1}u\|_{\cZ_k}^2 \leq C\left(\|u\|_{\cH} \|P_k^{-1}u\|_{\cH} + \|P_k^{-1}u\|^2_{\cH}\right)$
Thus, 
$$\|P_k^{-1}u\|_{\cZ_k} \leq C (\|u\|^2_{\cH} + \|(P_k^{-1}u)\|^2_{\cH}),$$
which implies 
$$ \|P_k^{-1}\|_{\cH \to \cZ_k} \leq C \big(1 + \rho(k)\big).$$ 
By the same argument, using that $\rho(k) = \|P_k^{-1}\|_{\cH \to \cH} = \|(P_k^{*})^{-1}\|_{\cH \to \cH}$ (since $P_k^{-1}: \cH\subset \Zspace{*}\to \Zspace{}\subset \cH$),
$$\|(P_k^*)^{-1}\|_{\cH \to \cZ_k} \leq C\big(1 + \rho(k)\big).$$
Thus by duality, 
$$\|P_k^{-1}\|_{\cZ_k^* \to \cH} \leq C( 1+ \rho(k)).$$
Using this in the right-hand side of \eqref{e:moreclever} as well as the inequality 
\beq\label{e:peterPaul}
2ab\leq \e a^2+\e^{-1}b^2 \quad\tfa a,b,\e>0,
\eeq 
we obtain for all $\varepsilon \in (0,1)$ sufficiently small, 
$$\|P_k^{-1}u\|_{\cZ_k} \leq C \left(\varepsilon \|P_k^{-1}u\|^2_{\cZ_k} + (\varepsilon^{-1}+ \rho(k)) \|u\|^2_{\cZ_k^*}\right)$$
and thus 
$$\|P_k^{-1}\|_{\cZ_k^*\to \cZ_k} \leq C(1+ \rho(k)).$$
We obtain the analogous bound for $(P_k^*)^{-1}$ by duality. The proof of the bound $\|(\cP_k - z)^{-1}\|_{\cZ_k^* \to \cZ_k}$ is similar, first estimating $\|(\cP_k - z)^{-1}\|_{\cH \to \cZ_k}$, using that 
$$\|(\cP_k - z)^{-1}u\|_{\cH} \leq \frac{1}{|\Im(z)|} \|u\|_{\cH},$$
since $\cP_k$ is self-adjoint on $\cH$. 
\end{proof}
\begin{definition}[The spaces $\Zspace{n}$]\label{def:Zspaces}
Let
$$\Zspace{n} := \begin{cases}
\cH & \textup{if } n = 0\\
\cZ_k & \textup{if } n = 1\\
\Zspace{2} \cap \Hspace{n} & \textup{if } n \geq 2\\
\end{cases}$$
(recalling for $n=2$ that $\Zspace{2} \subset \Hspace{2}$ by Assumption \ref{ass:domComp}).
For $n \geq 2$, the norm
$$\|u\|^2_{\cZ_k^n} := \|u\|^2_{\Zspace{2}} + \|u\|^2_{\Hspace{n}},$$
makes $\Zspace{n}$ a Hilbert space. We denote the dual of $\Zspace{n}$ by $\Zspace{-n}$  for all $n \geq 0$. 
\end{definition}
\begin{assumption}[Continuity of $P_k$ and $P_k^*$]
\label{ass:contPk}
For all $n \in \mathbb{N}$, the operators $P_k$ and $P_k^*$ define continuous maps
$$P_k, P_k^* : \Zspace{n+2}\to \cH_k^{n}.$$
For all $k_0 > 0$ and $n \in \mathbb{N}$, there exists $C(k_0) > 0$ such that for all $k \geq k_0$,
$$\|P_k u\|_{\cH_k^n} + \|P_k^* u\|_{\cH_k^n} \leq C(k_0) \|u\|_{\cZ_k^{n+2}}.$$
\end{assumption}

\begin{remark}
The idea behind Assumption \ref{ass:contPk} is that $P_k$ and $P_k^*$ can only act on functions satisfying the chosen boundary condition, and they remove this boundary condition as well as decreasing the regularity index by $2$. Thus, given $a_k(\cdot,\cdot)$, $\Hspace{}, \Hspace{1}, \Hspace{2}$ and $\Zspace{}$, Assumption \ref{ass:contPk} can be considered as constraining the spaces $\Hspace{n}$ for $n\geq 3$.
\end{remark}

Because of this, we can extend $P_k$ and $P_k^*$ uniquely into continuous linear maps from $\Hspace{-n}$ to $\Zspace{-n-2}$ for $n \geq 0$ by setting 
$$\begin{cases}
\langle P_k u,v \rangle := \langle u,P_k^*v\rangle,\\
\langle P_k^* u,v \rangle := \langle u,P_k v\rangle,
\end{cases} \quad \tfa v \in \Zspace{n+2}.$$
To state more conveniently the mapping properties of $P_k$ and $P_k^*$, we define the Hilbert spaces
\beq\label{e:capcupdefs}
\ZcapHspace{n} := \Zspace{n} \cap \Hspace{n}, \qquad \ZcupHspace{n} := \Zspace{n} + \Hspace{n}.
\eeq
By the inclusion $\Zspace{n} \subset \Hspace{n}$ for $n\geq 0$ and duality,  
\begin{equation}
\ZcapHspace{n} = \begin{cases}
\Zspace{n} & \textup{if } n \geq 0,\\
\Hspace{n} & \textup{if } n \leq 0,
\end{cases} \quad \tand \quad \ZcupHspace{n} = \begin{cases}
\Hspace{n} & \textup{if } n \geq 0,\\
\Zspace{n} & \textup{if } n \leq 0.
\end{cases}
\end{equation}
Thus, since $\Zspace{-n}= (\Zspace{n})^*$, 
\beqs
\ZcupHspace{-n} = (\ZcapHspace{n})^*, \quad\tfa n\in \mathbb{Z}.
\eeqs

One may think of $\cW_k^n$ (respectively $\cY_k^n)$ as the space ``with'' (respectively ``without'') boundary conditions, and the application of $P_k$ ``removes'' the boundary conditions. 
\begin{proposition}[$P_k, P_k^*$ and $\cP_k$ map $\ZcapHspace{n+2}$ to $\ZcupHspace{n}$ continuously]
For all $n \in \mathbb{Z}$ and $k_0 > 0$, there exists $C > 0$ such that for all $k \geq k_0$ and all $u \in \ZcapHspace{n}$, 
$$\|P_k u\|_{\ZcupHspace{n}} + \|P_k^* u\|_{\ZcupHspace{n}} + \|\cP_k u\|_{\ZcupHspace{n}} \leq C \|u\|_{\ZcapHspace{n+2}}$$
\end{proposition}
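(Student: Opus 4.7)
The plan is to split the range of $n$ into three regimes and exploit Assumption~\ref{ass:contPk} either directly or via duality. Since $\cP_k = \tfrac12(P_k + P_k^*)$, it suffices to prove the estimate for $P_k$ and $P_k^*$ separately, and since the two are completely symmetric in the roles of primal and adjoint, I will only carry out the argument for $P_k$.

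First, for $n \geq 0$, I will unpack the definitions \eqref{e:capcupdefs}: one has $\ZcapHspace{n+2} = \Zspace{n+2}$ and $\ZcupHspace{n} = \Hspace{n}$. The desired bound $\|P_k u\|_{\Hspace{n}} \leq C \|u\|_{\Zspace{n+2}}$ is then exactly the content of Assumption~\ref{ass:contPk}.

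Next, for $n \leq -2$, write $m := -n \geq 2$. Then $\ZcapHspace{n+2} = \ZcapHspace{2-m} = \Hspace{2-m}$ and $\ZcupHspace{n} = \Zspace{-m}$. The operator $P_k : \Hspace{2-m} \to \Zspace{-m}$ has already been defined by duality above the proposition via $\langle P_k u, v\rangle = \langle u, P_k^* v\rangle$ for $v \in \Zspace{m}$. By Assumption~\ref{ass:contPk} applied to $P_k^*$ (with index $m-2 \geq 0$), one has $\|P_k^* v\|_{\Hspace{m-2}} \leq C\|v\|_{\Zspace{m}}$, so that
\begin{equation*}
|\langle P_k u, v\rangle| \leq \|u\|_{\Hspace{2-m}} \|P_k^* v\|_{\Hspace{m-2}} \leq C\|u\|_{\Hspace{2-m}} \|v\|_{\Zspace{m}},
\end{equation*}
which gives $\|P_k u\|_{\Zspace{-m}} \leq C\|u\|_{\Hspace{2-m}}$ as required.

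Finally, the two borderline cases $n=-1$ and $n=-2$ must be handled separately because the descriptions of $\ZcapHspace{n+2}$ and $\ZcupHspace{n}$ involve the ``irregular'' indices $0,\pm 1$. For $n = -1$, $\ZcapHspace{1} = \cZ_k$ and $\ZcupHspace{-1} = \cZ_k^*$, and the bound is precisely Proposition~\ref{prop:PkZk}. For $n = -2$, $\ZcapHspace{0} = \cH$ and $\ZcupHspace{-2} = \Zspace{-2}$, and the estimate follows from the definition $\langle P_k u, v\rangle = \langle u, P_k^* v\rangle$ for $v \in \Zspace{2}$ together with the bound $\|P_k^* v\|_{\cH} \leq C\|v\|_{\Zspace{2}}$, which is the case $n=0$ of Assumption~\ref{ass:contPk}. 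There is no real obstacle here; the only subtlety is bookkeeping the definitions of $\ZcapHspace{}$ and $\ZcupHspace{}$ in the ``mixed-sign'' regime, and verifying that the dual pairings used to extend $P_k$ to negative-index spaces are consistent with the ones used above.
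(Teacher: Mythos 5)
Your proof is correct and uses essentially the same three-case decomposition as the paper: Assumption~\ref{ass:contPk} directly for $n \geq 0$, a duality argument (again via Assumption~\ref{ass:contPk}) for $n \leq -2$, and Proposition~\ref{prop:PkZk} for $n = -1$. The only blemish is organizational: you cover $n = -2$ in your general $n \leq -2$ duality argument (taking $m = 2$) and then restate the very same computation a second time under ``borderline cases,'' which is redundant but harmless.
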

\begin{proof}
This is Assumption \ref{ass:contPk} for $n \geq 0$ and follows from it by duality for $n \leq -2$. Finally, Proposition \ref{prop:PkZk} gives the result for $n = -1$ 
\end{proof}
\begin{assumption}[Elliptic regularity]
\label{ass:ell}
Let $Q$ equal either $P_k, P_k^*$ or $\cP_k$ and let $n \in \mathbb{N}$. If 
$$u \in \cH \quad \tand \quad Qu \in \Hspace{n},$$
then $u \in \Zspace{n+2}$, and for all $k_0 > 0$ and $n \in \mathbb{N}$, there exists $C_{\rm ell}(k_0,n) > 0$ such that for all $k \geq k_0$ and $u \in \Zspace{2}$, 
$$\|u\|_{\Zspace{n+2}}  \leq C_{\rm ell}(k_0,n) \Big(\norm{u}_{\cH} + \norm{Qu}_{\Hspace{n}}\Big).$$
\end{assumption}



\begin{proposition}[Norms of $P_k^{-1}$ and $(P_k^*)^{-1}$ from $\ZcapHspace{n}$ to $\ZcupHspace{n+2}$]\label{prop:Rstar}
Suppose that $P_k : \cZ_k \to \cZ_k^*$ is invertible. Then for all $n \in \mathbb{Z}$,  $P_k^{-1} : \ZcupHspace{n} \to \ZcapHspace{n+2}$ and  $(P_k^*)^{-1} : \ZcupHspace{n} \to \ZcapHspace{n+2}$ are continuous and for all $k_0 > 0$, there exists $C > 0$ such that for all $k \geq k_0$, 
\begin{equation}
\label{e:map_PkPkstar}
\|P_k^{-1}u\|_{\ZcapHspace{n+2}} + \|(P_k^*)^{-1} u\|_{\ZcapHspace{n+2}} \leq C\big(1+\rho(k)\big) \|u\|_{\ZcupHspace{n}}
\end{equation}
where $\rho(k)$ is defined by \eqref{e:rhoHtoH}.
\end{proposition}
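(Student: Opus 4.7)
The plan is to split the proof into three cases according to the sign of $n$, using that the spaces $\ZcupHspace{n}$ and $\ZcapHspace{n}$ collapse to familiar spaces depending on the sign. Recall from the text preceding the proposition that $\ZcupHspace{n} = \Hspace{n}$ and $\ZcapHspace{n+2} = \Zspace{n+2}$ when $n\geq 0$; $\ZcupHspace{-1} = \Zspace{-1} = \cZ_k^*$ and $\ZcapHspace{1} = \Zspace{1} = \cZ_k$; and $\ZcupHspace{n} = \Zspace{n}$ and $\ZcapHspace{n+2} = \Hspace{n+2}$ when $n\leq -2$. Thus the three regimes $n\geq 0$, $n=-1$, and $n\leq -2$ have to be addressed separately.

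First I would dispatch the case $n\geq 0$ by combining the $L^2$-resolvent bound with elliptic regularity. Given $f \in \Hspace{n}\subset \cH$, set $u := P_k^{-1}f$, which exists in $\cZ_k$ by the standing invertibility hypothesis. By the definition \eqref{e:rhoHtoH} of $\rho(k)$, $\|u\|_{\cH}\leq \rho(k)\|f\|_{\cH}\leq \rho(k)\|f\|_{\Hspace{n}}$. Since $P_k u = f \in \Hspace{n}$, Assumption~\ref{ass:ell} applied with $Q=P_k$ then gives $u\in\Zspace{n+2}$ together with
\[
\|u\|_{\Zspace{n+2}} \leq C_{\rm ell}(k_0,n)\big(\|u\|_{\cH}+\|f\|_{\Hspace{n}}\big)\leq C_{\rm ell}(k_0,n)\big(1+\rho(k)\big)\|f\|_{\Hspace{n}},
\]
which is \eqref{e:map_PkPkstar} in this regime. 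The argument for $(P_k^*)^{-1}$ is identical, using that $\|(P_k^*)^{-1}\|_{\cH\to\cH}=\|P_k^{-1}\|_{\cH\to\cH}=\rho(k)$ and applying Assumption~\ref{ass:ell} with $Q=P_k^*$. The case $n=-1$ is nothing but Proposition~\ref{prop:ResboundZkZk} rewritten in the $\mathcal{W}/\mathcal{Y}$ notation.

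For $n\leq -2$, set $m := -n-2\geq 0$ and proceed by duality. By definition of the extension of $P_k$ and $P_k^*$ to $\Hspace{-n}\to \Zspace{-n-2}$, and by the $\cH$-pairing identifications, for all $f\in \Zspace{n}$ and $g\in \Hspace{-n-2}=\Hspace{m}$,
\[
\langle P_k^{-1}f,g\rangle = \langle f,(P_k^*)^{-1}g\rangle.
\]
Applying the case $m\geq 0$ to $(P_k^*)^{-1}:\Hspace{m}\to \Zspace{m+2}=\Zspace{-n}$ yields
\[
|\langle P_k^{-1}f,g\rangle|\leq \|f\|_{\Zspace{n}}\|(P_k^*)^{-1}g\|_{\Zspace{-n}}\leq C\big(1+\rho(k)\big)\|f\|_{\Zspace{n}}\|g\|_{\Hspace{m}}.
\]
Taking the supremum over $g$ in the unit ball of $\Hspace{m}=(\Hspace{n+2})^*=(\ZcapHspace{n+2})^*$ gives the required bound on $\|P_k^{-1}f\|_{\ZcapHspace{n+2}}=\|P_k^{-1}f\|_{\Hspace{n+2}}$, and the bound for $(P_k^*)^{-1}$ follows symmetrically. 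Invertibility on the extended spaces is automatic: the range of the forward map $P_k:\Hspace{n+2}\to \Zspace{n}$ covers $\Zspace{n}$ because the already-constructed $(P_k^*)^{-1}:\Zspace{-n}\to \Hspace{-n-2}$ is a continuous inverse of the adjoint map, and injectivity follows from injectivity of $P_k$ on $\cZ_k$ plus the identification via the pairing.

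There is no genuine obstacle in this argument; the only care needed is in bookkeeping the dualities between $\ZcupHspace{n}$ and $\ZcapHspace{-n}$ and checking that the a priori extension of $P_k$ from $\Zspace{2}$ to the various $\Hspace{-n}$ provided by Assumption~\ref{ass:contPk} and duality matches the operator whose inverse is being estimated. These are purely formal verifications once the three cases are isolated.
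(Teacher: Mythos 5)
Your proposal is correct and follows essentially the same route as the paper: the case $n\geq 0$ via the elliptic-regularity assumption combined with the $L^2$ resolvent bound, the case $n=-1$ from the earlier $\cZ_k^* \to \cZ_k$ estimate (you correctly point to Proposition~\ref{prop:ResboundZkZk}, whereas the paper's text has a minor citation slip there), and the case $n\leq -2$ by duality between $\ZcupHspace{n}$ and $\ZcapHspace{-n}$. The only difference is cosmetic: for $n\geq 0$ you invoke Assumption~\ref{ass:ell} once with the given $n$, rather than running the paper's induction, which turns out to be unnecessary since the base of that induction already bounds $\|P_k^{-1}u\|_{\cH}$ directly by $\rho(k)\|u\|_{\cH}$.
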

\begin{proof}
The case $n = -1$ is Proposition \ref{prop:PkZk}. Hence it remains to prove \eqref{e:map_PkPkstar} for $n \geq 0$, since the case $n \leq -2$ follows by duality. We proceed by induction. First, for $n = 0$, let $u \in \cH$. Then $P_k^{-1}u \in \cH$ and thus by elliptic regularity, $u \in \Zspace{2}$ with
$$\|u\|_{\Zspace{2}} \leq C_{\rm ell}(k_0,0) (\|P_k^{-1}u\|_{\cH} + \|u\|_{\cH}) \leq C_{\rm ell}(k_0,0)(1+ \rho(k)) \|u\|_{\cH} \leq C \rho(k) \|u\|_{\cH}$$
by definition of $\rho(k)$, where $C$ depends only on $k_0$. Next, let $n \geq 0$ and suppose that there exists $C > 0$ such that
$$\|P_k^{-1}u\|_{\Zspace{n+2}} \leq C \rho(k) \|u\|_{\Hspace{n}}.$$
Let $u \in \Hspace{n+1}$. Then by elliptic regularity and using the continuous embeddings $\Zspace{n+2} \subset \cH$ and $\Hspace{n+1} \subset \Hspace{n}$, 
\begin{align*}
\|P_k^{-1}u\|_{\Zspace{n+3}} &\leq C_{\ell}(k_0,n+1) (\|P_k^{-1}u\|_{\cH} + \|u\|_{\Hspace{n+1}})\\
& \leq C_{\ell}(k_0,n+1) (\|P_k^{-1}u\|_{\Zspace{n+2}} + \|u\|_{\Hspace{n+1}}) \\
& \leq C_{\ell}(k_0,n+1) (C\rho(k)\|u\|_{\Hspace{n}} + \|u\|_{\Hspace{n+1}}) \\
& \leq C'\rho(k) \|u\|_{\Hspace{n+1}}
\end{align*}
where $C'$ depends only on $k_0$ and $n$. 
\end{proof}

\begin{proposition}[Resolvent norm from $\ZcupHspace{n-1}$ to $\ZcapHspace{n+1}$]\label{prop:rescuedfrombin}
Let $k_0 > 0$ and $n \in \mathbb{Z}$. There exists $C(k_0,n) > 0$ such that for all $k \geq k_0$ and all $z \in \mathbb{C} \setminus \R$, 
\begin{align*}
\|(\cP_k - z)^{-1} \|_{\ZcupHspace{n-1}\to \ZcapHspace{n+1}} &\leq C(k_0,n)\frac{\langle z \rangle^{1+\lfloor  |n|/2\rfloor}}{|\Im(z)|}.
\end{align*}
\end{proposition}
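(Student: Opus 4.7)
The plan is to reduce the estimate to the elementary self-adjoint bound $\|(\cP_k - z)^{-1}\|_{\cH \to \cH} \leq |\Imag z|^{-1}$ together with the case $n = 0$ already handled by Proposition \ref{prop:ResboundZkZk} (which, since $\ZcupHspace{-1}=\cZ_k^{*}$ and $\ZcapHspace{1}=\cZ_k$, gives the required $C\langle z\rangle/|\Imag z|$ with exponent $1+\lfloor 0/2\rfloor=1$), and then to climb the Sobolev scale using elliptic regularity (Assumption \ref{ass:ell}) applied to the identity $\cP_k u = (\cP_k - z)u + z u$. I would handle $n \geq 1$ by induction in steps of two, and then obtain $n \leq -1$ by duality from the self-adjointness of $\cP_k$. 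All constants stay uniform in $k \geq k_0$ because the constants in Proposition \ref{prop:ResboundZkZk} and in Assumption \ref{ass:ell} depend only on $k_0$ and (for the latter) on $n$.

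The base case $n=1$ is direct. Given $f \in \cH$, set $u := (\cP_k - z)^{-1} f$; this lies in $\cZ_k^{2}$ since $\cP_k$ is self-adjoint on $\cH$ with that domain by Proposition \ref{prop:Pksa}. Since $\cP_k u = f + z u \in \cH$, Assumption \ref{ass:ell} at degree $0$ gives
\begin{equation*}
\|u\|_{\cZ_k^{2}} \leq C_{\rm ell}\big(\|u\|_{\cH} + \|f\|_{\cH} + |z|\,\|u\|_{\cH}\big) \leq C\,\frac{\langle z\rangle}{|\Imag z|}\,\|f\|_{\cH},
\end{equation*}
matching $1 + \lfloor 1/2\rfloor = 1$.

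The inductive step for $n \geq 2$ uses the bound at index $n-2$, and I would emphasise that a \emph{two}-step jump is what delivers the sharp exponent: a one-step induction (from $n-1$ to $n$) would pay an extra power of $\langle z\rangle$ at every odd $n$, because elliptic regularity gains two indices but then costs a full $|z|$ to absorb the $z u$ contribution from $\cP_k u = f + z u$. Concretely, given $f \in \Hspace{n-1}$, set $u := (\cP_k - z)^{-1} f$; the embedding $\Hspace{n-1} \hookrightarrow \ZcupHspace{n-3}$ combined with the inductive hypothesis gives $u \in \cZ_k^{n-1} \subset \Hspace{n-1}$ with
\begin{equation*}
\|u\|_{\Hspace{n-1}} \leq \|u\|_{\cZ_k^{n-1}} \leq C\,\frac{\langle z\rangle^{1+\lfloor (n-2)/2\rfloor}}{|\Imag z|}\,\|f\|_{\Hspace{n-1}}.
\end{equation*}
Since $\cP_k u = f + z u \in \Hspace{n-1}$, Assumption \ref{ass:ell} at degree $n-1$ then upgrades $u$ to $\cZ_k^{n+1}$ with
\begin{equation*}
\|u\|_{\cZ_k^{n+1}} \leq C_{\rm ell}\big(\|u\|_{\cH} + \|f\|_{\Hspace{n-1}} + |z|\,\|u\|_{\Hspace{n-1}}\big) \leq C\,\frac{\langle z\rangle^{2+\lfloor (n-2)/2\rfloor}}{|\Imag z|}\,\|f\|_{\Hspace{n-1}},
\end{equation*}
and the identity $2 + \lfloor (n-2)/2\rfloor = 1 + \lfloor n/2\rfloor$ closes the induction.

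Finally, for $n \leq -1$ I would use duality. Since $\cP_k$ is self-adjoint, $((\cP_k - z)^{-1})^{*} = (\cP_k - \bar z)^{-1}$ in the $\cH$-pairing, and the natural identifications $(\ZcapHspace{m})^{*} \simeq \ZcupHspace{-m}$, $(\ZcupHspace{m})^{*} \simeq \ZcapHspace{-m}$ give
\begin{equation*}
\|(\cP_k - z)^{-1}\|_{\ZcupHspace{n-1}\to\ZcapHspace{n+1}} = \|(\cP_k - \bar z)^{-1}\|_{\ZcupHspace{-n-1}\to\ZcapHspace{-n+1}},
\end{equation*}
so the bound for $-n \geq 1$ transfers directly, using $\lfloor |n|/2\rfloor = \lfloor |{-n}|/2\rfloor$. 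The main piece of bookkeeping, and the place where one could easily lose sharpness, is the two-step nature of the induction; the remaining pieces are routine applications of the ingredients already in place.
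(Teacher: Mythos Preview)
Your proof is correct and follows essentially the same approach as the paper: the base cases $n=0$ (from Proposition \ref{prop:ResboundZkZk}) and $n=1$ (via elliptic regularity and the self-adjoint resolvent bound), followed by a two-step induction using Assumption \ref{ass:ell}, and then duality for negative $n$. Your write-up is in fact slightly more careful than the paper's about distinguishing $\Hspace{n-1}$ from $\Zspace{n-1}$ in the inductive step and about spelling out the duality identifications.
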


\begin{proof}
The result for $n = 0$ is Proposition \ref{prop:ResboundZkZk}. For $n = 1$, we use that 
$$\|(\cP_k - z)^{-1}u\|_{\cH} \leq \frac{\|u\|_{\cH}}{|\Im(z)|}\,.$$
and the elliptic regularity (Assumption \ref{ass:ell}) to write
\begin{align*}
	\|(\cP_k-z)^{-1}u\|_{\Zspace{2}}&\leq C(\| \cP_k(\cP_k-z)^{-1}u\|_{\cH} +\|(\cP_k-z)^{-1}u\|_{\cH})\\
	&\leq C\Big(\|u\|_{\cH}+|z| \cdot \| (\cP_k-z)^{-1}u\|_{\cH} +\|(\cP_k-z)^{-1}u\|_{\cH}\Big)\\
	&\leq C \frac{\langle z \rangle}{|\Im(z)|}\|u\|_{\cH}.
\end{align*}
Next let $n \geq 0$ and suppose that 
$$\|(\cP_k - z)^{-1}u\|_{\Zspace{n+1}} \leq C \frac{\langle z \rangle^{m}}{|\Im(z)|}\|u\|_{\Zspace{n-1}}$$
for all $u \in \Zspace{n-1}$. Then elliptic regularity gives
\begin{align*}
\|(\cP_k - z)^{-1}u\|_{\Zspace{n+3}} &\leq C \|(\cP_k - z)^{-1} u\|_{\cH} + \|\cP_k(\cP_k - z)^{-1}u\|_{\Zspace{n+1}}\\
& \leq C \Big(|\Im(z)|^{-1}\|u\|_{\cH} + \|u\|_{\Zspace{n+1}} + |z|\cdot \|(\cP_k - z)^{-1} u\|_{\Zspace{n+1}}\Big)\\
& \leq C \Big(|\Im(z)|^{-1}\|u\|_{\cH} + \|u\|_{\Zspace{n+1}} +  \frac{\langle z \rangle^{m+1}}{|\Im(z)|} \|u\|_{\Zspace{n-1}}\Big)\\
& \leq C \frac{\langle z \rangle^{m+1}}{|\Im(z)|} \|u\|_{\Zspace{n+1}}.
\end{align*}
Thus by induction, for all $n \geq 0$, 
$$
\|(\cP_k - z)^{-1}\|_{\ZcupHspace{n-1} \to \ZcapHspace{n+1}} \leq \frac{\langle z \rangle^{ 1+\lfloor |n|/2\rfloor}}{|\Im(z)|}.
$$
The result for $n \leq 0$ follows by duality. 
\end{proof}
%

\subsection{The spaces $\Dspace{s}$}

Since $\lambda \mapsto (\lambda + \CGa(k_0) + 1)^{s/2}, s \geq 0,$ is finite for $\lambda\in \sigma(\cP_k)$, the functional calculus of unbounded self-adjoint operators (see, e.g., \cite[Section 5.3]{schmudgen2012unbounded}) allows us to define the self-adjoint operator 
\beq\label{e:zoomWorking3}
\cX_{k,s} := (\cP_k + (\CGa(k_0) + 1)\Id)^{s/2}
\eeq
with domain $$\Dspace{s} := \mathcal{D}(\cX_{k,s}) \subset \cH$$ 
where the inclusion is dense in the $\cH$ norm.
Since the functional calculus is an algebra homomorphism, $\cX_{k,s} = \cX_{k}^s$, where $\cX_k := \cX_{k,1}$. Since $\cX_k^s$ is self-adjoint, it is, in particular, a closed operator, so the space $\cD_k^s$ is a Hilbert space for the graph norm
\beqs
\|u\|^2_{\cX_k^s} := \|u\|^2_{\cH} + \|\cX_k^s u\|^2_{\cH}.
\eeqs
Moreover, 
$\sigma(\cX_k^s) \subset [1,+\infty)$, hence 
$$\|u\|^2_{\cH} \leq (u,\cX_k^s u)_{\cH} \leq \|u\|_{\cH} \|\cX_k^s u\|_{\cH},$$
so the graph norm associated of $\cX_k^s$ is equivalent to the norm
\beq\label{e:zoomWorking2}
\|u\|^2_{\cD^s_k} := \|\cX_k^{s} u\|^2_{\cH}.
\eeq
This way, the operator $\cX_k^t$ induces an isometry from $\cD^s_k$ to $\cD_k^{s-t}$ for all $s \geq t \geq 0$. 
\begin{proposition}
\label{prop:Z2=D2}
$\Zspace{2} = \Dspace{2}$ with equal norms. Furthermore $\cZ_k = \mathcal{D}^{1}_k$ with equivalent norms; more precisely, for all $k_0 >0$, there exist constant $C(k_0) > 0$ such that for all $k \geq 0$ and for all $u \in \mathcal{D}^2$
$$\frac{1}{C(k_0)} \|u\|_{\cZ_k} \leq \norm{u}^2_{\cD_k^1} \leq C(k_0) \|u\|_{\cZ_k}.$$
\end{proposition}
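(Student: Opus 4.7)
The plan is to first identify $\Zspace{2}$ with $\Dspace{2}$ using the functional calculus, then prove the norm equivalence on this dense subspace, and finally pass to $\cZ_k = \Dspace{1}$ by density.

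\textbf{Step 1: $\Zspace{2} = \Dspace{2}$ with equal norms.} By the functional calculus for the self-adjoint operator $\cP_k$ (Proposition \ref{prop:Pksa}), applied to the composition of the functions $\lambda \mapsto \lambda + \CGa(k_0)+1$ and $\lambda \mapsto \lambda^{1/2}$ on $\sigma(\cP_k) \subset [-\CGa(k_0), +\infty)$, the identity
\beqs
\cX_k^2 = (\cP_k + (\CGa(k_0)+1)\Id)
\eeqs
holds in the sense of unbounded self-adjoint operators, in particular with equal domains. Hence
\beqs
\Dspace{2} = \mathcal{D}(\cX_k^2) = \mathcal{D}(\cP_k) = \Zspace{2},
\eeqs
and comparing \eqref{e:zoomWorking1} with \eqref{e:zoomWorking2} (for $s=2$) gives $\|u\|_{\Zspace{2}} = \|u\|_{\Dspace{2}}$.

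\textbf{Step 2: Norm equivalence on $\Zspace{2} = \Dspace{2}$.} For $u \in \Zspace{2}$, use Step 1 and the defining relation $(\cP_k u, v)_\cH = (\Re a_k)(u,v)$ to compute
\beqs
\|u\|^2_{\Dspace{1}} = (\cX_k^2 u, u)_\cH = (\Re a_k)(u,u) + (\CGa(k_0)+1)\|u\|^2_\cH.
\eeqs
The upper bound follows from the continuity assumption (Assumption \ref{ass:cont}), which gives $|(\Re a_k)(u,u)| \leq C_0(k_0) \|u\|^2_{\cZ_k}$, so
\beqs
\|u\|^2_{\Dspace{1}} \leq \big(C_0(k_0) + \CGa(k_0)+1\big)\|u\|^2_{\cZ_k}.
\eeqs
The lower bound follows from the G\aa rding inequality (Assumption \ref{ass:Gar}):
\beqs
\|u\|^2_{\Dspace{1}} \geq \cGa(k_0)\|u\|^2_{\cZ_k} - \CGa(k_0)\|u\|^2_\cH + (\CGa(k_0)+1)\|u\|^2_\cH \geq \cGa(k_0)\|u\|^2_{\cZ_k}.
\eeqs

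\textbf{Step 3: Extension by density to $\cZ_k = \Dspace{1}$.} Both $\cZ_k$ and $\Dspace{1}$ contain $\Zspace{2} = \Dspace{2}$ as a dense subspace: for $\cZ_k$ this is given by Proposition \ref{prop:Pksa}, while for $\Dspace{1}$ it is a standard consequence of the spectral theorem (the domain of any power $\cX_k^s$ with $s>0$ is a core for $\cX_k^t$ whenever $0 \leq t < s$). Since the identity map on $\Zspace{2}$ is bi-Lipschitz as a map between the two norms by Step 2, and both $(\cZ_k, \|\cdot\|_{\cZ_k})$ and $(\Dspace{1}, \|\cdot\|_{\Dspace{1}})$ are complete, this identity extends uniquely to a bi-Lipschitz isomorphism between them, establishing $\cZ_k = \Dspace{1}$ with equivalent norms.

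The only step requiring genuine input is the identification $\cX_k^2 = \cP_k + (\CGa(k_0)+1)\Id$ in Step 1, and this is immediate from the composition rule in the functional calculus; the rest is the standard second representation theorem for semibounded quadratic forms, recast here in the abstract language already set up in the paper.
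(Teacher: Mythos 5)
Your proof is correct, and it takes a genuinely different route from the paper in the key step. The paper identifies $\cZ_k$ with $\Dspace{1}$ by observing that $\cX_k^2$ is the operator associated to the lower semibounded form $a_k^+(u,v) := \Re a_k(u,v) + (\CGa(k_0)+1)(u,v)_\cH$ and then citing the second representation theorem (Schm\"udgen, Thm.~10.7 and Prop.~10.5) to get the set equality $\cZ_k = \mathcal{D}(|\cX_k^2|^{1/2}) = \Dspace{1}$ directly, after which the norm equivalence is asserted to follow from continuity of $\Re a_k$ and the G\aa rding inequality. You instead prove the norm equivalence by hand on the common dense subspace $\Zspace{2}$, via the explicit identity $\|u\|^2_{\Dspace{1}} = (\Re a_k)(u,u) + (\CGa(k_0)+1)\|u\|^2_\cH$, and then extend by density. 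Your approach has the advantage of being self-contained and making the norm constants explicit; the paper's is more concise because the set equality comes for free from the citation. One point worth spelling out in your Step 3: the density argument produces an \emph{abstract} bi-Lipschitz extension $\Phi : \cZ_k \to \Dspace{1}$, and to conclude that $\cZ_k$ and $\Dspace{1}$ are equal \emph{as subsets of $\cH$} (rather than merely isomorphic Banach spaces) you also need to observe that both spaces embed continuously into $\cH$, so that a $\cZ_k$-Cauchy sequence $(u_n) \subset \Zspace{2}$ and its $\Dspace{1}$-limit share the same $\cH$-limit, forcing $\Phi$ to coincide with the set-theoretic inclusion. This is elementary and does not constitute a gap, but it is the step that Schm\"udgen's theorem handles automatically in the paper's version.
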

\begin{proof}
The first statement follows from the fact that $\cX_k^2$ and $\cP_k$ differ by a multiple of identity, and by the definition of the norm of $\Zspace{2}$ (compare \eqref{e:zoomWorking1} with the combination of \eqref{e:zoomWorking3} and \eqref{e:zoomWorking2}). On the other hand, $\cX_k^2$ is the operator associated to the lower semi-bounded form $a_k^+ : \cZ_k \times \cZ_k \to \C$ defined by
$$a_k^+(u,v) := \Re a_k(u,v) + (\CGa(k_0) + 1) (u,v)_{\cH}$$
in the sense of \cite[Definition 10.4]{schmudgen2012unbounded}. In particular, by Theorem 10.7 and Proposition 10.5 in the latter reference, 
$$\cZ_k = \mathcal{D}(|\cX_k^2|^{1/2}) = \mathcal{D}(\cX_k) = \Dspace{1}.$$
The equivalence of the norms follows from the continuity of $\Re a_k$ and the G\aa rding inequality. 
\end{proof}
\begin{corollary}
\label{cor:DnsubZn}
For all $n \in \mathbb{N}$, $\Dspace{n} \subset \Zspace{n}$ and the embedding is continuous. 
\end{corollary}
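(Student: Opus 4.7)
The plan is to proceed by induction on $n$, with base cases $n = 0, 1, 2$ handled directly and an inductive step that advances $n \mapsto n+2$.

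For the base cases, the identifications $\Dspace{0} = \cH = \Zspace{0}$, $\Dspace{1} = \cZ_k = \Zspace{1}$ (with equivalent norms), and $\Dspace{2} = \Zspace{2}$ (with equal norms) are given directly by Definition \ref{def:Zspaces} together with Proposition \ref{prop:Z2=D2}. Since these establish the result for both $n=0$ and $n=1$, the induction on $n \mapsto n+2$ will cover all natural numbers.

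For the inductive step, I assume $\Dspace{n} \subset \Zspace{n}$ continuously for some fixed $n \geq 0$, and let $u \in \Dspace{n+2}$. The key algebraic identity is that $\cP_k = \cX_k^2 - (\CGa(k_0)+1)\Id$, so $\cX_k^n$ and $\cP_k$ commute on $\Dspace{n+2}$ by the functional calculus, and
\begin{equation*}
\|\cP_k u\|_{\Dspace{n}} = \|\cX_k^n \cP_k u\|_{\cH} = \|\cX_k^{n+2} u - (\CGa(k_0)+1)\cX_k^n u\|_{\cH} \leq \|u\|_{\Dspace{n+2}} + (\CGa(k_0)+1)\|u\|_{\Dspace{n}}.
\end{equation*}
By the inductive hypothesis, $\cP_k u \in \Zspace{n} \subset \Hspace{n}$ with $\|\cP_k u\|_{\Hspace{n}} \leq C\|\cP_k u\|_{\Dspace{n}} \leq C'\|u\|_{\Dspace{n+2}}$ (using also $\|u\|_{\Dspace{n}} \leq \|u\|_{\Dspace{n+2}}$, which follows since $\sigma(\cX_k) \subset [1,\infty)$). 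Since $u \in \cH$, Assumption \ref{ass:ell} (elliptic regularity applied to $Q = \cP_k$) gives $u \in \Zspace{n+2}$ together with
\begin{equation*}
\|u\|_{\Zspace{n+2}} \leq C_{\rm ell}(k_0,n)\big(\|u\|_{\cH} + \|\cP_k u\|_{\Hspace{n}}\big) \leq C''\|u\|_{\Dspace{n+2}},
\end{equation*}
completing the induction.

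The argument is essentially routine once one recognizes that the functional-calculus identity $\cX_k^{n+2} = \cX_k^n \cX_k^2$ lets one extract $\cP_k u$ with one degree of regularity already under control. The only mildly delicate point — but not a real obstacle — is making sure the commutation of $\cX_k^n$ with $\cP_k$ (or equivalently with $\cX_k^2$) is valid on $\Dspace{n+2}$; this is immediate from the spectral theorem since both are Borel functions of the same self-adjoint operator $\cP_k$, and $u \in \Dspace{n+2}$ lies in the natural domain where the composition is well-defined.
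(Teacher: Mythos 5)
Your proof is correct and takes essentially the same route as the paper: both establish the base cases $n=0,1,2$ via Definition \ref{def:Zspaces} and Proposition \ref{prop:Z2=D2}, then induct by observing $\cP_k u = (\cX_k^2 - (\CGa(k_0)+1)\Id)u \in \Dspace{n}$ for $u \in \Dspace{n+2}$ and invoking elliptic regularity (Assumption \ref{ass:ell}). You have simply spelled out the norm estimates that the paper leaves implicit.
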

\begin{proof}
The result is immediate if $n = 0$ and is Proposition \ref{prop:Z2=D2} above for $n = 1,2$. Finally, if $u \in \cD_k^{n+2}$, then 
$$\cP_k u = (\cX_k^2 - \CGa(k_0) + 1)u \in \cD_k^{n}$$
so the result follows by induction using elliptic regularity (Assumption \ref{ass:ell}).
\end{proof}

\

We also define $\cD_k^{-s} := (\cD_k^s)^*$. 
Since $\Dspace{s}$ is dense in $\Hspace{}$ for $s\geq 0$, $\cH$ can be identified as a subspace of $\Dspace{-s}$, so that 
$\cD_k^{t} \subset \cD_k^s$ for all real $s \leq t$. We can extend $\cX_k$ uniquely into a linear map from $\cD_k^s$ to $\cD_k^{s-1}$ for all $s \in [0,1]$ by putting
$$\langle\cX_k u,v\rangle := \langle \cX_k^{s}u,\cX_k^{1-s} v\rangle \quad \tfa (u,v) \in \cD_k^s \times \cD_k^{1-s}.$$ 
This way, $\cX_k : \cD_k^{s} \to \cD_k^{s-1}$ for all $s \in [0,+\infty)$ is an isometry and this is extended to $s \leq 0$ by duality. In turn, this allows us to view $\cP_k$ as a map $\cP_k: \cD_k^s \to \cD_k^{s-2}$ for all $s \in \R$ by $\cP_k := \cX_k^2 - (\CGa(k_0)+1)\Id$, with 
$$\|\cP_k u\|_{\cD_k^{s}} \leq (\CGa(k_0) + 1)\|u\|_{\cD_k^{s+2}}.$$

\begin{proposition}[Resolvent estimates in the $(\cD_k^s)$ scale]\label{prop:neverInBin}
Let $k_0 > 0$ and $s \in \mathbb{R}$. There exists $C > 0$ such that for all $k > 0$ and all $z \in \mathbb{C} \setminus \R$, 
$$\|(\cP_k - z)^{-1}\|_{\cD_k^s \to \cD_k^s} \leq  |\Im(z)|^{-1},$$
$$ \|(\cP_k - z)^{-1}\|_{\cD_k^s \to \cD_k^{s+2}} \leq  C\langle z \rangle|\Im(z)|^{-1},$$
where $\langle z \rangle := 1 + |z|$. 
\end{proposition}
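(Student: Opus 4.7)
The plan is to exploit that $\cP_k$, $\cX_k$ and $(\cP_k-z)^{-1}$ are all Borel functions of the same self-adjoint operator (namely $\cP_k$), so they pairwise commute in the sense of the functional calculus, and then to transport the trivial $\cH \to \cH$ bound along the isometries $\cX_k^s : \cD_k^s \to \cH$.

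First I would recall that, since $\sigma(\cP_k) \subset \mathbb{R}$, the spectral theorem gives $\|(\cP_k - z)^{-1}\|_{\cH\to\cH} \leq |\Im z|^{-1}$ for $z \in \mathbb{C}\setminus\R$. For $s \geq 0$, since $\cX_k^s$ and $(\cP_k - z)^{-1}$ are both obtained from the functional calculus of $\cP_k$ on $\cH$, they commute on $\cD_k^s$. Using that $\cX_k^s : \cD_k^s \to \cH$ is isometric (by definition of the $\cD_k^s$-norm),
\begin{equation*}
\|(\cP_k - z)^{-1} u\|_{\cD_k^s} \;=\; \|\cX_k^s(\cP_k - z)^{-1}u\|_{\cH} \;=\; \|(\cP_k - z)^{-1}\cX_k^s u\|_{\cH} \;\leq\; |\Im z|^{-1}\|u\|_{\cD_k^s}.
\end{equation*}
For $s < 0$, the corresponding bound follows by duality: on $\cH$, $(\cP_k - z)^{-*} = (\cP_k - \overline z)^{-1}$, and the extension of $(\cP_k - z)^{-1}$ to $\cD_k^s \to \cD_k^s$ as the dual of $(\cP_k - \overline z)^{-1} : \cD_k^{-s} \to \cD_k^{-s}$ (which was handled by the previous case, with $-s \geq 0$) inherits the norm bound $|\Im \overline z|^{-1} = |\Im z|^{-1}$.

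Next I would prove the $\cD_k^s \to \cD_k^{s+2}$ bound by reducing it to the first bound. Since $\cX_k^2 = \cP_k + (\CGa(k_0) + 1)\,\Id$, the commutation established above yields, on the appropriate space,
\begin{equation*}
\cX_k^{s+2}(\cP_k - z)^{-1} \;=\; \bigl(\cP_k + (\CGa(k_0) + 1)\Id\bigr)(\cP_k - z)^{-1} \cX_k^s \;=\; \cX_k^s + \bigl(z + \CGa(k_0) + 1\bigr)(\cP_k - z)^{-1}\cX_k^s.
\end{equation*}
Taking $\cH$-norms of both sides, using $\cX_k^s: \cD_k^s \to \cH$ isometric and the first bound applied at $s = 0$,
\begin{equation*}
\|(\cP_k-z)^{-1}u\|_{\cD_k^{s+2}} \;\leq\; \|u\|_{\cD_k^s} + \bigl(|z| + \CGa(k_0) + 1\bigr)|\Im z|^{-1}\|u\|_{\cD_k^s}.
\end{equation*}
Finally, since $|\Im z| \leq \langle z\rangle$ we have $1 \leq \langle z\rangle |\Im z|^{-1}$, and $|z| + \CGa(k_0) + 1 \leq C\langle z\rangle$ for a $k_0$-dependent constant, so the right-hand side is at most $C \langle z\rangle |\Im z|^{-1} \|u\|_{\cD_k^s}$.

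The only genuinely delicate point is extending the commutation $\cX_k^s(\cP_k - z)^{-1} = (\cP_k - z)^{-1}\cX_k^s$ to negative $s$, where the operators live on dual spaces and the identity must be read via duality pairings; however, because the distributional extension of $\cX_k^s$ was defined precisely as the dual of $\cX_k^{-s}$ on $\cD_k^{-s}$, and similarly for $(\cP_k - z)^{-1}$, the identity and the norm bounds transfer without difficulty. Everything else is a direct combination of the isometry property of $\cX_k^s$ and the spectral bound on $\cH$.
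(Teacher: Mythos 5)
Your proof is correct and follows essentially the same route as the paper: both exploit the commutation of $\cX_k^s$ and $(\cP_k-z)^{-1}$ in the functional calculus together with the isometry $\cX_k^s : \cD_k^s \to \cH$, reducing the first estimate to the trivial $\cH\to\cH$ spectral bound. The only cosmetic difference is in the second estimate: the paper bounds $\|\cX_k^{s+2}(\cP_k-z)^{-1}\cX_k^{-s}\|_{\cH\to\cH}$ by computing $\sup_{x}\lvert x/(x-(z+\CGa(k_0)+1))\rvert$ directly, while you decompose the operator as $\Id + (z+\CGa(k_0)+1)(\cP_k-z)^{-1}$ before applying the $\cH\to\cH$ resolvent bound — both manipulations are trivially equivalent and yield the same $\langle z\rangle |\Im z|^{-1}$ scaling.
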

\begin{proof}
Using the fact that $\cX_k^s : \cD_k^s \to \cH$ is an isometry, and functional calculus,
$$\|(\cP_k - z)^{-1}\|_{\cD_k^s \to \cD_k^s} \leq \|\cX_k^s (\cP_k - z)^{-1} \cX_k^{-s}\|_{\cH \to \cH} = \|(\cP_k - z)^{-1}\|_{\cH \to \cH} \leq |\Im(z)|^{-1}.$$
Similarly, 
$$\|(\cP_k - z)^{-1}\|_{\cD_k^{s} \to \cD_k^{s+2}} = \|\cX_k^{s+2} \big(\cX_k^2 - (\CGa + z)\big)^{-1} \cX_k^{-s} \|_{\cH \to \cH}  = \|g(\cX_k^2)\|_{\cH \to \cH} \leq \sup_{x \in \R} |g(x)|$$
where $g(x) := \frac{x}{x- (z + \CGa)}$. Since for all $z \in \C \setminus \R$, 
$$\sup_{x \in \R} \left|\frac{x}{x-z}\right| = \frac{|z|}{|\Im(z)|},$$
we conclude that 
$$\sup_{x \in \R}|g(x)| \leq (1 + \CGa) \langle z \rangle |\Im(z)|^{-1},$$
completing the proof. 
\end{proof}

\begin{proposition}[Functions of $\cP_k$]
\label{prop:f(Pk)}
Let $k_0 > 0$ and $s \geq 0$. There exists $C > 0$ such that, for all $k \geq k_0$ and for any function $f : \mathbb{R} \to \mathbb{C}$ satisfying 
$$\|f\|_{\infty,s} := \sup_{x \in \R} (1 + |x|^s)|f(x)| < \infty,$$
the operator $f(\cP_k) : \cH \to \cH$ defined by the functional calculus extends uniquely into a continuous map from $\cD_k^{-s}$ to $\cD_k^{s}$, with
$$\|f(\cP_k)\|_{\cD_k^{-s} \to \cD_k^{s}} \leq C \|f\|_{\infty,s}.$$
In particular (by Corollary \ref{cor:DnsubZn} and the definitions of $\ZcupHspace{n}$ and $\ZcapHspace{n}$ \eqref{e:capcupdefs}) for any $n\in \mathbb{N}$, $f(\cP_k) : \ZcupHspace{-n} \to \ZcapHspace{n}$ is continuous.
\end{proposition}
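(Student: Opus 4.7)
The plan is to reduce the statement to a bound on the $\cH \to \cH$ operator norm of a function of $\cP_k$ via the spectral theorem, exploiting the fact that $\cX_k^s$ defines isometries between the scale $(\cD_k^t)_{t \in \R}$.

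First, recall that by definition of the norm on $\cD_k^s$, $\cX_k^s : \cD_k^s \to \cH$ is an isometry for all $s \geq 0$, and since $\cX_k$ is self-adjoint, by duality $\cX_k^s: \cH \to \cD_k^{-s}$ is also an isometry. Consequently,
\[
\|f(\cP_k)\|_{\cD_k^{-s} \to \cD_k^s} = \|\cX_k^s\, f(\cP_k)\, \cX_k^s\|_{\cH \to \cH}.
\]
By the functional calculus (Proposition~\ref{prop:Pksa} and the algebra homomorphism property), $\cX_k^s f(\cP_k) \cX_k^s = g(\cP_k)$, where $g(\lambda) := (\lambda + \CGa(k_0) + 1)^s f(\lambda)$.

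Since $\sigma(\cP_k) \subset [-\CGa(k_0), +\infty)$, we have $\lambda + \CGa(k_0) + 1 \geq 1$ on the spectrum, and there exists a constant $C = C(k_0,s)$ such that $(\lambda + \CGa(k_0) + 1)^s \leq C(1 + |\lambda|^s)$ for all $\lambda \geq -\CGa(k_0)$. Hence
\[
\sup_{\lambda \in \sigma(\cP_k)} |g(\lambda)| \leq C \sup_{\lambda \in \R}(1 + |\lambda|^s)|f(\lambda)| = C\|f\|_{\infty,s},
\]
and the spectral theorem gives $\|g(\cP_k)\|_{\cH \to \cH} \leq C\|f\|_{\infty,s}$. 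This verifies the main bound. The uniqueness of the extension follows from the density of $\cH$ in $\cD_k^{-s}$.

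Finally, for the concluding statement about $f(\cP_k) : \ZcupHspace{-n} \to \ZcapHspace{n}$, we combine the just-established bound with Corollary~\ref{cor:DnsubZn}, which gives the continuous embedding $\cD_k^n \hookrightarrow \cZ_k^n = \ZcapHspace{n}$ for $n \in \mathbb{N}$. Dualising this embedding yields $\ZcupHspace{-n} = (\ZcapHspace{n})^* \hookrightarrow (\cD_k^n)^* = \cD_k^{-n}$, so we have the factorisation
\[
\ZcupHspace{-n} \hookrightarrow \cD_k^{-n} \xrightarrow{f(\cP_k)} \cD_k^n \hookrightarrow \ZcapHspace{n},
\]
which yields the desired continuity. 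I do not expect a serious obstacle here: the main (minor) point to check carefully is the identification of the dual isometry $\cX_k^s : \cH \to \cD_k^{-s}$, which uses the self-adjointness of $\cX_k^s$ and the way $\cD_k^{-s}$ is defined as the dual of $\cD_k^s$ via the $\cH$ inner product.
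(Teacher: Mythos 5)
Your proof is correct and takes essentially the same approach as the paper's: conjugating $f(\cP_k)$ by the isometry $\cX_k^s$ (equivalently, by $(\cP_k + \CGa(k_0)+1)^{s/2}$), reducing to the $\cH\to\cH$ norm of $g(\cP_k)$ where $g(\lambda) = (\lambda+\CGa(k_0)+1)^s f(\lambda)$, and bounding $|g|$ on the spectrum by $C\|f\|_{\infty,s}$. The paper also records the explicit constant $2^s(\CGa+1)^s$ and leaves the final ``in particular'' implicit, but these are cosmetic differences; your supplied factorisation through $\cD_k^{-n} \to \cD_k^n$ for the $\cY_k \to \cW_k$ conclusion is exactly the intended argument.
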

\begin{proof}
By functional calculus and using that $\cX_k^t = (\cP_k + \CGa(k_0) + 1)^{t/2} : \cD_k^t \to \cH$ is an isometry for all $t \in \R$, 
$$\|f(\cP_k)\|_{\cD_k^{-s} \to \cD_k^{s}} \leq \|(\cP_k + \CGa(k_0) + 1)^{s/2}f(\cP_k)(\cP_k + \CGa(k_0) + 1)^{s/2}\|_{\cH\to \cH} = \|g(\cP_k)\|_{\cH \to \cH}$$
where $g(x) = (\CGa + 1 + x)^{s} f(x)$ satisfies
$$|g(x)| \leq 2^{s}(\CGa + 1)^s (1 + |x|^s) |f(x)|$$
for all $x \in \sigma(\cP_k)$. Hence, $\|g(\cP_k)\|_{\cH \to \cH} \leq C \|f\|_{\infty,s}$ and the claim follows. 
\end{proof}

\subsection{Elliptic perturbation of $P_k$}
\label{sec:Pksharp}
By Proposition \ref{prop:Pksa}, for every $k_0 > 0$, there exists a real-valued, compactly supported function $\psis \in C^\infty_c(\R)$ such that
\begin{equation}
	\label{eq:conditionPsi}
	\psis(x) \geq \frac{-x + \CGa}{2}\quad\tfa x \in \sigma(\cP_k).
\end{equation}
Following \cite[Lemma 2.1]{GS3}, define 
\begin{equation}
\label{e:def_Sk}
S_k := \psis(\cP_k)
\end{equation}
by the functional calculus. Since $\psis$ has compact support, 
$$S_k : \cD_k^{-n} \to \cD_k^{n}$$ is continuous for all $n \in \mathbb{N}$ by Proposition \ref{prop:f(Pk)}. In what follows, the {\em elliptic perturbation} of $P_k$ is defined by 
\begin{equation}
\label{e:defPksharp}
P^{\sharp}_k := P_k + S_k.
\end{equation}
The associated sesquilinear form, denoted by $a_k^\sharp: \cZ_k \times \cZ_k \to \mathbb{C}$, is thus given by
\begin{equation}
	\label{def:aksharp}
	a_k^\sharp(u,v) := a_k(u,v) + (S_k u,v)_{\cH}.
\end{equation}
\newcommand{\Csharp}{C_{\sharp}}
\begin{proposition}[Properties of $P^\sharp_k$]
	\label{prop:ResPksharp}
	For every $k_0 > 0$ and any integer $n$, there exists a positive constant $\Csharp(k_0,n)$ such that, for all $k \geq k_0$, 
	\begin{equation}
		\label{eq:CoerciveAsharp}
		\Re (a_k^\sharp(u,u)) \geq \Csharp(k_0,1) \|u\|^2_{\cZ_k} \quad \tfa u \in \cZ_k,
	\end{equation}
	the operator $P_k^\sharp : \ZcapHspace{n+2} \to \ZcupHspace{n}$ is an isomorphism, and
	\[\|(P_k^\sharp)^{-1}u\|_{\ZcapHspace{n+2}} \leq \Csharp(k_0,n) \|u\|_{\ZcupHspace{n}} \quad \tfa u \in \ZcupHspace{n}. \]
\end{proposition}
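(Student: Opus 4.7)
The plan is to first establish the coercivity estimate \eqref{eq:CoerciveAsharp} and the isomorphism $P_k^\sharp : \cZ_k \to \cZ_k^*$ by Lax--Milgram (covering the case $n=-1$), then bootstrap to $n\geq 0$ via elliptic regularity together with the smoothing property of $S_k$, and finally dualise to handle $n\leq -2$.

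For coercivity, note that the defining property \eqref{eq:conditionPsi} of $\psis$ is the pointwise inequality $\psis(\lambda) + \tfrac{1}{2}\lambda - \tfrac{1}{2}\CGa(k_0) \geq 0$ on $\sigma(\cP_k)$. Applying the functional calculus for $\cP_k$ (whose quadratic form agrees with $\Re a_k$ on $\cZ_k = \Dspace{1}$ by Proposition~\ref{prop:Z2=D2}), this promotes to
\[
(S_k u, u)_\cH \;\geq\; -\tfrac{1}{2}\Re a_k(u,u) + \tfrac{\CGa(k_0)}{2}\|u\|_\cH^2 \quad \tfa u \in \cZ_k.
\]
Adding $\Re a_k(u,u)$ and invoking Assumption~\ref{ass:Gar} gives $\Re a_k^\sharp(u,u) \geq \tfrac{\cGa(k_0)}{2}\|u\|_{\cZ_k}^2$. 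Combined with continuity of $a_k^\sharp$ on $\cZ_k\times\cZ_k$ (which follows from Assumption~\ref{ass:cont} and the $\cH$-boundedness of $S_k$ from Proposition~\ref{prop:f(Pk)}), Lax--Milgram yields the desired isomorphism $P_k^\sharp : \cZ_k \to \cZ_k^*$ with $k$-uniform inverse bound.

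To promote this to $n\geq 0$, given $f \in \Hspace{n} = \ZcupHspace{n}$, set $u := (P_k^\sharp)^{-1} f \in \cZ_k$ and rewrite the equation as $P_k u = f - S_k u$. By Proposition~\ref{prop:f(Pk)}, $S_k : \cH \to \Dspace{n}$ is continuous uniformly in $k$; combined with the continuous embedding $\Dspace{n} \hookrightarrow \Zspace{n} \hookrightarrow \Hspace{n}$ from Corollary~\ref{cor:DnsubZn} and Definition~\ref{def:Zspaces}, we obtain $P_k u \in \Hspace{n}$. Assumption~\ref{ass:ell} then delivers $u \in \Zspace{n+2} = \ZcapHspace{n+2}$ with
\[
\|u\|_{\Zspace{n+2}} \leq C_{\rm ell}(k_0, n)\big( \|u\|_\cH + \|f\|_{\Hspace{n}} + \|S_k u\|_{\Hspace{n}}\big) \leq C\|f\|_{\Hspace{n}},
\]
after absorbing $\|S_k u\|_{\Hspace{n}} \leq C\|u\|_\cH$ and $\|u\|_\cH \leq \|u\|_{\cZ_k} \leq C\|f\|_{\cZ_k^*} \leq C\|f\|_{\Hspace{n}}$ (the latter chain using the embedding $\Hspace{n} \hookrightarrow \cH \hookrightarrow \cZ_k^*$).

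For $n\leq -2$ I would dualise. Since $\psis$ is real-valued, $S_k$ is self-adjoint on $\cH$, and since $\Re a_k$ (hence also the G\aa rding constants and the self-adjoint operator $\cP_k$) is unchanged under passing from $a_k$ to its adjoint form, the previous two paragraphs apply verbatim to $(P_k^\sharp)^* = P_k^* + S_k$, showing that it is an isomorphism $\ZcapHspace{m+2} \to \ZcupHspace{m}$ with $k$-uniform norms for every $m \geq 0$. Taking adjoints, and using the duality identifications $(\ZcapHspace{m+2})^* = \ZcupHspace{-m-2}$ and $(\ZcupHspace{m})^* = \ZcapHspace{-m}$, setting $n = -m-2$ delivers the claim for $n\leq -2$. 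The only subtle point in the whole argument is the extension of the spectral inequality from $\Dspace{2} = \Zspace{2}$ (where it is immediate from the functional calculus) to the form domain $\cZ_k = \Dspace{1}$; this is handled via the identification in Proposition~\ref{prop:Z2=D2} coming from the form representation theorem \cite[Thm.~10.7]{schmudgen2012unbounded} already invoked in the proof of Proposition~\ref{prop:Pksa}. Beyond this, the argument is routine bookkeeping between the scales $\Hspace{n}$, $\Zspace{n}$ and $\Dspace{n}$.
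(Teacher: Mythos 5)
Your proof is correct and follows essentially the same path as the paper's: derive coercivity of $a_k^\sharp$ from the spectral lower bound on $\psis$, invoke Lax--Milgram for the base case $\cZ_k\to\cZ_k^*$, bootstrap to $n\geq 0$ via elliptic regularity and the smoothing of $S_k$, and dualise (using $(P_k^\sharp)^* = P_k^* + S_k$) for $n\leq -2$. The only difference is cosmetic: in the coercivity step the paper establishes the inequality for $u\in\Zspace{2}$ and extends by density, while you pass directly to $\cZ_k$ via the form-domain identification $\cZ_k=\Dspace{1}$, which is an equally valid and perhaps slightly cleaner way to package the same fact.
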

\begin{proof} 
	By \eqref{eq:conditionPsi},
	\beqs
	x + \psi^\sharp(x) \geq \frac{1}{2} (x+ \CGa) \quad \tfa x \in \sigma(\cP_k). 
	\eeqs
	Therefore, by the functional calculus, for all $u \in \Zspace{1}$,
	\[\Re(a_k^{\sharp}(u,u)) = \Re a_k(u,u) + (\psi^\sharp(\cP_k)u,u)_{\cH} =  \big(\cP_k + \psi^\sharp(\cP_k) u,u\big)_{\cH} \geq \frac{1}{2}\big((\cP_k + \CGa \Id) u,u\big)_\cH.\]
	Hence, by the G\aa rding inequality, 
	\[\Re(a_k^\sharp(u,u)) \geq \frac{\cGa}{2} \|u\|^2_{\cZ_k}
	\quad\tfa u\in \Zspace{2},\]
and the same inequality holds for all $u \in \cZ_k$ by the density of $\cZ^2_k$ in $\cZ_k$ and continuity of $a_k^\sharp$. Thus, $a_k^\sharp$ is coercive, and the Lax-Milgram lemma implies that $P^\sharp_k :\cZ_k \to (\cZ_k)^*$ is boundedly invertible; this is the required result for $n = 1$. With $n \geq 2$, let $u \in \Hspace{n-2}$ and suppose that $v \in \cZ_k$ satisfies $P^\sharp_k v = u$. Then $P_k v = u - S_k v \in \Hspace{n-2}$ (by the smoothing property of $S_k$ from Proposition \ref{prop:f(Pk)}), so that $v \in \Zspace{n}$ by elliptic regularity (Assumption \ref{ass:ell}). Moreover, since $\|v\|_{\cH} \leq \|v\|_{\cZ_k} \leq \|u\|_{\cZ_k^*}$ (again by the Lax-Milgram lemma), 
\begin{align*}
\|v\|_{\Zspace{n}} 
&\leq C\big(\|v\|_{\cH} + \|u - S_k v\|_{\Hspace{n-2}}\big)\leq C\big(\|u\|_{(\cZ_k)^*} + \|u\|_{\Hspace{n-2}}\big) 
\leq C\|u\|_{\Hspace{n-2}},
\end{align*}
which proves the result for $n \geq 2$. The same reasoning applied to $P_k^* + S_k^* = P_k^* + S_k$ followed by a duality argument (recalling that the dual of  $\ZcupHspace{n}$ is $\ZcapHspace{-n}$) gives the result for $n \leq 0$.
\end{proof}

\subsection{Order notation}\label{sec:order_notation}

\newcommand{\etak}{\eta}
Let
$$\ZcapHspace{\infty} := \bigcap_{n \in \Z} \ZcapHspace{n}\,, \quad \ZcapHspace{-\infty} := \bigcup_{n\in \Z} \ZcapHspace{n},$$
and define $\ZcupHspace{\pm\infty}$ and $\cD_k^{\pm\infty}$ similarly. 

\begin{definition}[Order notation]
\label{def:orderNotation}
Let $(\eta_k)_{k>0}$ be a family of real numbers.
Let $m,n \in \Z$ and let $L : \cW_k^\infty \to \cY_k^{\infty}$ be a linear operator. 
Then
$$L = O_{m}(\eta^{n};\ZcapHspace{} \to \ZcupHspace{})$$
if, for all $k_0 > 0$ and for all $j \in \mathbb{Z}$, there exists a real number $C(k_0,j) > 0$ such that for all $k \geq k_0$ and all $u \in \ZcapHspace{j}$, 
\beq\label{e:rocky1}
\|L u\|_{\ZcupHspace{j-m}} \leq C(k_0,j) \eta_k^{n} \|u\|_{\ZcapHspace{j}}.
\eeq
The notations $L = O_{m}(\eta^{n};\ZcupHspace{} \to \ZcupHspace{})$, $L = O_{m}(\eta^{n};\cD_k \to \cD_k)$ are defined similarly. 
\end{definition}

Observe that these order relations can then be combined multiplicatively; e.g., 
$$L_1 = O_{m}(\eta^{n};\ZcapHspace{} \to \ZcupHspace{})\,, \,\, L_2 = O_{m'}(\eta^{n'};\ZcupHspace{} \to \ZcapHspace{}) \implies L_1L_2 = O_{m + m'}(\eta^{n + n'}; \ZcupHspace{} \to \ZcupHspace{}).$$

\subsection{Spatial pseudolocality}

The statement and proof of the main result use both frequency cut-offs in the form $f(\cP_k)$ for $f \in \mathcal{S}(\R)$ and spatial cut-offs coming from smooth compactly-supported functions. The following assumption encapsulates the properties of these spatial cut-offs that are required in this section. \blue{In what follows, for a linear operator $A$, we use the notation $\ad_A$ for the formal operator defined by
$$\ad_A : B \mapsto AB - BA\quad \tand \quad \ad_A^N:=\underbrace{\ad_A\circ\ad_A\circ\dots\circ \ad_{A}}_{N\textup{ times}}$$
(in all cases where we use this notation, the domains of $A$ and $B$ are such that $\ad_A B$ makes sense).}



\newcommand{\Lcut}{\mathcal{L}_{\rm sc}} 

\begin{definition}[Abstract ``spatial cutoffs"]~
	\label{def:spatialCutoffs}
	Let $(\eta_k)_{k > 0}$ be a family of real numbers. We say that a linear operator 
	is a {\em spatial cutoff of order $m$ with parameter $\eta$} if $R= O_m(1; \ZcupHspace{} \to \ZcupHspace{})$,
	\begin{equation*}
	\begin{gathered}
		\ad_{R}^N Q = O_{m - N + 2}(\eta^{-N}; \cW_k \to \cY_k), \quad \tand \quad \ad_{R^*}^N Q = O_{m - N + 2}(\eta^{-N};\cW_k \to \cY_k),
		\end{gathered}
	\end{equation*}
where $Q$ is any one of the operators $P_k$, $P_k^*$ and $\cP_k$. 
The set of spatial cutoffs of order $m$ and parameter $\eta$ is denoted by $\Lcut^m(\eta)$, and we write $\Lcut(\eta) := \Lcut^0(\eta)$. We omit the $\eta$ from the notation when it will not lead to confusion. 
\end{definition}

\begin{remark}
Recall from Remark \ref{rem:Pknotdiff} that in the model settings of Table \ref{tab:model_settings}, the operator $Q$ above is not a differential operator. Therefore the commutators $\ad^N_R Q$ and $\ad^N_{R^*} Q$ a priori contain boundary terms, hence some care must be taken to check the continuity properties above. In \S\ref{sec:assumptions}, we show that if $R$ is given by the multiplication with a smooth cut-off function $\chi$ {\em with vanishing normal derivative on $\partial \Omega_-$}, then it satisfies the commutator estimates above.
\end{remark}

Let
$
A,B = O_0(1,\ZcapHspace{}\to\ZcapHspace{}) \cap O_0(1,\ZcupHspace{}\to\ZcupHspace{})\,,
$
where the intersection notation is used to denote that the equation holds with either term on the right-hand side, and
and let $R \in \Lcut$. We say that $A$ and $B$ are \emph{separated by $R$} if both
	\[A(\Id -R) = O_{0}(\eta^{-\infty}; \ZcupHspace{}\to \ZcupHspace{})\cap O_{0}(\eta^{-\infty}; \ZcapHspace{}\to \ZcapHspace{})\,,
\]
\[	
	 RB = O_0(\eta^{-\infty};\ZcupHspace{}\to \ZcupHspace{})\cap O_0(\eta^{-\infty};\ZcapHspace{}\to \ZcapHspace{}),\]
	 We say that $A$ and $B$ are {\em separated} if they are separated by $R$ for some $R\in \Lcut$.
	
	The main result on spatial pseudolocality is as follows.
	
	\begin{theorem}[Pseudolocality of abstract Helmholtz operators]
	\label{thm:pseudolocSpace}
	Let $f \in \mc{S}(\R)$, let
	$A,B$ be separated, and let $Q$ be one of the operators $P_k,P_k^*$ or $\cP_k$. Then
	\begin{align}	\label{eq:pseudoloc1}
	A f(\cP_k) B &= O_{-\infty}(\etak^{-\infty};\ZcupHspace{}\to \ZcapHspace{}),\\
	 A Q B &= O_{2}(\etak^{-\infty};\ZcapHspace{}\to \ZcupHspace{})
		\label{eq:pseudoloc2}
	\\
	A(P_k^{\sharp})^{-1} B &= O_{-2}(\etak^{-\infty};\ZcupHspace{}\to \ZcapHspace{}).
		\label{eq:pseudoloc3}
	\end{align} 
\end{theorem}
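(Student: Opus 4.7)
The plan is to prove the three statements by iterated commutator expansions against $R$, handling the two statements involving functions of $\cP_k$ via the Helffer--Sj\"ostrand formula. I will take them in the order (ii), (i), (iii), since the idea is cleanest for (ii) and the formula-based arguments (i) and (iii) both build on it.

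For \eqref{eq:pseudoloc2}: split $AQB = ARQB + A(\Id - R)QB$. The second term is already $O_2(\etak^{-\infty}; \ZcapHspace{}\to\ZcupHspace{})$ because $Q = O_2(1; \ZcapHspace{}\to\ZcupHspace{})$ and, by separation, $A(\Id - R) = O_0(\etak^{-\infty}; \ZcupHspace{}\to\ZcupHspace{})$. For the first term, the identity $RQ = QR + \ad_R Q$ gives
\[
ARQB = AQRB + A(\ad_R Q)B,
\]
and $AQRB$ is $O_2(\etak^{-\infty})$ because $AQ = O_2(1)$ and $RB = O_0(\etak^{-\infty}; \ZcapHspace{}\to\ZcapHspace{})$. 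Hence $AQB \equiv A(\ad_R Q)B$ modulo $O_2(\etak^{-\infty})$. Iterating this reduction $N$ times, using $\ad_R^j Q = O_{2-j}(\etak^{-j})$ from Definition~\ref{def:spatialCutoffs}, yields
\[
AQB \equiv A(\ad_R^N Q)B \pmod{O_2(\etak^{-\infty}; \ZcapHspace{}\to\ZcupHspace{})},
\]
whose right-hand side is $O_{2-N}(\etak^{-N}) \subset O_2(\etak^{-N})$; letting $N\to\infty$ gives the claim.

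For \eqref{eq:pseudoloc1}: take an almost-analytic extension $\tilde f$ of $f\in\mathcal{S}(\R)$ satisfying $|\bar\partial \tilde f(z)|\leq C_N \langle z\rangle^{-N-1}|\Im z|^N$ for every $N$, and use the Helffer--Sj\"ostrand formula
\[
f(\cP_k) = -\frac{1}{\pi}\int_{\C} \bar\partial \tilde f(z)\,(\cP_k - z)^{-1}\,dA(z),
\]
interpreted in the $\cD_k^s$ scale. The iterated commutator identity
\[
R(\cP_k - z)^{-1} = (\cP_k - z)^{-1}R + (\cP_k - z)^{-1}(\ad_R \cP_k)(\cP_k - z)^{-1}
\]
is then used exactly as in (ii) to reduce $A(\cP_k - z)^{-1}B$, modulo $O(\etak^{-\infty})$, to a sum of terms with $N$ insertions of $\ad_R \cP_k = O_1(\etak^{-1})$ interlaced with $(\cP_k - z)^{-1}$. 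Each extra resolvent contributes polynomial growth $\langle z\rangle^{p}/|\Im z|^{p}$ through Propositions~\ref{prop:rescuedfrombin}--\ref{prop:neverInBin}, which is absorbed by the decay of $\bar\partial \tilde f$; each commutator contributes $\etak^{-1}$. The infinite regularity gain then follows from Proposition~\ref{prop:f(Pk)}, giving $A f(\cP_k)B = O_{-\infty}(\etak^{-N})$ for every $N$.

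For \eqref{eq:pseudoloc3}: set $K := (P^\sharp_k)^{-1}$ and split $AKB = ARKB + A(\Id - R)KB$. The second term is $O_{-2}(\etak^{-\infty})$ since $KB: \ZcupHspace{j}\to\ZcapHspace{j+2}$ by Proposition~\ref{prop:ResPksharp} and $A(\Id - R)$ is $O_0(\etak^{-\infty}; \ZcapHspace{}\to\ZcapHspace{})$. The first term is reduced by the identity $[R,K] = -K(\ad_R P^\sharp_k)K$, and $\ad_R P^\sharp_k = \ad_R P_k + \ad_R S_k$. The first summand is $O_1(\etak^{-1})$ by hypothesis; the second, $\ad_R S_k = [R,\psi^\sharp(\cP_k)]$, is treated by the Helffer--Sj\"ostrand expansion exactly as in step (i) applied to $\psi^\sharp$, producing $\ad_R S_k = O_{-\infty}(\etak^{-1})$. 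Iterating the reduction $N$ times yields $AKB = O_{-2}(\etak^{-N})$ for every $N$.

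The main obstacle is bookkeeping through the $\ZcapHspace{}$/$\ZcupHspace{}$ scale: at each commutator step one must verify that the intermediate operators land in a domain where the next commutator is meaningful, using that $\ad_R^N P_k$ and $\ad_R^N P_k^*$ are \emph{simultaneously} controlled (Definition~\ref{def:spatialCutoffs}) and that $K$ maps $\ZcupHspace{}$ into $\ZcapHspace{}$, not merely $\ZcupHspace{}$ into $\ZcupHspace{}$. A secondary technical difficulty is controlling the polynomial-in-$z$ growth of the iterated resolvent products in the Helffer--Sj\"ostrand integrals, which is handled by taking the order of vanishing of $\bar\partial \tilde f$ sufficiently large relative to the number of commutator iterations.
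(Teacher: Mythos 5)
Your overall strategy — peel off commutators against the separating operator $R$, kill the far-apart terms using $A(\Id-R)$ and $RB$ being $O(\eta^{-\infty})$, and treat $f(\cP_k)$ and $(P_k^\sharp)^{-1}$ via the Helffer--Sj\"ostrand formula and iterated resolvent identities — is exactly the paper's route (Propositions~\ref{prop:excessive1Space}, \ref{prop:atoms}, \ref{prop:proofadNR1}, \ref{prop:proofadNR2}). Parts (ii) and (iii) as you've sketched them are correct and match the paper's argument: (ii) is immediate from Definition~\ref{def:spatialCutoffs}; (iii) is Proposition~\ref{prop:atoms} applied to $Y_1 = P_k^\sharp$, with $\ad_R^N P_k^\sharp = \ad_R^N P_k + \ad_R^N S_k$, the latter controlled by (i).

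The gap is in (i). After reducing $Af(\cP_k)B$ to $A(\ad_R^N f(\cP_k))B$ modulo $O_{-\infty}(\eta^{-\infty})$, the Helffer--Sj\"ostrand expansion with $N$ commutator insertions gives, by itself, only
\[
\ad_R^N f(\cP_k) = O_{-2-N}(\eta^{-N};\cY_k \to \cW_k),
\]
i.e., a smoothing gain that is \emph{tied to} the decay order $N$. You then write that ``the infinite regularity gain follows from Proposition~\ref{prop:f(Pk)},'' but that proposition only controls $f(\cP_k)$ itself, not the iterated commutator $\ad_R^N f(\cP_k)$, which is a sum of products of resolvents and $\ad_R\cP_k$ and is not a function of $\cP_k$. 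The statement $O_{-\infty}(\eta^{-N})$ requires that, for a fixed decay $\eta^{-N}$, the smoothing order can be made arbitrarily large independently, and the direct expansion does not give this. The paper resolves this by writing $f = f_1 f_2$ with $f_1,f_2\in\mathcal{S}(\R)$ (a classical fact), expanding $\ad_R^N(f_1(\cP_k)f_2(\cP_k))$ by the Leibniz rule, and inducting on $N$: every Leibniz term then contains at least one factor $\ad_R^i g(\cP_k)$ with $i<N$ (for which the full $O_{-\infty}(\eta^{-i})$ holds by the inductive hypothesis) or a bare $f_j(\cP_k) = O_{-\infty}(1)$, and composing an $O_{-\infty}$ factor with the other (merely $O_{-2-N'}(\eta^{-N'})$) factor yields $O_{-\infty}(\eta^{-N})$. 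Without this step one only gets, for each $N$, $Af(\cP_k)B = O_{-2-N}(\eta^{-N})$; converting this ``diagonal'' family of bounds into the decoupled $O_{-\infty}(\eta^{-\infty})$ requires an extra argument (it works if $\eta_k$ is uniformly bounded below on $\{k\geq k_0\}$, since then $\eta^{-N}\leq C(N,n,k_0)\,\eta^{-n}$ for $N\geq n$, but that additional hypothesis and step are absent from your write-up and from the abstract setting). You should either insert the factorization/Leibniz induction or explicitly justify the diagonal-to-decoupled passage.
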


\begin{remark}[The constants implicit in Theorem \ref{thm:pseudolocSpace}]
	\label{r:paradise}
Both the assumptions and the conclusions of Theorem \ref{thm:pseudolocSpace} 
involve implicit constants coming from the order notation Definition \ref{def:orderNotation} used to denote bounds of the form \eqref{e:rocky1}. 
 It is clear from the proof of Theorem \ref{thm:pseudolocSpace} (but cumbersome to write precisely) that given $f \in \cS(\R)$ and a list of constants $\mathscr{C}$, there is another list of constants $\mathscr{C'}$ such that for all $(\eta_k)_{k >0}$, $A$, $B$ and $R$ satisfying the assumptions of the theorem with the constants $\mathscr{C}$, the conclusions of Theorem \ref{thm:pseudolocSpace} hold with the constants $\mathscr{C'}$. The same is true of all results in the remainder of this section. 
\end{remark}

We first reduce the proof of Theorem~\ref{thm:pseudolocSpace} to the proof of various commutator estimates.
\begin{proposition}\label{prop:excessive1Space}
Suppose that 
for all $R \in \Lcut$, for all $f \in \mathcal{S}(\R)$, and for every $N \in \mathbb{N}$, 
\begin{align}
	\ad_{R}^N f(\cP_k) &= O_{-\infty}(\etak^{-N}; \ZcupHspace{} \to \ZcapHspace{}),
	\label{eq:adNR1}\\
	\ad_{R}^N (P^\sharp_k)^{-1}  &= O_{-2}(\etak^{-N}; \ZcupHspace{} \to \ZcapHspace{}).\label{eq:adNR2}
\end{align}
Then the results of Theorem \ref{thm:pseudolocSpace} hold.
\end{proposition}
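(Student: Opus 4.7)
The plan is to reduce each of \eqref{eq:pseudoloc1}--\eqref{eq:pseudoloc3} to a single telescoping commutator identity, and then feed in the respective hypothesis. Starting from the trivial splitting $ATB = A(\Id-R)TB + ARTB$ and using $RT = TR + \ad_R(T)$, one obtains
\begin{equation*}
ATB \;=\; A(\Id-R)TB + ATRB + A\,\ad_R(T)\,B.
\end{equation*}
Iterating this decomposition on the residual $A\,\ad_R(T)\,B$ gives, by a one-line induction on $N\in \mathbb{N}$,
\begin{equation}\label{e:tele_pseudoloc_plan}
ATB \;=\; \sum_{j=0}^{N-1}\Big[A(\Id-R)\,\ad_R^j(T)\,B + A\,\ad_R^j(T)\,RB\Big] + A\,\ad_R^N(T)\,B.
\end{equation}
In each crossing term either $A(\Id-R)$ or $RB$ appears, and both are $O_0(\eta^{-\infty};\ZcupHspace{}\to \ZcupHspace{})\cap O_0(\eta^{-\infty};\ZcapHspace{}\to\ZcapHspace{})$ by the separation assumption, while the middle commutator factor contributes an extra $\eta^{-j}$; the tail $A\,\ad_R^N(T)\,B$ contributes $\eta^{-N}$ from the hypothesised commutator estimate. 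Letting $N\to\infty$ delivers the claimed $\eta^{-\infty}$ decay.

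Next I would plug in the three choices of $T$ in turn. For $T = f(\cP_k)$, hypothesis \eqref{eq:adNR1} gives $\ad_R^j f(\cP_k)=O_{-\infty}(\eta^{-j};\ZcupHspace{}\to\ZcapHspace{})$; composing with the $\eta^{-\infty}$ factors coming from separation, every crossing term in \eqref{e:tele_pseudoloc_plan} is $O_{-\infty}(\eta^{-\infty};\ZcupHspace{}\to\ZcapHspace{})$, and the tail is $O_{-\infty}(\eta^{-N};\ZcupHspace{}\to\ZcapHspace{})$, proving \eqref{eq:pseudoloc1}. For $T=(P_k^\sharp)^{-1}$ the same argument applies, with \eqref{eq:adNR2} in place of \eqref{eq:adNR1} and the smoothing order $-\infty$ replaced by $-2$, proving \eqref{eq:pseudoloc3}. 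For $T=Q\in\{P_k,P_k^*,\cP_k\}$, the commutator estimate $\ad_R^N Q = O_{-N+2}(\eta^{-N};\ZcapHspace{}\to\ZcupHspace{})$ is built into the definition of $\Lcut$; since the target differential order is only $2$, the progressive order gain as $j$ grows is absorbed via the continuous embeddings $\ZcupHspace{\ell+j-2}\hookrightarrow \ZcupHspace{\ell-2}$ for $j\geq 0$, and \eqref{e:tele_pseudoloc_plan} then yields \eqref{eq:pseudoloc2}.

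The step I expect to be the main obstacle is the bookkeeping between the two scales $\ZcapHspace{\bullet}$ and $\ZcupHspace{\bullet}$: the separation hypothesis comes in two flavours and one must pick the correct one at each stage so that the domain and codomain of every composition in \eqref{e:tele_pseudoloc_plan} match. For $T=f(\cP_k)$ or $(P_k^\sharp)^{-1}$, which map $\ZcupHspace{}\to\ZcapHspace{}$, the factor $RB$ in the second sum should be read as $\ZcupHspace{}\to\ZcupHspace{}$ (to feed $\ad_R^j(T)$ on its natural input side) while $A(\Id-R)$ in the first sum is read as $\ZcapHspace{}\to\ZcapHspace{}$ (to absorb the output); for $T=Q$, which maps in the opposite direction $\ZcapHspace{}\to\ZcupHspace{}$, the two roles are swapped. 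Once this choice is made uniformly, the order-arithmetic of Section~\ref{sec:order_notation} combines the three decay factors multiplicatively in each summand and the three claimed bounds drop out directly.
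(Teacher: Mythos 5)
Your proposal is correct and follows essentially the same route as the paper's proof: both extract one factor of $\ad_R$ per step, send the separation factors $A(\Id-R)$ and $RB$ to $O(\eta^{-\infty})$ remainders, and leave $A\,\ad_R^N(T)\,B$ as the surviving term controlled by the hypothesised commutator bounds. The only stylistic difference is that you unfold the full telescoping sum \eqref{e:tele_pseudoloc_plan} explicitly, whereas the paper folds the crossing terms into an $O_m(\eta^{-\infty})$ remainder at each step (and uses that $\ad_R^j X$ stays $O_m(1)$ rather than tracking the improving order as you do); your ``letting $N\to\infty$'' should really read ``since $N$ was arbitrary,'' but this does not affect the argument.
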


\bpf
The structure of the argument is the same for all three results, albeit with different spaces. We therefore omit the spaces from the notation for brevity.  

Since $A$ and $B$ are assumed to be separated, there exists $R \in \Lcut$ such that $A (\Id - R) = O_0(\eta^{-\infty})$ and $RB = O_0(\eta^{-\infty})$. Hence, for any operator $X$ such that $X = O_m(1)$, since $A, B\in \Lcut\subset \cL^0$,
\[\begin{split}
	A X B &= A R X B + A(\Id - R)  X B \\
	 	  &= A (\ad_{R} X) B +  A X R B +  A(\Id - R) X B\\
 		  & = A (\ad_{R}X) B + O_{m}(\eta^{-\infty}).
 	  \end{split}
\]
Repeating this argument $N$ times gives
\beq\label{eq:dangling1}
AXB = A (\ad_{R}^N X) B + O_{m}(\eta^{-\infty}),
\eeq
since $\ad_{R}^i X = O_{m}(1)$ for any $i \in \mathbb{N}$ (as can be easily checked by induction). By \eqref{eq:dangling1} applied with $X = f(\cP_k)$ (and $m=-\infty$ by Proposition \ref{prop:f(Pk)}),
$X = Q$  (and $m=2$), and $X = (P_k^\sharp)^{-1}$ (and $m=-2$ by Proposition \ref{prop:ResPksharp}), for all $N \in \mathbb{N}$,
\[Af(\cP_k) B = A (\ad_{R}^N f(\cP_k)) B + O_{-\infty}(\eta^{-\infty})\] 
\[AQ B = A (\ad_{R}^N Q) B + O_{2}(\eta^{-\infty})\]
\[A (P^\sharp_k)^{-1} B = A (\ad_{R}^N (P^\sharp_k)^{-1}) B + O_{-2}(\eta^{-\infty}).\]
Hence \eqref{eq:pseudoloc2} follows immediately from the middle equality and Definition \ref{def:spatialCutoffs}, while the first and third equality show that \eqref{eq:pseudoloc1} and \eqref{eq:pseudoloc3} follow from \eqref{eq:adNR1} and \eqref{eq:adNR2}, respectively.

\epf

We now prove that the assumptions \eqref{eq:adNR1}-\eqref{eq:adNR2} of Proposition \ref{prop:excessive1Space} hold true. In order to do this, 
the main tool the Helffer-Sjöstrand formula, which allows to express $f(\cP_k)$ in terms of $(\cP_k - z)^{-1}$. This formula is recalled below (for a proof,
see, e.g., \cite[Theorem 14.8]{Zw:12}).

\begin{proposition}[Helffer-Sjöstrand formula]
	\label{prop:HS}
	For all $f \in \mathcal{S}(\R)$, there exists a continuous function $w : \mathbb{C} \to \C$ such that if $\cA$ is a self-adjoint operator on a Hilbert space, then
	\begin{equation*}
		f(\cA) = \int_{\mathbb{C}} w(z) (\cA - z)^{-1} d m_{\C}(z)
	\end{equation*}
	where $dm_\C(x + iy)= dx\,dy$ and for every $M \in \mathbb{N}$, there exists $\kappa_{M}$ such that 
	\begin{equation}
		\label{eq:condw}
		\abs{w(z)} \leq \kappa_{M} \langle z \rangle^{-2M}\abs{\Im(z)}^{M} \qquad \tfa z \in \C.
	\end{equation}
\end{proposition}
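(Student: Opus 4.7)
The plan is to build an \emph{almost analytic extension} $\widetilde{f}:\C\to\C$ of $f$ and then set $w(z):=\frac{1}{\pi}\,\overline{\partial}\widetilde{f}(z)$. The argument proceeds in three steps: (i) construct $\widetilde{f}$ so that its restriction to $\R$ equals $f$ and $\overline{\partial}\widetilde{f}$ vanishes rapidly at $\R$ and at infinity; (ii) verify the formula for scalars by a contour-deformation (Cauchy--Pompeiu) argument; (iii) promote it to self-adjoint operators via the spectral theorem.

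First, I would construct $\widetilde{f}$ as follows. Fix $\chi\in C_c^\infty(\R)$ with $\chi\equiv 1$ on $[-1,1]$ and $\supp\chi\subset[-2,2]$. For a sequence $\lambda_k\to\infty$ chosen sufficiently fast (a Borel-summation trick) set
\[
\widetilde{f}(x+iy) := \chi(y/\langle x\rangle)\sum_{k=0}^{\infty} \chi(\lambda_k y)\,f^{(k)}(x)\,\frac{(iy)^k}{k!}.
\]
Differentiating under $\overline{\partial}=\tfrac12(\partial_x+i\partial_y)$ causes cancellation of the ``analytic'' part of each term, leaving
\[
\overline{\partial}\widetilde{f}(x+iy) = \frac{i}{2}\chi(y/\langle x\rangle)\sum_{k\ge 0}\chi'(\lambda_k y)\lambda_k f^{(k)}(x)\frac{(iy)^k}{k!}+(\text{cutoff boundary terms}),
\]
which vanishes to infinite order in $y$ at $\R$. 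Because $f\in\mathcal{S}(\R)$, each $f^{(k)}$ has arbitrary Schwartz decay in $x$, and the outer cutoff $\chi(y/\langle x\rangle)$ forces $|y|\lesssim \langle x\rangle$ on $\supp\widetilde f$; these combine to give for every $M$,
\[
|\,\overline{\partial}\widetilde{f}(z)\,|\leq \kappa_M\,\langle z\rangle^{-2M}|\Im z|^{M}.
\]

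Second, for a fixed $\lambda\in\R$, I would apply Stokes' theorem (Cauchy--Pompeiu) to the $(0,1)$-form $\widetilde{f}(z)(\lambda-z)^{-1}dz$ on $\{|\Im z|\ge\e,\ |z|\le R\}$ and let $\e\downarrow 0$, $R\uparrow\infty$. The contributions on $|z|=R$ vanish by the decay of $\widetilde f$, and the two boundary integrals $|\Im z|=\e$ combine in the limit to $2\pi i f(\lambda)$ by the jump relations for the Cauchy kernel; the vanishing of $\overline\partial \widetilde f$ to infinite order at $\R$ ensures integrability. This gives the scalar identity
\[
f(\lambda) = \int_{\C} w(z)\,(\lambda-z)^{-1}\,dm_\C(z),\qquad w:=\tfrac{1}{\pi}\overline{\partial}\widetilde{f},
\]
and the bound on $w$ is exactly \eqref{eq:condw}.

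Third, to transfer to an arbitrary self-adjoint $\cA$, I would apply the spectral theorem: writing $\cA=\int\lambda\,dE_\lambda$, both sides of the claimed identity are bounded operators (the right-hand side is norm-convergent because $\|(\cA-z)^{-1}\|\le|\Im z|^{-1}$ together with the $|\Im z|^M$ decay of $w$ for $M\ge 1$ tames the singularity at $\R$). For any vectors $u,v$ the sesquilinear form on both sides is $\int_{\sigma(\cA)} \langle\,\cdot\,\rangle\,d\langle E_\lambda u,v\rangle$, and by Fubini (justified using the quantitative bound on $w$) the scalar identity transfers pointwise in $\lambda$ to equality of the two spectral integrals. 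This yields the operator formula.

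The main obstacle will be the construction step: arranging $\widetilde f$ so that $\overline\partial\widetilde f$ decays like $|\Im z|^M$ for \emph{every} $M$, simultaneously with polynomial decay in $|z|$, requires the Borel-type summation with carefully chosen $\lambda_k$ (so that the series converges in $C^\infty$). Once that construction is in hand, the remaining contour-deformation and spectral-theorem steps are routine.
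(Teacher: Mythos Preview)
The paper does not prove this proposition; it simply cites \cite[Theorem 14.8]{Zw:12} for the proof and remarks that $w$ is (up to a constant) $\overline\partial$ of a quasi-analytic extension of $f$ (citing \cite[Theorem 3.6]{Zw:12}). Your plan via an almost analytic extension, Cauchy--Pompeiu, and the spectral theorem is exactly the standard argument in those references, so your approach and the paper's (deferred) proof coincide.
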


The function $w$ in Proposition \ref{prop:HS} is obtained (up to a constant factor) via a so-called ``quasi-analytic extension" of $f$; see, e.g., \cite[Theorem 3.6]{Zw:12}. 

By the Helffer-Sjöstrand formula, 
$$\ad_R^N f(\cP_k) = \int_{\C} w(z) \ad_R^N (\cP_k - z)^{-1} dm_\C(z)$$
Therefore, to prove \eqref{eq:adNR1}, \eqref{eq:adNR2} we need to bound 
$\ad_{R}^N Y^{-1}$
with $Y = P_k^\sharp$ or $Y = (\cP_k - z)^{-1}$, in terms of $\ad_R^N Y$ and $Y^{-1}$. 

The next result uses the notation that $A = O_m \big(f(\eta,n,z);\ZcupHspace{}\to \ZcapHspace{}\big)$ if $\| A u \|_{\ZcapHspace{n-m}} \leq f(\eta_k,n,z) \| u\|_{\ZcupHspace{n}}$.

\begin{proposition}
	\label{prop:atoms} 
	Let  $\Omega$ a subset of $\C$. 
Suppose that $X=O_m(1; \ZcupHspace{} \to \ZcupHspace{})\cap O_m(1; \ZcapHspace{} \to \ZcapHspace{})$
	and for every $z \in \Omega$, let $Y_z,Y_z^*:\ZcapHspace{n+2}\to \ZcupHspace{n}$ be invertible. Furthermore, suppose that there are $L_n\geq 0$ such that 
	\begin{itemize}
		\item[(a)] for all $z \in \Omega$, 
		\beqs
		Y_z^{-1}=  C_{1}(z)O_{-2}\big(\langle z\rangle^{L_n};\ZcupHspace{}\to \ZcapHspace{}\big) ,\quad (Y^*_z)^{-1}=  C_{1}(z)O_{-2}\big(\langle z\rangle^{L_n}; \ZcupHspace{}\to \ZcapHspace{}\big) 
		\eeqs
		\item[(b)] for all $z \in \Omega$, 
		\beqs
		\ad_X^N Y_z= C_2(z)O_{2+N(m-1)}\big(\etak^{-N};\ZcapHspace{}\to \ZcupHspace{}\big), \quad \ad_{X^*}^N Y^*_z= C_2(z)O_{2+N(m-1)}\big(\etak^{-N};\ZcapHspace{}\to \ZcupHspace{}\big)
		\eeqs
	\end{itemize} 
	for some functions $C_1,C_2: \Omega \to \R_+$. Then  for all $N\in \mathbb{N}$, $n\in \mathbb{Z}$, there is $M_n$ such that, for all $z \in \Omega$,
	\[\ad_{X}^N Y_z^{-1} = (1+C_1(z))^{N+1} (1+C_2(z))^{N} O_{-2+N(m-1)}\big(\etak^{-N}\langle z\rangle^{M_n};\ZcupHspace{}\to \ZcapHspace{}\big).\]
\end{proposition}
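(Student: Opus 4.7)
The plan is to prove the result by induction on $N$, using the standard commutator identity
\beq\label{e:adYinvProp}
\ad_X^N Y_z^{-1} = -Y_z^{-1}\sum_{k=1}^N \binom{N}{k}(\ad_X^k Y_z)(\ad_X^{N-k}Y_z^{-1}),
\eeq
valid for $N\geq 1$, which follows by applying the Leibniz rule for $\ad_X$ to the identity $\ad_X^N(Y_z Y_z^{-1}) = 0$ and solving for $\ad_X^N Y_z^{-1}$. The base case $N=0$ is precisely assumption (a), with $M_n := L_n$ and using $C_1(z) \leq 1+C_1(z)$.

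For the inductive step, fix $N\geq 1$ and assume the result holds for all $N' < N$ with some constants $M_n^{(N')}$. Fix $n\in\mathbb{Z}$ and $u\in \ZcupHspace{n}$. For each $k\in\{1,\ldots,N\}$, first apply the inductive hypothesis to get
$$v_k := (\ad_X^{N-k}Y_z^{-1})u \in \ZcapHspace{n+2-(N-k)(m-1)}$$
with norm controlled by $(1+C_1)^{N-k+1}(1+C_2)^{N-k}\etak^{-(N-k)}\langle z\rangle^{M_n^{(N-k)}}\|u\|_{\ZcupHspace{n}}$. Then apply assumption (b) with index $j = n+2-(N-k)(m-1)$: since $\ZcapHspace{j}\hookrightarrow \ZcupHspace{j}$ continuously, $\ad_X^k Y_z$ maps $v_k$ into $\ZcupHspace{j-2-k(m-1)} = \ZcupHspace{n-N(m-1)}$ with an additional factor $C_2(z)\etak^{-k}$. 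Finally apply assumption (a) again to obtain $Y_z^{-1}(\ad_X^k Y_z)v_k \in \ZcapHspace{n+2-N(m-1)}$ with an additional factor $C_1(z)\langle z\rangle^{L_{n-N(m-1)}}$. Multiplying the three factors and using $C_i(z)\leq 1+C_i(z)$ yields, for every $k\geq 1$,
$$(1+C_1)^{N-k+2}(1+C_2)^{N-k+1} \leq (1+C_1)^{N+1}(1+C_2)^N,$$
so each term of the sum in \eqref{e:adYinvProp} is bounded by $C(1+C_1)^{N+1}(1+C_2)^N \etak^{-N}\langle z\rangle^{M_n^{(N)}}\|u\|_{\ZcupHspace{n}}$ for some $M_n^{(N)}$ obtained by summing the exponents of $\langle z\rangle$ coming from the three factors. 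The binomial coefficients $\binom{N}{k}$ and the number of terms $N$ are absorbed into the constant $C$, completing the induction.

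The main bookkeeping difficulty—and the only real subtlety—is tracking how the Sobolev indices shift under composition and keeping the $n$-dependence of the $L_{\,\cdot\,}$'s and $M_{\,\cdot\,}^{(\,\cdot\,)}$'s consistent. The key calculation is that the three operators compose to send $\ZcupHspace{n}$ into $\ZcapHspace{n+2-N(m-1)}$, as expected, because the index shift $+2$ from $Y_z^{-1}$, the shift $-2-k(m-1)$ from $\ad_X^k Y_z$, and the shift $+2-(N-k)(m-1)$ from the inductive application of $\ad_X^{N-k}Y_z^{-1}$ sum to $+2-N(m-1)$ independently of $k$. Everything else is a straightforward product of bounds together with the elementary inequalities on the exponents of $(1+C_1)$ and $(1+C_2)$.
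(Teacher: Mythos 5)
Your proof is correct, and it takes a genuinely different route than the paper's. The paper introduces a combinatorial scaffolding of ``$(N,z)$-atoms'' and ``$(N,z)$-terms'' -- essentially encoding that $\ad_X^NY_z^{-1}$ is a linear combination of compositions $Y_z^{-1}(\ad_X^{i_1}Y_z)Y_z^{-1}\cdots Y_z^{-1}(\ad_X^{i_M}Y_z)Y_z^{-1}$ with $\sum i_\ell = N$ -- and inducts by showing that $\ad_X$ of a term is a term. Crucially, the paper runs this argument on elements of $\cH$ and then invokes a duality step (using the second halves of hypotheses (a) and (b), i.e.\ the estimates on $(Y_z^*)^{-1}$ and $\ad_{X^*}^NY_z^*$) to extend to $\ZcupHspace{n}$ for $n < 0$. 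You instead use the closed-form recursion $\ad_X^NY_z^{-1} = -Y_z^{-1}\sum_{k\geq 1}\binom{N}{k}(\ad_X^kY_z)(\ad_X^{N-k}Y_z^{-1})$ and induct on $N$ with the conclusion quantified over all $n\in\mathbb{Z}$ at once; since assumptions (a) and (b) already hold at every Sobolev level, the inductive hypothesis is available at any level, and your index arithmetic (the $+2$, $-2-k(m-1)$, $+2-(N-k)(m-1)$ contributions summing to $+2-N(m-1)$ independently of $k$) correctly closes the loop without any duality step. The exponent bookkeeping for $(1+C_1)^{N-k+2}(1+C_2)^{N-k+1}\leq (1+C_1)^{N+1}(1+C_2)^N$ works precisely because $k\geq 1$, and the base case $N=0$ is indeed assumption (a) with $M_n = L_n$. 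Your approach is more economical in that it never appeals to the adjoint estimates; it buys a shorter proof, while the paper's atom/term formalism makes the structure of the terms, and hence the exact power-counting of $C_1$ and $C_2$, manifest at a glance. One cosmetic remark: the embedding $\ZcapHspace{j}\hookrightarrow\ZcupHspace{j}$ you mention when applying (b) is not actually needed, since $v_k\in\ZcapHspace{j}$ is already the natural domain for (b); and you should flag that the recursion identity is validated at each level $n$ by the a priori mapping properties of $X$, $Y_z$, $Y_z^{-1}$ (which follow from $X\in O_m(1)$ on both scales and assumption (a)), so there is no circularity in deriving it before proving the improved regularity estimate.
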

\begin{proof}
	The main idea is that $\ad_X^N Y_z^{-1}$ is equal to a linear combination of terms of the form $ Y_{z}^{-1}(\ad_X^{i_1} Y_z) Y_z^{-1} (\ad_X^{i_2} Y_z)Y_{z}^{-1} \ldots Y_{z}^{-1} (\ad_{X}^{i_M} Y_z) Y_{z}^{-1}$, and the next definitions formalize this more precisely.\footnote{It is in fact possible to give a full closed-form expression for $\ad_X^{N} Y^{-1}$ involving sums of compositions of quantities of the form $(\ad_{X}^i Y)$ and $Y^{-1}$. However, the fomula and its proof, involving sums over all possible ordered partitions of $\{1,\ldots,N\}$, are slightly cumbersome and for the present purposes, this would be more information than actually needed.} 
	
	We will prove the lemma by showing the estimate for $\ad_X^NY_z^{-1}$ acting on elements of $\cH$ and then (using the second parts of assumptions (a) and (b)) argue by duality to act on $\Hspace{-n}$. 
	
	An operator $a_N:\cH \to \cH$ is called an {\em $(N,z)$-atom} if either
	\begin{itemize}
		\item[(i)] $N = 0$ and $a_N = 1$, or
		\item[(ii)] $a_N = (\ad_{X}^N Y_z) Y_z^{-1}$, or
		\item[(iii)] $a_N = a_i a_j$ where $a_i$ is an $(i,z)$-atom and $a_j$ is an $(j,z)$-atom with $i+j = N$ and $1 \leq i,j \leq N-1$.
	\end{itemize}
	An operator $t_N$ is called an {\em $(N,z)$-term} if it is of the form
	\[t_N =\sum_{j = 1}^J \sigma_j Y_z^{-1}a_{N,j}\] 
	where $J \in \mathbb{N}$, $\sigma_j$ are real coefficients and $a_{N,j}$ are $(N,z)$-atoms. For example, 
	$$t_5 = Y_z^{-1} (\ad_{X}^5 Y_z) Y_z^{-1}  - Y_z^{-1} (\ad_{X}^2 Y_z) Y_z^{-1} (\ad_{X}^{3} Y_z) Y_z^{-1}$$ 
	is a $(5,z)$-term. Notice that if $t_i$ and $t_j$ are $(i,z)$- and $(j,z)$-terms, then $t_i Y_z t_j$ is an $(i+j,z)$-term. 
	
	It follows immediately from assumptions (a) and (b), by induction on $N$, that if $t_N(z)$ is a $(N,z)$-term for all $z \in \Omega$, then
	\[t_N(z) =(1+C_1(z))^{N+1} (1+C_2(z))^{N} O_{- 2+N(m-1)}(\etak^{-N} \langle z\rangle^{M_n};\ZcupHspace{}\to \ZcapHspace{}).\]
	Thus it remains to show that for all $z \in \Omega$, $\ad_{X}^NY_z^{-1}:\cH\to \cH$ is an $(N,z)$-term. For this, it suffices to prove that, for all $N \in \mathbb{N}$, 
	\begin{equation}
		\label{eq:adRNterm}
		t_N \textup{ is an $(N,z)$-term } \quad \implies \quad \ad_X t_N \textup{ is an $(N+1,z)$-term}.
	\end{equation}
	By linearity, it is enough to prove \eqref{eq:adRNterm} in the case where $t_N = Y_z^{-1}a_N$ for some $(N,z)$-atom $a_N$. We consider separately the three cases (i), (ii), (iii) above in the definition of an $(N,z)$-atom.

		Case (i):~If $a_N = 1$, then 
		\[\ad_{X}t_N = \ad_X Y_z^{-1} = XY_z^{-1}-Y_z^{-1}X=Y_{z}^{-1}Y_zXY_z^{-1}-Y_{z}^{-1}XY_zY_{z}^{-1}= -Y_z^{-1} (\ad_{X} Y_z) Y_z^{-1}\]
		which is a $(1,z)$-term acting on $u$. This shows the implication \eqref{eq:adRNterm} for $N = 0$, and in the following cases, we fix $N \geq 1$ and proceed by induction assuming that it holds for all $i \leq N-1$. 

		Case (ii):~If $a_N = (\ad_X^N Y_z) Y_z^{-1}$, then   
		\[\begin{split}
			\ad_X t_N = \quad & (\ad_X Y_z^{-1}) (\ad_X^N Y_z) Y_z^{-1}+ Y_z^{-1} (\ad_X^{N+1} Y_z) Y_z^{-1}
			+ Y_z^{-1} (\ad_{X}^N Y_z) (\ad_X Y_z^{-1}).
		\end{split}\] 
		The second term on the right-hand side is an $(N+1,z)$-term. The first term on the right-hand side can be rewritten as
		\[-Y_z^{-1} \underbrace{(\ad_ X Y_z)Y_z^{-1}}_{\textup{$(1,z)$-atom}} \underbrace{(\ad_X^N Y_z) Y_z^{-1}}_{\textup{$(N,z)$-atom}}.\]
		This is thus an $(N+1,z)$-term. Similarly, the third term is an $(N+1,z)$-term, and thus $\ad_X t_N$ is an $(N+1,z)$-term.

		Case (iii):~If $a_N = a_i a_j$ then, since $a_j:\cH\to\cH$, 
		$$
		t_N =Y_z^{-1}a_ia_j=Y_{z}^{-1}a_iY_zY_z^{-1}a_j= t_i Y_z t_j
		$$
		 where $t_i := Y_z^{-1}a_i$ and $t_j := Y_z^{-1}a_j$ are $(i,z)$- and $(j,z)$-terms, respectively, with $i+j=n$.  Thus
		\[\ad_X t_N = (\ad_X t_i) Y_z t_j + t_i (\ad_X Y_z) t_j + t_i  Y_z (\ad_X t_j) .\]
		The first term is an $(N+1,z)$-term by the induction hypothesis. Similarly, the last term is an $(N+1,z)$-term. The middle term can be rewritten as
		\[t_i (\ad_X Y_z) t_j = \overbrace{t_i Y_z \underbrace{\underbrace{Y_z^{-1} (\ad_X Y_z) Y_z^{-1}}_{\textup{$(1,z)$-term}} Y_z t_j}_{\textup{$(j+1,z)$-term}}}^{\textup{$(i+(j+1),z)$-term}} \]
		which is an $(N+1,z)$-term. This concludes the proof.
\end{proof}

\

We can now prove the estimate  \eqref{eq:adNR1}.
\begin{proposition}\label{prop:proofadNR1}
	For any $N \in \mathbb{N}$, $f \in \mathcal{S}(\R)$ and $R \in \Lcut^m$,
\beqs
	\ad_{R}^N f(\cP_k) = O_{-\infty}(\etak^{-N}; \ZcupHspace{} \to \ZcapHspace{}).
\eeqs
\end{proposition}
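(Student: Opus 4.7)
My plan is to combine the Helffer--Sjöstrand formula (Proposition~\ref{prop:HS}) with Proposition~\ref{prop:atoms} applied to the resolvent. Starting from the representation
\[
\ad_R^N f(\cP_k) = \int_\C w(z)\, \ad_R^N (\cP_k - z)^{-1}\, dm_\C(z),
\]
I would apply Proposition~\ref{prop:atoms} with $X = R \in \Lcut$ (so $m = 0$) and $Y_z = \cP_k - z$. Hypothesis~(a) is verified by Proposition~\ref{prop:rescuedfrombin}, giving $C_1(z) \lesssim |\Im z|^{-1}$ together with polynomial growth $\langle z\rangle^{L_n}$ that depends on the Sobolev index $n$. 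Hypothesis~(b) follows directly from the definition of $\Lcut$: since $z$ is scalar, $\ad_R^j(\cP_k - z) = \ad_R^j \cP_k = O_{-j+2}(\eta^{-j}; \cW_k \to \cY_k)$, so $C_2(z) \lesssim 1$. Proposition~\ref{prop:atoms} then yields the pointwise-in-$z$ bound
\[
\bigl\|\ad_R^N (\cP_k - z)^{-1}\bigr\|_{\cY_k^n \to \cW_k^{n+N+2}} \leq C_n\, \eta^{-N}\, \frac{\langle z\rangle^{M_n}}{|\Im z|^{N+1}}.
\]

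Next, I would integrate against $w(z)$. Since $|w(z)| \leq \kappa_K \langle z\rangle^{-2K}|\Im z|^K$ for arbitrarily large $K$, choosing $K$ large enough (depending on $n$, $N$, $M_n$) makes the integrand absolutely integrable on $\C$. This produces $\ad_R^N f(\cP_k) = O_{-N-2}(\eta^{-N}; \cY_k \to \cW_k)$, i.e., a gain of $N+2$ derivatives.

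The main obstacle is upgrading this finite-order smoothing to the required $O_{-\infty}$. The idea is that because $f \in \mathcal{S}(\R)$, for any $M \geq 1$ the function $g_M(x) := (x+i)^M f(x)$ is still Schwartz, so $f(x) = (x+i)^{-M} g_M(x)$ and by functional calculus $f(\cP_k) = (\cP_k + i)^{-M} g_M(\cP_k)$. Applying Leibniz's rule,
\[
\ad_R^N f(\cP_k) = \sum_{i=0}^N \binom{N}{i} \bigl(\ad_R^i (\cP_k + i)^{-M}\bigr)\bigl(\ad_R^{N-i} g_M(\cP_k)\bigr),
\]
each $\ad_R^{N-i} g_M(\cP_k)$ is handled by the already-established finite-order bound applied to $g_M \in \mathcal{S}(\R)$, while each $\ad_R^i (\cP_k + i)^{-M}$ is bounded by an $M$-iterated variant of Proposition~\ref{prop:atoms} (proved by induction on $M$ using the resolvent identity and the same atomic-term decomposition, or equivalently by a higher-power Helffer--Sjöstrand formula with integrand $(\cP_k - z)^{-M}$). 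The net effect of the $M$-th power is an \emph{additional} $2M$ derivatives of smoothing, giving $\ad_R^N f(\cP_k) : \cY_k^n \to \cW_k^{n+N+2+2M}$ with norm $\lesssim \eta^{-N}$. Since $M$ is arbitrary, this yields the claimed $O_{-\infty}(\eta^{-N}; \cY_k \to \cW_k)$ bound. The only subtlety lies in the $M$-iterated commutator bound, but the atomic decomposition of the proof of Proposition~\ref{prop:atoms} adapts cleanly once one observes that $\ad_R^j (\cP_k + i)$ enjoys the same mapping properties as $\ad_R^j \cP_k$.
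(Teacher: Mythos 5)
Your first part (Helffer--Sj\"ostrand formula together with Proposition~\ref{prop:atoms} applied to $Y_z = \cP_k - z$, integrated against $w(z)$) is exactly what the paper does, and yields the same finite-order bound $O_{-2+N(m-1)}(\eta^{-N};\cY_k\to\cW_k)$. Where you depart is the upgrade to $O_{-\infty}$: the paper factors $f = f_1 f_2$ with $f_1, f_2 \in \mathcal S(\R)$ (Voigt's theorem, cited as~\cite{Vo:84}), applies Leibniz to $\ad_R^N(f_1(\cP_k)f_2(\cP_k))$, disposes of the endpoint terms $i \in \{0,N\}$ via Proposition~\ref{prop:f(Pk)} (since $f_j(\cP_k)$ alone is already $O_{-\infty}$), and closes an induction on $N$ for the interior terms. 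You instead write $f(x) = (x+\ri)^{-M}g_M(x)$ with $g_M(x) = (x+\ri)^M f(x) \in \mathcal S(\R)$, apply Leibniz once, and harvest $2M$ extra derivatives of smoothing from the factor $(\cP_k+\ri)^{-M}$; since $M$ is arbitrary, this gives $O_{-\infty}$ directly for the given $N$ with no induction. Both routes are correct. Yours is more ``one-shot'' (no induction on $N$, no appeal to Voigt's factorization theorem), while the paper's reuses Proposition~\ref{prop:atoms} only once and keeps all the extra work in the already-proved Proposition~\ref{prop:f(Pk)}. A small bookkeeping note: for the iterated commutator $\ad_R^i(\cP_k+\ri)^{-M}$ you do not actually need a new ``$M$-iterated variant'' of Proposition~\ref{prop:atoms} -- the multinomial Leibniz rule applied to $\bigl[(\cP_k+\ri)^{-1}\bigr]^M$ reduces it to $M$ applications of the original proposition at the single fixed point $z = -\ri$, where $C_1(z)$, $C_2(z)$ and $\langle z\rangle$ are all $O(1)$; this gives $\ad_R^i(\cP_k+\ri)^{-M} = O_{-2M+i(m-1)}(\eta^{-i};\cY_k\to\cW_k)$, and composing with $\ad_R^{N-i}g_M(\cP_k) = O_{-2+(N-i)(m-1)}(\eta^{-(N-i)};\cY_k\to\cW_k)$ yields $O_{-2-2M+N(m-1)}(\eta^{-N})$, hence $O_{-\infty}(\eta^{-N})$ as $M\to\infty$, for general $m$ (your restriction to $m=0$ is unnecessary). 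Also beware the clash of notation between the imaginary unit $\ri$ and the Leibniz summation index $i$ in your displayed Leibniz formula.
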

\begin{proof}
By the Helffer--Sj\"ostrand formula, 
	\[\ad_{R}^N f(\cP_k) = \int_{\C} w(z) \ad_{R}^N  (\cP_k - z)^{-1}\,dm_{\C}(z).\]
	By Proposition \ref{prop:atoms} with $X = R$, $\Omega = \C \setminus \R$, $Y_z = (\cP_k - z)$, the first commutator property of spatial cutoffs, and the resolvent estimate 
of Proposition \ref{prop:rescuedfrombin},	
	\[\ad_R^{N} (\cP_k - z)^{-1} = \Big(1+\frac{\langle z \rangle}{\abs{\Im(z)}}\Big)^N O_{-2+N(m-1)}(\etak^{-N}\langle z\rangle^{M_n}; \ZcupHspace{} \to \ZcapHspace{}).\]
	Therefore, 
	\[\ad_R^N f(\cP_k)  = O_{-2+N(m-1)}\left(\etak^{-N}\int_{\C^N} w(z) \langle z\rangle^{M_n}\Big(1+\frac{\langle z \rangle}{\abs{\Im(z)}}\Big)^N dm_{\C}(z); \ZcupHspace{} \to \ZcapHspace{}\right).\]
	The bound \eqref{eq:condw} on $w$ implies that the integral is finite, and thus, for all $f \in \mathcal{S}(\R)$,
	\begin{equation}
		\label{eq:toupgrade}
		\ad_R^N f(\cP_k)  = O_{-2+N(m-1)}(\etak^{-N}; \ZcupHspace{} \to \ZcapHspace{}).
	\end{equation}
	We now upgrade the regularity index from $-2+N(m-1)$ to $-\infty$ by induction on $N$. For $N = 0$, 
	\begin{equation}
		\label{eq:startInduct}
		\ad_{R}^0 f(\cP_k) = f(\cP_k) = O_{-\infty}(1;\ZcupHspace{} \to \ZcapHspace{})
	\end{equation}
	by Proposition \ref{prop:f(Pk)}. Next fix an integer $N \geq 1$ and suppose that for all $i \leq N-1$ and all $g \in \mathcal{S}(\R)$, 
	\[\ad_{R}^i g(\cP_k) = O_{-\infty}(\etak^{-i};\ZcupHspace{} \to \ZcapHspace{}).\]
By, e.g., \cite[Theorem 3.2]{Vo:84}, 
given a Schwartz function $f$, there exist two Schwartz functions $f_1$ and $f_2$ such that $f = f_1f_2$. Thus $f(\cP_k)= f_1(\cP_k) f_2(\cP_k)$ with $f_1,f_2 \in \cS(\R)$. Furthermore, by the Leibniz identity
	\[\ad_{X}^N (YZ) = \sum_{i = 0}^N \binom{N}{i} (\ad_{X}^i Y) (\ad_{X}^{N-i}Z).\]
	Thus,
\begin{align*}&\ad_{R}^N (f_1(\cP_k) f_2(\cP_k)) \\
&\qquad= f_1(\cP_k) (\ad_{R}^N f_2(\cP_k)) + (\ad_R^{N} f_1(\cP_k)) f_2(\cP_k) + \sum_{i = 1}^{N-1} \binom{N}{i} \ad_R^{i} f_1(\cP_k) \ad_{R}^{N-i} f_2(\cP_k).
\end{align*}
Bounding the first two terms on the right-hand side by  \eqref{eq:startInduct} and \eqref{eq:toupgrade}, and bounding the third term by the induction hypothesis, we obtain that
	\[\ad_{R}^N (f_1(\cP_k) f_2(\cP_k)) = O_{-\infty}(1)O_{-2+N(m-1)}(\etak^{-N}) +  \sum_{i = 1}^{N-1} O_{-\infty}(k^{-i}) O_{-\infty}(\etak^{-N+i}),\]
	where all the operators are $\ZcupHspace{} \to \ZcapHspace{}$. Since $\ZcapHspace{n}\subset \ZcupHspace{n}$ for all $n$, 
$\ad_{R}^N (f_1(\cP_k) f_2(\cP_k)) = O_{-\infty}(\etak^{-N};\ZcupHspace{} \to \ZcapHspace{})$, and the proof is complete.
\end{proof}

\

We now record a variant of the previous result.
\begin{proposition}
\label{prop:variant_adNRf(Pk)}
Let $N \in \mathbb{N}$, $f \in \mathcal{S}(\R)$ and let $R = O_m(1;\Dspace{} \to \Dspace{})$ be such that 
$$\ad_R^N \cP_k = O_{m - N + 2}(\eta^{-N};\Dspace{} \to \Dspace{}).$$
Then 
$$\ad_R^N f(\cP_k) = O_{-\infty}(\eta^{-N};\Dspace{}\to\Dspace{})$$
\end{proposition}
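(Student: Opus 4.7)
The strategy is to mimic exactly the proof of Proposition \ref{prop:proofadNR1}, but with the scale $(\cW_k \to \cY_k)$ replaced by $(\cD_k \to \cD_k)$. First, using the Helffer--Sj\"ostrand formula (Proposition \ref{prop:HS}) together with the bounded functional calculus, one writes
\[
\ad_{R}^N f(\cP_k) = \int_{\C} w(z)\, \ad_{R}^N (\cP_k - z)^{-1}\, dm_{\C}(z),
\]
where $w$ satisfies the decay estimate \eqref{eq:condw}, so that the whole argument reduces to bounding $\ad_{R}^N (\cP_k - z)^{-1}$ in the $\cD_k$ scale.

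The bound on $\ad_R^N(\cP_k - z)^{-1}$ will come from an analogue of Proposition \ref{prop:atoms} with $X = R$, $Y_z = \cP_k - z$, acting on $\cD_k$. The key observation is that the algebraic part of the proof of Proposition \ref{prop:atoms}---decomposing $\ad_X^N Y_z^{-1}$ into $(N,z)$-terms built from $(\ad_X^i Y_z) Y_z^{-1}$ factors---uses only the fact that $Y_z$ is invertible and that the commutator bounds and resolvent bounds hold in the ambient scale. The hypothesis of Proposition \ref{prop:variant_adNRf(Pk)} gives us $R = O_m(1;\cD_k \to \cD_k)$ and $\ad_R^N \cP_k = O_{m-N+2}(\eta^{-N};\cD_k \to \cD_k)$ (the role of assumption (b)), and Proposition \ref{prop:neverInBin} provides $(\cP_k - z)^{-1} = O_{-2}(\langle z\rangle |\Im z|^{-1}; \cD_k \to \cD_k)$ (the role of assumption (a)) for all $z \in \C \setminus \R$. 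Since in the $\cD_k$ scale we act on arbitrary $\cD_k^s$ directly via functional calculus (no duality step is needed to pass from $\cH$ to negative orders, unlike in the $(\cW_k,\cY_k)$ setting), the adjoint hypotheses on $X$ and $Y_z^*$ from Proposition \ref{prop:atoms} can be dropped. This yields, for some polynomial weight $\langle z \rangle^{M}$,
\[
\ad_R^N(\cP_k - z)^{-1} = \Big(1 + \tfrac{\langle z\rangle}{|\Im z|}\Big)^{N+1} O_{-2 + N(m-1)}(\eta^{-N}\langle z\rangle^{M};\cD_k \to \cD_k).
\]

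Plugging this into the Helffer--Sj\"ostrand integral and choosing the almost-analytic extension $w$ to decay fast enough (taking $M$ sufficiently large in \eqref{eq:condw}) makes the $z$-integral converge and produces
\[
\ad_R^N f(\cP_k) = O_{-2 + N(m-1)}(\eta^{-N};\cD_k \to \cD_k).
\]
This is the analogue of \eqref{eq:toupgrade} in the new scale; the orders in space are not yet $-\infty$.

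Finally, the upgrade from regularity $-2+N(m-1)$ to $-\infty$ proceeds by induction on $N$, exactly as at the end of the proof of Proposition \ref{prop:proofadNR1}. For $N=0$, Proposition \ref{prop:f(Pk)} gives $f(\cP_k) = O_{-\infty}(1;\cD_k \to \cD_k)$. For the inductive step, factor $f = f_1 f_2$ with $f_1, f_2 \in \mathcal{S}(\R)$ (so $f(\cP_k) = f_1(\cP_k) f_2(\cP_k)$), apply the Leibniz rule
\[
\ad_R^N(f_1(\cP_k) f_2(\cP_k)) = \sum_{i=0}^N \binom{N}{i} (\ad_R^i f_1(\cP_k))(\ad_R^{N-i} f_2(\cP_k)),
\]
bound the two endpoint terms ($i=0$ and $i=N$) by combining the already-established $O_{-2+N(m-1)}(\eta^{-N})$ bound with the $N=0$ smoothing estimate, and bound the interior terms using the inductive hypothesis; composition of two operators of orders $-\infty$ is again of order $-\infty$. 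The main technical point---and the only real obstacle---is verifying that the argument of Proposition \ref{prop:atoms} really does go through in the $\cD_k$ scale without the adjoint hypotheses; this follows because every step uses only the bounded mapping properties and the algebraic identity $\ad_X(AB) = (\ad_X A)B + A(\ad_X B)$, both of which are intrinsic to whichever Hilbert scale one works in.
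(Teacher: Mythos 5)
Your proposal is correct and follows essentially the same route the paper intends: the paper's own "proof" is a one-sentence remark deferring to the argument of Proposition \ref{prop:proofadNR1} transposed into the $\cD_k^s$ scale, using Propositions \ref{prop:neverInBin} and \ref{prop:f(Pk)}, and you have fleshed out precisely that plan — Helffer--Sj\"ostrand reduction, a $\cD_k$-scale analogue of Proposition \ref{prop:atoms}, and then the Schwartz-factorisation plus Leibniz induction to push the spatial order down to $-\infty$. Your observation that the adjoint hypotheses of Proposition \ref{prop:atoms} become redundant in the $\cD_k$ scale is the right way to see it: because the order notation $O_m(\cdot;\cD_k\to\cD_k)$ already quantifies over all integers $j$ (the spaces $\cD_k^{-s}$ being built into the scale by duality from the start), the $\cH$-first-then-dualise two-step of the original proof collapses into a single step, and the atom/term recursion only uses the algebraic identity $\ad_X(AB)=(\ad_X A)B + A(\ad_X B)$ plus the mapping properties already assumed on the whole scale.

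One small point worth noting, though it does not affect correctness in the context where the proposition is actually invoked: the recursion in the Proposition \ref{prop:atoms} argument requires commutator bounds $\ad_R^i\cP_k$ for every $i\le N$, not just for the single $N$ appearing in the hypothesis as literally written. In the application (Lemma \ref{lem:huge_whiteboard1}, via \eqref{eq:HWB1}) the bound is established for all $N$ simultaneously, so this is a matter of the statement being slightly terse rather than a gap in your argument; but if you were to spell the proof out fully you should state the hypothesis for all $i\le N$ (or all $N$).
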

The proof is the same as that of Proposition \ref{prop:proofadNR1},  using a variant of Proposition \ref{prop:atoms} in the scale $(\Dspace{s})_{s\in \mathbb{R}}$, and using the mapping properties of $(\cP_k - z), (\cP_k - z)^{-1}$ and $f(\cP_k)$ in this scale (with the latter two mapping properties coming from Propositions \ref{prop:neverInBin} and \ref{prop:f(Pk)}).

\begin{proposition}
	\label{prop:proofadNR2}
	For all $N \in \mathbb{N}$ and $R \in \Lcut^m$, 
	\[
	\ad_R^{N} (P_k^\sharp)^{-1} = O_{-2+N(m-1)}(\etak^{-N};\ZcupHspace{} \to \ZcapHspace{}).
	\]
\end{proposition}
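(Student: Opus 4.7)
The plan is to apply the abstract commutator inversion machinery of Proposition~\ref{prop:atoms} with $X = R$, $Y_z = P_k^\sharp$ (taken independent of $z$, so one may take $\Omega$ to be a single point), and $Y_z^{-1} = (P_k^\sharp)^{-1}$. The conclusion of Proposition~\ref{prop:atoms} in this configuration, with $C_1(z), C_2(z), \langle z\rangle^{M_n}$ absorbed into constants, is exactly the asserted bound $\ad_R^N (P_k^\sharp)^{-1} = O_{-2+N(m-1)}(\eta^{-N}; \ZcupHspace{} \to \ZcapHspace{})$. The work therefore reduces to checking the two hypotheses (a) and (b) of Proposition~\ref{prop:atoms}.

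For hypothesis (a), namely uniform boundedness of $(P_k^\sharp)^{-1}$ and $(P_k^\sharp)^{*-1}$ as maps $\ZcupHspace{} \to \ZcapHspace{}$ of order $-2$: the bound on $(P_k^\sharp)^{-1}$ is exactly Proposition~\ref{prop:ResPksharp}. For the adjoint, the function $\psi^\sharp$ chosen in \eqref{eq:conditionPsi} is real-valued and $\cP_k$ is self-adjoint on $\cH$, so $S_k = \psi^\sharp(\cP_k)$ is self-adjoint and $(P_k^\sharp)^* = P_k^* + S_k$. The proof of Proposition~\ref{prop:ResPksharp} uses only the G\aa rding inequality (Assumption~\ref{ass:Gar}) and the smoothing of $S_k$ (Proposition~\ref{prop:f(Pk)}), both of which hold verbatim with $P_k$ replaced by $P_k^*$ (the G\aa rding inequality is stated on $\Re a_k$, which is insensitive to the swap).

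For hypothesis (b), the commutator estimates $\ad_R^N P_k^\sharp = O_{2 + N(m-1)}(\eta^{-N}; \ZcapHspace{} \to \ZcupHspace{})$ and $\ad_{R^*}^N (P_k^\sharp)^* = O_{2 + N(m-1)}(\eta^{-N}; \ZcapHspace{} \to \ZcupHspace{})$: split $P_k^\sharp = P_k + S_k$ so that $\ad_R^N P_k^\sharp = \ad_R^N P_k + \ad_R^N S_k$. The $P_k$ contribution is controlled by the defining commutator estimate of a spatial cutoff $R \in \Lcut^m$ (Definition~\ref{def:spatialCutoffs}), which applies both to $\ad_R^N P_k$ and, after replacing $R$ by $R^*$, to $\ad_{R^*}^N P_k^*$. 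The $S_k$ contribution $\ad_R^N S_k = \ad_R^N \psi^\sharp(\cP_k)$ is controlled by Proposition~\ref{prop:proofadNR1} applied to $f = \psi^\sharp \in C_c^\infty(\R) \subset \mathcal{S}(\R)$; this yields the stronger $O_{-\infty}(\eta^{-N}; \ZcupHspace{} \to \ZcapHspace{})$ bound, which absorbs into the required estimate since $\ZcapHspace{n} \subset \ZcupHspace{n}$. The same argument with $R^*$ in place of $R$ handles $\ad_{R^*}^N S_k$, because the commutator properties of $R^*$ with $\cP_k$ are built into Definition~\ref{def:spatialCutoffs} symmetrically with those of $R$.

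The main technical content is entirely encapsulated in Propositions~\ref{prop:atoms} and~\ref{prop:proofadNR1}; the present proposition is a straightforward assembly, and the only subtlety worth flagging is the self-adjointness of $S_k$, which allows one to replace $(P_k^\sharp)^*$ by $P_k^* + S_k$ and thereby apply exactly the same decomposition to the adjoint branch.
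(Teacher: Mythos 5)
Your proof is correct and takes essentially the same route as the paper: apply Proposition~\ref{prop:atoms} with $X = R$, $\Omega$ a singleton, and $Y = P_k^\sharp$, check hypothesis (a) via Proposition~\ref{prop:ResPksharp}, and check hypothesis (b) by splitting $P_k^\sharp = P_k + S_k$, handling $\ad_R^N P_k$ via Definition~\ref{def:spatialCutoffs} and $\ad_R^N S_k$ via Proposition~\ref{prop:proofadNR1}. The only difference is that you spell out the adjoint/star halves of hypotheses (a) and (b) — noting $S_k$ is self-adjoint and that Definition~\ref{def:spatialCutoffs} is symmetric in $R$ and $R^*$ — which the paper's one-line argument leaves implicit.
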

\begin{proof}
	This follows from Proposition \ref{prop:atoms} applied with $X = R$, $\Omega = \{1\}$ and $Y_1 = P_k^\sharp$. Indeed, for assumption (a), the required estimate is given by Proposition \ref{prop:ResPksharp}, while for assumption (b), 
	\[\ad_{R}^N P_k^{\sharp} = \ad_{R}^N  P_k + \ad_R^N \psi(\cP_k) = O_{2+N(m-1)}(\etak^{-N}) + O_{-\infty}(\etak^{-N}) \]
	by the definition of spatial cutoffs 
	(i.e., Definition \ref{def:spatialCutoffs}) and by Proposition \ref{prop:proofadNR1}.
\end{proof}

\subsection{Boundary compatible operators and pseudolocality in frequency}


%

In some cases, we will need to use pseudolocality in the \emph{frequency space} in addition to the physical space. To this end, we introduce the set of {\em boundary compatible operators}, $\Lf^m$. 

\begin{definition}[Boundary compatible operators]~
	\label{def:freqOp}
	$X=O_m(1; \Dspace{} \to \Dspace{})$ is a \emph{boundary compatible operator} of order $m$, $X\in\Lf^m$, if, 
%
for		 all integers $N \geq 0$,
		$$\ad_{\cP_k}^N X = O_{N+m}(\etak^{-N};\Dspace{} \to \Dspace{})
		\quad\tand\quad
		\ad_{\cP_k}^N X^* = O_{N+m}(\etak^{-N};\Dspace{} \to \Dspace{}).
		$$
We write $\Lf := \Lf^0$. 
		\end{definition}
		
		\bre
		We highlight that the requirement that an operator is boundary compatible is much more stringent that the requirement that it is an abstract spatial cut-off, essentially because a function $u \in \cH$ must satisfy many ``boundary conditions'' to belong to the spaces $\Dspace{n}$ for large $n$ (see Table \ref{tab:model_settings}).
		In the setting of the Helmholtz PML problem of \S\ref{sec:verifyboundarycompatible} below, we show that if $R$ is given by the multiplication with a smooth cut-off function {\em that is locally constant near the obstacle boundary, and vanishes near in a neighbourhood of the truncation boundary}, then it satisfies the commutator estimates above.
		\ere

\begin{theorem}[Frequency pseudolocality]
	\label{thm:pseudolocFreq}
	Let $X \in \Lf^m$, $f,g \in C^\infty(\R)$ be polynomially bounded, such that one of $f$ and $g$ is 
	in $\mc{S}(\R)$, and $\supp f \cap \supp g = \emptyset$. Then
\beq
f(\cP_k) Xg(\cP_k) = O_{-\infty}(\etak^{-\infty};\Dspace{} \to \Dspace{})
\label{eq:pseudoloc1a}.
\eeq
\end{theorem}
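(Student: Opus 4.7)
The plan is to exploit the disjoint-support hypothesis at the level of the functional calculus via an iterated commutator expansion, in the spirit of a semiclassical pseudo-differential expansion.

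First, by taking adjoints and using that $X^*\in\Lf^m$ by Definition~\ref{def:freqOp}, I may assume $g\in\mathcal{S}(\R)$. The key initial observation is that because $\supp f\cap\supp g=\emptyset$, multiplicativity of the functional calculus gives $f(\cP_k)g(\cP_k)=(fg)(\cP_k)=0$, so
$$f(\cP_k)Xg(\cP_k)=f(\cP_k)\big[X,g(\cP_k)\big].$$
Next, I would substitute the Helffer--Sj\"ostrand representation (Proposition~\ref{prop:HS}) for $g$, use the resolvent commutator identity $[X,(\cP_k-z)^{-1}]=(\cP_k-z)^{-1}[\cP_k,X](\cP_k-z)^{-1}$, and iterate the telescoping
$$(\cP_k-z)^{-1}Y_j=Y_j(\cP_k-z)^{-1}-(\cP_k-z)^{-1}Y_{j+1}(\cP_k-z)^{-1},\qquad Y_j:=\ad_{\cP_k}^j X,$$
combined with $\int w_g(z)(\lambda-z)^{-(j+1)}dm_\C(z)=\tfrac{(-1)^j}{j!}g^{(j)}(\lambda)$ (obtained by differentiating Proposition~\ref{prop:HS} in $\lambda$) to obtain, for every $N\geq 1$, the expansion
$$f(\cP_k)Xg(\cP_k)=-\sum_{j=1}^{N-1}\tfrac{1}{j!}\,f(\cP_k)\,Y_j\,g^{(j)}(\cP_k)+(-1)^{N-1}f(\cP_k)R_N,$$
where $R_N:=\int_\C w_g(z)(\cP_k-z)^{-1}Y_N(\cP_k-z)^{-N}\,dm_\C(z)$.

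Finally, I would estimate the two types of terms separately. Since $X\in\Lf^m$ gives $Y_j=O_{m+j}(\eta^{-j};\Dspace{}\to\Dspace{})$ and $g^{(j)}\in\mathcal{S}(\R)$ gives $g^{(j)}(\cP_k)=O_{-\infty}(1;\Dspace{}\to\Dspace{})$ by Proposition~\ref{prop:f(Pk)}, composition yields that each main term is $O_{-\infty}(\eta^{-j};\Dspace{}\to\Dspace{})$. For the remainder, the key point (and the main technical obstacle) is to choose the quasi-analytic extension of $g$ so that $\supp w_g$ stays at positive distance from $\supp f$---this is possible precisely because $g$ and all its derivatives vanish on $\supp f$, so the standard construction of the almost-analytic extension $\tilde g$ automatically vanishes on a complex strip over $\supp f$. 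With this, $\lambda\mapsto f(\lambda)/(\lambda-z)$ is polynomially bounded in $\lambda$ of degree $N_f-1$ uniformly in $z\in\supp w_g$ (where $N_f$ is the polynomial degree of $f$), and combining Proposition~\ref{prop:f(Pk)} applied to this ratio with the resolvent estimate of Proposition~\ref{prop:neverInBin} and the decay~\eqref{eq:condw} of $w_g$ yields $f(\cP_k)R_N=O_{m+2N_f-N-2}(\eta^{-N};\Dspace{}\to\Dspace{})$. Given any target smoothing order $M$ and $\eta$-loss $N'$, choosing $N=\max(N'+1,\,M+m+2N_f-2)$ in the expansion makes both the sum of main terms and the remainder of order $\leq-M$ with $\eta$-factor $\eta^{-N}$, which (by the arbitrariness of $M$ and $N'$) is the claimed $O_{-\infty}(\eta^{-\infty};\Dspace{}\to\Dspace{})$ bound.
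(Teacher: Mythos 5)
Your expansion and the remainder estimate (including the support trick for the almost-analytic extension) are fine as far as they go, but the argument breaks at the main terms. You claim that choosing $N$ large makes the sum $\sum_{j=1}^{N-1}\frac{1}{j!}\,f(\cP_k)Y_j\,g^{(j)}(\cP_k)$ carry the factor $\eta^{-N}$, but you have only shown each summand is $O_{-\infty}(\eta^{-j})$, so the sum is dominated by the $j=1$ term, which is $O(\eta^{-1})$ regardless of how large $N$ is. The overall estimate you actually obtain is $O(\eta^{-1})$, not $O(\eta^{-\infty})$. The underlying difficulty is that $f(\cP_k)Y_j\,g^{(j)}(\cP_k)$ has exactly the same structure as the object you set out to bound — an element of $\Lf$ sandwiched between functions of $\cP_k$ with disjoint support — so no direct composition estimate will push it past $O(\eta^{-j})$; one would instead need to recurse on these main terms (each $\eta^{j}Y_j$ is boundary compatible and $\supp f\cap\supp g^{(j)}=\emptyset$, so the theorem applies again), but no such induction is set up in your write-up.

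The paper avoids the main terms entirely by a different algebraic prelude (Propositions~\ref{prop:excessive1} and \ref{prop:proofadNf1}). One first picks $f_1\in\mathcal{S}(\R)$ with $f_1f=f$ and $f_1g=0$ (possible since $\supp f$ and $\supp g$ are disjoint and one of them is compact) and observes that, since $f(\cP_k)f_1(\cP_k)=f(\cP_k)$ and $f_1(\cP_k)g(\cP_k)=0$,
\beqs
f(\cP_k)\,X\,g(\cP_k)=f(\cP_k)\bigl(\ad_{f_1(\cP_k)}^N X\bigr)g(\cP_k)\quad\tfa N.
\eeqs
Only after this identity does one expand via Helffer--Sj\"ostrand. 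Because the iterated commutator here is with $N$ independent copies of $f_1(\cP_k)$, the formula
\beqs
\ad_{(\cP_k-z_N)^{-1}}\cdots\ad_{(\cP_k-z_1)^{-1}}X = (-1)^N\prod_{i=1}^{N}(\cP_k-z_i)^{-1}\,\bigl(\ad_{\cP_k}^N X\bigr)\prod_{i=1}^{N}(\cP_k-z_i)^{-1}
\eeqs
produces a single term with $\ad_{\cP_k}^N X=O(\eta^{-N})$ sandwiched between resolvents — there are no ``main'' terms left over. Your telescoping of the single commutator $[X,g(\cP_k)]$ necessarily produces the $Y_j\,g^{(j)}(\cP_k)$ terms; the paper's device of stacking $N$ separate commutators with $f_1(\cP_k)$ is precisely what eliminates them. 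You should adopt that reduction before expanding.
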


As in the case of spatial pseudolocality, the proof of Theorem~\ref{thm:pseudolocFreq} can be reduced to certain commutator estimates.
\begin{proposition}\label{prop:excessive1}
Suppose that  for all $X\in \Lf^m$, 
	\begin{equation}
	\ad_{f(\cP_k)}^N X = O_{-\infty}(\etak^{-N};\Dspace{} \to \Dspace{}).\label{eq:adNf1}
	\end{equation}
Then the results of Theorem \ref{thm:pseudolocFreq} hold.
\end{proposition}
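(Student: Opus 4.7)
The plan is to adapt the proof of Proposition~\ref{prop:excessive1Space} to the frequency setting, using the functional calculus to build a frequency-space analogue of the separating cutoff $R \in \Lcut$. The key observation is that, because $f$ and $g$ are evaluated on the same self-adjoint operator $\cP_k$, one can produce an exact (not just $O(\eta^{-\infty})$) analogue of the separation conditions $A(I-R)=O(\eta^{-\infty})$ and $RB=O(\eta^{-\infty})$.

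Concretely, first choose $\tilde g \in C^\infty_b(\R)$ with $\tilde g \equiv 1$ on a neighbourhood of $\supp g$ and $\tilde g \equiv 0$ on a neighbourhood of $\supp f$, which is possible since $\supp f$ and $\supp g$ are closed and disjoint. Then $\tilde g \cdot g = g$ and $f \cdot \tilde g = 0$, so by the functional calculus
\[
\tilde g(\cP_k) g(\cP_k) = g(\cP_k), \qquad f(\cP_k) \tilde g(\cP_k) = 0.
\]
Using that $\tilde g(\cP_k)$ commutes with $f(\cP_k)$, I would then compute
\[
f(\cP_k)\, X\, g(\cP_k) = f(\cP_k)\, X\, \tilde g(\cP_k) g(\cP_k) = f(\cP_k)\tilde g(\cP_k)\, X\, g(\cP_k) - f(\cP_k)\, \ad_{\tilde g(\cP_k)} X\, g(\cP_k) = - f(\cP_k)\, \ad_{\tilde g(\cP_k)} X\, g(\cP_k),
\]
and iterate to obtain $f(\cP_k)\,X\, g(\cP_k) = (-1)^N f(\cP_k)\, \ad_{\tilde g(\cP_k)}^N X\, g(\cP_k)$ for every $N\in \mathbb{N}$.

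The next step is to apply the hypothesis~\eqref{eq:adNf1} (with $\tilde g$ in place of $f$) to conclude $\ad_{\tilde g(\cP_k)}^N X = O_{-\infty}(\eta^{-N};\Dspace{}\to\Dspace{})$, and then combine this with the mapping properties of $f(\cP_k)$ and $g(\cP_k)$ on the $\Dspace{}$ scale coming from the functional-calculus estimate of Proposition~\ref{prop:f(Pk)}: since one of $f,g$ is Schwartz, the corresponding operator is $O_{-\infty}(1;\Dspace{}\to\Dspace{})$, while the other is $O_{m'}(1;\Dspace{}\to\Dspace{})$ for some finite $m'$ controlled by its polynomial growth. A short bookkeeping argument in the $\Dspace{}$ scale then gives
\[
f(\cP_k)\,\ad_{\tilde g(\cP_k)}^N X\, g(\cP_k) = O_{-\infty}(\eta^{-N};\Dspace{}\to\Dspace{}),
\]
and since $N$ is arbitrary this upgrades to $O_{-\infty}(\eta^{-\infty};\Dspace{}\to\Dspace{})$, which is the claim.

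The main obstacle I anticipate is the verification that the hypothesis~\eqref{eq:adNf1} is usable for the bounded-but-not-compactly-supported function $\tilde g$: in the spatial case the corresponding hypothesis~\eqref{eq:adNR1} is stated for every $R\in\Lcut$, and the analogous statement here must cover at least all $\tilde g\in C_b^\infty(\R)$. Provided this is built into the hypothesis (as it presumably is for the subsequent analogue of Proposition~\ref{prop:proofadNR1}), the remaining work is routine telescoping; if instead the hypothesis is stated only for $f\in\cS(\R)$, one would first reduce to the case where the Schwartz function among $f,g$ has compact support by writing it as $\phi\cdot f+(1-\phi)f$ for a suitable $\phi\in C_c^\infty$ supported away from the other function's support, and treating the two pieces separately.
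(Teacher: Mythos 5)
Your telescoping mechanism is exactly the one used in the paper, but your cutoff is on the ``wrong side''. The paper assumes WLOG that $f$ is the Schwartz function and then picks a \emph{Schwartz} $f_1$ with $\supp(1-f_1)\cap\supp f=\emptyset$ and $\supp f_1\cap\supp g=\emptyset$; using $f = f f_1$ and $f_1 g = 0$ one gets
\[
f(\cP_k)\,X\,g(\cP_k)=f(\cP_k)f_1(\cP_k)Xg(\cP_k)=f(\cP_k)\big(\ad_{f_1(\cP_k)}X\big)g(\cP_k)+f(\cP_k)Xf_1(\cP_k)g(\cP_k)=f(\cP_k)\big(\ad_{f_1(\cP_k)}X\big)g(\cP_k),
\]
and iterating yields $f(\cP_k)\,X\,g(\cP_k)=f(\cP_k)\big(\ad_{f_1(\cP_k)}^N X\big)g(\cP_k)$ for all $N$. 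You instead choose $\tilde g$ with $\tilde g\equiv 1$ near $\supp g$ and $\tilde g\equiv 0$ near $\supp f$. Because $g$ is only required to be polynomially bounded, $\supp g$ may well be unbounded and $\tilde g$ is then forced to lie in $C_b^\infty(\R)\setminus\mathcal S(\R)$, so the hypothesis~\eqref{eq:adNf1} (established in Proposition~\ref{prop:proofadNf1} only for $f\in\mathcal S(\R)$) does not apply to $\ad_{\tilde g(\cP_k)}^N X$. You correctly flag this, but your two proposed fixes are either an extension of the hypothesis you don't have, or a splitting $f=\phi f+(1-\phi)f$ that does not actually eliminate the unbounded-support piece. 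The clean resolution is simply the complementary cutoff $f_1$ above: since $f$ is Schwartz (and in all the paper's applications compactly supported), a Schwartz $f_1$ with the required support properties exists, and no detour is needed. With that swap your argument reduces exactly to the paper's.
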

\bpf
Without loss of generality, assume that $f$ is Schwartz (the proof when $g$ is Schwartz is analogous). 
Since the sets $\supp f$ and $\supp g$ are disjoint, there exists $f_1 \in \mc{S}(\R)$ such that $\supp f_1\cap \supp g = \emptyset$ and $\supp (1 - f_1) \cap \supp f = 0$. Therefore, 
\[f(\cP_k) X g(\cP_k) = f(\cP_k)(\ad_{f_1(\cP_k)}^N X) g(\cP_k).\]
Therefore,
 \eqref{eq:pseudoloc1a}, follows from \eqref{eq:adNf1} and the mapping properties of $f(\cP_k)$ and $g(\cP_k)$ from Proposition \ref{prop:f(Pk)}.
\epf

\begin{proposition}
	\label{prop:proofadNf1}
	Given $f \in \mathcal{S}(\R)$, $m\in\mathbb{R}$, and $X \in \Lf^m$,
	\[\ad_{f(\cP_k)}^N X = O_{-\infty}(\etak^{-N};\Dspace{} \to \Dspace{})\]
	(i.e., the bound \eqref{eq:adNf1} holds).
\end{proposition}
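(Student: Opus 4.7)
The plan is to mirror the structure of the proof of Proposition~\ref{prop:proofadNR1}, now working in the scale $(\Dspace{s})_{s\in\R}$ and using the commutator bounds supplied by the definition of $\Lf^{m}$. The argument will proceed in two stages: a Helffer--Sj\"ostrand representation gives a preliminary bound with finite regularity index, which is then bootstrapped to regularity $-\infty$ using the smoothing of $f(\cP_k)$ via the Schwartz factorisation $f=f_{1}f_{2}$. The induction is only meaningful for $N\geq 1$ (which is the only case used in Proposition~\ref{prop:excessive1}), and this is where the induction will begin.

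\emph{Base case $N=1$.} Using Proposition~\ref{prop:HS} together with the identity $\ad_{(\cP_k-z)^{-1}}X=-(\cP_k-z)^{-1}(\ad_{\cP_k}X)(\cP_k-z)^{-1}$ (Leibniz plus $\ad_{(\cP_k-z)^{-1}}(\cP_k-z)=0$), I would write
\[
\ad_{f(\cP_k)}X=-\int_{\C}w(z)\,(\cP_k-z)^{-1}(\ad_{\cP_k}X)(\cP_k-z)^{-1}\,dm_{\C}(z).
\]
Combining the resolvent bound $(\cP_k-z)^{-1}=O_{-2}(\langle z\rangle/|\Im z|;\Dspace{}\to\Dspace{})$ from Proposition~\ref{prop:neverInBin} with $\ad_{\cP_k}X=O_{m+1}(\etak^{-1};\Dspace{}\to\Dspace{})$ from the definition of $\Lf^{m}$, and choosing $M$ in \eqref{eq:condw} large enough for absolute convergence, produces the preliminary estimate $\ad_{f(\cP_k)}X=O_{m-3}(\etak^{-1};\Dspace{}\to\Dspace{})$. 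To upgrade the regularity index, I would factor $f=f_{1}f_{2}$ with $f_{1},f_{2}\in\mathcal{S}(\R)$ (using \cite[Thm.~3.2]{Vo:84}, exactly as in Proposition~\ref{prop:proofadNR1}). Since $f_{1}(\cP_k)$ and $f_{2}(\cP_k)$ commute, the Leibniz rule gives
\[
\ad_{f(\cP_k)}X=f_{1}(\cP_k)\,\ad_{f_{2}(\cP_k)}X+\big(\ad_{f_{1}(\cP_k)}X\big)\,f_{2}(\cP_k);
\]
applying the preliminary bound to each $\ad_{f_{j}(\cP_k)}X$ and combining with $f_{j}(\cP_k)=O_{-\infty}(1;\Dspace{}\to\Dspace{})$ from Proposition~\ref{prop:f(Pk)}, each term is $O_{-\infty}(\etak^{-1})$, completing the base case.

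\emph{Inductive step.} To close the induction, I would strengthen the statement to the following: any operator $Y$ satisfying $Y=O_{m'}(\etak^{-r};\Dspace{}\to\Dspace{})$ and $\ad_{\cP_k}^{j}Y=O_{m'+j}(\etak^{-r-j};\Dspace{}\to\Dspace{})$ for all $j\geq 0$ satisfies $\ad_{f(\cP_k)}^{N}Y=O_{-\infty}(\etak^{-r-N};\Dspace{}\to\Dspace{})$ for every $N\geq 1$. The base case is the argument above with $(X,m,0)$ replaced by $(Y,m',r)$. For the inductive step, set $Z:=\ad_{f(\cP_k)}^{N-1}Y$. Since $\cP_k$ commutes with $f(\cP_k)$, the derivations $\ad_{\cP_k}$ and $\ad_{f(\cP_k)}$ also commute, so $\ad_{\cP_k}^{j}Z=\ad_{f(\cP_k)}^{N-1}(\ad_{\cP_k}^{j}Y)$. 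The operator $\ad_{\cP_k}^{j}Y$ itself satisfies the strengthened hypothesis with parameters $(m'+j,r+j)$, so the inductive hypothesis gives $\ad_{\cP_k}^{j}Z=O_{-\infty}(\etak^{-(r+j)-(N-1)})$ for every $j\geq 0$. Hence $Z$ satisfies the strengthened hypothesis with parameters $(-\infty,r+N-1)$, and applying the base case to $Z$ yields $\ad_{f(\cP_k)}Z=O_{-\infty}(\etak^{-r-N})$, as required. The proposition follows on specialising to $Y=X\in\Lf^{m}$ with $r=0$.

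\emph{Main obstacle.} The substantive point will be upgrading the regularity index from $m-3$ to $-\infty$ in the base case: each resolvent $(\cP_k-z)^{-1}$ gains only two orders of regularity, so the direct Helffer--Sj\"ostrand integral cannot possibly produce $O_{-\infty}$, and one is forced to exploit the infinite smoothing of $f(\cP_k)$ via the Schwartz factorisation together with the commutativity of $f_{1}(\cP_k)$ and $f_{2}(\cP_k)$. A secondary subtlety is that $\ad_{f(\cP_k)}^{N-1}X$ is never in $\Lf^{m'}$ for any $m'$ (the commutator $\ad_{\cP_k}$ accumulates extra factors of $\etak^{-1}$ that are not accounted for in the definition of $\Lf$), so the straightforward induction on $N$ with the original hypothesis does not close; introducing the more flexible ``$\Lf^{m'}$-like of decay $\etak^{-r}$'' class is what allows the extra $\etak^{-1}$ factor produced at each step to be absorbed.
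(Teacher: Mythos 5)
Your proposal is correct, but it takes a genuinely different route from the paper's proof.

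The paper's argument avoids induction on $N$ altogether. It first writes $\ad_{f(\cP_k)}^{N}X$ directly as an $N$-fold Helffer--Sj\"ostrand integral over $\C^{N}$, and then exploits the identity $\ad_{(\cP_k-z)^{-1}}(\cP_k-z')^{-1}=0$ to collapse the iterated resolvent commutators $\ad_{(\cP_k-z_{N})^{-1}}\cdots\ad_{(\cP_k-z_{1})^{-1}}X$ into a single sandwich $(-1)^{N}\prod(\cP_k-z_{i})^{-1}(\ad_{\cP_k}^{N}X)\prod(\cP_k-z_{i})^{-1}$. Feeding in $\ad_{\cP_k}^{N}X=O_{N+m}(\eta^{-N})$ gives the preliminary estimate $O_{-3N+m}(\eta^{-N})$ in one shot for every $N$. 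The regularity upgrade is then a \emph{single} Leibniz step: writing $\ad_{f(\cP_k)}^{N}X=\ad_{f_{1}(\cP_k)f_{2}(\cP_k)}\ad_{f(\cP_k)}^{N-1}X$ and expanding only the outermost commutator, so that each term is an $N$-fold iterated commutator (covered by the preliminary estimate, which holds for any Schwartz symbols, not just $f$) composed with one smoothing factor $f_{j}(\cP_k)$.

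Your argument instead establishes the $N=1$ case via the single-variable Helffer--Sj\"ostrand formula plus the Schwartz factorisation, and then inducts on $N$. The key idea enabling the induction is that $\ad_{\cP_k}$ and $\ad_{f(\cP_k)}$ commute as derivations (since $[\cP_k,f(\cP_k)]=0$), so the commutator bounds needed to re-apply the $N=1$ result to $Z:=\ad_{f(\cP_k)}^{N-1}Y$ follow from the inductive hypothesis applied to $\ad_{\cP_k}^{j}Y$. Your observation that a strengthened hypothesis (tracking both the order $m'$ and an extra decay index $r$) is necessary to close the induction is correct and is exactly the right fix: $\ad_{f(\cP_k)}^{N-1}X$ does not belong to $\Lf^{m'}$ for any $m'$ because the commutators $\ad_{\cP_k}^{j}$ of it decay like $\eta^{-(N-1)-j}$ rather than $\eta^{-j}$. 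The trade-off is that the paper's route requires the $N$-variable Helffer--Sj\"ostrand identity and the collapse of iterated resolvent commutators, while yours requires only the $N=1$ Helffer--Sj\"ostrand formula but needs the more delicate strengthened induction and the commutativity observation. Both deliver $O_{-\infty}(\eta^{-N};\Dspace{}\to\Dspace{})$.
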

\begin{proof}
Using the Helffer-Sjöstrand formula, commutators with $f(\cP_k)$ can be expressed in terms of commutators with $(\cP_k-z)^{-1}$: for all $f_j \in \mathcal{S}(\R)$, $j=1,\ldots,N$, all $X \in \Lf^m$, and for every integer $N \in \mathbb{N}$, 
\begin{equation}
	\label{eq:adfN}
	\begin{split}
		&\ad_{f_N(\cP_k)} \ad_{f_{N-1}(\cP_k)} \ldots \ad_{f_1(\cP_k)} X \\
		&\qquad= \int_{\C^N} w_1(z_1) \ldots w_N(z_N) (\ad_{(\cP_k - z_N)^{-1}} \ldots \ad_{(\cP_k - z_1)^{-1}} X)\,dm_{\C^N}(z_1,\ldots,z_N),
	\end{split}
\end{equation}
where $w_i$ is as in Proposition \ref{prop:HS} with $f = f_i$. 
Using the identities 
$\ad_{(\cP_k-z)^{-1}} X= -(\cP_k-z)^{-1}\ad_{\cP_k}X(\cP_k-z)^{-1}$, 
$\ad_X(YZ)=(\ad_X Y)Z+Y(\ad_XZ)$,  
and the fact that $\ad_{(\cP_k - z)^{-1}} (\cP_k - z')^{-1} = 0$, one obtains the formula
\begin{equation*}
	\ad_{(\cP_k-z_N)^{-1}} \ldots \ad_{(\cP_k - z_1)^{-1}} X = (-1)^N \prod_{i = 1}^{N}(\cP_k - z_i)^{-1} (\ad_{\cP_k}^N X)\prod_{i = 1}^{N}(\cP_k - z_i)^{-1}.
\end{equation*}
Therefore, by the resolvent estimate in 
Proposition \ref{prop:neverInBin} and the commutator assumption for frequency cutoffs (in Definition \ref{def:freqOp}),
\begin{align*}
	\ad_{(\cP_k-z_N)^{-1}} \ldots \ad_{(\cP_k - z_1)^{-1}} X 
= \prod_{j=1}^N\langle z_j\rangle^{2}|\Im( z_j)|^{-2}O_{-3N+m}(\eta^{-N};\Dspace{}\to \Dspace{}).
\end{align*}
Using this in \eqref{eq:adfN} and recalling 
	the decay properties of the $w_j$ \eqref{eq:condw}, we obtain that, for any $f_1,\ldots,f_N \in \cS(\R)$, 
	\[\ad_{f_N(\cP_k)} \ldots \ad_{f_1(\cP_k)} X= O_{-3N+m}(\etak^{-N};\Dspace{} \to \Dspace{}).\]
	In particular, if $f_j=f$, then $\ad^N_{f(\cP_k)} X= O_{-3N+m}(\etak^{-N};\Dspace{} \to \Dspace{})$. 
	
	To upgrade the regularity index from $-3N+m$ to $-\infty$, we use again that, given a Schwartz $f$, there exist two Schwartz functions $f_1$ and $f_2$ such that $f = f_1f_2$. By the functional calculus, $f(\cP_k)= f_1(\cP_k) f_2(\cP_k)$.  Furthermore, $\ad_{WY}Z = (\ad_W Z) Y + W (\ad_YZ)$. Thus,
	\[\begin{split}
		\ad_{f(\cP_k)}^N X  = \ad_{f_1(\cP_k) f_2(\cP_k)} \ad_{f(\cP_k)}^{N-1} X& =\underbrace{(\ad_{f_{1}(\cP_k)} \ad_{f(\cP_k)}^{N-1} X)}_{O_{-3N}(\etak^{-N})}  \underbrace{f_{2}(\cP_k)}_{O_{-\infty}(1)} + \underbrace{f_{1}(\cP_k)}_{O_{-\infty}(1)} \underbrace{(\ad_{f_{2}(\cP_k)} \ad_{f(\cP_k)}^{N-1} X)}_{O_{-3N}(\etak^{-N})}  \\
		 & =O_{-\infty}(\etak^{-N};\Dspace{} \to \Dspace{})
	\end{split}\] 
	which completes the proof. 
\end{proof}

\section{Pseudolocality results applied to the Helmholtz problem}\label{sec:assumptions}

In this section, we specialise the results of the previous section to the Helmholtz PML operator, that is, to the case where $\mathcal{H}_k^n = H_k^n$, $a_k$ is defined by \eqref{e:def_ak} and $\cZ_k$ is defined by \eqref{e:defZk_concrete}. With these definitions fixed, we keep the rest of the notation from Section \ref{sec:pseudolocS} (indeed, once we check that Assumptions \ref{ass:cont}, \ref{ass:Gar}, \ref{ass:domComp}, \ref{ass:contPk} and \ref{ass:ell} hold, all other objects appearing in this section are then defined in terms of $\cH_k^n$, $\cZ_k$ and $a_k$).  Observe that $\cZ_k$ is a subspace of $H^1_k$ with Dirichlet conditions on either $\partial \Omega$ (Dirichlet setting) or just $\Gamma_{\tr}$ (Neumann setting), and for all $j \geq 0$, $\ZcupHspace{j} = H_k^j$ and the inclusion $H_k^{-j} \subset \ZcupHspace{-j}$ is continuous. In particular, if $R = O_{-\infty}(\eta^n;\ZcupHspace{}\to\ZcupHspace{})$, then 
$$\|Ru\|_{H_k^N} \leq C \eta^{n}\|u\|_{H_k^{-N}} \quad \tfa N \in \mathbb{N}.$$

In this particular setting, we give sufficient conditions for smooth cut-off functions $\chi \in C^\infty(\overline{\Omega})$ to fulfil the conditions of Definition \ref{def:spatialCutoffs} or Definition \ref{def:freqOp}. We also identify some boundary-compatible operators in the sense of Definition \ref{def:freqOp} that are used for the proof of Theorem \ref{t:theRealDeal}.

In the remainder of this paper, we adopt the following notation.
\begin{definition}\label{def:cheeky1}
 Given two cutoff functions $\varphi,\widetilde{\varphi},\psi \in C^\infty(\overline{\Omega})$ and a real number $d > 0$, we write
$$\begin{array}{rcl}
\varphi \perp_d \psi &\iff& \dist(\supp \varphi,\supp \psi) > d\\[0.5em]
\varphi \prec_d \widetilde{\varphi} &\iff& \varphi \perp_d (1 - \widetilde{\varphi}).
\end{array}$$
We abbreviate $\perp_0$ and $\prec_0$ by $\perp$ and $\prec$.
\end{definition}

\subsection{Verifying the assumptions}\label{sec:verify}

We start by showing that the assumptions of Section \ref{sec:pseudolocS} hold for the PML problem. 

With the PDE coefficients in $A_\theta, b_\theta,$ and $n_\theta$ defined in \S\ref{sec:PML}, 
that section shows that there exists $\omega\in \mathbb{R}$ such that $\re^{\ri \omega}a(\cdot,\cdot)$ satisfies 
Assumption \ref{ass:cont} and \ref{ass:Gar} (with $\omega=0$ for the most commonly-used radial PML construction). Since 
\beqs
a_k(u-u_h, v_h) = 0 \quad \text{ if and only if }\quad \re^{\ri \omega} a_k(u-u_h, v_h) = 0,
\eeqs
without loss of generality we can assume that $\omega=0$. 

 It is standard that Assumption~\ref{ass:domComp} holds with $\Zspace{2}$ defined in \eqref{e:defZ2concrete}. Moreover, for all $u \in\Zspace{2}$,  
\[P_k u = -k^{-2} \div (A_\theta \nabla u)+k^{-2}\langle b_\theta(x),\nabla u\rangle- n_\theta u.\]
Thus, $P_k: \Zspace{n} \to \Hspace{n-2}$ is continuous for $n \geq 2$, thus Assumption~\ref{ass:contPk} holds. The smoothness of $\partial\Omega$, $A_\theta$, $b_\theta$, and $n_\theta$ ensure Assumption \ref{ass:ell} (elliptic regularity) holds

Since $\Dspace{0}=\cH$, $\Dspace{1}= \Zspace{}$, and $\cX_k^{-2}:=(\cP_k + (\CGa(k_0) + 1)\Id)^{-1}$ is an isomorphism from $\Dspace{n}$ to $\Dspace{n+2}$, it follows by induction that 
\beqs
\Dspaced{n}= \big\{ u \in H^{n}(\Omega) : u, \cP_k u, \ldots, \cP_k^{\lceil n/2\rceil-1} u \in H^1_0(\Omega)\big\}
\eeqs
and
\begin{align*}
\Dspacen{n}= \big\{ u \in H^{n}(\Omega) \,: \,&u= \cP_k u= \ldots= \cP_k^{\lceil n/2\rceil-1} u=0  \ton \Gamma_{\tr} 
\\
&\tand
\partial_{\nu, A_\theta} u=\partial_{\nu, A_\theta}(\cP_k u) = \ldots =\partial_{\nu, A_\theta}(\cP_k^{\lceil n/2\rceil-1}u) = 0  \ton \partial\Omega_-
\big\}.
\end{align*}

Now 
\beqs
\N{u}_{\Dspace{n}}= 
\begin{cases}
\| \cX_k^{2m} u\|_{\Hspace{}}, &n=2m,\\
\| \cX_k^{2m} u\|_{\Zspace{}}, & n=2m+1,
\end{cases}
\eeqs
Since $\cX_k^2 =\cP_k + (\CGa(k_0) + 1)\Id$ so that, by induction, for $m\in\mathbb{N}$, $\cX_k^{2m}$ coincides on $\Dspace{2m}$ (and thus on $\Dspace{2m+1}$) with a differential operator of order $2m$. 
Thus
\beq\label{e:warmOffice1}
\| u\|_{\Dspace{n}} \leq C \| u\|_{\Hspace{n}} \quad\tfor u \in \Dspace{n}.
\eeq

%

\subsection{Pseudolocality results with smooth cut-off functions}
\label{sec:spatialCheck}

The main result of this section is the following.

\begin{theorem}
\label{t:pseudoLocGeneral}
Suppose that $\chi_1,\chi_2\in C^\infty(\overline{\Omega})$ with $\supp \chi_1\cap \supp \chi_2=\emptyset$. Then, for all $f\in \mc{S}(\R)$,
\begin{gather*}
\chi_1f(\cP_k)\chi_2=O_{-\infty}(k^{-\infty};\ZcupHspace{}\to \ZcupHspace{}),\qquad
\chi_1 R_{k}^\sharp\chi_2=O_{-2}(k^{-\infty};\ZcupHspace{}\to \ZcupHspace{}).
\end{gather*}
\end{theorem}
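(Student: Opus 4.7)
The plan is to deduce Theorem~\ref{t:pseudoLocGeneral} from the abstract pseudolocality result Theorem~\ref{thm:pseudolocSpace}, using the fact that Assumptions~\ref{ass:cont}--\ref{ass:ell} have been verified for the Helmholtz PML setting in~\S\ref{sec:verify}. Once that framework is in place, it suffices to exhibit a spatial cutoff $R \in \Lcut(k)$ (Definition~\ref{def:spatialCutoffs}, with parameter $\eta_k = k$) by which the multiplication operators $M_{\chi_1}$ and $M_{\chi_2}$ are separated in the sense defined just before Theorem~\ref{thm:pseudolocSpace}.

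I would take $R$ to be multiplication by a \emph{boundary-adapted} cutoff $\widetilde\chi \in C^\infty(\overline{\Omega})$ satisfying $\widetilde\chi \equiv 1$ on an open neighbourhood of $\supp\chi_1$, $\widetilde\chi \equiv 0$ on an open neighbourhood of $\supp\chi_2$, and --- in the Neumann case only --- $\nu \cdot A_\theta\, \nabla\widetilde\chi = 0$ on $\partial\Omega_-$. Disjointness of the two compact supports secures the first two conditions, and the conormal condition is imposed by a standard modification inside a tubular neighbourhood of $\partial\Omega_-$, exploiting that $A_\theta = A$ there because the PML is inactive near the scatterer. This last condition is precisely what makes multiplication by $\widetilde\chi$ preserve the Neumann boundary condition encoded in $\cZ_k^n$ for $n \geq 2$, and hence (by duality) the whole scale $(\cZ_k^n)_{n \in \mathbb{Z}}$, the $\cW_k^j$ scale, and the $\cY_k^j$ scale.

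The next step is to check that $R := M_{\widetilde\chi}$ satisfies the requirements of Definition~\ref{def:spatialCutoffs}. Continuity $R\colon \cY_k^j \to \cY_k^j$ for every $j \in \mathbb{Z}$ follows from smoothness of $\widetilde\chi$, preservation of boundary conditions, and duality. The commutator estimates follow by direct computation: with $P_k = -k^{-2}\operatorname{div}(A_\theta\nabla) + k^{-2}\,b_\theta\cdot\nabla - n_\theta$, one finds
\begin{equation*}
\ad_R P_k \, u \;=\; k^{-2}\operatorname{div}(A_\theta\nabla\widetilde\chi)\,u \;+\; 2k^{-2}\,(A_\theta\nabla\widetilde\chi)\cdot\nabla u \;-\; k^{-2}(b_\theta\cdot\nabla\widetilde\chi)\,u,
\end{equation*}
a first-order differential operator with $k^{-2}$ prefactor (hence $\cW_k^j \to \cY_k^{j-1}$ with norm $O(k^{-1})$); a second commutation collapses the first-order term and yields $\ad^2_R P_k = -2k^{-2}\langle A_\theta\nabla\widetilde\chi,\nabla\widetilde\chi\rangle$, a multiplication operator of size $O(k^{-2})$ with no loss of regularity; and $\ad^N_R P_k = 0$ for $N \geq 3$. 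The identical computations apply to $P_k^*$ and therefore to $\cP_k = \tfrac12(P_k + P_k^*)$, with the conormal condition on $\widetilde\chi$ guaranteeing that the formal integration by parts used implicitly in each commutator is legitimate on $\cW_k^j$.

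Since $\widetilde\chi \equiv 1$ near $\supp\chi_1$ and $\widetilde\chi \equiv 0$ near $\supp\chi_2$, the pointwise identities $\chi_1(1-\widetilde\chi) \equiv 0$ and $\widetilde\chi\,\chi_2 \equiv 0$ give $M_{\chi_1}(\Id - R) = 0$ and $R\,M_{\chi_2} = 0$ as operators, so $M_{\chi_1}$ and $M_{\chi_2}$ are trivially separated by $R$. Theorem~\ref{thm:pseudolocSpace} then delivers $\chi_1 f(\cP_k) \chi_2 = O_{-\infty}(k^{-\infty}; \cY_k \to \cW_k)$ and $\chi_1 R_k^\sharp \chi_2 = O_{-2}(k^{-\infty}; \cY_k \to \cW_k)$, which imply the stated estimates via the continuous inclusion $\cW_k \hookrightarrow \cY_k$. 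The step I expect to be most delicate is the Neumann-case verification that $R$ meets \emph{all} the mapping requirements of Definition~\ref{def:spatialCutoffs} across the full scale $(\cY_k^j, \cW_k^j)_{j \in \mathbb{Z}}$: the single conormal condition on $\widetilde\chi$ must be reconciled, via duality, with the boundary conditions defining these spaces for every $j$, and the commutator identities computed above must then be justified rigorously on that whole scale rather than merely formally.
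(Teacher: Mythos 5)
Your overall approach---construct a boundary-adapted cutoff $\widetilde\chi\in\Lcut$ that separates $\supp\chi_1$ and $\supp\chi_2$ and then appeal to the abstract pseudolocality result Theorem~\ref{thm:pseudolocSpace}---is the paper's route, and your commutator identities for $\ad_{\widetilde\chi}^N P_k$ (first order with $k^{-1}$ gain for $N=1$, a multiplication operator for $N=2$, zero for $N\geq 3$) correctly reproduce the content of Lemma~\ref{l:spatialCut}. Where the argument breaks is the final invocation of Theorem~\ref{thm:pseudolocSpace}: that theorem requires its operators $A$ and $B$ to satisfy $A,B = O_0(1;\ZcapHspace{}\to\ZcapHspace{})\cap O_0(1;\ZcupHspace{}\to\ZcupHspace{})$, in addition to being separated by $R$. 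You take $A$, $B$ to be multiplication by the unmodified $\chi_1$, $\chi_2$, but in the Neumann case a generic smooth multiplier does \emph{not} preserve the boundary-adapted scale $\ZcapHspace{n}=\Zspace{n}$ for $n\geq 2$. That space encodes the conormal condition $\partial_{\nu,A_\theta}u|_{\partial\Omega_-}=0$, and since
\begin{equation*}
\partial_{\nu,A_\theta}(\chi_1 u)\big|_{\partial\Omega_-}=\big(u\,\nu\cdot A_\theta\nabla\chi_1\big)\big|_{\partial\Omega_-}\neq 0
\end{equation*}
in general, multiplication by $\chi_1$ fails the hypothesis $O_0(1;\ZcapHspace{}\to\ZcapHspace{})$ and the abstract theorem does not apply with this choice of $A,B$.

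The paper avoids exactly this: it does not plug $\chi_1,\chi_2$ directly into the abstract theorem. Instead it constructs two boundary-adapted cutoffs $\widetilde\chi_1,\widetilde\chi_2$ (Lemma~\ref{l:goodCut}) with $\widetilde\chi_i\equiv1$ near $\supp\chi_i$, $\widetilde\chi_i\equiv0$ near $\supp\chi_j$ for $j\neq i$, and $\partial_\nu\widetilde\chi_i|_{\partial\Omega}=0$; by Lemma~\ref{l:spatialCut} these belong to $\Lcut$, which in particular delivers the required mapping properties on the $\ZcapHspace{}$ scale. Theorem~\ref{thm:pseudolocSpace} is applied with $A=\widetilde\chi_1$, $B=\widetilde\chi_2$ to get $\widetilde\chi_1 f(\cP_k)\widetilde\chi_2 = O_{-\infty}(k^{-\infty};\ZcupHspace{}\to\ZcapHspace{})$, and only \emph{afterwards} is this sandwiched by the unmodified $\chi_1,\chi_2$, which are $O_0(1;\ZcupHspace{}\to\ZcupHspace{})$ but not $O_0(1;\ZcapHspace{}\to\ZcapHspace{})$. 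That sandwiching step degrades the target space from $\ZcapHspace{}$ to $\ZcupHspace{}$---exactly the conclusion Theorem~\ref{t:pseudoLocGeneral} asserts. Your claimed range $\ZcapHspace{}$ is too strong in the Neumann case, for precisely the same reason that the abstract theorem fails to apply directly to $\chi_1,\chi_2$.
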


This result is proved by using Theorem~\ref{thm:pseudolocSpace} and showing, first, that if $\chi \in C^\infty(\overline{\Omega})$ and $\partial_\nu \chi|_{\partial\Omega_-}=0$ then $\chi \in \Lcut$ (see Lemma \ref{l:spatialCut} below) and, second, that 
given $\chi_1,\chi_2\in C^\infty(\overline{\Omega})$, there exist 
$\widetilde\chi_j\in C^\infty(\overline{\Omega})$ with $\partial_\nu \widetilde\chi_j|_{\partial\Omega_-}=0$, $j=1,2,$ such that
$\chi_j\prec \widetilde\chi_j$, and $\widetilde\chi_1 \perp \widetilde\chi_2$ (see Lemma \ref{l:goodCut}).

\begin{lemma}
\label{l:spatialCut}
Given $\{C_N\}_{N > 0} \subset \R_+$, there exist $\{C'_{N,n}\}_{N,n} \subset \R_+$ such that the following is true. For all $\varepsilon > 0$, if $\chi \in C^\infty(\overline{\Omega})$ satisfies $\partial_\nu \chi|_{\partial\Omega_-}=0$ and
\begin{equation*}
\begin{gathered}
\max_{|\alpha|\leq N} \varepsilon^{|\alpha|}|\partial^\alpha \chi|\leq C_N\quad \tfa N \in \mathbb{N},
\end{gathered}
\end{equation*}
then
\begin{equation}
	\label{e:adNRQ_goodCut}
	\begin{gathered}
		\|\ad_{\chi}^N Q\|_{\ZcapHspace{n} \to \ZcupHspace{n+2-N}} \leq C'_{N,n} (\varepsilon k)^{-N},
	\end{gathered}
\end{equation}
where $Q$ is any one of the operators $P_k$, $P_k^*$ or $\cP_k$. In particular $\chi \in \Lcut.$
\end{lemma}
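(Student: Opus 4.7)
\medskip

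\noindent\textbf{Plan of proof of Lemma \ref{l:spatialCut}.}
The plan is to proceed by induction on $N$, reducing everything to a direct computation of the commutator $[\chi, P_k]$ as a first-order differential operator and then iterating, with the role of the hypothesis $\partial_\nu \chi|_{\partial\Omega_-}=0$ being to ensure that multiplication by $\chi$ preserves the boundary-condition subspaces $\Zspace{n}$. First I would establish the base case $N=0$, i.e.~that $\chi\in O_0(1;\ZcupHspace{}\to\ZcupHspace{})$ (with constants depending on the $C_N$'s but not on $\varepsilon$, after noting that multiplication by $\chi$ is bounded on $H^n_k$ with norm controlled by $\sum_{|\alpha|\leq n}\varepsilon^{-|\alpha|}(\varepsilon k)^{|\alpha|}$, which reduces to $\sup_{|\alpha|\leq n}\|\partial^\alpha\chi\|_\infty$ after absorbing the $k$-weights). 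For $j\geq 0$ this is a standard product-rule argument using the definition \eqref{e:weightedNorm} of the $k$-weighted norm; the crucial point is that the hypothesis $\partial_\nu\chi|_{\partial\Omega_-}=0$ (together with $\chi\in C^\infty(\overline\Omega)$) implies that $\chi u\in\Zspace{n}$ whenever $u\in\Zspace{n}$, so that $\chi:\ZcupHspace{j}\to\ZcupHspace{j}$ for $j\geq 0$; the analogous statement for $j\leq 0$ follows by duality, again using that $\bar\chi$ preserves the relevant subspaces.

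Next I would compute $[\chi,P_k]$ explicitly on $\Zspace{2}$, starting from the formula $P_k u=-k^{-2}\div(A_\theta\nabla u)+k^{-2}b_\theta\cdot\nabla u - n_\theta u$. A standard calculation gives
\begin{equation*}
[\chi,P_k]u \,=\, k^{-2}\Bigl(\,2\,A_\theta\nabla\chi\cdot\nabla u \,+\, u\,\div(A_\theta\nabla\chi) \,-\, (b_\theta\cdot\nabla\chi)\,u\Bigr),
\end{equation*}
so that $[\chi,P_k]$ is a first-order differential operator whose coefficients are bounded by $C(\|\nabla\chi\|_\infty+\|\nabla^2\chi\|_\infty)\lesssim C_1\varepsilon^{-1}+C_2\varepsilon^{-2}$; in the $k$-weighted scale this gives $\|[\chi,P_k]u\|_{H^{j-1}_k}\lesssim k^{-1}\varepsilon^{-1}\|u\|_{H^j_k}+k^{-2}\varepsilon^{-2}\|u\|_{H^j_k}\lesssim (\varepsilon k)^{-1}\|u\|_{H^j_k}$ under Assumption~\ref{ass:swlg}-style control $k\varepsilon\gtrsim 1$, but more generally one observes that each derivative landing on $u$ costs a factor of $k$ while each derivative landing on $\chi$ costs $\varepsilon^{-1}$, and the overall $k^{-2}$ prefactor accounts for the two ``missing'' $k$-factors.

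Iterating this computation, $\ad_\chi^N P_k$ is a sum of differential operators of order $\max(2-N,0)$ whose coefficients are products of derivatives of $\chi$ (of total order between $N$ and $2N$) with derivatives of $A_\theta,b_\theta,n_\theta$, all multiplied by $k^{-2}$. Using the assumed bounds $|\partial^\alpha\chi|\leq C_{|\alpha|}\varepsilon^{-|\alpha|}$ and counting $k$-weights carefully (each derivative on $u$ inside $\ad_\chi^N P_k$ contributes $k$, each derivative on $\chi$ contributes $\varepsilon^{-1}$, and the $k^{-2}$ from $P_k$ together with a loss of up to $\max(2-N,0)$ derivatives leaves a net gain of $k^{-N}$), one obtains the desired bound
$\|\ad_\chi^N Q\,u\|_{H^{j-(2-N)}_k}\leq C'_{N,j}(\varepsilon k)^{-N}\|u\|_{H^j_k}$.
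The analogous statements for $Q=P_k^*$ and $Q=\cP_k$ follow from the fact that $P_k^*$ has the same principal part as $P_k$ and differs only in a sign in the first-order term, so the same calculation goes through verbatim, and then $\cP_k=\tfrac12(P_k+P_k^*)$.

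\medskip

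\noindent\textbf{Main obstacle.}
The direct calculation itself is routine. The main obstacle is that, as highlighted in Remark~\ref{rem:Pknotdiff}, the extensions of $P_k$ and $P_k^*$ to $\ZcupHspace{-n}$ are \emph{not} differential operators, because they carry boundary-trace contributions on $\partial\Omega_-$ in the Neumann setting. Consequently the naive ``differential-operator'' computation of $[\chi,P_k]$ is a priori valid only on $\Zspace{n+2}$ for $n\geq 0$, and its extension to negative regularity indices requires a separate argument: one first checks the identity on $\Zspace{2}$ by the above calculation, then checks it on all of $\ZcapHspace{n+2}$ (for $n\geq 0$) by density, and finally extends to $n<0$ by the duality pairing using the identity $\langle[\chi,P_k]u,v\rangle=\langle u,[\bar\chi,P_k^*]v\rangle$. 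The hypothesis $\partial_\nu\chi|_{\partial\Omega_-}=0$ is precisely what ensures that multiplication by $\chi$ and $\bar\chi$ preserves $\Zspace{n}$ for all $n$, so that the duality argument closes; without it, spurious boundary terms of the type appearing in Remark~\ref{rem:Pknotdiff} would appear in the commutator and destroy the claimed smoothing/gain.
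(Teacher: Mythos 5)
Your approach is essentially the paper's: compute the commutator $[\chi,P_k]$ in the weak (sesquilinear) form for $u,v\in\Zspace{1}$, use $\partial_\nu\chi|_{\partial\Omega_-}=0$ to kill the boundary terms so the weak commutator is a bona fide first-order differential operator, and then extend to negative indices by duality via $(\ad_A^N B)^* = (-1)^N \ad_{A^*}^N B^*$. The plan to first establish the identity on $\Zspace{2}$ via the ``strong'' formula and then pass to $\Zspace{1}$ by density is a minor variant of the paper's direct computation on $\Zspace{1}\times\Zspace{1}$, and both exploit the same boundary condition at the same place.

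There is, however, one genuine gap in the iteration step. You describe $\ad_\chi^N P_k$ as ``a sum of differential operators of order $\max(2-N,0)$ whose coefficients are products of derivatives of $\chi$ of total order between $N$ and $2N$, multiplied by $k^{-2}$'' and argue that ``each derivative on $u$ contributes $k$ \ldots leaves a net gain of $k^{-N}$.'' That counting does not close for $N\geq 3$: only the $k^{-2}$ prefactor and at most two derivatives on $u$ are available, so the scheme yields at best $k^{-2}\varepsilon^{-N}$, which is a factor of roughly $(\varepsilon k)^{N-2}$ worse than the target $(\varepsilon k)^{-N}$. The reason the lemma nonetheless holds for all $N$ is that the iteration \emph{terminates}: after two commutations $\ad_\chi^2 P_k$ is the multiplication operator $-2k^{-2}(A_\theta\nabla\chi)\cdot\nabla\chi$ (the second derivatives of $u$ have all been transferred to $\chi$'s), so $\ad_\chi^N P_k = 0$ for $N\geq 3$ and the bound is vacuous there. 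The paper notes this explicitly via \eqref{e:helpTheFuture2} and the observation that a commutator with a multiplication operator is zero. Without recording this vanishing, your inductive argument cannot reach the stated conclusion for all $N$ (nor, therefore, the ``in particular $\chi\in\Lcut$'' conclusion, since membership in $\Lcut$ requires the $\ad^N$ estimate for every $N$). This is an easy fix once you carry out the computation concretely, but as written the proposal does not supply it.
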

\begin{proof}
We start by considering $\ad_{\chi}P_k$ acting in $\Zspace{1}$. Indeed, suppose that $u,v\in \Zspace{1}$. Then, $\chi u,\bar{\chi} v\in \Zspace{1}$ and hence
\begin{align}\nonumber
\langle \ad_\chi P_k u,v\rangle &=a_k(u,\bar{\chi} v)-a_k(\chi u,v)\\ \nonumber
&= k^{-2}\Big(\langle A_\theta \nabla u,\nabla (\bar{\chi} v)\rangle -\langle A_\theta \nabla (\chi u),\nabla v\rangle\Big)
+ k^{-2}\big\langle \chi b_\theta\cdot \nabla u  - b_\theta \cdot\nabla(\chi u), v \big\rangle
\\ \nonumber
&= k^{-2}\Big(\langle A_\theta \nabla u,v\nabla \bar{\chi} \rangle -\langle u A_\theta \nabla \chi ,\nabla v\rangle\Big)
-k^{-2} \langle (b_\theta \cdot \nabla \chi)u, v \rangle
\\ \label{e:helpTheFuture1}
&= k^{-2}\big\langle (A_\theta \nabla u) \cdot \nabla \chi+\nabla\cdot(uA_\theta \nabla \chi),v \big\rangle -k^{-2} \langle (b_\theta \cdot \nabla \chi)u, v \rangle,
\end{align}
where in the last line we use that  $\partial_{\nu}\chi|_{\partial\Omega_-}=0$ and $u|_{\Gamma_{\tr}}=0$. In particular, for $n\geq 1$, $\|\ad_\chi P_k u\|_{\Hspace{n-1}}\leq C(\varepsilon k)^{-1}\|u\|_{\Zspace{n}}$.
Next, since
\beqs
\langle \ad_\chi^2P_k u,v\rangle =\langle \ad_\chi P_k u,\overline{\chi} v\rangle -\langle \ad_\chi P_k (\chi u),v\rangle,
\eeqs
a short calculation using \eqref{e:helpTheFuture1} implies that, 
for $u,v\in \Zspace{1}$, 
\beq\label{e:helpTheFuture2}
\langle \ad_\chi^2P_k u,v\rangle = -k^{-2}\big\langle 2u (A_\theta \nabla \chi)\cdot \nabla \chi,v\big\rangle.
\eeq
Thus, for $n\geq 0$, $\|\ad_\chi^2P_k u\|_{\Hspace{n}}\leq C(\varepsilon k)^{-2}\|u\|_{\Zspace{n}}$.  
Since there are no derivatives of $u$ on the right-hand side of \eqref{e:helpTheFuture2}, a similar calculation shows that 
$u\in \Zspace{1}$, $(\ad_\chi^NP_k)u=0$ for $N\geq 3$. Thus 
\begin{equation}
\label{e:highn}
\|\ad_\chi^NP_k\|_{\Zspace{n}\to \Hspace{n-2+N}}
=\|\ad_\chi^NP_k\|_{\ZcapHspace{n}\to \ZcupHspace{n-2+N}}
\leq C(\varepsilon k)^{-N} \quad\tfor n\geq 0,
\end{equation}
and, by identical arguments,
\begin{equation}
\label{e:highn2}
\|\ad_\chi^NP_k^*\|_{\Zspace{n}\to \Hspace{n-2+N}}
=\|\ad_\chi^NP_k^*\|_{\ZcapHspace{n}\to \ZcupHspace{n-2+N}}
\leq C(\varepsilon k)^{-N}\quad\tfor n\geq 0.
\end{equation}
Now, for $\ell \geq 0$, $u\in \Hspace{-\ell}$ and $v\in \Zspace{\ell+2}$, by 
the fact that $(\ad_A^N B)^*= (-1)^N \ad_{A^*}^N B^*$ (which one can prove by induction),
\begin{align*}
\big|\big\langle (\ad_\chi^N P_k)u,v\big\rangle\big|=\big|\big\langle u,(-1)^N(\ad_{\bar{\chi}}^NP_k^*)v\big\rangle \big|
&\leq \|u\|_{\Hspace{-\ell}}\| (\ad_{\bar{\chi}}^NP_k^*)v\|_{\Hspace{\ell}}
\end{align*}
If $N\geq 3$, then the right-hand side of the last inequality is zero. Otherwise, \eqref{e:highn2} with $n=\ell+2-N \geq 0$ implies that 
\beqs
\big|\big\langle (\ad_\chi^N P_k)u,v\big\rangle\big|\leq C(\varepsilon k)^{-N}\|u\|_{\Hspace{-\ell}}\|v\|_{\Zspace{\ell+2-N}}.
\eeqs
Since $\Zspace{\ell+2}$ is dense in $\Zspace{\ell+2-N}$, the previous inequality and 
similar arguments for $P_k^*$ imply that
\begin{equation}
\label{e:lowl}
\|\ad_{\chi}^NP^*_k\|_{\Hspace{-\ell}\to \Zspace{-\ell-2+N}}+\|\ad_{\chi}^NP_k\|_{\Hspace{-\ell}\to \Zspace{-\ell-2+N}}\leq C(\varepsilon k)^{-N},\qquad \ell\geq 0.
\end{equation}
The combination of \eqref{e:highn}, \eqref{e:highn2}, and~\eqref{e:lowl} are then \eqref{e:adNRQ_goodCut}.
\end{proof}

\

We first prove the existence of a cut-off in $\Lcut$ between $\Omega_1$ and $\Omega_2$, under the assumption that $\Omega_1$ is sufficiently small -- this assumption allows us to use Fermi normal coordinates defined by $\partial\Omega$ on $\Omega_1$.

\begin{lemma}
\label{l:big}
There exists $\e_0>0$ such that for all $N>0$, there exists $C_N>0$ such that for all $0<\e<\e_0$ the following is true. If $\Omega_1,\Omega_2\subset \Omega$ are such that $d(\Omega_1,\Omega_2)>\e$ and if there exists $y\in \Omega$ such that $\Omega_1\subset B(y,\e_0)$, then there exists $\chi \in C^\infty(\overline{\Omega};[0,1])$ satisfying 
\begin{equation*}
\begin{gathered}
\Omega_1\cap \supp (1-\chi)=\emptyset,\qquad  \supp \chi\cap \Omega_2=\emptyset, \\
|\partial^\alpha \chi|\leq C_N\e^{-|\alpha|}\tfor |\alpha|\leq N,\qquad\tand\quad \partial_\nu \chi|_{\partial\Omega}=0.
\end{gathered}
\end{equation*}
\end{lemma}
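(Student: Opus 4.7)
The plan is to construct $\chi$ by convolving an indicator function with a mollifier, carrying out the convolution in Fermi normal coordinates near $\partial\Omega$ so that the Neumann condition holds automatically by symmetry. First I would choose $\e_0>0$ small enough that (i) the Fermi coordinate map $\Phi(y',t):=y'+t\nu(y')$ (with $\nu$ the inward normal to $\partial\Omega$) is a diffeomorphism from $\partial\Omega\times[0,10\e_0)$ onto the tubular neighbourhood $V:=\{x\in\overline\Omega:\dist(x,\partial\Omega)<10\e_0\}$, with $\Phi^{\pm 1}$ bi-Lipschitz of constant at most $2$, and (ii) $10\e_0<\dist(\partial\Omega_-,\Gamma_{\tr})$, so that $V$ has two disconnected sides.

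\emph{Case A: $\dist(y,\partial\Omega)>3\e_0$.} Here $\Omega_1\subset B(y,\e_0)$ has $\dist(\Omega_1,\partial\Omega)>2\e_0$, so the Neumann condition will hold vacuously. Take
\[
\chi:=\psi_{\e/16}*\mathbf{1}_{\Omega_1^{\e/16}},
\]
where $\psi\in C^\infty_c(B(0,1);\R_+)$ is a fixed mollifier with $\int\psi=1$, $\psi_r(x):=r^{-d}\psi(x/r)$, and $A^r:=\{z\in\R^d:\dist(z,A)\leq r\}$. Then $\chi\in[0,1]$, $\chi\equiv 1$ on $\Omega_1$, $\supp\chi\subset\Omega_1^{\e/8}$, and $|\partial^\alpha\chi|\leq C_N\e^{-|\alpha|}$; since $\dist(\Omega_1,\Omega_2)>\e$ and $\dist(\Omega_1,\partial\Omega)>2\e_0$, the support of $\chi$ is disjoint from both $\Omega_2$ and $\partial\Omega$.

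\emph{Case B: $\dist(y,\partial\Omega)\leq 3\e_0$.} Then $\Omega_1\subset V$, and by (ii) exactly one component $\Sigma$ of $\partial\Omega$ meets $\overline{B(y,4\e_0)}$. Work in Fermi coordinates based on $\Sigma$; using local coordinates on a single chart of $\Sigma$ covering the projection of $\Phi^{-1}(\Omega_1)$, identify $\Sigma$ with an open subset of $\R^{d-1}$ and write $\widetilde\Omega_1:=\Phi^{-1}(\Omega_1)\subset\Sigma\times[0,4\e_0)$, together with its reflection $\widetilde\Omega_1^*:=\{(y',-t):(y',t)\in\widetilde\Omega_1\}$. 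Let $\widetilde\psi_{\e/16}$ be a product mollifier on $\R^{d-1}\times\R$ whose $\R$-factor is even, and set
\[
\widetilde\chi:=\widetilde\psi_{\e/16}*\mathbf{1}_{(\widetilde\Omega_1\cup\widetilde\Omega_1^*)^{\e/16}}.
\]
Since $\widetilde\Omega_1\cup\widetilde\Omega_1^*$ and $\widetilde\psi_{\e/16}$ are both symmetric under $t\mapsto-t$, so is $\widetilde\chi$, so $\partial_t\widetilde\chi|_{t=0}=0$. Define $\chi(x):=\widetilde\chi(\Phi^{-1}(x))$ on $V$ and $\chi\equiv 0$ on $\overline\Omega\setminus V$; since $\supp\widetilde\chi\subset\Sigma\times(-5\e_0,5\e_0)$ lies well inside the Fermi domain, the extension is smooth. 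The pullback turns $\partial_t\widetilde\chi|_{t=0}=0$ into $\partial_\nu\chi|_\Sigma=0$, and $\chi\equiv 0$ in a neighbourhood of the other component of $\partial\Omega$ (if any) by (ii), so $\partial_\nu\chi=0$ on all of $\partial\Omega$; the bound $|\partial^\alpha\chi|\leq C_N\e^{-|\alpha|}$ follows from the chain rule and the uniform smoothness of $\Phi^{-1}$ on $\overline V$.

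The only nontrivial check — and the step I expect to be the main obstacle — is $\chi\equiv 0$ on $\Omega_2$ in Case B, since the reflected set $\widetilde\Omega_1^*$ could a priori bring support back into $\{t\geq 0\}$ near $\Phi^{-1}(\Omega_2)$. This is ruled out by the elementary identity: for any $(y'',s)\in\widetilde\Omega_1$ (so $s\geq 0$) and any $(y',t)$ with $t\geq 0$,
\[
|(y'-y'',t-s)|^2=|y'-y''|^2+(t-s)^2\leq|y'-y''|^2+(t+s)^2=|(y'-y'',t+s)|^2,
\]
i.e., reflection in $t$ never decreases the Euclidean distance from a point in $\{t\geq 0\}$ to $\widetilde\Omega_1$. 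Hence $\dist(\Phi^{-1}(x),\widetilde\Omega_1\cup\widetilde\Omega_1^*)=\dist(\Phi^{-1}(x),\widetilde\Omega_1)$ whenever $\Phi^{-1}(x)\in\{t\geq 0\}$. The bi-Lipschitz bound on $\Phi^{-1}$ then gives $\dist(\Phi^{-1}(x),\widetilde\Omega_1)\geq\dist(x,\Omega_1)/2>\e/2$ for $x\in\Omega_2$, while $\supp\widetilde\chi\subset(\widetilde\Omega_1\cup\widetilde\Omega_1^*)^{\e/8}$. Thus $\chi(x)=\widetilde\chi(\Phi^{-1}(x))=0$, completing the proof.
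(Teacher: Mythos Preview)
Your proof is correct and takes a genuinely different route from the paper's. The paper also works in Fermi coordinates near the boundary, but instead of your reflection trick it builds $\chi$ as a sum of two pieces: a ``near-boundary'' part obtained by mollifying the \emph{boundary trace} $\widetilde\Omega_1^\partial:=B(\Omega_1,4\delta)\cap\partial\Omega$ in the tangential variables only and multiplying by a fixed normal cutoff $\widetilde\psi_{1,\delta}(x_1)$, plus a ``far-from-boundary'' part given by standard $d$-dimensional mollification of $B(\Omega_1,4\delta)$ times $(1-\widetilde\psi_{1,\delta}(x_1))$. The Neumann condition then holds because the far part vanishes near $\partial\Omega$ while the near part is \emph{constant} in $x_1$ there; verifying $\supp\chi\cap\Omega_2=\emptyset$ requires a separate argument for the tangential spread of the near part.

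Your reflection approach is more economical: a single convolution, with $\partial_\nu\chi|_{\partial\Omega}=0$ coming for free from the $t\mapsto-t$ symmetry, and the key support check reducing to the one-line inequality $|(y'-y'',t-s)|\le|(y'-y'',t+s)|$ for $s,t\ge0$. The paper's construction, on the other hand, yields the slightly stronger property that $\chi$ is actually constant in the normal direction near $\partial\Omega$ (not just $\partial_\nu\chi=0$), though the lemma as stated does not require this.
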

\begin{proof}
Let $U_{\rm Fermi}$ be a tubular neighbourhood of $\partial\Omega$ in which there exists a Fermi normal coordinate chart; we denote these coordinates below by $(x_1,x')_F$ (where the subscript $F$ emphasises that these are not Euclidean coordinates).
Let $\delta_0>0$ be such that $U_{\rm Fermi}\supset B(\partial \Omega, 20\delta_0):= \{ x\in \Omega: \dist(x, \partial\Omega)< 20 \delta_0\}$
and let $\e_0=9\delta_0$. 
If $\Omega_1\not\subset U_{\rm Fermi}$ then $\dist(\Omega_1,\partial \Omega) \geq 2\delta_0 = (2/9)\epsilon_0$, and the existence of $\chi$ follows immediately (e.g., by mollification of the indicator function of $\Omega_1$).  
We therefore assume that $\Omega_1\subset U_{\rm Fermi}$.
Now, there exists $c_F>0$ (depending only on $\Omega$) such that, for all $r>0$ and for all $(0,x')\in \partial\Omega$, 
\beq\label{e:cF}
\big\{ (0,y')_F : |y'-x'|\leq c_F r\big\} \subset B\big( (0,x')_F ,r \big) \cap\partial \Omega.
\eeq

We now define some mollifiers and cut-off functions. 
Fix $\psi_m\in C_c^\infty(B_{\mathbb{R}^m}(0,1))$ such that  $\int \psi_m=1$, $m=d-1,d$. Fix also $\widetilde{\psi}_1\in C_c^\infty(-2,2)$ with $(-1,1)\cap \supp(1-\widetilde{\psi}_1)=\emptyset$. Then, for $\delta>0$, set $\psi_{m,\delta}(x):=\delta^{-m}\psi_m(\delta^{-1}x)$, and $\widetilde{\psi}_{1,\delta}(x):=\widetilde{\psi}_1(\delta^{-1}x)$. 

%

Let $\delta=\e/10$ and $\widetilde{\Omega}_1:=B(\Omega_1,4\delta)$; note that $d(\widetilde{\Omega}_1,\Omega_2)>6\e /10$. 
Let $\widetilde{\Omega}_1^\partial:=\widetilde{\Omega}_1\cap \partial\Omega$; note that this set may be empty.

Now let 
\beq\label{e:FermiChi}
\chi(x_1,x'):= \big(1_{\widetilde{\Omega}^\partial_1}*\psi_{d-1,c_F \delta}\big)(x')\widetilde{\psi}_{1,\delta}(x_1)+\big(1_{\widetilde{\Omega}_1}*\psi_{d,\delta}\big)(x)\big(1-\widetilde{\psi}_{1,\delta}(x_1)\big)=:\chi_{\rm near}+\chi_{\rm far},
\eeq

We now check that $\chi$ has the required properties. First, 
$$
\|\partial^\alpha (u*v)\|_{L^\infty}=\|(\partial^\alpha u)*v\|_{L^\infty}\leq \|\partial^\alpha u\|_{L^1}\|v\|_{L^\infty},
$$
so that
$$
\|\big(\partial^\alpha (u*v)\big)w\|_{L^\infty}
\leq \N{w}_{L^\infty} \|(\partial^\alpha u)*v\|_{L^\infty}\leq 
 \N{w}_{L^\infty}
\|\partial^\alpha u\|_{L^1}\|v\|_{L^\infty}.
$$
Combining this with the product rule and 
$$
\|\partial^\alpha \widetilde{\psi}_{1,\delta}\|_{L^\infty}\leq C_{\alpha}\delta^{-\alpha_1} \quad\tand\quad 
\|\partial^\alpha \psi_{m,\delta}\|_{L^1}\leq C_{\alpha}\delta^{-|\alpha'|}, \,\, m= d-1,d,
$$
implies the required derivative estimates on $\chi$. Next, since $\chi_{\rm far}\equiv 0$ near $\partial\Omega$ and $\partial_{x_1}^\alpha \chi_{\rm near}|_{x_1=0}=0$ for any $\alpha$, it follows that $\partial_{\nu}\chi|_{\partial\Omega}=0$. 

We now show that $\Omega_1\cap \supp (1-\chi)=\emptyset$; we do this by showing that 
\beq\label{e:convNightmare1}
\chi(x)=1 \quad \text{ when } \quad x=(x_1,x')\in B(\Omega_1,\delta)\cap \overline{\Omega}.
\eeq
First observe that, for such $x$, $\big(1_{\widetilde{\Omega}_1}*\psi_{d,\delta}\big)(x)=1$.
Then, since 
$\widetilde{\psi}_{1,\delta}(x_1)=0$ when $ x_1 \geq 2\delta$,
\eqref{e:convNightmare1} then follows from \eqref{e:FermiChi} if we can show that 
\beq\label{e:convNightmare2}
\big(1_{\widetilde{\Omega}^\partial_1}*\psi_{d-1,c_F\delta}\big)(x')=1 \quad\text{ when } x_1\leq 2\delta
\eeq
(i.e., on the support of $\widetilde{\psi}_{1,\delta}$). To prove \eqref{e:convNightmare2}, observe that,
%
%
by the triangle inequality, 
$$
d((0,x')_F,\Omega_1)\leq x_1+d(x,\Omega_1)<2\delta+d(x,\Omega_1)<3\delta.
$$
Since $\widetilde{\Omega}_1:=B(\Omega_1,4\delta)$, $B((0,x')_F,\delta)\subset \widetilde{\Omega}_1$. 
Thus $B((0,x')_F,\delta)\cap \partial\Omega\subset \widetilde{\Omega}_1\cap \partial\Omega= \widetilde{\Omega}_1^\partial$.
and  \eqref{e:convNightmare2} follows by \eqref{e:cF}.

%
%
%
%

Finally, we show that $\Omega_2\cap \supp \chi=\emptyset$, again by showing that 
\beqs
\chi(x)=0 \quad \text{ when } \quad x=(x_1,x')_F\in B(\Omega_2,\delta)\cap \overline{\Omega}.
\eeqs
Then, $d(x,\widetilde{\Omega}_1)>\e-5\delta=5\delta$; thus $(1_{\widetilde{\Omega}_1}*\psi_{d,\delta})(x)=0$ and $\chi_{\rm far}=0$. 
If $|x_1|> 2\delta$, $\chi_{\rm near}=0$. Otherwise, we claim that
\beq\label{e:cheeky1}
d((0,x')_F,\widetilde{\Omega}_1^\partial)\geq \epsilon-7\delta= 3 \delta;
\eeq
then, by \eqref{e:cF}, 
\beqs
\big\{ (0,y')_F : |y'-x'|\leq c_F \delta\big\}\cap \widetilde{\Omega}_1^\partial\,\subset\, B((0,x')_F, \delta)\cap \widetilde{\Omega}_1^\partial\,=\,\emptyset
\eeqs
so that  $(1_{\widetilde{\Omega}^\partial_1}*\psi_{d-1,c_F\delta})(x)=0$ (and hence $\chi_{\rm near}=0$). The inequality \eqref{e:cheeky1} follows by the triangle inequality: 
 \begin{align*}
\e\leq d(\Omega_1,\Omega_2)&\leq d(\Omega_1, \widetilde{\Omega}_1^\partial) 
+ d((0,x')_F,\widetilde{\Omega}_1^\partial) + d((0,x')_F,x) + d(x,\Omega_2),\\
&\leq 4 \delta + d((0,x')_F,\widetilde{\Omega}_1^\partial)+2 \delta + \delta,
\end{align*} 
concluding the proof.
\end{proof}

\begin{lemma}[Partition of unity satisfying Neumann boundary conditions]
\label{l:goodPartition}
Let $\Omega_i\subset \Omega$, $i=1,\dots, N$ be open with $\Omega\subset \cup_i\Omega_i$. Then there exist $\varphi_i\in C^\infty(\overline{\Omega})$ satisfying
$$
\supp \varphi_i\subset \Omega_i\cup \partial\Omega,\quad \partial_\nu \varphi_i=0 \,\,\ton\partial\Omega,\quad \tand\quad\sum_{i=1}^N \varphi_i\equiv 1.
$$
\end{lemma}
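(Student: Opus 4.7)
\emph{Plan of proof.} The strategy is to build the partition of unity from single bump functions that individually satisfy $\partial_\nu = 0$ on $\partial\Omega$, then rely on the quotient rule to show that the normalization step preserves the Neumann condition. Concretely, I would first refine $\{\Omega_i\}$ to a locally finite cover $\{U_j\}_{j=1}^M$ of $\overline{\Omega}$ together with a map $\iota:\{1,\dots,M\}\to\{1,\dots,N\}$ so that $\overline{U_j}\subset \Omega_{\iota(j)}\cup \partial\Omega$ and each $U_j$ has diameter smaller than the constant $\varepsilon_0$ from Lemma~\ref{l:big} (so that each $U_j$ sits in some ball of radius $\varepsilon_0$). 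By compactness of $\overline{\Omega}$, I can also arrange a further shrinking $\{U_j'\}_{j=1}^M$ with $\overline{U_j'}\subset U_j$ such that $\bigcup_j U_j'\supset \overline{\Omega}$.

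Next, for each $j$ I would apply Lemma~\ref{l:big} to the pair $(U_j',\,\Omega\setminus U_j)$ to produce $\chi_j\in C^\infty(\overline{\Omega};[0,1])$ with $\chi_j\equiv 1$ on $\overline{U_j'}$, $\supp \chi_j\subset \overline{U_j}\cup\partial\Omega \subset \Omega_{\iota(j)}\cup\partial\Omega$, and $\partial_\nu\chi_j|_{\partial\Omega}=0$. (For those $U_j$ compactly contained in $\Omega$ the construction is entirely interior and the Neumann condition is vacuous.) Set $\Sigma:=\sum_{j=1}^M \chi_j$. Since $\{U_j'\}$ covers $\overline{\Omega}$ and $\chi_j\geq 1_{\overline{U_j'}}$, we have $\Sigma\geq 1$ on $\overline{\Omega}$, so $\Sigma\in C^\infty(\overline{\Omega})$ is strictly positive, and by linearity $\partial_\nu \Sigma|_{\partial\Omega}=0$.

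Define $\widetilde{\chi}_j:=\chi_j/\Sigma\in C^\infty(\overline{\Omega})$. Clearly $\sum_j \widetilde{\chi}_j\equiv 1$ and $\supp \widetilde{\chi}_j\subset \Omega_{\iota(j)}\cup\partial\Omega$. The key point is that the Neumann property is preserved by division: the quotient rule gives
\[
\partial_\nu \widetilde{\chi}_j\big|_{\partial\Omega}=\frac{\Sigma\,\partial_\nu\chi_j-\chi_j\,\partial_\nu\Sigma}{\Sigma^2}\bigg|_{\partial\Omega}=0,
\]
since both $\partial_\nu\chi_j|_{\partial\Omega}=0$ and $\partial_\nu\Sigma|_{\partial\Omega}=0$. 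Finally, grouping by the refinement map $\iota$, set
\[
\varphi_i:=\sum_{j\,:\,\iota(j)=i}\widetilde{\chi}_j \quad (i=1,\dots,N),
\]
which satisfies all three required properties.

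The only nontrivial ingredient is the construction of bump functions with vanishing normal derivative on $\partial\Omega$, and this has already been done in Lemma~\ref{l:big} via Fermi normal coordinates; everything else is a standard partition-of-unity manipulation, carefully checked to be compatible with the boundary condition.
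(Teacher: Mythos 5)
Your proposal is correct and follows essentially the same route as the paper: both proofs invoke Lemma~\ref{l:big} to manufacture bump functions with vanishing normal derivative on small sets, sum them into a quantity $\Sigma\geq 1$ on $\overline{\Omega}$, normalize by dividing (noting that the Neumann condition is preserved by the quotient rule since both numerator and denominator have vanishing normal derivative), and regroup the resulting functions by the index $i$ of the covering sets. The only cosmetic difference is the bookkeeping (the paper uses a double index $(i,j)$ from covering each $\overline{U}_i$ by small balls $B(y_{ij},\e)$, while you use a single index $j$ together with a refinement map $\iota$), but the substance of the argument is identical.
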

\begin{proof}
Let $U_i\Subset \Omega_i$ be open sets such that $\Omega\subset \cup_i U_i$. Then, for $\e>0$ small enough, and $y\in \overline{U}_i$, $B(y,2\e)\subset \Omega_i$. In addition, since $\overline{U}_i$ is compact, there are $\{y_{ij}\}_{j=1}^{N_i}\subset \overline{U}_i$ such that $ U_i\subset \cup_{j=1}^{N_i} B(y_{ij},\e)$. 

 By Lemma~\ref{l:big} (applied with $\Omega_1= B(y_{ij},\e)$ and $\Omega_2= \Omega\setminus \overline{\Omega_i}$), for $\e>0$ small enough, there are $\widetilde{\varphi}_{ij}\in C^\infty(\overline{\Omega};[0,1])$ such that $\supp(1-\widetilde{\varphi}_{ij})\cap B(y_{ij},\e)=\emptyset$, $\supp \widetilde{\varphi}_{ij}\subset \Omega_i \cup \partial\Omega$,
 and $\partial_\nu \widetilde\varphi_{ij}=0$ on $\partial\Omega$.   
Noting that $\sum_{i=1}^N\sum_{j=1}^{N_i}\widetilde{\varphi}_{ij}\geq 1$, we define
$$
\varphi_i:=\frac{\sum_{j=1}^{N_i}\widetilde{\varphi}_{ij}}{\sum_{i=1}^N\sum_{j=1}^{N_i}\widetilde{\varphi}_{ij}},
$$
which has the required properties.
\end{proof}


\

We now remove the assumption from Lemma \ref{l:big} that $\Omega_1$ is sufficiently small.

\begin{lemma}
\label{l:goodCut}
There exists $\e_0>0$ such that for all $N>0$, there exists $C_N>0$ such that for all $0<\e<\e_0$ and $\Omega_1,\Omega_2\subset \Omega$ and $d(\Omega_1,\Omega_2)>\e$, there exists $\chi \in C^\infty(\overline{\Omega})$ satisfying 
\begin{equation}
\label{e:cutEstimates2}
\begin{gathered}
\Omega_1\cap \supp (1-\chi)=\emptyset,\qquad  \supp \chi\cap \Omega_2=\emptyset, \\
|\partial^\alpha \chi|\leq C_N\e^{-|\alpha|},\qquad |\alpha|\leq N,\qquad \partial_\nu \chi|_{\partial\Omega}=0, 
\end{gathered}
\end{equation}
\end{lemma}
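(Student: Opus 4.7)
The plan is to reduce the general statement to the ``small-ball'' case Lemma~\ref{l:big} via an $\epsilon$-independent partition of unity whose local patches each fit into a ball of size $\epsilon_0$. The key additional idea beyond Lemma~\ref{l:big} is to apply the small-ball result not to $\Omega_1$ itself, but to a slight inflation $\tilde\Omega_1:=\{x\in\Omega:\dist(x,\Omega_1)<\epsilon/3\}$; this is what upgrades the pointwise identity $\chi\equiv 1$ on $\Omega_1$ to the open-neighborhood statement $\Omega_1\cap\supp(1-\chi)=\emptyset$ required by the conclusion.

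Concretely, first fix $\epsilon_0>0$ small enough for Lemma~\ref{l:big} to apply, shrinking it if necessary so that $\epsilon_0<1$. Since $\overline{\Omega}$ is compact, cover it by a fixed collection of open balls $\{B(y_j,\epsilon_0/4)\}_{j=1}^M$ with $M=M(\Omega)$ independent of $\epsilon$, and set $U_j:=B(y_j,\epsilon_0/4)\cap\Omega$. Apply Lemma~\ref{l:goodPartition} to the cover $\{U_j\}$ to obtain a smooth partition of unity $\{\varphi_j\}_{j=1}^M\subset C^\infty(\overline{\Omega})$ with $\supp\varphi_j$ localised near $y_j$ (an inspection of the construction of Lemma~\ref{l:goodPartition} via Lemma~\ref{l:big} shows that $\supp\varphi_j\subset\overline{B(y_j,\epsilon_0/4)}\cap\overline{\Omega}$, even though the literal statement only asserts containment in $U_j\cup\partial\Omega$), together with $\partial_\nu\varphi_j=0$ on $\partial\Omega$ and derivatives $|\partial^\alpha\varphi_j|\leq C(\alpha)$ bounded independently of $\epsilon$.

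Given $\Omega_1,\Omega_2\subset\Omega$ with $\dist(\Omega_1,\Omega_2)>\epsilon$, define $\tilde\Omega_1$ as above, so that $\dist(\tilde\Omega_1,\Omega_2)\geq 2\epsilon/3$. For each $j$ with $\Omega_1^j:=\tilde\Omega_1\cap\supp\varphi_j\neq\emptyset$, note $\Omega_1^j\subset B(y_j,\epsilon_0)$ and apply Lemma~\ref{l:big} to the pair $(\Omega_1^j,\Omega_2)$ with separation parameter $2\epsilon/3$ to produce $\chi_j\in C^\infty(\overline{\Omega};[0,1])$ with $\chi_j\equiv 1$ on a neighborhood of $\Omega_1^j$, $\supp\chi_j\cap\Omega_2=\emptyset$, $\partial_\nu\chi_j|_{\partial\Omega}=0$, and $|\partial^\alpha\chi_j|\leq C_N\epsilon^{-|\alpha|}$ for $|\alpha|\leq N$ (the factor $(3/2)^{|\alpha|}$ is absorbed into $C_N$). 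Set $\chi_j:=0$ when $\Omega_1^j=\emptyset$, and define the global cutoff $\chi:=\sum_{j=1}^M\varphi_j\chi_j$.

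To verify the four conclusions in~\eqref{e:cutEstimates2}: the vanishing of $\chi$ near $\Omega_2$ is immediate from the support condition on each $\chi_j$; the derivative bound follows from Leibniz's rule, the uniform bound on $|\partial^\alpha\varphi_j|$, and the fact that the sum has a fixed number $M$ of terms; the Neumann condition follows from $\partial_\nu\varphi_j=\partial_\nu\chi_j=0$ on $\partial\Omega$. The real content of the argument is to show that $\chi\equiv 1$ on an open neighborhood of $\Omega_1$: for any $x\in\Omega_1$, the set $B(x,\epsilon/6)\cap\overline{\Omega}$ lies inside $\tilde\Omega_1$, so for every $y$ in this set and every $j$ with $\varphi_j(y)\neq 0$ one has $y\in\tilde\Omega_1\cap\supp\varphi_j=\Omega_1^j$, hence $\chi_j(y)=1$ by construction, and thus $\chi(y)=\sum_j\varphi_j(y)=1$. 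I expect the main (but still minor) obstacle to be the bookkeeping required to confirm that the partition of unity produced by Lemma~\ref{l:goodPartition} really has supports localised near the $y_j$, rather than spreading along $\partial\Omega$ via the $\partial\Omega$-component allowed in its statement; this ends up not being a genuine difficulty because the construction of the $\widetilde\varphi_{ij}$ in Lemma~\ref{l:goodPartition} uses the explicit Fermi-coordinate mollification of Lemma~\ref{l:big}, whose support is confined to a bounded neighborhood of the seed ball $B(y_{ij},\epsilon)$.
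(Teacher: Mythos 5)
Your proof follows the same route as the paper's: fix an $\epsilon_0$-scale cover and a Neumann-compatible partition of unity $\{\varphi_j\}$ via Lemma~\ref{l:goodPartition}, apply Lemma~\ref{l:big} piecewise, and combine as $\chi=\sum_j\varphi_j\chi_j$; your parenthetical concern about whether $\supp\varphi_j$ is actually localised near $y_j$ (rather than merely contained in $U_j\cup\partial\Omega$, which is all the literal statement of Lemma~\ref{l:goodPartition} asserts) is a legitimate subtlety that the paper itself glosses over, and your resolution by inspecting the construction is the right one. The inflation $\tilde\Omega_1$, however, is not actually needed: Lemma~\ref{l:big} already yields the support statement $\Omega_{1,i}\cap\supp(1-\chi_i)=\emptyset$, i.e.\ $\chi_i\equiv 1$ on an open $\overline{\Omega}$-neighbourhood of $\Omega_{1,i}$, not merely the pointwise identity, and together with the closedness of each $\supp\varphi_j$ this already gives the open-neighbourhood conclusion for $\chi$; and as written the inflation even introduces a small defect, since $\tilde\Omega_1\subset\Omega$ cannot contain the boundary points of $B(x,\epsilon/6)\cap\overline{\Omega}$, so the claimed inclusion used in your verification would itself need a minor patch.
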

\begin{proof}
Let $\e_0$ be as in Lemma~\ref{l:big}. Since $\overline{\Omega}$ is compact, there exist $\{x_i\}_{i=1}^M$ such that $\overline{\Omega}\subset \cup_{i=1}^M B(x_i,\e_0)$. Then, by Lemma~\ref{l:goodPartition}, there exist $\{\varphi_i\}_{i=1}^M$ a partition of unity subordinate to $\{B(x_i,\e_0)\}_{i=1}^M$ satisfying $\partial_\nu \varphi_i=0$ on $\partial\Omega$. Then, let $\Omega_{1,i}:= \Omega_1\cap B(x_i,\epsilon_0)$. 
By Lemma~\ref{l:big}, there exists $\chi_i$ such that the conditions in \eqref{e:cutEstimates2} hold with $\Omega_1$ replaced by $\Omega_{1,i}$. Define $\chi:=\sum_{i=1}^M \chi_i\varphi_i$. The derivative estimates in~\eqref{e:cutEstimates2} then follow from the product rule and the fact that the derivatives of $\varphi_i$ are independent of $\epsilon$ (but depend on $\epsilon_0$).
The condition that $\partial_\nu \chi|_{\partial\Omega}=0$ follows since both $\partial_\nu \chi_i=0$ and $\partial_{\nu}\varphi_i=0$.
Next, since $\supp\chi_i \cap \Omega_2=\emptyset$, $\supp \chi\cap \Omega_2 \subset \cup_i\supp (\chi_i\varphi_i)\cap \Omega_2=\emptyset$.
Finally, since $\chi_i\equiv 1$ on $\Omega_1\cap B(x_i,\epsilon_0)$ and $\supp \varphi_i \subset B(x_i,\epsilon_0)$, $(1-\chi_i)\varphi_i=0$ on $\Omega_1$, and thus 
$$
\supp (1-\chi)\cap \Omega_1 =\supp \Big(\sum_{i=1}^N (1-\chi_i)\varphi_i\Big)\cap \Omega_1=
\emptyset.
$$
\end{proof}

\begin{proof}[Proof of Theorem \ref{t:pseudoLocGeneral}]
Since $\supp \chi_1\cap \supp \chi_2=\emptyset$, there exist $\Omega_i$ neighbourhoods of $\supp \chi_i$ with $d(\Omega_1,\Omega_2)>0$. Therefore, by Lemma~\ref{l:goodCut} 
there are $\widetilde{\chi}_i$ with $\supp \chi_i\cap \supp (1-\widetilde{\chi}_i)=\emptyset$, $\supp \chi_i\cap \supp\widetilde{\chi}_j=\emptyset$, $i\neq j$, and $\partial_\nu \widetilde{\chi}_i=0$. Hence by Lemma~\ref{l:spatialCut}, $\widetilde{\chi}_i\in \Lcut$ and using Theorem~\ref{thm:pseudolocSpace}, we have
$$
\chi_1f(\cP_k)\chi_2=\chi_1\widetilde{\chi}_1f(\cP_k)\widetilde{\chi}_2\chi_2=\chi_1O_{-\infty}(k^{-\infty};\ZcupHspace{}\to \ZcapHspace{})\chi_2=O_{-\infty}(k^{-\infty}; \ZcupHspace{}\to \ZcupHspace{}),
$$
since $\chi_j = O_0(1;\ZcupHspace{}\to \ZcupHspace{})$. 
The proof for $R_{k}^\sharp$ is identical.
\end{proof}

\subsection{Some boundary compatible operators}\label{sec:verifyboundarycompatible}

\begin{lemma}\label{l:doneWithAds}
If $\varphi\in C^\infty(\overline{\Omega})$, $\supp \nabla\varphi \cap \partial\Omega=\emptyset$, and $\supp \varphi\cap \Gamma_{\tr}=\emptyset$, then 
$\varphi \in \Lf$ in the sense of Definition \ref{def:freqOp}.
\end{lemma}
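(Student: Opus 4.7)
To establish $\varphi \in \Lf$, I need to verify the three conditions in Definition \ref{def:freqOp}: (a) $\varphi \colon \Dspace{n} \to \Dspace{n}$ is bounded uniformly in $k$ for all $n \in \mathbb{Z}$; (b) for every $N \geq 0$, $\ad_{\cP_k}^N \varphi = O_N(k^{-N}; \Dspace{} \to \Dspace{})$; and (c) the analogue of (b) for $\varphi^*=\bar\varphi$. The two hypotheses imply that there exist a neighbourhood $U_- \supset \partial\Omega_-$ on which $\varphi \equiv c_1$ for some constant $c_1$, and a neighbourhood $U_+ \supset \Gamma_{\tr}$ on which $\varphi \equiv 0$; in particular $\nabla\varphi \equiv 0$ on the open set $U := U_- \cup U_+ \supset \partial\Omega$.

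For (a), I would combine the trivial product bound $\|\varphi u\|_{H^n_k}\le C\|u\|_{H^n_k}$ (with $C$ independent of $k$ since $\varphi$ is smooth) with the norm equivalence $\|\cdot\|_{\Dspace{n}}\sim \|\cdot\|_{H^n_k}$ on the subspace of $H^n_k$ satisfying the boundary conditions defining $\Dspace{n}$ (which follows from Assumption \ref{ass:ell} together with the description of $\Dspace{n}$ in Section~\ref{sec:verify}). To verify preservation of those boundary conditions, I would use that in $U_-$, $\cP_k$ coincides on $\Dspace{2}$-regular functions with the classical interior differential operator $\mathcal{L}_k=-k^{-2}\div(A_\theta\nabla\cdot)+k^{-2}\tilde b_\theta\cdot\nabla-n_\theta$, so that $\cP_k^j(\varphi u)=c_1\,\cP_k^j u$ near $\partial\Omega_-$; this transfers the Neumann conditions $\partial_{\nu,A_\theta}(\cP_k^j u)|_{\partial\Omega_-}=0$ from $u$ to $\varphi u$. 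In $U_+$, $\varphi u\equiv 0$ so every Dirichlet condition on $\Gamma_{\tr}$ is automatic. The Dirichlet case is handled identically, and negative indices follow by duality (using the same argument for $\bar\varphi$).

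For (b), the key point is that on the dense subset $\Dspace{\infty}:=\bigcap_n \Dspace{n}$ of each $\Dspace{n}$, $\cP_k$ acts as $\mathcal{L}_k$ and hence $\ad_{\cP_k}^N \varphi=\ad_{\mathcal{L}_k}^N\varphi$ pointwise. I would prove by induction on $N$ that $\ad_{\mathcal{L}_k}^N\varphi = Q_N$, where $Q_N$ is a classical differential operator of order at most $N$ with smooth coefficients supported in $\{\nabla\varphi\neq 0\}\subset\Omega\setminus U$, and satisfies
\[
\|Q_N u\|_{H^{n-N}_k}\le C\,k^{-N}\|u\|_{H^n_k}.
\]
The base case $N=0$ is trivial, and the inductive step is a direct computation generalising \eqref{e:helpTheFuture1}--\eqref{e:helpTheFuture2}: commuting $\mathcal{L}_k$ (order $2$, with the top-order coefficient weighted by $k^{-2}$) with $Q_{N-1}$ (order $N-1$) produces an operator of order $\le N$ by the standard top-symbol cancellation, its coefficients are linear in first derivatives of the coefficients of $Q_{N-1}$ (hence still supported in $\Omega\setminus U$), and in the $k$-weighted norm each commutation contributes exactly a factor $k^{-1}$ (one derivative costs $k$, offset by the $k^{-2}$ prefactor of the top part of $\mathcal{L}_k$). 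Because $Q_N u$ vanishes on $U$ -- hence in a neighbourhood of $\partial\Omega$ -- it automatically lies in $\Dspace{n-N}$, so the $H^{n-N}_k$-bound upgrades to the $\Dspace{n-N}$-bound via the norm equivalence from (a). Density of $\Dspace{\infty}$ in $\Dspace{n}$ and duality then cover every $n\in\mathbb{Z}$. Condition (c) is obtained by applying the same argument to $\bar\varphi$, which satisfies the same hypotheses.

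The main obstacle is the inductive step in (b): one must carefully verify the classical symbol-level cancellation ensuring $\ord([\mathcal{L}_k,Q_{N-1}])\le N$ rather than the naive $N+1$, and simultaneously track that the commutator coefficients remain supported in $\{\nabla\varphi\neq 0\}$. Both are concrete generalisations of the explicit two-step computation already done in Lemma~\ref{l:spatialCut}; nothing beyond a careful bookkeeping of derivatives of $\varphi$, $A_\theta$, $\tilde b_\theta$ and $n_\theta$ should be required.
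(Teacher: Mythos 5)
Your proof is correct and follows essentially the same route as the paper: both reduce the iterated commutator $\ad_{\cP_k}^N\varphi$ to commutators with the underlying local differential operator, observe that the resulting operator has coefficients supported in $\supp\nabla\varphi\Subset\Omega$ so that $Q_N u$ is compactly supported away from $\partial\Omega$ (and hence automatically in $\Dspace{n-N}$), and conclude via the standard $k$-weighted commutator bound for differential operators together with duality/density. The only structural difference is cosmetic: the paper explicitly localises via an auxiliary cutoff $\widetilde{\varphi}\in C_c^\infty(\Omega)$ with $\widetilde{\varphi}\equiv 1$ on $\supp\nabla\varphi$ and writes $\ad_{\cP_k}^N\varphi=(\ad_{L_k}^N\varphi)\widetilde{\varphi}$, whereas you track the support implicitly; you also spell out the $N=0$ case (boundedness of multiplication by $\varphi$ on the $\Dspace{n}$ scale) which the paper leaves implicit.
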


\bpf
By Definition \ref{def:freqOp},
we need to show that 
$\ad_{\cP_k}^N\varphi =O_{N}(k^{-N};\Dspace{}\to\Dspace{})$.
Let $\widetilde{\varphi}\in C^\infty_c(\Omega)$ be such that $\widetilde{\varphi}\equiv 1$ on $\supp \nabla \varphi$.
We first show that $\ad_{\cP_k}^N \varphi = (\ad_{L_k}^N \varphi)\widetilde{\varphi}$ on $C^\infty(\overline{\Omega})$ where $L_k$ is a second-order differential operator. 
By \eqref{e:helpTheFuture1}, for $u,v\in \Zspace{}$,
\beq\label{e:helpTheFuture3}
\langle (\ad_{\cP_k} \varphi) u,v\rangle
= -k^{-2}\big\langle (\Re  A_\theta \nabla u) \cdot \nabla \varphi+\nabla\cdot(u (\Re A_\theta )\nabla \varphi),v \big\rangle .
\eeq
By \eqref{e:helpTheFuture3}, 
\beqs
\ad_{\cP_k} \varphi = \widetilde{\varphi}(\ad_{\cP_k} \varphi) \widetilde{\varphi}= \widetilde{\varphi}(\ad_{L_k} \varphi) \widetilde{\varphi}.
\eeqs
%
%
Thus
\beqs
\ad^2_{\cP_k} \varphi= \cP_k \widetilde{\varphi}(\ad_{L_k} \varphi)\widetilde{\varphi}-\widetilde{\varphi}(\ad_{L_k} \varphi)\widetilde{\varphi} \cP_k.
\eeqs
Since $\cP_k \widetilde{\varphi}= L_k  \widetilde{\varphi}$ and $ \widetilde{\varphi} \cP_k = \widetilde{\varphi}L_k$, 
\beqs
\ad^2_{\cP_k} \varphi= L_k \widetilde{\varphi}(\ad_{L_k} \varphi)\widetilde{\varphi}-\widetilde{\varphi}(\ad_{L_k} \varphi)\widetilde{\varphi} L_k=\ad^2_{L_k} \varphi=(\ad^2_{L_k} \varphi)\widetilde{\varphi};
\eeqs
the fact that  $\ad_{\cP_k}^N \varphi = (\ad_{L_k}^N \varphi)\widetilde{\varphi}$ can be proved similarly by induction.
Therefore, given $u \in \Dspace{n+N}$, $\ad_{\cP_k}^N \varphi u= (\ad_{L_k}^N \varphi)\widetilde{\varphi}u \in \Hspace{n}$ with compact support in $\Omega$, and thus, when $n\in \mathbb{N}$, $(\ad_{\cP_k}^N \varphi) u \in \Dspace{n}$. Thus, by \eqref{e:warmOffice1}, commutator results for differential operators, and Corollary \ref{cor:DnsubZn}, for $n\in \mathbb{N}$,
\beqs
\| (\ad_{\cP_k}^N \varphi)u\|_{\Dspace{n}}\leq \| (\ad_{\cP_k}^N \varphi)u\|_{\Hspace{n}}
=\| (\ad_{L_k}^N \varphi)u\|_{\Hspace{n}}
\leq C k^{-N} \| u\|_{\Hspace{n+N}} \leq C' k^{-N} \|u\|_{\Dspace{n+N}}.
\eeqs
By the spectral theorem, $(\Dspace{s})_s$ is an interpolation scale, and the result for general $n$ follows by duality and interpolation.
\epf

\begin{lemma}[$\varphi P_k\varphi \in \Lf^2$ for suitable $\varphi$]
\label{l:commuteP}
Suppose that $\varphi\in C^\infty(\overline{\Omega})$, $\supp \nabla\varphi \cap \partial\Omega=\emptyset$, and $\supp \varphi\cap \Gamma_{\tr}=\emptyset$. Then,
\beq\label{eq:HRevil1}
\ad_{\varphi\cP_k\varphi }^NP_k= O_{N+2}(k^{-N}; \Dspace{}\to \Dspace{}), 
\quad 
\ad_{\varphi\cP^*_k\varphi }^NP_k= O_{N+2}(k^{-N}; \Dspace{}\to \Dspace{}), 
\eeq
and
\beq\label{eq:HRevil2}
\ad_{\cP_k}^N\varphi P_k\varphi = O_{N+2}(k^{-N}; \Dspace{}\to \Dspace{}), 
\quad 
\ad_{\cP^*_k}^N\varphi P_k\varphi = O_{N+2}(k^{-N}; \Dspace{}\to \Dspace{})\,;
\eeq
in particular, $\varphi P_k\varphi \in \Lf^2$ in the sense of Definition \ref{def:freqOp}. 
\end{lemma}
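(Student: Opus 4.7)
The plan is to decompose $P_k = \cP_k + R$, where $R := \frac{1}{2}(P_k - P_k^*)$, and to treat each piece separately, exploiting the support conditions on $\varphi$. Since $A_\theta, b_\theta, n_\theta$ coincide with $A, 0, n$ outside the PML region, $R$ is a second-order differential operator whose coefficients are supported in the PML. Because $\supp \varphi \cap \Gamma_{\tr} = \emptyset$ and the PML does not touch $\partial \Omega_-$, the operator $\varphi R \varphi$ is a second-order differential operator with smooth coefficients compactly supported in the interior of $\Omega$, disjoint from $\partial \Omega$. This structural observation is what will reduce everything to interior differential-operator calculus, exactly as in Lemma \ref{l:doneWithAds}.

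The first step is to show $\varphi \cP_k \varphi \in \Lf^2$ purely by composition. By the functional calculus, $\cP_k = \cX_k^2 - (\CGa + 1)\Id$ is bounded $\Dspace{s}\to \Dspace{s-2}$ for every $s$, i.e.\ $\cP_k = O_2(1; \Dspace{}\to\Dspace{})$, while Lemma \ref{l:doneWithAds} gives $\varphi \in \Lf$. For the iterated commutators, the Leibniz rule together with $\ad_{\cP_k}\cP_k = 0$ yields
\begin{equation*}
\ad_{\cP_k}^N(\varphi \cP_k \varphi) = \sum_{i+j = N} \binom{N}{i}(\ad_{\cP_k}^i \varphi)\,\cP_k\,(\ad_{\cP_k}^j \varphi),
\end{equation*}
and each summand is $O_{N+2}(k^{-N}; \Dspace{}\to\Dspace{})$ by composition, using $\ad_{\cP_k}^i \varphi = O_i(k^{-i}; \Dspace{}\to\Dspace{})$ from Lemma \ref{l:doneWithAds}.

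The second step is to show $\varphi R \varphi \in \Lf^2$. Since its coefficients are compactly supported in the interior of $\Omega$, the image $\varphi R \varphi u$ vanishes in a neighbourhood of $\partial \Omega$ for every input $u$ and hence automatically satisfies every boundary condition defining the Dspace scale; together with the standard $H_k^s$-boundedness of smooth second-order differential operators and $\Dspace{s}\subset H_k^s$, this gives $\varphi R \varphi = O_2(1; \Dspace{}\to\Dspace{})$. The commutators $\ad_{\cP_k}^N(\varphi R \varphi)$ can be computed entirely in the interior, where $\cP_k$ coincides with the differential representative $L_k$ appearing in the proof of Lemma \ref{l:doneWithAds}; by standard semiclassical commutator calculus, the $N$-fold commutator of two $k^{-1}\nabla$-order two operators is a differential operator of $k^{-1}\nabla$-order $N+2$ with prefactor $k^{-N}$, whose output again vanishes near $\partial \Omega$ and hence is in $\Dspace{s-N-2}$. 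Adding the two pieces gives \eqref{eq:HRevil2}; the adjoint bound uses $(\varphi P_k \varphi)^* = \bar\varphi P_k^* \bar\varphi = \bar\varphi \cP_k \bar\varphi - \bar\varphi R \bar\varphi$, to which the same argument applies, and since $\cP_k^* = \cP_k$ the two equalities in \eqref{eq:HRevil2} coincide, establishing $\varphi P_k \varphi \in \Lf^2$.

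For \eqref{eq:HRevil1}, the same decomposition yields
\begin{equation*}
\ad_{\varphi \cP_k \varphi}^N P_k = \ad_{\varphi \cP_k \varphi}^N \cP_k + \ad_{\varphi \cP_k \varphi}^N R.
\end{equation*}
The first summand is reduced to the second step by iterating the identity $\ad_{\varphi \cP_k \varphi}\cP_k = -\ad_{\cP_k}(\varphi \cP_k \varphi)$ combined with the Leibniz rule. The second summand is handled by induction on $N$: at each step the output of $\ad_{\varphi \cP_k \varphi}^j R$ continues to vanish near $\partial \Omega$ (because the output of $R$ does, and differential operators do not spread support), so the next commutator is again an interior commutator that produces one extra order of regularity together with one factor of $k^{-1}$. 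The $\cP_k^*$ versions are identical since $\cP_k^* = \cP_k$. The main obstacle throughout will be keeping track of Dspace membership through the iteration; this is handled uniformly by the observation that every $\ad$ involving $\varphi$ or $R$ contributes a factor supported in the interior of $\Omega$, so that the output always vanishes near $\partial \Omega$ and Dspace membership reduces to the standard $H_k^s$-boundedness of smooth differential operators.
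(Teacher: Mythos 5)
Your decomposition $P_k = \cP_k + R$ with the skew part $R$ supported in the PML, combined with the observation that $\varphi R \varphi$ is an interior differential operator, is a genuinely different route from the paper's proof, which instead localises spatially with cutoffs $\psi_{-1},\psi_0$ equal to $1$ near $\partial\Omega_-$ (where $\varphi$ is constant and $\cP_k = P_k$) and then reduces the remainder to a bare differential-operator computation after sandwiching $\varphi\cP_k\varphi$ and $P_k$ by interior cutoffs. Your argument for \eqref{eq:HRevil2}, and in particular the Leibniz expansion $\ad_{\cP_k}^N(\varphi \cP_k \varphi) = \sum_{i+j = N} \binom{N}{i}(\ad_{\cP_k}^i \varphi)\,\cP_k\,(\ad_{\cP_k}^j \varphi)$ together with Lemma \ref{l:doneWithAds}, is clean and correct, and the treatment of $\varphi R\varphi$ is sound. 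But two points in the treatment of \eqref{eq:HRevil1} don't stand as written. First, a factual error: you justify the boundary-vanishing of $\ad_{\varphi\cP_k\varphi}^j R$ by "because the output of $R$ does [vanish near $\partial\Omega$]". It does not: the PML region reaches $\Gamma_{\tr}$, so $Ru$ need not vanish near $\Gamma_{\tr}$. The conclusion is still true for $j\ge 1$ because every term in the iterated commutator carries a factor of $\varphi$ (either on the far left or applied before $R$), and since $\varphi$ vanishes near $\Gamma_{\tr}$ and differential operators preserve supports the output vanishes there; near $\partial\Omega_-$ the vanishing does come from $R$. This repair is easy, but it is a different argument from the one you gave, and it is exactly the observation you need.

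Second, and more seriously, the first summand $\ad_{\varphi\cP_k\varphi}^N\cP_k$ is dismissed with a one-sentence gesture. The identity $\ad_{\varphi\cP_k\varphi}\cP_k = -\ad_{\cP_k}(\varphi\cP_k\varphi)$ disposes of $N=1$, but for $N\ge 2$ you get $\ad_{\varphi\cP_k\varphi}^N\cP_k = -\ad_{\varphi\cP_k\varphi}^{N-1}\bigl((\ad_{\cP_k}\varphi)\cP_k\varphi + \varphi\cP_k(\ad_{\cP_k}\varphi)\bigr)$, and the Leibniz expansion produces factors $\ad_{\varphi\cP_k\varphi}^{i}(\ad_{\cP_k}\varphi)$, $\ad_{\varphi\cP_k\varphi}^{j}\cP_k$, $\ad_{\varphi\cP_k\varphi}^{l}\varphi$, each of which must contribute one power of $k^{-1}$ per $\ad$. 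For the first of these nothing you have established gives this; it would require a simultaneous induction on the whole family $\ad_{\varphi\cP_k\varphi}^i(\ad_{\cP_k}^m\varphi)$, which you neither state nor prove, and which in fact does not close as a purely algebraic Leibniz argument because the algebra generated by $\varphi$, $\cP_k$, $\ad_{\cP_k}^m\varphi$ is not stable under $\ad_{\ad_{\cP_k}^m\varphi}$. Unlike the $R$-piece, neither $\varphi\cP_k\varphi$ nor $\cP_k$ is an interior operator, so you cannot invoke interior semiclassical calculus directly; the actual source of the $k^{-N}$ gain is that $\ad_{\varphi\cP_k\varphi}^N\cP_k$ is supported where $\nabla\varphi\neq 0$, i.e.\ away from $\partial\Omega$ — and making that precise (rather than the operator decomposition) is what the paper's $\psi_0$-localisation does. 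To complete your proof of \eqref{eq:HRevil1} you would need to either carry out that localisation, or prove the simultaneous-induction estimates, before the $\cP_k + R$ split does any work for you.
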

%

\

\begin{proof}[Proof of Lemma \ref{l:commuteP}]
We prove \eqref{eq:HRevil1}; the proof of \eqref{eq:HRevil2} is very similar. We write 
\beqs
\ad_{\varphi \cP_k\varphi}^N P_k= \psi_0 \ad_{\varphi \cP_k\varphi}^N P_k+(1-\psi_0) \ad_{\varphi \cP_k\varphi}^N P_k, 
\eeqs
where $\psi_0$ is supported in a region close to $\partial\Omega$ where $\varphi$ is constant.
More precisely, let $\psi_i\in C^\infty(\overline{\Omega})$, $i=-1,0,1$ with $\supp (1-\psi_i)\cap \partial\Omega_-=\emptyset$, $\supp\psi_i\cap \supp(c-\varphi)=\emptyset$, $\supp (1-\psi_i)\cap \supp \psi_{i-1}=\emptyset$, and $\psi_2 P_k=\psi_2 P_k^*$
 (note that such functions exist since $\supp \nabla \varphi\cap \partial\Omega_-=\emptyset$).

By locality of $\cP_k$ and $P_k$ and the fact that $\cP_k=P_k$ on $\supp \psi_2\supset \supp\psi_0$,
$$
\psi_0 \ad_{\varphi \cP_k\varphi}^N P_k =\psi_0\ad_{c P_kc}^N P_k=0.
$$
Now, let $\tilde{\varphi}\in C^\infty(\overline{\Omega})$ with $\supp \tilde{\varphi}\cap \Gamma_{\tr}=\emptyset$ and $\supp \varphi\cap \supp (1-\tilde{\varphi})=\emptyset$. Then
$$
(1-\psi_0)\ad^N_{\varphi\cP_k\varphi}P_k=(1-\psi_0)\ad^N_{\varphi(1-\psi_{-1})\cP_k\varphi(1-\psi_{-1})}\big[(1-\psi_{-1})\tilde{\varphi}P_k(1-\psi_{-1})\tilde{\varphi}\big].
$$
Since $\supp \tilde{\varphi}\cap \supp (1-\psi_{-1})\cap \partial\Omega=\emptyset$, 
integration by parts (with all the boundary terms vanishing) shows that $\varphi(1-\psi_{-1})\cP_k\varphi(1-\psi_{-1})$ and $(1-\psi_{-1})\tilde{\varphi}P_k(1-\psi_{-1})\tilde{\varphi}$ coincide with differential operators on $C^\infty(\overline{\Omega})$. The result 
$$
\|\ad^N_{\varphi\cP_k\varphi}P_k\|_{\Dspace{n}\to \Dspace{n-N-2}}\leq Ck^{-N} 
$$
then follows by direct differentiation (using the product rule) and then density of $C^\infty(\overline{\Omega})$ in $\Dspace{n}$.
The proof of the analogous bound for $P_k^*$ is identical.
\end{proof}

%
%
%
%

\begin{lemma}[$\varphi (P_k^\sharp)^{-1}\varphi,\varphi (P_k^{\sharp,*})^{-1}\varphi\in \Lf^{-2}$]
\label{lem:huge_whiteboard1}
Suppose that $\varphi\in C^\infty(\overline{\Omega})$ and $\supp \nabla\varphi\cap \partial\Omega=\emptyset$, and $\supp \varphi\cap \Gamma_{\tr}=\emptyset$. Then, 
$$
\ad_{\cP_k}^N\varphi(P_k^\sharp)^{-1}\varphi=O_{N-2}(k^{-N};\Dspace{}\to \Dspace{}),\qquad \ad_{\cP_k}^N\varphi(P_k^{\sharp,*})^{-1}\varphi=O_{N-2}(k^{-N};\Dspace{}\to \Dspace{})
$$
and thus $\varphi (P_k^\sharp)^{-1}\varphi,\varphi (P_k^{\sharp,*})^{-1}\varphi\in \Lf^{-2}$ in the sense of Definition \ref{def:freqOp}. 
\end{lemma}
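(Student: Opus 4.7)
The plan is to establish both estimates simultaneously by induction on $N$. Since $S_k$ is self-adjoint, $P_k^{\sharp,*} = P_k^* + S_k$ has the same structure as $P_k^\sharp$ with $P_k$ replaced by $P_k^*$, so the argument is identical up to swapping these two operators, and I describe only the case of $\varphi(P_k^\sharp)^{-1}\varphi$. By Definition \ref{def:freqOp}, the claim that $\varphi(P_k^\sharp)^{-1}\varphi \in \Lf^{-2}$ reduces to showing
\begin{equation*}
\ad_{\cP_k}^N \big[\varphi(P_k^\sharp)^{-1}\varphi\big] = O_{N-2}(k^{-N}; \Dspace{} \to \Dspace{}) \qquad \tfa N \in \mathbb{N}.
\end{equation*}

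For the base case $N = 0$, given $u \in \Dspace{n}$ Lemma \ref{l:doneWithAds} gives $\varphi u \in \Dspace{n} \subset \ZcupHspace{n}$, so Proposition \ref{prop:ResPksharp} yields $v := (P_k^\sharp)^{-1}(\varphi u) \in \ZcapHspace{n+2}$ with $\|v\|_{\Hspace{n+2}} \leq C(1+\rho(k))\|u\|_{\Dspace{n}}$. The crucial point is then that $\varphi v$ lies in the strictly smaller space $\Dspace{n+2}$: since $\supp \varphi \cap \Gamma_{\tr} = \emptyset$, all conditions of the form $\cP_k^j(\varphi v)|_{\Gamma_{\tr}} = 0$ are trivial, while since $\nabla \varphi$ vanishes in a neighbourhood of $\partial\Omega_-$ one has $\cP_k^j(\varphi v) = \varphi \cP_k^j v$ there. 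The required conditions on $\cP_k^j v$ at $\partial\Omega_-$ are then obtained inductively in $j$ from the identity $\cP_k v = \varphi u - S_k v$, valid in a neighbourhood of $\partial\Omega_-$ (where $P_k = P_k^*$ since $A_\theta, n_\theta$ are real and $b_\theta = 0$ outside the PML), together with the fact that $S_k v \in \Dspace{\infty}$ for any $v \in \cH$ by Proposition \ref{prop:f(Pk)} and the compact support of $\psi^\sharp$, and the assumption $u \in \Dspace{n}$ which ensures $\varphi u$ carries the right boundary conditions at each step.

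For the inductive step I would use $\ad_{\cP_k} S_k = 0$ (by the functional calculus) and the commutator identity $\ad_{\cP_k}(P_k^\sharp)^{-1} = -(P_k^\sharp)^{-1}(\ad_{\cP_k} P_k)(P_k^\sharp)^{-1}$ to expand
\begin{equation*}
\ad_{\cP_k} \big[\varphi(P_k^\sharp)^{-1}\varphi\big] = (\ad_{\cP_k}\varphi)(P_k^\sharp)^{-1}\varphi + \varphi(P_k^\sharp)^{-1}(\ad_{\cP_k}\varphi) - \varphi(P_k^\sharp)^{-1}(\ad_{\cP_k} P_k)(P_k^\sharp)^{-1}\varphi,
\end{equation*}
and insert auxiliary cutoffs $\widetilde\varphi$ satisfying the same support hypotheses as $\varphi$ and equal to $1$ on $\supp \varphi \cup \supp \nabla \varphi$, so that every occurrence of $(P_k^\sharp)^{-1}$ sits inside a sandwich $\widetilde\varphi(P_k^\sharp)^{-1}\widetilde\varphi$ to which the induction hypothesis applies. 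The remaining building blocks are $\ad_{\cP_k}\varphi \in \Lf^1$ with an $O(k^{-1})$ gain (Lemma \ref{l:doneWithAds}) and the middle factor $\widetilde\varphi(\ad_{\cP_k} P_k)\widetilde\varphi = \tfrac12 \widetilde\varphi[P_k^*, P_k]\widetilde\varphi$, which agrees with a genuine differential operator away from $\partial\Omega$ (by $\supp \nabla \widetilde\varphi \cap \partial\Omega = \emptyset$) and whose iterated commutators with $\cP_k$ are therefore controlled by direct computation as in the proofs of Lemmas \ref{l:spatialCut} and \ref{l:commuteP}. Expanding $\ad_{\cP_k}^N$ by Leibniz yields a finite sum of such sandwiched terms, each of which is estimated by the induction hypothesis.

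The main obstacle is the base case: $(P_k^\sharp)^{-1}$ does not preserve the boundary conditions defining $\Dspace{n+2}$ (but only the weaker ones defining $\Zspace{n+2}$), so the proof needs to exploit in a precise way the two support hypotheses on $\varphi$ and the fact that $S_k$ maps into $\Dspace{\infty}$ in order to recover the missing boundary conditions. Once this boundary-condition bookkeeping is done, the inductive step is essentially forced by the algebraic structure and the previously-established Lemmas \ref{l:doneWithAds}, \ref{l:commuteP}, and Proposition \ref{prop:variant_adNRf(Pk)}.
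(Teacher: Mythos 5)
Your proof takes a genuinely different route from the paper's. The paper first uses locality of $\cP_k$ on the sandwiched operator to replace $\ad_{\cP_k}^N$ by $\ad_{\tilde\varphi\cP_k\tilde\varphi}^N$ for a cutoff $\tilde\varphi$ with $\tilde\varphi\equiv 1$ on $\supp\varphi$, $\supp\nabla\tilde\varphi\cap\partial\Omega=\emptyset$, and $\supp\tilde\varphi\cap\Gamma_{\tr}=\emptyset$; it then reduces, via Leibniz and Lemma~\ref{l:doneWithAds}, to bounding $\ad_{\tilde\varphi\cP_k\tilde\varphi}^N(P_k^\sharp)^{-1}$, and applies Proposition~\ref{prop:atoms} with $X=\tilde\varphi\cP_k\tilde\varphi$ and $Y=P_k^\sharp$, the required commutator estimate $\ad_{\tilde\varphi\cP_k\tilde\varphi}^NP_k^\sharp=O_{N+2}(k^{-N};\Dspace{}\to\Dspace{})$ being supplied by Lemma~\ref{l:commuteP} (for $P_k$) and Proposition~\ref{prop:variant_adNRf(Pk)} (for $S_k$). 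The decisive move is to localise $\cP_k$ \emph{before} commuting, so that all subsequent commutators involve only $\tilde\varphi\cP_k\tilde\varphi$, whose iterated commutators with $P_k^\sharp$ are already controlled. You instead attempt a direct induction in $N$ on $\ad_{\cP_k}^N[\varphi(P_k^\sharp)^{-1}\varphi]$.

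Two gaps need addressing. First, you quote $\|v\|_{\Hspace{n+2}}\leq C(1+\rho(k))\|u\|_{\Dspace{n}}$ as the output of Proposition~\ref{prop:ResPksharp}, but that proposition gives a $k$-\emph{uniform} bound $\Csharp(k_0,n)\|u\|_{\ZcupHspace{n}}$; you have conflated $(P_k^\sharp)^{-1}$ with $R_k=P_k^{-1}$, for which Proposition~\ref{prop:Rstar} carries the $(1+\rho(k))$ factor. This is not cosmetic: with $\rho(k)$, your base case gives only $O_{-2}(\rho(k);\Dspace{}\to\Dspace{})$, which is not membership in $\Lf^{-2}$ (Definition~\ref{def:freqOp} demands $O_{-2}(1)$), and the entire purpose of the coercive perturbation $P_k^\sharp=P_k+S_k$ is that its inverse is $k$-uniformly bounded. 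Second, the inductive step is only sketched: after one application of $\ad_{\cP_k}$ you need to control the iterated commutators of the new middle factor $\tilde\varphi(\ad_{\cP_k}P_k)\tilde\varphi$ with $\cP_k$, and you need to make precise how the fully expanded $\ad_{\cP_k}^N$ decomposes into finitely many products on which the induction hypothesis and these auxiliary bounds apply. You gesture at ``direct computation as in Lemmas~\ref{l:spatialCut} and~\ref{l:commuteP}'' without carrying it out; the atom machinery of Proposition~\ref{prop:atoms} exists precisely to organise this bookkeeping, and an informal Leibniz-plus-sandwich sketch is not a substitute. On the positive side, your base-case observation --- that the two support hypotheses on $\varphi$ force $\varphi(P_k^\sharp)^{-1}\varphi$ to preserve the boundary conditions defining $\Dspace{n+2}$, via the local identity $\cP_kv=\varphi u-S_kv$ near $\partial\Omega_-$, the fact that $S_k$ maps into $\Dspace{\infty}$, and $u\in\Dspace{n}$ carrying boundary conditions of the right order --- is correct once the $\rho(k)$ slip is removed, and it is a useful elaboration of a point the paper leaves implicit when it invokes the $\Dspace{}$-scale variant of Proposition~\ref{prop:atoms}.
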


%

\begin{proof}[Proof of Lemma \ref{lem:huge_whiteboard1}]
We prove the statement for $P_k^\sharp$. The proof for $(P_k^{\sharp})^*$ is identical.
Let $\tilde{\varphi}\in C^\infty(\overline{\Omega})$ with $\supp \nabla\tilde{\varphi}\cap \partial\Omega=\emptyset$ and $\supp (1-\tilde{\varphi})\cap \supp \varphi=\emptyset$. Then, by locality of $\cP_k$,
\beq\label{e:mrcroc2}
\ad_{\cP_k}^N\varphi(P_k^\sharp)^{-1}\varphi= \ad_{\tilde{\varphi}\cP_k\tilde{\varphi}}^N\varphi(P_k^\sharp)^{-1}\varphi,
\eeq
By Lemma \ref{l:doneWithAds}, 
$\ad_{\cP_k}^N\varphi =O_{N}(k^{-N};\Dspace{}\to\Dspace{})$,
so that, by repeated use of the identity $\ad_{AB} C = A(\ad_B C) + (\ad_A C)B$,
\beq\label{e:mrcroc1}
 \ad_{\tilde\varphi\cP_k\tilde\varphi}^N\varphi =O_{N}(k^{-N};\Dspace{}\to\Dspace{}).
\eeq
Therefore, by the combination of \eqref{e:mrcroc1}, \eqref{e:mrcroc2}, and repeated use of the identity $\ad_{A} BC = (\ad_A B)C + B(\ad_A C)$, 
it is enough to show that 
\beq\label{e:bitLost1}
\ad_{\tilde{\varphi}\cP_k\tilde{\varphi}}^N(P_k^\sharp)^{-1}=O_{N-2}(k^{-N};\Dspace{}\to\Dspace{}).
\eeq
To prove \eqref{e:bitLost1} we use Proposition~\ref{prop:atoms}. 
For this, observe that
$$
\ad_{\tilde{\varphi}\cP_k\tilde{\varphi}}^NP_k^\sharp= \ad_{\tilde{\varphi}\cP_k\tilde{\varphi}}^N(P_k+\psi(\cP_k)).
$$
By Lemma~\ref{l:commuteP},
$$
\ad_{\tilde{\varphi}\cP_k\tilde{\varphi}}^NP_k=O_{2+N}(k^{-N};\Dspace{}\to \Dspace{}) \quad\tand\quad \ad_{\tilde{\varphi}\cP_k\tilde{\varphi}}^NP_k^*=O_{2+N}(k^{-N};\Dspace{}\to \Dspace{}).
$$
By induction, $\ad_A^N(B+C)= \ad_A^N B + \ad_A^N C$, and so
\beq\label{eq:HWB1}
 \ad_{\tilde{\varphi}\cP_k\tilde{\varphi}}^N\cP_k=O_{2+N}(k^{-N};\Dspace{}\to \Dspace{}).
\eeq
Therefore, by Proposition \ref{prop:variant_adNRf(Pk)},
\begin{equation*}
\ad_{\tilde{\varphi}\cP_k\tilde{\varphi}}^N\psi(\cP_k)=O_{2+N}(k^{-N};\Dspace{}\to \Dspace{}).
\end{equation*}
We deduce that
$$
\ad_{\tilde{\varphi}\cP_k\tilde{\varphi}}^N P^\sharp_k=O_{2+N}(k^{-N};\Dspace{}\to \Dspace{})
$$
and \eqref{e:bitLost1} -- and hence also the result -- then follows from Proposition~\ref{prop:atoms}.
\end{proof}

\section{Pseudolocality of the elliptic projection}
\label{sec:pseudoLocPi}

In Sections \ref{sec:boundsCsol}, \ref{sec:pseudolocS}, and \ref{sec:assumptions}, we studied pseudolocality 
properties at the continuous level. Another key tool required for the proof of Theorem \ref{t:theRealDeal} is of a discrete nature, namely, we need to establish the spatial pseudolocality of the Galerkin projection $\Pi_k^\sharp$ associated to (the adjoint of) the sesquilinear form $a_k^\sharp$ defined in Definition \ref{e:defPksharp}, see Theorem \ref{t:pseudoLocalPi} below. 

We keep the notation of Section \ref{sec:assumptions}. The operator $\Pi_k^\sharp$ is defined as follows.
\begin{definition}[Elliptic projection]
	\label{def:ellipticProjection}
	Given $k > 0$ and a linear subspace $\blue{V} \subset \cZ_k$, the {\em elliptic projection} onto $\blue{V}$ is the linear operator $\Pi_k^\sharp : \cZ_k \to \blue{V}$ defined by
	\[a_k^\sharp(v,\Pi_k^\sharp u)  = a_k^\sharp(v,u) \qquad \tfa v \in \blue{V},\]
	where we recall that $\cZ_k$ is defined by \eqref{e:defZk_concrete}, $a_k^\sharp(u,v) = a_k(u,v) + (S_k u,v)_{\cH}$ and $S_k$ is defined by \eqref{e:def_Sk}. 
\end{definition}
The operator $\Pi_k^\sharp$ is well-defined for all $k > 0$ by the Lax-Milgram theorem, since $a_k^\sharp$ is coercive (by Proposition \ref{prop:ResPksharp}).

\begin{theorem}[Pseudolocality of $\Id - \Pi_k^\sharp$]
	\label{t:pseudoLocalPi}
Let \blue{$p\geq 1$, $C_0,\kappa,k_0,\mathfrak{c}>0$}. There exists $h_0>0$ such that for all $N>0$, $\chi,\psi \in C^\infty(\overline{\Omega})$ satisfying $\chi \perp_{\mathfrak{c}} \psi,$ there exists $C>0$ such that for all $k\geq k_0$, \blue{all $V_{\mathcal{T}}$ finite element spaces over a mesh $\mathcal{T}$ that are well-behaved of order $p$ at frequency $k$ with constants $(C_0,\kappa)$ in the sense of Definition \ref{d:wellbehaved}, and satisfy $h(\mathcal{T})\leq h_0$, } and $u\in\cZ_k$
%
	\[
	\|\chi (\Id - \Pi_k^\sharp) \psi u\|_{H^1_k} \leq C k^{-N} \|(\Id - \Pi_k^\sharp) \psi u\|_{H_k^{-p}},
	\]
where $\Pi_k^\sharp$ is the elliptic projection onto $\blue{V}$.
\end{theorem}

\begin{remark}
Through the constants $\mathfrak{c}$ and $h_0$, the assumptions of Theorem \ref{t:pseudoLocalPi} require a sufficient number of ``layers'' of elements separating the supports of $\chi$ and $\psi$. 
\end{remark}

Theorem~\ref{t:pseudoLocalPi} is an immediate consequence of the following two lemmas.

\begin{lemma}
	\label{l:iGainKBigRemainder}
Let $\mathfrak{c},\blue{C_0,\kappa}>0$ and $p\geq 1$. Then, there exists $h_0>0$ such that the following holds. For any $k_0 > 0$, $N > 0$, and any $\chi_-,\chi_+,\psi \in C^\infty(\overline{\Omega})$ satisfying
	$$\chi_- \prec_\mathfrak{c} \chi_+ \quad\tand\quad\chi_+ \perp \psi,$$
	there exists $C > 0$ such that, for all $k \geq k_0$, \blue{all $V_{\mathcal{T}}$ finite element spaces over a mesh $\mathcal{T}$ that are well-behaved of order $p$ at frequency $k$ with constants $(C_0,\kappa)$ in the sense of Definition \ref{d:wellbehaved}, and satisfy $h(\mathcal{T})\leq h_0$, } and $u \in \cZ_k$,
	\[\|\chi_-(\Id -\Pi_k^\sharp) \psi u\|_{H^1_k} \leq Ck^{-N}\Big( \|\chi_+(\Id - \Pi_k^\sharp)\psi u\|_{L^2} + \|(\Id - \Pi_k^\sharp)\psi u\|_{H_k^{-N}}\Big).\] 
\end{lemma}
\ble
\label{l:iAmSoSmooth}
Let $\blue{C_0,\kappa,}k_0>0$ and $\mathfrak{c}>0$. Then, there exists $h_0>0$ such that for all $N > 0$ and every $\chi$, $\psi \in C^\infty(\overline{\Omega})$ satisfying $\chi \perp_{\mathfrak{c}} \psi$, there exists $C > 0$ such that, for all $k \geq k_0$,\blue{all $V_{\mathcal{T}}$ finite element spaces over a mesh $\mathcal{T}$ that are well-behaved of order $p$ at frequency $k$ with constants $(C_0,\kappa)$ in the sense of Definition \ref{d:wellbehaved}, and satisfy $h(\mathcal{T})\leq h_0$,} and $u \in \cZ_k$,
\begin{equation}
	\label{e:iAmSoSmooth}\|\chi (\Id - \Pi_k^\sharp) \psi u\|_{L^2} \leq C\|(\Id - \Pi_k^\sharp) \psi u\|_{H_k^{-p}}.
\end{equation}
\ele

If $S_k$ is (formally) set to zero, then Lemmas \ref{l:iGainKBigRemainder} and \ref{l:iAmSoSmooth} are analogous to \cite[Lemmas 5.1 and 5.5]{AvGaSp:24}, respectively.

\

\begin{proof}[Proof of Lemma \ref{l:iGainKBigRemainder}]
Let $\mathfrak{c} > 0$ be given and let $h_0 >0$ be a sufficiently small constant depending only on $\mathfrak{c}$. Fix $k_0 > 0$, $N > 0$ and $\chi_-$, $\chi_+$ and $\psi$ as in the statement. 
In what follows, we denote by $C$ a generic constant depending only on the previous quantities. 

We first claim that, without loss of generality, we can assume that $\partial_\nu \chi_-=0$ and thus
\beq\label{e:reallyGoodSandwich1}
\ad_{\chi_-}P_k=O_1(k^{-1};\ZcapHspace{}\to\ZcupHspace{})
\eeq
by Lemma \ref{l:spatialCut} and Definition \ref{def:spatialCutoffs} and 
\beq\label{e:reallyGoodBaguette1}
	\ad_{\chi_-}S_k = O_{-\infty}(k^{-1}; \ZcupHspace{} \to \ZcapHspace{})
\eeq
by Proposition \ref{prop:proofadNR1}.
 Indeed, if $\partial_\nu \chi_-\neq 0$ we apply 
Lemma \ref{l:goodCut} with $\Omega_1= \supp \chi_-$ and $\Omega_2$ equal to $\supp(1-\chi_+)$ enlarged by distance $\mathfrak{c}/2$. We then relabel the resulting cut-off function $\chi_-$ and replace $\mathfrak{c}$ by $\mathfrak{c}/2$.

Let $k \geq k_0$, $u \in \cZ_k$ and suppose that $h \leq h_0$. It is sufficient to prove that
\begin{equation}
\label{e:localQOToShow}
\|\chi_-\Pi^\sharp_k \psi u\|_{H^{1}_k}\leq Ck^{-1/2}\|\chi_+\Pi_k^\sharp \psi u\|_{L^2} + Ck^{-N}\|(\Id - \Pi_k^\sharp )\psi u\|_{H_k^{-N}}
\end{equation}
since 
by iterating \eqref{e:localQOToShow} $2N$ times, (changing the cutoffs $\chi_-$ and $\chi_+$), one arrives at
\[\begin{split}
	\|\chi_-\Pi^\sharp_k \psi u\|_{H^1_k}&\leq Ck^{-N}\|\chi_+\Pi_k^\sharp \psi u\|_{L^2} + Ck^{-N}\|(\Id - \Pi_k^\sharp )\psi u\|_{H_k^{-N}}\\
	&= Ck^{-N}\|\chi_+(\Id - \Pi_k^\sharp) \psi u\|_{L^2} + Ck^{-N}\|(\Id - \Pi_k^\sharp )\psi u\|_{H_k^{-N}}
\end{split}\]
using the fact that $\chi_+ \psi = 0$. 

Let $\chi_0\in C^\infty(\overline{\Omega})$ be such that 
$\chi_- \prec_{\mathfrak{c}/4} \chi_0 \prec_{\mathfrak{c/4}} \chi_+$.
By the coercivity of $P_k^\sharp$ (cf. \eqref{eq:CoerciveAsharp}), the definition of $\Pi_k^\sharp$ (Definition \ref{def:ellipticProjection}), locality of $P_k$ and the fact that $\chi_- \psi = \chi_0 \psi = 0$, 
\begin{align*}
&\|\chi_-\Pi^\sharp_k \psi u\|_{H^1_k}^2 \leq C\abs{\big\langle  P_k^\sharp \chi_-\Pi^\sharp_k \psi u,\chi_- \Pi^\sharp_k \psi u\big\rangle}\\
&\quad = C\abs{\big\langle  P_k^\sharp \chi_-\Pi^\sharp_k \psi u,\chi_- {(\Id - \Pi^\sharp_k)}\psi u\big\rangle}\\
&\quad = C \Big(\abs{\langle P_k^\sharp \chi_-^2\Pi_k^\sharp \psi u, ({\Id - \Pi_k^\sharp}) \psi u\rangle} + \langle [P_k^\sharp,\chi_-] \chi_- \Pi_k^\sharp \psi u,{(\Id - \Pi_k^\sharp)} \psi u \rangle \Big)\\
&\quad = C \Big(\abs{\langle P_k^\sharp (\chi_-^2\Pi_k^\sharp \psi u - w_h),{(\Id - \Pi_k^\sharp)}\psi u\rangle} + \abs{\langle [P_k + S_k,\chi_-] \chi_- \Pi_k^\sharp\psi u ,{(\Id - \Pi_k^\sharp)} \psi u\rangle} \Big)\\
&\quad = C \big(\big|\big\langle P_k (\chi_-^2\Pi_k^\sharp \psi u - w_h),{(\Id - \Pi_k^\sharp)}\psi u\rangle\big|+ \big|\big\langle [P_k,\chi_-] \chi_- \Pi_k^\sharp\psi u ,\chi_0{(\Id - \Pi_k^\sharp)} \psi u\big\rangle\big|\big) + \Rrr,
\end{align*}
for all $w_h \in \blue{V_{\mathcal{T}}}$, where 
\begin{equation}
	\label{eq:nonLocalRemainder}
	\Rrr := \big|\big\langle S_k (\chi_-^2 \Pi_k^\sharp \psi u - w_h),(\Id - \Pi_k^\sharp) \psi u\big\rangle\big| + \big|\big \langle [S_k,\chi_-]\chi_- \Pi_k^\sharp \psi u,(\Id - \Pi_k^\sharp) \psi u \big\rangle\big|
\end{equation}
is the ``non-local" part. 
By \eqref{e:reallyGoodSandwich1} combined with the fact that $\chi_0 \psi = 0$,
\begin{align}\nonumber
\|\chi_-\Pi_k^\sharp \psi u\|_{H^1_k}^2&\leq 
C\big|\langle P_k (\chi_-^2\Pi^\sharp_k \psi u-w_h) ,{(\Id-\Pi_k^\sharp)}\psi u\rangle\big|
+Ck^{-1}\|\chi_-\Pi^\sharp_k \psi u\|_{H^1_k}\|\chi_0\Pi^\sharp_k \psi u\|_{L^2} + r
\label{eq:lunch1}
\end{align}
which implies 
\begin{equation}
	\label{eq:aux1}
	\|\chi_-\Pi_k^\sharp \psi u\|_{H^1_k}^2 \leq C\big|\langle P_k^\sharp (\chi_-^2\Pi^\sharp_k \psi u-w_h) ,{(\Id - \Pi^\sharp_k)} \psi u\rangle\big| +C k^{-2} \|\chi_0 \Pi_k^\sharp \psi u\|_{L^2}^2 + r.
\end{equation}%
Let $U_0$ be a neighbourhood of $\supp \chi_-$, and $U_1$ a set contained in $\{\chi_0 \equiv 1\}$. Since $\chi_- \prec_{\mathfrak{c}/4} \chi_0 \prec_{\mathfrak{c/4}} \chi_+$, we can arrange that 
\[d:=\partial_<(U_0,U_1) \geq \mathfrak{c}/8.\]
Hence, taking $h_0 < \frac{\mathfrak{c}}{8\kappa}$, where $\kappa$ is as in Definition \ref{d:sap}, we ensure that $d \geq \kappa h_0$. Thus, we can find a super-approximation $w_h$ to $\chi_-^2\Pi^\sharp_k \psi u$ with $\supp w_h \subset U_1$.
Now, for all $\epsilon<1$, 
\begin{align}\nonumber
&\big|\big\langle P_k(\chi_-^2\Pi^\sharp_k \psi u -w_h),{(\Id - \Pi_k^\sharp)} \psi u\big\rangle\big|
= { \big|\big\langle P_k(\chi_-^2\Pi^\sharp_k \psi u -w_h),\Pi_k^\sharp \psi u\big\rangle\big|} \qquad \textup{(by locality of $P_k$)}\\\nonumber
&\hspace{2cm}\leq \sum_{\blue{T}\in\mc{T}_k: \blue{T}\cap \supp \chi_0 \neq\emptyset}\| \chi_-^2\Pi^\sharp_k \psi u -w_h\|_{H^{1}_k(\blue{T})}\|\Pi_k^\sharp\psi u\|_{H^{1}_k(\blue{T})}\quad \textup{(by the definition of $a_k(\cdot,\cdot)$)}\\ \nonumber
&\hspace{2cm}\leq  \sum_{\blue{T}\in\mc{T}_k: \blue{T}\cap \supp \chi_0 \neq\emptyset}\|\Pi^\sharp_k\psi u\|_{H^{1}_k(\blue{T})}\frac{h_\blue{T}}{d}\Big(\|\Pi^\sharp_k \psi u\|_{L^2(\blue{T})}+\|\chi_-\Pi^\sharp_k \psi u\|_{H^{1}_k(\blue{T})}\Big)\\ \nonumber
& \hspace{6cm}\qquad \textup{(by the super-approximation property, Definition \ref{d:sap})}\\ \nonumber
&\hspace{2cm}\leq  C\sum_{\blue{T}\in\mc{T}: \blue{T}\cap \supp \chi_0 \neq\emptyset}\|\Pi^\sharp_k\psi u\|_{L^2(\blue{T})}\frac{1}{kd}\Big(\|\Pi^\sharp_k \psi u\|_{L^2(\blue{T})}+\|\chi_-\Pi^\sharp_k \psi u\|_{H^1_k(\blue{T})}\Big) 
\\ \nonumber
& \hspace{7cm}\qquad \textup{(by the inverse inequality, Definition  \ref{d:ii})}\\\nonumber
& \hspace{2cm}\leq  C \sum_{\blue{T}\in \cT_k:\blue{T} \cap \supp(\chi_0) \neq \emptyset} \frac{(1 + \epsilon^{-1})}{(kd)^2}\|\Pi^\sharp_k\psi u\|_{L^2(\blue{T})}^2 + \epsilon \|\chi_- \Pi_k^\sharp \psi u\|_{H^1_k(\blue{T})}^2\\ 
&\hspace{2cm}\leq  \frac{C(1+\e^{-1})}{(kd)^{{2}}}\|\chi_+\Pi^\sharp_k \psi u\|_{L^2(\Omega)}^2+  \e\|\chi_-\Pi^\sharp_k\psi u\|^2_{\cZ_k}.
\label{e:localQO4}
\end{align}
Inputting \eqref{e:localQO4} into \eqref{eq:aux1} and recalling that $d \geq \mathfrak{c} /8$ and $k\geq k_0$, we find that
\begin{equation}
	\label{eq:PplShoutingTrain}
	\|\chi_- \Pi_k^\sharp \psi u\|^2_{\Zspace{}} \leq C k^{-2} \|\chi_+ \Pi_k^\sharp \psi u\|_{L^2(\Omega)}^2 + \Rrr,
\end{equation}
To estimate $\Rrr$, since 
$\textup{dist}(\supp (w_h,\supp (1 - \chi_0))) > 0$, 
pseudolocality of $S_k$ (Theorem \ref{t:pseudoLocGeneral}) implies that 
\begin{align*}
\big|\langle S_k(\chi_-^2\Pi_k^\sharp \psi u-w_h),(1-\chi_0){(\Id - \Pi_k^\sharp)} \psi u\rangle\big| \leq Ck^{-N}
\big\| \chi_-^2\Pi^\sharp_k \psi u -w_h\big\|_{\Zspace{}}
\big\|{(\Id - \Pi_k^\sharp)} \psi u\big\|_{H_k^{-N}}.
\end{align*}
Arguing as in \eqref{e:localQO4}, but now using \ref{e:swlg} where before we used \blue{the inverse inequalities of Definition \ref{d:ii}}, and also recalling that $d \geq \mathfrak{c} /8$, we find that
\begin{equation*}
	\|\chi_-^2 \Pi_k^\sharp \psi u  - w_h\|^2_{\cZ_k} \leq \frac{C}{k^2}\Big(\|\chi_- \Pi_k^\sharp \psi u\|^2_{\cZ_k} + \|\chi_+ \Pi_k^\sharp \psi u\|^2_{L^2(\Omega)}\Big).
\end{equation*}
Therefore, for all $\epsilon < 1$, 
\begin{equation}
	\label{eq:lunch2}
	\begin{split}
		&\big|\langle S_k(\chi_-^2\Pi_k^\sharp \psi u-w_h),(1-\chi_0){(\Id - \Pi_k^\sharp)} \psi u\rangle\big| \\
		&\qquad\qquad = Ck^{-N}\Big(
		\epsilon\|\chi_-\Pi^\sharp_k\psi u\|_{\Zspace{}}^2+
		\e  \|\chi_+\Pi^\sharp_k \psi u\|_{L^2(\Omega)}^2+ \epsilon^{-1} \big\|{(\Id - \Pi_k^\sharp)}\psi u\big\|_{H_k^{-N}}^2
		\Big).
	\end{split}
\end{equation}
Reasoning similarly and using the mapping properties of $S_k$, we find that 
\begin{equation*}
	\begin{aligned}
	\big|\langle S_k(\chi_-^2\Pi_k^\sharp \psi u-w_h),\chi_0{(\Id - \Pi_k^\sharp)} \psi u\rangle\big|& \leq Ck^{-1}\Big(
	\epsilon \|\chi_-\Pi^\sharp_k\psi u\|_{\Zspace{}}^2+ \e 
	\|\chi_+\Pi^\sharp_k \psi u\|_{\mathcal{H}}^2+ \epsilon^{-1} \big\|\chi_0\Pi_k^\sharp \psi u\big\|_{H_k^{-N}}^2
	\Big)\\
	&\leq Ck^{-1}\Big(
	\epsilon \|\chi_-\Pi^\sharp_k\psi u\|_{\Zspace{}}^2+ (\e+\e^{-1}) 
	\|\chi_+\Pi^\sharp_k \psi u\|_{L^2(\Omega)}^2
	\Big).
	\end{aligned}
\end{equation*}
Arguing similarly, we obtain
\begin{equation}
\begin{aligned}
	&\big|\big \langle [S_k,\chi_-]\chi_- \Pi_k^\sharp \psi u,(\Id - \Pi_k^\sharp) \psi u \big\rangle\big|\\
	& \leq C \Big(\epsilon k^{-2} \|\chi_- \Pi_k^\sharp \psi u\|_{\Zspace{}}^2 +  k^{-2}(\e+\e^{-1}) \|\chi_+ \Pi_k^\sharp \psi u\|_{L^2(\Omega)}^2 + \epsilon^{-1}k^{-N}\|(\Id - \Pi_k^\sharp)\psi u\|_{H_k^{-N}}^2\Big);
	\end{aligned}
	\label{eq:lunch4}
\end{equation} 
indeed, $[\chi_-,S_k] = \chi_0[\chi_-,S_k] + (1 - \chi_0)[\chi_-,S_k]$
and 
\beqs
\chi_0[\chi_-,S_k] = O_{-\infty}(k^{-1};\ZcupHspace{} \to\ZcapHspace{})\,\,\tand\,\,(1 - \chi_0)[\chi_-,S_k] = - (1-\chi_0) S_k \chi_- = O_{-\infty}(k^{-\infty};\ZcupHspace{}\to \ZcupHspace{})
\eeqs
by, respectively, 
\eqref{e:reallyGoodBaguette1} and  Theorem \ref{t:pseudoLocGeneral}.

Combining~\eqref{eq:lunch2}-\eqref{eq:lunch4} thus leads to 
\[\Rrr\leq C\epsilon \|\chi_- \Pi_k^\sharp \psi u\|_{\cZ_k}^2 + C(1+\e^{-1})k^{-1}\|\chi_+ \Pi_k^\sharp \psi u\|^2_{L^2(\Omega)} + C\epsilon^{-1}k^{-N} \|(\Id - \Pi_k^\sharp)\psi u\|_{H_k^{-N}}^2.\]
for all $\epsilon < 1$. Inserting this estimate in \eqref{eq:PplShoutingTrain} and taking $\epsilon$ sufficiently small, we obtain~\eqref{e:localQOToShow} and hence the result. 
\end{proof}

\

In the proof of Lemma \ref{l:iAmSoSmooth}, we need a variant of Lemma \ref{l:iGainKBigRemainder} where, roughly speaking, the contributions at distance $k^{-1}2^n$ from $\supp \chi$ are multiplied by a weight decaying exponentially in $n$. The main tool is the following lemma:

\begin{lemma}[Dyadic decomposition for $S_k$]
\label{l:dyadicSk}
Let $\phi_0 \in C^\infty_c(\R^d)$ with $\phi_0(x) = 1$ for $|x| \leq \frac12$, $\phi_0(x) = 0$ for $|x|\geq 1$, and let $\phi_n(x) := \phi_0(x/2^{n}) - \phi_0(x/2^{n-1})$ for $n \geq 1$. Let $x_0 \in \overline{\Omega}$, and $R > 0$, and let $\varphi_{n,k} \in C^\infty(\overline{\Omega})$ be defined by
\beqs
\varphi_{n,k}(x) := \phi_n((x - x_0)/(Rk^{-1})).
\eeqs
Then, for any $k_0 > 0$ and $N \in\mathbb{N}$, there exists $C(N,k_0,R) > 0$ such that for all $k \geq k_0$ and $n \in \mathbb{N}$, 
$$\|\varphi_{n,k}S_k\varphi_{0,k}\|_{L^2 \to H_k^N} \leq C(N,k_0,R) 2^{-nN}.$$
\end{lemma}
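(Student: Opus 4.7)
The plan is to exploit the pseudolocality results of Section~\ref{sec:pseudolocS} at the dyadic length scale $\varepsilon_n:=2^n R/k$. A direct computation from the definition of $\phi_n$ shows that, for $n\geq 1$, $\varphi_{n,k}$ is supported in the annulus $\{2^{n-2}R/k\leq |x-x_0|\leq 2^nR/k\}$ (and in the ball of radius $R/k$ for $n=0$), with derivative bounds $|\partial^\alpha \varphi_{n,k}|\leq C_\alpha \varepsilon_n^{-|\alpha|}$. These bounds also yield uniform $H_k^N$ multiplier bounds: since $k^{-|\beta|}|\partial^\beta \varphi_{n,k}|\leq C_\beta(2^n R)^{-|\beta|}\leq C_\beta R^{-|\beta|}$, multiplication by $\varphi_{n,k}$ is bounded on $H_k^N$ uniformly in $n$ and $k$.

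For $n\in\{0,1,2\}$, the supports of $\varphi_{n,k}$ and $\varphi_{0,k}$ may overlap, but the bound is trivial: Proposition~\ref{prop:f(Pk)} gives $\|S_k\|_{L^2\to H_k^N}\leq C_N$, and combined with the above multiplier bounds this yields $\|\varphi_{n,k}S_k\varphi_{0,k}\|_{L^2\to H_k^N}\leq C_{N,R}$, which absorbs the harmless factor $2^{-nN}\geq 2^{-2N}$.

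For $n\geq 3$, the supports of $\varphi_{n,k}$ and $\varphi_{0,k}$ are separated by distance at least $(2^{n-2}-1)R/k\geq 2^{n-3}R/k\sim \varepsilon_n$. Using Lemma~\ref{l:goodCut} (applied to $\Omega_1=\supp\varphi_{n,k}$ and $\Omega_2$ a small neighbourhood of $\supp\varphi_{0,k}$), I construct an auxiliary cutoff $\widetilde\varphi_{n,k}\in C^\infty(\overline\Omega;[0,1])$ satisfying $\widetilde\varphi_{n,k}\equiv 1$ on $\supp\varphi_{n,k}$, $\widetilde\varphi_{n,k}\varphi_{0,k}\equiv 0$, $|\partial^\alpha \widetilde\varphi_{n,k}|\leq C_\alpha \varepsilon_n^{-|\alpha|}$, and $\partial_\nu \widetilde\varphi_{n,k}|_{\partial\Omega_-}=0$. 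By Lemma~\ref{l:spatialCut}, $\widetilde\varphi_{n,k}\in \Lcut$ with parameter $\eta_n:=\varepsilon_n k=2^n R$. Since $\widetilde\varphi_{n,k}\varphi_{0,k}=0$ and $\widetilde\varphi_{n,k}\varphi_{n,k}=\varphi_{n,k}$, iterating the commutator identity $M$ times yields
\[
\varphi_{n,k}\,S_k\,\varphi_{0,k}=\varphi_{n,k}\,\bigl(\ad_{\widetilde\varphi_{n,k}}^{M} S_k\bigr)\,\varphi_{0,k}.
\]
Proposition~\ref{prop:proofadNR1} applied with cutoff $\widetilde\varphi_{n,k}$ and parameter $\eta_n=2^n R$ gives $\ad_{\widetilde\varphi_{n,k}}^{M} S_k=O_{-\infty}((2^nR)^{-M};\ZcupHspace{}\to \ZcapHspace{})$. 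Combined with the uniform multiplier bounds on $\varphi_{n,k}$ and $\varphi_{0,k}$ and the choice $M=N$, this produces $\|\varphi_{n,k}S_k\varphi_{0,k}\|_{L^2\to H_k^N}\leq C_{N,R}(2^n R)^{-N}\leq C'_{N,R}\,2^{-nN}$, as required.

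The main obstacle is ensuring the construction of $\widetilde\varphi_{n,k}$ (satisfying both the scale-$\varepsilon_n$ derivative bounds and the Neumann condition on $\partial\Omega_-$) remains uniform in $n$ and $k$. For $\varepsilon_n<\varepsilon_0$ (with $\varepsilon_0$ as in Lemma~\ref{l:goodCut}) the construction is direct. For $\varepsilon_n\geq \varepsilon_0$, the supports of $\varphi_{n,k}$ and $\varphi_{0,k}$ are separated by a $k$-independent positive distance, so Theorem~\ref{t:pseudoLocGeneral} yields $\|\varphi_{n,k}S_k\varphi_{0,k}\|_{L^2\to H_k^N}=O(k^{-\infty})$, which dominates $2^{-nN}$ in the only nontrivial range $n\lesssim \log_2(\mathrm{diam}(\Omega)\,k/R)$; beyond this range $\varphi_{n,k}$ vanishes identically on $\overline\Omega$ and the bound is automatic.
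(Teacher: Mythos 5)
Your proof is correct and follows essentially the same path as the paper's. The paper also invokes Lemma~\ref{l:goodCut} (applied at scale $\varepsilon\sim 2^nRk^{-1}$) to construct a cutoff separating $\varphi_{n,k}$ from $\varphi_{0,k}$, checks that this cutoff lies in $\Lcut$ with parameter $\eta_n = 2^nR$ via Lemma~\ref{l:spatialCut}, and then applies Theorem~\ref{thm:pseudolocSpace} together with Remark~\ref{r:paradise} (which guarantees uniformity of the implied constants). Your unrolling of the commutator iteration down to Proposition~\ref{prop:proofadNR1} is exactly the content of the reduction performed in Proposition~\ref{prop:excessive1Space}, so this is the same argument made explicit rather than a different one. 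You are somewhat more careful than the paper about the boundary cases — the trivial bound for $n\in\{0,1,2\}$ (the paper asserts separation already for $n\geq 2$, where the supports merely touch, so your shift to $n\geq 3$ is a small sharpening) and the fall-back to Theorem~\ref{t:pseudoLocGeneral} when $\varepsilon_n\geq\varepsilon_0$; the paper leaves these details to Remark~\ref{r:paradise} and to the reader, whereas you write them out, which is a virtue.
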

\begin{proof}
We start by observing that $\supp \phi_0\subset B(0,1)\blue{:=\{ x : |x|< 1\}}$, while for $n \geq 1$,  
$$\supp (\phi_n) \subset B(0,2^{n}) \setminus B(0,2^{n-2}),$$
and in particular, $\phi_n \perp \phi_0$ for $n \geq 2$. Let $\chi_n$ be given by Lemma \ref{l:goodCut} applied with $\Omega_1 := \supp \varphi_{k,n}$ and $\Omega_2 := \supp \varphi_{k,0}$ and $\varepsilon = O(k^{-1}2^n)$. Then $\varphi_{n,k}$ and $\varphi_{0,k}$ are separated by $\chi_n$, and the result of the Lemma follows by the combination of Lemma \ref{l:spatialCut}, Theorem \ref{thm:pseudolocSpace} and Remark \ref{r:paradise}. 
\end{proof}

\begin{lemma}
	\label{l:iStayPutOnBalls}
    Let \blue{$p\geq 1$, $C_0,\kappa$,} $k_0 > 0$, $C_\dagger > 0$, let $\psi \in C^\infty(\overline{\Omega})$ and let $N > 0$.
Then there exist $C,\mu,\blue{h_0} >0$ such that the following is true. If  $k \geq k_0$,  $x_0 \in \overline{\Omega}$, $R > 0$ and $\chi_-,\chi_+ \in C^\infty(\overline{\Omega})$ satisfy
	\begin{enumerate}
		\item $\supp \chi_- \subset \supp \chi_+ \subset B(x_0, Rk^{-1}/4)$\label{l:stayPutBallCond}
		\item $d:=\dist(\supp \chi_-,\supp(1 - \chi_+)) > \mu  k^{-1}$\label{l:stayPutDistCond}
		\item $\max_{|\alpha| = n} \|\partial^\alpha \chi_-\|_{L^\infty} \leq C_{\dagger} k^{n}$, \quad $n = 0,\ldots,p$, \label{l:stayPutCutoffCond}
		\item $\supp \chi_+ \cap \supp \psi = \emptyset$, \label{l:stayPutInterCond},
	\end{enumerate} 
	then for all \blue{all $V_{\mathcal{T}}$ finite element spaces over a mesh $\mathcal{T}$ that are well-behaved of order $p$ at frequency $k$ with constants $(C_0,\kappa)$ in the sense of Definition \ref{d:wellbehaved} with $h(\mathcal{T})\leq h_0$} and all $u \in \cZ_k$, 
	\[\|\chi_-\Pi_k^\sharp \psi u\|_{\cZ_k} \leq C \|\chi_+\Pi_k^\sharp \psi u\|_{L^2(\Omega)} + C\sum_{n = 0}^{\infty} 2^{-Nn} \|\varphi_{n,k}( \Id - \Pi_k^\sharp) \psi u\|_{H_k^{-N}}\]
	where 
	$\varphi_{n,k}$ is as in Lemma \ref{l:dyadicSk}.
\end{lemma}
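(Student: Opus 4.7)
The plan is to adapt the proof of Lemma~\ref{l:iGainKBigRemainder} to the local wavelength-scale setting, using the dyadic pseudolocality of $S_k$ from Lemma~\ref{l:dyadicSk} to cope with the non-local contributions. First I would, by coercivity of $a_k^\sharp$ (Proposition~\ref{prop:ResPksharp}) and the Galerkin orthogonality defining $\Pi_k^\sharp$ (Definition~\ref{def:ellipticProjection}), write
\begin{equation*}
\|\chi_-\Pi_k^\sharp\psi u\|_{\cZ_k}^2 \leq C\,\bigl|a_k^\sharp(\chi_-^2\Pi_k^\sharp\psi u - v_h, (\Id-\Pi_k^\sharp)\psi u)\bigr|
+ C\,\bigl|\langle[P_k+S_k,\chi_-]\chi_-\Pi_k^\sharp\psi u, (\Id-\Pi_k^\sharp)\psi u\rangle\bigr|
\end{equation*}
for any $v_h\in V_k$, using that $\chi_-\psi=0$ (by assumption \ref{l:stayPutInterCond}). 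I would then apply the super-approximation property (Assumption~\ref{ass:sap}) to choose $v_h$ with $\supp v_h$ contained in the interior of $\{\chi_+\equiv 1\}$; the sub-wavelength grid (Assumption~\ref{ass:swlg}) and the hypothesis $d\geq \mu k^{-1}$ ensure that $h_K/d\leq C/\mu$, so that taking $\mu$ large will make the resulting constant small.

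Next I would handle the $P_k$ contributions exactly as in Lemma~\ref{l:iGainKBigRemainder}: locality of $P_k$ and $\supp v_h\subset\{\chi_+\equiv1\}$ confine the integrand to $\{\chi_+\equiv 1\}$, and $\chi_+\psi=0$ converts $\chi_+(\Id-\Pi_k^\sharp)\psi u$ into $-\chi_+\Pi_k^\sharp\psi u$; combining super-approximation with the inverse inequality (Assumption~\ref{ass:ii}) then produces a bound of the form $(C/\mu^2)\|\chi_+\Pi_k^\sharp\psi u\|_{L^2}^2 + \epsilon\|\chi_-\Pi_k^\sharp\psi u\|_{\cZ_k}^2$. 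The commutator $[P_k,\chi_-]$ is supported where $\nabla\chi_-\neq 0$, hence inside $\{\chi_+\equiv1\}$; it is a bounded operator $\cZ_k\to L^2$ under the derivative estimate \ref{l:stayPutCutoffCond}, and its pairing with $-\chi_+\Pi_k^\sharp\psi u$ yields the same type of bound.

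The key new step is the $S_k$ pieces. I would insert the partition of unity $\sum_{n\geq 0}\varphi_{n,k}\equiv 1$ (after slight enlargement of each $\varphi_{n,k}$ into $\widetilde\varphi_{n,k}$ supported in a finite union of neighbouring dyadic annuli so that $\widetilde\varphi_{n,k}\varphi_{n,k}=\varphi_{n,k}$), and note that by assumption \ref{l:stayPutBallCond}, $\supp(\chi_-^2\Pi_k^\sharp\psi u - v_h)\subset\{\varphi_{0,k}\equiv 1\}$. Pseudolocality of $S_k$ in the form of Lemma~\ref{l:dyadicSk} then gives
\begin{equation*}
\bigl|\langle S_k(\chi_-^2\Pi_k^\sharp\psi u - v_h),\varphi_{n,k}(\Id-\Pi_k^\sharp)\psi u\rangle\bigr|
\leq C\,2^{-n(N+1)}\|\chi_-^2\Pi_k^\sharp\psi u - v_h\|_{L^2}\|\varphi_{n,k}(\Id-\Pi_k^\sharp)\psi u\|_{H_k^{-N}},
\end{equation*}
with a constant independent of $n$; the commutator $[S_k,\chi_-]$ is handled analogously by decomposing $[S_k,\chi_-]=\chi_0[S_k,\chi_-] + (1-\chi_0)[S_k,\chi_-]$ into a nearby part controlled by Proposition~\ref{prop:proofadNR1} and a distant part which is fully smoothing via Theorem~\ref{t:pseudoLocGeneral}. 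Since the super-approximation gives $\|\chi_-^2\Pi_k^\sharp\psi u - v_h\|_{L^2}\leq C(\|\chi_+\Pi_k^\sharp\psi u\|_{L^2} + \|\chi_-\Pi_k^\sharp\psi u\|_{\cZ_k})/\mu$, this contribution also fits into the desired inequality after another application of Young's inequality.

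Assembling the pieces, choosing $\mu$ sufficiently large to absorb $C/\mu^2$, and choosing $\epsilon$ sufficiently small to absorb $\epsilon\|\chi_-\Pi_k^\sharp\psi u\|_{\cZ_k}^2$ on the left, I obtain a squared inequality
\begin{equation*}
\|\chi_-\Pi_k^\sharp\psi u\|_{\cZ_k}^2 \leq C\|\chi_+\Pi_k^\sharp\psi u\|_{L^2}^2 + C\sum_{n\geq 0} 2^{-2nN}\|\varphi_{n,k}(\Id-\Pi_k^\sharp)\psi u\|_{H_k^{-N}}^2,
\end{equation*}
from which the conclusion follows by taking square roots and the elementary inequality $\sqrt{\sum a_n^2}\leq \sum a_n$. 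The main obstacle will be the dyadic bookkeeping: in particular, making sure that the enlarged cutoffs $\widetilde\varphi_{n,k}$ needed to exploit Lemma~\ref{l:dyadicSk} can be re-expressed as finite sums of the $\varphi_{j,k}$ appearing in the statement, and that the constants arising from the commutator $[S_k,\chi_-]$ (where $\chi_-$ varies at the wavelength scale) do not degrade the geometric decay rate $2^{-nN}$.
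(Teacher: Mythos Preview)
Your approach is essentially the same as the paper's, and is correct. Two minor points where the paper's execution is a bit cleaner and resolves the ``obstacle'' you flag at the end:

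First, instead of inserting $\sum_n\varphi_{n,k}=1$ on one side of the pairing (which then forces you to manufacture enlarged cutoffs $\widetilde\varphi_{n,k}$ to recover $\|\varphi_{n,k}(\Id-\Pi_k^\sharp)\psi u\|_{H_k^{-N}}$ on the right), the paper uses the standard Littlewood--Paley fact $\sum_n\varphi_{n,k}^2\geq \tfrac12$ to write $|\langle S_kf,g\rangle|\leq 2\sum_n|\langle\varphi_{n,k}S_kf,\varphi_{n,k}g\rangle|$. This places $\varphi_{n,k}$ on \emph{both} sides simultaneously, so after using $f=\varphi_{0,k}f$ one applies Lemma~\ref{l:dyadicSk} directly with no bookkeeping.

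Second, for the commutator $[S_k,\chi_-]$ the paper does not split into near/far parts (which would require $\chi_-\in\Lcut$, i.e.\ $\partial_\nu\chi_-|_{\partial\Omega_-}=0$, an assumption you do not have at wavelength scale). Instead it again expands dyadically and observes that $\varphi_{n,k}\chi_-=0$ for $n\geq 1$, so $\varphi_{n,k}[S_k,\chi_-]\chi_-=\varphi_{n,k}S_k\chi_-^2=\varphi_{n,k}S_k\varphi_{0,k}\chi_-^2$ and Lemma~\ref{l:dyadicSk} applies; for $n=0$ one only needs the crude bound $\|\varphi_{0,k}[S_k,\chi_-]\|_{\cZ_k\to H_k^N}\leq C$, which follows from the mapping properties of $S_k$ alone. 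This sidesteps the wavelength-scale commutator estimate entirely.
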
 

\begin{proof}
	Let $k_0 > 0$ be given and let $\mu  := C_0 \kappa$
 Let $C_{\dagger} > 0$, $\psi \in C^\infty(\overline{\Omega})$, $N >0$ and denote by $C > 0$ any generic constant depending only on the previous quantities. Let $k \geq k_0$, suppose that $h \leq h_0$ and let $x_0$, $R$ and $\chi_-$, $\chi_+$ as in the statement. The choice of $\mu$ 
 \blue{and the inequality \eqref{e:swlg}}
 imply that
	\[d \geq \kappa h.\]
	Therefore, one can proceed as in the proof of Lemma \ref{l:iGainKBigRemainder} using the super-approximation property (Definition \ref{d:sap}), but taking into account that, now, $d$ scales as $k^{-1}$ instead of $1$, so that the analogue of \eqref{e:reallyGoodSandwich1} is 
\beqs
\|\ad_{\chi_-}P_k\|_{\ZcapHspace{n}\to\ZcupHspace{n-1}}\leq C.
\eeqs
This leads to
	\begin{equation}
		\label{eq:newComputer}
		\|\chi_- \Pi_k^\sharp \psi u\|_{\cZ_k}^2 \leq C \|\chi_+ \Pi_k^\sharp \psi u\|_{L^2(\Omega)}^2 + \Rrr,
	\end{equation}
	where, as in
 	\eqref{eq:nonLocalRemainder}, 
	
	\newcommand{\Rrrrrrrr}{r}
	\begin{equation}
	\label{e:defRrrrrrrr}
\Rrrrrrrr := \big|\big\langle S_k (\chi_-^2 \Pi_k^\sharp \psi u - w_h),(\Id - \Pi_k^\sharp) \psi u\big\rangle\big| + \big|\big \langle [S_k,\chi_-]\chi_- \Pi_k^\sharp \psi u,(\Id - \Pi_k^\sharp) \psi u \big\rangle\big|.
	\end{equation}

	We now use the property that for all $x \in \mathbb{R}^d$, 
	\[\sum_{n \in \mathbb{N}} \varphi_{n,k}^2(x) \geq 1/2 \]
	(see e.g. \cite[Lemma 1.1.1]{AlGe:07}) to write, for any $f,g \in \cZ_k$, 
	\[\abs{\langle S_k f,g\rangle} \leq  2\sum_{n = 0}^{\infty} \abs{\langle \varphi_{n,k} S_k f, \varphi_{n,k} g\rangle}.\]	
 	Taking $f = \chi_-^2 \Pi_k^\sharp \psi u - w_h$ and $g = (\Id - \Pi_k^\sharp)\psi u$, and using the fact that $f = \varphi_{0,k}f$, the first term of \eqref{e:defRrrrrrrr} is estimated by
	\[\begin{split}
		&\abs{\big\langle S_k ( \chi_-^2 \Pi_k^\sharp \psi u - w_h),(\Id - \Pi_k^\sharp)\psi u\big\rangle}\\ 
		& \qquad \leq 2 \sum_{n = 0}^{\infty} \big|\big\langle \varphi_{n,k} S_k \varphi_{0,k}(\chi_-^2 \Pi_k^\sharp \psi u - w_h),\varphi_{n,k}(\Id - \Pi_k^\sharp)\psi u\big\rangle\big|\\
		&\qquad \leq C\|\chi_-^2 \Pi_k^\sharp \psi u - w_h\|_{\cZ_k} \sum_{n = 0}^{\infty} \|\varphi_{n,k} S_k \varphi_{0,k}\|_{\cZ_k \to H_k^{N}} \|\varphi_{n,k} (\Id - \Pi_k^\sharp)\psi u\|_{H_k^{-N}} \\
		&\qquad\leq C(\|\chi_- \Pi_k^\sharp u\|_{\cZ_k} + \|\chi_+ \Pi_k^\sharp u\|_{L^2(\Omega)}) \sum_{n = 0}^{\infty} \|\varphi_{n,k} S_k \varphi_{0,k}\|_{\cZ_k \to H_k^{N}} \|\varphi_{n,k} (\Id - \Pi_k^\sharp)\psi u\|_{H_k^{-N}},
	\end{split}\]
	where we have used that
	\[\|\chi_-^2 \Pi_k^\sharp \psi u - w_h\|_{\cZ_k} \leq C \frac{1}{(kd)^2} \|\chi_- \Pi_k^\sharp u\|_{\cZ_k} + \|\chi_+ \Pi_k^\sharp u\|_{L^2(\Omega)}\]
	(obtained by reasoning as in \eqref{e:localQO4}), and taken into account that $(kd)^{-1} \leq M^{-1} \leq C$. By Lemma \ref{l:dyadicSk}, 
	\[\|\varphi_{n,k} S_k \varphi_{0,k}\|_{\cZ_k \to H_k^{N}} \leq C 2^{-Nn} \quad \tfa n \in \mathbb{N}.\]
	Hence, for all $\epsilon < 1$, 
	\begin{align}\nonumber
		&\abs{\big\langle S_k ( \chi_-^2 \Pi_k^\sharp \psi u - w_h),(\Id - \Pi_k^\sharp)\psi u\big\rangle} \\
		&\qquad \leq C\epsilon \|\chi_- \Pi_k^\sharp \psi u\|_{\cZ_k}^2 + C\epsilon \|\chi_+ \Pi_k^\sharp \psi u\|_{L^2(\Omega)}^2 + C\epsilon^{-1} \left(\sum_{n = 0}^{\infty} 2^{-Nn} \|\varphi_{n,k} (\Id - \Pi_k^\sharp)\psi u\|_{H_k^{-N}}\right)^2 
		\label{eq:Sestim1}
	\end{align}
	Similarly, using that $\varphi_{n,k} \chi_-  =  0$ for $n \geq 1$, we deduce that for all $\varepsilon < 1$, 
	\begin{align}\nonumber
		&\abs{\big\langle [\chi_-,S_k] \chi_- \Pi_k^\sharp \psi u, (\Id - \Pi_k^\sharp) \psi u\big\rangle}\\\nonumber &\qquad\leq 2 \abs{\langle \varphi_{0,k} [S_k,\chi_-] \chi_- \Pi_k^\sharp \psi u, \varphi_{0,k} (\Id - \Pi_k^\sharp)\psi u\rangle} + \sum_{n = 1}^{\infty} \abs{\big\langle \varphi_{n,k} S_k\chi_-^2\Pi_k^\sharp \psi u, \varphi_{n,k} (\Id - \Pi_k^\sharp)\psi u\big\rangle} \\\nonumber
		&\qquad \leq C \big\|\chi_- \Pi_k^\sharp \psi u\big\|_{\cZ_k} \big\|\varphi_{0,k} [S_k,\chi_-]\big\|_{\cZ_k \to H_k^{N}} \big\|\varphi_{0,k}(\Id - \Pi_k^\sharp)\psi u\big\|_{H_k^{-N}}\\\nonumber
		&\qquad\qquad \qquad + C \|\chi_- \Pi_k^\sharp \psi u\|_{\cZ_k} \sum_{n = 1}^{\infty} \|\varphi_{n,k} S_k \varphi_{0,k}\|_{\cZ_k \to H_k^{N}} \|\varphi_{n,k} (\Id - \Pi_k^\sharp) \psi u\|_{H_k^{-N}}\\
		& \qquad \leq C \varepsilon \|\chi_- \Pi_k^\sharp \psi u\|_{\cZ_k}^2 + C \varepsilon^{-1} \left( \sum_{n = 0}^{\infty} 2^{-Nn} \|\varphi_{n,k} (\Id - \Pi_k^\sharp)\psi u\|_{H_k^{-N}}\right)^2.
		\label{eq:Sestim2}
	\end{align}
Adding \eqref{eq:Sestim1} and \eqref{eq:Sestim2}, inserting the result in \eqref{e:defRrrrrrrr} and then in \eqref{eq:newComputer}, 
and letting $\varepsilon$ be small enough, we obtain the result.
\end{proof}

\

\begin{proof}[Proof of Lemma \ref{l:iAmSoSmooth}]
We claim that, given $\mathfrak{c} > 0$, there exists $h_0 > 0$ such that for all $k_0 > 0$ and $\chi_-, \chi_+ ,\psi \in C^\infty(\overline{\Omega})$ satisfying 
\begin{equation}
	\label{e:sepChi-Chi+}
\chi_-\prec_{\mathfrak{c}} \chi_+ \quad \tand \quad \chi_- \perp \psi
\end{equation}
there exists 
$C > 0$ such that for all $k \geq k_0$, $h\leq h_0$, $u \in \cZ_k$ and $0\leq j\leq p-1$,
\begin{equation}
	\label{e:iAmSoSmooth2}
	\|\chi_{-} \Pi^\sharp_k \psi u\|_{H_k^{-j}}\leq C\Big(\|\chi_+\Pi_k^\sharp \psi u\|_{H_k^{-(j+1)}} 
	+\big\|(\Id - \Pi_k^\sharp)\psi u\big\|_{H_k^{-p}}
	\Big).
\end{equation}
If this is true, the lemma follows easily. Indeed, given $\mathfrak{c} > 0$, let $\mathfrak{c'} = \frac{\mathfrak{c}}{2p}$, and given $\chi, \psi$ as in the statement, let $\chi_1,\ldots, \chi_p$ be a sequence of nested cutoffs, i.e. such that
\begin{equation*}
\chi_i \prec_{\mathfrak{c'}}\chi_{i+1}
	\qquad \tfa i \in \{1,\ldots,p-1\}\quad\tand\quad \chi_p \perp \psi.
\end{equation*}
One then applies \eqref{e:iAmSoSmooth2} $p$ times with $\chi_- = \chi_i$ and $\chi_+= \chi_{i+1}$, using at the end that $\chi_p \Pi_k^\sharp \psi u = \chi_p (\Id - \Pi_k^\sharp) \psi u$ to obtain \eqref{e:iAmSoSmooth}.

It therefore remains to prove  \eqref{e:iAmSoSmooth2}. Let $\mathfrak{c} > 0$ be given and let $h_0 > 0$ be a sufficiently small constant. Let $k_0 > 0$, $\chi_-,\chi_+,\psi \in C^\infty(\overline{\Omega})$ such that \eqref{e:sepChi-Chi+} holds and let $C > 0$ denote a generic constant depending only on the previous quantities. Let $k \geq k_0$, suppose that $h\leq h_0$, let $u \in \cZ_k$ and let $0 \leq j \leq p-1$.

By arguing exactly as at the start of the proof of Lemma \ref{l:iGainKBigRemainder}, without loss of generality, we can assume that $\partial_\nu \chi_-=0$ 
and thus the commutator estimates \eqref{e:reallyGoodSandwich1} and \eqref{e:reallyGoodBaguette1} hold
(the first one by Lemma \ref{l:spatialCut} and Definition \ref{def:spatialCutoffs}, and the second one
by Proposition \ref{prop:proofadNR1}).

Let $\chi_0,\chi_1 \in C^\infty(\overline{\Omega})$ be such that 
\[
 \chi_- \prec_{(\mathfrak{c}/3)} \chi_0 \prec_{(\mathfrak{c}/3)} \chi_1 \prec_{(\mathfrak{c}/3)} \chi_+.
\]
To prove \eqref{e:iAmSoSmooth2}, it is sufficient to show that, for all $v \in H_k^{j}$, 
\begin{equation}
\label{e:regularShow}
\big|\big\langle v,\chi_- \Pi^\sharp_k \psi u\big\rangle\big|\leq C\Big(\|\chi_+\Pi^\sharp_k \psi u\|_{H_k^{-(j+1)}}+\big\|(\Id-\Pi_k^\sharp)\psi u\big\|_{H_k^{-p}}
\Big)\|v\|_{H_k^{j}}.
\end{equation}
By the relation $P_k^\sharp\RPd = \Id$, the definition of $\Pi_k^\sharp$ (Definition \ref{def:ellipticProjection}), and the fact that $\chi_-\psi = 0$, for all $w_h \in \blue{V_{\mathcal{T}}}$,
\begin{align}\nonumber
	\langle v,\chi_{-} \Pi_k^\sharp \psi u\rangle &=	\big\langle  P_k^\sharp R_{k}^\sharp v, \chi_{-}{(\Id - \Pi_k^\sharp) }\psi u\big\rangle  \\ \nonumber
	&=	\big\langle  \chi_{-} P_k^\sharp R_{k}^\sharp v, {(\Id - \Pi_k^\sharp)}\psi u\big\rangle  \\ \nonumber
		& = \big\langle  P_k^\sharp \chi_-R_{k}^\sharp v,{(\Id - \Pi_k^\sharp)}\psi u\big\rangle + \big\langle  [\chi_-,P_k^\sharp] R_{k}^\sharp v,{(\Id - \Pi_k^\sharp)}\psi u\big\rangle\\ 
				& = \big\langle  P_k^\sharp\big( \chi_-R_{k}^\sharp v -w_h\big), {(\Id - \Pi_k^\sharp)} \psi u\big\rangle +
				 \big\langle  [\chi_-,P_k^\sharp] R_{k}^\sharp v,{(\Id - \Pi_k^\sharp)}\psi u\big\rangle.
\label{e:EuanYawn1}
\end{align}
For the second term on the right-hand side of \eqref{e:EuanYawn1}, since $\chi_- = \chi_- \chi_+$, by locality of $P_k$, and by the mapping properties of $R_k^\sharp$ (Proposition \ref{prop:ResPksharp}), $[\chi_-,P_k]$ 
(from \eqref{e:reallyGoodSandwich1}), and  $[\chi_-,S_k]$ (from \eqref{e:reallyGoodBaguette1}),
\begin{align}\nonumber
	\abs{\big\langle  [\chi_-,P_k^\sharp] R_{k}^\sharp v,{(\Id - \Pi_k^\sharp)} \psi u\big\rangle} &\leq \abs{\big\langle  \chi_+ [\chi_-,{P_k}] R_{k}^\sharp v,{(\Id - \Pi_k^\sharp)} \psi u\big\rangle}+ \abs{ \big\langle  [\chi_-,{S_k}] R_{k}^\sharp v,{(\Id - \Pi_k^\sharp)}  \psi u\big\rangle} \\\nonumber
	&\leq Ck^{-1} \|v\|_{H_k^{j}} \Big(\|\chi_+(\Id - \Pi_k^\sharp )\psi u\|_{H_k^{-j-1}} + \big\|(\Id - \Pi_k^\sharp)\psi u\big\|_{H_k^{-p}} \Big).\nonumber
\end{align}
Furthermore, by the mapping properties of $S_k$ (Proposition \ref{prop:f(Pk)}),
\begin{align*}
\abs{\big\langle S_k (\chi_- R_k^\sharp v - w_h),(\Id - \Pi_k^\sharp)\psi u\big\rangle} 
&\leq C \big\|\chi_- R_k^\sharp v - w_h\big\|_{\Zspace{1}} \big\|(\Id - \Pi_k^\sharp)\psi u\big\|_{H_k^{-p}}.
\end{align*}
Using the approximation property of $\blue{V_{\mathcal{T}}}$ (Assumption \ref{d:app}) with $m = 1$, and using that $j+2 \leq p+1$, we can choose $w_h$ supported in $\supp \chi_0$ such that
\beq\label{e:bankHoliday2}
\sum_{\blue{T} \in \mathcal{T}_k}(h_\blue{T} k)^{-2(j+1)} \|\chi_- R_k^\sharp v- w_h\|_{H^1_k(\blue{T})}^{2} \leq \Cfem \|\chi_- R_k^\sharp v\|^2_{H_k^{j+2}} \leq C \|v\|_{H_k^{j}}^2.
\eeq
In particular, 
\[\begin{split}
	\big\|\chi_- R_k^\sharp v - w_h\big\|_{\Zspace{1}}^2 = \sum_{\blue{T} \in \cT_k} \|\chi_- R_k^\sharp v - w_h\|_{H^1_k(\blue{T})} 
	& \leq (hk)^{2j}  \sum_{\blue{T} \in \cT_k} h_\blue{T}^{-2j} \|\chi_- R_k^\sharp v - w_h\|_{H^1_k(\blue{T})} \\
	& \leq C(hk)^{2j} \N{v}_{H_k^{j}}^2,
\end{split}\]
where we recall that $h: = \max_{\blue{T} \in \cT_k} h_\blue{T}$. Hence, with this choice of $w_h$, since $\blue{V_{\mathcal{T}}}$ satisfies $hk \leq C$, 
\[\abs{\big\langle S_k (\chi_- R_k^\sharp v - w_h),(\Id - \Pi_k^\sharp)\psi u\big\rangle} \leq C \|v\|_{H_k^{j}} \big\|(\Id - \Pi_k^\sharp)\psi u\big\|_{H_k^{-p}}.\]
Therefore, to prove \eqref{e:regularShow}, it remains to prove that for this choice of $w_h\in \blue{V_{\mathcal{T}}}$,
\begin{equation}
	\label{e:EuanYawn3}
	\big|\big\langle  P_k\big( \chi_-R_{k}^\sharp v -w_h\big),{(\Id - \Pi_k^\sharp )}\psi u\big\rangle\big|\leq C \Big(\|\chi_+ \Pi_k^\sharp \psi u\|_{H_k^{-(j+1)}} + \big\|(\Id - \Pi_k^\sharp)\psi u\big\|_{H_k^{-p}}\Big)\|v\|_{H_k^{j}}.
\end{equation}
By (in this order) the locality of $P_k$ and  the fact that $\chi_0\equiv 1$ on the support of $\chi_- \RPd v - w_h$, 
continuity of $P_k$, and \eqref{e:bankHoliday2},
\begin{align}\nonumber
&|\langle P_k(\chi_- R_{k}^\sharp v-w_h),(\Id - \Pi_k^\sharp) \psi u\rangle|\\ \nonumber
&\qquad = |\langle P_k(\chi_- R_{k}^\sharp v-w_h),\chi_0(\Id - \Pi_k^\sharp) \psi u\rangle|\\\nonumber
&\qquad\leq \sum_{\blue{T}\in \mc{T}_k}\|\chi_0 (\Id - \Pi_k^\sharp) \psi u\|_{H^1_k(\blue{T})}\|\chi_- R_{P^\sharp_k}v-w_h\|_{H^1_k(\blue{T})}
\\\nonumber
&\qquad\leq \Big(\sum_{\blue{T}\in \mc{T}_k}(h_\blue{T}k)^{2(j+1)}\|\chi_0 \Pi^\sharp_k \psi u\|^2_{H^1_k(\blue{T})}\Big)^{\frac{1}{2}}\Big(\sum_{\blue{T}\in \mc{T}_k}(h_\blue{T} k)^{-2(j+1)}\|\chi_- R_{P^\sharp_k}v-w_h\|^2_{H^1_k(\blue{T})}\Big)^{\frac{1}{2}}\\
&\qquad\leq C \Big(\sum_{\blue{T}\in \mc{T}_k}(h_\blue{T}k)^{2(j+1)}\|\chi_0\Pi^\sharp_k\psi  u\|^2_{H^1_k(\blue{T})}\Big)^{\frac{1}{2}}\|v\|_{H_k^{j}}.
\label{e:dualityGivesRegularity}
\end{align}
To apply the arguments from \cite[Lemma 5.5]{AvGaSp:24}, and especially the wavelength-scale quasi-uniformity \blue{\eqref{e:qu}}, we now need to group the elements $\blue{T} \in \cT_k$ into sets lying within balls of radius $\approx k^{-1}$. 

To this end, we choose a sufficiently large constant $\mu > 0$ depending only on $h_0$ and $k_0$ and let $\{x_\ell\}_{\ell=1}^{\cL}\subset\Omega$ be a ``maximal $\mu k^{-1}$ separated set", constructed inductively by choosing an initial point $x_1 \in \Omega$, and if $x_1,\ldots,x_\ell$ are constructed, choosing $x_{\ell + 1} \in \Omega \setminus \cup_{m = 1}^\ell B(x_\ell,\mu k^{-1})$ if this set is not empty, or finishing the construction with $\ell = \cL$ otherwise. By construction,
\begin{equation*}
\Omega\subset \bigcup_{\ell=1}^{\cL} B(x_\ell, \mu k^{-1}),
\end{equation*}
and one can check that for all $M>0$, there exists $\mathfrak{D}_M>0$ depending solely on $M$ and the space dimension $d$, and there exists a partition of $\{1,\ldots,\mathcal{L}\}$ into $\mathfrak{D}_M$ sets $\mathcal{J}_1^M, \mathcal{J}_2^M,\ldots,\mathcal{J}_{\mathfrak{D}_M}^M$, such that
\begin{equation*}
(\ell_1,\ell_2\in \mc{J}_m^M \,\,\tand \,\,\ell_1\neq \ell_2 )\,\Rightarrow\, B(x_{\ell_1},M\mu k^{-1})\cap B(x_{\ell_2},M\mu  k^{-1})=\emptyset,
\end{equation*}
i.e., the maximal number of overlaps between balls of radius $M\mu k^{-1}$ with centers in $\{x_{\ell}\}_{\ell = 1}^{\cL}$ is $\mathfrak{D}_M$.  
Define
$$
h_\ell:=\max\big\{ h_\blue{T}\,:\, \blue{T}\cap B(x_\ell,k^{-1}) \neq \emptyset\big\}\leq C_M\inf\big\{ h_\blue{T}\,:\, \blue{T}\cap B(x_\ell, M\mu k^{-1})\neq \emptyset\big\},
$$
where the second inequality follows from \blue{\eqref{e:qu}}.
For all $1 \leq \ell \leq \cL$ and for $m \geq 1$, let  $\chi_{\ell,m} \in C^\infty(\overline{\Omega})$ be such that 
\begin{equation}
	\label{e:condChiEllM}
	\supp \chi_{\ell,m} \subset B(x_\ell,(m+1)\mu k^{-1}) \cap \overline{\Omega}, \quad \supp (1 - \chi_{\ell,m}) \cap B(x_{\ell},m\mu k^{-1}) \cap \overline{\Omega} = \emptyset.
\end{equation}
Using a construction via scaling, one can arrange that there exists a universal constant $C_\dagger$ such that
\begin{equation}
	\|\partial^\alpha \chi_{\ell,m}\|_{\infty} \leq C_\dagger  (\mu  k^{-1})^{-|\alpha|} \quad \tfa |\alpha|\leq p.
	\label{e:controlChiLM}
\end{equation} 
By choosing $\mu $ large enough, one ensures that $\mu  k^{-1} \geq 2h$, which implies that $\blue{T} \cap B(x_\ell,\mu  k^{-1}) \neq \emptyset \implies \blue{T} \subset B(x_\ell,2\mu  k^{-1})$. Therefore, 
\begin{align}\nonumber
\sum_{\blue{T}\in \mc{T}_k}(h_\blue{T}k)^{2(j+1)}\|\chi_0\Pi^\sharp_k \psi u\|^2_{H^{1}_k(\blue{T})}&\leq C\sum_{\ell=1}^{\cL} (h_{\ell}k)^{2(j+1)}\sum_{\blue{T}\cap B(x_\ell,\mu k^{-1})\neq \emptyset}\|\chi_0\Pi^\sharp_k \psi u\|^2_{H^1_k(\blue{T})}\\ \nonumber
&\leq C\sum_{\ell=1}^{\cL} (h_{\ell}k)^{2(j+1)}\sum_{\blue{T}\subset B(x_\ell,2\mu k^{-1})}\|\chi_0\Pi^\sharp_k \psi u\|^2_{H^1_k(\blue{T})}\\
&\leq C\sum_{\ell = 1}^{\cL} (h_{\ell}k)^{2(j+1)}\|\chi_0\chi_{\ell,2}\Pi^\sharp_k \psi u\|^2_{\cZ_k}.
\label{e:regrouping}
\end{align}
Next we apply Lemma~\ref{l:iStayPutOnBalls} to estimate the norms $\|\chi_0\chi_{\ell,2} \Pi_k^\sharp \psi u\|_{\cZ_k}$. Choosing $R = 20\mu $, one gets that for every $\ell = 1,\ldots,\cL$, 
\[ \supp (\chi_0 \chi_{\ell,2}) \subset \supp (\chi_1 \chi_{\ell,4}) \subset B(x_\ell,Rk^{-1}/4),\]
so that assumption \eqref{l:stayPutBallCond} is satisfied.
Moreover, by definition of $\chi_{\ell,m}$ (see \eqref{e:condChiEllM}), 
\[d:=\dist (\supp(\chi_0 \chi_{\ell,2}), \supp (1- \chi_1 \chi_{\ell_4})) \geq \dist(B(x_\ell,3\mu k^{-1}),B(x_\ell,4 \mu k^{-1})^c) \geq \mu k^{-1},\]
showing that assumption \eqref{l:stayPutDistCond} is also satisfied by taking $\mu$ sufficiently large.
Assumption \eqref{l:stayPutCutoffCond} follows from this and the control on the derivatives of $\chi_{\ell,m}$ in \eqref{e:controlChiLM}. Finally, we have $\supp (\chi_1 \chi_{\ell,4})\subset \supp \chi_{1} \subset (\supp \psi)^c$ so that assumption \eqref{l:stayPutInterCond} holds. Therefore,  by Lemma~\ref{l:iStayPutOnBalls}, for $N\geq p$,
\begin{equation}
\label{e:crossfire1}
\| \chi_0 \chi_{\ell,2}\Pi^\sharp_k \psi u\|^2_{\cZ_k}\leq C\|\chi_{1} \chi_{\ell,4}\Pi^\sharp_k \psi  u\|^2_{L^2(\Omega)} + C\sum_{n= 0}^{\infty}2^{-Nn}\|\varphi_{n,\ell}(\Id - \Pi_k^\sharp) \psi u\|_{\Hspace{-p}}^2
\end{equation}
where 
\begin{equation}
\label{e:varphi}
\varphi_{n,\ell} = \phi_n\big((x - x_\ell)/( Rk^{-1})\big) \tfor n \geq 0,
\end{equation}
with $\phi_n$ as in Lemma \ref{l:dyadicSk}.
(note that here it is crucial that $\mathfrak{C}$ in Lemma \ref{l:iStayPutOnBalls} does not depend on $\chi_-$ and $\chi_+$). 

By \eqref{e:EuanYawn3}, \eqref{e:dualityGivesRegularity}, \eqref{e:regrouping}, and \eqref{e:crossfire1}, to prove \eqref{e:regularShow}, it is sufficient to prove that 
\beq\label{e:crossfire2}
	\sum_{\ell = 1}^\cL (h_\ell k)^{2(j+1)}\|\chi_{1}\chi_{\ell,4} \Pi^\sharp_k \psi  u\|^2_{L^2(\Omega)} \leq C \|\chi_+\Pi_k^\sharp \psi u\|_{H_k^{-(j+1)}}^2.
\eeq
and 
\beq
\sum_{\ell= 1}^{\cL} (h_\ell k)^{2(j+1)}\sum_{n = 0}^{\infty} 2^{-Nn} \|\varphi_{n,\ell} (\Id - \Pi_k^\sharp)\psi u\|_{H_k^{-p}}^2 \leq C\|(\Id - \Pi_k^\sharp) \psi u\|_{H_k^{-p}}^2.
\label{e:crossfire3}
\eeq

By the wavelength-scale quasi-uniformity \blue{\eqref{e:qu}} and  the inverse inequality (Definition \ref{d:ii}),
\[\begin{split}
	\sum_{\ell = 1}^\cL (h_\ell k)^{2j+2}\|\chi_{1}\chi_{\ell,4} \Pi^\sharp_k \psi  u\|^2_{L^2(\Omega)} &\leq C \sum_{\ell = 1}^{\cL} \sum_{\blue{T} \cap \supp \chi_{1}\chi_{\ell,4} \neq \emptyset} (h_\blue{T} k)^{2j+2} \|\Pi_k^\sharp \psi u\|^2_{L^2(\blue{T})} \\
	& \leq C \sum_{\ell = 1}^{\cL} \sum_{\blue{T} \cap \supp \chi_{1}\chi_{\ell,4} \neq \emptyset}  \|\Pi_k^\sharp \psi u\|^2_{H^{-(j+1)}(\blue{T})}.
	\end{split}\]
Applying \cite[Lemma 5.2]{AvGaSp:24} (a simple bound on sums of negative Sobolev norms on elements by a global dual norm) and Lemma \ref{l:sumNegNorms} below, there exists $M > 0$ large enough such that 
\[
\sum_{l = 1}^{\cL}\sum_{\blue{T} \cap \supp \chi_{1}\chi_{\ell,4} \neq \emptyset} \|\Pi_k^\sharp \psi u\|^2_{H^{-(j+1)}(\blue{T})} \leq C \sum_{l = 1}^{\cL} \big\|\chi_+ \chi_{\ell,M} \Pi_k^\sharp \psi u\big\|^2_{H_k^{-(j+1)}} \leq C \mathfrak{D}_M \|\chi_+\Pi_k^\sharp \psi u\|_{H_k^{-(j+1)}}^2,
\]
and the combination of these last two displayed equations is \eqref{e:crossfire2}.

On the other hand, 
by the definition of $\varphi_{n,\ell}$ \eqref{e:varphi}, there exists $C_{p,R}>0$ such that 
for every $x \in \R^d$, 
\beq\label{e:crossfire4}
	\sum_{\ell = 1}^{\cL}\sum_{n = 0}^{\infty} 2^{-Nn}
\Big(\max_{1\leq \alpha\leq p} k^{-|\alpha|}\abs{\varphi_{n,\ell}(x)}\Big)^2	
	 \leq C_{p,R}\sum_{n = 0}^{\infty} 2^{-Nn} \kappa_n(x)
\eeq
where
\[\kappa_n(x) = \textup{Card}\Big(\big\{ 1 \leq \ell \leq \cL \,:\, \dist(x,x_\ell) \leq 2^{n+1}Rk^{-1} \big\}\Big).\]
To estimate $\kappa_n(x)$, we write
\[ \kappa_n(x) = \sum_{m = 1}^{\mathfrak{D}_1} \textup{Card}(K_m(x)),\quad \textup{where}\quad K_m(x) := \big\{ \ell \in \mathcal{J}_m^1 \,:\, \dist(x,x_\ell) \leq 2^{n+1} Rk^{-1}\big\}\] 
and since the $\mu k^{-1}$ balls centered at $x_\ell$ for $\ell \in \mathcal{J}_m^1$ are pairwise disjoint, 
\[\sum_{\ell \in K_m(x)} \mu(B(x_\ell,k\mu^{-1})) \leq \mu (B(x,2^{n+1}Rk^{-1})) \quad \iff \quad \textup{Card}(K_m(x)) \leq 2^{d(n+1)}\frac{R^d}{\mu^d}.\]
This proves that $\kappa_n(x) \leq C 2^{nd}$. 
Inserting this bound into \eqref{e:crossfire4} and choosing $N$ large enough,  we find that 
\beqs
	\sum_{\ell = 1}^{\cL}\sum_{n = 0}^{\infty} 2^{-Nn}
\Big(\max_{1\leq \alpha\leq p} k^{-|\alpha|}\abs{\varphi_{n,\ell}(x)}\Big)^2	
	 \leq C C_{p,R}.
\eeqs
We now use Lemma \ref{l:sumNegNorms} with $\{\chi_n\}_{n} = \{ 2^{-nN/2} \varphi_{n,\ell}, : \ell=1,\ldots, \cL , n=0,\ldots, \infty\}$ to obtain that
%
\[\sum_{\ell= 1}^{\cL} \sum_{n = 0}^{\infty} 2^{-Nn} \|\varphi_{n,\ell} (\Id - \Pi_k^\sharp)\psi u\|_{H_k^{-p}}^2 \leq CC_{p,R} \|(\Id - \Pi_k^\sharp) \psi u\|_{H_k^{-p}}^2,\]
which is \eqref{e:crossfire3}, and the proof is complete.
\end{proof}

\

Finally, we prove the following technical lemma used in the proof of Lemma \ref{l:iAmSoSmooth}.


\begin{lemma}
	\label{l:sumNegNorms}
Given $N>0$ there exists $C_N>0$ such that the following is true. Suppose that $(\chi_n)_n$ is such that
	there exists $C_{\rm over}> 0$ such that for all $x \in \R^d$,
		\[\sum_{n = 0}^{\infty} \Big(\max_{|\alpha|\leq N}k^{-|\alpha|}|\partial^\alpha \chi_n(x)|\Big)^2 \leq C_{\rm over}.\]
Then, for all $v \in H_k^{-N}$, 
	\begin{equation}
		\label{eq:sumNegNorms}
		\sum_{n} \|\chi_n v\|^2_{H_k^{-N}} \leq C_N C_{\rm over}\N{v}^2_{H_k^{-N}}.
	\end{equation}
\end{lemma}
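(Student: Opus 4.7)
The plan is to prove \eqref{eq:sumNegNorms} by duality. The map $T\colon H_k^{-N}\to \ell^2(H_k^{-N})$, $v\mapsto (\chi_n v)_n$ has formal adjoint $T^*\colon \ell^2(H_k^{N})\to H_k^{N}$ given by $T^*((\phi_n)_n) = \sum_n \bar\chi_n\phi_n$, since for $v\in H_k^{-N}$ and $(\phi_n)\in \ell^2(H_k^N)$ one has $\sum_n \langle \chi_n v,\phi_n\rangle = \langle v,\sum_n \bar\chi_n\phi_n\rangle$. Since $\|T\|=\|T^*\|$, the inequality \eqref{eq:sumNegNorms} is equivalent to
\[
	\Big\|\sum_n \bar\chi_n \phi_n\Big\|_{H_k^N}^2 \leq C_N C_{\rm over}\sum_n \|\phi_n\|_{H_k^N}^2 \quad \tfa (\phi_n)\in \ell^2(H_k^N).
\]
First, I would establish that the hypothesis implies the pointwise bound
\[
	\sum_n |\partial^\beta \chi_n(x)|^2 \leq C_{\rm over}\, k^{2|\beta|} \quad \tfa |\beta|\leq N,
\]
which follows immediately from $|\partial^\beta\chi_n(x)|^2\leq k^{2|\beta|}(\max_{|\alpha|\leq N}k^{-|\alpha|}|\partial^\alpha\chi_n(x)|)^2$.

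Next, I would expand the left-hand side using the definition of the $H_k^N$ norm \eqref{e:weightedNorm} and the Leibniz rule: for each $|\alpha|\leq N$,
\[
	\partial^\alpha \Big(\sum_n \bar\chi_n \phi_n\Big) = \sum_{\beta \leq \alpha}\binom{\alpha}{\beta}\sum_n (\partial^\beta \bar\chi_n)(\partial^{\alpha-\beta}\phi_n).
\]
The key step is a pointwise Cauchy--Schwarz inequality in the index $n$:
\[
	\Big|\sum_n (\partial^\beta\bar\chi_n)(\partial^{\alpha-\beta}\phi_n)(x)\Big|^2 \leq \Big(\sum_n |\partial^\beta\chi_n(x)|^2\Big)\Big(\sum_n |\partial^{\alpha-\beta}\phi_n(x)|^2\Big)\leq C_{\rm over}\, k^{2|\beta|}\sum_n |\partial^{\alpha-\beta}\phi_n(x)|^2.
\]
Integrating and multiplying by $k^{-2|\alpha|}$ yields
\[
	k^{-2|\alpha|}\Big\|\sum_n (\partial^\beta\bar\chi_n)(\partial^{\alpha-\beta}\phi_n)\Big\|_{L^2}^2 \leq C_{\rm over}\, k^{-2|\alpha-\beta|}\sum_n \|\partial^{\alpha-\beta}\phi_n\|_{L^2}^2.
\]
Finally, summing over $|\alpha|\leq N$ and $\beta\leq \alpha$ and using the definition of $\|\cdot\|_{H_k^N}$ gives the required bound with a constant $C_N$ depending only on $N$ and $d$ (absorbing the binomial coefficients and the number of multi-indices). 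There is no real obstacle here: the argument is a standard duality-plus-Leibniz computation, and the hypothesis on $(\chi_n)$ is tailored precisely so that the pointwise Cauchy--Schwarz step produces the correct $k$-weighting.
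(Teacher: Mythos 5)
Your proof is correct and takes essentially the same route as the paper: both are duality arguments whose crux is the estimate $\bigl\|\sum_n \chi_n\psi_n\bigr\|_{H_k^N}^2\leq C_NC_{\rm over}\sum_n\|\psi_n\|_{H_k^N}^2$. The paper phrases this by constructing the test function $\varphi=\sum_n\chi_n\theta_n\varphi_n$ (with $\theta_n:=\|\chi_n v\|_{H_k^{-N}}$ and $\varphi_n$ near-optimal unit vectors) and invoking $\|v\|_{H_k^{-N}}\geq |(v,\varphi)|/\|\varphi\|_{H_k^N}$, while you phrase it via $\|T\|=\|T^*\|$ for $T:v\mapsto(\chi_n v)_n$; these are the same argument in different clothing. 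Your version actually carries out the Leibniz-plus-pointwise-Cauchy--Schwarz computation that the paper summarises as ``by assumption,'' so if anything you are more explicit about the only nontrivial step.
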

\begin{proof}
	For every $n$, let $\theta_n := \|\chi_n v\|_{H_k^{-N}}$, let $\varphi_n \in H_k^{N}$ with unit norm, and let 
	\[\varphi := \sum_{n} \chi_n \theta_n \varphi_n.\]
	Then by assumption, 
	\[\N{\varphi}_{\Hspace{N}}^2 \leq C_N C_{\rm over} \sum_{n} \theta_n^2.\]
	Therefore, 
	\[\N{v}_{H_k^{-N}} \geq \frac{|(v,\varphi)|^2}{\|\varphi\|^2_{H_k^{N}}} \geq \frac{\big|\sum_{n} (v,\theta_n\chi_n \varphi_n)\big|^2}{C_N C_{\rm over}\sum_{n}\theta_n^2} = \frac{\big|\sum_{n} \theta_n(\chi_n v, \varphi_n)\big|^2}{C_N C_{\rm over}\sum_{n}\theta_n^2} .
	\]
	By taking the supremum over each $\varphi_n$,
	\[\N{v}_{H_k^{-N}}  \geq \frac{1}{C_N C_{\rm over}} \frac{\Big|\sum_{n} \theta_n^2\Big|^2}{\sum_{n} \theta_n^2} =  \frac{1}{C_N C_{\rm over}}\sum_{n} \theta_n^2,\]
and the result \eqref{eq:sumNegNorms} follows.
\end{proof}

\section{Proof of the main result (Theorem \ref{t:theRealDeal})}
\label{sec:proof_realDeal}

We fix $a_k : H^1_k \times H^1_k \to \R$, $\blue{\mathrm{J}} \subset \R_+$, $p$, $\blue{V_{\mathcal{T}}}$ as in the statement of Theorem \ref{t:theRealDeal}, and keep the definitions and notations from Sections \ref{sec:state_main_result}, \ref{sec:pseudolocS}, and \ref{sec:pseudoLocPi}. We denote by $h = h(k) := \max_{\blue{T} \in \cT_k} h_\blue{T}$. 

\subsection{Outline of the proof}

Let $X^-(\ell), X^+(\ell) \in \R^{M_\Int}, X^\Pml \in \R^{M_\Pml}$ be the column vectors defined by
\[X^{-}_i(\ell) := \| \chi_i\Psi (u-u_h)\|_{H^\ell_k}, \quad X_i^{+}(\ell) := \|\chi_i(1-\Psi)  (u-u_h)\|_{H_k^\ell}, \quad 1 \leq i \leq M_\Int,\]
\[\tand\quad X^\Pml_i(\ell) := \|\chi_{i + M_\Int}(u-u_h)\|_{H_k^\ell} \quad 1 \leq i \leq M_\Pml
\] 
and let $Z \in \R^M$ be the column vector of local best approximation errors, i.e., 
$$Z_i = \|u - w_h\|_{H^1_k(\Omega_i)}$$
where $w_h$ is an arbitrary, fixed element of $\blue{V_{\mathcal{T}}}$. The heart of the proof of Theorem \ref{t:theRealDeal} consists of forming a matrix system of inequalities for the vector $X(\ell) = \begin{pmatrix}
	X^-(\ell) \\ X^{+}(\ell) \\ X^\Pml(\ell)
\end{pmatrix}$. We start by obtaining this system in the lowest possible norm, which is dictated by the polynomial order $p$, i.e., with $\ell = -p+1$. In Lemma \ref{l:lastDay}, we show that 
\begin{equation}
\label{e:systemOutline}
X(-p+1) \leq \specialC \oldT X(-p+1) + C B Z + R
\end{equation}
where $R$ is a superalgebraically small remainder term, where $\oldT$ and $B$ are the matrices defined in \eqref{e:matrixT} and \eqref{e:matrixB}, and where $C_\dagger, C$ are positive constants. 
Therefore, if $(I- \specialC \oldT)^{-1}$ exists, then 
\beqs
X(-p+1) \leq \mathfrak{C}(I - \specialC \oldT)^{-1} BZ + R\,.
\eeqs
Each line of the inequality \eqref{e:systemOutline} is obtained by applying a localised version of the elliptic-projection argument, Lemma \ref{l:localDualityArg}, and exploiting both the local behaviour of the mesh size and the microlocal behaviour of the solution operator of the continuous problem from \S\ref{sec:boundsCsol} (and in particular, its improved behaviour on high-frequencies or in the PML region, leading to Lemmas \ref{l:highFreqUpgrade} and \ref{l:PMLUpgrade}).

We then use Theorem \ref{t:onlyTheSimpleOnes} to bound $(I- \specialC \oldT)^{-1}$ in terms of the simple-path matrix $\transfer$ of $\specialC \oldT$ (Definition \ref{def:simple_path_mat}), giving 
\begin{equation}
\label{e:system_lowest_outline2}
X(-p+1) \leq \mathfrak{C}\transfer BZ + R\,.
\end{equation}

Next, we upgrade \eqref{e:system_lowest_outline2} to higher norms, i.e., we estimate $X(\ell)$ for $1-p\leq \ell\leq 1$. For this, we notice that, on the one hand, since $\Psi$ is smoothing and pseudo-local,
\begin{equation}
\label{e:X-_upgrade_outline}
X^{-}(\ell) \leq C X^{-}(-p+1) + R
\end{equation}
for all $1-p\leq \ell\leq 1$. 
(In fact,  \eqref{e:X-_upgrade_outline} should actually have an $\widetilde{X}^-$ on the right-hand side involving $\widetilde{\chi}_i$ such that $\chi_i \prec \widetilde{\chi}_i$, but we have neglected this in this outline for brevity.)

On the other hand, by the ``improved'' local duality arguments of Lemmas \ref{l:highFreqUpgrade} and \ref{l:PMLUpgrade}, 
\begin{equation}
\label{e:X+_upgrade_outline}
X^{+}(\ell) \lesssim (\mathcal{H}k)^{\ell+1} Z +  (\mathcal{H}k)^{p+\ell + 1} X^{-}(-p+1) + (\Hmin(N) k^N) X^+(-p+1)+ R
\end{equation}
\begin{equation}
\label{e:Pml_upgrade_outline}
X^{\Pml}(\ell) \lesssim (\mathcal{H}k)^{\ell+1} Z + (\Hmin(N) k^N) \big(X^+(-p+1) +  X^-(-p+1)\big) + R
\end{equation}
for all $1-p\leq \ell\leq 0$. Combined with \eqref{e:system_lowest_outline2}, this gives the 
bounds in the second and third block rows of \eqref{e:mainResult}, 
up to the $L^2$ norm. Finally, to obtain \eqref{e:X+_upgrade_outline} and \eqref{e:Pml_upgrade_outline} in the $H^1_k$ norm, i.e., for $\ell=1$, we use Lemmas \ref{l:highReg} and \ref{l:PMLHighest} which give
\begin{equation}
\label{e:highReg_outline}
X^+(1) \lesssim Z + X^+(0) + (\Hdiag_{\Int,\Int}k)^pX^{-}(-p+1) + R,
\end{equation}
\begin{equation}
\label{e:PMLhighest_outlie}
X^\Pml(1) \lesssim Z + X^\Pml(0) + R.
\end{equation}
The estimates in the $H^1_k$ norm are then obtained by inserting \eqref{e:system_lowest_outline2}, \eqref{e:X+_upgrade_outline}, \eqref{e:Pml_upgrade_outline} into \eqref{e:highReg_outline} and \eqref{e:PMLhighest_outlie}.

\subsection{Localised duality argument}




The next result 
relates the Galerkin error in some region $A$ of phase-space to (i) the set of local best-approximation errors in subdomains covering $\Omega$ and (ii) the set of local Galerkin errors in these subdomains, modulo a small global term. The subdomain contributions are weighted by ``transfer coefficients'' $\eta_{j \to A}$ that describe the corresponding local behavior of the Helmholtz solution operator. This result is applied several times later in the proof of Theorem \ref{t:theRealDeal}, for special choices of the partition of unity $\{\phi_j\}$ and operators $A$.

\begin{lemma}[Localised duality argument]
	\label{l:localDualityArg}
Given $\mathfrak{c}, \,k_0>0$, there exists $h_0$ such that the following holds. Let $N > 0$ and let
$\{\phi_j\}_{j = 1}^{J}$ and $\{\tilde{\phi}_j\}_{j = 1}^J$ be such that
	\beq\label{e:supportCondition1}
		\phi_j,\tilde{\phi}_j \in C^\infty_c(\R^d,[0,1]), \quad  \quad 
\phi_j \prec_{\mathfrak{c}} \widetilde{\phi}_j,
		\qquad j = 1,\ldots, J, 
\eeq	
and $\sum_{j = 1}^J \phi_j = 1$ on $\Omega$. For any $k > 0$, 
define
	\[h_j := \max \Big\{h_\blue{T} \, :\, \blue{T} \in \cT_k \,\,\textup{ s.t. }\, \blue{T} \cap \supp \tilde{\phi}_j \neq \emptyset\Big\}.\]  
Then there exists $C > 0$ such that for each $\ell\in \{0,\ldots, p-1\}$, for all $k \geq k_0$, $k \notin \blue{\mathrm{J}}$, with $h\leq h_0$, for all $A : H_k^{-\ell} \to L^2(\Omega)$, for all $u-u_h$ satisfying \eqref{e:Galerkin_ortho}, and for all $w_{h,j} \in \blue{V_{\mathcal{T}}}, j=1,\ldots,J$,
	\[\begin{split}
		\|A(u-u_h)\|_{L^2} &\leq C\sum_{j = 1}^J \eta_{j \to A}(h_jk)^p  \Bigg((h_jk)^{-p}\|\tilde{\phi}_j(u - w_{h,j})\|_{H^1_k} +\|\tilde{\phi}_j (u - u_h)\|_{H_k^{-N}} \Bigg)\\
		&\qquad+ Ck^{-N}(hk)^{p+\ell + 1} \|A\|_{H_k^{-\ell} \to L^2}\left( \sum_{j=1}^J(hk)^{-p}\|u-w_{h,j}\|_{H^1_k} +\|u - u_h\|_{H_k^{-N}}\right)
	\end{split}\]
	where for all $j \in \{1,\ldots,J\}$, 
\beq\label{eq:alphaj}
\eta_{j \to A} := (h_j k)^p \|\tilde{\phi}_j \RPs A^*\|_{L^2 \to L^2} + (h_j k)^{\ell+1} \|\tilde{\phi}_j (\RPd)^* A^*\|_{L^2 \to H_k^{\ell+2}}.
\eeq
\end{lemma}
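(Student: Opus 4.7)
The plan is to prove the lemma via a localized Aubin--Nitsche / elliptic-projection duality argument along the lines of the sketch in \S\ref{s:sketch} (in particular equations \eqref{e:basicEllipticProjection}--\eqref{e:preasymptoticSketch2}), but adapted to a general test operator $A$ and to the target norm $L^2$. First, using $\|A(u-u_h)\|_{L^2}=\sup_{\|v\|_{L^2}\leq 1}|\langle u-u_h,A^{*}v\rangle|$ with $\|A^{*}v\|_{H_k^{\ell}}\leq \|A\|_{H_k^{-\ell}\to L^2}$, I would note that $\Pi_k^\sharp \RPs A^{*}v\in V_k$, so Galerkin orthogonality \eqref{e:Galerkin_ortho} yields
\[
\langle u-u_h,A^{*}v\rangle=\langle P_k(u-u_h),(I-\Pi_k^\sharp)\RPs A^{*}v\rangle,
\]
and the splitting $P_k=P_k^\sharp-S_k$ produces an elliptic piece and a smoothing piece that will drive, respectively, the $\|\tilde{\phi}_j(u-w_{h,j})\|_{H_k^1}$ and the $\|\tilde{\phi}_j(u-u_h)\|_{H_k^{-N}}$ contributions in the lemma.

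Next, I would insert $\sum_j\phi_j=1$ into the right slot and use the defining property $\langle P_k^\sharp w_h,(I-\Pi_k^\sharp)\,\cdot\,\rangle=0$ for $w_h\in V_k$ to replace $u$ by $u-w_{h,j}$ in the $j$th summand of the elliptic piece, obtaining
\[
\sum_j \bigl\langle P_k^\sharp(u-w_{h,j}),(I-\Pi_k^\sharp)\phi_j \RPs A^{*}v\bigr\rangle \,-\, \sum_j \bigl\langle S_k(u-u_h),(I-\Pi_k^\sharp)\phi_j \RPs A^{*}v\bigr\rangle.
\]
In each summand I would then split $u-w_{h,j}$ (resp.\ $u-u_h$) into the local part $\tilde{\phi}_j(\cdot)$ and the nonlocal part $(1-\tilde{\phi}_j)(\cdot)$. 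The local parts produce the main terms: continuity of $P_k^\sharp:H^1_k\to H_k^{-1}$ and the smoothing mapping $S_k:H_k^{-N}\to H_k^{N}$ (Proposition \ref{prop:f(Pk)}) extract the prefactors $\|\tilde{\phi}_j(u-w_{h,j})\|_{H^1_k}$ and $\|\tilde{\phi}_j(u-u_h)\|_{H_k^{-N}}$ (the latter paying the extra $(h_jk)^p$ weight that appears in the statement).

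What remains in each summand is a dual quantity of the form $\|(I-\Pi_k^\sharp)\phi_j \RPs A^{*}v\|_{X}$ in a suitable $X$, and the coefficient $\eta_{j\to A}$ emerges by estimating this in two complementary ways, mirroring the frequency-splitting step in Lemma \ref{l:bound_eta_microlocal}. The Cea estimate \eqref{e:introPiSharpCea} for $\Pi_k^\sharp$ combined with piecewise-polynomial approximation at order $p$ of an $L^2$-regular function yields a contribution proportional to $(h_jk)^{p}\|\tilde{\phi}_j \RPs A^{*}\|_{L^2\to L^2}$. Using the second-resolvent identity $\RPs = (\RPd)^{*}+\RPs S_k(\RPd)^{*}$ (a consequence of $P_k^\sharp-P_k=S_k$ and Proposition \ref{prop:ResPksharp}), so that $(\RPd)^{*}$ supplies extra regularity mapping $L^2\to H_k^{\ell+2}$, together with polynomial approximation at order $\ell+1$ yields the complementary contribution $(h_jk)^{\ell+1}\|\tilde{\phi}_j(\RPd)^{*}A^{*}\|_{L^2\to H_k^{\ell+2}}$. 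Summed, these reproduce $\eta_{j\to A}$ as in \eqref{eq:alphaj}.

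The main obstacle, and the source of the global $Ck^{-N}(hk)^{p+\ell+1}\|A\|_{H_k^{-\ell}\to L^2}(\ldots)$ remainder, is controlling the nonlocal composites
\[
(1-\tilde{\phi}_j)P_k^\sharp(I-\Pi_k^\sharp)\phi_j \RPs A^{*},\qquad (1-\tilde{\phi}_j) S_k(I-\Pi_k^\sharp)\phi_j \RPs A^{*}.
\]
Since $\phi_j$ and $1-\tilde{\phi}_j$ are separated by at least $\mathfrak{c}$, chaining the spatial pseudolocality of $\Pi_k^\sharp$ (Theorem \ref{t:pseudoLocalPi}), of $\RPs$ and $S_k$ (Theorem \ref{t:pseudoLocGeneral}), and of $P_k^\sharp$ (via Lemma \ref{l:spatialCut} and Definition \ref{def:spatialCutoffs}) between the appropriate $k$-weighted Sobolev scales forces each composite to be $O(k^{-N})$ for any $N$. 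The smallness $h\leq h_0$ enters here through Theorem \ref{t:pseudoLocalPi}, which needs a minimal number of mesh layers separating $\phi_j$ from $1-\tilde{\phi}_j$. The bookkeeping is delicate because each pseudolocality result applies only to a pair of spatially separated cutoffs, so one must thread the $\phi_j\prec_{\mathfrak{c}}\tilde{\phi}_j$ hypothesis carefully through each three-factor composite, and keep track of which norm $\|A\|_{H_k^{-\ell}\to L^2}$ the final remainder absorbs.
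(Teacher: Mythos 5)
Your proposal follows essentially the same elliptic-projection duality route as the paper's proof of Lemma~\ref{l:localDualityArg}: dualize against $v\in L^2$, use Galerkin orthogonality with $\Pi_k^\sharp$, split $P_k=P_k^\sharp-S_k$, insert the partition of unity, replace $u$ by $u-w_{h,j}$, and then invoke pseudolocality of $\Pi_k^\sharp$, $S_k$, $P_k^\sharp$ and $R_k^\sharp$ plus the local Aubin--Nitsche estimate (Lemma~\ref{l:localSharp}) and the frequency-splitting bound (Lemma~\ref{l:bound_eta_microlocal}) to extract $\eta_{j\to A}$ and the $O(k^{-N})$ remainder. The one cosmetic difference is bookkeeping: you split $u-w_{h,j}$ directly into $\tilde\phi_j(\cdot)$ and $(1-\tilde\phi_j)(\cdot)$ in the left slot, whereas the paper inserts $\tilde\phi_j$ on the right by first proving the composite replacement $X(\Id-\Pi_k^\sharp)\phi_j\approx\tilde\phi_j X\check\phi_j(\Id-\Pi_k^\sharp)\phi_j$ for $X\in\{(P_k^\sharp)^*,S_k\}$ via the three-term identity \eqref{eq:idDoublePseudoLoc} and only then moves $\tilde\phi_j$ across the pairing; these are equivalent rearrangements of the same argument.
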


To prove Lemma \ref{l:localDualityArg} we use the following two lemmas; the first is a localised version of the classic Aubin-Nitsche duality argument applied to the operator $P_k^\sharp$ defined in \eqref{def:aksharp}, and the second is a localised version of the bound on the adjoint-approximability constant from \cite[Theorem 1.7]{GS3} (with similar bounds appearing in \cite{MeSa:10, MeSa:11, ChNi:20, LSW3, LSW4, GLSW1, BeChMe:25}). Recall the definition of $\Pi_k^\sharp$ from Definition \ref{def:ellipticProjection}, and let 
$$\Pi_k : H^1_k \to \blue{V_{\mathcal{T}}}$$
be the $H^1_k$-orthogonal projection onto $\blue{V_{\mathcal{T}}}$. 

\begin{lemma}[Localised Aubin-Nitsche argument for $P_k^\sharp$]
	\label{l:localSharp}
	For any $\mathfrak{c} > 0$, there exists $h_0 > 0$ such that the following holds. Let $k_0 > 0$, $N > 0$, $\chi \in C^\infty(\overline{\Omega})$ and $U \subset \overline{\Omega}$ be such that 
	\[
	\supp \chi \subset U, \quad \partial_<(\supp \chi,U) \geq \mathfrak{c},
	\]
	where the notation $\partial_<$ is defined by \eqref{e:def_partial}.
	For any $k > 0$, let
	$$h_U := \max\big\{h_\blue{T} \,:\, \blue{T} \in \cT_k,\,\, \blue{T} \cap U \neq \emptyset\,\big\}.$$
	Then, there exists 
	$C>0$ such that for all 
	$\ell =0\ldots,p-1$, $k \geq k_0$, $h\leq h_0$, and $u \in H^1_k$, 
	\[
	\|\chi  (\Id - \Pi_k^\sharp) u\|_{H_k^{-\ell}} \leq C \left((h_U k)^{\ell+1} + k^{-N} (hk)^{\ell + 1}\right) \|(\Id - \Pi_k)u\|_{H^1_k}.
	\]
\end{lemma}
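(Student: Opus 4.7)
The plan is to carry out a localised Aubin--Nitsche duality argument for the coercive perturbation $P_k^\sharp$. First, I would dualise: by definition,
\[
\|\chi(\Id-\Pi_k^\sharp)u\|_{H_k^{-\ell}} = \sup_{\|v\|_{H_k^\ell}\leq 1} \bigl|\bigl((\Id-\Pi_k^\sharp)u,\chi v\bigr)_{L^2}\bigr|,
\]
so it suffices to fix a unit $v\in H_k^{\ell}$ and bound the pairing. Set $w:=\chi v$ and let $\phi := (P_k^\sharp)^{-1}w \in \cZ_k^2$, so that $a_k^\sharp(\phi,\cdot) = (w,\cdot)$. By Proposition~\ref{prop:ResPksharp} (applied with $n=\ell$, using that the constant there is $k$-uniform because $P_k^\sharp$ is uniformly coercive) together with $\cY_k^\ell = H_k^\ell$ in the Helmholtz setting, one has $\|\phi\|_{H_k^{\ell+2}}\leq C\|v\|_{H_k^\ell}$. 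Using the identity $a_k^\sharp(\phi,\psi)=(w,\psi)$ followed by the Galerkin orthogonality $a_k^\sharp(\phi_h,(\Id-\Pi_k^\sharp)u)=0$ for every $\phi_h\in V_k$ (which is Definition~\ref{def:ellipticProjection} read with $v=\phi_h$), I would obtain
\[
\bigl((\Id-\Pi_k^\sharp)u,\chi v\bigr) = \overline{a_k^\sharp\bigl(\phi-\phi_h,\,(\Id-\Pi_k^\sharp)u\bigr)},
\]
for every $\phi_h\in V_k$. Continuity of $a_k^\sharp$ and C\'ea's lemma (via coercivity, \eqref{eq:CoerciveAsharp}) then reduce the task to estimating
\[
\|(\Id-\Pi_k^\sharp)u\|_{H_k^1}\leq C \inf_{w_h\in V_k}\|u-w_h\|_{H_k^1} = C\|(\Id-\Pi_k)u\|_{H_k^1}\,,
\]
and, critically, $\inf_{\phi_h\in V_k}\|\phi-\phi_h\|_{H_k^1}$, where the localisation needs to occur.

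To localise, I would pick an intermediate cutoff $\widetilde\chi\in C^\infty(\overline\Omega)$ with $\chi\prec_{\mathfrak{c}/2}\widetilde\chi$, $\supp\widetilde\chi\subset U$, and $\partial_<(\supp\widetilde\chi, U)\geq \mathfrak{c}/2$, and split $\phi = \widetilde\chi\phi + (1-\widetilde\chi)\phi$. For the \emph{near} part $\widetilde\chi\phi$, Assumption~\ref{ass:ap} (applied with $j=\ell+2\leq p+1$ and $m=1$, and with the inclusion sets $U_0=\supp\widetilde\chi$, $U_1=U$) furnishes, provided $h_0$ is chosen so that $\mathfrak{c}/2 \geq \kappa h_0$, an approximant $\phi_h\in V_k$ with $\supp\phi_h\subset U$ and
\[
\sum_{K\in\cT_k}(h_K k)^{-2(\ell+1)}\|\widetilde\chi\phi-\phi_h\|_{H^1_k(K)}^2 \leq C\|\widetilde\chi\phi\|_{H_k^{\ell+2}}^2 \leq C\|v\|_{H_k^\ell}^2.
\]
Because $\widetilde\chi\phi-\phi_h$ vanishes outside $U$, the sum is effectively over elements $K$ with $K\cap U\neq\emptyset$, on which $h_K\leq h_U$; factoring out $(h_Uk)^{2(\ell+1)}$ yields
\[
\|\widetilde\chi\phi-\phi_h\|_{H_k^1}\leq C(h_Uk)^{\ell+1}\|v\|_{H_k^\ell}.
\]
For the \emph{far} part, I would invoke Theorem~\ref{t:pseudoLocGeneral}: since $\supp(1-\widetilde\chi)\cap\supp\chi=\emptyset$, pseudolocality of $R_k^\sharp$ gives, for any $N$,
\[
\|(1-\widetilde\chi)\phi\|_{H_k^1} = \|(1-\widetilde\chi) R_k^\sharp \chi v\|_{H_k^1} \leq C_N k^{-N}\|v\|_{H_k^\ell}.
\]

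Combining the two estimates yields $\|\phi-\phi_h\|_{H_k^1}\leq C\bigl((h_Uk)^{\ell+1} + k^{-N}\bigr)\|v\|_{H_k^\ell}$; substituting and taking the supremum over $v$ gives
\[
\|\chi(\Id-\Pi_k^\sharp)u\|_{H_k^{-\ell}}\leq C\bigl((h_Uk)^{\ell+1} + k^{-N}\bigr)\|(\Id-\Pi_k)u\|_{H_k^1}.
\]
Since $hk\leq \Cfem$ (Assumption~\ref{ass:swlg}), the constant factor $(hk)^{\ell+1}$ in the stated bound can be inserted for free by relabelling $N$. The main obstacle is the consistency of the splitting with the discrete approximation: one has to ensure that there is enough room between $\supp\chi$, $\supp\widetilde\chi$, and $\partial U$ relative to the local meshwidth so that Assumption~\ref{ass:ap} applies with a discrete function supported inside $U$, and this is precisely why $h_0$ must be chosen small depending on $\mathfrak{c}$ and the constant $\kappa$ of Assumption~\ref{ass:ap}. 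Once this room is secured, the rest of the argument is a direct combination of pseudolocality of $R_k^\sharp$, local polynomial approximation, and the standard duality/C\'ea chain.
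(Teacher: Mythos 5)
Your argument follows the same route as the paper: dualise, introduce the adjoint problem $\phi = R_k^\sharp \chi v$, use $\Pi_k^\sharp$'s Galerkin orthogonality and C\'ea, and then localise $\phi$ into a near part $\widetilde\chi\phi$ and a far part $(1-\widetilde\chi)\phi$ by a nested cutoff. The near-part estimate and the use of Assumption~\ref{ass:ap} with $U_1 = U$ to control $h_K$ by $h_U$ are exactly right.

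The gap is in the far part. You discard it without any finite-element approximation, bounding $\|(1-\widetilde\chi)R_k^\sharp\chi v\|_{H_k^1}\leq C_Nk^{-N}$ by pseudolocality; this yields the weaker remainder $k^{-N}$ rather than the claimed $k^{-N}(hk)^{\ell+1}$. You then assert that the missing factor $(hk)^{\ell+1}$ can be recovered ``by relabelling $N$'' since $hk\leq\Cfem$. This does not work: $hk$ has no uniform \emph{lower} bound (only the upper bound $hk\leq\Cfem$ from Assumption~\ref{ass:swlg}), so $k^{-N'}\leq Ck^{-N}(hk)^{\ell+1}$ cannot hold for any fixed choice of $N'$ when $h$ is taken small at fixed $k$. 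The correct fix is the one the paper employs: approximate $(1-\widetilde\chi)R_k^\sharp\chi v$ by a second element $w_{h,2}\in V_k$ using Assumption~\ref{ass:ap} globally, which produces $\|(1-\widetilde\chi)R_k^\sharp\chi v - w_{h,2}\|_{H_k^1}\leq C(hk)^{\ell+1}\|(1-\widetilde\chi)R_k^\sharp\chi v\|_{H_k^{\ell+2}}$, and \emph{then} apply pseudolocality (Theorem~\ref{t:pseudoLocGeneral}) to the $H_k^{\ell+2}$ norm of the far part. This yields the full factor $k^{-N}(hk)^{\ell+1}$ and is precisely why the remainder term in the lemma carries the $(hk)^{\ell+1}$ scaling that is used later in the proof of Lemma~\ref{l:localDualityArg}.
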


\begin{proof}
Fix $\mathfrak{c} > 0$, and let $h_0$ be such that $\mathfrak{c} \geq 2\kappa h_0$ 
where $\kappa$ is as in Assumption \ref{d:app}. Let 
$N$, $\chi$ and $U$ as in the statement, and
let $C$ denote a generic constant depending only on the previous quantities. Let $\widetilde{\chi} \in C^\infty(\overline{\Omega})$ be such that $\chi \prec_{\mathfrak{c}/2} \widetilde{\chi}$, $\supp \widetilde{\chi} \subset U$, and 
$$\partial_<(\supp \widetilde{\chi},U) \geq \mathfrak{c}/2.$$
Let $\ell \in \{0,\dots,p-1\}$, and let $v \in H_k^{\ell}$ be such that $\|v\|_{H_k^{\ell}} = 1$.

By the Definition of $\Pi_k^\sharp$ (Definition \ref{def:ellipticProjection}), for all $w_{h,1},w_{h,2} \in \blue{V_{\mathcal{T}}}$, letting $w_h := w_{h,1} + w_{h,2}$, 
\begin{align}\nonumber
\big|\langle v, \chi (\Id - \Pi_k^\sharp)u\rangle\big| &= \big|\langle \chi v, (\Id - \Pi_k^\sharp) u\rangle\big|\\\nonumber
 &= \big|\langle P_k^\sharp (R_k^\sharp \chi v - w_h), (\Id - \Pi_k^\sharp) u\rangle\big|\\\nonumber
& \leq C\|R_k^\sharp \chi v - w_h\|_{H^1_k} \|(\Id - \Pi_k^\sharp) u\|_{H^1_k}\\
& \leq C \left(\|\widetilde{\chi} R_k^\sharp \chi v - w_{h,1}\|_{H^1_k} + \|(1 - \widetilde{\chi})R_k^\sharp \chi v - w_{h,2}\|_{H^1_k}\right) \inf_{w_h \in \blue{V_{\mathcal{T}}}} \|u - w_h\|_{H^1_k},\label{e:TGV}
\end{align}
where we used Céa's lemma for the coercive operator $P_k^\sharp$ in the last step. By the approximation property of $\blue{V_{\mathcal{T}}}$ (Assumption \ref{d:app}), $w_{h,1},w_{h,2} \in \blue{V_{\mathcal{T}}}$ can be chosen such that 
$$\sum_{\blue{T} \in \cT_k} (h_\blue{T} k)^{2 - 2(\ell + 2)} \|\widetilde{\chi} R_k^\sharp \chi v - w_{h,1}\|^2_{H^1_k(\blue{T})} \leq C \|\widetilde{\chi} R_k^\sharp \chi v\|_{H_k^{\ell+2}}^2\,,$$
$$\sum_{\blue{T} \in \cT_k} (h_\blue{T} k)^{2 - 2(\ell + 2)} \|(1-\widetilde{\chi}) R_k^\sharp \chi v - w_{h,2}\|^2_{H^1_k(\blue{T})} \leq C \|(1-\widetilde{\chi})R_k^\sharp \chi v\|_{H_k^{\ell+2}}^2\,,$$
with in addition $\supp w_{h,1} \subset U$. In this case, by the definition of $h_U$ and $h$,  
\begin{equation}
\label{e:loc_ba}
\|\widetilde{\chi} R_k^\sharp \chi v - w_{h,1}\|_{H_k^{1}} \leq C (h_U k)^{\ell + 1} \|\widetilde{\chi} R_k^\sharp \chi v\|_{H_k^{\ell+2}}, \quad \tand
\end{equation}
\begin{equation}
\label{e:glob_ba}
\|(1-\widetilde{\chi}) R_k^\sharp \chi v - w_{h,1}\|_{H_k^{1}} \leq C (hk)^{\ell+ 1}\|(1-\widetilde{\chi}) R_k^\sharp \chi v\|_{H_k^{\ell+2}}.
\end{equation}
Using \eqref{e:loc_ba} and \eqref{e:glob_ba} in \eqref{e:TGV} and the estimates 
$$\|\widetilde{\chi} R_k^\sharp \chi v\|_{H_k^{\ell+2}} \leq C\|R_k^\sharp v\|_{H_k^{\ell + 2}} \leq C \|v\|_{H_k^{\ell}}\,,$$
(by the mapping properties of $R_k^\sharp$, Proposition \ref{prop:ResPksharp}) and 
$$\|(1-\widetilde{\chi}) R_k^\sharp \chi v\|_{H_k^{\ell+2}} \leq C\|R_k^\sharp v\|_{H_k^{\ell + 2}} \leq C k^{-N}\|v\|_{H_k^{\ell}}\,,$$
(by pseudo-locality of $R_k^\sharp$, Theorem \ref{t:pseudoLocGeneral}), we obtain
$$\big|\langle v, \chi (\Id - \Pi_k^\sharp) u\rangle\big| \leq C \big( (h_Uk)^{\ell+1}+ k^{-N}(hk)^{\ell+1}\big) \inf_{w_h \in \blue{V_{\mathcal{T}}}}\|u - w_h\|_{H^1_k}$$
and the conclusion follows by taking the supremum over $v$.
\end{proof}

\

Lemma \ref{l:localSharp} has the following special case when $\chi \equiv 1$ on $\overline{\Omega}$:
\begin{corollary}
	\label{l:AubinNitsche}
Given $k_0>0$, there exists $C>0$ such that for all $\ell\in \{0,\ldots, p-1\}$, and for all $u \in H_k^{\ell} \cap \cZ_k$,
	\[\|(\Id - \Pi_k^\sharp) u\|_{H_k^{-\ell}} \leq C(hk)^{\ell+1}\|(\Id - \Pi_k)u\|_{H^1_k}.\]
\end{corollary}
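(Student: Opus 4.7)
The plan is to obtain Corollary \ref{l:AubinNitsche} as the immediate specialization of Lemma \ref{l:localSharp} to the trivial cutoff $\chi \equiv 1$ on $\overline{\Omega}$, with $U := \overline{\Omega}$. In this configuration the separation hypothesis $\partial_<(\supp\chi, U) \geq \mathfrak{c}$ holds vacuously (the difference of boundaries relative to $\partial\Omega$ is empty, so any $\mathfrak{c} > 0$ works), and $h_U = h$ by definition. Both terms $(h_U k)^{\ell+1}$ and $k^{-N}(hk)^{\ell+1}$ appearing on the right-hand side of Lemma \ref{l:localSharp} then coalesce into a single factor of the form $C(hk)^{\ell+1}$ after absorbing the $k^{-N}$ into the constant, which is precisely the rate claimed in the corollary (for $\ell\in \{0,\ldots,p-1\}$).

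Although appealing to Lemma \ref{l:localSharp} as a black box suffices, the argument simplifies dramatically in this global setting, so I would actually inline the short duality proof to avoid the apparatus of overlapping cutoffs $\chi \prec_{\mathfrak{c}/2} \widetilde{\chi}$. Concretely, I would test against $v\in H_k^\ell$ with unit norm and write, using $P_k^\sharp R_k^\sharp = \mathrm{Id}$ and the defining property of $\Pi_k^\sharp$ (Definition \ref{def:ellipticProjection}),
\[
\langle v,(\Id-\Pi_k^\sharp)u\rangle = a_k^\sharp\bigl(R_k^\sharp v - w_h,\,(\Id-\Pi_k^\sharp)u\bigr) \qquad \text{for any } w_h\in V_k.
\]
Then $k$-uniform continuity of $a_k^\sharp$ (Assumption \ref{ass:cont} together with the smoothing bound on $S_k$ from Proposition \ref{prop:f(Pk)}) gives
\[
|\langle v,(\Id-\Pi_k^\sharp)u\rangle| \leq C\,\|R_k^\sharp v - w_h\|_{H^1_k}\,\|(\Id-\Pi_k^\sharp)u\|_{H^1_k}.
\]

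To finish, I would combine three standard ingredients. First, the shift estimate $\|R_k^\sharp v\|_{H_k^{\ell+2}} \leq C\|v\|_{H_k^{\ell}}$, which follows from Proposition \ref{prop:ResPksharp} applied to $P_k^\sharp$ (whose resolvent is bounded on the whole $\ZcapHspace{}\text{-}\ZcupHspace{}$ scale, uniformly in $k$). Second, the approximation property of the finite-element space (Assumption \ref{ass:ap} with $j=\ell+2 \leq p+1$ and $m=1$), which produces $w_h\in V_k$ with
\[
\|R_k^\sharp v - w_h\|_{H^1_k} \leq C(hk)^{\ell+1}\,\|R_k^\sharp v\|_{H_k^{\ell+2}} \leq C(hk)^{\ell+1}\,\|v\|_{H_k^\ell}.
\]
Third, C\'ea's lemma for the coercive form $a_k^\sharp$ (coercivity \eqref{eq:CoerciveAsharp}), which yields $\|(\Id-\Pi_k^\sharp)u\|_{H^1_k} \leq C\|(\Id-\Pi_k)u\|_{H^1_k}$. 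Taking the supremum over $v$ with $\|v\|_{H_k^\ell}=1$ closes the argument.

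There is essentially no obstacle: because $\chi\equiv 1$, the split of $R_k^\sharp v$ into a local and a remote piece that complicates the proof of Lemma \ref{l:localSharp} is absent, and in particular the pseudolocality result for $R_k^\sharp$ (Theorem \ref{t:pseudoLocGeneral}) is not needed. The only care required is in tracking the regularity index through the shift estimate for $R_k^\sharp$ and in verifying that $\ell+2\leq p+1$ so that Assumption \ref{ass:ap} applies.
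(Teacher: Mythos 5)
Your approach matches the paper's exactly: the paper gives no separate argument for this corollary, introducing it with ``Lemma \ref{l:localSharp} has the following special case when $\chi \equiv 1$ on $\overline{\Omega}$,'' and you correctly identify that taking $\chi \equiv 1$ and $U = \overline{\Omega}$ makes the separation hypothesis vacuous, $h_U = h$, and collapses the two terms on the right of Lemma \ref{l:localSharp} into a single $(hk)^{\ell+1}$. The inline duality proof you supply is also correct and is the right thing to write down since the cutoff machinery and the pseudolocality of $R_k^\sharp$ contribute nothing here.

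One thing you should have flagged: the rate you derive, $(hk)^{\ell+1}$, does \emph{not} literally match the $(hk)^{\ell-1}$ printed in the statement of Corollary \ref{l:AubinNitsche}, although you assert that it is ``precisely the rate claimed.'' This is almost certainly a typo in the paper — every downstream application (e.g.\ the chain of bounds culminating in \eqref{eq:sicily1} and \eqref{eq:sicily2}, where the corollary is invoked with $\ell' = p-1$ to produce a factor $(hk)^{p}$) is consistent only with the exponent $\ell+1$, and it is also what Lemma \ref{l:localSharp} yields under the specialization $\chi\equiv 1$. Since $(hk)^{\ell+1} \leq (hk)^{\ell-1}$ for $hk \leq 1$, your stronger bound implies the printed one, so there is no logical gap in your argument; but a careful write-up should note the discrepancy rather than claim an exact match.
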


\begin{definition}[Localised adjoint-approximability constant]
\label{def:eta_microlocal}
For $A: H_k^{-\ell} \to L^2$ and $\phi \in C^\infty(\overline{\Omega})$, define 
the {\em localised adjoint-approximability constant} associated to $\phi$ and $A$ as
$$\eta(\phi \to A) := \|(\Id-\Pi_k)\phi R_k^* A^*\|_{L^2 \to H^1_k}.$$
\end{definition}

\begin{lemma}[Bound on $\eta(\phi \to A)$]
\label{l:bound_eta_microlocal}
For all $k_0 > 0$ and $\mathfrak{c}> 0$, there exists $h_0 > 0$ such that, for all $N > 0$, $\phi,\widetilde{\phi} \in C^\infty(\overline{\Omega})$ with $\phi_j \prec_{\mathfrak{c}} \tilde{\phi}$, there exists $C > 0$ such that for all $h \leq h_0$, for all $k \geq k_0$, and for all $A : H_k^{-\ell}\to L^2$,
$$\eta(\phi \to A) \leq C \Big((h_{\tilde{\phi}} k)^{p}\|\tilde{\phi}R_k^* A^*\|_{L^2 \to L^2} + (h_{\tilde{\phi}} k)^{\ell + 1} \|\phi (R_k^\sharp)^* A^*\|_{L^2 \to H_k^{\ell + 2}} + (hk)^p k^{-N}\Big),$$
where $h_{\tilde{\phi}} :=\max \Big\{h_\blue{T} \, :\, \blue{T} \in \cT_k \,\,\textup{ s.t. }\, \blue{T} \cap \supp \tilde{\phi} \neq \emptyset\Big\}$.
\end{lemma}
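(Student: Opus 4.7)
\medskip\noindent\emph{Proof plan.} The starting point is the algebraic identity
\[
R_k^* - (R_k^\sharp)^* = (R_k^\sharp)^* S_k R_k^*,
\]
(obtained by expanding $(P_k^\sharp)^* = P_k^* + S_k$ and using self\-adjointness of $S_k$), which gives the splitting
\[
\phi R_k^* A^* = \phi (R_k^\sharp)^* A^* \;+\; \phi (R_k^\sharp)^* S_k R_k^* A^*.
\]
The first summand will produce the second term of the right\-hand side of the claimed estimate, the second summand will produce the first term, and a pseudolocal error will supply the $(hk)^p k^{-N}$ remainder.

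For the first summand, mapping properties of $(R_k^\sharp)^*$ (Proposition~\ref{prop:ResPksharp}) give $\phi(R_k^\sharp)^*A^*f \in H^{\ell+2}_k$ with norm bounded by $\|\phi(R_k^\sharp)^*A^*f\|_{H^{\ell+2}_k}$. Since $\phi\prec_{\mathfrak{c}} \widetilde\phi$, provided $h_0$ is small enough (depending on $\mathfrak{c}$ and the constant $\kappa$ from Assumption~\ref{ass:ap}), we can apply the localised form of the approximation property to obtain $w_h\in V_k$, supported in a neighbourhood of $\supp\phi$ contained in $\supp\widetilde\phi$, satisfying
\[
\sum_{K\in\cT_k}(h_Kk)^{-2(\ell+1)}\|\phi(R_k^\sharp)^*A^*f - w_h\|_{H^1_k(K)}^2 \leq C\|\phi(R_k^\sharp)^*A^*f\|^2_{H^{\ell+2}_k}.
\]
Only elements $K$ intersecting $\supp\widetilde\phi$ contribute, so $h_K\le h_{\widetilde\phi}$, which yields the bound by $(h_{\widetilde\phi}k)^{\ell+1}\|\phi(R_k^\sharp)^*A^*\|_{L^2\to H^{\ell+2}_k}\|f\|_{L^2}$.

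For the second summand, split once more as
\[
\phi (R_k^\sharp)^* S_k R_k^* A^* = \phi (R_k^\sharp)^* S_k\, \widetilde\phi\, R_k^* A^* + \phi (R_k^\sharp)^* S_k\, (1-\widetilde\phi)\, R_k^* A^*.
\]
For the ``near'' piece, the combination of the smoothing of $S_k$ (Proposition~\ref{prop:f(Pk)}) and the mapping property of $(R_k^\sharp)^*$ yields a bound of $\|\widetilde\phi R_k^*A^*f\|_{L^2}$ on its $H^{p+1}_k$ norm; arguing exactly as in the first summand but now with $j=p+1$ and $m=1$, approximation produces $w_h\in V_k$ supported in a neighbourhood of $\supp\phi$ inside $\supp\widetilde\phi$ with error $\leq C(h_{\widetilde\phi}k)^p \|\widetilde\phi R_k^*A^*f\|_{L^2}$. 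For the ``far'' piece, insert an intermediate cutoff $\chi$ with $\phi\prec\chi\prec\widetilde\phi$ and write
\[
\phi (R_k^\sharp)^* S_k(1-\widetilde\phi) = \phi(R_k^\sharp)^*\chi \cdot S_k(1-\widetilde\phi) + \phi(R_k^\sharp)^*(1-\chi)\cdot S_k(1-\widetilde\phi);
\]
Theorem~\ref{t:pseudoLocGeneral} applied to both $S_k=\psi^\sharp(\cP_k)$ and $(R_k^\sharp)^*$ shows each factor is $O_{-\infty}(k^{-N_0};\cY_k\to\cY_k)$ for any $N_0$. Combined with the polynomial bound $\|R_k^*\|_{L^2\to L^2}\leq Ck^{N_1}$ guaranteed by Assumption~\ref{a:polyBound} on $\R_+\setminus\mathcal{J}$, and the global approximation property contributing a factor $(hk)^p$, this piece contributes $(hk)^pk^{-N}$ after choosing $N_0=N+N_1$.

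\medskip\noindent\emph{Main obstacle.} The delicate point is extracting the \emph{local} meshwidth $h_{\widetilde\phi}$, rather than the global $h$, in both the low-regularity ($\ell+1$) term and the high-regularity ($p$) term. This is achieved by appealing to the \emph{support-preserving} part of Assumption~\ref{ass:ap}: the piecewise-polynomial approximant $w_h$ to $\phi(\cdots)$, which is literally supported in $\supp\phi$, can be chosen supported in an enlargement $U_1\subset\supp\widetilde\phi$, which is permitted because $\phi\prec_{\mathfrak{c}}\widetilde\phi$ provides enough room once $h_0$ is small relative to $\mathfrak{c}/\kappa$. Keeping track of the polynomial-growth factor on $R_k^*$ in the final remainder is routine: one simply chooses the order $N_0$ in the pseudolocality estimates of Theorem~\ref{t:pseudoLocGeneral} large enough to absorb $k^{N_1}$.
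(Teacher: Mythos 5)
Your proposal is correct and follows essentially the same route as the paper. Both start from the resolvent identity $R_k^* = (R_k^\sharp)^* + (R_k^\sharp)^* S_k R_k^*$, treat the $(R_k^\sharp)^* A^*$ piece by the support-preserving approximation property to extract the local factor $(h_{\tilde\phi}k)^{\ell+1}$, handle the $(R_k^\sharp)^* S_k R_k^* A^*$ piece by inserting $\widetilde\phi + (1-\widetilde\phi)$ between $S_k$ and $R_k^*$, and absorb the far contribution into a remainder via pseudolocality of $S_k$ and $(R_k^\sharp)^*$ together with the polynomial bound on $\rho(k)$; the only cosmetic difference is that the paper applies the approximation property once to $\phi(R_k^\sharp)^*S_kR_k^*A^*$ before moving $\phi$ past $S_k$, whereas you split first and apply the approximation estimate to each piece, but both orderings yield the same estimates.
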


\begin{proof}
Let $k_0, \mathfrak{c} > 0$ and let $h_0 > 0$ be such that $\mathfrak{c} \geq \kappa h_0$ where $\kappa$ is as in Assumption \ref{d:app}. Let $N > 0$, $\phi, \widetilde{\phi}$ be as in the statement. Let $\check{\phi} \in C^\infty(\overline{\Omega})$ be such that $\phi \prec \check{\phi} \prec \widetilde{\phi}$. Let $C$ denote a generic positive constant depending only on the previous quantities.
Since $(P_k^\sharp)^* =  P_k^* + S_k$, applying $(R_k^\sharp)^*$ to the left, and then $R_k^*$ to the right, we obtain that
\begin{equation}
	\label{e:EuanFavForm2}
	R_k^* =   (R_k^\sharp)^* + (R_k^\sharp)^* S_k R_k^*.
\end{equation}
Thus
\begin{align}\nonumber
\eta(\phi \to A) &\leq \|(\Id - \Pi_k) \phi (R_k^{\sharp})^* A^*\|_{L^2 \to H^1_k} + \|(\Id - \Pi_k)\phi (R_k^\sharp)^*S_k R_k^* A^*\|_{L^2 \to H^1_k}\\
& \leq C (h_{\tilde{\phi}} k)^{\ell +1}\|\phi (R_k^\sharp)^* A^*\|_{L^2 \to H_k^{\ell+2}} + C(h_{\tilde{\phi}} k)^p\|\phi (R_k^\sharp)^* S_k R_k^* A^*\|_{L^2 \to H_k^{p+1}}
\label{e:split_eta}
\end{align}
by the approximation property of $\blue{V_{\mathcal{T}}}$ (Assumption \ref{d:app}), which can be applied since $\phi R_k^\sharp$ maps $L^2$ into $\cZ_k$ (i.e., $\phi R_k^\sharp$ satisfies a 
zero Dirichlet boundary condition on $\Gamma_{\rm tr}$ and, if necessary, also on $\partial\Omega_-$). Finally, one can use pseudolocality of $(R_k^\sharp)^*$ and $S_k$ (Theorem \ref{t:pseudoLocGeneral}) to ``move $\phi$ to the right of $S_k$'' in the second term, as follows
\begin{align}\nonumber
\phi (R_k^\sharp)^* S_k &= \phi (R_k^\sharp )^* S_k\widetilde{\phi} + \phi (R_k^\sharp)^* S_k(1 - \widetilde{\phi})\\\nonumber
& = \phi (R_k^\sharp)^* S_k\widetilde{\phi} + \phi (R_k^\sharp)^* [\check{\phi} S_k(1 - \widetilde{\phi})]
+  \phi [(R_k^\sharp)^* (1 -\check{\phi})] S_k(1 - \widetilde{\phi}) \\ \nonumber
& = \phi (R_k^\sharp )^* S_k\widetilde{\phi} + \phi (R_k^\sharp)^* O_{-\infty}(k^{-\infty};\cY_k \to \cY_k) + O_{-\infty}(k^{-\infty};\cY_k \to \cY_k) S_k(1 - \widetilde{\phi})\\
& = \phi (R_k^\sharp )^* S_k\widetilde{\phi} + O_{-\infty}(k^{-\infty};\cY_k \to \cY_k),
\label{e:moved_to_the_right}
\end{align}
using the mapping properties of $S_k$ (Proposition \ref{prop:f(Pk)}) and of $R_k^\sharp$ (Proposition \ref{prop:ResPksharp}). Inserting \eqref{e:moved_to_the_right} into \eqref{e:split_eta} and using the continuity of $R_k^\sharp$ from $H_k^{p-1}$ to $H_k^{p+1}$ (Proposition \ref{prop:ResPksharp}) and of $S_k$ from $L^2 \to H_k^{p-1}$ (Proposition \ref{prop:f(Pk)}), the result follows.
\end{proof}

\

\begin{proof}[Proof of Lemma \ref{l:localDualityArg}]
Let $\mathfrak{c},k_0 > 0$, and let $h_0 > 0$ be small enough to apply Theorem \ref{t:pseudoLocalPi}, Lemma \ref{l:localSharp} and Lemma \ref{l:bound_eta_microlocal}. Fix $\{\phi_j\}_{j = 1}^J$, $\{\tilde{\phi}_j\}_{j = 1}^J$ as in the statement, and let $N > 0$. Let $C$ denote a generic constant (whose value may change from line to line) depending only on the previous quantities. Let $k \geq k_0$ with $k \notin \blue{\mathrm{J}}$. By Assumption \ref{a:polyBound}, there exists $N' > 0$ such that 
\begin{equation}
\label{e:defN'}
k^{-N'}\rho(k) \leq  Ck^{-N}.
\end{equation}
	Let $v \in L^2$ with $\|v\|_{L^2} = 1$. Arguing as in \eqref{e:startDuality}, we obtain that, 	for all $w_{h,j} \in \blue{V_{\mathcal{T}}}$, $j=1,\ldots,J$,
		\begin{equation}
		\label{eq:dismantled}
		\begin{split}
			\big\langle A(u-u_h),v\big\rangle& = \sum_{j = 1}^J \big\langle u-w_{h,j} , (P_k^\sharp)^*  (\Id - \Pi_k^\sharp) \phi_j \RPs A^* v \big\rangle - \sum_{j = 1}^J \big\langle u-u_h ,   S_k (\Id - \Pi_k^\sharp) \phi_j \RPs A^* v \big\rangle.\end{split}
	\end{equation}
The plan is to use the pseudo-locality properties of $(P_k^\sharp)$, $S_k$ and $(\Id - \Pi_k^\sharp)$ shown in Sections \ref{sec:pseudolocS}-\ref{sec:pseudoLocPi}, 
to show that, up to small remainders, 
$$P_k^\sharp (\Id - \Pi_k^\sharp)\phi_j \approx \widetilde{\phi}_j  P_k^\sharp \check\phi_j (\Id - \Pi_k^\sharp)\phi_j \quad \tand\quad S_k(\Id - \Pi_k^\sharp)\phi_j \approx \widetilde{\phi}_j S_k \check\phi_j (\Id - \Pi_k^\sharp)\phi_j\,,$$
where $\check\phi_{j} \in C^\infty_c(\R^2,[0,1])$ is such that
\[
\phi_j \prec_{\mathfrak{c}/4} \check\phi_j \prec_{\mathfrak{c}/4} \widetilde{\phi}_j \quad \tfa j \in \{1,\ldots,J\}.
\]
To achieve this, we rewrite the difference as
\begin{equation}
	\label{eq:idDoublePseudoLoc}
	X(\Id - \Pi_k^\sharp)\phi_j  -\widetilde{\phi}_j X \check\phi_j (\Id - \Pi_k^\sharp)\phi_jv  = X (1-\check\phi_j)(\Id - \Pi_k^\sharp) \phi_j + (1 - \widetilde{\phi}_j) X \check\phi_j (\Id - \Pi_k^\sharp) \phi_j,
\end{equation}
where $X$ is either $(P_k^\sharp)^*$
or $S_k$.
First, when $X = S_k$, \eqref{eq:idDoublePseudoLoc} gives,
for all $w \in H_k^{\ell+2} \cap \cZ_k$, 
	\begin{align}\nonumber
			&\|S_k(\Id - \Pi_k^\sharp) \phi_jw - \widetilde{\phi}_j S_k \check\phi_j (\Id - \Pi_k^\sharp)\phi_j w\|_{H_k^N} \\ \nonumber
			& \qquad \leq \|S_k\|_{H_k^{1}\to H_k^N} \|(1-\check\phi_j)(\Id - \Pi_k^\sharp) \phi_j w\|_{H_k^{1}} +  \|(1 -\widetilde{\phi}_j) S_k \check\phi_j \|_{H_k^{-p} \to H_k^N} \|(\Id - \Pi_k^\sharp)\phi_jw\|_{H_k^{-p}} \\ \nonumber
			& \qquad\leq Ck^{-N'}\|(\Id - \Pi_k^\sharp)\phi_jw\|_{H_k^{-p}} \qquad \text{ (by Theorem \ref{t:pseudoLocalPi} and \eqref{eq:pseudoloc1} of Theorem \ref{thm:pseudolocSpace})} \\ \nonumber
			& \qquad\leq C k^{-N'} \|(\Id - \Pi_k^\sharp)\phi_jw\|_{H_k^{-p+1}}\\
			& \qquad \leq Ck^{-N'} (hk)^{p+\ell+1} \| w\|_{H_k^{\ell+2}} \qquad\text{ (by Corollary \ref{l:AubinNitsche})},
					\label{eq:sicily1}
	\end{align}
	where the condition that $w \in \cZ_k$ is need to apply Corollary \ref{l:AubinNitsche}.
In particular, taking $w = R_k^* A^* v$, \eqref{eq:sicily1} gives
	\begin{align}\nonumber
		&\Big\|\Big(S_k(\Id - \Pi_k^\sharp) \phi_j - \widetilde{\phi}_j S_k \check\phi_j (\Id - \Pi_k^\sharp)\phi_j \Big) R_k^* A^*v\Big\|_{H_k^N}
		\\ \nonumber
		&\hspace{5cm} \leq Ck^{-N'}(hk)^{p+\ell+1}\big(1+\rho(k)\big)\|A^*\|_{L^2 \to H_k^\ell}\|v\|_{L^2}\\
		&\hspace{5cm}\leq Ck^{-N}(hk)^{p+\ell+1}\|A^*\|_{L^2 \to H_k^\ell}\label{eq:aloeVera1}
	\end{align}
using the mapping properties of $R_k^*$ from Proposition \ref{prop:Rstar} and the definition of $N'$ in \eqref{e:defN'}.

Similarly, when $X = P_k^\sharp$, \eqref{eq:idDoublePseudoLoc} gives
\begin{align}\nonumber
			&\|(P_k^\sharp)^*(\Id - \Pi_k^\sharp) \phi_jw - \widetilde{\phi}_j (P_k^\sharp)^* \check\phi_j (\Id - \Pi_k^\sharp)\phi_j w\|_{H_k^{-1}} \\ \nonumber
			&\qquad \leq \|P_k^{\sharp}\|_{H^1_k\to H^{-1}_k} \|(1 - \check\phi_j) (\Id - \Pi_k^\sharp) \phi_j w\|_{H^1_k} + \|(1 - \widetilde{\phi}_j) (P_k^\sharp)^* \check\phi_j\|_{H_k^1 \to H_k^{-1}} \|(\Id - \Pi_k^\sharp) \phi_j w\|_{H^1_k} \\ \nonumber
			& \qquad \leq C k^{-N'} \|(\Id - \Pi_k^\sharp) \phi_jw\|_{H_k^{-p}} + Ck^{-N} (hk)^{\ell + 1} \|w\|_{H_k^{\ell+2}}\\ \nonumber
			&\hspace{4cm}
			\text{ (by Theorem \ref{t:pseudoLocalPi},  \eqref{eq:pseudoloc1} of Theorem \ref{thm:pseudolocFreq}, and Assumption \ref{d:app})} 
			 \\
			& \qquad \leq Ck^{-N'} \big((hk)^{p+\ell+1}+(hk)^{\ell + 1}\big)\|w\|_{H_k^{\ell+2}},\qquad\text{ (by Corollary \ref{l:AubinNitsche}).}	\label{eq:sicily2}
	\end{align}
Choosing again $w = R_k^* A^* v$ in \eqref{eq:sicily2},
 	\begin{align}\nonumber
& 		\Big\|\Big((P_k^\sharp)^*(\Id - \Pi_k^\sharp) \phi_j - \widetilde{\phi}_j (P_k^\sharp)^* \check\phi_j (\Id - \Pi_k^\sharp)\phi_j \Big)
		 R_k^* A^* v
		\Big\|_{H_k^{-1}} \\
		&
		\hspace{5cm}\leq Ck^{-N}(hk)^{\ell + 1} \|A^*\|_{L^2 \to H_k^\ell} \|v\|_{L^2}.
		 		\label{eq:aloeVera2}
 	\end{align}
Therefore, by the combination of \eqref{eq:dismantled}, \eqref{eq:aloeVera1} and \eqref{eq:aloeVera2},  
\begin{align}
\nonumber
&
\Big|\big \langle A(u-u_h),v\big\rangle\Big|\\
\nonumber
&\leq C\Bigg(\sum_{j = 1}^{J} \abs{\big\langle u - w_{h,j} , \widetilde{\phi}_j (P_k^\sharp)^* \check\phi_j (\Id - \Pi_k^\sharp) \phi_j R_k^* A^* v\big\rangle} +  \sum_{j = 1}^{J} \abs{\big\langle u - u_h , \widetilde{\phi}_j S_k \check\phi_j (\Id - \Pi_k^\sharp) \phi_j R_k^* A^* v \big\rangle} +  R\Bigg),\\
&=C\Bigg(\sum_{j = 1}^{J} \abs{\big\langle P_k^\sharp \widetilde{\phi}_j(u - w_{h,j}) ,  \check\phi_j (\Id - \Pi_k^\sharp) \phi_j R_k^* A^* v\big\rangle} +  \sum_{j = 1}^{J} \abs{\big\langle S_k \widetilde{\phi}_j(u - u_h) , \check\phi_j (\Id - \Pi_k^\sharp)\phi_j R_k^* A^* v \big\rangle} +  R\Bigg),
\label{ready_to_go}
\end{align}
 where
\begin{equation*}
	\begin{gathered}
		R := k^{-N}(hk)^{\ell + 1}\|A^*\|_{L^2\to H_k^\ell} \Big(\sum_{j=1}^J\|u - w_{h,j}\|_{H^1_k} + (hk)^{p} \|u-u_h\|_{H_k^{-N}}\Big)
	\end{gathered}
\end{equation*}
(where we have used that $\|v\|_{L^2} = 1$).	
	Since $a_k^\sharp$ is coercive, Céa's lemma implies that 
	$$\|(\Id - \Pi_k^\sharp)v\|_{H^1_k} \leq C \|(\Id - \Pi_k)v\|_{H^1_k}.$$
	Therefore, for each $j \in \{1,\ldots,J\}$, 
	\begin{equation}
	\label{eq:I-Piwj}
	\|(\Id - \Pi_k^\sharp)\phi_j R_k^* A^* v\|_{H^1_k} \leq C\eta(\phi_j\to A) \|v\|_{L^2} = C \eta(\phi_j \to A),
	\end{equation}
	where $\eta(\phi_j\to A)$ is the localised adjoint-approximability constant defined in Definition \ref{def:eta_microlocal}. Similarly, by Lemma \ref{l:localSharp} with $\ell = p-1$,  
	\begin{equation}
		\label{eq:I-Piwj_localSharped}
		\|\check{\phi}_j(\Id -\Pi_k^\sharp) \phi_j R_k^* A^* v\|_{H_k^{-p+1}} \leq C \big((h_jk)^p + k^{-N'}(hk)^p\big)\eta(\phi_j\to A) ,
	\end{equation}
By \eqref{eq:I-Piwj}, \eqref{eq:I-Piwj_localSharped} and the mapping properties of $S_k$ (Proposition \ref{prop:f(Pk)}) in \eqref{ready_to_go}, 
	\begin{align*}
		\Big|\big \langle A(u-u_h),v\big\rangle\Big| &\leq C \sum_{j = 1}^J \eta(\phi_j \to A)\Big(\|\widetilde{\phi}_j(u-w_{h,j})\|_{H^1_k}
		+(h_jk)^{p}\|\widetilde{\phi}_j(u-u_h)\|_{H^{-N}_k}\Big)+ C(R + R')
	\end{align*}
	where $R' = k^{-N}(hk)^{p} (\sum_{j=1}^J\|u-w_{h,j}\|_{H^1_k} + (hk)^{p}\|u-u_h\|_{H_k^{-N}})$, using the fact that $R_k^*$, and thus $\eta(\phi_j \to A)$, are polynomially bounded on $\R_+\setminus \blue{\mathrm{J}}$, thanks to Assumption \ref{a:polyBound} and Proposition \ref{prop:Rstar} (while $R_k^\sharp$ is bounded by Proposition \ref{prop:ResPksharp}). The result then follows by using Lemma \ref{l:bound_eta_microlocal} to estimate the constants $\eta(\phi_j \to A)$, and taking the supremum over $v$.
\end{proof}

\subsection{Improvements at high frequency and in the PML region}

We now use the improved behavior of the resolvent on (i) high-frequency functions and (ii) functions localised in the PML region to improve Lemma \ref{l:localDualityArg}.

\ble[Improvement of $R_k^*$ on high-frequencies]\label{l:bigScreen}
Let $\psi\in C_c^\infty(\mathbb{R})$ satisfy $\psi^\sharp \prec \psi$ and let $\Psi := \psi(\cP_k)$ and let $\varphi\in C^\infty(\overline{\Omega})$ be such that 
$\supp \varphi \cap \Gamma_{\tr} = \emptyset$. Then 
$$
R_{k}^* (1 - \Psi) \varphi = (R_{k}^\sharp)^* (1 - \Psi)\varphi +O_{-\infty}(k^{-\infty};\ZcupHspace{}\to\ZcupHspace{}).
$$
\ele

\bpf
We use again the resolvent identity \eqref{e:EuanFavForm2} 
to write
\beqs
R_k^* (1 - \Psi)  =  (R_{k}^\sharp)^* (1 - \Psi) +R_k^* S_k (R_{k}^\sharp)^*  (1 - \Psi).
\eeqs
The idea is to now use pseudolocality of $(R_{k}^\sharp)^*$ to move $(1 - \Psi)$ next to $S_k$, with this product then zero since $\psi^\sharp(1-\psi)=0$. 
The issue is that we have only shown that $(R_k^\sharp)^*$ is pseudolocal with respect to frequency cut-offs when sandwiched by appropriate spatial cut offs -- see 
Lemma \ref{lem:huge_whiteboard1} 
 and Theorem \ref{thm:pseudolocFreq}.
		
To this end, let $\varphi_{\Pml,1},\varphi_{\Pml,2} \in C^\infty(\overline{\Omega})$ be such that
$\varphi \prec \varphi_{\Pml,1} \prec \varphi_{\Pml,2}$, and 
\beq\label{e:cutoffPML}
\supp (\varphi_{\Pml,2})\cap \Gamma_{\tr}=\emptyset\quad \tand \quad \supp(1- \varphi_{\Pml,1}) \cap \partial \Omega_- =\emptyset.
\eeq
By Lemma~\ref{t:pseudoLocGeneral} applied to both $(1-\Psi)$ and $(R_{k}^\sharp)^*$,
\begin{align*}
	R_k^* S_k (R_{k}^\sharp)^*  (1 - \Psi)\varphi
	&=R_k^* S_k (R_{k}^\sharp)^* \varphi_{\Pml,1} (1 - \Psi)\varphi +O_{-\infty}(k^{-\infty};\ZcupHspace{}\to\ZcupHspace{})\\
	&=R_k^* S_k \varphi_{\Pml,2}(R_{k}^\sharp)^*  \varphi_{\Pml,1} (1 - \Psi)\varphi +O_{-\infty}(k^{-\infty};\ZcupHspace{}\to\ZcupHspace{}),\\
	&=R_k^* S_k \varphi_{\Pml,2}(R_k^\sharp)^*\varphi_{\Pml,2}  \varphi_{\Pml,1} (1 - \Psi)\varphi +O_{-\infty}(k^{-\infty};\ZcupHspace{}\to\ZcupHspace{}),\\
	&=R_k^* S_k \big(\varphi_{\Pml,2}(R_k^\sharp)^*\varphi_{\Pml,2} \big) (1 - \Psi)\varphi +O_{-\infty}(k^{-\infty};\ZcupHspace{}\to\ZcupHspace{}).
\end{align*}
By Lemma \ref{lem:huge_whiteboard1}
(with $\varphi=\varphi_{\Pml,2}$) 
$\varphi_{\Pml,2}(R_k^\sharp)^*  \varphi_{\Pml,2}\in \cL^{\rm f}_{-2}$. Thus
by Theorem~\ref{thm:pseudolocFreq}, with $\psi^\sharp \prec \widetilde\psi\prec \psi$,  
		\begin{align*}
		R_k^* S (R_k^\sharp)^*  (1 - \Psi)\varphi
				&=R_k^* S \varphi_{\Pml,2}(R_k^\sharp)^*  \varphi_{\Pml,2}(1 - \Psi)\varphi +O_{-\infty}(k^{-\infty};\ZcupHspace{}\to\ZcupHspace{})\\
				&=R_k^* S (1 - \widetilde\Psi)\varphi_{\Pml,2}(R_k^\sharp)^*  \varphi_{\Pml,2} (1 - \Psi) \varphi +O_{-\infty}(k^{-\infty};\ZcupHspace{}\to\ZcupHspace{})\\
		&=		O_{-\infty}(k^{-\infty};\ZcupHspace{}\to\ZcupHspace{}),
		\end{align*}
		where we have used that 
		\beqs
		O_{-\infty}(k^{-\infty};\Dspace{}\to\Dspace{})=O_{-\infty}(k^{-\infty};\ZcupHspace{}\to\ZcupHspace{}),
		\eeqs
		 since, for any $n\in \mathbb{Z}$, $\Dspace{|n|}\subset \ZcupHspace{n}\subset \Dspace{-|n|}$ with continuous inclusions (by Corollary \ref{cor:DnsubZn}).
\epf

\begin{lemma}[High-frequency upgrade]
\label{l:highFreqUpgrade}
For any $\mathfrak{c}, \,k_0$, there exists  $h_0$ such that the following is true.
Let $N > 0$, let $\psi\in C_c^\infty(\mathbb{R})$ satisfy $\psi^\sharp \prec \psi$ and let $\Psi := \psi(\cP_k)$. Let 
$\phi,\widetilde{\phi} \in C^\infty(\overline{\Omega})$ be such that $\phi \prec_\mathfrak{c} \tilde{\phi}$ and $\supp \widetilde{\phi} \cap \Gamma_{\rm tr} = \emptyset$. 
Then there exists $C > 0$ such that 
for all $\ell\in \{0,\ldots, p-1\}$, for 
all $k \geq k_0$, $k \notin \blue{\mathrm{J}}$, $h\leq h_0$ and $w_h \in \blue{V_{\mathcal{T}}}$,
\begin{align}\nonumber
&\|\phi(1-\Psi)(u-u_h)\|_{H_k^{-\ell}}\\ \nonumber
&\leq C(h_{\tilde\phi} k)^{p+\ell+1}\Big((h_{\tilde\phi} k)^{-p}\|\widetilde{\phi}(u-w_h)\|_{H_k^1}+\|\widetilde{\phi}\Psi(u-u_h)\|_{H_k^{-N}}+(h_{\tilde\phi}k)^N\|\widetilde{\phi}(1-\Psi)(u-u_h)\|_{H_k^{-N}}\Big)\\ \label{e:highFreqUpgrade}
&\qquad +Ck^{-N}(hk)^{p+\ell+1}\Big((hk)^{-p}\|u-w_h\|_{H_k^1}+\|u-u_h\|_{H_k^{-N}}\Big),
\end{align}
where $
	h_{\tilde\phi} := \max \big\{\diam(\blue{T}) \, :\, \blue{T} \in \cT_k \,\,\textup{ s.t. }\, \blue{T} \cap \supp \widetilde{\phi} \neq \emptyset\big\}.$
\end{lemma}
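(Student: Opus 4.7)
The plan is to run the same elliptic-projection/duality machinery used to prove Lemma~\ref{l:localDualityArg}, but with two additional ingredients that together yield the extra $(h_{\tilde\phi}k)^{\ell+1}$ gain and remove the factor of $\rho$ that would otherwise appear. First, I would pass to the dual norm: since $H_k^{-\ell}$ is the dual of $H_k^{\ell}$, it suffices to bound $|\langle (u-u_h),\phi(1-\Psi)v\rangle|$ uniformly over $v\in H_k^{\ell}$ with $\|v\|_{H_k^{\ell}}=1$. Second, I would choose a partition of unity $\{\phi_j\}_{j=1}^{J}$ with one element, $\phi_{j_0}$, satisfying $\phi_{j_0}\prec\widetilde\phi$ and arrange $\check\phi_j\prec\widetilde\phi_j$ with $\widetilde\phi_j=\widetilde\phi$ for $j=j_0$ and $\widetilde\phi_j\perp\phi$ for $j\neq j_0$.

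Having fixed $v$, I would run the dismantling \eqref{eq:dismantled} with $A^{*}v$ replaced by $\phi(1-\Psi)v$, yielding (up to the usual superalgebraic remainders produced by commuting $\check\phi_j$ across $(P_k^\sharp)^*$, $S_k$, and $\Pi_k^\sharp$ as in \eqref{eq:idDoublePseudoLoc}--\eqref{eq:aloeVera2}):
\begin{align*}
\langle u-u_h,\phi(1-\Psi)v\rangle
&\approx \sum_{j}\big\langle P_k^\sharp\widetilde\phi_j(u-w_{h,j}),\,\check\phi_j(\Id-\Pi_k^\sharp)\phi_j\,R_k^{*}\phi(1-\Psi)v\big\rangle\\
&\qquad-\sum_{j}\big\langle S_k\widetilde\phi_j(u-u_h),\,\check\phi_j(\Id-\Pi_k^\sharp)\phi_j\,R_k^{*}\phi(1-\Psi)v\big\rangle.
\end{align*}
The key step is then to estimate the localised adjoint-approximability constant $\eta(\phi_j\to A)$ with $A^{*}=\phi(1-\Psi)$ using Lemma~\ref{l:bound_eta_microlocal}. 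The first summand in \eqref{eq:alphaj} is $(h_{\widetilde\phi_j}k)^{p}\|\widetilde\phi_j R_k^{*}\phi(1-\Psi)\|_{L^2\to L^2}$; applying Lemma~\ref{l:bigScreen} (after commuting $\phi$ past $(1-\Psi)$, the cost being $O(k^{-\infty})$ by Proposition~\ref{prop:proofadNR1}), this equals $(h_{\widetilde\phi_j}k)^{p}\|\widetilde\phi_j(R_k^\sharp)^{*}\phi(1-\Psi)\|_{L^2\to L^2}+O(k^{-\infty})$, and the latter is $O((h_{\widetilde\phi_j}k)^{p})$ since $(R_k^\sharp)^{*}$ is uniformly bounded on $L^2$ by Proposition~\ref{prop:ResPksharp}. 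The second summand in \eqref{eq:alphaj} is $O((h_{\widetilde\phi_j}k)^{\ell+1})$ by the $L^2\to H_k^{\ell+2}$ mapping of $(R_k^\sharp)^{*}$. Hence $\eta(\phi_j\to A)=O((h_{\widetilde\phi_j}k)^{\ell+1})$, and when plugged into the Lemma~\ref{l:localDualityArg} skeleton this produces precisely the prefactor $(h_{\widetilde\phi_j}k)^{p+\ell+1}$ in front of both $(h_{\widetilde\phi_j}k)^{-p}\|\widetilde\phi_j(u-w_{h,j})\|_{H_k^{1}}$ and $\|\widetilde\phi_j(u-u_h)\|_{H_k^{-N}}$.

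The remaining point is to refine the $H_k^{-N}$ error term by splitting $u-u_h=\Psi(u-u_h)+(1-\Psi)(u-u_h)$. The $\Psi$ part contributes the $\|\widetilde\phi\Psi(u-u_h)\|_{H_k^{-N}}$ piece with the full prefactor. For the $(1-\Psi)$ part, observe that the argument only meets $\widetilde\phi_j(u-u_h)$ through $S_k\widetilde\phi_j(u-u_h)$ (from the $S_k$-term above) and through commutator-induced remainders; since $\psi^\sharp\prec\psi$, the functional calculus gives $S_k(1-\Psi)=\psi^\sharp(\cP_k)(1-\psi)(\cP_k)=0$, so $S_k\widetilde\phi_j(1-\Psi)=[S_k,\widetilde\phi_j](1-\Psi)$, and both factors of the commutator can be made $O(k^{-N})$ at the cost of spatial cutoffs using Proposition~\ref{prop:proofadNR1}, which after tracking through yields the extra $(h_{\widetilde\phi}k)^{N}$ gain on the $(1-\Psi)$ piece. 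Taking the supremum over $v$ and absorbing the global $O(k^{-N})$ remainders (arising exactly as in \eqref{eq:aloeVera1}--\eqref{eq:aloeVera2}) gives \eqref{e:highFreqUpgrade}.

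\textbf{Main obstacle.} The technical heart of the argument is the bookkeeping of pseudolocality in \emph{both} space and frequency simultaneously: to use Lemma~\ref{l:bigScreen} one needs $\phi$ to sit to the right of $R_k^{*}$ next to $(1-\Psi)$, and to use Theorem~\ref{t:pseudoLocalPi} and the $(P_k^\sharp)^{*}$ estimates one needs spatial cutoffs in precise nested configurations. Juggling these — in particular, ensuring that every commutator produced while moving $\phi$ past $\Psi$, past $(R_k^\sharp)^{*}$, and past $S_k$ contributes only to the $Ck^{-N}$ global remainder and does so while respecting the high-/low-frequency split — is where the care lies; the inequality itself then falls out of the Lemma~\ref{l:localDualityArg} template with the improved $\eta(\phi_j\to A)$.
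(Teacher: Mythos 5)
Your setup and the calculation of the improved adjoint--approximability constant $\eta(\phi_j\to A)$ via Lemma~\ref{l:bigScreen} and Proposition~\ref{prop:ResPksharp} follow the paper closely and are correct: both you and the paper get $\eta_{j\to A}=O\big((h_{\widetilde\phi_j}k)^{\ell+1}\big)$ rather than the generic $O\big((h_{\widetilde\phi_j}k)^p\rho\big)$, and this is what produces the leading prefactor $(h_{\widetilde\phi}k)^{p+\ell+1}$. Where you diverge, and where the argument has a genuine gap, is in how you obtain the extra factor $(h_{\widetilde\phi}k)^N$ on the $\|\widetilde\phi(1-\Psi)(u-u_h)\|_{H_k^{-N}}$ term.

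You propose to get this factor in a single shot: split $u-u_h$ into low and high frequencies, observe that the $(1-\Psi)$ piece only meets the bookkeeping through $S_k\widetilde\phi_j(1-\Psi)(u-u_h)$, use $S_k(1-\Psi)=0$ to rewrite this as $[S_k,\widetilde\phi_j](1-\Psi)(u-u_h)$, and then claim the commutator ``can be made $O(k^{-N})$ at the cost of spatial cutoffs using Proposition~\ref{prop:proofadNR1}.'' This last step does not hold. A single commutator $[S_k,\widetilde\phi_j]=\ad_{\widetilde\phi_j}S_k$ is only $O(k^{-1})$ in the sense of Proposition~\ref{prop:proofadNR1} (that proposition gives $\ad_R^N f(\cP_k)=O_{-\infty}(\eta^{-N})$, i.e.\ $N$ iterated commutators for $k^{-N}$; here you have exactly one). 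There is no natural way to iterate the $\ad$ once $S_k(1-\Psi)=0$ has been used, since after one commutator the $(1-\Psi)$ is no longer adjacent to a copy of $S_k$ that annihilates it. Spatial pseudolocality also does not help, because the commutator kernel lives on $\supp\nabla\widetilde\phi_j$, not away from $\supp\widetilde\phi_j$, so one cannot gain extra powers by inserting disjoint spatial cutoffs. Thus the one-shot argument only gives the $(1-\Psi)$ term with prefactor $(h_{\widetilde\phi}k)^{p+\ell+1}\cdot k^{-1}$, far short of the claimed $(h_{\widetilde\phi}k)^{p+\ell+1+N}$.

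The paper handles this differently: it proves only the weaker one-step estimate \eqref{eq:sufficientInduct1}, in which the $(1-\Psi)$ term appears with prefactor $(h_{\widetilde\chi}k)^{p+\ell+1}$ (no extra $(h_{\widetilde\chi}k)^N$), and then iterates \eqref{eq:sufficientInduct1} with a chain of cutoffs nested between $\phi$ and $\widetilde\phi$. Each iteration costs one nested layer and multiplies the prefactor on the $(1-\Psi)$ piece by a further $(h k)^p$, so after $O(N/p)$ iterations the required $(h_{\widetilde\phi}k)^N$ gain is accumulated (while the best-approximation and $\Psi$ terms stay at their one-step prefactors because $(h k)$ is bounded). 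Without this bootstrap, the extra $(h_{\widetilde\phi}k)^N$ factor in the statement is not reached.
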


We highlight that the advantage of \eqref{e:highFreqUpgrade} over the bound in Lemma \ref{l:localDualityArg} is the arbitrary power $N$ in the term $(h_{\tilde\phi}k)^N\|\widetilde{\phi}(1-\Psi)(u-u_h)\|_{H_k^{-N}}$.

\

\begin{proof}[Proof of Lemma \ref{l:highFreqUpgrade}]
Let $\mathfrak{c} > 0$, $k_0 > 0$. Let $c \in (0,\mathfrak{c})$ be arbitrary and let $h_0$ be small enough to apply Lemma \ref{l:localDualityArg} with $\mathfrak{c} = c$. Let $N$, $\psi$, $\phi$ and $\widetilde{\phi}$ as in the statement, and let $\chi, \widetilde{\chi} \in C^\infty(\overline{\Omega})$ be any cutoff functions chosen such that $\chi \prec_c \widetilde{\chi}$ and $\supp \widetilde{\chi} \cap \Gamma_{\rm tr} = \emptyset$. Denote by $C$ any positive constant whose value depends only on the previous quantities. Then, given $k \geq k_0$, $k \notin \blue{\mathrm{J}}$, $h \leq h_0$ and $w_h \in \blue{V_{\mathcal{T}}}$,
it is enough to show that 
\begin{align}\nonumber
&\|\chi(1-\Psi)(u-u_h)\|_{H_k^{-\ell}}\\\nonumber
&\leq C(h_{\tilde\chi} k)^{\ell+1}\Big(\|\widetilde{\chi}(u-w_h)\|_{H_k^1}+(h_{\tilde\chi}k)^p\|\widetilde\chi\Psi(u-u_h)\|_{H_k^{-N}}+(h_{\tilde\chi}k)^p\|\widetilde\chi(1-\Psi)(u-u_h)\|_{H_k^{-N}}\Big)\\
&\qquad +Ck^{-N}(hk)^{\ell+1}\Big(\|u-w_h\|_{H_k^1}+(hk)^p\|u-u_h\|_{H_k^{-N}}\Big).\label{eq:sufficientInduct1}
\end{align}
where $h_{\tilde{\chi}}$ is defined analogously to $h_{\tilde{\phi}}$. Indeed, one can then apply \eqref{eq:sufficientInduct1} iteratively with a sequence of cut-offs appropriately nested between $\phi$ and $\widetilde\phi$.

Let $\check\chi,\hat\chi$ be such that 
$\chi\prec_{c/4}\check\chi\prec_{c/4} \hat\chi\prec_{c/4}\widetilde\chi$. 
We apply Lemma~\ref{l:localDualityArg} with 
$A=E_\ell\chi(1-\Psi)$, where $E_\ell:H_k^{-\ell}\to L^2$ an isomorphism, with 
$\{\phi_j\}_{j = 1}^{2} := \{ \hat{\chi},1-\hat{\chi}\}$ 
(i.e., only two functions in the partition of unity)
and $\{\tilde\phi_j\}_{j = 1}^2=\{ \widetilde\chi,1- \check\chi\}$. 
Then, by Lemma \ref{l:bigScreen} (since $\supp\chi\cap \Gamma_\tr=\emptyset$),
$$
\|\widetilde{\chi}R_k^*A^*\|_{L^2\to L^2} \leq \|\widetilde{\chi}(R^\sharp_k)^*A^*\|_{L^2\to L^2}+C k^{-N}
$$
and 
$$
\|(1-\check\chi)R_k^*A^*\|_{L^2\to L^2}\leq\|(1-\check\chi)(R^\sharp_k)^*A^*\|_{L^2\to L^2}+Ck^{-N}.
$$
Moreover, by Theorem~\ref{t:pseudoLocGeneral}, 
\beqs
\|(1-\check\chi)(R^\sharp_k)^*A^*\|_{L^2\to H_{k}^{\ell+2}} \leq Ck^{-N}
\eeqs
and by Proposition~\ref{prop:ResPksharp}
\beqs
\|\widetilde\chi(R^\sharp_k)^*A^*\|_{L^2\to H_{k}^{\ell+2}}\leq C.
\eeqs
Therefore, with $\eta_{j \to A}$ defined by \eqref{eq:alphaj},
\beqs
\eta_{1\to A} \leq C \Big( 
(h_{\tilde\chi}k)^p\|\widetilde\chi R_k^*A^*\|_{L^2\to L^2}+(h_{\tilde\chi}k)^{\ell+1}\|\widetilde\chi(R^\sharp_k)^*A^*\|_{L^2\to H_{k}^{\ell+2}}\Big)\leq C(h_{\tilde\chi}k)^{\ell+1},
\eeqs
and $\eta_{2\to A}\leq C k^{-N}(hk)^{\ell+1}$.
Lemma \ref{l:localDualityArg} thus gives 
\begin{align*}
\|\chi(1-\Psi)(u-u_h)\|_{H_k^{-\ell}}&\leq (h_{\tilde{\chi}} k)^{\ell+1}C\Big(\|\widetilde{\chi}(u-w_h)\|_{H_k^1}+(h_{\tilde{\chi}}k)^p\|\widetilde{\chi}(u-u_h)\|_{H_k^{-N}}\Big)\\
&\qquad\qquad+ Ck^{-N}(hk)^{\ell+1}\Big(\|u-w_h\|_{H_k^1}+(hk)^{p}\|u-u_h\|_{H_k^{-N}}\Big),
\end{align*}
and \eqref{eq:sufficientInduct1} follows using $\|\widetilde{\chi}(u-u_h)\|_{H_k^{-N}} \leq  \| \widetilde{\chi}\Psi(u-u_h)\|_{H_k^{-N}} + \|\widetilde{\chi}(1 - \Psi)(u-u_h)\|_{H_k^{-N}}$.
\end{proof}

Recall from \S\ref{sec:state_main_result} that $U_{\Pml}$ is a neighbourhood of $\Gamma_{\tr}$ such that Theorem~\ref{thm:PML} holds on $U_{\Pml}$.

\begin{lemma}[PML upgrade]
\label{l:PMLUpgrade}
For any $\mathfrak{c}, \,k_0$, there exists $h_0 > 0$ such that the following is true. Let $N > 0$, let $\psi \in C^\infty_c(\R)$, let $\Psi := \psi(\cP_k)$, and let $\phi,\widetilde{\phi}\in C^\infty(\overline{\Omega})$ be such that 
$\phi \prec_{\mathfrak{c}} \widetilde{\phi}$ and $\supp \widetilde\phi \subset U_{\Pml}$. Then there exists $C > 0$ such that, for all $\ell\in \{0,\ldots, p-1\}$, 
$k \geq k_0$, $k \notin \blue{\mathrm{J}}$, $h\leq h_0$ and $w_h \in \blue{V_{\mathcal{T}}}$,
\begin{align*}
&\|\phi\Psi (u-u_h)\|_{H_k^{-\ell}} + \|\phi(1 - \Psi) (u-u_h)\|_{H_k^{-\ell}}\\
&\hspace{2cm} \leq C(h_{\tilde\phi} k)^{p+\ell+1}\Big((h_{\tilde{\phi}}k)^{-p}\|\widetilde{\phi}(u-w_h)\|_{H_k^1}+(h_{\tilde\phi}k)^N\|\widetilde{\phi}(u-u_h)\|_{H_k^{-N}}\Big)\\
&\hspace{5cm}+Ck^{-N}(hk)^{p+\ell+1}\Big((hk)^{-p}\|u-w_h\|_{H_k^1}+\|u-u_h\|_{H_k^{-N}}\Big),
\end{align*}
where $h_{\tilde\phi} := \max \Big\{\diam(\blue{T}) \, :\, \blue{T} \in \cT_k \,\,\textup{ s.t. }\, \blue{T} \cap \supp \widetilde{\phi} \neq \emptyset\Big\}$.
\end{lemma}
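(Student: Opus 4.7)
The plan is to mimic the proof of Lemma \ref{l:highFreqUpgrade}, substituting the PML improvement on the solution operator (Theorem \ref{thm:PML}) for the high-frequency improvement (Lemma \ref{l:bigScreen}).

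First I would establish a one-step inequality. Let $\phi \prec_{c} \chi \prec_{c} \hat{\chi} \prec_{c} \check\chi \prec_{c} \widetilde\chi \prec_{c} \widetilde\phi$ with $c := \mathfrak{c}/10$, all supported in $U_{\Pml}$, and fix $Q \in \{\Psi,\, \Id - \Psi\}$. Apply Lemma \ref{l:localDualityArg} with $A := E_\ell \chi Q$ (where $E_\ell : H_k^{-\ell} \to L^2$ is an isomorphism), partition of unity $\{\hat\chi,\, 1 - \hat\chi\}$, and refinements $\{\widetilde\chi,\, 1 - \check\chi\}$. The transfer coefficients from \eqref{eq:alphaj} are controlled as follows: since $\widetilde\chi$ is supported in $U_{\Pml}$, Theorem \ref{thm:PML} gives $\|\widetilde\chi R_k^*\|_{L^2 \to L^2} \leq C$, and Proposition \ref{prop:ResPksharp} gives $\|\widetilde\chi (R_k^\sharp)^*\|_{L^2 \to H_k^{\ell+2}} \leq C$; together with the boundedness of $Q$ on $L^2$ these yield $\eta_{1 \to A} \leq C(h_{\widetilde\chi}k)^{\ell+1}$ (using $\ell + 1 \leq p$ and $hk \leq 1$). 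For the complementary piece, the pseudolocality estimate \eqref{e:pseudoloc_in_pml} in Theorem \ref{thm:PML}, together with the pseudolocality of $Q = \psi(\cP_k)$ or $(1 - \psi)(\cP_k)$ (Theorem \ref{t:pseudoLocGeneral}) used to commute $Q$ past an intermediate spatial cutoff sandwiched between $\chi$ and $\check\chi$ up to $O(k^{-\infty})$ errors, gives $\eta_{2 \to A} \leq Ck^{-N}$. Plugging these into Lemma \ref{l:localDualityArg} produces the one-step estimate
\begin{equation*}
\|\chi Q(u - u_h)\|_{H_k^{-\ell}} \leq C(h_{\widetilde\chi}k)^{\ell+1}\|\widetilde\chi(u - w_h)\|_{H^1_k} + C(h_{\widetilde\chi}k)^{p+\ell+1}\|\widetilde\chi(u - u_h)\|_{H_k^{-N}} + r,
\end{equation*}
with $r = Ck^{-N}(hk)^{\ell+1}\|u-w_h\|_{H^1_k} + Ck^{-N}(hk)^{p+\ell+1}\|u-u_h\|_{H_k^{-N}}$ the standard global residual.

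Next I would iterate this one-step estimate to extract the extra factor $(h_{\widetilde\phi}k)^N$ appearing in the statement, which is the essential new feature relative to Lemma \ref{l:highFreqUpgrade}. Pick $M := \lceil N/(2p)\rceil + 1$ and a finer chain $\phi = \chi_0 \prec_{c'} \chi_1 \prec_{c'} \cdots \prec_{c'} \chi_M = \widetilde\phi$ inside $U_{\Pml}$, with $c' := \mathfrak{c}/(2M)$. Apply the one-step first with $(\phi,\widetilde\phi) = (\chi_0,\chi_1)$; then bound $\|\chi_1(u-u_h)\|_{H_k^{-N}} \leq \|\chi_1(u-u_h)\|_{H_k^{-(p-1)}}$ (using that $H_k^{-(p-1)} \hookrightarrow H_k^{-N}$ with norm dominance when $N \geq p-1$), and re-apply the one-step with $(\phi,\widetilde\phi,\ell) = (\chi_1,\chi_2,p-1)$ to bound the latter. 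Iterating $M$ times and using wavelength-scale quasi-uniformity (Assumption \ref{ass:qu}) to replace each $h_{\chi_j}$ by $h_{\widetilde\phi}$ up to constants, the coefficient of $\|\widetilde\phi(u-u_h)\|_{H_k^{-N}}$ becomes $C^M(h_{\widetilde\phi}k)^{p+\ell+1+2p(M-1)}$, which dominates the required $(h_{\widetilde\phi}k)^{p+\ell+1+N}$ by the choice of $M$. The best-approximation contributions accumulated along the iteration form a convergent geometric series bounded by the leading term $C(h_{\widetilde\phi}k)^{\ell+1}\|\widetilde\phi(u-w_h)\|_{H^1_k}$, and the global residuals stay of the claimed form by starting the argument with $N$ enlarged by a fixed multiple of $p$. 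The final statement is obtained by running this argument once for $Q = \Psi$ and once for $Q = \Id - \Psi$ and adding the resulting bounds.

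The main obstacle is the bookkeeping for the non-local transfer coefficient $\eta_{2 \to A}$: when $Q = \Psi$, the product $\Psi\chi$ is not spatially localized on $\supp \chi$, so moving the cutoff $(1-\check\chi)$ past $\Psi$ and obtaining the $O(k^{-N})$ decay requires inserting intermediate spatial cutoffs and invoking frequency pseudolocality of $\psi(\cP_k)$ and $(R_k^\sharp)^*$ (Theorem \ref{t:pseudoLocGeneral}), in the spirit of the proof of Lemma \ref{l:bigScreen}. Along the iteration one must simultaneously ensure that all intermediate cutoffs remain inside $U_{\Pml}$ so that Theorem \ref{thm:PML} continues to apply, that the successive values of $N$ used in the one-step are chosen large enough to absorb the accumulated constants, and that the $k^{-N}$ remainders produced at each step fold into the final remainder $r$ without enlarging its exponent of $hk$.
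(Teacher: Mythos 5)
Your proposal follows essentially the same route as the paper: apply Lemma~\ref{l:localDualityArg} with the two-set partition $\{\hat\chi, 1-\hat\chi\}$, control the transfer coefficients $\eta_{j\to A}$ by first using pseudolocality of $\Psi$ (Theorem~\ref{t:pseudoLocGeneral}) to move a spatial cutoff next to $R_k^*$ and then invoking Theorem~\ref{thm:PML}, and iterate the resulting one-step estimate to produce the extra factor $(h_{\widetilde\phi}k)^N$ -- precisely the scheme the paper imports wholesale from the proof of Lemma~\ref{l:highFreqUpgrade}. Two cosmetic slips: what you call ``frequency pseudolocality'' of $\psi(\cP_k)$ should be \emph{spatial} pseudolocality (Theorem~\ref{t:pseudoLocGeneral} is the spatial result; frequency pseudolocality is Theorem~\ref{thm:pseudolocFreq}), and Assumption~\ref{ass:qu} is not needed to replace $h_{\chi_j}$ by $h_{\widetilde\phi}$ -- since each $\chi_j$ is supported inside $\supp\widetilde\phi$ one simply has $h_{\chi_j}\le h_{\widetilde\phi}$ by definition.
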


\begin{proof}
We proceed as in the proof of Lemma \ref{l:highFreqUpgrade}, with $A=E_\ell \chi \Psi$ or $A = E_\ell \chi (1 - \Psi)$, but this time, the 
terms $\eta_{j \to A}$ in Lemma \ref{l:localDualityArg} are bounded by first using pseudolocality of $\Psi$ (or $1 - \Psi$) to write
$$R_k^*\Psi \chi  = R_k^* \underline{\chi} \Psi \chi + O_{-\infty}(k^{-\infty};\cY_k \to \cY_k),$$
where $\chi \prec \underline{\chi} \prec \check{\chi}$. The conclusion is then obtained from
$$
\|(1-\check\chi) R_k^* \underline{\chi}\|_{L^2\to L^2} \leq Ck^{-N} \quad\tand\quad \|\widetilde\chi R_k^* \underline{\chi}\|_{L^2\to L^2}\leq C,
$$
with these bounds following from Theorem~\ref{thm:PML}, since $1 - \check{\chi} \perp \underline{\chi}$ and $\supp \widetilde{\chi}\subset U_{\Pml}$. 
\end{proof}

\bre
The iteration in the proofs  of Lemmas \ref{l:highFreqUpgrade} and  \ref{l:PMLUpgrade} is possible because the $\eta_{j \to A}$ are small, precisely because of the ``good" behaviour of the solution operator on high frequencies/in the PML, respectively. 
\ere

\subsection{Estimates in lowest regularity}

In the remainder of this section, we fix a cover $\{\Omega_j\}_{1 \leq j \leq M}$ satisfying \eqref{e:domainConditions}.

\begin{lemma}[The system of inequalities involving $X$]\label{l:lastDay}
Let $\{\chi_i\}_{i = 1}^M$ 
be such that \eqref{e:crazyCover0} holds, 
let $\psi \in C^\infty_c(\R)$ with $\psis \prec \psi$, let $\Psi = \psi(\cP_k)$, and let $k_0, N>0$. Then, there exists $h_0,\,C_\dagger,C>0$ such that the following holds for all 
$k \geq k_0$, $k \notin \blue{\mathrm{J}}$, $h\leq h_0$, $u - u_h$ satisfying \eqref{e:Galerkin_ortho} and $w_{h,i} \in \blue{V_{\mathcal{T}}}$, $i\in\{1,\ldots,\domainnumber\}$. 
	 Letting $X^-, X^+$, $X^{\Pml}$ be the column vectors of local Galerkin errors defined by 
	\begin{equation}
	\begin{gathered}
		\label{e:defX}
		X^-_i= \norm{ \chibigger_{i} \Psi(u - u_h)}_{H^{-p+1}_k}, \quad {X}_i^+ = \norm{\chibigger_{i} (1 - \Psi)(u - u_h)}_{H_k^{-p+1}},\quad i=1,\dots,\domainnumber_\Int ,\\
		 X^{\Pml}_i:=\|\chibigger_{\domainnumber_\Int +i}(u-u_h)\|_{H_k^{-p+1}},\quad i=1,\dots,\domainnumber_\Pml\qquad 
		 X := \begin{pmatrix}
		X^- \\X^+\\X^{\Pml}
		\end{pmatrix}
		\end{gathered}
\end{equation}
	(with $\pm$ standing for high and low frequency), $\matrixZ$ the column vector of local best approximation errors defined by
	\begin{equation*}
		{Z}_i = \norm{u - w_{h,i}}_{H^1_k(\Omega_i)},
	\end{equation*}
and $B$, $\oldT$ the matrices defined by \eqref{e:matrixB} and \eqref{e:matrixT}, the
following system of inequalities 
\beq\label{e:system}
\big( I - \specialC \oldT\big)X \leq 
C \big(B\matrixZ + R \,\mathbf{1}\big)
\eeq
holds in the component-wise sense, with 
$\mathbf{1}:=\begin{pmatrix}
1&\cdots&1
\end{pmatrix}^T
$
and
$R:=
R_1+R_2$, where
\begin{align*}
R_1:=
k^{-N}(hk)^p
\sum_{i=1}^\domainnumber\|u-w_{h,i}\|_{H^1_k},
\qquad R_2:= 
k^{-N}(hk)^{2p} \|u-u_h\|_{H^{-N}_k}.
\end{align*}
\end{lemma}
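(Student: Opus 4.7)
The proof assembles the inequality \eqref{e:system} one row at a time by applying, to each component of $X$, the localised duality argument best adapted to its frequency content and spatial location: Lemma \ref{l:localDualityArg} for the low-frequency interior rows $X^-$, Lemma \ref{l:highFreqUpgrade} for the high-frequency interior rows $X^+$, and Lemma \ref{l:PMLUpgrade} for the PML rows $X^\Pml$. The first step is to fix, via Lemma \ref{l:goodPartition}, a partition of unity $\{\phi_j\}_{j=1}^{\domainnumber}$ subordinate to $\{\Omega_j\}$ together with slightly enlarged cutoffs $\tilde\phi_j$ with $\phi_j \prec_{\mathfrak c} \tilde\phi_j$ and $\supp\tilde\phi_j \subset \Omega_j \cup \partial\Omega$. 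Thanks to \eqref{e:domainConditions}, we can arrange additionally that $\supp \tilde\phi_j \cap \Gamma_\tr = \emptyset$ for interior $j$ and $\supp \tilde\phi_j \subset U_\Pml$ for PML $j$, so that the improvement Lemmas \ref{l:highFreqUpgrade}--\ref{l:PMLUpgrade} may be applied with these cutoffs.

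For $X^-_i$ we apply Lemma \ref{l:localDualityArg} with $A = E_{p-1}\chi_i \Psi$, where $E_{p-1}\colon H_k^{-p+1}\to L^2$ is an isometry, and with the partition $\{\phi_j\}$ fixed above. After using $\|\tilde\phi_j(u-w_{h,j})\|_{H^1_k}\leq Z_j$ and the splitting $\|\tilde\phi_j(u-u_h)\|_{H_k^{-N}} \leq \|\tilde\phi_j\Psi(u-u_h)\|_{H_k^{-N}} + \|\tilde\phi_j(1-\Psi)(u-u_h)\|_{H_k^{-N}}$, the last two terms are bounded (modulo $O(k^{-N})$ tails absorbed into $R_2$) by the $X^-_m$ and $X^+_m$ entries with $\Omega_m \cap \Omega_j \neq \emptyset$, via a further partition-of-unity expansion $\tilde\phi_j = \sum_m \tilde\phi_j \chi_m$ and the weaker topology of $H_k^{-N}$ for $N>p$. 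The transfer coefficient $(h_jk)^p\eta_{j\to A}$ is estimated by combining the resolvent identity \eqref{e:EuanFavForm2} with pseudolocality of $\Psi$ (Theorem \ref{t:pseudoLocGeneral}), which reduces $\|\tilde\phi_j R_k^* \Psi \chi_i\|_{L^2\to L^2}$ to $\|\tilde\phi_j R_k^* \chi_i\|_{L^2\to L^2} = \mathcal C_{ij}$ modulo $O(k^{-N})$ remainders. For interior $j$ this gives the $\mathcal C_{\Int,\Int}(\Hdiag_{\Int,\Int}k)^{2p}$ entries of $\oldT$, with the overlap contributions from the $R_k^\sharp$ term of $\eta_{j\to A}$ absorbed into the $\mathcal C$ entries (using that $\mathcal C_{ij}$ dominates the intersection constant uniformly in $k$); for PML $j$, Theorem \ref{thm:PML} gives the $\Hmin_{\Int,\Pml}(N)k^N$ entries.

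For $X^+_i$ and $X^\Pml_i$ we apply Lemmas \ref{l:highFreqUpgrade} and \ref{l:PMLUpgrade} with $\phi = \chi_i$ and $\tilde\phi$ a slight enlargement, exploiting the iterated improvement that inserts an extra factor $(h_{\tilde\phi}k)^N$ in front of the $\|\tilde\phi(u-u_h)\|_{H_k^{-N}}$ terms. Since the right-hand sides of these lemmas are supported in a neighbourhood of $\chi_i$, expanding $\tilde\phi = \sum_j \tilde\phi \phi_j$ restricts the sum to those $j$ with $\Omega_i \cap \Omega_j \neq \emptyset$; for such pairs, wavelength-scale quasi-uniformity (Assumption \ref{ass:qu}) gives $h_{\tilde\phi} \simeq h_{ij}$, producing precisely the $\Hmin$-type blocks of $\oldT$. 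The exponent $2p$ in the $X^+ \to X^-$ block of $\oldT$ arises because the low-frequency source contributes via $\|\tilde\phi \Psi (u-u_h)\|_{H_k^{-N}}$, on which the extra $(h_{\tilde\phi}k)^N$ factor of Lemma \ref{l:highFreqUpgrade} is \emph{not} present, so only the basic $(h_{ij}k)^{2p}$ factor from $(h_{\tilde\phi}k)^{p+\ell+1}$ survives; for all other entries, the iterated high-frequency or PML improvement yields the $(h_{ij}k)^N$ factor.

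Collecting the three groups of bounds yields the component-wise inequality $X \leq \specialC(\oldT X + B Z + R\,\mathbf 1)$, which rearranges to \eqref{e:system}. The main obstacle is the careful bookkeeping between intersection contributions (which feed the $\Hmin$ blocks) and non-intersection contributions (which either give $\mathcal C$ entries or are $O(k^{-N})$ by pseudolocality in frequency or in the PML), together with the absorption of all polynomially-bounded factors of $k$ arising from $\rho(k) \leq Ck^{N_0}$ on $\R_+ \setminus \mathcal J$ (Assumption \ref{a:polyBound}) into the remainder $R_1 + R_2$, accomplished by choosing $N$ sufficiently large in the improvement lemmas. A technical subtlety is that the replacement $h_j \to h_{ij}$ in the off-diagonal $\Hmin$ entries is only legitimate on overlapping pairs, where wavelength-scale quasi-uniformity forces $h_i \simeq h_j \simeq h_{ij}$.
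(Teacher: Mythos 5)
Your proposal omits the structural step that the paper's proof relies on crucially: the introduction of an \emph{expanded cover} $\{\widehat\Omega_j\}_{1\le j\le 2M_\Int+M_\Pml}$, in which each interior domain $\Omega_i$ is split into a part $\Omega_i^\circ$ disjoint from $\overline{\Omega_\Pml}$ and a part $\Omega_i^\times\Subset U_\Pml$, with corresponding cutoffs $\chi_i^\circ,\chi_i^\times$. Without this, your approach cannot produce the $\Hmin_{\Int,\Pml}(N)k^N$ entries of $\oldT$ coupling interior rows to PML columns. Consider the $X^-\leftarrow X^\Pml$ coupling: if you apply Lemma \ref{l:localDualityArg} directly with $A=E_{p-1}\chi_i\Psi$ for interior $i$ and a PML partition element $\phi_j$, the transfer coefficient involves $(h_jk)^{2p}\|\tilde\phi_j R_k^*\Psi\chi_i\|_{L^2\to L^2}$, and when $\Omega_i\cap\Omega_j\neq\emptyset$ (the case of interest, since $\Omega_i$ may reach into the PML) the supports of $\tilde\phi_j$ and $\chi_i$ overlap, so the pseudolocality part \eqref{e:pseudoloc_in_pml} of Theorem \ref{thm:PML} does not apply; only the boundedness part \eqref{e:bounded_in_pml} gives $\|\tilde\phi_j R_k^*\chi_i\|\lesssim 1$. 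This leaves a coefficient of size $(h_jk)^{2p}$, which dominates $(h_{ij}k)^N$ for large $N$ and small $h$, and so is too weak for \eqref{e:system}. The paper resolves this precisely by the splitting: for $X^{\circ,-}$, $\chi_i^\circ$ is disjoint from $\overline{\Omega_\Pml}$ so the PML contribution is pseudolocally $O(k^{-N})$ and gets absorbed into $R$; for $X^{\times,-}$, $\chi_i^\times$ lives in $U_\Pml$ so Lemma \ref{l:PMLUpgrade} applies and yields $(h_{ij}k)^N$. The same defect affects your $X^+$ bound: applying Lemma \ref{l:highFreqUpgrade} with $\phi=\chi_i$ whose support crosses into the PML gives the low-frequency term $\|\tilde\phi\Psi(u-u_h)\|_{H_k^{-N}}$ a prefactor $(h_{\tilde\phi}k)^{2p}$, not $(h_{\tilde\phi}k)^{N}$, and the PML portion of this term therefore contributes $(h_{ij}k)^{2p}$ to $X^+\leftarrow X^\Pml$ instead of the required $(h_{ij}k)^N$. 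The paper's footnote following \eqref{e:mainBoundX-} states this explicitly: ``Without this splitting, one only gets $\Hmin_{\Int,\Pml}(2p)k^{2p}$ instead of $\Hmin_{\Int,\Pml}(N)k^N$.''

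Separately, your explanation that for overlapping pairs wavelength-scale quasi-uniformity ``forces $h_i\simeq h_j\simeq h_{ij}$'' is incorrect. Assumption \ref{ass:qu} controls diameter ratios only for elements within distance $O(k^{-1})$ of each other, whereas $h_i:=\max\{h_K:K\cap\Omega_i\neq\emptyset\}$ is a maximum over a macroscopic region; overlapping domains can (and in the intended applications do) have meshwidths differing by polynomial factors in $k$. The mechanism that actually produces $h_{ij}=\min(h_i,h_j)$ in the paper is the choice of cutoffs $\tilde\phi$ in Lemmas \ref{l:highFreqUpgrade} and \ref{l:PMLUpgrade} supported in the intersection $\widehat\Omega_i\cap\widehat\Omega_j$ (via products such as $\varphi_{i,2}^\times\varphi_{j,1}$), so that $h_{\tilde\phi}\leq\min(\widehat h_i,\widehat h_j)$ follows directly from the definition of $h_U$ as a maximum over elements meeting $U$, not from any mesh-comparability assumption.
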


Let $\pi_{\Int,\pm} \in \mathbb{M}((2M_\Int + M_\Pml) \times M)$ and $\pi_{\Pml} \in \mathbb{M}(M_{\Pml} \times (2M_\Int + M_\Pml))$ be defined by
\begin{equation}
\label{e:def_pi+-}
 \piminus  := \begin{pmatrix}
I_{\domainnumber_\Int } & 0_{\domainnumber_\Int } & 0_{\domainnumber_\Int \times\domainnumber_\Pml}
\end{pmatrix},\quad \piplus := 
\begin{pmatrix}
0_{\domainnumber_\Int } & I_{\domainnumber_\Int }&0_{\domainnumber_\Int \times\domainnumber_\Pml}
\end{pmatrix},
\end{equation}
\begin{equation}
\label{e:def_pipml}
\pipml=
\begin{pmatrix}
0_{\domainnumber_\Pml\times\domainnumber_\Int } & 0_{\domainnumber_\Pml\times\domainnumber_\Int }&I_{\domainnumber_\Pml}
\end{pmatrix}.
\end{equation}

Lemma \ref{l:lastDay} has the following 
corollary.

\begin{corollary}
	\label{c:Eurostar}
Let $\{\chi_i\}_{i = 1}^M$, $\psi$ be as in the statement of Lemma \ref{l:lastDay} and let $k_0, N > 0$. 
Then there exist $\specialC, h_0 > 0$ such that for every $M > 0$ and $C_M > 0$ there exists $C >0$ such that the following holds. For all $k \geq k_0$, $k \notin \blue{\mathrm{J}}$, $h \leq h_0$, if 
\beqs
\sum_{n=0}^\infty (\specialC\oldT)^n \leq C_M k^{M},
\eeqs
and if $u - u_h$ satisfies \eqref{e:Galerkin_ortho}, then
\[X \leq C
(I-\specialC \oldT)^{-1}
 B Z+CR_1\mathbf{1}
 \]
for all $w_h \in \blue{V_{\mathcal{T}}}$, with $X$, $Z$,  $R_1$, and $\mathbf{1}$ defined as in Lemma \ref{l:lastDay}.

That is, for $1 \leq i\leq M_{\Int}$,
\begin{align*}
&\|\chi_i\Psi(u-u_h)\|_{H^{-p+1}_k}\leq 
C \sum_{j=1}^\domainnumber \left[\piminus 
(I-\specialC \oldT)^{-1}
 \matrixB\right]_{i,j}
\|u-w_{h,j}\|_{H_k^1(\Omega_j)}
+CR_1,
\\ 
&
\|\chi_i(1-\Psi)(u-u_h)\|_{H_k^{-p+1}}\leq 
C\sum_{j=1}^\domainnumber \left[\piplus
(I-\specialC \oldT)^{-1}
 \matrixB\right]_{i,j}
\|u-w_{h,j}\|_{H_k^1(\Omega_j)}
+CR_1, 
\end{align*}
and for $1\leq i \leq \domainnumber_\Pml$,
\begin{align*}
&\|\chi_{\domainnumber_\Int+i}(u-u_h)\|_{H_k^{-p+1}}\leq 
C\sum_{j=1}^\domainnumber \left[\pi_{\Pml}
(I-\specialC \oldT)^{-1}
 \matrixB\right]_{i,j}
\|u-w_h\|_{H_k^1(\Omega_j)}
+C R_1,
\end{align*}
where $\pi_{\Int,\pm}$ and $\pi_{\Pml}$ are defined by \eqref{e:def_pi+-} and \eqref{e:def_pipml}.
\end{corollary}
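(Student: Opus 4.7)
The proof will derive Corollary \ref{c:Eurostar} from Lemma \ref{l:lastDay} by inverting $I - \specialC \oldT$ via its Neumann series and absorbing the two remainder terms. The main technical subtlety is to prevent the $k^M$ growth of $(I - \specialC \oldT)^{-1}$ from contaminating the final coefficient of $R_1$, which will be achieved by invoking Lemma \ref{l:lastDay} with an inflated parameter $N'$ in place of the $N$ appearing in the corollary.

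Concretely, I will apply Lemma \ref{l:lastDay} with parameter $N' \geq N + M + J$, where $J > 0$ is chosen below. Write $\oldT(N')$ for the matrix \eqref{e:matrixT} with $N$ replaced by $N'$. After choosing $h_0$ small enough that $h_{ij}k \leq 1$ whenever $h \leq h_0$, we have $\oldT(N') \leq \oldT$ componentwise, so the hypothesis $\sum_{n \geq 0}(\specialC \oldT)^n \leq C_M k^M$ passes to $\oldT(N')$ and yields $(I - \specialC \oldT(N'))^{-1} \leq C_M k^M \mathbf{1}\mathbf{1}^T$ componentwise with nonnegative entries. Lemma \ref{l:lastDay} then produces
\[
(I - \specialC \oldT(N'))X \leq C\bigl(BZ + R_1'\mathbf{1} + R_2'\mathbf{1}\bigr),
\]
with $R_1' = k^{-N'}(hk)^p \sum_j\|u - w_{h,j}\|_{H^1_k}$ and $R_2' = k^{-N'}(hk)^{2p}\|u - u_h\|_{H^{-N'}_k}$.

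To replace $R_2'$ by a quantity controllable by $X$, I use the covering condition \eqref{e:crazyCover0} to build a smooth partition of unity $\{\phi_i\}_{i = 1}^M$ with $\supp \phi_i \Subset \{\chi_i \equiv 1\}$. Each $\phi_i$ is a $k$-uniformly bounded multiplier on $H^{-N'}_k$ (its $C^\infty$ semi-norms are $k$-independent), and $\|\cdot\|_{H^{-N'}_k} \leq \|\cdot\|_{H^{-(p-1)}_k}$ for $N' \geq p - 1$. Writing $\chi_i(u-u_h) = \chi_i\Psi(u-u_h) + \chi_i(1-\Psi)(u-u_h)$ on interior indices gives
\[
\|u - u_h\|_{H^{-N'}_k} \leq \sum_{i = 1}^M \|\phi_i\chi_i(u - u_h)\|_{H^{-N'}_k} \leq C\|X\|_1.
\]
Substituting this bound into the previous system, applying the nonnegative matrix $(I - \specialC \oldT(N'))^{-1}$ componentwise, and taking the $\ell^\infty$ norm with $\|X\|_1 \leq M\|X\|_\infty$ yields
\[
\|X\|_\infty \leq CC_M k^M\|BZ\|_\infty + CC_M k^{M-N'}(hk)^p \sum_j\|u - w_{h,j}\|_{H^1_k} + CC_M M^2 k^{M-N'}\|X\|_\infty.
\]
Choosing $J$ large enough (depending only on $M, C_M, k_0$ and the constants $C, C_M$) makes $CC_M M^2 k^{M-N'} \leq 1/2$ for all $k \geq k_0$, absorbing the last term; and $N' \geq N + M$ gives $k^{M-N'} \leq k^{-N}$, so the middle term is bounded by $CR_1$. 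Reinserting these bounds into the componentwise inequality and using the monotonicity $(I - \specialC \oldT(N'))^{-1} \leq (I - \specialC \oldT)^{-1}$ on the principal $BZ$ term produces $X \leq C(I - \specialC \oldT)^{-1}BZ + CR_1\mathbf{1}$. The three displayed inequalities of the corollary then follow by extracting the three blocks of $X$ via the projectors $\piminus$, $\piplus$, $\pipml$ from \eqref{e:def_pi+-}--\eqref{e:def_pipml}.

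The main obstacle is the partition-of-unity step bounding the global negative-order norm $\|u - u_h\|_{H^{-N'}_k}$ by $\|X\|_1$: this is the mechanism that allows $R_2$ to be absorbed rather than left as an unresolved global term on the right-hand side, and it rests on the $k$-uniform multiplier estimates for the $k$-independent smooth cutoffs $\phi_i$ on the weighted spaces $H^{-N'}_k$, together with the componentwise comparison of the Neumann inverses corresponding to different values of the parameter $N'$.
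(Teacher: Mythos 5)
Your argument takes essentially the same route as the paper, but fuses an intermediate step in a way that is arguably more robust. The paper's intended proof routes through Lemma \ref{l:inaugural1} -- an unconditional, polynomial-in-$k$ bound on $\|u-u_h\|_{H^{-p+1}_k}$ in terms of a best approximation error -- which the paper obtains by applying Lemma \ref{l:lastDay} with the fixed choice $N=2p$ and using the same partition-of-unity observation you make (namely, that $\|u-u_h\|_{H^{-p+1}_k}$ is controlled by $\sum_i X_i$); the paper then applies Lemma \ref{l:lastDay} again with an inflated parameter and uses Lemma \ref{l:inaugural1} to absorb the $R_2$ remainder before inverting. You skip the auxiliary lemma and instead inline the partition-of-unity bound $\|u-u_h\|_{H^{-N'}_k}\leq C\sum_i X_i$ directly, absorbing $R_2$ by means of the $C_M k^M$ Neumann-series control supplied by the corollary's hypothesis. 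This is sound and arguably cleaner, since the corollary's hypothesis furnishes exactly the smallness required for the absorption, whereas the absorption underlying Lemma \ref{l:inaugural1} must dispose of the $\specialC\oldT X$ contribution without any mesh hypothesis and hence requires more care. Two details should be tightened. First, $h_{ij}k\leq 1$ does not follow from $h\leq h_0$ alone (Assumption \ref{ass:swlg} only yields $h_{ij}k\leq C$), so the componentwise monotonicity $\oldT(N')\leq \oldT(N)$ holds only after absorbing a bounded factor $C^{N'-N}$ into $\specialC$ and the hypothesis. Second, your $N'$ depends on $M$ and $C_M$, so to respect the corollary's quantifier order ($\specialC, h_0$ fixed before $M, C_M$) one must observe that the $h_0$ and $\specialC$ produced by Lemma \ref{l:lastDay} do not actually depend on $N$ (as the proof of that lemma shows -- $N$ enters only through the remainder terms). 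Both points are equally implicit in the paper's route and are not peculiar to your approach, but they are worth spelling out.
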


Corollary~\ref{c:Eurostar} follows from Lemma~\ref{l:lastDay} using the following lemma.

\ble\label{l:inaugural1}
For all $k_0 > 0$, there exist constants $C,h_0,N' > 0$ such that for all $k \geq k_0$, $k \notin \blue{\mathrm{J}}$, $h \leq h_0$, $u-u_h$ satisfying \eqref{e:Galerkin_ortho} and $w_h \in \blue{V_{\mathcal{T}}}$, 
$$
\|u-u_h\|_{H_k^{-p+1}}\leq Ck^{N'}(hk)^p\|u-w_h\|_{H_k^1}.
$$
\ele
\begin{proof}
We apply Lemma \ref{l:lastDay} with any cover $\{\chi_i\}_{i = 1}^M$ satisfying \eqref{e:crazyCover0} and with $N = 2p$. 
By the definition of $X$ \eqref{e:defX} 
and the fact that for $k \notin \blue{\mathrm{J}}$, all the elements of $B$ \eqref{e:matrixB} are bounded by $Ck^{N'}(hk)^p$ for some $N' > 0$ (by Assumption \ref{a:polyBound})
\begin{align*}
\|u-u_h\|_{H_k^{-p+1}}\leq \sum_{i=1}^{2\domainnumber_\Int +\domainnumber_\Pml }X_i&\leq Ck^{N'}(hk)^p\|u-w_h\|_{H_k^1}+Ck^{-2p}(hk)^{2p}\|u-u_h\|_{H_k^{-p+1}}\\
& = Ck^{N'}(hk)^p\|u-w_h\|_{H_k^1}+Ch^p\|u-u_h\|_{H_k^{-p+1}};
\end{align*}
the result then follows by choosing $h_0$ small enough. 
\end{proof}

\paragraph{Outline of the proof of Lemma \ref{l:lastDay}}

The main idea of this proof -- and, indeed, the heart of the paper -- is that one can use the localised duality argument (Lemma \ref{l:localDualityArg}) to obtain a system of inequalities (as in \eqref{e:system}) relating local Galerkin errors and local best approximation errors. By choosing $A = \chi_i \Psi$ or $\chi_i(1 - \Psi)$ in Lemma \ref{l:localDualityArg}, this allows to obtain bounds for $X^-$ and $X^+$.
However, it turns out that this idea is not quite sufficient to 
fully exploit the fact that the solution operator on either the PML or high frequencies is pseudolocal (via Theorem \ref{thm:PML} and Lemma \ref{l:bigScreen} below). 
Our method is to split the domains 
$\{\Omega_i\}$ more finely, use Lemma \ref{l:localDualityArg} on this finer cover and then gather back the errors on the original domains. 
The improvements over the straightforward application of Lemma \ref{l:localDualityArg} are that, thanks to pseudolocality, we obtain instances of $h_{ij}$ instead of $h_j$, and we exploit the situations where the resolvent on $\Omega_i\cap \Omega_j$ behaves better than on $\Omega_j$.

\

\begin{proof}[Proof of Lemma \ref{l:lastDay}]
Throughout this proof, let $\chi_i$, $\psi$, $\Psi$, $k_0$ and $N$ be as in the statement. Without loss in generality, we can assume $N \geq p-1$. Denote by $C$ any positive constant, and by $h_0 > 0$ a small enough constant (to be specified in the proof), whose values only depends on the previous quantities. Now let $k \geq k_0$, $k \notin \blue{\mathrm{J}}$, such that $h \leq h_0$, $u-u_h$ satisfying \eqref{e:Galerkin_ortho}, and $w_{h,j} \in \blue{V_{\mathcal{T}}}$, $j\in \{1,\ldots,\domainnumber\}$.

\paragraph{Definition of an expanded set of domains.}

We first define an expanded set of domains $\{\widehat{\Omega}_i\}_{1 \leq i \leq \widehat{\domainnumber}}$. These appear only in the proof, and allow us to exploit the fact that the intersection of an ``interior domain" (i.e., an $\Omega_i$ for $1\leq i\leq \domainnumber_\Int$) and a ``PML domain" (i.e., an $\Omega_i$ for $\domainnumber_{\Int+1}\leq i\leq \domainnumber$) occurs only in the PML.

Let 
$$\Omega_{\Pml} := \bigcup_{j = \domainnumber_{\Int + 1}}^{\domainnumber}\Omega_j$$
be the union of all domains lying in the PML region. Recall that $\Omega_{\Pml} \Subset U_{\Pml}$ by assumption \eqref{e:domainConditions}. Thus, for each $i \in \{1,\ldots,\domainnumber_{\Int}\}$, we may find two open sets $V_{i},W_{i}$ such that 
$$ \Omega_i \cap \Omega_{\Pml} \Subset V_i \Subset W_i \Subset U_{\Pml}.$$
Let 
$$\Omega_i^\circ := \Omega_i \setminus \overline{W_i}\,, \quad \tand  \quad \Omega_i^\x := \Omega_i \cap V_i$$
(where $\Omega_i^\x$ may be empty)
and observe that 
$$\Omega_i = \Omega_i^\circ \cup \Omega_i^\x\,, \quad \Omega_i^\circ \cap \overline{\Omega_\Pml} = \emptyset\,, \quad \tand \quad \Omega_i^\x \Subset U_{\Pml}$$
(the notation $\x$ is chosen because these domains ``cross'' the PML). 
Let $\varphi_i^\circ, \tilde\varphi_i^\circ,\varphi_i^\x\in C^\infty(\overline{\Omega})$ be such that 
\beq\label{e:EuanConfused2}
\varphi_i^\circ \prec \tilde{\varphi}_i^\circ,
\eeq
\beq\label{e:EuanConfused3}
\chi_i \prec \varphi_i^\circ + \varphi_i^\x\,, \quad \supp(\tilde{\varphi}_i^\circ) \subset \Omega_i^\circ \cup \partial \Omega. 
\eeq
Let 
$$
\chi_i^\circ := \chi_i \tilde{\varphi}_i^\circ
\quad\tand\quad
\chi_i^\x := \chi_i \varphi_i^\x,
$$
so that, in particular, 
\beq\label{e:EuanConfused1}
\big\{ \chi_i \equiv 1\big\} \subset \big\{ \chi_i^\x \equiv 1\big\} \cup \big\{ \chi_i^\circ \equiv 1\big\}, \quad i=1,\ldots,\domainnumber_\Int.
\eeq
To see \eqref{e:EuanConfused1}, observe that if $\chi_i(x)=1$ and $\varphi_i^\x(x)\neq 1$, then $\varphi_i^\circ(x)\neq 0$ (since $\varphi_i^\circ =1 - \varphi_i^\x$ on $\supp \tilde\chi_i \supset \supp \chi$ by \eqref{e:EuanConfused3}), and thus $\tilde\varphi_i^\circ (x)=1$ by \eqref{e:EuanConfused2}.

We now renumber
$$
\chi_1^\circ,\ldots,\chi_{\domainnumber_\Int}^\circ, \chi_1^\x,\ldots,\chi_{\domainnumber_\Int}^\x, \chi_{\domainnumber_{\Int+1}},\ldots,\chi_M \quad\tas \{\varphi_{i,1}\}_{1 \leq i \leq \widehat{\domainnumber}},
$$
with $\widehat{\domainnumber} = 2\domainnumber_{\Int} + \domainnumber_{\Pml}$, 
and 
$$
\Omega_1^\circ,\ldots, \Omega^\circ_{\domainnumber_\Int}, \Omega_1^\x,\ldots, \Omega^\x_{\domainnumber_\Int}, 
\Omega_{\domainnumber_{\Int+1}},\ldots,\Omega_M \quad\tas \{\widehat{\Omega}_i\}_{1 \leq i \leq \widehat{\domainnumber}}.
$$
The key properties of these domains and cutoffs that we use in the rest of the proof are that 
the condition \eqref{e:crazyCover0} still holds, i.e.
\begin{equation}
\label{e:crazyCoverTilde}
\Omega \subset \bigcup_{i = 1}^{\widehat{\domainnumber}} \textup{int}\big(\{\varphi_{i,1} \equiv 1\}\big)
\end{equation}
(by \eqref{e:EuanConfused1})
and, for $i=1,\ldots,\domainnumber_{\Int}$,
\beq\label{e:newCutOff}
\max\big\{\varphi_{i,1}, \varphi_{i+\domainnumber_\Int,1}\big\} \leq \chi_i\leq 
 \varphi_{i,1} + \varphi_{i + \domainnumber_{\Int},1} \quad \tand \quad \Omega_i = \widehat{\Omega}_i \cup \widehat{\Omega}_{i + \domainnumber_\Int} 
\eeq
(by \eqref{e:EuanConfused3}).
Let $\widehat h_j$ be upper bounds for the local meshwidth on $\widehat{\Omega}_j$ and 
define $\widehat h_{ij}$ analogously to  \eqref{e:hij}.

\paragraph{Definition of suitable cut-off functions.}

Let $\{\varphi_{i,0}\}_{i = 1}^{\widehat{\domainnumber}}$ be a partition of unity subordinate to the cover \eqref{e:crazyCoverTilde} of $\Omega$, and thus such that $\varphi_{i,0}\prec \varphi_{i,1}$.

Given $\{\varphi_{i,0}\}_{i=1}^{\widehat{\domainnumber}}$
and $\{\varphi_{i,1}\}_{i=1}^{\widehat{\domainnumber}}$,
there exists $\mathfrak{c}>0$ and  
sequences $\{\varphi_{i,\nu}\}$, $\nu=2,3,4$, of elements of $C^\infty_c(\R^d)$ supported in $\Omega_i$ such that 
$\varphi_{i,\nu} \prec_{\mathfrak{c}} \varphi_{i,\nu+1}$, for $i=1,\ldots,\widehat{\domainnumber}$ and $\nu =0,\ldots,3$ 
(the conditions involving $\prec_{\mathfrak{c}}$ are used below to apply Lemma \ref{l:localDualityArg}). Let
\beq\label{e:MartinLovesNotation1}
\varphi_{i,\nu}^\circ:= \varphi_{i,\nu}\,, \quad \varphi_{i,\nu}^\x:= \varphi_{i+M_\Int,\nu}\,, \quad \varphi_{i,\nu}^\Pml := \varphi_{i+2M_\Int}.
\eeq


\paragraph{Bound on $X^-$.}

We first show that 
\begin{align}
\nonumber
X^- &\leq C \begin{pmatrix}
\mathcal{C} (\Hdiag_{\Int,\Int} k)^{2p} & \mathcal{C}  (\Hdiag_{\Int,\Int} k)^{2p} & \Hmin_{\Int,\Pml}(N) k^N 
\end{pmatrix} X \\
& \hspace{5cm} + C\begin{pmatrix}
\mathcal{C} (\Hdiag_{\Int,\Int} k)^{p} & 0
\end{pmatrix} Z + CR,
\label{e:mainBoundX-}
\end{align}
which gives the first block row  of \eqref{e:system} (where here, and in the rest of the proof, we use the convention that a vector plus a scalar is the vector obtained by adding the scalar to every entry). 
To do this, we estimate $X^-$ (defined by \eqref{e:defX})
$$X^- \leq X^{\circ,-} + X^{\x,-}$$
where $X^{-,\circ} := \|\chi_i^\circ \Psi(u-u_h)\|_{H_k^{-p+1}}$ and $X^{-,\x} := \|\chi_i^\x \Psi(u-u_h)\|_{H_k^{-p+1}}$.\footnote{Without this splitting, one only gets $\Hmin_{\Int,\Pml}(2p) k^{2p}$ instead $\Hmin_{\Int,\Pml}(N) k^{N}$ in the third block of the first matrix in the right-hand side of \eqref{e:mainBoundX-}.} We estimate $X^{-,\circ}$ and $X^{-,\x}$ separately. 

{\em Bound on $X^{\circ,-}$.} The main work is to bound $X^{\circ,-}$.  
To this end, we fix $i \in \{1,\ldots,M_\Int\}$ and apply Lemma \ref{l:localDualityArg} with $A = A_i := \chi_i^\circ \Psi$ (observe that the smoothing property of $\Psi$, Proposition \ref{prop:f(Pk)}, implies that $A_i: H_k^{-p+1} \to L^2$) and the functions $\{\phi_j\}_{1 \leq j \leq 2\widehat{\domainnumber}}$, $\{\widetilde{\phi}_j\}_{1 \leq j \leq 2\widehat{\domainnumber}}$ defined by
\[
\phi_j := 
\begin{cases}
 \varphi_{i,3}^\circ\varphi_{j,0}, & j=1,\ldots,\widehat{\domainnumber},\\
 (1 -\varphi_{i,3}^\circ)\varphi_{j-\widehat{\domainnumber},0}, & j=\widehat{\domainnumber}+1,\ldots, 2\widehat{\domainnumber},
\end{cases}
\]
and
\[
\widetilde{\phi}_j := 
\begin{cases}
 \varphi_{i,4}^\circ\varphi_{j,1}, & j=1,\ldots,\widehat{\domainnumber},\\
 (1 -\varphi_{i,2}^\circ)\varphi_{j-\widehat{\domainnumber},1}, & j=\widehat{\domainnumber}+1,\ldots, 2\widehat{\domainnumber}.
\end{cases}
\]
With these definitions, $\big\{\phi_j\big\}_{1\leq j \leq 2\widehat{\domainnumber}}$ is indeed a partition of unity on $\Omega$ and $\big\{\widetilde{\phi}_j\big\}_{1\leq j \leq 2\widehat{\domainnumber}}$ satisfies the condition 
\eqref{e:supportCondition1} by the definition of $\varphi_{j,\nu}$. Therefore, choosing $h_0$ small enough, Lemma \ref{l:localDualityArg} ensures that
\begin{equation}
\label{eq:alphaji-alphaprimeji}
X_i^{\circ,-} \leq C\sum_{j = 1}^{\widehat{\domainnumber}}  \big[(\widehat{h}_{ij}k)^{2p} \alpha_{j \to i} + (\widehat{h}_{j}k)^{2p} \alpha'_{j \to i}\big] \widehat{X}_j + \big[(\widehat{h}_{ij}k)^{p} \alpha_{j \to i} + (\widehat{h}_{j}k)^{p} \alpha'_{j \to i}\big] \widehat{Z}_j+CR,
\end{equation}
where $R = k^{-N}\left((hk)^{2p}  \|u-u_h\|_{H^{-N}_k} + (hk)^p \sum_{j=1}^\domainnumber\|u - w_{h,j}\|_{H^1_k}\right)$, 
\begin{equation}
\label{e:defXjZjtilde}
\widehat{X}_j := \|\varphi_{j,1} (u-u_h)\|_{H^{-N}_k}\,, \quad \widehat{Z}_j := \|\varphi_{j,1} (u-w_{h,j})\|_{H^1_k},\quad j=1,\ldots,\widehat{\domainnumber},
\end{equation}
and 
$$\alpha_{j \to i} := \|\varphi_{i,4}^\circ \varphi_{j,1} R_k^* \Psi \chi_i^{\circ}\|_{L^2 \to L^2} + \|\varphi_{i,4}^\circ \varphi_{j,1} (R_k^\sharp)^* \Psi \chi_i^{\circ}\|_{L^2 \to H^{p+1}_k},$$
$$\alpha'_{j \to i} := \|(1-\varphi_{i,2}^\circ) \varphi_{j,1} R_k^* \Psi \chi_i^{\circ}\|_{L^2 \to L^2} + \|(1-\varphi_{i,2}^\circ) \varphi_{j,1} (R_k^\sharp)^* \Psi \chi_i^{\circ}\|_{L^2 \to H^{p+1}_k}.$$
Since $N \geq p-1$, by \eqref{e:defXjZjtilde}, \eqref{e:newCutOff}, and \eqref{e:defX},
\begin{align}\label{e:pinkBunny1}
\widehat{X}_j &\leq \|\varphi^\circ_{j,1} \Psi (u-u_h)\|_{H^{-p+1}_k} + \|\varphi^\circ_{j,1} (1 - \Psi) (u-u_h)\|_{H^{-p+1}_k} \leq X_j^- + X_j^+\,, \qquad 1 \leq j \leq M_{\Int},\\
\widehat{X}_{j + M_\Int} &\leq \|\varphi^\x_{j,1} \Psi (u-u_h)\|_{H^{-p+1}_k} + \|\varphi^\x_{j,1} (1 - \Psi) (u-u_h)\|_{H^{-p+1}_k}\leq X_j^- + X_j^+,\qquad 1 \leq j \leq M_{\Int},\\
\widehat{X}_{j + 2M_\Int} &\leq \|\varphi^\Pml_{j,1} (u-u_h)\|_{H^{-p+1}_k} = X_j^\Pml, \qquad 1 \leq j \leq M_{\Pml},
\end{align}
and similarly, 
\beq\label{e:pinkBunny2}
\widehat{Z}_j \leq \begin{cases}
Z_j & 1 \leq j \leq M_{\Int},\\
Z_{j - M_\Int} & M_{\Int}+1 \leq j \leq 2M_{\Int},\\
Z_{j - M_\Int} & 2M_{\Int} + 1 \leq j \leq \widehat{\domainnumber}.
\end{cases}
\eeq
Let
$$\omega_{j \to i} := (\widehat{h}_{ij}k)^{2p} \alpha_{j \to i} + (\widehat{h}_{j}k)^{2p} \alpha'_{j \to i}\,, \quad \beta_{j \to i} := (\widehat{h}_{ij}k)^{p} \alpha_{j \to i} + (\widehat{h}_{j}k)^{p} \alpha'_{j \to i}$$
and
$$\omega_{j \to i}^\circ := \omega_{j \to i}\,, \quad \omega_{j \to i}^\x := \omega_{j+M_\Int\to i},\quad j=1,\ldots, \domainnumber_\Int,$$ 
$$\omega_{j \to i}^{\Pml} := \omega_{j + 2M_\Int \to i},\quad j=1,\ldots, \domainnumber_\Pml,$$
and define $\beta^{\circ}_{j \to i},\beta^{\x}_{j \to i}$ and $\beta^{\Pml}_{j \to i}$ analogously. Then \eqref{eq:alphaji-alphaprimeji} can be written as 
\begin{equation}
\label{e:X0-_1}
X^{\circ,-} \leq C \begin{pmatrix}
\mathscr{A} & \mathscr{B}   & \mathscr{C} 
\end{pmatrix} X + 
\begin{pmatrix}
\mathscr{D}  & \mathscr{E} 
\end{pmatrix}Z + R
\end{equation}
where, for $1 \leq i \leq M_\Int$, 
\beq\label{e:heroic1}
\mathscr{A}_{ij} = \mathscr{B}_{ij} = \omega^\circ_{j\to i} + \omega^\x_{j\to i}\,, \quad 1 \leq j \leq M_\Int, \qquad \mathscr{C}_{ij} = \omega_{j \to i}^{\Pml}\,, \quad 1 \leq j\leq M_\Pml,
\eeq
and
\beq\label{e:heroic2}
\mathscr{D}_{ij} = \beta^\circ_{j\to i} + \beta^\x_{j\to i}\,, \quad 1 \leq j \leq M_\Int, \qquad \mathscr{E}_{ij} = \beta_{j \to i}^{\Pml}\,, \quad 1 \leq j\leq M_\Pml.
\eeq
To proceed, we now bound the following four terms appearing in the definitions of $\alpha_{j\to i}$ and $\alpha'_{j \to i}$:
\begin{equation*}
\|\varphi_{i,4} \varphi_{j,1}R_k^*\Psi \varphi_{i,1}^\circ\|_{L^2\to L^2}, 
\quad \| \varphi_{i,4} \varphi_{j,1} (R_{k}^\sharp)^* \Psi \varphi_{i,1}^\circ\|_{ L^2
			\to H_k^{p+1}},\quad 
\end{equation*}
\begin{equation*}
\|(1 -\varphi_{i,2})\varphi_{j,1}R_k^*\Psi \varphi_{i,1}^\circ\|_{L^2\to L^2},
\quad \tand\quad 
\| (1 -\varphi_{i,2})\varphi_{j,1} (R_{k}^\sharp)^* \Psi \varphi_{i,1}^\circ\|_{ L^2
			\to H_k^{p+1}},
\end{equation*}
where we have used that $\chi_i^\circ=\varphi_{i,1}=\varphi_{i,1}^\circ$ for $i=1,\ldots,\domainnumber_\Int$ by \eqref{e:MartinLovesNotation1}.

First, by pseudolocality of $\Psi$ (Lemma~\ref{t:pseudoLocGeneral}), polynomial boundedness of $R_k^*$ (Assumption \ref{a:polyBound}) and boundedness of $R_k^*$ in $U_{\Pml}$ (estimate \eqref{e:bounded_in_pml} in Theorem \ref{thm:PML}), 
	\begin{align}
		\|\varphi_{i,4} \varphi_{j,1}R_k^*\Psi\varphi_{i,1}^\circ \|_{L^2\to L^2}
\leq C1_{\{\widehat{\Omega}_j\cap \Omega_i^\circ\neq \emptyset\}} \begin{cases}
	  k^{-N} + \|1_{\Omega_j^\circ}R_k^* 1_{\Omega_i^\circ}\|_{L^2\to L^2}, &1\leq j\leq \domainnumber_\Int \\
	 1, &\domainnumber_\Int +1\leq j\leq 2M_{\Int}\\
	 0, & 2M_\Int + 1 \leq j \leq \widehat{\domainnumber},
		\end{cases}
		\label{e:box1_bis}
\end{align}
since by definition, for $j \in \{M_\Int + 1,\ldots,2M_\Int\}$, $\widehat{\Omega}_j \subset U_{\Pml}$, and for $j \in \{2M_\Int + 1,\ldots,\widehat{\domainnumber}\}$, $\widehat{\Omega}_j \subset \Omega_{\Pml}$, while $\Omega_j^\circ \cap \Omega_\Pml = \emptyset$.  

Second, by the mapping properties of $R_k^\sharp$  (Proposition \ref{prop:ResPksharp}), 
boundedness of $\Psi:L^2\to H_k^{p-1}$ (Proposition \ref{prop:f(Pk)}), and similar arguments,
\begin{equation*}
		\|\varphi_{i,4} \varphi_{j,1}(R_k^\sharp)^*\Psi\varphi_{i,1}^\circ \|_{L^2\to L^2}
\leq C1_{\{\widehat{\Omega}_j\cap \Omega_i^\circ\neq \emptyset\}} \begin{cases}
	  1, &1\leq j\leq \domainnumber_\Int \\
	 1, &\domainnumber_\Int +1\leq j\leq 2M_{\Int}\\
	 0, & 2M_\Int + 1 \leq j \leq \widehat{\domainnumber}.
		\end{cases}
\end{equation*}

Third, by pseudolocality of $\Psi$ again, and of $R_k$ in $U_{\Pml}$ (estimate \eqref{e:pseudoloc_in_pml} of Theorem \ref{thm:PML}), and since $\varphi_{i,2}^\circ \prec \varphi_{i,3}$, 
\begin{align*}
\|(1-\varphi_{i,2}^\circ)\varphi_{j,1}R_k^*\Psi\varphi^\circ_{i,1}\|_{L^2\to L^2}
\leq Ck^{-N}+ C\begin{cases}
	 \|1_{\Omega_j^\circ} R_k^* 1_{\Omega_i^\circ}\|_{L^2\to L^2}, &1\leq j\leq \domainnumber_\Int \\
	 0,&\domainnumber_\Int +1\leq j\leq \widehat{\domainnumber}.
		\end{cases}
		\label{e:box3_bis}
\end{align*}

Finally, by pseudolocality of $(R_k^\sharp)^*$ 
and $\Psi$ (Lemma~\ref{t:pseudoLocGeneral}), since $\varphi_{i,2}^\circ \prec_c\varphi_{i,3}^\circ$, 
\begin{equation}
	\label{e:box4_bis}
	\begin{aligned}
	\| (1 - \varphi_{i,2}^\circ) \varphi_{j,1}(R_k^\sharp)^*\Psi \varphi_{i,1}^\circ\|_{ L^2 \to H_k^{p+1}} &\leq 
	Ck^{-N}.\\
		\end{aligned}
\end{equation}
From the estimates \eqref{e:box1_bis}-\eqref{e:box4_bis}, we deduce that 
$$\omega_{j\to i}^\circ \leq C\Big((\widehat{h}_{ij}k)^{2p} 1_{\{\widehat{\Omega}_j\cap \Omega_i^\circ\neq \emptyset\}} + (\widehat{h}_j k)^{2p}\Big)\Big(k^{-N} +\|1_{\Omega^\circ_j} R_k^* 1_{\Omega_i^\circ}\|_{L^2\to L^2}\Big)  \leq C(h_jk)^{2p} \mathcal{C}_{ij} + C(hk)^{2p}k^{-N},$$
using the inclusions $\Omega_j^\circ \subset \Omega_j$, the fact that if $A' \subset A$, $B' \subset B$, then 
$$\|1_{A'}R_k^* 1_{B'}\| =\|1_{A'}1_AR_k^*1_B 1_{B'}\| \leq \|1_{A'}\| \|1_A R_k^*1_B\|\|1_{B'}\|  \leq  \|1_A R_k^*1_B\|,$$
and the fact that $1 \leq \mathcal{C}_{ij}$ when $\Omega_i^\circ \cap \Omega_j^\circ \neq \emptyset$. Similarly, 
$$\omega_{j \to i}^\x \leq 1_{\{\Omega_j^\x \cap \Omega_i^\circ \neq \emptyset\}} (\widehat{h}_{ij}k)^{2p} + C(\widehat{h}_j k)^{2p}k^{-N}\leq C (\Hmin(2p))_{ij}k^{2p} + C(hk)^{2p}k^{-N}.$$
Therefore, by \eqref{e:heroic1}, the following inequalities hold componentwise
$$\mathscr{A} \leq C \mathcal{C} (\Hdiag k)^{2p} + C(hk)^{2p}k^{-N}\,, \quad \mathscr{B} \leq C \mathcal{C} (\Hdiag k)^{2p} + C(hk)^{2p}k^{-N}.$$
One can check in a similar way that, by \eqref{e:heroic2},
$$\mathscr{D} \leq C \mathcal{C} (\Hdiag k)^{p} + C(hk)^pk^{-N}.$$
Finally, the estimates \eqref{e:box1_bis}-\eqref{e:box4_bis} also imply that, for $j \in \{1,\ldots,M_{\Pml}\}$, 
$$\omega_{j \to i}^{\Pml} \leq  C(hk)^{2p} k^{-N}\,, \qquad \beta_{j \to i}^{\Pml} \leq C(hk)^{p} k^{-N},$$
and thus, componentwise, by \eqref{e:heroic1} and \eqref{e:heroic2},
$$\mathscr{C} \leq C(hk)^{2p} k^{-N}\,, \quad \mathscr{E} \leq C(hk)^p k^{-N}.$$
Taking into account the definition of $R$, \eqref{e:X0-_1} thus yields
\begin{align}
X^{\circ,-} &\leq C \begin{pmatrix}
\mathcal{C} (\Hdiag_{\Int,\Int} k)^{2p} & \mathcal{C} (\Hdiag_{\Int,\Int} k)^{2p} & 0
\end{pmatrix} X+ C\begin{pmatrix}
\mathcal{C} (\Hdiag_{\Int,\Int} k)^{p} &0 
\end{pmatrix} Z + CR
\label{e:mainBoundXo-}
\end{align}
{\em Bound on $X^{\x,-}$.} To bound $X_i^{\x,-}$ for $i \in \{1,\ldots,M_{\Int}\}$, we write
\begin{align*}
X_i^{\x,-} &= \|\varphi_{i,1}^\times \Psi (u-u_h)\|_{H_k^{-p+1}} \leq \sum_{j = 1}^{\widehat{\domainnumber}}\|\varphi_{i,1}^\times \varphi_{j,0}\Psi (u-u_h)\|_{H_k^{-p+1}}
\end{align*}
since $\{\varphi_{j,0}\}_{1 \leq j \leq \widehat{\domainnumber}}$ is a partition of unity. Applying Lemma \ref{l:PMLUpgrade} with $\ell = p-1$ to each term with $\varphi = \varphi_{i,1}^\x \varphi_{j,0}$, $\tilde{\varphi} = \varphi_{i,2}^\x \varphi_{j,1}$, we deduce that
\beq\label{e:purpleCheeky}
X_i^{\x,-} \leq C \sum_{j = 1}^{\widehat{\domainnumber}} 1_{\{\Omega_{i}^\x \cap \widehat{\Omega}_j \neq \emptyset\}} \Big((\widehat{h}_{ij}k)^N \widehat{X}_j + (\widehat{h}_{ij}k)^p \min(\widehat{Z}_j ,Z_i)\Big) + CR,
\eeq
with $\widehat{Z}_j$ and $\widehat{X}_j$ given by \eqref{e:defXjZjtilde} and $Z_i$ given by \eqref{e:defX}. Estimating $\{\widehat{X}_j\}_{1 \leq j \leq \widehat{\domainnumber}}$ and $\{\widehat{Z}_j\}_{1 \leq j \leq \widehat{\domainnumber}}$ in terms of $\{Z_j\}_{1 \leq j \leq M}$ and $\{X_j^\pm\}_{1 \leq j \leq M}$ as in \eqref{e:pinkBunny1}-\eqref{e:pinkBunny2}, we obtain
\begin{align}
\nonumber
X^{\x,-} &\leq 
C
\begin{pmatrix}
\Hmin_{\Int,\Int}(N)k^N & \Hmin_{\Int,\Int}(N)k^N  & \Hmin_{\Int,\Pml}(N)k^N 
\end{pmatrix}
X \\
&\hspace{5cm}+ 
C
\begin{pmatrix}
(\Hdiag_{\Int,\Int}k)^p & 0
\end{pmatrix}
Z
+CR,
\label{e:mainBoundXx-}
\end{align}
where, in the minimum in \eqref{e:purpleCheeky}, we always choose the $Z_i$. 
Summing the estimates \eqref{e:mainBoundXo-} and \eqref{e:mainBoundXx-} gives the claimed estimate \eqref{e:mainBoundX-} for $X^-$.

\paragraph{Bound on $X^+$.} 
We now show that 
\begin{align}
\nonumber
X^+ &\leq C \begin{pmatrix}
(\Hmin_{\Int,\Int}(2p) k^{2p} & \Hmin_{\Int,\Int}(N) k^{N} & \Hmin_{\Int,\Pml}(N) k^N 
\end{pmatrix} X \\
& \hspace{5cm} + C\begin{pmatrix}
(\Hdiag_{\Int,\Int}k)^p& 0
\end{pmatrix} Z + CR
\label{e:mainBoundX+}
\end{align}
which gives the second block row of \eqref{e:system}.
As before, we write 
$$X^+ \leq X^{\circ,+} + X^{\x,+}$$
where $X^{\circ,+} := \|\chi_i^\circ(1 - \Psi)(u-u_h)\|_{H_k^{-p+1}}$ and $X^{\x,+} := \|\chi_i^\x(1 - \Psi)(u-u_h)\|_{H_k^{-p+1}}$. 
Using exactly the same method as above for the bound on $X^{\x,-}$, we obtain
\begin{align}
\nonumber
X^{\x,+} &\leq 
C
\begin{pmatrix}
\Hmin_{\Int,\Int}(N)k^N & \Hmin_{\Int,\Int}(N)k^N  & \Hmin_{\Int,\Pml}(N)k^N 
\end{pmatrix}
X \\
&\hspace{5cm}
+C
\begin{pmatrix}
(\Hdiag_{\Int,\Int}k)^p & 0
\end{pmatrix}
Z
+CR.
\label{e:mainBoundXx+}
\end{align}
Using the same arguments, 
but applying Lemma \ref{l:highFreqUpgrade} instead of Lemma \ref{l:PMLUpgrade}, we obtain 
\begin{align}
\nonumber
X^{\circ,+} &\leq 
C
\begin{pmatrix}
\Hmin_{\Int,\Int}(2p)k^{2p} & \Hmin_{\Int,\Int}(N)k^N  & 0
\end{pmatrix}
X \\
&\hspace{5cm}+ 
C
\begin{pmatrix}
(\Hdiag_{\Int,\Int}k)^p & 0
\end{pmatrix}
Z
+CR.
\label{e:mainBoundXo+}
\end{align}
The bound \eqref{e:mainBoundX+} then follows by adding \eqref{e:mainBoundXx+} and \eqref{e:mainBoundXo+}.

\paragraph{Bound on $X^\Pml$.} Following the same method as for $X^{\x,-}$, we obtain
\begin{align}
\nonumber
X^{\Pml} &\leq 
C
\begin{pmatrix}
\Hmin_{\Pml,\Int}(N)k^{N} & \Hmin_{\Pml,\Int}(N)k^N  & \Hmin_{\Pml,\Pml}(N)k^N 
\end{pmatrix}
X \\
&\hspace{5cm}+ 
C
\begin{pmatrix}
0
& (\cH_{\Pml,\Pml}k)^p
\end{pmatrix}
Z
+CR.
\label{e:mainBoundXpml}
\end{align}

Gathering the estimates \eqref{e:mainBoundX-}, \eqref{e:mainBoundX+} and \eqref{e:mainBoundXpml} and taking into account the definitions of $B$ and $W$ in \eqref{e:matrixB} and \eqref{e:matrixT}, we obtain \eqref{e:system}, which concludes the proof of the lemma.
\end{proof}

\subsection{A bound on $(I-\specialC \oldT)^{-1}$ via graph arguments}\label{s:graph}

We now state the result that allows to bound the matrix $(I - \specialC \oldT)^{-1}$ coefficientwise by the simple-path matrix (see Definition \ref{def:simple_path_mat}) of $\specialC \oldT$ in Corollary \ref{c:Eurostar}. The proof is deferred to Appendix \ref{sec:loops}. Recall the graph notation from \S \ref{sec:simple_path_mat}.

\begin{theorem}[A bound on $(I-\oldT)^{-1}$ by the simple-path matrix]
	\label{t:onlyTheSimpleOnes}
Let $M \in \mathbb{N}$, let $\oldT \in \mathbb{M}(M\times M)$ be a matrix with non-negative coefficients. 
\beq\label{e:loop_condition}
\text{If } \quad c  := \sum_{L \in \mathbb{SL}} W_L< 1, \quad\text{ then } \quad
\sum_{n = 0}^{\infty}W^n < \infty,
\eeq
and
\beq\label{e:transfer}
\transfer\leq \sum_{n = 0}^{\infty} W^n \leq \frac{1}{1-c} \transfer
\eeq
in the componentwise sense, where $\transfer$ is the simple-path matrix of $W$.
\end{theorem}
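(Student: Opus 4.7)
For the lower bound, every non-intersecting path $v \in \mathbb{V}_{ij}$ is itself an element of $\mathbb{P}_{ij}$, so by non-negativity of the entries of $W$,
\[
\transfer_{ij} \;=\; \sum_{v \in \mathbb{V}_{ij}} W_v \;\leq\; \sum_{p \in \mathbb{P}_{ij}} W_p \;=\; \sum_{n \geq 0} (W^n)_{ij},
\]
which is the left-hand inequality in~\eqref{e:transfer}.

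For the right-hand inequality, I would construct an injective weight-preserving map
\[
\phi: \mathbb{P}_{ij} \longrightarrow \mathbb{V}_{ij} \times \bigsqcup_{r \geq 0} \mathbb{SL}^r
\]
via iterated \emph{chronological loop-erasure}. Given $p \in \mathbb{P}_{ij}$, locate the smallest index $k$ at which the vertex $p(k)$ has already occurred, say $p(k) = p(k_0)$ with $k_0 < k$. By minimality of $k$ the vertices $p(1), \ldots, p(k-1)$ are all distinct, so $L_1 := (p(k_0), p(k_0+1), \ldots, p(k))$ is a simple loop at $p(k_0)$, i.e.\ an element of $\mathbb{SL}$. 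Excising $L_1$ produces a strictly shorter path $p^{(1)} \in \mathbb{P}_{ij}$; iterating the procedure extracts simple loops $L_2, \ldots, L_r$ and terminates at a non-intersecting path $v := p^{(r)}$, at which point I define $\phi(p) := (v, (L_1, \ldots, L_r))$. Because the edges of $p$ partition into those of $v$ together with those of the $L_s$, the weight decomposes as $W_p = W_v \prod_{s=1}^r W_{L_s}$.

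The main obstacle is the injectivity of $\phi$. The key observation is that \emph{at each step $s$ of the forward algorithm the basepoint of $L_s$ occurs for the first time in $p^{(s)}$ exactly at the excision index}: the minimality of $k$ forbids the basepoint from appearing earlier in $p^{(s-1)}$, and that initial segment is preserved in $p^{(s)}$. This furnishes an explicit inverse: given $(v, (L_1, \ldots, L_r))$ in the image of $\phi$, set $p^{(r)} = v$ and, for $s = r, r-1, \ldots, 1$, obtain $p^{(s-1)}$ by inserting $L_s$ at the first occurrence of $\mathrm{basepoint}(L_s)$ in $p^{(s)}$; by the above observation the resulting $p^{(0)}$ is the unique preimage of $(v, (L_1, \ldots, L_r))$ under $\phi$.

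Injectivity and weight-preservation of $\phi$, together with non-negativity of $W$, then give, for every $N \in \mathbb{N}$,
\[
\sum_{n=0}^N (W^n)_{ij} \;=\; \sum_{\substack{p \in \mathbb{P}_{ij} \\ |p|\leq N}} W_p \;\leq\; \sum_{v \in \mathbb{V}_{ij}} W_v \sum_{r \geq 0} \Big(\sum_{L \in \mathbb{SL}} W_L\Big)^{\!r} \;=\; \frac{\transfer_{ij}}{1-c},
\]
with the geometric sum converging because $c<1$. This bound is uniform in $N$, so the series $\sum_n W^n$ converges componentwise and satisfies the claimed upper bound in~\eqref{e:transfer}.
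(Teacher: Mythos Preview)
Your proposal is correct and takes essentially the same approach as the paper: the paper also constructs an injective, weight-preserving map $\mathcal{D}ec : \mathbb{P}_{ij} \to \mathbb{V}_{ij} \times \mathbb{SL}^{(\mathbb{N})}$ by iteratively extracting the simple loop ending at the first repeated vertex (your $k$ is the paper's $\ell_\times(p)$, your $k_0$ its $\ell_0(p)$), and proves injectivity via the same observation that the basepoint of the extracted loop first appears in the remainder path precisely at the excision index, allowing reconstruction by reinsertion.
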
	

\subsection{Estimates in higher norms and completion of the proof of Theorem \ref{t:theRealDeal}}\label{s:proofRegularity}

In this section, we complete the proof of Theorem \ref{t:theRealDeal}. In view of Corollary \ref{c:Eurostar} and Theorem \ref{t:onlyTheSimpleOnes}, the main task is to obtain higher norm estimates for the Galerkin error.

We now fix $\{\chi\}_{i = 1}^M$, $k_0$, $N$ and $\psi$ as in the statement of Theorem \ref{t:theRealDeal}. For $i = 1,\ldots,M$, let $\chi_{i,\nu} \in C^\infty(\overline{\Omega})$, $\nu = 0,1,2,3$, be such that 
$$\chi_{i,0} \prec \chi_{i,1} \prec \chi_{i,2} \prec \chi_{i,3}$$
with $\chi_{i,0} = \chi_i$ and $\supp(\chi_{i,\nu}) \subset \Omega_i \cap \partial\Omega$. 
Let 
 $\specialC > 0$ be as in Corollary \ref{c:Eurostar} applied with $\{\chi_{i,2}\}_{i = 1}^M$. 
Let $h_0$ be sufficiently small, depending on $k_0$ and the cutoff functions (this restriction that $h_0$ is sufficiently small comes from the applications below of 
Corollary \ref{c:Eurostar} and Lemmas \ref{l:highFreqUpgrade}, \ref{l:PMLUpgrade},  \ref{l:inaugural1}, \ref{l:highReg}, and \ref{l:PMLHighest}).
 Let $c \in (0,1)$ and suppose that the simple loop condition \eqref{e:meshConditionsGeneral} holds. 
Let $C$ denote any positive constant whose value only depends on the previous quantities. Let $k \geq k_0$, $k \notin \blue{\mathrm{J}}$, $u \in H^1_k$ and $w_h \in \blue{V_{\mathcal{T}}}$. To show that there exists a unique $u_h \in \blue{V_{\mathcal{T}}}$ such that \eqref{e:Galerkin_ortho} holds, by linearity and the fact that $\blue{V_{\mathcal{T}}}$ is finite-dimensional, it suffices to show that when $u = 0$, the unique solution to \eqref{e:Galerkin_ortho} is $u_h = 0$. But since the latter is a consequence of \eqref{e:onlyH1}, without loss of generality, we may assume that there exists $u_h \in \blue{V_{\mathcal{T}}}$ satisfying \eqref{e:Galerkin_ortho} and it remains to prove the bound \eqref{e:mainResult}.

By Theorem \ref{t:onlyTheSimpleOnes}, 
$$\sum_{n =0}^{\infty} (\specialC \oldT)^n \leq \frac{1}{1-c} \transfer,$$
where $\transfer$ is the simple-path matrix of $\specialC W$. Since $\rho(k)$ is polynomially bounded on $\R_+\setminus \blue{\mathrm{J}}$, $\transfer\leq C k^{M}$ coefficient-wise (since by definition, the coefficients of $\transfer$ are finite linear combination of finite products of coefficients of $\specialC \oldT$). Thus, we can apply Corollary \ref{c:Eurostar} to deduce that 
for $i=1,\dots, \domainnumber_\Int $,
\begin{align}
\label{e:low_freq_check}
\|\chi_{i,2}\Psi(u-u_h)\|_{H^{-p+1}_k}&\leq 
C \sum_{j=1}^\domainnumber \left[\piminus 
\transfer
 \matrixB\right]_{i,j}
\|u-w_{h,j}\|_{H_k^1(\Omega_j)}
+CR,
\\
\label{e:high_freq_check}
\|\chi_{i,2}(1-\Psi)(u-u_h)\|_{H_k^{-p+1}}&\leq 
C\sum_{j=1}^\domainnumber \left[\piplus
\transfer
 \matrixB\right]_{i,j}
\|u-w_{h,j}\|_{H_k^1(\Omega_j)} + CR
\end{align}
and for $1\leq i \leq \domainnumber_\Pml$,
\begin{align}
\label{e:pml_check}
&\|\chi_{\domainnumber_\Int+i,2}(u-u_h)\|_{H_k^{-p+1}}\leq 
C\sum_{j=1}^\domainnumber \left[\pi_{\Pml}
\transfer
 \matrixB\right]_{i,j}
\|u-w_{h,j}\|_{H_k^1(\Omega_j)}+ CR,
\end{align}
with $R = k^{-N}(hk)^p\sum_{j=1}^\domainnumber\|u - w_{h,j}\|_{H^1_k}$. 

\subsubsection*{Low-frequency bound in arbitrary norms}

The first block row of  \eqref{e:mainResult}, i.e., the low-frequency bound, follows from \eqref{e:low_freq_check} by applying Lemma \ref{l:lfshift} below for each $i = 1,\ldots,M_\Int$, with $\phi=\chi_{i,0} = \chi_i$, $\tilde{\phi} := \chi_{i,2}$, $v = u-u_h$ and $N = N'$ large enough, and then using Lemma \ref{l:inaugural1} 
(with $w_h = (\sum_{j=1}^J w_{h,j})/J$) 
to estimate $k^{-N'}\|u-u_h\|_{H_k^{-N'}}$ by $R$.

\begin{lemma}[Low-frequency shift]
\label{l:lfshift}
Let $\phi, \tilde{\phi}, \in C^\infty(\overline{\Omega})$ be such that $\phi \prec \tilde{\phi}$ and $\supp \tilde{\phi} \cap \Gamma_{\tr} = \emptyset$. Let $\psi \in  C^\infty_c(\R)$ and let $\Psi := \psi(\cP_k)$. Then, for all $k_0 > 0$ and $N \in \mathbb{N}$, there exists $C > 0$ such that
$$\|\phi \Psi v\|_{H^N_k} \leq C \|\tilde{\phi} \Psi v\|_{H^{-N}_k} + Ck^{-N} \|v\|_{H_k^{-N}} \quad \tfa k \geq k_0 \tand  v \in H^{-N}_k.$$ 
\end{lemma}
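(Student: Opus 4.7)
The plan is to exploit the infinite smoothing of $\Psi$ together with the spatial pseudolocality result of Theorem~\ref{t:pseudoLocGeneral}, using the standard trick of inserting a second frequency cutoff that acts as the identity on the range of $\Psi$. Specifically, I would pick $\widetilde\psi \in C_c^\infty(\R)$ with $\widetilde\psi \equiv 1$ on $\supp \psi$ and set $\widetilde\Psi := \widetilde\psi(\cP_k)$; the functional calculus then gives $\widetilde\Psi \Psi = \Psi$, so that using the partition $1 = \widetilde\phi + (1 - \widetilde\phi)$ I obtain the decomposition
\beqs
\phi \Psi v \;=\; \phi \widetilde\Psi \Psi v \;=\; \underbrace{\phi \widetilde\Psi \widetilde\phi \, \Psi v}_{\text{(I)}} \;+\; \underbrace{\phi \widetilde\Psi (1 - \widetilde\phi) \, \Psi v}_{\text{(II)}}.
\eeqs

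For term (I), Proposition~\ref{prop:f(Pk)} applied to $\widetilde\psi \in \cS(\R)$ gives that $\widetilde\Psi : \ZcupHspace{-N} \to \ZcapHspace{N}$ is bounded uniformly in $k$; combined with the continuous inclusions $H_k^{-N} \subset \ZcupHspace{-N}$ and $\ZcapHspace{N} \subset H_k^N$ recorded at the start of \S\ref{sec:assumptions}, together with the obvious $H_k^{-N}$-boundedness of multiplication by the smooth function $\widetilde\phi$, this gives
\beqs
\|\phi \widetilde\Psi \widetilde\phi \, \Psi v\|_{H^N_k} \;\leq\; C \|\widetilde\phi \Psi v\|_{H^{-N}_k}
\eeqs
with constant independent of $k \geq k_0$.

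For term (II), the condition $\phi \prec \widetilde\phi$ ensures $\supp \phi \cap \supp (1 - \widetilde\phi) = \emptyset$. Hence Theorem~\ref{t:pseudoLocGeneral} applied to the Schwartz function $\widetilde\psi$ yields $\phi \widetilde\Psi (1 - \widetilde\phi) = O_{-\infty}(k^{-\infty}; \ZcupHspace{} \to \ZcupHspace{})$, so that in particular $\|\phi \widetilde\Psi (1 - \widetilde\phi) w\|_{H^N_k} \leq C k^{-N} \|w\|_{H^{-N}_k}$ for every $w$. Applying this with $w = \Psi v$ and invoking $\|\Psi v\|_{H^{-N}_k} \leq C \|v\|_{H^{-N}_k}$ (again by Proposition~\ref{prop:f(Pk)}) bounds (II) by $C k^{-N} \|v\|_{H^{-N}_k}$.

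Summing the two estimates yields the lemma. The only mild bookkeeping is tracking the embeddings between the concrete scale $H_k^{\pm N}$ and the abstract spaces $\ZcupHspace{\pm N}, \ZcapHspace{\pm N}$ in which the pseudolocality machinery of Sections~\ref{sec:pseudolocS}--\ref{sec:assumptions} is phrased; no genuine obstacle arises, as the result is essentially a direct consequence of pseudolocality plus the smoothing of $\Psi$.
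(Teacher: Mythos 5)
Your proof is correct and, in fact, takes a cleaner route than the paper's. Both proofs ultimately reduce to the same decomposition $\phi\Psi = \phi\widetilde\Psi\widetilde\phi\Psi + O_{-\infty}(k^{-\infty};\cY_k\to\cY_k)$, but you get there differently: you exploit the exact functional-calculus identity $\widetilde\Psi\Psi = \Psi$ and then peel off the complement $(1-\widetilde\phi)$ by spatial pseudolocality (Theorem~\ref{t:pseudoLocGeneral}). The paper instead begins by inserting an auxiliary cutoff $\varphi_\Pml$ that is constant near $\partial\Omega_-$ and vanishes near $\Gamma_{\tr}$ (so that $\varphi_\Pml$ is a boundary-compatible operator by Lemma~\ref{l:doneWithAds}), applies the frequency-pseudolocality result (Theorem~\ref{thm:pseudolocFreq}) to commute $\widetilde\Psi$ past $\varphi_\Pml\Psi$, and only then removes $\varphi_\Pml$ by spatial pseudolocality. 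Your argument entirely bypasses the frequency-pseudolocality machinery of Section~5.6 and Lemma~\ref{l:doneWithAds}; as a consequence, it never uses the hypothesis $\supp\widetilde\phi\cap\Gamma_{\tr}=\emptyset$, so it actually establishes a slightly stronger statement. The only thing you should be careful to record explicitly is that the identity $\widetilde\Psi\Psi = \Psi$, which a priori holds on $\cH$, extends to the operators acting on $H^{-N}_k$ via the uniqueness of the extensions in Proposition~\ref{prop:f(Pk)} (which follows from density of $\cH$ in $\cD_k^{-N}$); that step is implicit in your write-up but worth making explicit.
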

\begin{proof}
As in the proof of Lemma \ref{l:bigScreen}, the assumptions let us define $\varphi_{\Pml} \in C^\infty(\overline{\Omega})$ such that (i) $\varphi_\Pml \equiv 1$ near $\partial \Omega_-$, (ii) $\varphi_\Pml \equiv 0$ near $\Gamma_{\tr}$ and (iii) $\tilde{\phi} \prec \varphi_{\Pml}$. By Lemma \ref{l:doneWithAds}, $\varphi_\Pml$ is a boundary compatible operator in the sense of Definition \ref{def:freqOp}, and thus by Theorem \ref{thm:pseudolocFreq},
$$\phi\Psi = \phi \varphi_{\Pml} \Psi = \phi \widetilde{\Psi} \varphi_{\Pml} \Psi + \phi O_{-\infty}(k^{-\infty};\cD_k \to \cD_k) = \phi \widetilde{\Psi} \varphi_{\Pml} \Psi + O_{-\infty}(k^{-\infty};\cY_k \to \cY_k)$$
where $\widetilde{\psi} \in \mathcal{S}(\R)$ is such that $\psi \prec \widetilde{\psi}$ and $\widetilde{\Psi}:=\widetilde{\psi}(\cP_k)$, and 
where the last step uses the fact that for $n \geq 0$, $\ZcupHspace{-n} \subset \Dspace{-n}$, $\Dspace{n} \subset \ZcupHspace{n}$ are continuous inclusions and $\phi \in O_0(1,\cY_k \to \cY_k)$. Hence, since $\phi \widetilde{\Psi} = \phi \widetilde{\Psi} \tilde{\phi} + O_{-\infty}(k^{-\infty};\ZcupHspace{} \to \ZcupHspace{})$ by Theorem \ref{t:pseudoLocGeneral},
$$\phi \Psi  = \phi \widetilde{\Psi} \tilde{\phi} \Psi + O_{-\infty}(k^{-\infty};\ZcupHspace{} \to \ZcupHspace{}),$$
using that $\varphi_{\Pml} \tilde{\phi} = \tilde{\phi}$. Thus, 
$$\|\phi \Psi u\|_{H_k^N} \leq \|\phi\|_{H_k^N  \to H_k^{N}} \|\widetilde{\Psi}\|_{H_k^{-N} \to H_k^{N}} \|\tilde{\phi} \Psi u\|_{H_k^{-N}} + Ck^{-N} \|u\|_{H_k^{-N}}$$
and the conclusion follows using the mapping properties of $\widetilde{\Psi}$ from Proposition \ref{prop:f(Pk)}.
\end{proof}

\subsubsection*{High-frequency and PML bounds up to the $L^2$ norm.}
The second and third block rows of \eqref{e:mainResult} 
when $m \in \{1,\ldots,p\}$ (i.e., up to the $L^2$ norm) are obtained by using Lemma \ref{l:highFreqUpgrade} and  \ref{l:PMLUpgrade}, and then using Lemma \ref{l:inaugural1} 
(again with $w_h = (\sum_{j=1}^J w_{h,j})/J$) to estimate $\|u-u_h\|_{H_k^{-N}}$ in the remainder term.

Indeed, to prove the second block row of \eqref{e:mainResult} for $m \in \{1,\ldots,p\}$, observe that from Lemma~\ref{l:highFreqUpgrade} (with $\ell=m-1$) combined with Lemma \ref{l:inaugural1} (with $w_h=w_{h,i}$), with $1\leq i\leq \domainnumber_\Int$,
\begin{align}\nonumber
\|\chi_{i,1}(1-\Psi)(u-u_h)\|_{H_k^{1-m}}&\leq C(h_i k)^m\Big(\|u-w_{h,i}\|_{H_k^1(\Omega_i)}+(h_ik)^p\|\chi_{i,2}\Psi(u-u_h)\|_{H_k^{-p+1}}\\
&\hspace{1cm}+(h_ik)^N\|\chi_{i,2}(1-\Psi)(u-u_h)\|_{H_k^{-p+1}}\Big) +Ck^{-N}(hk)^m\|u-w_{h,i}\|_{H_k^1};
\label{e:quickSand1}
\end{align}
the second block row of \eqref{e:mainResult} then follows from \eqref{e:quickSand1} and \eqref{e:high_freq_check}, using that $\chi_{i,0}\prec \chi_{i,1}$ (this extra ``layer" is used in the proof for $m=0$ below). 

The third block row of \eqref{e:mainResult}, i.e., the bound on the PML error, is proved in a similar way to the high-frequency bound, using Lemma~\ref{l:PMLUpgrade}  instead of Lemma~\ref{l:highFreqUpgrade}. Indeed, Lemma~\ref{l:PMLUpgrade} (with $\ell=m-1$ and $N$ sufficiently large) combined with 
Lemma \ref{l:inaugural1} (with $w_h=w_{h,i}$) implies that, with $\domainnumber_\Int +1\leq i\leq \domainnumber$, 
\begin{align}\nonumber
\|\chi_{i,1}(u-u_h)\|_{H_k^{1-m}}&\leq C(h_{i} k)^m\Big(\|u-w_{h,i}\|_{H_k^1(\Omega_{i})}+(h_ik)^N\|\chi_{i,2}(u-u_h)\|_{H_k^{-p+1}}\Big) \\
&\hspace{5cm}+Ck^{-N}(hk)^m\|u-w_{h,i}\|_{H_k^1}.
\label{e:quickSand2}
\end{align}
The third block row of \eqref{e:mainResult} then follows from \eqref{e:quickSand2} and \eqref{e:pml_check}, using again that $\chi_{i,0}\prec \chi_{i,1}$.

\subsubsection*{High-frequency and PML bound in the energy norm.}

The second block row of \eqref{e:mainResult}
 for $m = 0$ (i.e., in the $H^1_k$ norm) follows from \eqref{e:high_freq_check}, \eqref{e:quickSand1} with $m = 1$, and the following lemma applied with $\phi = \chi_{i,0}$, $\widetilde{\phi} := \chi_{i,1}$, $N := N'$ large enough and using Lemma \ref{l:inaugural1} 
(with $w_h=w_{h,i}$)  
to estimate $k^{-N'}\|u-u_h\|_{H_k^{-N'}}$ by $R$.

\begin{lemma}
\label{l:highReg}
For any $k_0 > 0$ and $\mathfrak{c} > 0$, there exists $h_0 > 0$ such that the following holds. Let $\phi,\widetilde{\phi} \in C^\infty(\overline{\Omega})$ be such that
$$\phi \prec_{\mathfrak{c}} \widetilde{\phi} \quad \tand \quad \supp(\widetilde{\phi}) \cap \Gamma_{\tr} = \emptyset.$$
Furthermore, let $\psi,\psi_0 \in C^\infty_c(\R)$ be such that $\psi_0 \prec \psi$, let $\Psi := \psi(\cP_k)$, $\Psi_0:= \psi_0(\cP_k)$ and
$$A:= \phi(1 - \Psi) \quad \tand \quad \widetilde{A} := \widetilde{\phi}(1 - \Psi_0).$$
Then, for all $N > 0$, there exists $C > 0$ such that for all $k \geq k_0$, $h \leq h_0$, $u-u_h$ satisfying \eqref{e:Galerkin_ortho} and for all $w_h \in \blue{V_{\mathcal{T}}}$,  
\begin{align*}
\|A(u-u_h)\|_{H^1_k} &\leq C \left( \|\widetilde{\phi} (u-w_h)\|_{H^1_k} + \|\widetilde{A}(u-u_h)\|_{L^2} + (h_{\tilde{\phi}}k)^p \|\widetilde{\phi}(u-u_h)\|_{H_k^{-N}}\right)\\
& \hspace{5.5cm} + Ck^{-N}(\|u -w_h\|_{H_k^1} + \|u - u_h\|_{H_k^{-N}}).
\end{align*}
where $h_{\tilde{\phi}} :=\max \big\{h_\blue{T} \, :\, \blue{T} \in \cT_k \,\,\textup{ s.t. }\, \blue{T} \cap \supp (\widetilde{\phi}) \neq \emptyset\big\}$.
\end{lemma}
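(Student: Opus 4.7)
The strategy is to apply Gårding's inequality to the high-frequency, spatially-localised error $A(u-u_h)$, and then combine Galerkin orthogonality with the pseudolocal analysis of Sections~\ref{sec:pseudolocS}--\ref{sec:pseudoLocPi} to convert the resulting bilinear form into the ingredients on the right-hand side. Since $\supp \phi \cap \Gamma_{\tr} = \emptyset$, we have $A(u-u_h) \in \cZ_k$ and Assumption~\ref{ass:Gar} gives
\[
\|A(u-u_h)\|_{H^1_k}^2 \;\leq\; C\bigl(\,\abs{\Re\, a_k(A(u-u_h),A(u-u_h))} + \|A(u-u_h)\|_{L^2}^2\,\bigr).
\]
The $L^2$ term is immediately reduced to $\|\tilde A(u-u_h)\|_{L^2}$: the functional-calculus identity $(1-\Psi)=(1-\Psi)(1-\Psi_0)$ (which follows from $\psi_0\prec \psi$) combined with pseudolocality of $\Psi$ (Theorem~\ref{t:pseudoLocGeneral}) yields
\[
A = \phi(1-\Psi)\tilde\phi(1-\Psi_0) + O_{-\infty}(k^{-\infty};\cY_k\to\cY_k),
\]
hence $\|A(u-u_h)\|_{L^2} \leq C\|\tilde A(u-u_h)\|_{L^2} + Ck^{-N}\|u-u_h\|_{H^{-N}_k}$.

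The heart of the argument is the bilinear-form term. Using that $A^*A$ is self-adjoint on $L^2$,
\[
a_k(Av,Av) = a_k(v, A^*A v) + \langle v, [A^*, P_k^*] Av\rangle,\qquad v := u-u_h.
\]
For the first piece, Galerkin orthogonality~\eqref{e:Galerkin_ortho} gives
$a_k(u-u_h, A^*A(u-u_h)) = a_k(u-u_h, A^*A(u-u_h) - v_h)$
for every $v_h\in V_k$. We choose $v_h$ by expanding $A^*A = \phi^2 - \Psi\phi^2 - \phi^2\Psi + \Psi\phi^2\Psi$ and approximating each summand separately. The three summands containing $\Psi$ are smoothing by Proposition~\ref{prop:f(Pk)}, so after inserting $\tilde\phi$ via pseudolocality they are controlled in $H^1_k$ by the approximation property (Assumption~\ref{ass:ap}) at rate $(h_{\tilde\phi}k)^p \|\tilde\phi(u-u_h)\|_{H^{-N}_k}$, producing the third term on the right-hand side of the lemma. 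For $\phi^2(u-u_h)$ we split $u-u_h=(u-w_h)+(w_h-u_h)$: the first part is approximated by Assumption~\ref{ass:ap} contributing $\|\tilde\phi(u-w_h)\|_{H^1_k}$ after pseudolocality, while the second part is approximated via the super-approximation bound (Assumption~\ref{ass:sap}), using that $w_h-u_h\in V_k$. The super-approximation error carries an extra factor $h/\mathfrak{c}$, so the resulting product with $\|u-u_h\|_{H^1_k}$ coming from testing $a_k$ in one slot is absorbable into the left-hand side for $h_0$ small; the auxiliary $\|\phi(w_h-u_h)\|_{H^1_k}$ and $\|w_h-u_h\|_{L^2}$ factors reduce to $\|\tilde\phi(u-w_h)\|_{H^1_k} + \|A(u-u_h)\|_{H^1_k}$ and $\|u-w_h\|_{L^2} + \|\tilde A(u-u_h)\|_{L^2}$, respectively, via the triangle inequality and Step~2.

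The commutator $[A^*, P_k^*] = (1-\Psi)[\phi, P_k^*] + [1-\Psi, P_k^*]\phi$ is treated as follows: the first summand is $O_1(k^{-1};\cY_k\to \cY_k)$ by Lemma~\ref{l:spatialCut} (after replacing $\phi$ by a cutoff with $\partial_\nu \phi=0$ on $\partial\Omega_-$ via the same device as in the proof of Lemma~\ref{l:iGainKBigRemainder}), contributing a factor $k^{-1}$ that is absorbable into the LHS; the second summand $-[\Psi, P_k^*-\cP_k]\phi$ vanishes identically in the non-PML region (since $P_k = \cP_k$ there and $\Psi$ commutes with $\cP_k$), and in the PML region is handled by combining boundary compatibility of $\Psi$ (Lemma~\ref{l:doneWithAds}) with the same pseudolocal bookkeeping to absorb the commutator against $\|A(u-u_h)\|_{H^1_k}$. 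All residual $\|u-u_h\|_{H^{-N}_k}$-type contributions from $O_{-\infty}$-remainders are routed through Lemma~\ref{l:inaugural1} (applied with $w_h = J^{-1}\sum_j w_{h,j}$) to reproduce the stated remainder $Ck^{-N}(\|u-w_h\|_{H^1_k} + \|u-u_h\|_{H^{-N}_k})$.

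The main obstacle is the Galerkin-orthogonality step in the middle paragraph: approximating $A^*A(u-u_h)$ well enough by a finite-element function to produce only the \emph{locally} supported norms appearing on the right-hand side. Super-approximation (Assumption~\ref{ass:sap}) only directly applies to objects of the form $\chi^2 v_h$ with $v_h\in V_k$, so one has to separate $u-u_h$ into $(u-w_h)+(w_h-u_h)$, decompose $A^*A$ by frequency, and carefully combine pseudolocality of $\Psi$ with the approximation and super-approximation bounds so that no global $\|u-u_h\|_{H^1_k}$ factor survives past the absorption step.
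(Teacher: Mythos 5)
Your treatment of the Gårding inequality, the $L^2$ term (via $(1-\Psi)=(1-\Psi)(1-\Psi_0)$ and pseudolocality), and the commutator $[P_k,A]$ all match the paper and are sound. The gap is in the main bilinear-form term, and it is a genuine one: you have dropped the elliptic-projection machinery that makes the absorption close.

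Specifically, after Galerkin orthogonality you propose $a_k(u-u_h,\,A^*A(u-u_h)-v_h)$ with $v_h$ chosen by expanding $A^*A$ and approximating each summand via Assumptions~\ref{ass:ap}/\ref{ass:sap}. This only yields the continuity bound $|a_k(u-u_h,w)|\le C\|u-u_h\|_{H^1_k(\supp w)}\,\|w\|_{H^1_k}$, so the left factor in each product is a localized \emph{full}-frequency Galerkin error on a neighbourhood $U_1\supset\supp\phi$ (not $\|A(u-u_h)\|_{H^1_k}$). Even after a further frequency split this factor dominates $\|\widetilde\phi(1-\Psi)(u-u_h)\|_{H^1_k}$, which is strictly larger than the quantity on the left-hand side of the lemma because $\phi\prec_{\mathfrak c}\widetilde\phi$. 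The super-approximation gain of $h/\mathfrak c$ does not repair this: a term of the form $(h/\mathfrak c)\,\|\widetilde\phi(1-\Psi)(u-u_h)\|_{H^1_k}\cdot\|A(u-u_h)\|_{H^1_k}$ cannot be absorbed into $\|A(u-u_h)\|_{H^1_k}^2$ with a small constant, because the first factor is not controlled by the conclusion of the lemma (the lemma only allows $\|\widetilde A(u-u_h)\|_{L^2}$ and $(h_{\tilde\phi}k)^p\|\widetilde\phi(u-u_h)\|_{H^{-N}_k}$, not the high-frequency $H^1_k$ norm on the larger support). One could in principle set up an iteration over nested cutoffs to close this, but you have not done so, and such an iteration would require careful bookkeeping of the growing constants and spacings that is not in your proposal.

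The paper's proof avoids this by choosing $v_h=\Pi_k^\sharp A^*A(u-u_h)$, so the term becomes $\langle P_k(u-u_h),(\Id-\Pi_k^\sharp)A^*A(u-u_h)\rangle$, and then uses the identity (cf.~\eqref{e:basicEllipticProjection}/\eqref{eq:dismantled}) that trades $P_k(u-u_h)$ for $P_k^\sharp(u-w_h)-S_k(u-u_h)$. The two ingredients this buys are: (i) C\'ea's lemma for the coercive form $a_k^\sharp$ gives $\|(\Id-\Pi_k^\sharp)w\|_{H^1_k}\le C\|w\|_{H^1_k}\le C\|A(u-u_h)\|_{H^1_k}$ for $w=\phi_2 A^*A(u-u_h)$ — so the absorbable factor is \emph{exactly} $\|A(u-u_h)\|_{H^1_k}$ and not a bigger norm; and (ii) the Aubin--Nitsche estimate Lemma~\ref{l:localSharp} converts $S_k(\Id-\Pi_k^\sharp)$ into the $(h_{\tilde\phi}k)^p$ factor. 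Without $\Pi_k^\sharp$ one has neither a coercive structure to supply C\'ea, nor the duality argument that produces the $h$-power; direct approximation plus super-approximation, which is the correct strategy for coercive (elliptic) problems, does not transfer to the indefinite Helmholtz form. This is the key conceptual step missing from your proposal.
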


The heart of the proof of Lemma \ref{l:highReg} is that, by the G\aa rding inequality, Galerkin orthogonality, and the definition of $\Pi^\sharp_k$, 
\begin{equation}
\label{e:luigi2_prepare}
 \begin{aligned}
&\|A(u-u_h)\|_{H_k^1}^2 \leq \Re \big\langle  P_k A(u-u_h), A(u-u_h)\big\rangle + C \| A(u-u_h)\|^2_{L^2}\\
&\leq \big| \big\langle P_k(u-u_h), (I-\Pi^\sharp_k)A^*A(u-u_h)\big\rangle\big| +\big|\big\langle [P_k,A](u-u_h),A(u-u_h)\big\rangle\big|+C\|A(u-u_h)\|^2_{L^2}.
\end{aligned}
\end{equation}
The first term on the right-hand side of \eqref{e:luigi2_prepare} is dealt with in a similar way to the proof of Lemma \ref{l:localDualityArg} (compare the first term on the right-hand side of \eqref{e:luigi2_prepare} to the right-hand side of \eqref{eq:dismantled}). The following lemma deals with the second term on the right-hand side of \eqref{e:luigi2_prepare} (i.e., the commutator term).

\begin{lemma}\label{l:commutator}
Let $\phi, \widetilde{\phi}\in C^\infty(\overline{\Omega})$ be such that $\partial_\nu \phi |_{\partial\Omega_-}=0$, $\phi \prec \widetilde{\phi}$ and $\supp (\widetilde{\phi})\cap \Gamma_{\tr}=\emptyset$, let 
$\psi,\psi_{0}\in C^\infty_c(\mathbb{R})$ be such that $\psi_0 \prec \psi$, let $\Psi:= \psi(\cP_k)$ and $\Psi_0:= \psi_0(\cP_k)$ and let $$A:= \phi (1-\Psi) \quad \tand \quad \widetilde{A}:= \widetilde{\phi} (1-\Psi_0)$$ 
Then
\beqs
A = A\widetilde{A}+O_{-\infty}(k^{-\infty};\ZcupHspace{}\to \ZcupHspace{})
\eeqs
\beq\label{e:commutator1}
[P_k,A] = [P_k,A] \widetilde{A}+O_{-\infty}(k^{-\infty};\ZcupHspace{}\to \ZcupHspace{})
\eeq
and for all $k_0 > 0$, there exists $C > 0$ such that for all $k \geq k_0$,
\beq\label{e:commutator2}
\|[P_k,A]\|_{L^2\to (\cZ_k)^*}\leq  Ck^{-1}.
\eeq
\end{lemma}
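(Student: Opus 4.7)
The plan is to establish the three claims in order, leveraging the algebraic identities $\phi\widetilde\phi = \phi$ (from $\phi\prec\widetilde\phi$) and $\Psi\Psi_0 = \Psi_0$ (from $\psi_0\prec\psi$, which gives $\psi\psi_0 = \psi_0$ and hence $\Psi\Psi_0 = \Psi_0$ by the functional calculus), together with the spatial and frequency pseudolocality results of Section~\ref{sec:pseudolocS}.

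For the first identity, I would expand $A\widetilde A = \phi(1-\Psi)\widetilde\phi(1-\Psi_0)$ using the commutator identity $(1-\Psi)\widetilde\phi = \widetilde\phi(1-\Psi) + [\widetilde\phi,\Psi]$ and then simplify via $\phi\widetilde\phi = \phi$ and $(1-\Psi)(1-\Psi_0) = 1-\Psi$; after a further use of $\phi\widetilde\phi = \phi$ inside the commutator, this yields the compact identity
\[
A - A\widetilde A \;=\; \phi\Psi(1-\widetilde\phi)(1-\Psi_0).
\]
Since $\supp\phi \cap \supp(1-\widetilde\phi) = \emptyset$, Theorem~\ref{t:pseudoLocGeneral} gives $\phi\Psi(1-\widetilde\phi) = O_{-\infty}(k^{-\infty};\ZcupHspace{}\to\ZcupHspace{})$; composition with the uniformly bounded $(1-\Psi_0)$ on $\ZcupHspace{}$ finishes the first identity.

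For the second identity, I would use the derivation identity $[P_k, A\widetilde A] = [P_k, A]\widetilde A + A[P_k,\widetilde A]$ to write
\[
[P_k, A] - [P_k, A]\widetilde A \;=\; [P_k, A - A\widetilde A] \;+\; A[P_k, \widetilde A].
\]
For the first term, the explicit form of $A - A\widetilde A$ obtained above actually maps $\ZcupHspace{}\to\ZcapHspace{}$ (since $\Psi:\ZcupHspace{-n}\to\ZcapHspace{n}$ by Proposition~\ref{prop:f(Pk)} and multiplication by $\phi$ preserves $\ZcapHspace{}$ thanks to $\partial_\nu\phi|_{\partial\Omega_-}=0$), so both $P_k(A-A\widetilde A)$ and $(A-A\widetilde A)P_k$ (the latter extended by duality to $\ZcupHspace{-n}\to\ZcupHspace{-n-2}$) are $O_{-\infty}(k^{-\infty})$ between $\ZcupHspace{}$-spaces. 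For $A[P_k,\widetilde A]$, I would expand $[P_k,\widetilde A] = [P_k,\widetilde\phi](1-\Psi_0) - \widetilde\phi[P_k,\Psi_0]$ and handle the two summands separately. The piece $A[P_k,\widetilde\phi](1-\Psi_0)$ factors as $A\eta[P_k,\widetilde\phi]\eta'(1-\Psi_0)$ with $\eta,\eta'$ smooth cutoffs concentrated on $\supp\nabla\widetilde\phi$, disjoint from $\supp\phi$; then $A\eta = \phi(1-\Psi)\eta$ reduces via $\phi\eta = 0$ to $-\phi\Psi\eta = O_{-\infty}(k^{-\infty})$ by Theorem~\ref{t:pseudoLocGeneral}. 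For $A\widetilde\phi[P_k,\Psi_0]$, the identities $\phi\widetilde\phi = \phi$ and $(1-\Psi)\Psi_0 = 0$ (by $\psi_0\prec\psi$) reduce the expression, modulo a pseudolocally negligible error of the form $\phi\Psi(1-\widetilde\phi)[P_k,\Psi_0]$, to $\phi[P_k,\Psi]\Psi_0$, which is then shown to be superalgebraically small via a Helffer--Sjöstrand expansion exploiting the repeated cancellation against $\Psi_0$.

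For the third claim, split $[P_k,A] = [P_k,\phi](1-\Psi) - \phi[P_k,\Psi]$. The explicit commutator formula \eqref{e:helpTheFuture1} (valid under $\partial_\nu\phi|_{\partial\Omega_-}=0$, which kills the boundary terms in the integration by parts) together with the $k$-weighting in $\cZ_k$ gives $\|[P_k,\phi]\|_{L^2\to \cZ_k^*}\leq Ck^{-1}$, since the factor $k^{-2}$ in front of the derivative terms of $P_k$ combines with the bound $\|\nabla v\|_{L^2}\leq k\|v\|_{\cZ_k}$; composition with the bounded $(1-\Psi): L^2\to L^2$ preserves the bound. For $\phi[P_k,\Psi]$, use $[\cP_k,\Psi]=0$ to write $[P_k,\Psi] = \tfrac{1}{2}[P_k-P_k^*,\Psi]$, apply the Helffer--Sjöstrand representation (Proposition~\ref{prop:HS}) and the resolvent identity $[P_k,(\cP_k-z)^{-1}] = -(\cP_k-z)^{-1}[P_k,\cP_k](\cP_k-z)^{-1}$ to express $[P_k,\Psi]$ as an integral of resolvent-sandwiched $[P_k,P_k^*]$, whose principal symbol is $\{p,\bar p\}/\ri$ and is $O(k^{-1})$ owing to the $k^{-2}$ scaling of $P_k$; combined with the decay of $w(z)$ and the resolvent bounds of Proposition~\ref{prop:neverInBin}, this integral yields $\|\phi[P_k,\Psi]\|_{L^2\to \cZ_k^*}\leq Ck^{-1}$.

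The main obstacle will be the last step of the second identity, namely showing that $\phi[P_k,\Psi]\Psi_0 = O_{-\infty}(k^{-\infty};\ZcupHspace{}\to\ZcupHspace{})$. A naive commutator bound gives only $O(k^{-1})$, as in Part 3; the promotion to superalgebraic decay must exploit the identity $(1-\psi)\psi_0 = 0$ iteratively inside the Helffer--Sjöstrand expansion so that each resolvent factor paired with $\Psi_0$ produces additional smoothing. This bookkeeping is complicated by the fact that $\widetilde\phi$ is not assumed boundary-compatible in the sense of Definition~\ref{def:freqOp}, so direct appeal to the frequency-space pseudolocality of Theorem~\ref{thm:pseudolocFreq} is unavailable, and an intermediate boundary-compatible cutoff between $\phi$ and $\widetilde\phi$ must be inserted before these commutator expansions can be carried out.
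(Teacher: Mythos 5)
The first identity is correct, and your computation $A - A\widetilde A = -\phi\Psi(1-\widetilde\phi)(1-\Psi_0)$ (you dropped the sign, but this is immaterial) is a valid, if slightly more roundabout, rederivation of the paper's argument $\phi(1-\Psi) = \phi(1-\Psi)(1-\Psi_0) = \phi(1-\Psi)\widetilde\phi(1-\Psi_0) + O_{-\infty}(k^{-\infty})$.

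For the second and third claims, however, there is a genuine gap, and it is precisely the one you flag. The difficulty is not merely that $\phi[P_k,\Psi]\Psi_0$ needs a more careful Helffer--Sj\"ostrand expansion; it is structural. Your decomposition
\[
[P_k,A] - [P_k,A]\widetilde A = [P_k,\,A - A\widetilde A] \,+\, A[P_k,\widetilde A]
\]
forces you to manipulate $[P_k,\widetilde\phi]$ and $[P_k,\Psi_0]$, neither of which has controlled mapping properties in this framework. In the Neumann case $\widetilde\phi$ need not satisfy $\partial_\nu\widetilde\phi|_{\partial\Omega_-}=0$, so multiplication by $\widetilde\phi$ does not preserve $\Zspace{n}$; the commutator $[P_k,\widetilde\phi]$ then carries boundary contributions and does \emph{not} fall under Lemma~\ref{l:spatialCut}. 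Likewise the resolvent identity $[P_k,(\cP_k-z)^{-1}] = -(\cP_k-z)^{-1}[P_k,\cP_k](\cP_k-z)^{-1}$ requires $[P_k,\cP_k]$ to act sensibly between $\cD_k^s$ spaces, which in turn requires $P_k$ (or a cut-off of it) to be boundary-compatible in the sense of Definition~\ref{def:freqOp}; but $P_k$ itself is not such an operator (Remark~\ref{rem:Pknotdiff}), so neither the Helffer--Sj\"ostrand argument for $\phi[P_k,\Psi]$ in part~(3) nor the ``repeated cancellation against $\Psi_0$'' you sketch for part~(2) go through as written.

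The paper sidesteps all of this by never forming $[P_k,\widetilde\phi]$ or $[P_k,\Psi]$ directly. Instead it picks a larger cut-off $\varphi_{\Pml}\succ\widetilde\phi$ that is \emph{identically $1$ near $\partial\Omega_-$ and supported away from $\Gamma_{\tr}$}, so that $\varphi_{\Pml}P_k\varphi_{\Pml}$ coincides with a differential operator and is boundary-compatible (Lemma~\ref{l:commuteP}). It then proves three one-sided replacements, $[P_k,A]=[\varphi_{\Pml}P_k\varphi_{\Pml},A]+O_{-\infty}(k^{-\infty})$, $A=A\widetilde A+O_{-\infty}(k^{-\infty})$, and $A(\varphi_{\Pml}P_k\varphi_{\Pml})=A(\varphi_{\Pml}P_k\varphi_{\Pml})\widetilde A+O_{-\infty}(k^{-\infty})$, each by locality of $P_k$, spatial pseudolocality of $\Psi$ (Theorem~\ref{t:pseudoLocGeneral}), or frequency pseudolocality of $\varphi_{\Pml}P_k\varphi_{\Pml}$ (Theorem~\ref{thm:pseudolocFreq}). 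Combining these gives \eqref{e:commutator1} without ever touching $[P_k,\widetilde\phi]$. For \eqref{e:commutator2}, it writes $[P_k,A]=[\varphi_{\Pml}P_k\varphi_{\Pml},\phi](1-\Psi)+\phi[\varphi_{\Pml}P_k\varphi_{\Pml},1-\Psi]+O_{-\infty}(k^{-\infty})$, and the two commutators are bounded by Lemma~\ref{l:spatialCut} ($\phi\in\Lcut$) and by $\varphi_{\Pml}P_k\varphi_{\Pml}\in\Lf$ with Proposition~\ref{prop:proofadNf1}, respectively. Note that the needed cut-off is \emph{outside} $\widetilde\phi$ (i.e.\ $\widetilde\phi\prec\varphi_{\Pml}$), not an intermediate one between $\phi$ and $\widetilde\phi$ as you suggest; an intermediate boundary-compatible cut-off $\widetilde\phi'$ would give you an analogue of \eqref{e:commutator1} with $\widetilde A'$ on the right, and the passage from $\widetilde A'$ back to $\widetilde A$ would again need the same pseudolocality machinery. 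So while your algebraic skeleton is sound and your diagnosis of the obstacle is accurate, the proposal as written does not complete the proof, and the remedy is to conjugate $P_k$ by $\varphi_{\Pml}$ at the outset rather than to refine the nesting between $\phi$ and $\widetilde\phi$.
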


\bpf 
Similar to in the proof of Lemma \ref{l:bigScreen}, let $\varphi_{\Pml} \in C^\infty(\overline{\Omega})$, 
be such that $\widetilde{\phi}\prec\varphi_{\Pml}$ and that
$$\supp (\varphi_{\Pml})\cap \Gamma_{\tr}=\emptyset\quad \tand \quad \supp(1- \varphi_{\Pml}) \cap \partial \Omega_- =\emptyset$$
(compare to \eqref{e:cutoffPML}). We claim that
\begin{align}
[P_k,A] &= [\varphi_{\Pml} P_k \varphi_{\Pml}, A]+ O_{-\infty}(k^{-\infty};\ZcupHspace{} \to \ZcupHspace{}) \label{e:PkAtosandwichA}\\
A &= A\widetilde{A} + O_{-\infty}(k^{-\infty};\ZcupHspace{} \to \ZcupHspace{}),
\label{e:AtoAAtilde}\\
A(\varphi_{\Pml} P_k \varphi_{\Pml}) &= A(\varphi_{\Pml} P_k \varphi_{\Pml})\widetilde{A} + O_{-\infty}(k^{-\infty};\ZcupHspace{} \to \ZcupHspace{}).\label{e:AsandwitchtoAsandwichAtilde}
\end{align}
Once these three properties are shown, we obtain \eqref{e:commutator1} and \eqref{e:commutator2} as follows. First,
\begin{align*}
[P_k,A] &= [\varphi_{\Pml} P_k \varphi_{\Pml},A] + O_{-\infty}(k^{-\infty};\ZcupHspace{} \to \ZcupHspace{})\qquad \text{(by \eqref{e:PkAtosandwichA})}\\
& = [\varphi_{\Pml} P_k \varphi_{\Pml},A]\widetilde{A} + O_{-\infty}(k^{-\infty};\ZcupHspace{} \to \ZcupHspace{}) \qquad 
\text{(by \eqref{e:AtoAAtilde} and \eqref{e:AsandwitchtoAsandwichAtilde})}\\
& = [P_k,A]\widetilde{A} + O_{-\infty}(k^{-\infty};\ZcupHspace{} \to \ZcupHspace{}) \qquad 
\text{(by \eqref{e:PkAtosandwichA})}
\end{align*}
which is \eqref{e:commutator1}. Second,
\begin{align}\nonumber
[P_k,A] &= [\varphi_{\Pml} P_k \varphi_{\Pml},A] + O_{-\infty}(k^{-\infty};\ZcupHspace{} \to \ZcupHspace{}) \qquad \text{(by \eqref{e:PkAtosandwichA}}\\\nonumber
&= [\varphi_{\Pml} P_k \varphi_{\Pml},\phi](1 - \Psi) + \phi [\varphi_{\Pml} P_k \varphi_{\Pml},(1 - \Psi)] + O_{-\infty}(k^{-\infty};\ZcupHspace{} \to \ZcupHspace{}) \\\nonumber
& \hspace{8cm} \textup{(by definition of $A$)} \\
& = \varphi_{\Pml}[P_k,\phi]\varphi_{\Pml} (1 - \Psi) + \phi [\varphi_{\Pml} P_k \varphi_{\Pml},(1 - \Psi)] \qquad \textup{(since $[\varphi_{\Pml},\phi] = 0$)}\label{e:twoComm}
\end{align}
By Lemma \ref{l:spatialCut}, $\phi \in \Lcut$, and by Lemma \ref{l:commuteP}, $\varphi_{\Pml} P_k \varphi_{\Pml} \in \Lf$. Thus, by the Definition of these spaces (Definitions \ref{def:spatialCutoffs} and \ref{def:freqOp}), \eqref{e:twoComm} gives \eqref{e:commutator2}.

We now prove \eqref{e:PkAtosandwichA}-\eqref{e:AsandwitchtoAsandwichAtilde}. First, by locality of $P_k$, 
\begin{equation}
\label{e:PkAsandwichA}
P_k A = P_k \phi (1 - \Psi) = \varphi_{\Pml} P_k \varphi_{\Pml} \phi (1 - \Psi) = \varphi_{\Pml} P_k \varphi_{\Pml} A.
\end{equation}
Moreover, by Theorem \ref{t:pseudoLocGeneral}, the locality of $P_k$, the fact that $\widetilde\phi= \widetilde\phi \varphi_{\Pml}$, and then Theorem \ref{t:pseudoLocGeneral} again,
\begin{align}\nonumber
A P_k = \phi (1-\Psi) P_k &= \phi(1-\Psi)\widetilde\phi P_k + O_{-\infty}(k^{-\infty};\ZcupHspace{}\to \ZcupHspace{})\\\nonumber
&=\phi(1-\Psi)\widetilde{\phi}P_k \varphi_{\Pml}
+O_{-\infty}(k^{-\infty};\ZcupHspace{}\to \ZcupHspace{})\\\nonumber
&=\phi(1-\Psi)\widetilde{\phi}\varphi_{\Pml}P_k \varphi_{\Pml}
+O_{-\infty}(k^{-\infty};\ZcupHspace{}\to \ZcupHspace{})\\\nonumber
&=\phi(1-\Psi)\varphi_{\Pml}P_k \varphi_{\Pml}
+O_{-\infty}(k^{-\infty};\ZcupHspace{}\to \ZcupHspace{}),\\
& = A \varphi_{\Pml} P_k \varphi_{\Pml}+O_{-\infty}(k^{-\infty};\ZcupHspace{}\to \ZcupHspace{})
\label{e:APkAsandwitch}.
\end{align}
Combining \eqref{e:PkAsandwichA} and \eqref{e:APkAsandwitch} gives \eqref{e:PkAtosandwichA}. Second, by the fact that  $(1-\psi)= (1-\psi)(1-\psi_0)$ and by Theorem \ref{t:pseudoLocGeneral},
\begin{align*}
A = \phi(1- \Psi) = \phi(1-\Psi) (1-{\Psi}_0) &= \phi(1-\Psi) \widetilde\phi(1-{\Psi}_0) +O_{-\infty}(k^{-\infty};\ZcupHspace{}\to \ZcupHspace{}),\\
&=A\widetilde{A} + O_{-\infty}(k^{-\infty};\ZcupHspace{}\to \ZcupHspace{}).
\end{align*}
which is \eqref{e:AtoAAtilde}. Finally, using again that $\varphi_{\Pml} P_k \varphi_{\Pml} \in \Lf$, we obtain
$$(1 - {\Psi})(\varphi_{\Pml} P_k \varphi_{\Pml}) = (1 - {\Psi})(\varphi_{\Pml} P_k \varphi_{\Pml}) (1 -{\Psi}_0) + O_{-\infty}(k^{-\infty},\ZcupHspace{} \to \ZcupHspace{})$$
by Theorem \ref{thm:pseudolocFreq}. Left-multiplying by $\phi$ thus gives
\begin{align*}
A(\varphi_{\Pml} P_k \varphi_{\Pml}) &= \phi(1 - {\Psi})(\varphi_{\Pml} P_k \varphi_{\Pml}) (1 -\Psi_0) + O_{-\infty}(k^{-\infty},\ZcupHspace{} \to \ZcupHspace{}) \\
& = \phi(1 - {\Psi})(\varphi_{\Pml} P_k \varphi_{\Pml}) \widetilde{\phi}(1 -\Psi_0) \\
&\qquad + 
\phi (1 - \Psi) (\varphi_{\Pml} P_k \varphi_{\Pml})(1 - \widetilde{\phi})(1 -\Psi_0)
+ O_{-\infty}(k^{-\infty},\ZcupHspace{} \to \ZcupHspace{}) \\
& = A(\varphi_{\Pml} P_k \varphi_{\Pml})\widetilde{A} \\
& \qquad + \phi (1 - \Psi) (\varphi_{\Pml} P_k \varphi_{\Pml})(1 - \widetilde{\phi})(1 -\Psi_0)
+ O_{-\infty}(k^{-\infty},\ZcupHspace{} \to \ZcupHspace{}),
\end{align*}
and \eqref{e:AsandwitchtoAsandwichAtilde} then follows from locality of $\varphi_{\Pml} P_k \varphi_{\Pml}$ and pseudolocality of $\Psi$ (Theorem \ref{t:pseudoLocGeneral}), since
\begin{align*}
&\phi (1 - \Psi) (\varphi_{\Pml} P_k \varphi_{\Pml})(1 - \widetilde{\phi}) \\
&\qquad = [\phi (1 - \Psi) (1 -\check{\phi})] (\varphi_{\Pml} P_k \varphi_{\Pml})(1 - \widetilde{\phi})+\phi (1 - \Psi) [\check{\phi} (\varphi_{\Pml} P_k \varphi_{\Pml})(1 - \widetilde{\phi})] \\
&\qquad = O_{-\infty}(k^{-\infty};\ZcupHspace{}\to\ZcupHspace{}) (\varphi_{\Pml} P_k \varphi_{\Pml})(1 - \widetilde{\phi}) + 0\\
&\qquad = O_{-\infty}(k^{-\infty};\ZcupHspace{} \to \ZcupHspace{})
\end{align*}
where $\check{\phi} \in C^\infty(\overline{\Omega})$ is such that $\phi \prec \check{\phi} \prec \widetilde{\phi}$. 
\epf

\

\begin{proof}[Proof of Lemma \ref{l:highReg}]
Let $k_0 > 0$ and $\mathfrak{c} > 0$, and let $h_0$ be small enough to apply Theorem \ref{t:pseudoLocalPi} and Lemma \ref{l:localSharp}. Let $\phi$, $\widetilde{\phi}$, $\psi$, $\psi_0$ and $N$ be as in the statement, and denote by $C$ any positive constant depending only on the previous quantities. Let $k \geq k_0$, suppose that $h \leq h_0$ let $u-u_h$ be such that \eqref{e:Galerkin_ortho} holds.
Let $\phi_{1},\phi_2,\phi_3\in C^\infty(\overline{\Omega})$ be such that 
\beqs
\phi \prec_{\mathfrak{c}/4} \phi_{1} \prec_{\mathfrak{c}/4} \phi_2 \prec_{\mathfrak{c}/4} \phi_3 \prec_{\mathfrak{c}/4} \widetilde{\phi}
\eeqs
with, additionally, $\partial_{\nu} (\phi_1)|_{\partial\Omega_-}=0$; such a $\phi_1$ exists by Lemma \ref{l:goodCut}. 
Since
$$
\|\phi(1-\Psi)u\|_{H_k^{1}}\leq \|\phi_1(1-\Psi)u\|_{H_k^{1}},
$$
it is enough to estimate the latter.
Let $A= \check{\phi} (1 - \Psi)$. 
By the G\aa rding inequality and Galerkin orthogonality \eqref{e:Galerkin_ortho},
\begin{equation}
\label{e:luigi2}
 \begin{aligned}
\|A(u-u_h)\|_{H_k^1}^2 &\leq \Re \big\langle  P_k A(u-u_h), A(u-u_h)\big\rangle + C \| A(u-u_h)\|^2_{L^2}\\
&\leq \big| \big\langle P_k(u-u_h),(\Id - \Pi_k^\sharp) A^*A(u-u_h)\big\rangle\big| \\
& \hspace{3cm} +\big|\big\langle  [P_k,A](u-u_h),A(u-u_h)\big\rangle\big|+C\|A(u-u_h)\|^2_{L^2}.
\end{aligned}
\end{equation}
With $\widetilde{A}$ as in the statement, Lemma \ref{l:commutator} gives 
\begin{align}\nonumber
&\big|\big\langle  [P_k,A](u-u_h),A(u-u_h)\big\rangle\big|
\\\nonumber
&\qquad  \leq \big|\big\langle  [P_k,A]\widetilde{A}(u-u_h),A(u-u_h)\big\rangle\big| + Ck^{-N} \|u-u_h\|_{H_k^{-N}} \|A(u-u_h)\|_{H^1_k}\\
& \qquad \leq C\left(k^{-1}\|\widetilde{A}(u-u_h)\|_{L^2} + Ck^{-N} \|u-u_h\|_{H_k^{-N}}\right) \|A(u-u_h)\|_{H^1_k},
\label{e:bound_commutator}
\end{align}
and
\begin{align}
\nonumber
\|A(u-u_h)\|_{L^2}^2 & \leq \|A(u-u_h)\|_{L^2} \|A(u-u_h)\|_{H^1_k}\\ 
&\leq C \left(\|\widetilde{A}(u-u_h)\|_{L^2} + Ck^{-N} \|u-u_h\|_{H_k^{-N}}\right) \|A(u-u_h)\|_{H^1_k}.
\label{e:bound_L2term}
\end{align}
Combining \eqref{e:luigi2}, \eqref{e:bound_commutator} and \eqref{e:bound_L2term}, we deduce that 
$$\|A(u-u_h)\|^{2}_{H^1_k} \leq  \big| \big\langle P_k(u-u_h),(\Id - \Pi_k^\sharp) A^*A(u-u_h)\big\rangle\big| +C \|\widetilde{A}(u-u_h)\|^2_{L^2} + Ck^{-2N} \|u-u_h\|^2_{H_k^{-N}},$$
and to conclude the proof, it remains to show that 
\begin{align}\nonumber
&\big| \big\langle P_k(u-u_h),(\Id - \Pi_k^\sharp) A^*A(u-u_h)\big\rangle\big|\\
& \hspace{3cm} \leq (\|\widetilde{\phi}(u-w_h)\|_{H^1_k} + (h_{\tilde{\phi}}k)^p \|\widetilde{\phi}(u - u_h)\|_{H_k^{-N}} + C R)\|A(u-u_h)\|_{H^1_k}
\label{e:main_term_Garding}
\end{align}
where $R := k^{-N} (\|u-u_h\|_{H_k^{-N}} + \|u-w_h\|_{H^1_k}).$

To establish \eqref{e:main_term_Garding}, we use the identity
\begin{align*}
&\big\langle P_k(u-u_h),(\Id - \Pi_k^\sharp) A^*A(u-u_h)\big\rangle \\
&\qquad =   \big\langle u-w_h,(P_k^\sharp)^*(\Id - \Pi_k^\sharp) A^*A(u-u_h)\big\rangle - \langle  u-u_h, S_k(\Id - \Pi_k^\sharp) A^*A(u-u_h)\big\rangle
\end{align*}
(shown in the same manner as \eqref{eq:dismantled} in the proof of the localised duality argument, Lemma \ref{l:localDualityArg}). Next, by pseudo-locality of $1 - \Psi$, $A^* = \phi_2 A^* + O_{-\infty}(k^{-\infty};\ZcupHspace{}\to\ZcupHspace{})$, so that
\begin{align*}
&\abs{\big\langle P_k(u-u_h),(\Id - \Pi_k^\sharp) A^*A(u-u_h)\big\rangle} \\
&\qquad \leq \abs{ \big\langle u-w_h,(P_k^\sharp)^*(\Id - \Pi_k^\sharp) \phi_2 A^*A(u-u_h)\big\rangle} +\abs{ \langle  (u-u_h), S_k(\Id - \Pi_k^\sharp) \phi_2A^*A(u-u_h)\big\rangle} \\
& \qquad \qquad + CR\|A(u-u_h)\|_{H^1_k}.
\end{align*}
Now, adapting the arguments in the proof of Lemma \ref{l:localDualityArg} (from \eqref{eq:dismantled} to \eqref{ready_to_go}), we obtain
\begin{align}
\nonumber
&\abs{\big\langle P_k(u-u_h),(\Id - \Pi_k^\sharp) A^*A(u-u_h)\big\rangle}\\
& \quad \leq \abs{\big\langle P_k^\sharp \widetilde{\phi}(u-w_h),\phi_3(\Id - \Pi_k^\sharp) w\big\rangle} + \abs{\big\langle S_k \widetilde{\phi}(u-u_h),\widetilde{S}_k \phi_3(\Id - \Pi_k^\sharp) w\big\rangle} + CR\|A(u-u_h)\|_{H^1_k},
\label{e:almost_there}
\end{align}
where $w = \phi_2 A^* A (u-u_h)$ and $\widetilde{S}_k := \widetilde{\psi}(\cP_k)$ where $\widetilde{\psi} \in C^\infty_c(\R)$ is such that $\psi \prec \widetilde{\psi}$. Namely, we follow exactly the same steps as in \eqref{eq:sicily1} and \eqref{eq:sicily2} but with $\ell = -1$, and in \eqref{eq:aloeVera1} and \eqref{eq:aloeVera2}, choosing $v = A^* A (u-u_h)$, we use the estimate $\|v\|_{H^1_k} \leq \|A^*\|_{H^1_k \to H^1_k} \|A(u-u_h)\|_{H^1}$, and the fact that $\|A^*\|_{H^1_k\to H^1_k} \leq C$. Finally, by the quasi-optimality of $\Pi_k^\sharp$ and the previous bound on $\|A^*\|_{H^1_k \to H^1_k}$, 
$$\|(\Id - \Pi_k^\sharp) w\|_{H^1_k} \leq C \|A(u-u_h)\|_{H^1_k},$$
and in turn, by Lemma \ref{l:localSharp}, 
$$\|\widetilde{S}_k \phi_3 (\Id - \Pi_k^\sharp) w\|_{L^2} \leq C\Big((h_{\tilde{\phi}}k)^p + k^{-N}(hk)^p\Big)\|A(u-u_h)\|_{H^1_k}.$$
Using these bounds in \eqref{e:almost_there}, 
\begin{align*}
&\abs{\big\langle P_k(u-u_h),(\Id - \Pi_k^\sharp) A^*A(u-u_h)\big\rangle}\\
&\qquad \leq C\left(\|\widetilde{\phi}(u-w_h)\|_{H^1_k} + \big((\widetilde{h}_{\phi}k)^p + k^{-N}(hk)^p\big)\|S_k \widetilde{\phi} (u-u_h)\|_{L^2} + CR\right)\|A(u-u_h)\|_{H^1_k}
\end{align*}
and \eqref{e:main_term_Garding} follows by using the mapping properties of $S_k$. 
\end{proof}

\

The proof of the third block row of \eqref{e:mainResult} in the $H^1_k$ norm (i.e., $m = 0$), is similar to that of the second block row, using Lemma \ref{l:PMLHighest} below instead of Lemma \ref{l:highReg}.

\begin{lemma}\label{l:PMLHighest}
For any $k_0 > 0$ and $\mathfrak{c} > 0$, there exists $h_0 > 0$ such that the following holds. Let $\phi,\widetilde{\phi} \in C^\infty(\overline{\Omega})$ be such that $\phi \prec_{\mathfrak{c}} \widetilde{\phi}$.
Then, for all $N > 0$, there exists $C > 0$ such that for all $k \geq k_0$, $h \leq h_0$, $u-u_h$ satisfying \eqref{e:Galerkin_ortho} and for all $w_h \in \blue{V_{\mathcal{T}}}$,  
\begin{align*}
\|\phi(u-u_h)\|_{H^1_k} &\leq C \left( \|\widetilde{\phi} (u-w_h)\|_{H^1_k} + \|\widetilde{\phi}(u-u_h)\|_{L^2}\right)\\
& \hspace{3.5cm} + Ck^{-N}\big(\|u -w_h\|_{H_k^1} + \|u - u_h\|_{H_k^{-N}}\big).
\end{align*}
where $h_{\tilde{\phi}} :=\max \Big\{h_\blue{T} \, :\, \blue{T} \in \cT_k \,\,\textup{ s.t. }\, \blue{T} \cap \supp (\widetilde{\phi}) \neq \emptyset\Big\}$.
\end{lemma}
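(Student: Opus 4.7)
\textbf{Proof plan for Lemma \ref{l:PMLHighest}.} The proof will mirror that of Lemma \ref{l:highReg}, but with the crucial simplification that we dispense entirely with the frequency cutoff $(1-\Psi)$: the operator playing the role of ``$A$'' is just the spatial multiplier $\phi_1$, so the delicate frequency-pseudolocality argument (Lemma \ref{l:commutator}) is not needed, and correspondingly the support condition $\supp\widetilde\phi\cap\Gamma_{\tr}=\emptyset$ imposed in Lemma \ref{l:highReg} can be dropped.

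First I would use Lemma \ref{l:goodCut} to insert nested intermediate cutoffs
\[
\phi\,\prec_{\mathfrak{c}/4}\,\phi_1\,\prec_{\mathfrak{c}/4}\,\phi_2\,\prec_{\mathfrak{c}/4}\,\phi_3\,\prec_{\mathfrak{c}/4}\,\widetilde\phi,
\]
with $\phi_1$ chosen so that $\partial_\nu\phi_1|_{\partial\Omega_-}=0$; this ensures (by Lemma \ref{l:spatialCut}) that $\phi_1\in \Lcut$ and in particular $\ad_{\phi_1}P_k=O_1(k^{-1};\ZcapHspace{}\to\ZcupHspace{})$. Since $\phi\prec\phi_1$, bounding $\|\phi_1(u-u_h)\|_{H^1_k}$ suffices. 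Note that $\phi_1^2(u-u_h)\in\cZ_k$ automatically because $u-u_h\in\cZ_k$, so $\Pi_k^\sharp$ can be applied to it.

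Next, by the G\aa rding inequality \eqref{e:Garding1} and Galerkin orthogonality \eqref{e:Galerkin_ortho},
\begin{align*}
\|\phi_1(u-u_h)\|_{H^1_k}^2 &\leq C\Re\bigl\langle P_k\phi_1(u-u_h),\phi_1(u-u_h)\bigr\rangle + C\|\phi_1(u-u_h)\|_{L^2}^2 \\
&\leq C\bigl|\langle P_k(u-u_h),(\Id-\Pi_k^\sharp)\phi_1^2(u-u_h)\rangle\bigr| \\
&\qquad + C\bigl|\langle[P_k,\phi_1](u-u_h),\phi_1(u-u_h)\rangle\bigr| + C\|\phi_1(u-u_h)\|_{L^2}^2.
\end{align*}
For the commutator term, by locality of $P_k$ and the fact that $[P_k,\phi_1]$ is supported in $\supp\nabla\phi_1\subset\{\phi_3\equiv 1\}\cap\{\widetilde\phi\equiv 1\}$, one can insert $\phi_3$ to the right; then the bound $\|[P_k,\phi_1]\|_{L^2\to(\cZ_k)^*}\leq Ck^{-1}$ from Lemma \ref{l:spatialCut} (applied in the dual-space direction) gives
\[
\bigl|\langle[P_k,\phi_1](u-u_h),\phi_1(u-u_h)\rangle\bigr|\leq Ck^{-1}\|\widetilde\phi(u-u_h)\|_{L^2}\|\phi_1(u-u_h)\|_{H^1_k},
\]
and the $L^2$ term is absorbed similarly as $\|\phi_1(u-u_h)\|_{L^2}^2\leq\|\widetilde\phi(u-u_h)\|_{L^2}\|\phi_1(u-u_h)\|_{H^1_k}$.

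For the main (elliptic-projection) term, I would rewrite
\begin{align*}
\langle P_k(u-u_h),(\Id-\Pi_k^\sharp)\phi_1^2(u-u_h)\rangle
&=\langle u-w_h,(P_k^\sharp)^*(\Id-\Pi_k^\sharp)\phi_1^2(u-u_h)\rangle \\
&\qquad -\langle u-u_h,S_k(\Id-\Pi_k^\sharp)\phi_1^2(u-u_h)\rangle
\end{align*}
(as in \eqref{eq:dismantled}), then use spatial pseudolocality of $P_k^\sharp$ and $S_k$ (Theorem \ref{t:pseudoLocGeneral}) together with Theorem \ref{t:pseudoLocalPi} to replace the test function by $\phi_3(\Id-\Pi_k^\sharp)\phi_1^2(u-u_h)$ up to $O(k^{-\infty})$ remainders, and then ``push the cutoff $\widetilde\phi$ onto $u-w_h$ and $u-u_h$'' in the two inner products. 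C\'ea's lemma for $P_k^\sharp$ gives $\|(\Id-\Pi_k^\sharp)\phi_1^2(u-u_h)\|_{H^1_k}\leq C\|\phi_1(u-u_h)\|_{H^1_k}$, while Lemma \ref{l:localSharp} with $\ell=0$ gives $\|\phi_3(\Id-\Pi_k^\sharp)\phi_1^2(u-u_h)\|_{L^2}\leq C\|\phi_1(u-u_h)\|_{H^1_k}$; combining these with the mapping properties of $S_k$ from Proposition \ref{prop:f(Pk)} yields
\[
\bigl|\langle P_k(u-u_h),(\Id-\Pi_k^\sharp)\phi_1^2(u-u_h)\rangle\bigr|\leq C\bigl(\|\widetilde\phi(u-w_h)\|_{H^1_k}+\|\widetilde\phi(u-u_h)\|_{L^2}+R\bigr)\|\phi_1(u-u_h)\|_{H^1_k},
\]
with $R=k^{-N}(\|u-w_h\|_{H^1_k}+\|u-u_h\|_{H_k^{-N}})$. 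Dividing through by $\|\phi_1(u-u_h)\|_{H^1_k}$ (or using Young's inequality to absorb it) gives the claim. The only real subtlety is that, unlike in Lemma \ref{l:highReg}, here $\widetilde\phi$ is allowed to meet $\Gamma_{\tr}$; but since no frequency-pseudolocality argument (and hence no boundary-compatible operator) is required, this causes no difficulty, and the spatial cutoffs $\phi_1,\phi_3$ produced by Lemma \ref{l:goodCut} are the only place where care with the boundary is needed.
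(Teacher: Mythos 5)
Your proposal is correct and follows essentially the same route as the paper: the paper's own proof literally states that the argument is identical to that of Lemma \ref{l:highReg}, with the frequency-pseudolocality input of Lemma \ref{l:commutator} replaced by the two simpler facts that $[P_k,\phi_1]=[P_k,\phi_1]\phi_2$ (locality of $P_k$) and $\|[P_k,\phi_1]\|_{L^2\to\cZ_k^*}\leq Ck^{-1}$ (Lemma \ref{l:spatialCut}), after normalising $\phi_1$ so that $\partial_\nu\phi_1|_{\partial\Omega_-}=0$. Your extra intermediate cutoff $\phi_3$ and the observation that the condition $\supp\widetilde\phi\cap\Gamma_{\tr}=\emptyset$ can be dropped (since no boundary-compatible/frequency argument is needed and $\Gamma_{\tr}$ always carries Dirichlet conditions) are both consistent with what the paper does.
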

\begin{proof}
Let $\phi_{1},\phi_2 \in C^\infty(\overline{\Omega})$ be such that 
$$\phi \prec \phi_1 \prec \phi_2 \prec \widetilde{\phi},$$
with in addition, $\partial_\nu (\phi_1)|_{\partial\Omega_-}=0$. Then, observe that 
$$
\|\phi (u-u_h)\|_{H_k^1}\leq C\|\phi_1(u-u_h)\|_{H_k^1}. 
$$
The proof is now identical to that of Lemma~\ref{l:highReg}, with the following replacement for Lemma \ref{l:commutator}: (i) $[P_k,\phi_1]=[P_k,\phi_1]\phi_2$, which follows from locality of $P_k$ and (ii), $\|[P_k,\phi_1]\|_{L^2 \to \cZ_k^*} \leq Ck^{-1}$ is continuous, which follows from Lemma~\ref{l:spatialCut} and Definition \ref{def:spatialCutoffs}.
\end{proof}


\section{Proof of Theorem \ref{t:simple}}
\label{sec:proof_simple}

Under the assumptions on $\cT$ in Theorem \ref{t:simple}, the family $V_{\cT}^p)$ is a well-behaved finite-element \blue{space} of order $p$ at frequency $k$ in the sense of Definition \ref{d:wellbehaved} \blue{by Theorem \ref{t:itIsWellBehaved}}; we can therefore apply Theorem \ref{t:theRealDeal} (in particular, \eqref{e:onlyH1}).
By \eqref{e:transfer}, $\transfer \leq \sum_{n = 0}^{\infty} (\specialC W)^n$.
To prove Theorem \ref{t:simple}, 
it is therefore sufficient to show that, provided the mesh conditions \eqref{e:meshConditions} holds, the loop condition \eqref{e:meshConditionsGeneral} holds and
\begin{equation}
\label{e:musetti}
\begin{pmatrix}
\Id & (\Hdiag_{\Int,\Int} k)^N & 0 \\
0 & 0 & (h_\pml k)^N 
\end{pmatrix} 
\sum_{n = 0}^{\infty} (\specialC W)^n
B \leq C (\mathscr{T}\mathscr{B} + \mathscr{R})
\end{equation}
where 
$B, W$ are defined by \eqref{e:matrixB}, \eqref{e:matrixT}, while $\Hdiag_{\Int,\Int}$, $\mathscr{T}$ and $\mathscr{B}$ and $\mathscr{R}$ are defined by \eqref{e:matrices}. In fact, by Theorem \ref{t:onlyTheSimpleOnes}, the loop condition \eqref{e:meshConditionsGeneral} holds if and only if the sum $\sum_{n = 0}^{\infty} W^n$ converges, and from the way the simple-path matrix $\transfer$ was used in the proof of Theorem \ref{t:theRealDeal} to bound $(I - \specialC W)^{-1}$, it suffices to show \eqref{e:musetti} with $\transfer$ replaced by $\sum_{n = 0}^{\infty}W^n$. In addition, since, under the mesh conditions \eqref{e:meshConditions}, $(\Hdiag_{\Int,\Int} k)^{2pN'} \leq k^{-N'}\Id$ and $(h_\Pml k) \leq c$, it suffices to show that
\begin{equation}
\label{e:matrix_estimate}
\begin{pmatrix}
\Id & 0 & 0 \\
0 & 0 & \Id 
\end{pmatrix} \sum_{n = 0}^{\infty} (\specialC W)^n B \leq C\mathscr{T}\mathscr{B}.
\end{equation}

We obtain \eqref{e:matrix_estimate} by ``forgetting'' about the improvements on the high-frequency components of the Galerkin error. That is, we consider the directed graph $\mathcal{G}$ in Figure \ref{f:graph2} -- which describes the error propagation without any frequency splitting (where we have used the bounds on the solution operator from \S\ref{sec:boundsCsol}) -- and let 
$$\mathscr{W} := \begin{pmatrix}
\mathcal{C}_{\Int,\Int} (\Hdiag_{\Int,\Int} k)^p & h_{\min}(N) k^{N}\\
h_{\min}(N)^T k^{N} & (h_{\pml} k)^N
\end{pmatrix}$$
be the associated weighted adjacency matrix. Here, 
\beqs
h_{\min}(N) = \begin{pmatrix}
0 & h_{\visible,\pml}^N & h_{\invisible,\pml}^N & h_{\pml}^N
\end{pmatrix}^T.
\eeqs
 The point is that $\mathscr{T}$ is, up to a constant, the simple-path matrix of $\mathscr{W}$. More precisely, the following result holds.

\begin{figure}[htbp]
\begin{center}
\begin{tikzpicture}[->,>=stealth,shorten >=1pt,auto,node distance=7cm,semithick]

\begin{scope}[xscale=2,yscale=1.5]
  \node[draw, circle] (A) at (0, 0) {${\Omega_\cavity}$};
  \node[draw, circle] (B) at (2, 0) {${\Omega_\visible}$};
  \node[draw, circle] (C) at (4, 0) {${\Omega_\invisible}$};
  \node[draw, circle] (D) at (6, 0) {${\Omega_{\pml}}$};

  \draw[<-] (A) to[bend left] node[midway, above] {$({h_\cavity}k )^{2p}\sqrt{k {\rho}}$} (B);
  \draw[<-] (B) to[bend left] node[midway, below] {$({h_\visible} k )^{2p}\sqrt{k {\rho}}$} (A);
  \draw[<-] (B) to[bend left] node[midway, above] {$({h_\visible} k )^{2p}k $} (C);
  \draw[<-] (C) to[bend left] node[midway, below] {$({h_\invisible}k )^{2p}k $} (B);
  \draw[<-] (D) to[in=0,out=-135] (4,-1.3)node[ below] {$(h_{\visible,\pml}k )^{N}$}to[in = -45,out=180] (B);
  \draw[<-] (B) to[in=180, out=45] (4,1.3)node[ above] {$(h_{\visible,\pml}k )^{N}$}to[in=135,out=0] (D);
    \draw[<-] (C) to[bend left] node[midway, above] {\scriptsize{$(h_{\invisible,\pml}k )^{N}$}} (D);
  \draw[<-] (D) to[bend left] node[midway, below] {\scriptsize{$(h_{\invisible,\pml}k )^{N}$}} (C);
%
  \draw[<-] (A) to[loop above] node[midway, above] {$({h_\cavity}k )^{2p}{\rho}$} (A);
  \draw[<-] (B) to[loop above] node[midway, above] {$({h_\visible} k )^{2p}k $} (B);
  \draw[<-] (C) to[loop above] node[midway, above] {$({h_{I}}k )^{2p}k $} (C);
  \draw[<-] (D) to[loop above] node[midway, above] {$({h_\pml} k )^{N}$} (D);
  \end{scope}
\end{tikzpicture}
\end{center}
\caption{The graph showing propagation of errors for the decomposition into $\Omega_\cavity$, $\Omega_\visible$, $\Omega_\invisible$, \blue{and $\Omega_\pml$.} Recall that $h_{\visible,\pml} = \min (h_\visible,h_\pml)$ and $h_{\invisible,\pml} = \min (h_\invisible,h_\pml)$.}
\label{f:graph2} 
\end{figure}

\begin{lemma}
\label{l:what_is_t}
For all $\specialC > 0$, there exists $c,C > 0$ such that if \eqref{e:meshConditions} holds with $c$, then 
$$\sum_{n = 0}^{\infty} \specialC^n \mathscr{W}^n \leq C\mathscr{T}.$$
\end{lemma}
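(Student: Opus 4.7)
The plan is to reduce the claim to an application of Theorem~\ref{t:onlyTheSimpleOnes} followed by a finite enumeration of non-intersecting paths in the small graph of Figure~\ref{f:graph2}. The strategy has two distinct stages: first, verify the simple-loop hypothesis of Theorem~\ref{t:onlyTheSimpleOnes} for $\specialC\mathscr{W}$; second, bound the resulting simple-path matrix $\transfer(\specialC\mathscr{W})$ by a constant times $\mathscr{T}$.

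For the first stage, I would enumerate the simple loops $L\in\mathbb{SL}$ in the $4$-node graph. Self-loops at $\cavity,\visible,\invisible,\pml$ carry weights of the form $\specialC\rho(h_\cavity k)^{2p}$, $\specialC k(h_\visible k)^{2p}$, $\specialC k(h_\invisible k)^{2p}$, $\specialC(h_\pml k)^N$, each controlled directly by $\specialC c$ (or by $\specialC c^{N/(2p)}$) under \eqref{e:meshConditions}. Two-loops between adjacent nodes split into products of factors already appearing in \eqref{e:meshConditions}; for instance $\cavity\to\visible\to\cavity$ has weight $\specialC^2 k\rho(h_\cavity k)^{2p}(h_\visible k)^{2p}\le \specialC^2 c^2$, and analogous estimates apply for $\visible\!\leftrightarrow\!\invisible$, for loops involving $\pml$ (using $(h_{\cdot,\pml}k)^N$ is majorised by $(h_\pml k)^N$), and so on. Simple loops of length three or four are bounded similarly by $\specialC^3 c^3$ and $\specialC^4 c^4$. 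Since the number of simple loops in a $4$-node graph is bounded by a universal constant, choosing $c$ small depending on $\specialC$ makes the total simple-loop weight strictly less than one, and Theorem~\ref{t:onlyTheSimpleOnes} yields
\[
\sum_{n=0}^\infty (\specialC\mathscr{W})^n\leq \frac{1}{1-c'}\transfer(\specialC\mathscr{W}).
\]

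For the second stage, I would carry out a case-by-case enumeration of the non-intersecting paths $\mathbb{V}_{ij}$ (again a finite list). Since the weight of any path of length $L$ contains the factor $\specialC^L$ with $L\le 4$, we have $\transfer(\specialC\mathscr{W})\le \specialC^4\transfer(\mathscr{W})$, reducing matters to showing $\transfer(\mathscr{W})\le C\mathscr{T}$ componentwise. The ``leading'' non-intersecting paths precisely match the corresponding entries of $\mathscr{T}$: the single edge $\cavity\to\visible$ contributes $\sqrt{k\rho}(h_\visible k)^{2p}=\mathscr{T}_{\cavity,\visible}$, the two-step path $\cavity\to\visible\to\invisible$ contributes $\sqrt{k\rho}(h_\visible k)^{2p}\cdot k(h_\invisible k)^{2p}=\mathscr{T}_{\cavity,\invisible}$, and so on for the remaining non-zero entries of $\mathscr{T}$. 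All remaining non-intersecting paths carry at least one factor $(h_{\cdot,\pml}k)^N$, which under \eqref{e:meshConditions} is super-algebraically small and can therefore be absorbed into $C\mathscr{T}$ (using that $\mathscr{T}$ has entries bounded below by $1$ on its diagonal and that $N$ is arbitrary).

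The main obstacle I expect is the bookkeeping around the PML node. The $4$th row and column of $\mathscr{T}$ vanish off the diagonal, whereas $\transfer(\mathscr{W})$ a priori produces non-zero entries there via paths like $\cavity\to\visible\to\pml$ weighted by $\sqrt{k\rho}(h_\visible k)^{2p}(h_{\visible,\pml}k)^N$. The key point will be that the factor $(h_{\visible,\pml}k)^N$ beats every fixed negative power of $k$, which combined with the polynomial growth of $\rho$ guaranteed by Assumption~\ref{a:polyBoundIntro} makes these contributions $O(k^{-N'})$ for any $N'$; by choosing $N$ sufficiently large at the outset of the proof (consistently with its role in the definition \eqref{e:matrixT} of $W$), these contributions are absorbed into the constant $C$. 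The rest of the argument is a routine verification that each of the finitely many non-intersecting paths between any two nodes is bounded by a constant times the indicated entry of $\mathscr{T}$.
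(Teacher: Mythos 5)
Your proposal follows the same two-stage plan as the paper's own proof: verify the simple-loop hypothesis of Theorem~\ref{t:onlyTheSimpleOnes} for $\specialC\mathscr{W}$, and then bound the resulting simple-path matrix $\transfer(\specialC\mathscr{W})$ by a constant multiple of $\mathscr{T}$. The first stage is fine, though the cleanest way to see it is the one the paper isolates: under \eqref{e:meshConditions} every entry of $\mathscr{W}$ is $\leq c$ (or a power of $c$) except possibly $\mathscr{W}_{1,2}=\sqrt{k\rho}\,(h_\visible k)^{2p}$, and any simple loop using the edge $\cavity\to\visible$ must also use $\visible\to\cavity$ (the only other edge into $\cavity$ with nonzero weight), so that loop carries the factor $\mathscr{W}_{1,2}\mathscr{W}_{2,1}=(h_\cavity k)^{2p}\rho\cdot(h_\visible k)^{2p}k\leq c^2$. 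Your enumeration reaches the same conclusion but spends more work than necessary. One small arithmetic slip: non-intersecting paths in a four-node graph have length at most $3$, so the correct constant is $\max(1,\specialC)^3$ rather than $\specialC^4$ (the latter is wrong for $\specialC<1$).

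The genuine gap is in your treatment of the PML column. You correctly observe that $\transfer(\mathscr{W})_{\cavity,\pml}>0$ — already the length-two path $\cavity\to\visible\to\pml$ contributes $\sqrt{k\rho}\,(h_\visible k)^{2p}(h_{\visible,\pml}k)^N>0$, and the same happens for the $(\visible,\pml)$, $(\invisible,\pml)$ and $(\pml,\cdot)$ entries — while the corresponding entries of $\mathscr{T}$ are exactly $0$. Your proposed remedy, that these weights are $O(k^{-N'})$ for every $N'$ and so can be ``absorbed into the constant $C$'', cannot close the gap: no constant $C$ makes a strictly positive quantity $\leq C\cdot 0$, however fast it decays. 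What is actually needed is to weaken the target to something of the form $C\big(\mathscr{T}+k^{-N}\mathscr{F}\big)$ with $\mathscr{F}$ the all-ones matrix, so that the superalgebraically small path weights land in the $\mathscr{F}$ remainder — which is exactly the kind of remainder that appears in the final statement of Theorem~\ref{t:simple}. (You have spotted something real here: the paper's own proof of this lemma — ``as can be checked by direct calculation'' — silently skips the same point, and read strictly componentwise the stated inequality $\sum_n\specialC^n\mathscr{W}^n\leq C\mathscr{T}$ fails at these PML entries. The repair is to change what you compare against, not to inflate $C$.)
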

\begin{proof}
Observe that under \eqref{e:meshConditions}, only one edge in this graph can possibly have a weight $>c$, namely, 
	\[\mathscr{W}_{1,2} = \sqrt{k \rho(k)} (h_\visible k)^{2p}.\] 
	Moreover, any simple loop in this graph containing the edge $\mathscr{W}_{1,2}$ must also contain the edge
	\[\mathscr{W}_{2,1}  = (h_\cavity k)^{2p} \sqrt{k \rho(k)},\]
	and the product of these two weights is 
	\[\mathscr{W}_{1,2} \mathscr{W}_{2,1} = \underbrace{(h_\cavity k)^{2p}  \rho(k)}_{<c} \underbrace{(h_\visible k)^{2p}  k}_{<c} \leq c^2.\]
	Therefore, provided that $c$ is sufficiently small, the sum of the weights of all simple loops in $\mathcal{G}$ can be made $<1$. 
	The conclusion follows by remarking that, under the mesh conditions \eqref{e:meshConditions}, 
	\[\sum_{p \in \mathbb{V}_{ij}} \specialC^{|p|}\mathscr{W}_p \leq C\transferIntro_{ij},\]
	as can be checked by direct calculation.
\end{proof}
\

By Lemma \ref{l:what_is_t}, it suffices to show that for all $\specialC > 0$, there exists $\specialC' > 0$ and $C > 0$ such that 
\begin{equation}
\label{e:matrix_estimate2}
\begin{pmatrix}
\Id & 0 & 0 \\
0 & 0 & \Id 
\end{pmatrix}  (\specialC W)^n B \leq C \specialC'^n\mathscr{W}^n\mathscr{B}.
\end{equation}
To prove this, we first observe that, with $\ell = p$ or $2p$, 
$$\Hmin_{\Int,\Int}(\ell) k^{\ell} \leq \mathcal{C}_{\Int,\Int} (\Hdiag_{\Int,\Int} k)^{\ell}.$$
Indeed, $(\mathcal{C}_{\Int,\Int})_{ij} \geq 1$ for all $i,j$ such that $(\mathcal{H}^{\min}_\Int(\ell))_{ij} \neq 0$ (since when the domains overlap, the norm of the solution operator is $\geq 1$). Since $\Hdiag$ is diagonal, it follows that for such pairs $i,j$ and all $\ell \geq 0$, 
\[(\cH_{\Int,\Int}^{\min}(\ell))_{ij} = \min(h_i,h_j)^\ell \leq (\mathcal{C}_{\Int,\Int})_{ij} h_j^{\ell} = (\mathcal{C}_{\Int,\Int} (\Hdiag)^{\ell})_{ij}.\]

Therefore, the matrix $\oldT$ associated to the full graph (Figure \ref{f:twitch}) and the matrix $B$ can be estimated by blocks as
$$\oldT \leq C\begin{pmatrix}
\mathcal{C}_{\Int,\Int} (\Hdiag_{\Int,\Int} k)^{2p}& \mathcal{C}_{\Int,\Int}(\Hdiag_{\Int,\Int} k)^{2p} &h_{\min}(N) k^{N}\\
\mathcal{C}_{\Int,\Int} (\Hdiag_{\Int,\Int} k)^{2p}& \mathcal{C}_{\Int,\Int}(\Hdiag_{\Int,\Int} k)^{2p} &h_{\min}(N) k^{N}\\
h_{\min}^T(N) k^{N} & h_{\min}^T(N) k^{N}& (h_\pml k)^N\\
\end{pmatrix} = C\begin{pmatrix}
\textup{diag}(A(2p)) J & Kb(N)\\[0.2em]
b(N)^T K^T & (h_\pml k)^N
\end{pmatrix}$$ 
$$B \leq C\begin{pmatrix}
\mathcal{C}_{\Int,\Int} (\Hdiag_{\Int,\Int} k)^p& 0\\
\mathcal{C}_{\Int,\Int} (\Hdiag_{\Int,\Int} k)^{p}& 0\\
0
& (h_\pml k)^p\\
\end{pmatrix} = C\begin{pmatrix}
\textup{diag}({A(p)})K & 0
\\
0
 & (h_\pml k)^p
\end{pmatrix}$$ 
where
$$A(\ell) = \mathcal{C}_{\Int,\Int} (\Hdiag_{\Int,\Int} k)^\ell\,, \quad \textup{diag}(A) := \begin{pmatrix}
A & 0 \\
0 & A
\end{pmatrix}\,, \quad b(\ell) = h_{\min}(\ell) k^{\ell}\,, \quad J = \begin{pmatrix}
I_3 & I_3\\
I_3 & I_3
\end{pmatrix} \quad K = \begin{pmatrix}
I_3 \\
I_3
\end{pmatrix}$$ 
with $I_3$ denoting the $3\times3$ identity matrix. With these definitions, observe that
$$\mathscr{W} = \begin{pmatrix}
A(2p) &b(N)\\
b(N)^T & (h_\pml k)^N
\end{pmatrix}\,, \quad \mathscr{B} = \begin{pmatrix}
A(p) & 0
\\
0
& (h_\pml k)^p
\end{pmatrix};$$
thus the estimate \eqref{e:matrix_estimate2} immediately follows from the next lemma.
\begin{lemma}
Let $A,B$ be $M \times M$ matrices with positive coefficients, $b,b' \in \blue{(0,\infty)}^M$, $c \in \blue{(0,\infty)}$. Then, for all $n$,
$$\begin{pmatrix}
\Id_M & 0 & 0\\
0 & 0 & \Id_M
\end{pmatrix}\begin{pmatrix}
\end{pmatrix}\begin{pmatrix}
\textup{diag}(A) J & Kb\\
b^TK^T & c
\end{pmatrix}^n \begin{pmatrix}
\textup{diag}(A')K & 0\\
0 & c'
\end{pmatrix}\leq 2^{n+1}\begin{pmatrix}
A & b\\
b^T & c
\end{pmatrix}^n \begin{pmatrix}
A' & 0\\
0 & c'
\end{pmatrix}.$$
\end{lemma}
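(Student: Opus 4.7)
The plan is to exploit the identity $JK = 2K$ to collapse the block-structure and reduce $X^n Y$ (where $X, Y$ denote the middle and rightmost matrices on the left-hand side) to the iterated action of a smaller $(M+1)\times(M+1)$ matrix $\tilde Z$, whose entries can then be compared directly with $Z := \begin{pmatrix} A & b \\ b^T & c \end{pmatrix}$. The key algebraic facts, all immediate from $K = (\Id_M; \Id_M)^T$ and $J = K K^T$, are
\[
\mathrm{diag}(A)\, K = K A, \qquad K^T K = 2\,\Id_M, \qquad \mathrm{diag}(A)\, J K = 2 K A, \qquad b^T K^T K = 2 b^T.
\]

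The first step is to show by induction on $n$ that $X^n Y$ has the structured form
\[
X^n Y = \begin{pmatrix} K R_n \\ S_n \end{pmatrix}, \qquad \text{where} \qquad \begin{pmatrix} R_n \\ S_n \end{pmatrix} = \tilde Z^n Y', \quad \tilde Z := \begin{pmatrix} 2A & b \\ 2 b^T & c \end{pmatrix},
\]
and $Y' := \begin{pmatrix} A' & 0 \\ 0 & c' \end{pmatrix}$. The base case $n = 0$ holds with $R_0 = (A', 0)$ and $S_0 = (0, c')$, directly from the factorisation $Y = \begin{pmatrix} K A' & 0 \\ 0 & c' \end{pmatrix} = \begin{pmatrix} K (A', 0) \\ (0, c') \end{pmatrix}$. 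For the inductive step, applying the four identities above yields
\[
X \begin{pmatrix} K R_n \\ S_n \end{pmatrix} = \begin{pmatrix} \mathrm{diag}(A)\, J K R_n + K b\, S_n \\ b^T K^T K R_n + c S_n \end{pmatrix} = \begin{pmatrix} K ( 2 A R_n + b S_n ) \\ 2 b^T R_n + c S_n \end{pmatrix},
\]
which is precisely $\begin{pmatrix} K R_{n+1} \\ S_{n+1} \end{pmatrix}$ with $(R_{n+1}; S_{n+1}) = \tilde Z\,(R_n; S_n)$.

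Applying the projection $P = \begin{pmatrix} \Id_M & 0 & 0 \\ 0 & 0 & \Id_M \end{pmatrix}$, which selects the top $M$ rows of the first block (killing the duplicate copy produced by $K$) together with the bottom row, gives $P X^n Y = (R_n; S_n) = \tilde Z^n Y'$. The conclusion then follows from the componentwise inequality $\tilde Z \leq 2 Z$, which holds trivially since $2 A = 2 A$, $b \leq 2 b$, $2 b^T = 2 b^T$, and $c \leq 2 c$, with all entries non-negative. Non-negativity of the entries propagates this to $\tilde Z^n \leq 2^n Z^n$ componentwise, and hence $P X^n Y = \tilde Z^n Y' \leq 2^n Z^n Y' \leq 2^{n+1} Z^n Y'$, which is the desired inequality.

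I do not foresee a real obstacle: the entire argument is driven by the single algebraic observation $J K = 2 K$, and the factor of $2$ accumulated across $n$ iterations of $\tilde Z$ is precisely the source of the $2^n$ appearing on the right-hand side (the extra factor of $2$ inside the $2^{n+1}$ of the statement is merely slack inherited from the entrywise bound $\tilde Z \leq 2 Z$). The only mild point requiring care is the verification that the structured form $\begin{pmatrix} K R_n \\ S_n \end{pmatrix}$ is preserved under left-multiplication by $X$, but this is immediate from the four identities listed above.
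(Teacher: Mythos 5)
Your proof is correct and actually establishes the slightly sharper bound $2^n$ instead of $2^{n+1}$. Your route differs from the paper's in a way worth noting. The paper's proof bounds the full $(2M+1)\times(2M+1)$ power $\begin{pmatrix}\mathrm{diag}(A)J & Kb\\ b^TK^T & c\end{pmatrix}^n$ componentwise by $2^n\begin{pmatrix}\mathrm{diag}(A_n)J & Kb_n\\ b_n^TK^T & c_n\end{pmatrix}$, where $A_n,b_n,c_n$ are the blocks of $Z^n$; the accumulating factors of $2$ are introduced at each inductive step, and the right factor $Y$ is only multiplied at the very end. You instead show that, because of the special structure of $Y$, the unprojected product $X^nY$ factors \emph{exactly} as $\begin{pmatrix}KR_n\\ S_n\end{pmatrix}$ with $(R_n;S_n)=\tilde Z^n Y'$, $\tilde Z=\begin{pmatrix}2A & b\\ 2b^T & c\end{pmatrix}$. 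Thus the projection gives the identity $PX^nY=\tilde Z^n Y'$, and all the approximation is deferred to the single componentwise comparison $\tilde Z\le 2Z$. The algebraic driver ($JK=2K$, $K^TK=2I$, $\mathrm{diag}(A)K=KA$) is the same in both arguments, but your version collapses to an $(M+1)\times(M+1)$ recursion and makes the origin of the $2^n$ transparent in a single matrix $\tilde Z$, which is cleaner than the paper's inequality-based induction on the full block matrix.
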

\begin{proof}
Let
$$\begin{pmatrix}
A_n & b_n\\
b_n^T & c_n
\end{pmatrix} := \begin{pmatrix}
A & b \\
b^T & c
\end{pmatrix}^n.$$
Using that
$J^2 = 2J$, $JK = 2K$ and $K^T K = 2$, one can check by an easy induction that
$$
\begin{pmatrix}
\textup{diag}(A) J & Kb\\
b^TK^T & c
\end{pmatrix}^n \leq 2^n\begin{pmatrix}
\textup{diag}(A_n) J & Kb_n\\
b_n^TK^T & c_n.\end{pmatrix}$$
The result then follows using that
\begin{align*}
\begin{pmatrix}
\Id_M & 0 & 0\\
0 & 0 & \Id_M
\end{pmatrix}\begin{pmatrix}
\textup{diag}(A_n) J & Kb_n\\
b_n^TK^T & c_n
\end{pmatrix}\begin{pmatrix}
\textup{diag}(A')K & Kb'\\
b'^T & c'
\end{pmatrix} &= \begin{pmatrix}
A_n & A_n & b\\
b & b & c
\end{pmatrix}\begin{pmatrix}
A' & 0\\
A' & 0\\
0 & c'
\end{pmatrix}\\
& \leq 2 \begin{pmatrix}
A_n & b_n\\
b_n^T & c_n
\end{pmatrix}\begin{pmatrix}
A' & 0\\
0 & c'
\end{pmatrix}. 
\end{align*}
\end{proof}

\appendix

\section{Definition of radial perfectly matched layers}
\label{sec:PML}

Let $R_{\rm scat}$ be such that
\beq\label{e:Rscat}
\supp(A-I) \cup \supp(n-1) \cup \Omega\Subset \blue{B(0,R_{\rm scat})
:=\big\{ x: |x|< R_{\rm scat}\big\}
}.
\eeq
Let $\RPMLo>R_{\rm scat}$ be such that $\blue{B(0,\RPMLo) }\Subset \Omega_{\tr}$.

As in \S\ref{sec:1.1} and \S\ref{sec:3.1}, let $\Omega:=\Omega_{+}\cap \Omega_{\tr}$ and $\Gamma_{\tr}:=\partial\Omega_{\tr}$.
For $0\leq \theta<\pi/2$, let the PML scaling function $f_\theta\in C^{\infty}([0,\infty);\mathbb{R})$ be defined by $f_\theta(r):=f(r)\tan\theta$ for some $f$ satisfying
\begin{equation}
\label{e:fProp}
\begin{gathered}
\big\{f(r)=0\big\}=\big\{f'(r)=0\big\}=\big\{r\leq \RPMLo\big\},\quad f'(r)\geq 0;
\end{gathered}
\end{equation}
i.e., the scaling ``turns on'' at $r=\RPMLo$. 
Given $f_\theta(r)$,  let 
\beqs
\alpha(r) := 1 + \ri f_\theta'(r) \quad \tand\quad \beta(r) := 1 + \ri f_\theta(r)/r.
\eeqs
We now define two possible PML problems \eqref{e:PML1}; both are formed by first replacing $\Delta$ in \eqref{e:edp} by 
\begin{align*}
\Delta_\theta&= \Big(\frac{1}{1+\ri f_\theta'(r)}\pdiff{}{r}\Big)^2+\frac{d-1}{(r+\ri f_\theta(r))(1+\ri f'_\theta(r))}\pdiff{}{r} +\frac{1}{(r+\ri f_\theta(r))^2}\Delta_\omega,\\
&= \frac{1}{(1+\ri f_\theta'(r))(r+\ri f_\theta(r))^{d-1}}\frac{\partial}{\partial r}
\left( \frac{ (r+\ri f_\theta(r))^{d-1}}{1+\ri f_\theta'(r)}\frac{\partial}{\partial r}
\right)+\frac{1}{(r+\ri f_\theta(r))^2}\Delta_\omega
\end{align*}
(with $\Delta_\omega$ the surface Laplacian on $S^{d-1}$) and then either multiplying by $ \alpha \beta^{d-1}$ or not -- the coefficients $A_\theta,b_\theta$, and $n_\theta$ for both options are defined below.

\paragraph{Comparison of the two different formulations.}

The multiplication by $\alpha \beta^{d-1}$ has the advantage that the resulting operator is in divergence form; however, for $P_k$ to satisfy~\eqref{e:Garding1}, one requires additional assumptions. In particular,~\cite[Lemma 2.3]{GLSW1} shows that~\eqref{e:Garding1} holds for any $f_\theta(r)$ satisfying the above assumptions in $d=2$ and holds in $d=3$ when $f_\theta(r)/r$ is, in addition, non-decreasing and~\cite[Remark 2.1]{GLSW1} shows that such an additional assumption is needed.

If one instead integrates by parts the complex-scaled PDE directly (i.e., avoids the above multiplication), 
then the resulting sesquilinear form satisfies the G\aa rding inequality after multiplication by $\re^{\ri \omega}$, for some suitable constant $\omega$~\cite[Lemma A.6]{GLS1}. 
%

We highlight that, in other papers on PMLs, the scaled variable, which in our case is $r+\ri f_\theta(r)$, is often written as $r(1+ \ri \widetilde{\sigma}(r))$ with $\widetilde{\sigma}(r)= \sigma_0$ for $r$ sufficiently large; see, e.g., \cite[\S4]{HoScZs:03}, \cite[\S2]{BrPa:07}. Therefore, to convert from our notation, set $\widetilde{\sigma}(r)= f_\theta(r)/r$ and $\sigma_0= \tan\theta$.
In this alternative notation, the assumption that $f_\theta(r)/r$ is nondecreasing is therefore that $\widetilde{\sigma}$ is nondecreasing -- see \cite[\S2]{BrPa:07}.

\paragraph{The sesquilinear form after multiplication by $\alpha \beta^{d-1}$.}
Define $b_\theta:=0$,
\beq\label{e:firstPML}
A_\theta := 
\begin{cases}
A
\hspace{-1ex}
& \tin \Omega,\\
HDH^T 
\hspace{-1ex}
&\tin (\blue{B(0,\RPMLo)})^c
\end{cases}
\quad\tand\quad
n_\theta := 
\begin{cases}
 n &\tin \Omega \cap \blue{B(0,\RPMLo)},\\
\alpha(r) \beta(r)^{d-1} 
\hspace{-1ex}
&\tin (\blue{B(0,\RPMLo)})^c,
\end{cases}
\eeq
where, in polar coordinates $(r,\varphi)$,
\beqs
D =
\left(
\begin{array}{cc}
\beta(r)\alpha(r)^{-1} &0 \\
0 & \alpha(r) \beta(r)^{-1}
\end{array}
\right) 
\quad\tand\quad
H =
\left(
\begin{array}{cc}
\cos \varphi & - \sin\varphi \\
\sin \varphi & \cos\varphi
\end{array}
\right) 
\tfor d=2,
\eeqs
and, in spherical polar coordinates $(r,\varphi, \phi)$,
\beqs
D =
\left(
\begin{array}{ccc}
\beta(r)^2\alpha(r)^{-1} &0 &0\\
0 & \alpha(r) &0 \\
0 & 0 &\alpha(r)
\end{array}
\right) 
\tand
H =
\left(
\begin{array}{ccc}
\sin \varphi \cos\phi & \cos \varphi \cos\phi & - \sin \phi \\
\sin \varphi \sin\phi & \cos \varphi \sin\phi & \cos \phi \\
\cos \varphi & - \sin \varphi & 0 
\end{array}
\right) 
\eeqs
for $d=3$.
(observe that then $A=I$ and $n=1$ when $r=\RPMLo$ and thus $A_\theta$ and $n_\theta$ are continuous at $r=\RPMLo$).

\ble[{\cite[Lemma 2.3]{GLSW1}}]
Let $f_\theta$ satisfy \eqref{e:fProp} 
and the additional assumption when $d=3$ that $f_\theta(r)/r$ is nondecreasing,
given $\epsilon>0$
there exists $c>0$ such that, 
for all $\epsilon \leq \theta\leq \pi/2-\epsilon$, $A_\theta$ defined by \eqref{e:firstPML} satisfies
\beqs
\Re \big( A_\theta(x) \xi, \xi\big)_2 \geq c \|\xi\|_2^2 \quad\tfa \xi \in \mathbb{C}^d \tand x \in \Omega;
\eeqs
thus the G\aa rding inequality \eqref{e:Garding1} holds.
\ele

\paragraph{The sesquilinear form without multiplication by $\alpha \beta^{d-1}$.}

Define
\beq\label{e:secondPML}
A_\theta := 
\begin{cases}
A
\hspace{-1ex}
& \tin \Omega,\\
HDH^T 
\hspace{-1ex}
&\tin (\blue{B(0,\RPMLo)})^c
\end{cases}
\tand
n_\theta  := 
\begin{cases}
 n &\tin \Omega \cap \blue{B(0,\RPMLo)},\\
1
\hspace{-1ex}
&\tin (\blue{B(0,\RPMLo)})^c,
\end{cases}
\eeq
where, in polar coordinates $(r,\varphi)$,
\beqs
D =
\left(
\begin{array}{cc}
\alpha(r)^{-2} &0 \\
0 & \beta(r)^{-2}
\end{array}
\right) 
\quad\tand\quad
H =
\left(
\begin{array}{cc}
\cos \varphi & - \sin\varphi \\
\sin \varphi & \cos\varphi
\end{array}
\right) 
\tfor d=2,
\eeqs
and, in spherical polar coordinates $(r,\varphi, \phi)$,
\beqs
D =
\left(
\begin{array}{ccc}
\alpha(r)^{-2} &0 &0\\
0 & \beta(r)^{-2} &0 \\
0 & 0 &\beta(r)^{-2}
\end{array}
\right) 
\tand
H =
\left(
\begin{array}{ccc}
\sin \varphi \cos\phi & \cos \varphi \cos\phi & - \sin \phi \\
\sin \varphi \sin\phi & \cos \varphi \sin\phi & \cos \phi \\
\cos \varphi & - \sin \varphi & 0 
\end{array}
\right) 
\eeqs
for $d=3$ 
(observe that then $A_\theta=I$ and $n_\theta=1$ when $r=\RPMLo$ and thus $A$ and $n$ are continuous at $r=\RPMLo$).
In addition, for $d=2$, 
$$
b_\theta(r)=\begin{cases}0&\tin \Omega\cap \blue{B(0,\RPMLo)}\\
H \begin{pmatrix} 
\alpha^{-2}\big( \log (\alpha\beta)\big)'
\\0\end{pmatrix}&\tin (\blue{B(0,\RPMLo)})^c,\end{cases}
$$
and for $d=3$
$$
b_\theta(r)=\begin{cases}0&\tin \Omega\cap \blue{B(0,\RPMLo)}\\\
H\begin{pmatrix}
\alpha^{-2}\big( \log (\alpha\beta^2)\big)'\\0\\0\end{pmatrix}&\tin (\blue{B(0,\RPMLo)})^c.\end{cases}
$$

\ble[{\cite[Lemma A.6]{GLS2}}]
Let $f_\theta$ satisfy \eqref{e:fProp}. 
Given $\epsilon>0$
there exists $\omega\in \Rea, c>0$ such that, 
for all $\epsilon \leq \theta\leq \pi/2-\epsilon$, $A_\theta$ defined by \eqref{e:secondPML} satisfies
\beqs
\Re \big( \re^{\ri \omega} A_\theta(x) \xi, \xi\big)_2 \geq c\|\xi\|_2^2 \quad\tfa \xi \in \mathbb{C}^d \tand x \in \Omega;
\eeqs
thus the G\aa rding inequality \eqref{e:Garding1} holds for the sesquilinear form $\re^{\ri \omega}a_k(\cdot,\cdot)$.
\ele

\section{Loop decompositions in directed graphs (Theorem \ref{t:onlyTheSimpleOnes})}

\label{sec:loops}

Fix a matrix $\oldT \in \mathbb{M}(M\times M)$, let $\mathcal{G}$ be the graph associated to $\oldT$ as defined in \S \ref{sec:simple_path_mat} and $\transfer$ the simple-path matrix of $\oldT$. Denote by $\mathbb{P}_{ij}$ the set of paths from $i$ to $j$. Recalling the classical identity 
$$(\oldT^n)_{ij} = \sum_{p \in \mathbb{P}_{ij} \textup{ s.t. } |p|=n}\oldT_p$$
and summing over $n$, one obtains that
$$\left[\sum_{n \in \mathbb{N}} W^n\right]_{ij} = \sum_{p \in \mathbb{P}_{ij}} \oldT_p,$$
provided that the right-hand side converges.
The first inequality in \eqref{e:transfer} then follows immediately. 

To prove the implication in \eqref{e:loop_condition} and the second inequality in \eqref{e:transfer}, it is sufficient to show that 
\begin{equation}
\label{e:transfer_bound}
\sum_{p \in \mathbb{P}_{ij}} \oldT_p \leq\frac{1}{1-c} \transfer_{ij} 
\end{equation}
	
\subsection{Outline}
	
We show 
\eqref{e:transfer_bound} by constructing an injective map
\[\mathcal{D}ec: \mathbb{\mathbb{P}}_{ij} \to
 \mathbb{V}_{ij} \times \mathbb{SL}^{(\mathbb{N})} 
  \]
 where for any set $A$, $A^{(\mathbb{N})}$ denotes the set of finite ordered sequences of elements of $A$ (possibly of size $0$). The map $\mathcal{D}ec$ is defined in Definition \ref{def:loopDec} below, and its properties are stated in Lemma \ref{l:decIsInj}. It corresponds to a decomposition of every path $p \in \mathbb{P}_{ij}$ into a non-intersecting segment 
$v \in \mathbb{V}_{ij}$ and a tuple of simple loops $(L_1,\ldots,L_Q) \in \mathbb{SL}^{(\mathbb{N})}$. The idea is that one obtains the decomposition by recursively removing loops from $p$ until the remainder is non-intersecting.
If one defines 
\begin{equation}
\label{e:extT}
\oldT_{(v,(L_1,\ldots,L_Q))} := \oldT_{v} \oldT_{L_1}\ldots \oldT_{L_Q},	
\end{equation}
then it will be seen that $\oldT_p = \oldT_{\mathcal{D}ec(p)}$ for all $p \in \mathbb{P}_{ij}$. The proof of Lemma \ref{t:onlyTheSimpleOnes} is then obtained as follows:
\[\sum_{p \in \mathbb{P}_{ij}} \oldT_{p} = \sum_{p \in \mathbb{P}_{ij}} \oldT_{\mathcal{D}ec(p)} = \sum_{q \in \mathcal{D}ec(\mathbb{P}_{ij})} \oldT_q \leq \sum_{q \in \mathbb{V}_{ij} \times \mathbb{SL}^{(\mathbb{N})}} \oldT_q\]
since $\mathcal{D}ec$ is injective. The last term can be rewritten as 
\begin{align*}
	 \sum_{q \in \mathbb{V}_{ij} \times \mathbb{SL}^{(\mathbb{N})}} \oldT_q& = \sum_{v \in \mathbb{V}_{ij}} \sum_{Q = 0}^{\infty} \sum_{L_1,\ldots,L_Q \in \mathbb{SL}}\oldT_{v} \oldT_{L_1}\ldots \oldT_{L_Q} = \sum_{v \in \mathbb{V}_{ij}} \oldT_v \sum_{Q = 0}^\infty\left(\sum_{L \in \mathbb{SL}} \oldT_L\right)^Q.
\end{align*}
Therefore if $\left(\sum_{L \in \mathbb{SL}} \oldT_L \right)\leq c < 1$, then
\[\sum_{p \in \mathbb{P}_{ij}} \oldT_{p} \leq \frac{1}{1-c} \sum_{v \in \mathbb{V}_{ij}} \oldT_v = \transfer_{ij}.\]

\subsection{Construction of the map $\operatorname{\cD ec}$}

For $1 \leq \ell \leq m \leq \abs{p}+1$, the {\em splice} of $p$ between $\ell$ and $m$, denoted by $p{[\ell,m)}$, is the path obtained from $p$ by only keeping the edges from $\ell$ to $m-1$, that is 
\[p{[\ell,m)} := e_{\ell} e_{\ell+1}\ldots e_{m-1},\]
with the convention that $p{[\ell,\ell)} = \mathbf{0}$. Given two paths $p = e_1 e_2\ldots e_{\abs{p}}$ and $q = f_{1}f_{2}\ldots f_{\abs{q}}$ such that $p(\abs{p}+1) = q(1)$, the {\em concatenation} of $p$ and $q$ is defined by
\[p\cdot q = e_1e_2\ldots e_{\abs{p}} f_{1} f_2\ldots f_{\abs{q}},\]
with the convention that for all paths $p$,
$p \cdot \mathbf{0} = \mathbf{0} \cdot p = p.$
In particular, for all $p,q \in \mathbb{P}$, $$|p\cdot q| = |p| + |q|.$$ 
Furthermore, when $m>\ell$, $p{[\ell,m)}$ is a path from $p(\ell)$ to $p(m)$, and for all $1 \leq \ell \leq |p|+1$, 
$$p = p[1,\ell) \cdot p[\ell,|p|+1).$$
If $p(\ell) = i_0$ and $L_{i_0}$ is either $\mathbf{0}$ or a loop through $i_0$, one can then define the {\em insertion of $L_{i_0}$ in $p$ at index $\ell$} by
\begin{equation*}
	p\overset{{\ell}}{\leftharpoonup}L_{i_0} := p{[1,\ell)}\cdot L_{i_0}\cdot p{[\ell,\abs{p}+1)}.
\end{equation*}

To extract the first loop of a self-intersecting path, one can ``follow" the path until some vertex $i_\times$ occurs for the second time. One then backtracks to the first occurence of that vertex, and the splice in between those two occurences defines a simple loop that can be extracted from $p$. More precisely, let
\[\ell_\times(p) := \inf \Big\{\ell \in \{1,\ldots,|p|+1\} \,: p(\ell) \in \{p(1),\ldots,p(\ell-1)\}\Big\},\]
the {\em index of first crossing}. Note that $\ell_\times(p) = \infty$ if, and only if, $p$ is non-intersecting. If $\ell_\times(p) < \infty$,  define $i_\times(p):= p(\ell_\times(p))$ the {\em first crossing point} of $p$, and  
\[\ell_0(p) := 
	\inf\Big\{\ell \in \{0,\ldots,\abs{p}+1\} \,: p(\ell) = i_\times(p)\Big\}\]
the first index at which $p$ visits $i_\times(p)$. These definitions are illustrated in Figure \ref{f:defIcross}. Define the maps $L: \mathbb{P}\to \mathbb{P}$ and $E:\mathbb{P} \to \mathbb{P}$ by\\
\begin{minipage}{0.65\textwidth}
	\begin{itemize}
		\item $L(p) := \begin{cases}
			p[\ell_0(p),\ell_\times(p)) & \textup{if } \ell_\times(p) < \infty,\\[1em]
			\mathbf{0} & \textup{if } \ell_\times(p) = \infty.
		\end{cases} 
		$\\[1em]
		the {\em first loop} in $p$, and\\[1em]
		\item $	E(p) := \begin{cases}
			p[1,\ell_0(p))\cdot p[\ell_\times(p),\abs{p}+1) & \textup{if } \ell_\times(p) < \infty,\\[1em]
			p & \textup{if } \ell_\times(p) = \infty,
		\end{cases}$\\[1em]
		the remainder after extracting the loop $L(p)$.
	\end{itemize}
\end{minipage}
\hfill
\begin{minipage}{0.3\textwidth}
	\begin{figure}[H]
		\centering
		\includegraphics[width=\textwidth]{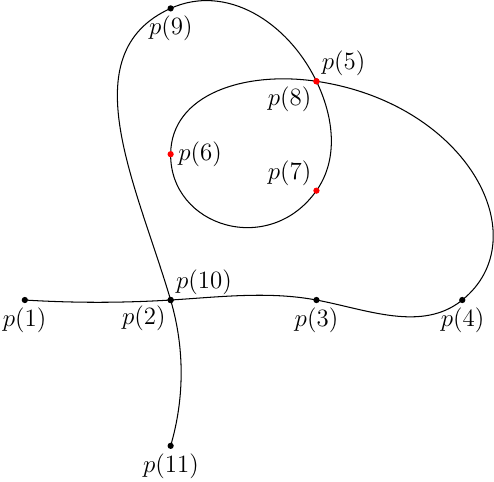}
		\caption{Example of a self-intersecting path. Here, $\ell_0(p) = 5$ and $\ell_\times(p) = 8$. The vertices of $L(p)$ are highlighted in red.}
		\label{f:defIcross}
	\end{figure}
\end{minipage}\\[1em]
The properties of $L$ and $E$ are summarized in the following lemma.  
The proof is immediate from the definitions.
\begin{lemma}
	\label{l:recreate2}
	For all paths $p$, 
	\[p = E(p)\overset{{\ell_0(p)}}{\leftharpoonup}L(p),\]
	\[\abs{p} = \abs{E(p)} + \abs{L(p)} \quad \tand \quad \oldT_p = \oldT_{L(p)} \oldT_{E(p)}.\]
	The path $p$ is non-intersecting if, and only if, $L(p) = \mathbf{0}$, in which case, $p = E(p)$. Otherwise, $L(p) \in \mathbb{SL}$, $|L(p)|\geq 1$ and
	\[\ell_0(p) = \inf\Big\{\ell \in \{0,\ldots,\abs{E(p)}+1\}\,: \big(E(p)\big)(\ell) =\big(L(p)\big)(1)\Big\}.\]
	If $p \in \mathbb{P}_{ij}$, then either 
	\begin{enumerate}
		\item $E(p) \in \mathbb{P}_{ij}$, or
		\item $E(p) = \mathbf{0}$, and this can only occur if $i = j$. 
	\end{enumerate}
\end{lemma}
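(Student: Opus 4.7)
The plan is to split into cases according to whether $p$ is non-intersecting, and in each case verify each clause by direct unpacking of the definitions of $\ell_\times$, $\ell_0$, $L$, $E$, splicing, concatenation, and insertion.

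First I would treat the trivial case $\ell_\times(p) = \infty$, i.e.\ $p$ non-intersecting. Then $L(p) = \mathbf{0}$ and $E(p) = p$ by definition, so $|E(p)| + |L(p)| = |p|$ and $W_p = W_{L(p)} W_{E(p)}$ hold (using the conventions $W_{\mathbf{0}} = 1$, $|\mathbf{0}| = 0$). The insertion identity reduces to $p = p \overset{\ell_0(p)}{\leftharpoonup} \mathbf{0}$, which is the definition of insertion of the empty path. The ``either/or'' clause for $p \in \mathbb{P}_{ij}$ is then immediate since $E(p) = p \in \mathbb{P}_{ij}$.

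Next I would treat the case $\ell_\times(p) < \infty$. By definition $\ell_0(p) < \ell_\times(p)$ and $p(\ell_0(p)) = p(\ell_\times(p)) = i_\times(p)$, so $L(p) = p[\ell_0(p), \ell_\times(p))$ is a non-empty loop through $i_\times(p)$. By minimality of $\ell_\times(p)$, the vertices $p(1), \ldots, p(\ell_\times(p)-1)$ are pairwise distinct, so in particular $p(\ell_0(p)), p(\ell_0(p)+1), \ldots, p(\ell_\times(p)-1)$ are distinct; together with the closing identity $p(\ell_0(p)) = p(\ell_\times(p))$, this shows $L(p) \in \mathbb{SL}$. For the reconstruction identity $p = E(p) \overset{\ell_0(p)}{\leftharpoonup} L(p)$, I would unwind the definitions: the right-hand side is $p[1, \ell_0(p)) \cdot L(p) \cdot p[\ell_\times(p), |p|+1)$, which telescopes to $p[1, |p|+1) = p$ (the junction vertices match because all three pieces meet at $i_\times(p)$). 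The additivity of $|\cdot|$ and multiplicativity of $W_{(\cdot)}$ under concatenation immediately yield $|p| = |E(p)| + |L(p)|$ and $W_p = W_{L(p)} W_{E(p)}$.

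For the formula $\ell_0(p) = \inf\{\ell : (E(p))(\ell) = (L(p))(1)\}$, I would observe that $E(p)[1, \ell_0(p)) = p[1, \ell_0(p))$ (verbatim), that by minimality of $\ell_0(p)$ in $p$ no vertex in $p(1), \ldots, p(\ell_0(p)-1)$ equals $i_\times(p)$, and that $(E(p))(\ell_0(p)) = p(\ell_\times(p)) = i_\times(p) = (L(p))(1)$; hence $\ell_0(p)$ is the first hitting index of $i_\times(p)$ in $E(p)$. Finally, for the $\mathbb{P}_{ij}$ clause, I would note that $E(p) = \mathbf{0}$ forces both $p[1, \ell_0(p)) = \mathbf{0}$ and $p[\ell_\times(p), |p|+1) = \mathbf{0}$, i.e.\ $\ell_0(p) = 1$ and $\ell_\times(p) = |p|+1$, whence $i = p(1) = i_\times(p) = p(|p|+1) = j$; otherwise $E(p)$ inherits its starting vertex from $p(1) = i$ (or from $p(\ell_\times(p)) = i_\times(p) = i$ when only the left half is empty) and ends at $p(|p|+1) = j$ (or at $p(\ell_0(p)) = i_\times(p) = j$ when only the right half is empty), so $E(p) \in \mathbb{P}_{ij}$.

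Since every clause reduces to bookkeeping of the splice indices, there is no real obstacle; the only mildly delicate point is checking the boundary subcases $\ell_0(p) = 1$ and $\ell_\times(p) = |p|+1$ (which forces the $i = j$ alternative), and I would handle these explicitly to avoid any ambiguity.
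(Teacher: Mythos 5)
Your proof is correct and takes the same route the paper has in mind (the paper simply states the proof is ``immediate from the definitions''); you have merely spelled out the bookkeeping. The only minor imprecision is the final remark that the boundary subcases $\ell_0(p)=1$ and $\ell_\times(p)=|p|+1$ force $i=j$: it is their \emph{conjunction} that forces $E(p)=\mathbf{0}$ and hence $i=j$, while each alone still gives $E(p)\in\mathbb{P}_{ij}$, which your main argument already treats correctly.
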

Define $E^n(p) := E(E^{n-1}(p))$, with $E^0(p) := p$. 
\begin{corollary}
	\label{c:numberLoops}
	Let $p \in \mathbb{P}$. Then there exists a unique number $n_0 \in \mathbb{N}$, the {\em number of loops in $p$}, such that the following properties hold:
	\begin{itemize}
		\item  either $n_0 = 0$ or $E^{n_0 - 1}(p) \neq E^{n_0}(p)$,
		\item for all $n \geq n_0$, $E^n(p) = E^{n_0}(p)$.
	\end{itemize}  
\end{corollary}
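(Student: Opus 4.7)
The key observation is that $E$ strictly shrinks a self-intersecting path (as measured by length) and fixes a non-intersecting one. Indeed, by Lemma~\ref{l:recreate2}, $|p|=|E(p)|+|L(p)|$, and $L(p)=\mathbf{0}$ precisely when $p$ is non-intersecting, in which case $E(p)=p$ and $|L(p)|=0$; otherwise $L(p)\in\mathbb{SL}$ has $|L(p)|\geq 1$, so $|E(p)|<|p|$ and in particular $E(p)\neq p$.

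Define $n_0:=\min\{n\in\mathbb{N}\,:\,E^n(p)\text{ is non-intersecting}\}$, with the convention that the minimum is $0$ if $p$ itself is non-intersecting. This set is nonempty: the sequence of non-negative integers $(|E^n(p)|)_{n\geq 0}$ is non-increasing, and strictly decreases as long as $E^n(p)$ is self-intersecting, so it cannot strictly decrease forever; equivalently, $n_0\leq |p|$.

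\emph{Verification of the two properties.} Once $E^{n_0}(p)$ is non-intersecting, $E$ fixes it, and a trivial induction on $n\geq n_0$ gives $E^n(p)=E^{n_0}(p)$, which is the second bullet. For the first bullet, if $n_0\geq 1$ then by the minimality of $n_0$, $E^{n_0-1}(p)$ is self-intersecting, so by the observation above $|E^{n_0}(p)|<|E^{n_0-1}(p)|$ and hence $E^{n_0-1}(p)\neq E^{n_0}(p)$.

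\emph{Uniqueness.} Suppose $n_0'\in\mathbb{N}$ also satisfies both bullets. If $n_0'>n_0$, then by the second bullet for $n_0$ applied at $n=n_0'-1$ and $n=n_0'$, we get $E^{n_0'-1}(p)=E^{n_0}(p)=E^{n_0'}(p)$, contradicting the first bullet for $n_0'$. By symmetry $n_0'<n_0$ is likewise impossible, so $n_0'=n_0$. The only potentially delicate point — that the iteration truly stabilizes — is exactly the length-decrease argument above, and it is immediate from Lemma~\ref{l:recreate2}; the rest is a direct bookkeeping.
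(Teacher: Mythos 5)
Your proof is correct and follows the same core idea as the paper: Lemma~\ref{l:recreate2} gives $|E^{n+1}(p)| \leq |E^n(p)| - 1$ whenever $E^{n+1}(p) \neq E^n(p)$, so the length cannot strictly decrease forever and the iteration stabilizes. You spell out the uniqueness and the verification of the two bullet points more explicitly than the paper, but the key argument is identical.
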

\begin{proof}
	If $E^{n+1}(p) \neq E^n(p)$, then by Lemma \ref{l:recreate2}, $|E^{n+1}(p)| \leq |E^n(p)| - 1$. Since infinite sequences of natural numbers cannot be strictly decreasing, the sequence $(E^n(p))_n$ must eventually stagnate. 
\end{proof}
\begin{corollary}
	\label{c:recreate2}
	The map 
	\[\begin{array}{llll}
		\textup{Dec}:&\mathbb{P}\times \mathbb{P}^{(\mathbb{N})} &\to& \mathbb{P}\times \mathbb{P}^{(\mathbb{N})}\\
		&p,(L_1,\ldots,L_Q) & \mapsto & E(p), (L_1,\ldots,L_Q,L(p))
	\end{array}\]
	is injective. If $X \in \mathbb{P} \times \mathbb{P}^{(\mathbb{N})}$ and if $\oldT_X$ is defined as in \eqref{e:extT}, then 
	\begin{equation}
		\label{e:conserv}
		\oldT_{\operatorname{Dec}(X)} = \oldT_{X}.
	\end{equation}
\end{corollary}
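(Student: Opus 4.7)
The plan is to verify both claims directly from the definition of $\operatorname{Dec}$ and from Lemma \ref{l:recreate2}, which already provides the key reconstruction identity $p = E(p)\overset{\ell_0(p)}{\leftharpoonup} L(p)$ and the weight factorisation $\oldT_p = \oldT_{L(p)} \oldT_{E(p)}$. There is no combinatorial obstacle; the work is to package these into a statement about the pair-valued map $\operatorname{Dec}$.

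For the weight identity \eqref{e:conserv}, I would simply unfold both sides using the multiplicative definition \eqref{e:extT}. On the left,
\[
\oldT_{\operatorname{Dec}(p,(L_1,\ldots,L_Q))} = \oldT_{E(p)}\,\oldT_{L_1}\cdots \oldT_{L_Q}\,\oldT_{L(p)},
\]
while on the right, $\oldT_{(p,(L_1,\ldots,L_Q))} = \oldT_p\,\oldT_{L_1}\cdots\oldT_{L_Q}$. These agree precisely because Lemma \ref{l:recreate2} gives $\oldT_p = \oldT_{L(p)}\oldT_{E(p)}$, and scalar multiplication is commutative.

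For injectivity, I would construct an explicit left inverse. Given an output $(q,(M_1,\ldots,M_R))$ in the image (so necessarily $R\geq 1$), set $Q := R-1$, $L_i := M_i$ for $1\leq i\leq Q$, and distinguish two cases according to whether $M_R = \mathbf{0}$ or not. If $M_R = \mathbf{0}$ then define $p:= q$; otherwise set
\[
\ell := \inf\Big\{\,\ell' \in \{1,\ldots,|q|+1\}\,:\, q(\ell') = M_R(1)\,\Big\},\qquad p := q\overset{\ell}{\leftharpoonup} M_R.
\]
The fact that this recipe recovers the original preimage is exactly the content of the first two displayed equations of Lemma \ref{l:recreate2}: if $(q,(M_1,\ldots,M_R)) = \operatorname{Dec}(p,(L_1,\ldots,L_Q))$, then $q = E(p)$, $M_R = L(p)$, and the defining formula for $\ell_0(p)$ stated in Lemma \ref{l:recreate2} coincides with $\ell$ above. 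Since the construction depends only on $(q,(M_1,\ldots,M_R))$, two different preimages cannot share the same image, proving injectivity.

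The only step requiring a moment of care is that $\ell$ above is well-defined in the nontrivial case, i.e.\ the starting vertex of $M_R$ actually appears somewhere in $q$. This is automatic from the definitions: whenever $L(p)\neq\mathbf{0}$, its starting vertex is $i_\times(p)$, which by construction equals $p(\ell_0(p)-1) = (E(p))(\ell_0(p))$ and therefore occurs in $E(p) = q$. With these observations assembled, Corollary \ref{c:recreate2} follows without further computation.
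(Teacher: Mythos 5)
Your proof is correct and follows essentially the same route as the paper's: packaging Lemma \ref{l:recreate2} as an explicit left inverse is logically the same as the paper's direct check that two preimages with equal images coincide, and both hinge on the same two facts, namely the reconstruction $p = E(p)\overset{\ell_0(p)}{\leftharpoonup}L(p)$ and the intrinsic formula for $\ell_0(p)$ in terms of $E(p)$ and $L(p)(1)$. One tiny slip in your well-definedness remark: $i_\times(p) = p(\ell_0(p))$, not $p(\ell_0(p)-1)$; the equality $(E(p))(\ell_0(p)) = p(\ell_0(p))$ still gives what you need, so the argument is unaffected.
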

\begin{proof}
	Suppose that
	\[(E(p),(L_1,\ldots,L_Q,L(p))) = (E(p'),(L_1',\ldots,L_Q',L(p')))\]
	and let $E = E(p) = E(p')$ and $L = L(p) = L(p')$. To conclude, it suffices to show that $p = p'$ (since it is obvious that $L_i = L'_i$ for $1 \leq i \leq Q$). There are two cases: either $|L| = 0$ or $|L|\geq 1$. By Lemma \ref{l:recreate2}, in the first case, $E = p = p'$. In the second case, since $E(p)=E(p')$ and $L(p)=L(p')$, 
	\[\ell_0(p) = \ell_0(p') = \inf\Big\{\ell \in \{0,\ldots,\abs{E}+1\}\,: E(\ell) =L(1)\Big\} =:\ell_0 \]
	and $p = p' = E\overset{{\ell_0}}{\leftharpoonup}L$. Thus in both cases, $p = p'$. 
	The proof of \eqref{e:conserv} is immediate.
\end{proof}\
\begin{definition}[Loop decomposition of a path]
	\label{def:loopDec}
	Given $p \in \mathbb{P}$, the {\em loop-decomposition} of $p$, denoted by $\operatorname{\mathcal{D}ec}(p) \in \mathbb{P} \times \mathbb{P}^{(\mathbb{N})}$, is defined by
	\[\operatorname{\mathcal{D}ec}(p) := \operatorname{Dec}^{n_0}(p,\varnothing)\]
	where $\varnothing$ is the empty sequence of paths, and $n_0$ is the number of loops in $p$. 
\end{definition}
\begin{lemma}
	\label{l:decIsInj}
	For all $p \in \mathbb{P}_{ij}$, $\operatorname{\cD ec}(p) \in \mathbb{V}_{ij} \times \mathbb{SL}^{(\mathbb{N})}$, and 
	\begin{equation}
		\label{e:conserv2}
		\oldT_{\operatorname{\cD ec}(p)} = \oldT_p.
	\end{equation} 
	Furthermore, the map 
	$\operatorname{\cD ec} : \mathbb{P} \to \mathbb{V} \times \mathbb{SL}^{(\mathbb{N})}$
	is injective. 
\end{lemma}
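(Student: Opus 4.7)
The lemma claims three things: (i) membership of $\operatorname{\cD ec}(p)$ in $\mathbb{V}_{ij}\times \mathbb{SL}^{(\mathbb{N})}$; (ii) the weight identity $\oldT_{\operatorname{\cD ec}(p)}=\oldT_p$; and (iii) injectivity of $\operatorname{\cD ec}$. I will treat them in that order, using Lemma \ref{l:recreate2}, Corollary \ref{c:numberLoops}, and (crucially for (iii)) Corollary \ref{c:recreate2}.

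For (i), I unfold the definition $\operatorname{\cD ec}(p)=\operatorname{Dec}^{n_0}(p,\varnothing)$ iteratively. Writing $\operatorname{\cD ec}(p)=(q,(L_1,\dots,L_{n_0}))$, an easy induction on $k=0,\dots,n_0$ using the definition of $\operatorname{Dec}$ gives $q=E^{n_0}(p)$ and $L_k=L(E^{k-1}(p))$. By Corollary \ref{c:numberLoops}, $E^{n_0}(p)=E^{n_0+1}(p)$, so by Lemma \ref{l:recreate2} $E^{n_0}(p)$ is non-intersecting; and since $p\in\mathbb{P}_{ij}$, iterating the dichotomy in Lemma \ref{l:recreate2} shows that at each step either $E^k(p)\in\mathbb{P}_{ij}$ or $E^k(p)=\mathbf{0}$ (the latter only if $i=j$). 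In both cases $q\in\mathbb{V}_{ij}$, recalling the convention $\mathbb{V}_{ii}=\{\mathbf{0}\}$. For $1\le k\le n_0$, the minimality clause in Corollary \ref{c:numberLoops} ensures $E^{k-1}(p)\neq E^k(p)$, so by Lemma \ref{l:recreate2} $L_k=L(E^{k-1}(p))$ is a non-empty simple loop, i.e.\ $L_k\in\mathbb{SL}$.

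For (ii), the identity $\oldT_{\operatorname{Dec}(X)}=\oldT_X$ from Corollary \ref{c:recreate2} applied $n_0$ times to $X=(p,\varnothing)$ gives $\oldT_{\operatorname{\cD ec}(p)}=\oldT_{(p,\varnothing)}=\oldT_p$ (since the empty sequence contributes a factor $1$ to \eqref{e:extT}).

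For (iii), suppose $\operatorname{\cD ec}(p)=\operatorname{\cD ec}(p')$ with loop counts $n_0$ and $n_0'$. The length of the second coordinate of $\operatorname{\cD ec}(\cdot)$ equals the number of loops, so $n_0=n_0'$. Then $\operatorname{Dec}^{n_0}(p,\varnothing)=\operatorname{Dec}^{n_0}(p',\varnothing)$, and $n_0$ applications of the injectivity of $\operatorname{Dec}$ (Corollary \ref{c:recreate2}) yield $(p,\varnothing)=(p',\varnothing)$, hence $p=p'$.

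The only subtle step is ensuring the components extracted really land in $\mathbb{V}_{ij}$ and $\mathbb{SL}$ respectively; this is where the two halves of Lemma \ref{l:recreate2} (the $L(p)\neq\mathbf{0}\Rightarrow L(p)\in\mathbb{SL}$ dichotomy together with the $\mathbb{P}_{ij}$-or-$\mathbf{0}$ preservation under $E$) combine with the minimality of $n_0$ to give the right classification at every step. Everything else is a bookkeeping consequence of Corollaries \ref{c:numberLoops} and \ref{c:recreate2}.
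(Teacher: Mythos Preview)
Your proof is correct and follows essentially the same approach as the paper's: both identify $q=E^{n_0}(p)$ and $L_k=L(E^{k-1}(p))$, use Corollary \ref{c:numberLoops} to show $q$ is non-intersecting and each $L_k$ is nonempty, use the dichotomy in Lemma \ref{l:recreate2} to place $q\in\mathbb{V}_{ij}$ and $L_k\in\mathbb{SL}$, and derive the weight identity and injectivity by iterating Corollary \ref{c:recreate2}.
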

\begin{proof}
	Write $\operatorname{Dec}^{n_0}(p,\varnothing) = (v,(L_1,\ldots,L_{n_0}))$, and observe that $v = E^{n_0}(p)$. If $v$ were self-intersecting, then it would follow that $E^{n_0+1}(p) \neq E^{n_0}(p)$, contradicting Corollary \ref{c:numberLoops}. Thus, $v \in \mathbb{V}$. If $p \in \mathbb{P}_{ij}$ then either $i \neq j$, in which case it follows by \ref{l:recreate2} that $v \in \mathbb{V}_{ij}$, or $i = j$ in which case $v = \mathbf{0}$ (otherwise we would have an non-intersecting path in $\mathbb{P}_{ii}$, which is impossible). On the other hand, one can check easily by induction that 
	\[L_1 = L(p),\quad L_2 = L(E(p)), \quad \ldots\quad L_{n_0} = L(E^{n_0-1}(p)),\]
	and thus, by Lemma \ref{l:recreate2}, for $1 \leq i \leq n_0$, $L_i$ is either $\mathbf{0}$ or a simple loop. But $L_i$ cannot be $\mathbf{0}$, since this would imply that $E^{i-1}(p) = E^i(p)$, contradicting again Corollary \ref{c:numberLoops}. Thus $L_1,\ldots,L_{n_0} \in \mathbb{SL}$. 
	
The relation \eqref{e:conserv2} follows immediately from \eqref{e:conserv}. 
	
	Finally, suppose that $\operatorname{\mathcal{D}ec}(p) = \operatorname{\cD ec}(p')$. Then $p$ and $p'$ have the same number of loops $n_0$ (otherwise, the list of loops in their loop-decomposition could not be the same) and thus 
	$\operatorname{\cD ec}(p) = \operatorname{Dec}^{n_0}(p,\varnothing)$, $\operatorname{\cD ec}(p') = \operatorname{Dec}^{n_0}(p',\varnothing)$ and therefore
	\[\operatorname{Dec}^{n_0}(p,\varnothing) = \operatorname{Dec}^{n_0}(p',\varnothing)\]
	But since $\operatorname{Dec}$ is injective (by Corollary \ref{c:recreate2}), $\operatorname{Dec}^{n_0}$ is injective, and thus it must be that $p = p'$. 
	\end{proof}

\section{Proofs of the local bounds on the solution operator (Theorems \ref{thm:DV} and \ref{thm:PML})}\label{app:help}

In this section we prove Theorems \ref{thm:DV} and \ref{thm:PML}. In fact, we prove a stronger 
analogue of Theorem \ref{thm:DV} phrased using semiclassical pseudodifferential operators -- i.e., pseudodifferential operators in a calculus where each derivative is weighted by $k^{-1}$. Furthermore, because we work on a bounded domain, we need a special class of pseudodifferential operators adapted to the boundary. 

\subsection{Pseudodifferential operators and $b$-pseudodifferential operators}

\subsubsection{Semiclassical pseudodifferential operators.}
\label{s:semiclassicalPseudos}

Semiclassical pseudodifferential operators are generalisations of Fourier multipliers acting as
$$
\Op(a)u(x):=\frac{1}{(2\pi h)^d}\int e^{\frac{i}{h}\langle x-y,\xi\rangle}a(x,\xi)u(y)dyd\xi,
$$
where, for some $m\in \mathbb{R}$, $a$ satisfies
$$
|\partial_x^\alpha \partial_\xi^\beta a(x,\xi)|\leq C_{\alpha \beta}\langle \xi\rangle^{m-|\alpha|}.
$$ 
In this case we write $a\in S^m(T^*\mathbb{R}^d)$ and $\Op(a)\in \Psi^m(\mathbb{R}^d)$. When $m=0$ we write $S(T^*\mathbb{R}^d)$ and $\Psi(\mathbb{R}^d)$ respectively. This class of pseudodifferential operators is the natural class of operators generalising quantization of $b(x)(hD)^\alpha$ for some $b\in C^\infty(\mathbb{R}^d)$ and $\alpha\in\mathbb{N}^d$. For more details and information about the calculus of such operators see e.g.~\cite[Appendix E]{DyZw:19} and~\cite{Zw:12}.

The class of $b$-pseudodifferential operators that we work with is, instead, the natural class of operators quantizing differential operators that are tangential to the boundary of $\Omega_+$. Away from $\partial\Omega_+$ they are pseudodifferential operators in the sense above, but near $\partial\Omega_+$ they have a different form. In particular, in coordinates $(x_1,x')$ with $\partial\Omega_+=\{x_1=0\}$, their symbols are functions on the $b$-cotangent bundle, ${}^bT^*\Omega_+$, whose sections are of the form
$$
\sigma \frac{d x_1}{x_1}+\xi' dx'.
$$
Notice that ${}^bT^*\Omega_+$ is the dual to sections of $T^*\Omega_+$ that are tangent to $\partial\Omega_+$. We also write ${}^b\overline{T^*\Omega_+}$ for the fiber radially compactified $b$-contangent bundle; i.e., ${}^bT^*\Omega_+$ with the sphere at infinity in $(\sigma, \xi')$ attached.

In coordinates, $b$-pseudodifferential operators are of the form
$$
\Op_{b}(a)(u)(x)=\frac{1}{(2\pi h)^d}\int e^{\frac{i}{h}((x_1-y_1)\xi_1+(x'-y'),\xi')}\phi(x_1/y_1)a(x_1,x',x_1\xi_1,\xi')u(y)dyd\xi,
$$
where $\phi\in C_c^\infty(1/2,2)$ with $\phi\equiv 1$ near $1$  and for some $m$
$$
|D_{x}^\alpha D_\sigma^j D_{\xi'}^\beta a(x_1,x',\sigma,\xi')|\leq C_{j\alpha\beta}\langle (\sigma,\xi')\rangle^{m-j-|\beta|}.
$$
In this case, we write $\Op_{b}(a)\in \Psi_b^m(\Omega_+)$ and $a\in S^m({}^bT^*\Omega_+)$. When $m=0$ we write $S({}^bT^*\Omega_+)$ and $\Psi_b(\Omega_+)$ respectively. We also write $\Psi_b^{-\infty}=\cap_m \Psi_b^m$. 

 The class comes equipped with principal symbol map ${}^b\sigma: {}^b\Psi^m(\Omega_+)\to S^m({}^bT^*\Omega_+)/ h {}^bS^{m-1}(T^*\Omega_+)$ such that if $A\in\Psi_b(\Omega_+)$ and $\sigma(A)=0$ then $A\in h\Psi_b^{m-1}(\Omega_+)$. We now introduce two important sets for $b$-pseudodifferential operators. For $A\in \Psi_b^m(\Omega_+)$ and $q\in {}^b\overline{T^*\Omega_+}$, we say $q\in {}^b\Ell(A)$ if there is a neighbourhood, $U$ of $q$ such that
$$
|\sigma(A)(q')|\langle (\sigma,\xi')\rangle^{-m}>c>0,\qquad q'\in U\cap {}^bT^*\Omega_+.
$$ 
Next, we say $q\notin {}^b\WF(A)$ if there is $E\in {}^b\Psi(\Omega_+)$ with $q\in {}^b\Ell(E)$ such that 
$$
EA\in h^\infty \Psi_b^{-\infty}.
$$ 
For a more complete treatment of these operators, we refer the reader to~\cite[Appendix A]{HiVa:18} and the references therein.

\subsubsection{The generalised bicharacteristic flow on ${}^bT^*\Omega_+$.}

\blue{
To define carefully the notion of a generalised broken bicharacteristic we follow~\cite[Section 24.3]{Ho:85} and introduce some additional concepts. The semiclassical principal symbol of the operator $
P_\theta = -k^{-2}n_\theta^{-1}(x)\div(A_\theta(x) \nabla)-1$
is given by 
$$
p_\theta(x,\xi):=n_\theta^{-1}(x)A_\theta^{ij}(x)\xi_i\xi_j-1. 
$$

We can choose coordinates near a point $x_0\in \partial\Omega_+$ such that there is a neighborhood $U$ of $x_0$ and $V$ of $(0,0)$ satisfying 
$$
p_\theta=\xi_1^2-r(x,\xi'),\qquad \partial\Omega\cap U=\{x_1=0\}\cap V,\quad \Omega\cap U=\{ x_1>0\}\cap V,
$$
where $r$ is real-valued.
When the define the \emph{hyperbolic points}
$$
\mathcal{H}:=\{ (x,\xi)\,:\, x_1=0,\, \Re p_\theta=0,\, r(x,\xi')> 0\}.
$$
Notice that 
$$
\mathcal{H}\cap \{\Re  p_\theta=0\}=\cup_{\pm}\mathcal{H}_{\pm},\qquad \mathcal{H}_{\pm}:=\Big\{ (x,\xi)\,:\, x_1=0,\,\xi_1=\pm \sqrt{r(x,\xi')}\Big\}.
$$
The hyperbolic points (over $\{\Re p_\theta=0\}$) are those points at which the bicharacteristic flow for $\Re p_\theta$ is transverse to the boundary and hence at which generalised bicharacteristics will obey the usual Snell--Descartes law of reflection.

Next, we define the \emph{glancing points}
$$
\mathcal{G}:=\Big\{ (x,\xi)\,:\, \Re p_\theta(x,\xi) = x_1= r(x,\xi')= \xi_1 = 0\Big\},
$$
 the \emph{diffractive points}
$$
\mathcal{G}_d:=\{ (x,\xi)\in \mathcal{G}\,:\, H_{\Re p_\theta}^2x_1>0\},
$$
and the \emph{gliding vector field}:
$$
H_{\Re p_\theta}^{\mathcal{G}}:=H_{\Re p_\theta}+(H_{\Re p_{\theta}}^2x_1/H_{x_1}^2{\Re p_{\theta}})H_{x_1}. 
$$
(Notice that $H_{\Re p_\theta}^{\mathcal{G}}$ is tangent to $\mathcal{G}$.)

Roughly speaking, the glancing points are those points at which the Hamiltonian vector field for $\Re p_\theta$ is tangent to the boundary and are divided into two pieces, $\mathcal{G}_d$, where the Hamiltonian trajectory touches the boundary at a single point and has exactly second order contact, and $\mathcal{G} \setminus \mathcal{G}_d$, where the trajectories will follow the gliding vector field. In $\mathcal{G}\setminus \mathcal{G}_d$, the trajectory may have order of contact larger than 2 and immediately leave the boundary or it may ``stick'' to the boundary until the curvature changes sign.

\begin{definition}\label{d:PGBB}
    A \emph{pre-generalised broken bicharacteristic} is a differentiable map
    $$\gamma:I\setminus B\to (T^*\Omega_+\setminus \partial (T^*\Omega_+))\cup \mathcal{G},$$ 
    where $I\subset \mathbb{R}$ is an interval with $B\subset I$, such that $\Re p_{\theta}\circ \gamma=0$ and
    \begin{enumerate}
        \item $\gamma'(t)=H_{\Re p_\theta}(\gamma(t))$ if $\gamma(t)\in T^*\Omega_+\setminus \partial (T^*\Omega_+)$ or $\gamma(t)\in \mathcal{G}_d$
        \item $\gamma'(t)=H_{\Re p_{\theta}}^{\mathcal{G}}(\gamma(t))$ if $\gamma(t)\in \mathcal{G}\setminus \mathcal{G}_d$
        \item Every $t\in B$ is isolated, for $t\in B$ $\gamma(s)\in T^*\Omega_+\setminus \partial (T^*\Omega_+)$ if $s\neq t$ and $|s-t|$ is small enough, the limits $\gamma(t\pm 0)$ exist and, in the coordinates above, there are $x',\xi'$ such that 
        $$
        \gamma(t\pm 0)= (0,x', \xi_1^{\pm},\xi'),\qquad \xi_1^+=-\xi_1^-, \, (\xi_1^{+})^2=r(x,\xi');
        $$
        i.e. they are different points in the same hyperbolic fiber of $ \partial\Omega_+$. 
    \end{enumerate}
    A \emph{generalised broken bicharacteristic} is $\tilde{\gamma}:I\to \pi^{b}(\{\Re p_{\theta}=0\})$ a continuous map such that there are $B$, $\gamma$ with $\gamma:I\setminus B\to \{\Re p_{\theta}=0\}$ a pre-generalised broken bicharacteristic and, for $t\in I\setminus B$, $\tilde{\gamma}(t)=\pi^b(\gamma(t))$. (Here, $\pi^b$ is as in \S\ref{s:semiclassicalPseudos}.)
\end{definition}

We highlight that, given a point in $\rho\in \pi^{b}(\{\Re p_{\theta}=0\})$, there may be more than one generalised broken bicharacteristic through $\rho$. This non-uniqueness occurs at points of infinite order tangency with Hamiltonian for $p$ \cite[Corollary 24.3.10]{Ho:07}. (For a construction of a point with multiple generalised broken bicharacteristics, see~\cite[Example 24.3.11]{Ho:07}. This construction relies on the existence of a trajectory with infinitely many transversal intersections with the boundary in finite time whose endpoint is tangent to the boundary.) It is also easy to see that uniqueness holds when at every such point of infinite order tangency, $\partial\Omega_+$ is geodesically concave.

\begin{definition}
    Let $\mathcal{U}\subset \pi^b(\{\Re p_\theta =0\})$ be the set of points through which there is a unique generalised broken bicharacteristic. We define the generalised broken bicharacteristic flow, $\varphi_t:\mathcal{U}\to\mathcal{U}$ by 
    $$
    \varphi_t(\rho)=\gamma(t),\qquad \gamma(0)=\rho,
    $$
    where $\gamma$ is a generalised broken bicharacteristic. 
\end{definition}
One can show that $\varphi_t$ is continuous~\cite[24.3.12]{Ho:07}.

\bre 
We assume for simplicity below that the generalised broken bicharacteristics are everywhere unique (i.e. $\mathcal{U}=\pi^b(\{\Re p_\theta=0\})$), but highlight that the definitions of $K$ and $\Gamma_\pm$, and the statements of Theorems \ref{thm:negligible2} and \ref{thm:negligible3}
below can all be rewritten without the need for uniqueness of the flow (with the results of the theorems
still true). We also point out that, as discussed above, $\mathcal{U}=\pi^{b}(\{\Re p_\theta=0\})$ whenever every point of infinite order tangency is geodesically concave with respect to the metric $g^{-1}=n^{-1}A$.
\ere
}



We are now in a position to define the \emph{forward and backward trapped sets} $\Gamma_-$ and $\Gamma_+$, respectively,
$$
\Gamma_{\pm}:=\Big\{ q\in \pi^b(\{ \Re p_\theta=0\})\,: \sup \{t>0\,:\, \varphi_{\mp t}(q)\in {}^bT^*\Omega\}=\infty\Big\},
$$
as well as the trapped set,
$$
K:=\Gamma_+\cap \Gamma_-.
$$
One can show that $\Gamma_{\pm}$ and hence $K$ are closed (see e.g.~\cite[Proposition 6.3]{DyZw:19}).

\subsection{Improved resolvent estimates}

We can now state our improved estimates on the solution operator. Below, we use the notation ${}^b\Psi(\Omega)$ for elements of ${}^b\Psi(\Omega_+)$ whose kernels are supported away from $\Gamma_{\tr}$.
\begin{theorem}\label{thm:DV2}
Let $k_0>0$, and  let $\blue{\mathrm{J}}$ be such that 
Assumption \ref{a:polyBoundIntro} holds. 
 Then for all $A\in {}^b\Psi(\Omega)$ with ${}^b\WF(A)\cap K=\emptyset$, there exists $C>0$ such that, for all $k\in (k_0,\infty)\setminus\blue{\mathrm{J}}$,
\begin{gather*}
\|AR_k\|_{L^2\to L^2}+\|R_kA\|_{L^2\to L^2}\leq C\sqrt{\|R_k\|k},\qquad \|AR_kA\|_{L^2\to L^2}\leq Ck.
\end{gather*}
\end{theorem}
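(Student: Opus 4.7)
The plan is to adapt the Datchev--Vasy gluing scheme to the semiclassical $b$-microlocal setting for the complex-scaled PML operator $P_k$, combining propagation of singularities with the PML bounds (Theorem \ref{thm:PML}) in the role of the ``non-trapping'' ingredient.

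First, I will use the hypothesis ${}^b\WF(A)\cap K=\emptyset$ to decompose $A$ microlocally. Since $K^c=\Gamma_+^c\cup\Gamma_-^c$ (as $K=\Gamma_+\cap\Gamma_-$), a $b$-microlocal partition of unity gives $A=A_++A_-+A_0$ with ${}^b\WF(A_\pm)\cap\Gamma_\pm=\emptyset$ and ${}^b\WF(A_0)\cap\{\Re p_\theta=0\}=\emptyset$. The $A_0$ contribution is controlled by elliptic estimates for $P_k$, so I concentrate on $A_\pm$.

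Second, I will establish the key local bounds $\|A_+R_k\|_{L^2\to L^2}\leq Ck$ and $\|R_kA_-\|_{L^2\to L^2}\leq Ck$ via propagation of singularities. Since ${}^b\WF(A_+)\cap\Gamma_+=\emptyset$, every backward generalized bicharacteristic from a point of ${}^b\WF(A_+)\cap\{\Re p_\theta=0\}$ enters the deep PML neighbourhood $U$ of Theorem \ref{thm:PML} in a uniformly bounded time. Applying the $b$-microlocal propagation-of-singularities estimate of Vasy / Melrose--Sj\"ostrand type,
\begin{equation*}
\|A_+u\|_{L^2}\leq C\|Eu\|_{L^2}+Ck\|FP_ku\|_{L^2}+O(k^{-\infty})\|u\|_{L^2},
\end{equation*}
(with $E\in\Psi_b$ microlocally supported in $U$ and $F\in\Psi_b$ elliptic along the bicharacteristic tubes), then substituting $u=R_kf$ and using $\|ER_kf\|_{L^2}\leq C\|f\|_{L^2}$ from Theorem \ref{thm:PML}, yields the desired bound. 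Taking adjoints and using ${}^b\WF(A_-^*)={}^b\WF(A_-)$ gives the analogous bound on $\|R_kA_-\|$.

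Third, I will assemble the three bounds of Theorem \ref{thm:DV2}. The double-cutoff bound $\|AR_kA\|\leq Ck$ follows by expanding $AR_kA=(A_++A_-)R_k(A_++A_-)$ and controlling each of the four pieces by the one-sided estimates, with the ``cross term'' $A_-R_kA_+$ handled by a two-sided propagation argument (every bicharacteristic connecting ${}^b\WF(A_+)$ to ${}^b\WF(A_-)$ meets $U$ in bounded time). For the single-cutoff bound, $\|A_+R_k\|\leq Ck\leq C\sqrt{k\rho}$ is immediate (using $\rho\geq ck$), while for $A_-R_k$ I apply the $TT^*$ identity $\|A_-R_k\|^2=\|A_-R_kR_k^*A_-^*\|$ together with the resolvent identity $R_k-R_k^*=2iR_k\,\Imag(P_k)\,R_k^*$: since $\Imag(P_k)$ is supported in the PML region, combining this with the bound $\|R_kA_-^*\|\leq Ck$ from the previous step together with the PML bounds yields $\|A_-R_k\|^2\leq Ck\rho$, giving the $\sqrt{k\rho}$ estimate.

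The principal obstacle is establishing the propagation-of-singularities estimate with uniform constants in $k$ for $P_k$ and its generalized bicharacteristic flow, which near $\partial\Omega_-$ involves reflection and potentially glancing/gliding rays that must be handled via the $b$-microlocal calculus. A secondary subtlety is that $\Imag(P_k)$ is supported throughout the entire PML region (including the transition zone where the complex scaling turns on), whereas Theorem \ref{thm:PML} gives resolvent bounds only for cutoffs supported in the deep PML neighbourhood $U$; bridging this gap in the $TT^*$ step requires an additional propagation step tracing the outgoing flow from $U$ back across the PML transition.
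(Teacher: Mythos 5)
Your overall plan — propagation toward the PML away from trapping, PML bounds as the ``non-trapping'' ingredient, and a Datchev--Vasy-style gain — is the right framework and is what the paper does. But two of your four steps do not go through as stated, and the way you propose to extract the $\sqrt{k\rho}$ gain is not actually how Datchev--Vasy works.

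\textbf{The $TT^*$/resolvent-identity step is not algebraically coherent.} You want to bound $\|A_-R_kR_k^*A_-^*\|$ using the identity $R_k - R_k^* = -2iR_k(\Im P_k)R_k^*$. But this identity produces $R_k-R_k^*$, not the product $R_kR_k^*$; substituting it into $A_-R_kR_k^*A_-^*$ (say via $R_k = R_k^* -2iR_k(\Im P_k)R_k^*$) yields terms like $A_-R_k^{*2}A_-^*$ and $A_-R_k(\Im P_k)R_k^{*2}A_-^*$, and in each piece the factor $R_k^*A_-^* = (A_-R_k)^*$ reappears, so the argument is circular. The bound $\|R_kA_-^*\|\leq K$ from step~2 does not break this circularity, because it sits on the wrong side of the product. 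The paper does not use $TT^*$ at all: the $\sqrt{k\rho}$ gain is produced by a \emph{positive-commutator identity} (Lemma~\ref{l:DV}), namely
\[
k\,\Im\langle P_k u, G^2 u\rangle \;=\; \tfrac{k}{2i}\langle [P_k,G^2]u,u\rangle,
\]
whose left-hand side is trivially $\leq Ck\|P_ku\|\,\|u\|$ and whose right-hand side is $\geq c\|Au\|^2$ minus controllable error terms after a sharp G\aa rding inequality; taking $u = R_k f$ gives $\|Au\|^2\lesssim k\rho\|f\|^2$ directly. This is also what Datchev--Vasy actually do: their square-root improvement comes from a commutator/escape-function estimate, not from a resolvent-identity $TT^*$ argument.

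\textbf{The cross term $A_-R_kA_+$ is not handled by ``two-sided propagation.''} The claim that ``every bicharacteristic connecting ${}^b\WF(A_+)$ to ${}^b\WF(A_-)$ meets $U$ in bounded time'' is false in general: a point of ${}^b\WF(A_-)$ can lie on $\Gamma_+$ (backward-trapped), so the backward flow from ${}^b\WF(A_-)$ — the only direction in which propagation of singularities for $R_k$ transports estimates — need not escape. Taking adjoints swaps $\Gamma_+$ and $\Gamma_-$ but also swaps the propagation direction, so the same obstruction recurs for $A_+^*R_k^*A_-^*$. What is needed, and what the paper provides (Lemma~\ref{l:fullAway}), is a bootstrap: one first establishes $\|\tilde A u\|^2 \lesssim k\|P_ku\|\,\|Xu\| + k^2\|P_ku\|^2 + k^{-N}\|u\|^2$ for a slightly larger $\tilde A$ covering both ${}^b\WF(A)$ and ${}^b\WF(P_ku)$ (Lemma~\ref{l:halfAway}, which again uses the positive commutator), then uses elliptic estimates to absorb $\|Xu\|$ into $\|\tilde Au\|$ and closes the estimate by Cauchy--Schwarz. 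Without this absorption step, expanding $AR_kA$ into $A_\pm$ pieces is not enough.

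Finally, the transition-zone gap you acknowledge (Theorem~\ref{thm:PML} controls only the deep PML, while $\Im P_k$ is nonzero throughout the scaling region) is real; the paper sidesteps it because its positive-commutator lemma is proved in an annulus strictly inside $B(0,R_{\operatorname{PML},-})$ and then iterated to push the remainder into the deep PML, never invoking $\Im P_k$ explicitly. You should consider reorganizing your argument around a positive-commutator/escape-function estimate as in Lemma~\ref{l:DV}, which both supplies the $\sqrt{k\rho}$ gain and feeds naturally into the bootstrap needed for $\|AR_kA\|\lesssim k$.
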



\begin{theorem}\label{thm:negligible2}
Let $k_0>0$ and let $\blue{\mathrm{J}}$ be such that 
Assumption \ref{a:polyBoundIntro} holds. 
 Then for all $A,B \in {}^b\Psi(\Omega)$ with 
 \begin{equation*}
 \overline{{}^b\WF(A)\cup \bigcup_{t\geq 0} \varphi_{-t}({}^b\WF( A)\cap \pi^b(\{\Re p_\theta=0\}))}\cap {}^b\WF(B)=\emptyset,\qquad {}^b\WF(A)\cap \Gamma_+=\emptyset
 \end{equation*}
 and all $N>0$ there exists $C>0$ such that, for all $k\in (k_0,\infty)\setminus\blue{\mathrm{J}}$,
\begin{gather*}
\|A R_kB\|_{L^2\to L^2}\leq Ck^{-N}.
\end{gather*}
\end{theorem}

\begin{theorem}\label{thm:negligible3}
Let $k_0>0$ and let $\blue{\mathrm{J}}$ be such that 
Assumption \ref{a:polyBoundIntro} holds. 
 Then for all $A,B \in {}^b\Psi(\Omega)$ with 
 \begin{equation*}
 \overline{{}^b\WF(A)\cup\bigcup_{t\geq 0} \varphi_{t}({}^b\WF( A)\cap\pi^b(\{\Re p_\theta=0\}))}\cap {}^b\WF(B)=\emptyset,\qquad {}^b\WF(A)\cap \Gamma_-=\emptyset
 \end{equation*}
 and all $N>0$ there exists $C>0$ such that, for all $k\in (k_0,\infty)\setminus\blue{\mathrm{J}}$,
\begin{gather*}
\|A R_k^*B\|_{L^2\to L^2}\leq Ck^{-N}.
\end{gather*}
\end{theorem}

\bpf[Proof of Theorem \ref{thm:DV} using Theorems \ref{thm:DV2}-\ref{thm:negligible2}] Part (i) of Theorem \ref{thm:DV} follows immediately from
Theorem \ref{thm:DV2}.
Part (ii) of Theorem \ref{thm:DV} follows from Theorems \ref{thm:negligible2} and \ref{thm:negligible2} by choosing $B=\psi$ (i.e., the cutoff
in $\cavity$) and then noting that the choice $A=\chi$ (i.e., the cutoff in $\invisible$) satisfies the assumptions in
Theorems \ref{thm:negligible2} and \ref{thm:negligible2}.
\epf

\subsection{Estimates away from the scatterer}

We start by proving an estimate `deep' in the PML region; i.e. near the truncation boundary. 
\begin{lemma}
\label{l:nearPMLBoundary}
There exists $U\subset \Omega$ such that $\overline{U}$ is a neighbourhood of $\Gamma_{\tr}$ and for all $k_0>0$, $\psi,\widetilde{\psi}\in C^\infty(\Omega)$ with $\supp \psi,\supp\widetilde\psi\subset U$, $\psi\prec \widetilde\psi$, and $\supp (1-\psi)\cap \partial\Omega=\emptyset$, there exists $C>0$ such that for $k>k_0$,
\begin{equation}
\label{e:nearPMLByAway}
\|\psi u\|_{H_k^1}\leq C\Big(\| \widetilde\psi P_k u\|_{L^2}+Ck^{-N}\| u\|_{H_k^{-N}}\Big).
\end{equation}
\end{lemma}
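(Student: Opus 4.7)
The plan is to exploit the fact that the PML construction makes $P_k$ a semiclassically elliptic second-order operator on a deep enough neighbourhood of $\Gamma_{\tr}$, and then apply semiclassical elliptic regularity up to the Dirichlet boundary $\Gamma_{\tr}$ enforced by membership in $\cZ_k$.

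First, I would choose $U$ to be a neighbourhood of $\Gamma_{\tr}$ lying sufficiently deep in the PML region that the scaling function satisfies $f_\theta'(r) \geq c > 0$ and $f_\theta(r) \geq c > 0$ throughout. Using the explicit formulas for $A_\theta$, $b_\theta$ and $n_\theta$ in Appendix~\ref{sec:PML}, a direct computation gives the semiclassical principal symbol $p_\theta(x,\xi) = \langle A_\theta(x)\xi,\xi\rangle - n_\theta(x)$ of $P_k$. The complex scaling produces nonzero imaginary parts for the coefficients of the quadratic form and/or for $n_\theta$, and a short calculation shows $\Im p_\theta$ is nonzero wherever $\Re p_\theta = 0$ (with $\xi \in \R^d$ real). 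Hence $|p_\theta(x,\xi)| \gtrsim \langle\xi\rangle^2$ uniformly on $U$, i.e.\ $P_k$ is semiclassically elliptic on $U$.

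Second, because $u \in \cZ_k$ satisfies $u|_{\Gamma_{\tr}} = 0$, and because Dirichlet conditions satisfy the Shapiro--Lopatinskii condition for any second-order operator with non-degenerate principal part, standard semiclassical elliptic boundary regularity applies. Concretely, I would construct an approximate right inverse $Q$ for the Dirichlet problem $(P_k, \gamma_{\Gamma_{\tr}})$ on $U$, satisfying
\[
P_k Q = \widetilde\psi I + R_\infty,
\]
with $Q$ continuous $L^2 \to H^1_k$ after localisation, pseudolocal in the sense of Theorem~\ref{t:pseudoLocGeneral}, and $R_\infty = O_{-\infty}(k^{-\infty}; \cY_k \to \cY_k)$ in the notation of \S\ref{sec:order_notation}. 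After straightening $\Gamma_{\tr}$ to a hyperplane, $Q$ is built by the standard iterative semiclassical symbolic calculus, using reflection across the boundary to enforce the Dirichlet condition, and using ellipticity to solve the transport equations at each order in $k^{-1}$.

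Third, applying the parametrix and using $u|_{\Gamma_{\tr}} = 0$ gives
\[
\psi u = \psi \widetilde\psi u = \psi Q(\widetilde\psi P_k u) + \psi Q\big((1-\widetilde\psi) P_k u\big) - \psi R_\infty u.
\]
The first term is bounded by $\|\psi Q\|_{L^2 \to H^1_k}\|\widetilde\psi P_k u\|_{L^2}$, and the middle term is $O(k^{-\infty})$ smoothing by pseudolocality of $Q$ (since $\supp \psi \cap \supp(1-\widetilde\psi) = \emptyset$), yielding the required estimate \eqref{e:nearPMLByAway}.

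The main technical obstacle is the construction of the Dirichlet parametrix $Q$ with $O(k^{-\infty})$ remainder and the pseudolocal mapping properties needed above. While this is classical in principle, it requires careful bookkeeping of semiclassical orders at each step of the iterative symbol construction, as well as uniformity of the construction in the PML parameters. An attractive alternative, more in the spirit of the present paper, is to use the ellipticity of $P_k$ on $U$ (which gives coercivity and hence invertibility of the Dirichlet problem), and then derive the $O(k^{-\infty})$ smoothing of the resolvent sandwiched between disjoint cutoffs directly via the commutator-based pseudolocality arguments of Sections~\ref{sec:pseudolocS}--\ref{sec:assumptions}, bypassing the explicit parametrix entirely.
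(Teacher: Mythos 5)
Your approach is correct in principle but takes a genuinely different and considerably more labour-intensive route than the paper. You propose to construct a full semiclassical Dirichlet boundary parametrix $Q$ on $U$ (checking Shapiro--Lopatinskii, straightening the boundary, iterative symbol construction, reflection, etc.) and then prove its pseudolocality. The paper instead short-circuits all of this by citing a ready-made a priori estimate (\cite[Lemma 4.4]{GLS2}), which supplies $\|v\|_{H^1_k}\leq C\|P_kv\|_{L^2}$ for $v$ supported near $\Gamma_{\tr}$ with $v|_{\Gamma_{\tr}}=0$; this is applied directly to $v=\psi u$, and the resulting commutator $[P_k,\psi]u$ is then handled by a plain \emph{interior} elliptic parametrix (\cite[Proposition E.32]{DyZw:19}), legitimate because $\supp\partial\psi$ is compactly contained in $\Omega$ away from both $\Gamma_{\tr}$ and $\partial\Omega_-$. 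So the paper never builds a boundary parametrix at all: the boundary estimate is imported wholesale, and pseudodifferential machinery is used only where no boundary is in sight. Your route is more self-contained but buys nothing extra here, while demanding a delicate construction (uniformity in the PML parameter, bookkeeping of semiclassical orders, proper ellipticity in dimension $2$) that the paper deliberately avoids. One small internal slip to fix if you pursue it: you announce $Q$ as a \emph{right} inverse with $P_kQ=\widetilde\psi I+R_\infty$, but the identity $\widetilde\psi u = Q P_k u - R_\infty u$ you then use requires a \emph{left} parametrix, $Q P_k=\widetilde\psi I+R_\infty$ (on the class of functions vanishing on $\Gamma_{\tr}$). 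Your closing remark -- that one could instead establish invertibility by coercivity and then obtain the $O(k^{-\infty})$ off-diagonal decay from the commutator machinery of \S\ref{sec:pseudolocS}--\S\ref{sec:assumptions} -- is indeed closer in spirit to what the paper does, though the paper uses the cruder commutator-plus-interior-parametrix argument rather than the full abstract pseudolocality framework.
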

\begin{proof}
By~\cite[Lemma 4.4]{GLS2}, there is $U$ with $\overline{U}$ a neighbourhood of $\partial \Omega_{\tr}$ such that for $v\in H_k^1(\Omega)$ with $v|_{\Gamma_{\tr}}=0$ and $\supp v\subset U$,
\begin{equation}
\label{e:PMLImport}
\|v\|_{H_k^1}\leq C\|P_k v\|_{L^2}.
\end{equation}
Let $\psi_j\in C^\infty_c(U)$ be such that $\psi_1\prec\psi_2\prec\widetilde\psi$ and $\supp(1-\psi_1)\cap \supp \partial \psi=\emptyset$.
Applying \eqref{e:PMLImport} with $v= \psi u$, we obtain 
\beq\label{e:actualLastDay1}
\|\psi u\|_{H_k^1}\leq C\|P_k\psi u\|_{L^2}\leq C\big(\|\psi P_ku\|_{L^2}+\big\| [P_k, \psi] u\big\|_{L^2}\big)
\leq C\big(\|\psi P_ku\|_{L^2}+ Ck^{-1}\| \psi_1 u\|_{H_k^1}\big)
\eeq
Now, shrinking $U$ if necessary so that 
$$
\{x\in U\,:\, \text{ there exists }\xi \text{ such that }p_\theta(x,\xi) = 0 \} =\emptyset,
$$
the elliptic parametrix construction~\cite[Proposition E.32]{DyZw:19} implies that
\beq\label{e:actualLastDay2}
\|\psi_1 u\|_{H^1_k}\leq C\|\psi_2P_ku\|_{L^2}+Ck^{-N}\|u\|_{H_k^{-N}},
\eeq
and the result follows by combining \eqref{e:actualLastDay1} and \eqref{e:actualLastDay2}.
\end{proof}

\

We now prove Theorem \ref{thm:PML}.

\

\bpf[Proof of Theorem \ref{thm:PML}]
The bound $\|\chi R_k\|_{L^2\to L^2}\leq C$ follows immediately from \eqref{e:nearPMLByAway}, with the bound $\|R_k\chi \|_{L^2\to L^2}\leq C$ then following by applying the previous bound with $P_k$ replaced by $P_k^*$. 

The bound $\|\chi R_k\psi\|_{L^2\to H^1_k}\leq Ck^{-N}$ also follows immediately from \eqref{e:nearPMLByAway}, with then the bound $\|\chi R_k\psi\|_{L^2\to H^N_k}\leq Ck^{-N}$ following by elliptic regularity up to the boundary. 

Finally, the bound $\|\psi R_k\chi\|_{L^2\to L^2}\leq Ck^{-N}$ follows by applying the  bound 
$\|\chi R_k\psi\|_{L^2\to L^2}\leq Ck^{-N}$
with $P_k$ replaced by $P_k^*$, and then the bound $\|\psi R_k\chi\|_{L^2\to H^N_k}\leq Ck^{-N}$ follows by elliptic regularity up to the boundary. 
\epf

\

Next, we prove an estimate near incoming points away from the truncation boundary.

\begin{lemma}
\label{l:outgoing}
Let $m\geq 2$, $\chi\in C_c^\infty(\Omega\setminus \overline{B(0,R_{\operatorname{scat}}}))$ (where $R_{\rm scat}$ is defined by \eqref{e:Rscat}). Then there is $\epsilon>0$ such that for $A,B\in \Psi^0$ with 
$$
\WF(A)\cap \Big\{ \big\langle \tfrac{x}{|x|},\xi\big\rangle\geq \e,\,p_\theta(x,\xi)=0\Big\}=\emptyset,
$$
$$
\WF(A)\cup \bigcap_{t\geq 0}\big\{(x-t\xi,\xi)\,:\, (x,\xi)\in \WF(A)\cap \{p_\theta=0\}\big\}\cap\{p_\theta=0\}\subset\operatorname{Ell}(B)
$$
and $N>0$, given $k_0>0$ there exists $C>0$ such that for all $k\geq k_0$
$$
\|A\chi u\|_{H_k^m}\leq Ck\|BP_ku\|_{H_k^{m-2}}+Ck^{-N}\|u\|_{H_k^{-N}}.
$$
\end{lemma}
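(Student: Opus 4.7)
The plan is to establish this as a semiclassical propagation of singularities estimate for $P_k$ in the PML/free region $r > R_{\rm scat}$. Since $\chi$ is supported there, and on this region $A\equiv I$, $n\equiv 1$, the operator $P_k$ reduces to the radial-PML complex-scaled Helmholtz operator with semiclassical principal symbol
$$
p_\theta(x,\xi)\;=\;\alpha(r)^{-2}\xi_r^2\;+\;\beta(r)^{-2}r^{-2}|\xi'|^2\;-\;1
$$
(in polar coordinates), whose real part coincides with $|\xi|^2-1$ inside $\{f_\theta\equiv 0\}$ and whose imaginary part, in the PML, has a definite sign proportional to $\xi_r$ on the physical characteristic set.

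First, I would split $\WF(A)$ into its elliptic part (where $|p_\theta|\neq 0$) and its characteristic part (on $\{p_\theta=0\}$). The elliptic part is immediately handled by the standard semiclassical elliptic parametrix (e.g.\ \cite[Proposition E.32]{DyZw:19}): since $\WF(A)\subset\Ell(B)$ by the second hypothesis, one gets $\|A\chi u\|_{H_k^m}\le C\|BP_ku\|_{H_k^{m-2}}+Ck^{-N}\|u\|_{H_k^{-N}}$ on this piece (no factor of $k$ is needed). Thus the real work is on the characteristic part.

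Second, on $\{p_\theta=0\}$, I would apply semiclassical propagation of singularities for the complex-principal-symbol operator $P_k$ along the Hamilton flow $\varphi_t$ of $\Re p_\theta$, flowing backward from points of $\WF(A)$. By the first hypothesis, $\WF(A)\cap\{p_\theta=0\}$ lies in the region $\langle x/|x|,\xi\rangle<\epsilon$, i.e.\ in the \emph{non-outgoing} part of the characteristic set. In the PML, $\mathrm{sgn}\,\Im p_\theta=-\mathrm{sgn}\,\xi_r$ (with appropriate conventions): this is the "good" sign for backward propagation from non-outgoing directions, so that a positive-commutator argument with a weight monotone along $\varphi_{-t}$ produces an estimate propagating control backward along trajectories without accumulating error from the imaginary part. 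Because each unit of flow time in the semiclassical propagation argument costs a factor $h^{-1}=k$ (from integrating $P_k u$ along the trajectory), one picks up the $Ck$ prefactor on $\|BP_ku\|_{H_k^{m-2}}$ that appears in the statement. By hypothesis the backward flowout $\bigcup_{t\ge 0}\varphi_{-t}(\WF(A)\cap\{p_\theta=0\})$ is contained in $\Ell(B)$; the propagation therefore terminates in the elliptic set of $B$, where the forcing $P_k u$ is controlled by $B P_k u$, closing the estimate modulo the smoothing remainder $Ck^{-N}\|u\|_{H_k^{-N}}$.

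The main obstacle will be the correct handling of the complex symbol in the PML: one must verify that the sign of $\Im p_\theta$ never obstructs the positive-commutator argument along backward trajectories starting in the non-outgoing part of $\{p_\theta=0\}$, and that the weight used in the commutator can be chosen both monotone along the flow and compactly supported away from outgoing directions. Once these sign and support properties are checked (using the explicit structure of $\alpha,\beta$ and the hypotheses on $f_\theta$ from \S\ref{sec:PML}), the argument is a straightforward adaptation of the standard semiclassical propagation-of-singularities estimate combined with the elliptic parametrix; the contribution of $\chi$ itself only enters through a harmless commutator $[P_k,\chi]$ supported in $\{\chi\neq 1\}\cap\{\nabla\chi\neq 0\}$, which is absorbed into the $Ck^{-N}\|u\|_{H_k^{-N}}$ remainder via an additional elliptic argument.
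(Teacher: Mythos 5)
Your plan matches the paper's proof almost step for step: split $\WF(A)$ into the piece off $\{p_\theta=0\}$ (handled by the semiclassical elliptic parametrix, using that $\WF(A)\subset\Ell(B)$) and the piece on the characteristic set (handled by semiclassical propagation, using the non-outgoing hypothesis so that the backward free flow $t\mapsto(x-t\xi,\xi)$ escapes to the elliptic region in uniformly bounded time, and using the sign of $\Im p_\theta$ to license the positive-commutator argument). The paper just makes the escape explicit via $|x-t\xi|^2\geq(1-2\e)(|x|^2+t^2)$ and a microlocal partition of unity.

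One correction is worth making: the sign fact you should verify and use is that $\Im p_\theta\leq 0$ \emph{uniformly} near $\{\Re p_\theta=0\}$, not $\operatorname{sgn}\Im p_\theta=-\operatorname{sgn}\xi_r$. In the PML region the scaled symbol is (in polar coordinates) $\alpha^{-2}\xi_r^2+\beta^{-2}|\xi'|^2/r^2-1$ with $\Im\alpha^{-2},\Im\beta^{-2}\leq 0$; these multiply the non-negative quantities $\xi_r^2$ and $|\xi'|^2/r^2$, so $\Im p_\theta\leq 0$ regardless of the sign of $\xi_r$. If the sign of $\Im p_\theta$ actually flipped with $\xi_r$, the positive-commutator estimate (\cite[Theorem E.47]{DyZw:19}) would not apply uniformly on $\{p_\theta=0\}$; the uniform non-positivity is exactly what the paper invokes. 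Finally, the paper simply treats $A\chi\in\Psi^0$ as the test operator and propagates from $\WF(A\chi)$, so the $[P_k,\chi]$ commutator never needs to be written down; your remark that $[P_k,\chi]$ is ``absorbed into the $Ck^{-N}$ remainder'' is not accurate as stated (the commutator is only $O(k^{-1})$), but it is also unnecessary.
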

\begin{proof}Since 
$$
\WF(A)\cap \Big\{ \langle \tfrac{x}{|x|},\xi\rangle\geq \e,\,p_\theta(x,\xi)=0\Big\},
$$ 
there is a neighbourhood, $V$ of $\{p_\theta=0\}$ such that 
$$
\WF(A\chi )\cap V\subset\{ \langle \tfrac{x}{|x|},\xi\rangle< 2\e\}.
$$
In particular, for $(x,\xi)\in \WF(A\chi)\cap V$, and $t\geq 0$,
$$
|x-t\xi|^2=|x|^2-2t|x|\langle \tfrac{x}{|x|},\xi\rangle +t|\xi|^2\geq |x|^2-4t|x|\e+t^2\geq |x|^2(1-2\e)+t^2(1-2\e).
$$
Therefore, there is $T>0$ such that for all $(x,\xi)\in\WF(A\chi)\cap V$, there is $0\leq t\leq T$ such that $(x-t\xi,\xi)\notin \{p_\theta=0\}$. Using a microlocal partition of unity on $\WF(A\chi)\cap V$, $\{X_j\}_{j=1}^N$, there are $0<T_j\leq T$ and $E_j\in \Psi^{\comp}$ with $\WF(E_j)\subset \{p_\theta\neq 0\}\cap \Omega_{\tr}\cap \operatorname{Ell}(B)$ such that
$$
\{ (x-t\xi,\xi)\,:\, (x,\xi)\in \WF(AX_j\chi)\cap V,\, 0\leq t\leq T_j\}\subset \operatorname{Ell}(B)
$$
and 
$$
 \{ (x-T_j\xi,\xi)\,:\, (x,\xi)\in \WF(AX_j\chi)\cap V\}\subset \operatorname{Ell}(E_j).
$$


Now, let $X\in \Psi^{\comp}$ with $\WF(X)\subset V$ and $\WF(I-X)\cap \{p_\theta=0\}=\emptyset$. Then, by the elliptic parametrix construction~\cite[Proposition E.32]{DyZw:19}
\begin{equation}
\label{e:high}
\|(I-X)A\chi u\|_{H_k^m}\leq C\|BP_ku\|_{H_k^{m-2}}+Ck^{-N}\|u\|_{H_k^{-N}}.
\end{equation}
On the other hand, using that $X\in \Psi^{\comp}$ and then that $\Im p_\theta \leq 0$ near $p_\theta=0$, by~\cite[Theorem E.47]{DyZw:19} we have
\begin{equation}
\label{e:low}
\|XAX_j\chi u\|_{H_k^m}\leq C\|XAX_j\chi u\|_{H_k^{-N}}+Ck^{-N}\|u\|_{H_k^{-N}}\leq Ck\|BP_ku\|_{L^2}+\|E_j u\|_{L^2}+Ck^{-N}\|u\|_{H_k^{-N}}.
\end{equation}
Finally, since $\WF(E_j)\subset \{p_\theta\neq 0\}\cap \operatorname{Ell}(B)$, we have
\begin{equation}
\label{e:low2}
\|E_j u\|_{L^2}\leq C\|BP_ku\|_{L^2}+Ck^{-N}\|u\|_{H_k^{-N}}.
\end{equation}
Combining~\eqref{e:high},~\eqref{e:low}, and~\eqref{e:low2}, and summing in $j$,
$$
\|A\chi u\|_{H_k^m}\leq Ck\|BP_ku\|_{H_k^{m-2}}+Ck^{-N}\|u\|_{H_k^{-N}}.
$$

\end{proof}

We now prove the key propagation lemma that allows us to improve resolvent estimates away from trapping. In particular, we estimate $u$ in an annulus away from $\Omega_-$ but inside $B(0,R_{\operatorname{PML}_-})$ 
\begin{lemma}
\label{l:DV}
Let $R_{\operatorname{PML}_+}>R_{\operatorname{PML}_-}$ with $B(0,R_{\operatorname{PML}_+})\Subset \Omega_{\tr}$, $a\in C_c^\infty((R_{\operatorname{scat}},R_{\operatorname{PML}_-}))$ and $b\in C_c^\infty(R_{\operatorname{scat}},R_{\operatorname{PML}_+})$ with 
$$
b\equiv 1 \text{ on } \{ x\in (R_{\operatorname{scat}}, R_{\operatorname{PML}_-})\,:\, x\geq \inf \supp a\}.
$$ 
and define $A=a(|x|)$, $B=B(|x|)$. Then, for $X\in {}^b\Psi^0$ with ${}^b\WF(I-X)\cap {}^b\WF(P_ku)=\emptyset$, given $k_0>0$ there exists $C>0$ such that for all $k\geq k_0$
$$
\|A u\|_{L^2}^2\leq Ck\|P_ku\|_{L^2}\|X u\|_{L^2}+Ck^2\|BP_ku\|_{L^2}^2+C_Nk^{-N}\|u\|^2_{L^2}.
$$
\end{lemma}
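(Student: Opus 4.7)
\textbf{Proof plan for Lemma \ref{l:DV}.}

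The proof uses a positive commutator (Morawetz-type) argument tailored to the free Helmholtz operator, exploiting the fact that on the support of $B$, the coefficients are trivial ($A \equiv I$, $n \equiv 1$, no PML) so that $p_\theta = |\xi|^2 - 1$ and the Hamiltonian flow is straight-line motion $\dot x = 2\xi$, $\dot \xi = 0$. The plan is to construct a self-adjoint semiclassical operator $Q = \Op_h(q) \in \Psi^0$ whose principal symbol $q(x,\xi)$ is localised in $\supp b$ and is designed so that its derivative along $H_{p_\theta}$ dominates $a(|x|)^2$ on the characteristic set. Concretely, working in polar coordinates on the annulus and writing $\xi_r := \hat x \cdot \xi$, the standard Morawetz choice
\[
q(x,\xi) := \psi(|x|)\, \xi_r\, \chi_0(p_\theta(x,\xi))
\]
gives, on $\{p_\theta = 0\}$,
\[
H_{p_\theta} q = 2\psi'(r)\xi_r^2 + \frac{2\psi(r)}{r}\bigl(1 - \xi_r^2\bigr),
\]
so by choosing $\psi(r) \geq 0$ with $\psi' \geq 0$, $\psi$ monotone and engineered so that a primitive-type relation makes $H_{p_\theta} q \geq c\, a(r)^2$ on $\{p_\theta = 0\} \cap \supp b$, one obtains the desired positivity.

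With this $Q$ in hand, I would compute
\[
\bigl\langle \tfrac{i}{h}[P_k, Q] u,\, u\bigr\rangle = \tfrac{2}{h}\,\Im\bigl\langle P_k u,\, Q u\bigr\rangle,
\]
and decompose the left-hand side into the principal part, which by G\aa rding applied to $\tfrac{i}{h}[P_k,Q] - A^*A$ (a semiclassical operator whose principal symbol is non-negative on the characteristic set) contributes $\|Au\|_{L^2}^2$ up to an error supported microlocally where $p_\theta \neq 0$ (controlled by $\|BP_k u\|_{L^2}^2$ via the elliptic parametrix on $\{p_\theta \neq 0\}$, as in Lemma \ref{l:outgoing}) plus an $O(h^\infty)$ remainder acting on $u$. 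The right-hand side is bounded by Cauchy--Schwarz as
\[
\tfrac{2}{h}\,|\langle P_k u, Q u\rangle| \leq \tfrac{C}{h}\|P_k u\|_{L^2}\|Q u\|_{L^2},
\]
and I would use pseudolocality of $Q$ together with the hypothesis ${}^b\WF(I-X) \cap {}^b\WF(P_k u) = \emptyset$ to replace $\|Qu\|_{L^2}$ by $C\|Xu\|_{L^2}$ modulo an $O(k^{-\infty})\|u\|_{L^2}$ remainder: write $Q = XQ + (I-X)Q$, move $X$ across the pairing $\langle P_k u, (I-X)Qu\rangle = \langle (I-X)^*P_k u, Qu\rangle$, and observe that $(I-X)^*P_k u = O(k^{-\infty})$ in any norm by the wavefront condition.

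Multiplying through by $h = k^{-1}$ gives exactly the advertised inequality
\[
\|Au\|_{L^2}^2 \leq Ck\|P_ku\|_{L^2}\|X u\|_{L^2} + Ck^2\|BP_k u\|_{L^2}^2 + C_N k^{-N}\|u\|_{L^2}^2,
\]
where the second term absorbs the elliptic contribution from $\{p_\theta \neq 0\} \cap \supp b$ (one factor of $k$ from the loss in the elliptic parametrix for each of the two factors of $\|BP_k u\|$, matching the $k^2$), and the $k^{-N}$ residual collects all the smoothing remainders from pseudodifferential composition and from the wavefront argument for $X$.

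The main obstacle I anticipate is the careful accounting of boundary terms in the commutator, specifically where $\psi$ and $\chi_0$ transition near $r = R_{\operatorname{scat}}$ (the inner edge) and near $r = R_{\operatorname{PML}_-}$ (the outer edge at which the PML turns on and $p_\theta$ acquires a nontrivial imaginary part). The choice of $\psi$ must make $H_{p_\theta}q$ non-negative everywhere it is produced by the commutator — not just on $\supp a$ — and the outgoing-boundary contribution at $r \sim R_{\operatorname{PML}_-}$ must have the correct sign (this is where the property that $\Im p_\theta \leq 0$ in the PML, combined with the positivity of $\xi_r$ on outgoing bicharacteristics emerging from $\supp a$, is used, essentially to absorb the otherwise uncontrolled flux into a term of favourable sign). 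The assumption $b \equiv 1$ on $\{r \geq \inf\supp a\} \cap (R_{\operatorname{scat}}, R_{\operatorname{PML}_-})$ is precisely what is needed so that $B$ is elliptic on the support of any error coming from cutting off $q$ in the region $\{p_\theta \neq 0\}$, which is why the elliptic-error term appears with $\|BP_k u\|$ rather than $\|P_k u\|$.
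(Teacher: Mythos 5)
Your proposal takes a genuinely different route from the paper in one respect and has a genuine gap in another, so let me address both.

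\textbf{Different commutant.} The paper's commutator argument uses the \emph{zeroth-order} radial multiplier $G^2 = g(|x|)^2$ (with $g \geq 0$, $g' \leq 0$, $g' \leq -1$ on $\supp a$) as the escape function. The principal symbol of $\tfrac{k}{2i}[P_k,G^2]$ is then $\sim g g' \xi_r$, which is \emph{not} sign-definite on the characteristic set: it changes sign across $\xi_r = 0$. This forces the paper to microlocalize via $E$ (supported in $\{\xi_r < 2\epsilon\}$) and control that piece by the propagation estimate of Lemma~\ref{l:outgoing}. Your Morawetz commutant $q = \psi(r)\xi_r\chi_0(p_\theta)$ is \emph{first-order}, and the symbol of its commutator, $2\psi'\xi_r^2 + \tfrac{2\psi}{r}(1-\xi_r^2)$, is sign-definite wherever $\psi \geq 0$ and $\psi' \geq 0$, so no incoming/outgoing splitting is needed there. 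You correctly observe, though, that $\psi$ must decrease somewhere to be compactly supported, and there a propagation argument (the analogue of Lemma~\ref{l:outgoing}) is still required. So both arguments ultimately invoke the same toolkit; your route relocates where the propagation estimate enters. The trade-off is that the paper's zeroth-order commutant keeps all the symbol computations trivial (only $\partial_r g^2$ appears), at the price of needing the $E$-splitting on the whole of $\supp g'$, whereas yours gets free positivity on a larger region but requires more care constructing $\psi$.

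\textbf{The gap.} You write that Garding applied to $\tfrac{i}{h}[P_k,Q] - A^*A$ contributes $\|Au\|^2$ plus an elliptic error plus ``an $O(h^\infty)$ remainder acting on $u$,'' and in your final display the residual $C_N k^{-N}\|u\|_{L^2}^2$ is said to ``collect all the smoothing remainders from pseudodifferential composition.'' This is not correct. Sharp Garding with a non-negative symbol gives a remainder $O(h)\|u\|^2 = O(k^{-1})\|u\|^2$, not $O(k^{-N})$: the remainder of the Garding inequality is a genuine $O(h)$ loss, not a smoothing composition error. The paper resolves this with an explicit \emph{iteration}: the first commutator step produces precisely a $k^{-1}\|B_1 u\|^2$ error, and this is then bootstrapped by re-applying the one-step estimate (with a slightly enlarged cutoff $\widetilde{B}_1$ replacing $A$, and a slightly enlarged $\widetilde{B}_2$ replacing $B_2$), gaining one factor of $k^{-1}$ per step and reaching $k^{-L}\|B_1 u\|^2$ for any $L$ after $L$ nested applications. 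This induction is what produces the $C_N k^{-N}\|u\|^2$ in the statement of the lemma, and it is entirely missing from your proposal. Without it, your argument only establishes the weaker estimate with $C k^{-1}\|u\|^2$ on the right, which does not suffice for the downstream use in Theorem~\ref{thm:DV2} and Lemma~\ref{l:halfAway}.
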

\begin{proof}
Let  $a,b_1\in C_c^\infty((R_{\operatorname{scat}},R_{\operatorname{PML}_-}))$ with 
$a\prec b_1$,
$\supp b_1\cap \{b<\tfrac{1}{2}\} =\emptyset$. Let $b_2\in C_c^\infty(R_{\operatorname{scat}},R_{\operatorname{PML}_+})$ with $\supp b_2\cap \{b<\frac{1}{2}\}=\emptyset$ and 
$$
b_2\equiv 1 \text{ on } \big\{ x\in (R_{\operatorname{scat}}, R_{\operatorname{PML}_-})\,:\, x\geq \inf \supp b_1\big\}.
$$ 
Let $A=a(|x|)$ and $B_j=b_j(|x|)$, $j=1,2$.  We claim that 
\begin{equation}
\label{e:step1DV}
c\|Au\|_{H_k^s}^2\leq Ck\|P_ku\|_{L^2}\| Xu\|_{L^2}+k^2\|B_2P_ku\|_{L^2}^2+Ck^{-1}\|B_1u\|_{L^2}^2+Ck^{-N}\|u\|_{L^2}^2.
\end{equation}
To establish \eqref{e:step1DV}, first let $g\in C_c^\infty(\mathbb{R})$ with $\supp g\subset  [0, R_{\operatorname{PML}_-})$, $g\geq 0$, $g'\leq 0$, $\supp g'\subset (R_{\operatorname{scat}},R_{\operatorname{PML}_-})$ 
$g'\leq -1$ on $\supp a$, and $\supp(1-b_1)\cap \supp g'=\emptyset$. Next, let $E\in \Psi^{0}$ with $0\leq \sigma(E)\leq 1$, and
\begin{gather*}
\WF(E)\subset \Big\{(x,\xi)\,: \, \langle \tfrac{x}{|x|},\xi\rangle<2 \e\langle\xi\rangle\Big\},\qquad \WF(I-E)\cap \Big\{ (x,\xi)\,:\,\langle \tfrac{x}{|x|},\xi\rangle<\e\langle\xi\rangle\Big\}=\emptyset.
\end{gather*}
Finally, let $b_0\in C_c^\infty ((R_{\operatorname{scat}},R_{\operatorname{PML}_-})$ with 
$g' \prec b_0\prec b_1$.

Put $G= g(|x|)$, $B_0=b_0(|x|)$ and consider 
$$
k\Im \big\langle P_ku,G^2 u\big\rangle = \frac{k}{2i}\big\langle [P_k,G^2]u,u\big\rangle=\frac{k}{2i}\big\langle B_0[P_k,G^2]B_0B_1u,B_1u\big\rangle. $$
Now, define $Z:=\frac{k}{2i}B_0[P_k,G^2]B_0\in \Psi^1$ and observe that
\begin{align*}
\sigma(Z)=b_0^2g(|x|)\langle \xi,\partial_xg(|x|)\rangle &= b_0^2g(|x|)\langle \xi,\tfrac{x}{|x|}\rangle g'(|x|)\\
&=b_0^2\Big(g(|x|)\langle \xi,\tfrac{x}{|x|}\rangle g'(|x|)(1-\sigma(E^2))+g(|x|)\langle \xi,\tfrac{x}{|x|}\rangle g'(|x|)\sigma(E^2)\Big)\\
&\leq b_0^2\Big(-c\e a^2\langle\xi\rangle(1-\sigma(E^2))+g(|x|)\langle \xi,\tfrac{x}{|x|}\rangle g'(|x|)\sigma(E^2)\Big)\\
&\leq b_0^2\Big(-c\e a^2\langle\xi\rangle+C\sigma(E^2)|\xi|\Big)\\
&\leq b_0^2\Big(-c\e a^2\langle \xi\rangle+C\sigma(E^2)\langle \xi\rangle + Cp_\theta^2\Big)
\end{align*}
Therefore, by the microlocal G\aa rding inequality~\cite[Proposition E.34]{DyZw:19}, 
$$
\frac{k}{2i}\langle [P_k,G^2]B_1u,B_1u\rangle \leq -c_\e\|B_0AB_1u\|_{H^1_k}^2+C\|B_0EB_1u\|_{H^1_k}^2 +C\|B_0P_kB_1u\|_{L^2}^2 +Ck^{-1}\|B_1u\|_{L^2}^2
$$
Thus, since $a\prec b_0\prec b_1$ and ${}^b\WF(I-X)\cap {}^b\WF(P_ku)=\emptyset$,
$$
c_\e\|A u\|_{L^2}^2\leq Ck\|P_ku\|_{L^2}\| G  X u\|_{L^2}+C\|EB_1u\|_{H^1_k}^2+C\|B_1P_ku\|_{L^2}^2+Ck^{-1}\|B_1u\|_{L^2}^2 +Ck^{-N}\|u\|_{H_k^{-N}}^2
$$
Then, by Lemma~\ref{l:outgoing}, 
$$
c_\e\|Au\|_{L^2}^2\leq Ck\|P_ku\|_{L^2}\| X u\|_{L^2}+ Ck^2\| B_2P_ku\|_{L^2}^2+Ck^{-1}\|B_1u\|_{L^2}^2+Ck^{-N}\|u\|_{H_k^{-N}}^2
$$
as claimed in \eqref{e:step1DV}.

Now, suppose by induction that for  $a,b_1\in C_c^\infty((R_{\operatorname{scat}},R_{\operatorname{PML}_-}))$ with $a\prec b_1$ and $b_2\in C_c^\infty(R_{\operatorname{scat}},R_{\operatorname{PML}_+})$ with $\supp b_2\cap \{b<\frac{1}{2}\}=\emptyset$ and 
$$
b_2\equiv 1 \text{ on } \Big\{ x\in (R_{\operatorname{scat}}, R_{\operatorname{PML}_-})\,:\, x\geq \inf \supp b_1\Big\},
$$ we have, with $A=a(|x|)$ and $B_1=b_1(|x|)$, $B_2=b_2(|x|)$, 
\begin{equation}
\label{e:step2DV}
c\|Au\|_{L^2}^2\leq Ck\|P_ku\|_{L^2}\| X u\|_{L^2}+Ck^2\|B_2 P_ku\|_{L^2}^2+Ck^{-L}\|B_1u\|_{L^2}^2+Ck^{-N}\|u\|_{H_k^{-N}}^2.
\end{equation}

Now, fix $a,b_1\in C_c^\infty((R_{\operatorname{scat}},R_{\operatorname{PML}_-}))$ with $a\prec b_1$, and $b_2\in C_c^\infty(R_{\operatorname{scat}},R_{\operatorname{PML}_+})$ with $\supp b_2\cap \{b<\frac{1}{2}\}=\emptyset$ and 
$$
b_2\equiv 1 \text{ on } \big\{ x\in (R_{\operatorname{scat}}, R_{\operatorname{PML}_-})\,:\, x\geq \inf \supp b_1\big\}.
$$

Then, by~\eqref{e:step2DV}, letting $\widetilde{b}_1\in C_c^\infty((R_{\operatorname{scat}},R_{\operatorname{PML}_-})) $ with 
$a\prec \widetilde{b}_1 \prec b_1$
and $\widetilde{b}_2\in C_c^\infty(R_{\operatorname{scat}},R_{\operatorname{PML}_+})$ with $\supp \widetilde{b}_2\cap \{b_2<\frac{1}{2}\}=\emptyset$ and 
$$
\widetilde{b}_2\equiv 1 \text{ on } \{ x\in (R_{\operatorname{scat}}, R_{\operatorname{PML}_-})\,:\, x\geq \inf \supp \widetilde{b}_1\},
$$
by~\eqref{e:step2DV} with $A=a(|x|)$ and $\widetilde{B}_1=b_1(|x|)$, $\widetilde{B}_2=b_2(|x|)$, 
\begin{equation*}
c\|Au\|_{L^2}^2\leq Ck\|P_ku\|_{L^2}\| X u\|_{L^2}+Ck^2\|\widetilde{B}_2P_ku\|_{L^2}^2+Ck^{-L}\|\widetilde{B}_1u\|_{L^2}^2+Ck^{-N}\|u\|_{H_k^{-N}}^2.
\end{equation*}
Now, by~\eqref{e:step1DV} with $A$ replaced by $\widetilde{B}_1$, 
\begin{equation*}
c\|\widetilde{B}_1u\|_{L^2}^2\leq Ck\|P_ku\|_{L^2}\| X u\|_{L^2}+Ck^{-1}\|B_1u\|_{L^2}^2+Ck^{-N}\|u\|_{H_k^{-N}}^2.
\end{equation*}
Hence,
\begin{equation*}
\begin{aligned}c\|Au\|_{L^2}^2&\leq Ck\|P_ku\|_{L^2}\| X u\|_{L^2}+Ck^2\|\widetilde{B}_2P_ku\|_{L^2}^2+Ck^{-L-1}\|B_1u\|_{L^2}^2+Ck^{-N}\|u\|_{H_k^{-N}}^2\\
&\leq  Ck\|P_ku\|_{L^2}\| X u\|_{L^2}+Ck^2\|B_2P_ku\|_{L^2}^2+Ck^{-L-1}\|B_1u\|_{L^2}^2+Ck^{-N}\|u\|_{H_k^{-N}}^2;
\end{aligned}
\end{equation*}
we have therefore obtained \eqref{e:step2DV} with $L$ replaced by $L+1$, and the result then follows by induction.
\end{proof}

\subsection{Estimates near the scatterer and away from trapping}

Before proceeding, we record the following consequences of~\cite[Proposition 4.6, Theorems 8.1 and 8.5]{Va:08}.
\begin{theorem}\label{l:ellipticVasy}
Let $A,E\in {}^b\Psi(\Omega)$ with ${}^b\WF(A)\cup{}^b\WF(E)\subset \Omega_{\tr}$ with ${}^b\WF(A)\subset {}^b\Ell(E)$ and ${}^b\WF(A)\cap {}^b\pi(\{p_\theta=0\})=\emptyset$. Then, for all $k_0>0$ there exists $C>0$ such that 
$$
\|Au\|_{L^2}\leq C\|EP_ku\|_{L^2}+C_Nk^{-N}\|u\|_{L^2}
$$
\end{theorem}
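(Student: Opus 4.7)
The statement is a standard elliptic estimate in the semiclassical $b$-pseudodifferential calculus of \cite{Va:08}, and my plan is to build a microlocal left parametrix $Q$ for the composition $EP_k$ recovering $A$ modulo a smoothing, $O(k^{-\infty})$ remainder, and then read off the bound by applying $Q$ to $EP_k u$.

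First I would combine the two hypotheses: since $p_\theta$ is nonvanishing on $\pi^b({}^b\WF(A))$ (so $P_k$ is $b$-elliptic there) and $\sigma(E)$ is nonvanishing on ${}^b\WF(A)$, the principal symbol $\sigma(E)\,p_\theta$ of $EP_k$ is elliptic on some ${}^b\overline{T^*\Omega_+}$-neighbourhood $U$ of ${}^b\WF(A)$ with $U\subset {}^b\Ell(E)$ and $U\cap\pi^b(\{p_\theta=0\})=\emptyset$. I would then build $Q\in{}^b\Psi^{-2}(\Omega)$ with ${}^b\WF(Q)\subset U$ by the usual asymptotic-summation recipe in the $b$-calculus: take a cutoff $\chi\in S^0({}^bT^*\Omega_+)$ with $\chi\equiv 1$ near ${}^b\WF(A)$ and $\supp\chi\subset U$, start from $Q_0:=\Op_b(\chi\,\sigma(A)/(\sigma(E)p_\theta))$, observe via the $b$-composition formula that $Q_0\cdot(EP_k)=A+k^{-1}R_1$ with $R_1\in {}^b\Psi^{-1}$ and ${}^b\WF(R_1)\subset U$, iterate $Q_{n+1}=Q_n-k^{-n}R_nQ_0$ to kill successive lower-order remainders, and Borel-sum to get $Q(EP_k)=A+R$ with $R\in k^{-\infty}\Psi_b^{-\infty}$. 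Applying this identity to $u$ and using the uniform $L^2$-boundedness of $Q$ and the smoothing of $R$ gives
\[
\|Au\|_{L^2}\leq \|Q\|_{L^2\to L^2}\|EP_k u\|_{L^2}+\|Ru\|_{L^2}\leq C\|EP_k u\|_{L^2}+C_Nk^{-N}\|u\|_{L^2},
\]
as required.

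The one non-routine point is checking that $P_k$, which away from $\partial\Omega_-$ is a standard semiclassical second-order differential operator, also fits into the $b$-framework near $\partial\Omega_-$ and composes correctly with elements of ${}^b\Psi(\Omega)$; this is exactly the setting of \cite{Va:08}, since the coefficients of $P_k$ extend smoothly across $\partial\Omega_-$ and $p_\theta$ pulls back to a smooth function on ${}^bT^*\Omega_+$ that is elliptic where it is nonzero. Once this framework-level fact is invoked, the composition rules, the ellipticity of $EP_k$ on $U$, and the existence of the parametrix $Q$ with the stated properties are exactly \cite[Proposition 4.6, Theorems 8.1 and 8.5]{Va:08}, which is how the theorem is stated to follow. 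I therefore expect the main obstacle not to be in the symbolic inversion step itself but rather in bookkeeping the $b$-calculus near the boundary and tracking that the smoothing residual $R$ produces a genuine $k^{-N}\|u\|_{L^2}$ error on $\Omega$.
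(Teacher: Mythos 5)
Your plan — build a left $b$-parametrix $Q$ for $EP_k$ by symbolic inversion — is correct in spirit away from $\partial\Omega_-$, and there it essentially recreates the case the paper handles by citing the standard elliptic parametrix of \cite[Proposition E.32]{DyZw:19}. But near $\partial\Omega_-$ the construction breaks down at exactly the step you describe as ``non-routine,'' and the breakdown is not a bookkeeping issue but a substantive one.

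The problem is that $P_k$ is not an element of ${}^b\Psi^2(\Omega)$, so the composition $Q_0\cdot(EP_k)$ is not covered by the $b$-calculus composition theorem, and $Q_0 := \Op_b\big(\chi\,\sigma(A)/(\sigma(E)p_\theta)\big)$ is not a valid $b$-quantization. In Fermi coordinates $(x_1,x')$ near $\partial\Omega_-$, a $b$-symbol must be a smooth bounded function of $\sigma = x_1\xi_1$ and $\xi'$, whereas $p_\theta = g^{11}\xi_1^2 + \cdots - 1$ depends directly on $\xi_1 = \sigma/x_1$ and is therefore singular as a function on ${}^b\overline{T^*\Omega}$ as $x_1\to 0$. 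So $1/p_\theta$ is not in $S^{-2}({}^bT^*\Omega)$, and there is no $b$-asymptotic sum to Borel-sum. (This is also why the hypothesis is phrased as ${}^b\WF(A)\cap {}^b\pi(\{p_\theta=0\})=\emptyset$: one must first push the characteristic set into a compressed bundle before it even has the right domain.) Vasy's Proposition 4.6, which the paper cites for the boundary case, does not proceed by parametrix construction — it is a quadratic-form/Dirichlet-form estimate that uses the boundary condition (Dirichlet or Neumann) and coercivity to control $\|Au\|$ by $\|EP_ku\|$ plus an error, precisely because ordinary symbolic inversion of $P_k$ is unavailable at the boundary. Your citation is also slightly off: Theorems 8.1 and 8.5 of \cite{Va:08} are the propagation estimates, used in this paper for Theorem~\ref{l:propagateVasy}, not for the elliptic estimate here.

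To repair the argument you would need to split into two regimes, as the paper does: for the part of ${}^b\WF(A)$ with $T^*\partial\Omega_-$-component absent, your parametrix argument in the ordinary semiclassical calculus works and gives the estimate; for the part with $b$-wavefront set meeting $\partial\Omega_-$, replace the parametrix step by the quadratic-form estimate of \cite[Proposition 4.6]{Va:08} (or reproduce that argument), using the boundary condition encoded in $\cZ_k$.
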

\begin{proof}
The estimate follows from~\cite[Proposition 4.6]{Va:08} when ${}^b\WF(A)\subset B(0,R_{\operatorname{scat}})$ and from the standard elliptic parametrix construction~\cite[Proposition E.32]{DyZw:19}, when ${}^b\WF(A)\cap T^*\partial \Omega_-=\emptyset$. 
\end{proof}

\begin{theorem}\label{l:propagateVasy}
Let $A,B,E\in {}^b\Psi(\Omega)$ such that 
\begin{gather*}
{}^b\WF(A)\cup\bigcup_{t=0}^T \varphi_{-t}({}^b\WF(A)\cap\pi^b(\{ \Re p_\theta=0\})\subset {}^b\Ell(E),\\
\text{ and for all }  q\in {}^b\WF(A)\cap\pi^b(\{ \Re p_\theta=0\}),\qquad \bigcup_{t=0}^T\varphi_{-t}(q)\cap {}^b\Ell(B)\neq \emptyset.
\end{gather*}
 Then, for all $k_0>0$ there exists $C>0$ such that 
$$
\|Au\|_{L^2}\leq Ck\|EP_ku\|_{L^2}+\|Bu\|_{L^2}+C_Nk^{-N}\|u\|_{L^2}
$$
\end{theorem}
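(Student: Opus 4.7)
The plan is to reduce the statement to the $b$-propagation and $b$-elliptic estimates of \cite{Va:08}, combined with a standard microlocal partition of unity and compactness argument. First, decompose ${}^b\WF(A)$ into the elliptic part ${}^b\WF(A)\setminus\pi^b(\{\Re p_\theta=0\})$ and the characteristic part ${}^b\WF(A)\cap\pi^b(\{\Re p_\theta=0\})$. On the elliptic part, Theorem \ref{l:ellipticVasy} gives directly the desired bound with the $\|Bu\|_{L^2}$ term absent (absorbed into the $O(k^{-N})$ remainder), provided one chooses an auxiliary cut-off microlocalized in ${}^b\Ell(E)$; since ${}^b\WF(A)\subset {}^b\Ell(E)$ by hypothesis, this is automatic.

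For the characteristic part, the idea is to push an estimate from ${}^b\Ell(B)$ back along $\varphi_{-t}$ up to ${}^b\WF(A)$. Fix $q\in {}^b\WF(A)\cap\pi^b(\{\Re p_\theta=0\})$; by hypothesis there is $t_q\in[0,T]$ with $\varphi_{-t_q}(q)\in {}^b\Ell(B)$, and by continuity of the generalised broken bicharacteristic flow (see \cite[Section 24.3]{Ho:85} and \cite{Va:08}) there is an open neighbourhood $U_q$ of $q$ in ${}^b\overline{T^*\Omega}$ on which the same is true (with a slightly larger $t$). Choose a partition of $\{q\in {}^b\WF(A)\cap\pi^b(\{\Re p_\theta = 0\})\}$ by finitely many such $U_{q_i}$ using compactness of ${}^b\WF(A)$, and construct $A_i\in {}^b\Psi(\Omega)$ with ${}^b\WF(A_i)\subset U_{q_i}$ and $A=\sum_i A_i$ microlocally on the characteristic set (plus a residual piece microlocalized in the elliptic set, handled by the previous step). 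Apply \cite[Theorems 8.1 and 8.5]{Va:08}, in their semiclassical form with parameter $h=k^{-1}$, to each $A_i$: propagation along $\varphi_{-t}$ for $t\in[0,t_{q_i}+\varepsilon]$ yields
\[
\|A_iu\|_{L^2}\le Ck\|E_i P_k u\|_{L^2}+\|B_iu\|_{L^2}+C_Nk^{-N}\|u\|_{L^2},
\]
where $E_i$ can be chosen so that ${}^b\WF(E_i)$ is contained in a small neighbourhood of the backward trajectory from $U_{q_i}$ (which by hypothesis lies inside ${}^b\Ell(E)$) and $B_i$ is microlocalized in ${}^b\Ell(B)$. The factor $k$ comes from the semiclassical loss of one derivative in the Duhamel-type positive-commutator argument, together with the fact that $P_k$ is an order-two operator with $h^2$ in front of its principal part.

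Summing over $i$, using ellipticity of $E$ on the relevant region to replace each $E_i P_k u$ by $E P_k u$ (up to $O(k^{-\infty})$ remainders via Theorem \ref{l:ellipticVasy}), and similarly using ellipticity of $B$ to replace $B_i u$ by $Bu$, yields the claim. The main obstacle will be carefully handling the glancing points on $\pi^b\partial\Omega_-$, where the generalised broken bicharacteristic flow can branch or fail to be $C^1$; however, this is exactly the situation treated in \cite[Section 4 and Theorems 8.1, 8.5]{Va:08}, where the positive-commutator arguments are carried out with symbols adapted to the $b$-calculus, and the $k$-dependence of the constants is tracked by inspection of the proof in the semiclassical setting.
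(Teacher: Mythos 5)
Your proof takes essentially the same route as the paper, which simply combines the $b$-propagation results of \cite[Theorems 8.1 and 8.5]{Va:08} (for Dirichlet and Neumann boundary conditions respectively, near $\partial\Omega_-$) with the standard semiclassical propagation estimate \cite[Theorem E.47]{DyZw:19} in the interior; your compactness/partition-of-unity unpacking is the correct way to read those citations. One small imprecision: the step where you replace $E_iP_ku$ by $EP_ku$ and $B_iu$ by $Bu$ is a $b$-pseudodifferential elliptic parametrix argument (since ${}^b\WF(E_i)\subset{}^b\Ell(E)$ etc.), not an application of Theorem \ref{l:ellipticVasy}, which concerns ellipticity of $P_k$ itself rather than of the microlocal cutoffs.
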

\begin{proof}
The estimates follow from the combination of the propagation results in~\cite[Theorem 8.1]{Va:08} (for Dirichlet boundary conditions on $\partial\Omega_-$) and \cite[Theorem 8.5]{Va:08} (for Neumann boundary conditions on $\partial\Omega_-$) applied near the $\partial\Omega$ and~\cite[Theorem E.47]{DyZw:19} applied away from $\partial\Omega_-$. 
\end{proof}

\

Our next lemma shows that, to measure $u$ away from trapping, we need only have an estimate for $u$ in an annulus.
\begin{lemma}
\label{l:propagate}
Suppose that $A\in {}^b\Psi(\Omega)$ and ${}^b\WF(A)\cap K=\emptyset$. Then, for any $R_{\operatorname{scat}}<R_1<R_{\operatorname{PML}_-}$ and $B\in C_c^\infty(\Omega)$ with $\supp (1-B)\cap \{|x|=R_1\}=\emptyset$, given $k_0>0$ there exists $C>0$ such that for all $k\geq k_0$
\beq\label{e:savingSpace}
\|Au\|_{L^2}\leq Ck\|P_ku\|_{L^2}+\|Bu\|_{L^2}+C_Nk^{-N}\|u\|_{L^2}^2.
\eeq
\end{lemma}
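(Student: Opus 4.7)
\medskip
\noindent\emph{Proof plan for Lemma \ref{l:propagate}.} The plan is to microlocalise $A$ and estimate each piece separately, using elliptic regularity where possible and propagation of singularities elsewhere. Fix a small open neighbourhood $U_1 \subset \Omega$ of $\{|x|=R_1\}$ on which $B\equiv 1$. Let $\Sigma:={}^b\WF(A)\cap\pi^b(\{\Re p_\theta=0\})$ denote the characteristic part of the wavefront set of $A$; by hypothesis $\Sigma$ is a compact subset of ${}^bT^*\Omega$ disjoint from the trapped set $K=\Gamma_+\cap\Gamma_-$.

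First, I would write $A = A_{\mathrm{ell}}+A_{\mathrm{prop}}$ with $A_{\mathrm{ell}},A_{\mathrm{prop}}\in{}^b\Psi(\Omega)$ chosen so that ${}^b\WF(A_{\mathrm{ell}})\cap\pi^b(\{\Re p_\theta=0\})=\emptyset$ while ${}^b\WF(A_{\mathrm{prop}})$ lies in an arbitrarily small neighbourhood of $\Sigma$. Theorem~\ref{l:ellipticVasy} applied with a suitable elliptic $E\in{}^b\Psi(\Omega)$ (microsupported near ${}^b\WF(A_{\mathrm{ell}})$ hence with $\sigma(P_k)$ elliptic on ${}^b\WF(E)$) immediately yields $\|A_{\mathrm{ell}}u\|_{L^2}\leq C\|P_ku\|_{L^2}+C_Nk^{-N}\|u\|_{L^2}$, which is absorbed into the right-hand side of \eqref{e:savingSpace}. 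Thus it suffices to bound $\|A_{\mathrm{prop}}u\|_{L^2}$.

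For the propagation part, note that each $q\in\Sigma$ either satisfies $q\notin\Gamma_+$ (so the backward generalised bicharacteristic $\{\varphi_{-t}(q):t\geq 0\}$ leaves every compact subset of ${}^bT^*\Omega$ in finite time) or $q\notin\Gamma_-$ (so the forward bicharacteristic does). Since all bicharacteristics reflect off $\partial\Omega_-$ and $R_{\mathrm{scat}}<R_1<R_{\mathrm{PML}_-}$, any escape from the scattering region must cross the sphere $\{|x|=R_1\}$, so in either case there exists $T_q>0$ and a sign $\varepsilon_q\in\{+1,-1\}$ with $\varphi_{\varepsilon_q T_q}(q)\in{}^bT^*U_1$. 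By continuity of the generalised broken bicharacteristic flow and compactness of $\Sigma$, I would extract a finite open cover $\{V_j\}_{j=1}^J$ of $\Sigma$ together with times $T_j>0$ and signs $\varepsilon_j\in\{+1,-1\}$ such that $\varphi_{\varepsilon_j s}(q')\in{}^bT^*U_1$ for every $q'\in V_j$ and some $s\in[0,T_j]$, and such that the entire trajectory $\{\varphi_{\varepsilon_j t}(q'):q'\in V_j,\,0\leq t\leq T_j\}$ is contained in ${}^bT^*\Omega$. Choose a subordinate $b$-pseudodifferential partition $A_{\mathrm{prop}}=\sum_j A_j$ with ${}^b\WF(A_j)\Subset V_j$, and for each $j$ a controller $B_j\in{}^b\Psi(\Omega)$ elliptic on $\varphi_{\varepsilon_j T_j}({}^b\WF(A_j))$ and microsupported in $U_1$ (so $B_j=BB_j$ microlocally modulo $O(k^{-\infty})$).

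Applying Theorem~\ref{l:propagateVasy} to each $A_j$ for which $\varepsilon_j=-1$, and its time-reversed analogue (valid by the symmetry of propagation of singularities for real-principal-type operators, as in~\cite{Va:08}) for those with $\varepsilon_j=+1$, yields
$\|A_j u\|_{L^2}\leq Ck\|E_jP_ku\|_{L^2}+\|B_j u\|_{L^2}+C_Nk^{-N}\|u\|_{L^2}$
for suitable cutoffs $E_j\in{}^b\Psi(\Omega)$; since $B_j$ is microsupported where $B\equiv 1$, we may absorb $\|B_j u\|_{L^2}\leq \|Bu\|_{L^2}+C_Nk^{-N}\|u\|_{L^2}$ using $B_j=B_j B+O(k^{-\infty})$. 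Summing over the finite index set $j$ and combining with the elliptic estimate completes the proof of \eqref{e:savingSpace}. The main obstacle is precisely the need to handle both propagation directions uniformly: the cited Theorem~\ref{l:propagateVasy} is stated only for the backward direction, so care is required to justify the forward analogue near $\partial\Omega_-$ where glancing and reflection must be treated via Vasy's generalised broken bicharacteristic framework---but this is symmetric in the sense of bicharacteristics for Dirichlet and Neumann boundary conditions, and the requisite forward version follows from~\cite[Theorems~8.1 and~8.5]{Va:08} by reversing time.
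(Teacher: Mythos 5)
Your decomposition into an elliptic piece and a propagation piece, the use of Theorem~\ref{l:ellipticVasy} for the former, and the idea of propagating each microlocal piece forward or backward using Theorem~\ref{l:propagateVasy} (and its time‑reversal) is sound and matches the paper's overall strategy. However, there is a genuine gap in your treatment of the propagation part.

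You claim that ``any escape from the scattering region must cross the sphere $\{|x|=R_1\}$'', and you conclude that every escaping trajectory through $\Sigma = {}^b\WF(A)\cap\pi^b(\{\Re p_\theta=0\})$ passes through $U_1\subset\{B\equiv 1\}$. This is false for points of $\Sigma$ with $|x|>R_1$. Since $B$ is only required to be $\equiv 1$ near $\{|x|=R_1\}$ (it may vanish elsewhere), and the coefficients are trivial in the annulus $R_{\operatorname{scat}}<|x|<R_{\operatorname{PML}_-}$, a point $q=(x_0,\xi_0)$ with $|x_0|>R_1$ and $\xi_0$ radially outward produces an escaping bicharacteristic in the chosen direction that moves directly outward into the PML region without ever crossing $\{|x|=R_1\}$, hence never entering $U_1$. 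Your argument would then reach a controller $B_j$ microsupported deep in the PML rather than in $\{B\equiv1\}$, and the term $\|B_j u\|$ cannot be absorbed into $\|Bu\|$.

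The paper's proof explicitly accounts for this. In the case ${}^b\WF(A)\subset\{|x|>R_1\}$, after propagating backwards for bounded time one either reaches $\{B\equiv 1\}$ \emph{or} the deep PML region $U$ from Lemma~\ref{l:nearPMLBoundary}; the contribution $\|\psi u\|_{L^2}$ supported there is then bounded by $Ck\|P_ku\|_{L^2}+Ck^{-N}\|u\|_{H_k^{-N}}$ using Lemma~\ref{l:nearPMLBoundary} (PML ellipticity), which already appears on the right-hand side of \eqref{e:savingSpace}. Only in the other case, ${}^b\WF(A)\subset\{|x|<R_1\}$, can one rely on the fact that escape (in some direction, staying inside $B(0,R_{\operatorname{PML}_-})$) must cross $\{|x|=R_1\}$ and hence reach $\{B>1/2\}$; this is the case your compactness/partition argument handles correctly. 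To repair your proof, you should split ${}^b\WF(A)$ according to whether $|x|\gtrless R_1$ (or equivalently route the outward-escaping pieces of your microlocal partition to the PML) and invoke Lemma~\ref{l:nearPMLBoundary} as the termination of propagation for those pieces.
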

\begin{proof}
First, by Lemma~\ref{l:nearPMLBoundary} 
we may assume that 
$$
{}^b\WF(A)\subset \Omega_{\tr}.
$$
Next observe that if ${}^b\WF(A)\cap {}^b\pi(\{p_\theta=0\})=\emptyset$, then by the ellipticity results in Theorem~\ref{l:ellipticVasy}
$$
\|Au\|_{L^2}\leq C\|P_ku\|_{L^2}+C_Nk^{-N}\|u\|_{L^2}.
$$
Therefore, we may assume that ${}^b\WF(A)$ is contained in a small neighbourhood of ${}^b\pi(\{p_\theta=0\})$. 

If ${}^b\WF(A)\subset \{B\equiv 1\}$ then, by the elliptic parametrix \cite[Proposition E.32]{DyZw:19},
$$
\|Au\|_{L^2}\leq C\|Bu\|_{L^2}+Ck^{-N}\|u\|_{L^2}.
$$ 
Therefore, using a partition of unity, we need only consider two cases: ${}^b\WF(A)\subset \{|x|>R_1\}$ and ${}^b\WF(A)\subset \{|x|<R_1\}$. 

First, suppose that ${}^b\WF(A)\subset \{|x|>R_1\}$. Let $U$ be as in Lemma~\ref{l:nearPMLBoundary}. Then, there exists $T>0$ such that for all $(x,\xi)\in {}^b\WF(A)$, there is $t\in[0,T]$ such that 
$$
\varphi_{-t}(x,\xi)\in \{B\equiv 1\}\cup \{(x,\xi)\,:x\in U\}
$$
(i.e., flowing backwards, one either hits $B\equiv 1$ or the PML).
In particular, by the propagation results~\cite[Theorem E.47]{DyZw:19}
 there is $\psi \in C^\infty(U)$ with $\psi\equiv 1$ near $\Gamma_{\tr}$ such that 
$$
\|Au\|_{L^2}\leq Ck\|P_ku\|_{L^2}+\|Bu\|_{L^2}+\|\psi u\|_{L^2}+Ck^{-N}\|u\|_{L^2}.
$$
By Lemma~\ref{l:nearPMLBoundary}, we then obtain
$$
\|Au\|_{L^2}\leq Ck\|P_ku\|_{L^2}+\|Bu\|_{L^2}+Ck^{-N}\|u\|_{L^2}
$$
as required.

Next, suppose ${}^b\WF(A)\subset\{|x|<R_1\}$. Then, since ${}^b\WF(A)\cap K=\emptyset$, applying a partition of unity again, we may assume there exists $T>0$ such that for all $(x,\xi)\in {}^b\WF(A)$, either there is $t\in[0,T]$ such that 
$$
\varphi_t(x,\xi)\in \Big\{(x,\xi)\,:\, B(x)>\tfrac{1}{2}\Big\},\qquad \bigcup_{s\in[0,t]}\varphi_s(x,\xi)\in B(0,R_{\operatorname{PML}_-}),
$$
(informally, one flows forwards from $A$, staying away from the PML region, and reaches where $B>1/2$ at time $t$)
or there is $t\in[0,T]$ such that 
$$
\varphi_{-t}(x,\xi)\in \Big\{(x,\xi)\,:\, B(x)>\tfrac{1}{2}\Big\},\qquad \bigcup_{s\in[-t,0]}\varphi_s(x,\xi)\in B(0,R_{\operatorname{PML}_-}).
$$
(informally, one flows backwards from $A$, staying away from the PML region, and reaches where $B>1/2$ at time $t$).
The result \eqref{e:savingSpace} then follows by the propagation results of Theorem \ref{l:propagateVasy}.
\end{proof}

\

Finally, we combine the above lemmas to show that we may estimate $u$ away from trapping by $u$ near the wavefront set of $P_ku$. In particular, this will improve the resolvent estimate when the measurement is away from trapping.
\begin{lemma}
\label{l:halfAway}
Suppose that $A\in {}^b\Psi(\Omega)$ and ${}^b\WF(A)\cap K=\emptyset$. Then, for any $X\in {}^b\Psi^0$ with ${}^b\WF(I-X)\cap {}^b\WF(P_ku)=\emptyset$, given $k_0>0$ there exists $C>0$ such that for all $k\geq k_0$
$$
\|Au\|^2_{L^2}\leq Ck\|P_ku\|_{L^2}\|Xu\|_{L^2}+Ck^2\|P_ku\|_{L^2}^2+C_Nk^{-N}\|u\|_{L^2}^2.
$$
\end{lemma}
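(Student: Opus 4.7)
The plan is to combine Lemma~\ref{l:propagate} (which reduces measurement of $Au$ in the absence of trapping to measurement on a compact annulus plus a $k\|P_ku\|$ term) with Lemma~\ref{l:DV} (which provides a \emph{quadratic} estimate on a radial annular cutoff, with the sharper $\sqrt{\|P_ku\|\|Xu\|}$-type factor). The point is that Lemma~\ref{l:propagate} gives a linear-in-$\|P_ku\|$ bound, but once squared the $k\|P_ku\|$ term already becomes $Ck^2\|P_ku\|^2$, so all that remains is to handle the annular piece by the improved estimate of Lemma~\ref{l:DV}.

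First I would fix radii $R_{\operatorname{scat}}<R_0<R_1<R_{\operatorname{PML}_-}<R_{\operatorname{PML}_+}$ with $B(0,R_{\operatorname{PML}_+})\Subset\Omega_{\tr}$ and choose radial cutoffs $a,b\in C^\infty_c(\mathbb{R};[0,1])$ with $\supp a\subset (R_0,R_{\operatorname{PML}_-})$, $a\equiv 1$ near $|x|=R_1$, $\supp b\subset (R_{\operatorname{scat}},R_{\operatorname{PML}_+})$, and $b\equiv 1$ on $\{x:|x|\in(R_{\operatorname{scat}},R_{\operatorname{PML}_-}),\,|x|\geq\inf\supp a\}$. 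Setting $B_0:=a(|x|)$ and $B_1:=b(|x|)$ (so that $a\prec b$ in the sense required by Lemma~\ref{l:DV}), then by Lemma~\ref{l:propagate} applied to $A$ and the cutoff $B_0$ (which is supported in a fixed neighbourhood of $\{|x|=R_1\}$ away from trapping),
\[
\|Au\|_{L^2}\leq Ck\|P_ku\|_{L^2}+\|B_0u\|_{L^2}+C_Nk^{-N}\|u\|_{L^2}.
\]

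Squaring and applying $(\alpha+\beta+\gamma)^2\leq 3(\alpha^2+\beta^2+\gamma^2)$ gives
\[
\|Au\|_{L^2}^2\leq Ck^2\|P_ku\|_{L^2}^2+C\|B_0u\|_{L^2}^2+C_Nk^{-N}\|u\|_{L^2}^2.
\]
Now Lemma~\ref{l:DV} with $A=B_0$ and $B=B_1$ (which is applicable since $B_0,B_1$ are of the required radial form with $a\prec b$) yields
\[
\|B_0u\|_{L^2}^2\leq Ck\|P_ku\|_{L^2}\,\|Xu\|_{L^2}+Ck^2\|B_1P_ku\|_{L^2}^2+C_Nk^{-N}\|u\|_{L^2}^2,
\]
and bounding $\|B_1P_ku\|_{L^2}\leq \|b\|_{L^\infty}\|P_ku\|_{L^2}\leq C\|P_ku\|_{L^2}$ and substituting into the previous display gives the claimed inequality (after absorbing constants and adjusting $N$).

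There is no real obstacle here beyond bookkeeping: the only point requiring care is verifying that the annular cutoff $B_0$ simultaneously satisfies the hypotheses of Lemma~\ref{l:propagate} (compactly supported in $\Omega$, containing the sphere $\{|x|=R_1\}$ in its interior) and plays the role of the inner cutoff in Lemma~\ref{l:DV} (radial, supported in $(R_{\operatorname{scat}},R_{\operatorname{PML}_-})$ with a suitable outer dominating cutoff $B_1$). The choice above evidently does both. The remainder term $C_Nk^{-N}\|u\|_{L^2}^2$ is already in the desired form, and absorbing the term $C_Nk^{-N}\|u\|_{L^2}^2$ coming from squaring the $C_Nk^{-N}\|u\|_{L^2}$ of Lemma~\ref{l:propagate} is trivial (replace $N$ by $N+1$, say).
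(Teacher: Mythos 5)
Your proof is correct and follows essentially the same route as the paper: apply Lemma~\ref{l:propagate} to reduce the estimate for $\|Au\|$ to an estimate on a radial annular cutoff $B_0u$ plus a $k\|P_ku\|$ term, square, then invoke Lemma~\ref{l:DV} for $\|B_0u\|^2$ and bound $\|B_1P_ku\|\le C\|P_ku\|$. You spell out the cutoff bookkeeping a bit more explicitly than the paper's two-line proof (which reuses the same radial cutoff as both the $B$ of Lemma~\ref{l:propagate} and the $A$ of Lemma~\ref{l:DV}, leaving the choice of outer cutoff in Lemma~\ref{l:DV} implicit), but the argument is the same.
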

\begin{proof}
Let $R_{\operatorname{scat}}<R_1<R_{\operatorname{PML}_-}$ and $b\in C_c^\infty(R_{\operatorname{scat}},R_{\operatorname{PML}_-})$ with $\supp (1-b)\cap \{|x|=R_1\}=\emptyset. $ Then, by Lemma~\ref{l:propagate}
$$
\|Au\|^2_{L^2}\leq Ck^2\|P_ku\|_{L^2}^2+C\|Bu\|_{L^2}^2+C_Nk^{-N}\|u\|_{L^2}^2,
$$
and, by Lemma~\ref{l:DV},
$$
\|Bu\|_{L^2}^2\leq Ck\|P_ku\|_{L^2}\|X u\|_{L^2}+Ck^2\|P_ku\|_{L^2}^2+C_Nk^{-N}\|u\|_{L^2}^2,
$$
which completes the proof.
\end{proof}

\

When the both the data and measurement are away from trapping, we can use the previous lemma to improve our estimates further-- all the way to a non-trapping type bound.
\begin{lemma}
\label{l:fullAway}
Suppose that ${}^b\WF(P_ku)\cap K=\emptyset$, then for any $A\in {}^b\Psi(\Omega)$ with ${}^b\WF(A)\cap K=\emptyset$ given $k_0>0$ there exists $C>0$ such that for all $k\geq k_0$
$$
\|Au\|_{L^2}\leq Ck\|P_ku\|_{L^2}+Ck^{-N}\|u\|_{L^2}.
$$
\end{lemma}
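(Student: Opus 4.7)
The plan is to bootstrap Lemma~\ref{l:halfAway} by applying it twice: first to a judiciously chosen cutoff operator adapted to the wavefront set of $P_k u$, then to the original $A$. The polynomial square-root loss in the resolvent estimate that appeared in the proofs of the preceding lemmas will be eliminated via Young's inequality by exploiting the freedom in choosing $X$.

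Since ${}^b\WF(P_ku)\cap K=\emptyset$, we may select $X\in {}^b\Psi^0$ with ${}^b\WF(X)\cap K=\emptyset$ and ${}^b\WF(I-X)\cap{}^b\WF(P_ku)=\emptyset$ simultaneously (take $X$ to be microlocally the identity on a small neighbourhood of ${}^b\WF(P_ku)$, which stays away from $K$, and supported away from $K$). This is the key observation: the single operator $X$ plays a dual role, serving both as the ``measurement'' operator to which Lemma~\ref{l:halfAway} applies and as the admissible cutoff on the right-hand side of that same lemma.

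Applying Lemma~\ref{l:halfAway} with $A$ replaced by $X$ and with the cutoff on the right also taken to be $X$ yields
\[
\|Xu\|_{L^2}^2\leq Ck\|P_ku\|_{L^2}\|Xu\|_{L^2}+Ck^2\|P_ku\|_{L^2}^2+C_Nk^{-N}\|u\|_{L^2}^2.
\]
Young's inequality $Ck\|P_ku\|_{L^2}\|Xu\|_{L^2}\leq \tfrac12\|Xu\|_{L^2}^2+\tfrac{C^2}{2}k^2\|P_ku\|_{L^2}^2$ lets us absorb the mixed term on the left, giving
\[
\|Xu\|_{L^2}\leq Ck\|P_ku\|_{L^2}+C_Nk^{-N}\|u\|_{L^2}.
\]

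Applying Lemma~\ref{l:halfAway} to the original $A$ with cutoff $X$ (which is admissible since ${}^b\WF(I-X)\cap{}^b\WF(P_ku)=\emptyset$) gives
\[
\|Au\|_{L^2}^2\leq Ck\|P_ku\|_{L^2}\|Xu\|_{L^2}+Ck^2\|P_ku\|_{L^2}^2+C_Nk^{-N}\|u\|_{L^2}^2.
\]
Substituting the bound on $\|Xu\|_{L^2}$ and using Young's inequality once more to handle the cross term $k^{1-N}\|P_ku\|_{L^2}\|u\|_{L^2}$ (which is dominated by $k^2\|P_ku\|_{L^2}^2+k^{-N'}\|u\|_{L^2}^2$ for any $N'$, upon adjusting $N$) yields
\[
\|Au\|_{L^2}^2\leq Ck^2\|P_ku\|_{L^2}^2+C_Nk^{-N}\|u\|_{L^2}^2,
\]
and taking square roots completes the proof after relabelling $N$. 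There is no real obstacle here beyond verifying the microlocal partition of unity argument that produces $X$ with the two compatible support properties; the analytical core is Lemma~\ref{l:halfAway}, and the present lemma is essentially a self-improvement of that estimate enabled by the hypothesis that $P_ku$ is microlocalised away from the trapped set.
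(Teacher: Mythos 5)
Your proof is correct and follows essentially the same bootstrap strategy as the paper: use ${}^b\WF(P_ku)\cap K=\emptyset$ to choose the cutoff $X$ with wavefront set both microlocally the identity near ${}^b\WF(P_ku)$ and disjoint from $K$, so that the cross term $k\|P_ku\|\|Xu\|$ in Lemma~\ref{l:halfAway} can be absorbed by Young's inequality. The only difference is in implementation. You apply Lemma~\ref{l:halfAway} twice (once with $A$ replaced by $X$ to get the self-improved bound on $\|Xu\|$, then to the original $A$). The paper instead introduces an enlarged operator $\tilde A\in {}^b\Psi(\Omega)$ with ${}^b\WF(\tilde A)\cap K=\emptyset$ that is microlocally the identity on both ${}^b\WF(A)$ and ${}^b\WF(X)$, applies Lemma~\ref{l:halfAway} once to $\tilde A$ with cutoff $X$, uses an elliptic parametrix in the $b$-calculus to obtain $\|Xu\|\leq C\|\tilde Au\|+Ck^{-N}\|u\|$, and then absorbs the resulting $\|\tilde Au\|$ on the left via Young's inequality; finally it reads off the bound on $\|Au\|$ from ellipticity of $\tilde A$ on ${}^b\WF(A)$. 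The two routes are equally valid: yours is arguably slightly more transparent (one clear step of self-improvement, then reuse), while the paper's is marginally more economical in that the propagation estimate is invoked only once, at the cost of the intermediate elliptic-parametrix step and the auxiliary operator $\tilde A$.
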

\begin{proof}
Let $\tilde{A},X\in {}^b\Psi(\Omega)$ with ${}^b\WF(I-X)\cap {}^b\WF(P_ku)=\emptyset$, ${}^b\WF(\tilde{A})\cap K=\emptyset$, and ${}^b\WF(I-\tilde{A})\cap( {}^b\WF(A)\cup{}^b\WF(X))=\emptyset$. 
By Lemma~\ref{l:halfAway},
$$
\|\tilde{A}u\|_{L^2}^2\leq Ck\|P_ku\|_{L^2}\|Xu\|_{L^2}+Ck^2\|P_ku\|_{L^2}^2+Ck^{-N}\|u\|_{L^2}.
$$
Then, by the elliptic parametrix construction in the b-calculus~\cite[Equation 3.11]{GaWu:23} (see also \cite[Appendix A]{HiVa:18}),
$$
\|Xu\|_{L^2}\leq 
C\|\tilde{A}u\|_{L^2}+Ck^{-N}\|u\|_{L^2}
$$
Combining the last two inequalities and using the inequality \eqref{e:peterPaul}, we obtain that
$$
\|\tilde{A}u\|_{L^2}^2\leq Ck^2\|P_ku\|_{L^2}^2+Ck^{-N}\|u\|_{L^2}^2.
$$
Finally, since $\tilde{A}$ is elliptic on $\WF(A)$,
$$
\|Au\|_{L^2}\leq  C\|\tilde{A}u\|_{L^2}+Ck^{-N}\|u\|_{L^2},
$$
which completes the proof
\end{proof}

\subsection{Proof of Theorem \ref{thm:DV2}}



To prove the first bound in Theorem \ref{thm:DV2}, 
let $u=R_kf$. Then, by Lemma~\ref{l:halfAway} with $X=I$, 
\begin{align*}
\|Au\|^2_{L^2}&\leq Ck\|P_ku\|_{L^2}\|u\|_{L^2}+Ck^2\|P_ku\|_{L^2}^2+C_Nk^{-N}\|u\|_{L^2}^2\\
&\leq Ck\|R_k\|_{L^2\to L^2}\|f\|^2_{L^2}+Ck^2\|f\|_{L^2}^2+C_Nk^{-N}\|R_k\|_{L^2\to L^2}^2\|f\|^2\\
&\leq C(k\|R_k\|_{L^2\to L^2}+k^2)\|f\|_{L^2}^2.
\end{align*}
In particular, 
$$
\|AR_k\|_{L^2\to L^2}\leq C(\sqrt{k\|R_k\|_{L^2\to L^2}}+k)\leq C\sqrt{k\|R_k\|_{L^2\to L^2}},
$$
where the last inequality follows since $\|R_k\|_{L^2\to L^2}\geq ck$. 

Reversing the direction of the flow in all of the above lemmas (or, equivalently, applying the above results to $-P_k^*$), the proof of Lemma~\ref{l:halfAway} also yields
$$
\|A^*u\|_{L^2}^2\leq Ck\|P_k^*u\|_{L^2}\|u\|_{L^2}+Ck^2\|P_k^*u\|_{L^2}^2+C_Nk^{-N}\|u\|_{L^2}^2.
$$
Therefore, putting $u=R_{k}^*f$, and arguing in the same way as above, we obtain
$$
\|R_kA\|_{L^2\to L^2}=\|A^*R_{k}^*\|_{L^2\to L^2}\leq C\sqrt{k\|R_k\|_{L^2\to L^2}}.
$$

To prove the second bound in Theorem \ref{thm:DV2}, let $u=R_kAf$. Then, by Lemma~\ref{l:fullAway}, since $P_ku=Af$, and ${}^b\WF(A)\cap K=\emptyset$,
$$
\|Au\|_{L^2}\leq Ck\|Af\|_{L^2}+Ck^{-N}\|u\|_{L^2}\leq Ck\|f\|_{L^2}+Ck^{-N}\|R_k\|_{L^2\to L^2}\|f\|_{L^2}.
$$

\subsection{Proof of Theorems~\ref{thm:negligible2} and~\ref{thm:negligible3}}


\begin{proof}[Proof of Theorem~\ref{thm:negligible2}]
Since ${}^b\WF(A)\cap \Gamma_+=\emptyset$ there exists $T>0$ and $B_1\in \Psi(\Omega)$ such that  $\WF(B_1)\subset ((T^*\Omega\setminus \overline{T^*B(0,R_{\operatorname{scat}})})\cap \{p_\theta\neq 0\})\setminus {}^b\WF(B)$ 
and for all $q\in {}^b\WF(A)\cap \pi^b(\{ \Re p_\theta=0\})$, 
\begin{equation*}
\bigcup_{t=0}^T\varphi_{-t}(q)\cap \Ell(B_1)\neq \emptyset
\end{equation*}
(informally, $B_1$ is supported in the PML region away from $B$, and flowing backwards from $A$ one hits $B_1$).
In addition, since 
$$
{}^b\WF(A)\cup\bigcup_{t=0}^T\varphi_{-t}\big({}^b\WF(A)\cap\pi^b(\{ \Re p_\theta=0\})\big)\cap {}^b\WF(B)=\emptyset,
$$
there exists $E\in {}^b\Psi(\Omega)$ such that 
$$
{}^b\WF(A)\cup\bigcup_{t=0}^T\varphi_{-t}\big({}^b\WF(A)\cap \pi^b(\{ \Re p_\theta=0\})\big)\subset {}^b\Ell(E),\qquad {}^b\WF(E)\cap {}^b\WF(B)=\emptyset.
$$
Therefore, applying Theorem~\ref{l:propagateVasy} with $u=R_kBf$, and then using both ${}^b\WF(E)\cap {}^b\WF(B)=\emptyset$ and Assumption \ref{a:polyBoundIntro}, we obtain
\begin{equation}
\label{e:propBack}
\|Au\|_{L^2}\leq Ck\|EBf\|_{L^2}+C\|B_1u\|_{L^2}+C_Nk^{-N}\|u\|_{L^2}\leq \|B_1u\|_{L^2}+C_Nk^{-N}\|f\|_{L^2}.
\end{equation}
The elliptic parametrix construction~\cite[Proposition E.32]{DyZw:19} then implies that
\begin{equation}
\label{e:ellipBack}
\|B_1u\|_{L^2}\leq C\|B_1Bf\|_{L^2}+C_Nk^{-N}\|u\|_{L^2}\leq C_Nk^{-N}\|f\|_{L^2}.
\end{equation}
Combining~\eqref{e:propBack} and~\eqref{e:ellipBack}, we obtain that
$$
\|Au\|_{L^2}\leq C_Nk^{-N}\|f\|_{L^2}
$$
and the result $\|AR_kB\|_{L^2\to L^2}\leq C_Nk^{-N}$ follows.
\end{proof}

\

The proof of Theorem~\ref{thm:negligible3} is nearly identical with $P_k$ replaced by $-P_k^*$.

\section{\es{Spaces of mapped piecewise polynomials}}

\es{
In this section, we define carefully our notion of mapped piecewise polynomial spaces that allow for curved elements following~\cite[Appendix A]{GaSp:25b}. 
We highlight that the presentation below allows general $C^{p+1}$ element maps, as opposed to, e.g., \cite[Chapter 13]{ErGu:21} which considers polynomial element maps, or \cite{Be:89}, which considers element maps that are small perturbations of affine maps (\cite[Definition 2.1]{Be:89}). This allows us to resolve general $C^{p+1}$ boundaries with our elements.

\subsection{Definition of spaces of mapped polynomials}

\begin{definition}[Conforming mesh]
\label{d:conformingMesh}
A finite collection of closed sets $\mathcal{T}$ is a \emph{conforming mesh} of $\Omega\subset \mathbb{R}^d$ if
\begin{itemize}
\item[(i)] for each $T \in \mathcal{T}$, $T$ is closed, $\mathring{T}$ is nonempty and connected,
\item[(ii)] $\overline{\Omega}= \cup_{T\in \mathcal{T}}T$
\item[(iii)] if $T_1, T_2\in \mathcal{T}$ and $\mathring{T_1}\cap \mathring{T_2} \neq \emptyset$ then $T_1=T_2$.
\item[(iv)] for each $T \in \mathcal{T}$, $T$ has Lipschitz boundary and 
$$
\partial T=\bigcup_{j=1}^d \bigcup_{i=1}^{N_{j,d}}F_{j,i}(T)
$$
with $F_{j,i}\blue{(T)}$ a smooth, open, embedded submanifold of dimension $d-j$ such that $\partial \big(F_{j,i}\blue{(T)}\big)=\partial\overline{F_{j,i}\blue{(T)}}$,
\item[(v)] if $T_1,T_2\in\mathcal{T}$ and $F_{j_1,i_1}(T_1)\cap F_{j_2,i_2}(T_2)\neq \emptyset$, then $F_{j_1,i_1}(T_1)=F_{j_2,i_2}(T_2)$. 
\end{itemize}
\end{definition}


\begin{definition}[$m$-simplex]
Let $m>0$. A set $\blue{T}\Subset \mathbb{R}^d$ is an \emph{$m$-simplex} if there are $\{x_j\}_{j=0}^m\subset \mathbb{R}^d$ such that
$$
\blue{T}=\text{convex hull}\big(\{ x_j\}_{j=0}^m\}\big)
$$
$\blue{T}$ is an \emph{open $m$-simplex} if $\blue{T}=(\widetilde{\blue{T}})^\circ$ where $\widetilde{\blue{T}}$ is an $m$ simplex  and $\blue{T}\neq \emptyset$. 
\end{definition}

\begin{definition}[Reference element and element maps]
\label{d:referenceElement}
$\widehat{T}\Subset \mathbb{R}^d$ is a reference element for a mesh $\mathcal{T}$ if $\operatorname{diam}(\widehat{T})=1$ and there exist a family of bi-Lipschitz maps 
$\{\mapF_{T}\}_{T\in \mathcal{T}}$ such that $T = \mapF_T( \widehat{T})$ for all $T\in \mathcal{T}$ \blue{and for all $(j,i)$ there is $i'$ such that  $F_{j,i}(T)=\mapF_T(F_{j,i'}(\widehat{T}))$. 
}The mesh $\mathcal{T}$ is \emph{simplicial} if $\widehat{T}$ is a $d$ simplex.
\end{definition}

Given a mesh $\mathcal{T}$ with element maps $\{\mapF_T\}_{T\in\mathcal{T}}$ and reference element $\widehat{T}$, the corresponding spaces of mapped polynomials are defined by
\beq\label{e:polySpace}
V^p(\mathcal{T},\{\mapF_T\}):= \big\{ 
v\in H^1(\Omega) \, :\, \text{ for all }  T \in \cT : v|_T\circ \mapF_T \in \mathbb{P}^p(\widehat{T})
\big\},
\eeq
where $\mathbb{P}^p(\widehat{T})$ denotes the space of polynomials of degree $\leq p$ on the reference element. We abbreviate $V^p(\mathcal{T},\{\mapF_T\})$ to $V^p_{\mathcal{T}}$.

We use the following measure of how star-shaped an open subset of $\mathbb{R}^d$ is. 
\begin{definition}[Shape-regularity constant]
Let $T\subset \mathbb{R}^d$ and define
$$
\rho_{\max}(T):=\sup \big\{ \rho>0\,:\, T\text{ is star-shaped with respect to a ball of radius }\rho\big\}.
$$
The \emph{shape-regularity constant of $T$} is defined by
\begin{equation}
\label{e:shapeRegularity}
\gamma(T):=\frac{\operatorname{diam}(T)}{\rho_{\max}(T)}.
\end{equation}
\end{definition}

\begin{definition}[$C^r$ conforming mesh]
\label{d:Crtriang}
A conforming mesh is $C^r$ with constant $\Upsilon>0$ if $\gamma(\widehat{T})\leq \Upsilon$ and for each $T\in\mathcal{T}$ there is $h_T>0$ such that the  element map $\mapF_T$ can be written as $R_T \circ A_T$ where $A_T$ is an affine map and 
\begin{align}
&\| \partial A_T\|_{L^\infty} \le \Upsilon h_T, \quad 
\| (\partial A_T)^{-1}\|_{L^\infty}\leq \Upsilon h^{-1}_T,\quad \| (\partial R_T)^{-1}\|_{L^\infty}\leq \Upsilon,\nonumber\\
&
\| \partial^\alpha R_T\|_{L^\infty}\leq \Upsilon^{1+|\alpha|} \alpha! \quad\tfa\, |\alpha|\leq r.\label{e:mapsHighReg}
\end{align}
Let $h(\mathcal{T}):=\sup_{T\in\mathcal{T}}h_T$ be the \emph{width of the mesh $\mathcal{T}$}. Furthermore, a mesh is $C^\omega$ if~\eqref{e:mapsHighReg} holds for all $|\alpha|<\infty$.
\end{definition}

\bre 
Given a piecewise $C^{r}$ domain, a $C^r$ mesh is constructed in \cite[\S6]{Be:89}, building on the 2-d results of \cite{Sc:73, Zl:73} and the isoparametric elements in general dimension of \cite{Le:86}.
\ere


\begin{definition}[Affine-conforming mesh]
\label{d:AffineConforming}
A $C^r$ mesh (in the sense of Definition \ref{d:Crtriang}) is \emph{affine conforming} if whenever $F_{j_1,i_1}(T_1)\cap F_{j_2,i_2}(T_2)\neq \emptyset$ there is an affine isomorphism $\kappa:\mapF_{T_1}^{-1}(F_{j_1,i_1}(T_1))\to \mapF_{T_2}^{-1}(F_{j_2,i_2}(T_2))$  such that 
\beq\label{e:isomorphism}
\mapF_{T_1}|_{
\overline{
\mapF_{T_1}^{-1}(F_{j_1,i_1}(T_1))
}}
=\mapF_{T_2}|_{
\overline{
\mapF_{T_2}^{-1}(F_{j_2,i_2}(T_2))
}}\circ \kappa.
\eeq
\end{definition}



\bre
Without a condition on the element maps such as Definitions \ref{d:AffineConforming}, 
the space of mapped polynomials on two adjacent mesh elements 
may not even intersect non-trivially when restricted to a common face. If the intersection is trivial, there cannot be any non-trivial, globally-$H^1$ mapped polynomials. 

Conditions such as Definitions \ref{d:AffineConforming} 
are therefore common in the literature. 
For example, the Lagrange $\mathbb{P}_{k,d}$ elements in \cite{ErGu:21} are affine conforming (in the sense of Definition \ref{d:AffineConforming}) by 
 \cite[Definition 8.1 and Exercise 20.1]{ErGu:21}. 
 
 \blue{In general, given a bounded, connected domain $\Omega$ with $C^{1}$ boundary, it is possible to construct an affine-conforming mesh that is $C^p$ for all $p$. Indeed, the Whitehead theorem~\cite{Wh:40} implies that there is a map $\Phi:P\to \Omega$ from a polyhedron (not necessarily simply connected) to $\Omega$ that is bilipschitz and such that there is a decomposition of $P$ into $d+1$ simplices, $\{\tilde{T}_i\}_{i=1}^N$ such that $\Phi|_{\overline{\tilde{T}_i}}$ is a diffeomorphsim. If desired, one then further decomposes $\{\tilde{T}_i\}$ into smaller simplices $\{T_j
 \}_{j=1}^M$, which can be mapped to $\widehat{T}$ using an affine map, $A_{T_j}$.   The maps $\mathscr{F}_{T_j}$ can then be chosen as $P\circ A_{T_j}^{-1}$. One can see that the constant $\Upsilon$ in this construction can be controlled in terms of only the map $P$ i.e. it need not increase upon further refinement.  
 }
\ere

\subsection{Quasi-interpolation in spaces of mapped polynomials}

\begin{definition}[Lagrange nodes of a simplex]
    For a simplex $T$ with vertices $\{x_i\}_{i=1}^m$, let 
    $$
    \mathcal{L}_p(T):=\bigg\{ \frac{1}{p}\sum_{i=1}^{n+1}\alpha_i x_i\,:\, \alpha \in\mathbb{N}^{m},\,|\alpha|=p\bigg\}
    $$
    denote the \emph{$p$-Lagrange nodes} of $T$. 
\end{definition}
\begin{definition}[Lagrange nodes of an affine-conforming mesh]
    For a simplicial affine-conforming mesh $\mathcal{T}$, we define the \emph{$p$-Lagrange nodes of the mesh} by 
    $$
    \mathcal{L}_p(\mathcal{T}):=\bigcup_{T\in\mathcal{T}}\mathscr{F}_{T}(\mathcal{L}_p(\widehat{T})).
    $$
    \end{definition}

    Observe that Lagrange nodes are invariant under affine isomorphisms; i.e., if $A$ is an affine isomorphism and $T$ is a simplex, then $\mathcal{L}_p(A(T))=A(\mathcal{L}_p(T))$. Together with Definition~\eqref{d:AffineConforming}, this implies that if $T_1,T_2\in\mathcal{T}$ share a $(d-j)$-dimensional face $F_{j,i_1}(T_1)=F_{j,i_2}(T_2)$, then $\mathscr{F}_{T_1}(L_p(\widehat{T}))\cap F_{j,i_1}(T_1)=\mathscr{F}_{T_2}(\mathcal{L}_p(\widehat{T}))\cap F_{j,i_2}(T_2)=\mathscr{F}_{T_1}(\mathcal{L}_p(\hat{T} \cap F_{j,i_1}^{-1}(T_1))) = \mathscr{F}_{T_2}(\mathcal{L}_p(\hat{T} \cap F_{j,i_2}^{-1}(T_2)))$. 

    \begin{definition}
\label{d:lagrangeBasisSimplex}
        For a simplex $T$, denoting $L_p(T)=\{v_i\}_{i=1}^N$, we define the \emph{Lagrange basis of degree $p$-polynomials for $T$}, $\mathscr{L}_p(T):=\{\phi_{i}\}_{i=1}^N$ by
        \begin{equation}
        \label{e:defineLagrange}
        \phi_i(v_j)=\delta_{ij},\qquad \phi_i\in\mathbb{P}^p.
        \end{equation}
            For $S\subset T$ a simplex, we also define the \emph{dual Lagrange basis of degree $p$ polynomials}, $\mathscr{L}_p^*(S,T):=\{\psi_{i,S}\}_{i=1}^N\subset \mathbb{P}^p(S) $
    $$
    \int_{S} \psi_{i,S} \phi_jd\textup{vol}_S=\delta_{ij}.
    $$
    \end{definition}
    Notice that~\eqref{e:defineLagrange} defines a unique polynomial of degree $p$ since the $p$-Lagrange nodes of a simplex are unisolvent for degree $p$ polynomials (see~\cite[Proposition 7.12]{ErGu:21}). This also implies that $\mathscr{L}_p(T)$ is, in fact, a basis for degree $p$ polynomials on $T$. 

        \begin{lemma}
        Let $\mathcal{T}$ be a simplicial affine conforming mesh. Then for all $v\in\mathcal{L}_p(\mathcal{T})$, there is a unique $\phi\in V_{\mathcal{T}}^p$ such that 
           \begin{equation}
    \phi(v)=1,\qquad \phi(v')=0,\quad v'\in L_p(\mathcal{T})\setminus\{v\}. 
    \end{equation}
    \end{lemma}
    \begin{proof}
    Define the function $f_v:L_p(\mathcal{T})\to \{0,1\}$ by 
    $$
    f_v(v)=1,\qquad f_v(v')=0,\, v'\neq v.
    $$
    For $T\in\mathcal{T}$, let $\hat{\phi}_T\in \mathscr{L}_p(\widehat{T})$ be the $p$-Lagrange polynomial on $\widehat{T}$ satisfying
        $$
        \hat{\phi}_{T}(w)= f_v(\mathscr{F}_T(w)) \,, \quad \tfa w \in \mathcal{L}_p(\hat{T})
        $$
        Then, define $\phi:\Omega\to \mathbb{R}$ by 
        $$
        \phi|_{T}=\hat{\phi}_T\circ\mathscr{F}_{T}^{-1}. 
        $$
        To check that $\phi\in V_{\mathcal{T}}^p$ it remains to check that $\phi\in H^1(\Omega)$. For this, it suffices to show that, if $T_1,T_2\in\mathcal{T}$ share a face $F_{j,i_1}(T_1)=F_{j,i_2}(T_2)$, then
        $$
\hat{\phi}_{T_1}\circ\mathscr{F}_{T_1}^{-1}|_{F_{j,i_1}}=\hat{\phi}_{T_2}\circ\mathscr{F}_{T_2}^{-1}|_{F_{j,i_2}}.
        $$
        This follows from~\eqref{e:isomorphism} together with the fact that Lagrange nodes are unisolvent on faces of $\widehat{T}$ and affine isomorphism invariant. 
    \end{proof}
    
    We are now in a position to define the Lagrange basis of polynomials on an affine-conforming mesh.
    \begin{definition}
    \label{d:lagrangeBasisMesh}
    For a simplicial affine-conforming mesh $\mathscr{T}$ with $L_p(\mathcal{T})=\{v_i\}_{i=1}^N$, we define the \emph{Lagrange basis of degree $p$ mapped polynomials for $\mathcal{T}$}, $\mathscr{L}_p(\mathcal{T}):=\{\phi_i\}_{i=1}^N$ by
              \begin{equation}
    \phi(v_i)=\delta_{ij}. 
    \end{equation}
    \end{definition}

    In the next definition, for each Lagrange node $v$, we fix a mapped simplex $\sigma_v$ of dimension either $d$ or $d-1$ containing $v$, with the only restriction that if $v$ is in the boundary of a mesh element, then $\sigma_v$ is a $(d-1)$-dimensional face of an element $T \in \mathcal{T}$, and if $v \in \partial \Omega$, then $\sigma_v \subset \partial \Omega$.
    \begin{definition}[Mapped Scott-Zhang quasi-interpolant]\label{d:mapSZ}
    Let $\mathcal{T}$ be an affine-conforming mesh. For each $v\in \mathcal{L}_p(\mathcal{T})$, if $v\in T^\circ$ for some $T\in \mathcal{T}$, let $\sigma_v:=T$. If $v\in\partial T$ for some $T\in\mathcal{T}$ and $v\notin \partial\Omega$, choose $i$ such that $v\in F_{1,i}(T)$ and set $\sigma_v:=F_{1,i}(T)$. Otherwise, there is $T\in\mathcal{T}$ and $i$ such that $v\in F_{1,i}(T)$ and $F_{1,i}(T)\subset \partial\Omega$ and we set $\sigma_v:=F_{1,i}(T)$. In all of the above cases, we set $T_v:=T$.

    We say $\Pi_{\mathcal{T}}^p$ is a \emph{mapped Scott-Zhang quasi-interpolant} if there is a choice of $\sigma_v$ as above such that
    $$
    \Pi_{\mathcal{T}}^pu:=\sum_{v\in\mathcal{L}_p(\mathcal{T})}\left(\int_{\sigma_v} \psi_{\mathscr{F}_{T_v}^{-1}(v),\mathscr{F}_{T_v}^{-1}(\sigma_v)}\circ\mathscr{F}_{T_v}^{-1}(x)\, u\,|\det D\mathscr{F}_{T_v}^{-1}(x)|\,d\textup{vol}_{\sigma_v}(x)\right) \phi_{v}
    $$
    \end{definition}

    \begin{lemma}
    \label{l:scottZhangApprox}
        Given $p\geq 1$, $1\leq \ell\leq m\leq p+1$, $\Upsilon>0$ there exists $C>0$ such that the following is true.                For all  $C^{p+1}$ affine-conforming meshes, $\mathcal{T}$, with constant $\Upsilon$, any mapped Scott-Zhang quasi-interpolant $\Pi_{\mathcal{T}}^p$, and all $T\in\mathcal{T}$,
        $$
        |(I-\Pi_{\mathcal{T}}^p)u|^2_{H_k^\ell(T)}\leq C
        (h_{T}k)^{2(m-\ell)}k^{-2m}\sum_{T'\in\mathcal{T},\,T'\cap T\neq \emptyset}\|u\|_{H^m(T')}^2
        $$
    \end{lemma}
    \begin{proof}
    Fix $T\in\mathcal{T}$, let $\{T_i\}_{i=1}^N\subset \mathcal{T}$ be those mesh elements that intersect $T$ nontrivially, and define $\widetilde{T}=\cup_iT_i$. 
    
    There is a polynomial $\tilde{p}_T\in\mathbb{P}^{p}(\widehat{T})$ (e.g. the averaged Taylor polynomial for $u\circ \mathscr{F}_T$~\cite[Lemma 4.3.8]{BrSc:00}) such that,
    \beq\label{e:average}
    \|\tilde{p}_T-u\circ\mathscr{F}_T\|_{H^\ell(\widehat{T})}\leq C_{\Upsilon,\ell,m} |u\circ\mathscr{F}_T|_{H^m(\widehat{T})}.
    \eeq
    In particular, then $p_T:=\tilde{p}_T\circ\mathscr{F}_T^{-1}$ satisfies
    \begin{align*}
    |u-p_T|_{H^\ell(T)}&=\sum_{|\alpha|=\ell}\|\partial^\alpha [(\tilde{p}_T-u\circ\mathscr{F}_T)\circ \mathscr{F}_T^{-1}|_{L^2(T)}\\
    &\leq C_\Upsilon h_T^{\frac{d}{2}-\ell}\sum_{|\beta|\leq \ell}\|\partial^\beta (\tilde{p}_T-u\circ\mathscr{F}_T)\|_{L^2(\widehat{T})}\\
    &\leq C_{\Upsilon,\ell,m} h_T^{\frac{d}{2}-\ell}| u\circ\mathscr{F}_T|_{H^m(\widehat{T})}
     \leq C_{\Upsilon,\ell,m} h_T^{m-\ell}\| u\|_{H^m(T)},
    \end{align*}
    where we have used \eqref{e:mapsHighReg} and \eqref{e:average}.

    Now, observe that extending $p_T$ from $T$ to an element of $V_{\mathcal{T}}^p$
    $$
    \|(I-\Pi_{\mathcal{T}}^p)u\|_{H^\ell(T)}\leq \|u-p_T\|_{H^{\ell}(T)}+\|\Pi_{\mathcal{T}}^p(u-p_T)\|_{H^m(T)}.
    $$
By Definition \ref{d:mapSZ},
    \begin{align*}
    &\|\Pi_{\mathcal{T}}^p(u-p_T)\|_{H^\ell(T)}  
\\
&\qquad\leq\sum_{v\in\mathcal{L}_p(\mathcal{T})\cap T} \Big|\int_{\sigma_v} \psi_{\mathscr{F}_{T_v}^{-1}(v),\mathscr{F}_{T_v}^{-1}(\sigma_v)}\circ\mathscr{F}_{T_v}^{-1}(x)\, (u-p_T)\,|\det D\mathscr{F}_{T_v}^{-1}(x)|\,d\textup{vol}_{\sigma_v}(x)\Big|\|\phi_{v}\|_{H^\ell(T)}.
    \end{align*}
    First, we estimate 
    $$
    \|\phi_v\|_{H^\ell(T)}\leq  h_T^{\frac{d}{2}-\ell}\| \phi_{v}\circ\mathscr{F}_T \|_{H^\ell(\hat{T})}\leq C_{p,\ell,\Upsilon}h_T^{\frac{d}{2}-\ell}.
    $$
    Next, when $\sigma_v=T$, we estimate 
    \begin{align*}
    &\Big|\int_{T} \psi_{\mathscr{F}_{T}^{-1}(v),\mathscr{F}_{T}^{-1}(\sigma_v)}\circ\mathscr{F}_{T}^{-1}(x)\, (u-p_T)\,|\det D\mathscr{F}_{T}^{-1}(x)|\,d\textup{vol}_{T}(x)\Big|\\
    &\leq \|\det D_{\mathscr{F}_T^{-1}}\|_{L^\infty}^{1/2}\|(u-p_T)\|_{L^2(T)}\|\psi_{\mathscr{F}_T^{-1}(v),\widehat{T}}\circ \mathscr{F}_{T^{-1}}|\det D\mathscr{F}_{T}^{-1}|^{1/2}\|_{L^2}\\
    &\leq C_{\Upsilon,p} h_T^{-\frac{d}{2}}\|u-p_T\|_{L^2(T)}.
    \end{align*}
    Finally, when $\sigma_v$ has dimension $d-1$, adapting the proof of~\cite[Lemma 3.1]{CaDe:15} (see also~\cite{Ve:16}), we obtain
\begin{align*}
   \Big|\int_{\sigma_v} \psi_{\mathscr{F}_{T_v}^{-1}(v),\mathscr{F}_{T_v}^{-1}(\sigma_v)}\circ\mathscr{F}_{T_v}^{-1}(x)\, (u-p_T)\,|\det D\mathscr{F}_{T_v}^{-1}(x)|\,d\textup{vol}_{\sigma_v}(x)\Big|
   \leq C_{\Upsilon, p }h_{T_i}^{-\frac{d}{2}+m}\Big(\sum_{T_i}\|u\|^2_{H^m(T_i)}\Big)^{\frac{1}{2}}.
\end{align*}
 All together, using the fact that, for $T_1\cap T_2\neq \emptyset$, $h_{T_1}\leq C_{\Upsilon}h_{T_2}$, we obtain 
 $$
 \|(I-\Pi_{\mathcal{T}}^p)u\|^2_{H^\ell(T)}\leq C_{\Upsilon,p}h_T^{2(m-\ell)}\sum_{T_i}\|u\|_{H^m(T_i)}^2,
 $$
 which implies the result after multiplication by $k^{2(m-\ell)}$.
    \end{proof}

\subsection{Spaces of mapped polynomials are well-behaved}

We now show that the spaces of mapped polynomials defined above are well-behaved in the sense of Definition \ref{d:wellbehaved}. We highlight that once one has the order-$p$ approximation property (of Definition \ref{d:app}) for mapped polynomials, the proofs of the inverse inequalities of Definition \ref{d:ii} and the super-approximation property of Definition \ref{d:sap} are appropriate curved-element modifications of standard proofs in the literature.

\begin{theorem}
\label{t:itIsWellBehaved}
Let $p\geq 1$, $\Upsilon>0$. Then there is $C_0>0$ such that for all $\mathcal{T}$, $C^{p+1}$ simplicial affine conforming meshes with constant $\Upsilon$ with $h(\mathcal{T})\leq \Upsilon k^{-1}$ and $U(\mathcal{T},k^{-1})\leq \Upsilon$, the space $V^p_{\mathcal{T}}$ is a well-behaved finite-element space of order $p$ at frequency $k$ with constants $(C_0,2)$ in the sense of Definition \ref{d:wellbehaved}.
\end{theorem}

\begin{proof}
    To see that $V^p_{\mathcal{T}}$ 
    satisfies the order-$p$ approximation property of Definition \ref{d:app}, 
    we apply Lemma~\ref{l:scottZhangApprox} and sum over $T\in\mathcal{T}$ to arrive at the approximation estimate in Definition~\ref{d:app}. Here, we use the fact that for any $T\in\mathcal{T}$, there is $C_{\Upsilon,d}$ such that 
    $$
    \sup_{T\in\mathcal{T}}\#\{T'\in\mathcal{T}\,:\, T'\cap T\}\leq C_{\Upsilon,d}.
    $$
    The support property of the approximant (with $\kappa=2$) in Definition~\ref{d:app} follows from the fact that the approximation operator $\Pi_{\mathcal{T}}^pu|_{T}=0$ if $u|_{T'}=0$ for $T\cap T'\neq \emptyset$, $T'\in\mathcal{T}$.  

Next, we check that $V_{\mathcal{T}}^p$ satisfies the inverse inequalities Definition \ref{d:ii} --  We first claim that for $v\in V^p_{\mathcal{T}}$ and $T\in\mathcal{T}$, $r\in\{0,1\}$, and $\alpha\in\mathbb{N}^d$, $|\alpha|\geq 1$,
\begin{equation} 
\label{e:highNormInverse}
\|\partial^{\alpha}v\|_{L^2(T)}\leq C_{|\alpha|,\Upsilon,r}h_T^{r-|\alpha|}\sum_{|\gamma|=r}\|\partial^\gamma v\|_{L^2(T)}.
\end{equation}
To prove this, observe that, 
by the definition of $V^p_{\mathcal{T}}$ \eqref{e:polySpace} there is $u\in\mathbb{P}^p(\hat{T})$ such that $v|_{T}=u\circ \mathscr{F}_{T}^{-1}.$ Let also 
$$
M_{\hat{T}}:\hat{T}\to \hat{T}_0:=\Big\{x=(x_1,\dots,x_d)\in\mathbb{R}^d\,:\, x_i>0,\, \sum_{i=1}^d x_i<1\Big\}
$$
be a bijective affine map and note that $\|DM_{\hat{T}}\|+\|D (M_{\hat{T}})^{-1}\|\leq C_{\Upsilon}$. 

We estimate
\begin{align*}
\|\partial^\alpha v\|_{L^2(T)}&=\|\partial^\alpha (u\circ\mathscr{F}_{T}^{-1})\|_{L^2(T)}\\
&\leq \sum_{1\leq |\beta|\leq |\alpha|}C_{\alpha,\Upsilon}h_T^{-|\alpha|} \|(\partial^{\beta}u )\circ \mathscr{F}_{T}^{-1}\|_{L^2(T)}\\
&=\sum_{1\leq |\beta|\leq |\alpha|}C_{\alpha,\Upsilon}h_T^{-|\alpha|} \|\det  D\mathscr{F}_T\|_{L^\infty(\hat{T})}^{1/2}\|\partial^{\beta}u\|_{L^2(\hat{T})}\\
&\leq \sum_{1\leq |\beta|\leq |\alpha|}C_{\alpha,\Upsilon}h_T^{\frac{d}{2}-|\alpha|} \|\partial^{\beta}(u\circ M_{\hat T}^{-1})\|_{L^2(\hat{T}_0)}\\
&\leq C_{\alpha,r,\Upsilon}h_T^{\frac{d}{2}-|\alpha|} \sum_{|\gamma|=r}\| \partial^{\gamma}(u \circ M_{\hat T}^{-1})\|_{L^2(\hat{T}_0)},
\end{align*}
where in the last inequality we use the inverse inequality~\cite[Lemma 4.5.3]{BrSc:08} (applied with $h=1$ $\blue{T}=\hat{T}_0$, and $\mathcal{P}$ the collection of degree $p$ polynomials on $\hat{T}_0$) together with the fact that $u\circ M_{\hat T}^{-1}$ is a polynomial of degree $p$.
Finally, we estimate
\begin{align*}
    \sum_{|\gamma|=r}h_T^{\frac{d}{2}-|\alpha|} \|\partial^{\gamma}(u \circ M_{\hat T}^{-1})\|_{L^2(\hat{T}_0)}&\leq C_{\Upsilon}h_T^{-|\alpha|}\sum_{|\gamma|=r}h_T^{|\gamma|} \|\partial^{\gamma}(u\circ \mathscr{F}_{T}^{-1}) \|_{L^2(T)}=C_{\Upsilon}h_T^{r-|\alpha|} \sum_{|\gamma|=r}\|\partial^{\gamma}v \|_{L^2(T)},
\end{align*}
which implies~\eqref{e:highNormInverse}.
Notice that~\eqref{e:highNormInverse} with $r=0$ directly implies the first estimate in Definition~\ref{d:ii}.
To conclude that $V^p_{\mathcal{T}}$ satisfies the inverse inequalities of Definition \ref{d:ii}, we argue by duality. Let $\iota:\widetilde{V}^p_{\mathcal{T}}(T)\to L^2(T)$ denote the inclusion operator, where 
$$
\widetilde{V}^p_{\mathcal{T}}(T):=\big\{ u\in L^2(T)\,:\, v\circ\mathscr{F}_{T}\in \mathbb{P}^{p}(\hat{T})\big\}.
$$ 
Our goal is to estimate
$$
\|\iota \|_{H_k^{-j}(T)\to L^2(T)}\leq C_{j,\Upsilon}(h_Tk)^{-j},
$$
which is equivalent to the estimate
\begin{equation}
\label{e:dualityInverse}
\|\Pi\|_{L^2(T)\to H_k^j(T)}\leq C_{j,\Upsilon}(h_T k)^{-j},
\end{equation}
where $\iota^*=\Pi:L^2(T)\to \tilde{V}_{\mathcal{T}}^p(T)$ is the orthogonal projector.
This follows from the $L^2$ boundedness of $\Pi$ and the estimates~\eqref{e:highNormInverse}.


We now show that $V^p_{\mathcal{T}}$ has the super-approximation property (Definition \ref{d:sap}); we follow the argument in \cite{DeGuSc:11}, adjusted appropriately here to apply to mapped polynomials. 
Observe that for $|\alpha|=p+1$ and $v\in \mathcal{V}_{\mathcal{T}}^p$, there is $u\in\mathbb{P}^p$ such that 
\begin{align}\nonumber
\|\partial^\alpha v\|_{L^2(T)}&=\|\partial^\alpha (u\circ A_T^{-1}\circ R_T^{-1} )\|_{L^2(T)}\\ \nonumber
&\leq C\sum_{1\leq |\beta|\leq p}\| (\partial^\beta (u\circ A_T^{-1})\circ R_T^{-1}\|_{L^2(T)}\\ \label{e:pplus1}
&\leq C\sum_{1\leq |\beta|\leq p}\| (\partial^\beta (v\circ R_T))\circ R_{T}^{-1}\|_{L^2(T)}\leq C\sum_{1\leq |\beta|\leq p}\| \partial^\beta v\|_{L^2(T)};
\end{align}
i.e., we can control $p+1$ derivatives of mapped polynomials by $p$ derivatives (note that when $R_T=I$, i.e., when the element is not curved, $\partial^\alpha v=0$).
We now apply the approximation property (Definition~\ref{d:app}). Indeed, using the operator $\mathcal{C}_{\mathcal{T}}^p$ from~\cite[Theorem A.11]{GaSp:25b} there is $v_h\in V_{\mathcal{T}}^p$ supported in $U_1$ such that 
\begin{align*}
    &(h_Tk)^{-p}k^{p+1}\|\chi^2 u_h-v_h\|_{H_k^1(\blue{T})}\leq C\|\chi^2 u_h\|_{H^{p+1}(T)}\\
    &\leq C\Big(\|\chi^2 u_h\|_{L^2(T)}+\sum_{|\alpha|=p+1}\|\partial^{\alpha}(\chi^2 u_h)\|_{L^2(T)}\Big)\\
    &\leq C\bigg(\|\chi^2 u_h\|_{L^2(T)}+\sum_{j=2}^{p+1}|\chi^2|_{W^{j,\infty}(T)}\| u_h\|_{H^{p+1-j}(T)}+\sum_{\substack{0\leq |\gamma|\leq 1\\|\beta|\leq p+1-|\gamma|}}\|(\partial^\gamma \chi^2) \partial^\beta u_h\|_{L^2(T)}\bigg)
\end{align*}
where we have used the chain rule in the last inequality.
Let $\overline{\chi}$ be the average of $\chi$ on $T$ and observe that $\|\chi -\overline{\chi}\|_{L^\infty(T)}\leq CC_\dagger h_T d^{-1}$ (by \eqref{e:chiBound} with $n=1$). Then, by using inverse inequalities, introducing $\overline{\chi}$, using the bound on $\chi-\overline{\chi}$, and the bound \eqref{e:chiBound} with $n=1$, we have 
\begin{align*}    
    &(h_Tk)^{-p}k^{p+1}\|\chi^2 u_h-v_h\|_{H_k^1(\blue{T})}
    \\
    &\leq C\bigg(\|\chi^2 u_h\|_{L^2(T)}+C_{\dagger}^2\sum_{j=2}^{p+1}d^{-j}h_T^{-p-1+j}\|u_h\|_{L^2(T)}\\
&\hspace{4cm}+\sum_{\substack{0\leq |\gamma|\leq 1\\|\beta|\leq p+1-|\gamma|}}C_\dagger d^{-|\gamma|}\Big(h_Td^{-1}C_{\dagger}\|  \partial^\beta u_h\|_{L^2(T)}+\overline{\chi}\|\partial^\beta u\|_{L^2(T)}\Big)\bigg).
\end{align*}
Then, by using \eqref{e:pplus1} and \eqref{e:highNormInverse}, reintroducing $\chi$ via $\overline{\chi}= -(\chi-\overline{\chi}) +\chi$, and using the bound on $\chi-\overline{\chi}$, we obtain that
\begin{align*}
&(h_Tk)^{-p}k^{p+1}\|\chi^2 u_h-v_h\|_{H_k^1(\blue{T})}\\
    &\leq C\bigg(\|\chi^2 u_h\|_{L^2(T)}+C_{\dagger}^2\sum_{j=2}^{p+1}d^{-j}h_T^{-p-1+j}\|u_h\|_{L^2(T)}\\
    &\hspace{4cm}+\sum_{|\beta|\leq p}C_\dagger(1+d^{-1}) \Big(h_Td^{-1}C_{\dagger}\|\partial^\beta u_h\|_{L^2(T)}+\overline{\chi}\|  \partial^\beta u_h\|_{L^2(T)}\Big)\bigg)\\
        &\leq C\bigg(\|\chi^2 u_h\|_{L^2(T)}+C_{\dagger}^2\sum_{j=2}^{p+1}d^{-j}h_T^{-p-1+j}\|u_h\|_{L^2}\\
        &\qquad+  C_\dagger h_T^{1-p}(1+d^{-1}) \Big(d^{-1}C_\dagger\|u_h\|_{L^2(T)}+\overline{\chi}| u_h|_{H^1(T)}\Big)
     +C_\dagger^2(1+d^{-1})(h_Td^{-1}+1)\|u_h\|_{L^2(T)}\bigg)\\
 &\leq C\bigg(\|\chi^2 u_h\|_{L^2(T)}+C_{\dagger}^2\sum_{j=2}^{p+1}d^{-j}h_T^{-p-1+j}\|u_h\|_{L^2(T)}\\
 &\hspace{3cm}+   C_\dagger h_T^{1-p}(1+d^{-1}) 
\big(| (\chi-\bar{\chi})u_h|_{H^1(T)}+|\chi u_h|_{H^1(T)}\big)\\
&\hspace{5cm}+C_\dagger^2(1+d^{-1})\big((h_T^{1-p}+h_T)d^{-1}+1\big)\|u_h\|_{L^2(T)}\bigg)\\
         &\leq C\bigg(\|\chi^2 u_h\|_{L^2(T)}+C_{\dagger}^2\sum_{j=2}^{p+1}d^{-j}h_T^{-p-1+j}\|u_h\|_{L^2(T)}\\
         &\,+   C_\dagger h_T^{1-p}(1+d^{-1}) \big(C_\dagger d^{-1}\|u_h\|_{L^2}+|\chi u_h|_{H^1(T)}\big)+C_\dagger^2(1+d^{-1})\big((h_T^{1-p}+h_T)d^{-1}+1\big)\|u_h\|_{L^2(T)}\bigg)\\
        &\leq CC_{\dagger}^2(1+d^{-1})(h_T^{1-p}d^{-1}+1)\|u_h\|_{L^2(T)}+CC_\dagger h_T^{1-p}(1+d^{-1})|\chi u_h|_{H^1(T)}.
\end{align*}
Thus,
\begin{align*}
\|\chi^2 u_h-v_h\|_{H_k^1(\blue{T})}
&\leq  k^{-1}
CC_{\dagger}^2(1+d^{-1})(h_Td^{-1}+h_T^p)\|u_h\|_{L^2(T)}+C_\dagger h_T(1+d^{-1})|\chi u_h|_{H^1(T)}. 
\end{align*}
The estimate in Definition~\ref{d:sap} now follows since the parameter $d$ is assumed to be $<2$. 
    \end{proof}
}

\footnotesize{

\bibliographystyle{alpha}
\bibliography{biblio_combined_sncwadditions}

}

\end{document}